\begin{document}

\newtheorem{thm}{Theorem}[section]
\newtheorem{lem}[thm]{Lemma}
\newtheorem{prop}[thm]{Proposition}
\newtheorem{cor}[thm]{Corollary}
\newtheorem{hyp}[thm]{Hypothesis}
\newtheorem*{propn}{Proposition}
\newtheorem*{corn}{Corollary}

\theoremstyle{definition}
\newtheorem{defn}[thm]{Definition}

\theoremstyle{definition}
\newtheorem{eg}[thm]{Example}

\theoremstyle{definition}
\newtheorem*{egn}{Example}

\theoremstyle{definition}
\newtheorem{que}{Question}

\theoremstyle{definition}
\newtheorem{prob}[que]{Problem}

\theoremstyle{remark}
\newtheorem{rem}{Remark}

\newtheorem*{thma}{Theorem A}
\newtheorem*{thmb}{Theorem B}
\newtheorem*{thmc}{Theorem C}
\newtheorem*{thmd}{Theorem D}
\newtheorem*{thme}{Theorem E}
\newtheorem*{thmf}{Theorem F}
\newtheorem*{thmg}{Theorem G}
\newtheorem*{thmh}{Theorem H}

\newcommand{\Hpi}{\mathrm{Hall}_\pi}
\newcommand{\Hpip}{\mathrm{Hall}_{\pi'}}
\newcommand{\Irr}{\mathrm{Irr}}

\newcommand{\sgp}{{[\leq]}}
\newcommand{\nsgp}{{[\unlhd]}}
\newcommand{\fsgp}{{[\leq_f]}}
\newcommand{\nil}{{[\mathrm{pronil}]}}
\newcommand{\sol}{{[\mathrm{prosol}]}}
\newcommand{\fin}{{[\mathrm{fin}]}}
\newcommand{\fsg}{{[\mathrm{sim}]}}
\newcommand{\elcyc}{{[\mathrm{elcyc}]}}

\newcommand{\ir}{\varrho}

\newcommand{\und}{\underbar}
\newcommand{\Sylp}{\mathrm{Syl}_p}
\newcommand{\piP}{p \in \mathbb{P}}
\newcommand{\la}{\leftarrow}
\newcommand{\IH}{\mathrm{IH}}
\newcommand{\Cf}{\mathrm{Cf}}
\newcommand{\Aut}{\mathrm{Aut}}
\newcommand{\Inn}{\mathrm{Inn}}
\newcommand{\Out}{\mathrm{Out}}
\newcommand{\Fr}{\mathrm{Fr}}
\newcommand{\rO}{\mathrm{O}}
\newcommand{\rH}{\mathrm{H}}
\newcommand{\Comm}{\mathrm{Comm}}
\newcommand{\KComm}{\mathrm{KComm}}
\newcommand{\LComm}{\mathrm{LComm}}
\newcommand{\ICom}{\mathrm{ICom}}
\newcommand{\SNp}{\mathcal{S}}
\newcommand{\e}{\mathrm{e}}
\newcommand{\LFe}{\mathrm{LF-e}}
\newcommand{\db}{\mathrm{db}}
\newcommand{\eb}{\mathrm{eb}}
\newcommand{\Sym}{\mathrm{Sym}}
\newcommand{\Alt}{\mathrm{Alt}}
\newcommand{\ch}{\mathrm{char}}
\newcommand{\PA}{\mathrm{PA}}
\newcommand{\Fin}{\mathrm{Fin}}
\newcommand{\Hom}{\mathrm{Hom}}
\newcommand{\imm}{\mathrm{imm}}
\newcommand{\pro}{\text{pro-}}
\newcommand{\FD}{\mathrm{[FD]}}
\newcommand{\FR}{\mathrm{[FR]}}
\newcommand{\CT}{\mathrm{[CT]}}
\newcommand{\Core}{\mathrm{Core}}
\newcommand{\rQ}{\mathrm{Q}}
\newcommand{\id}{\mathrm{id}}
\newcommand{\lep}{\leq_{[p]}}
\newcommand{\plh}{\mathrm{ht}_{[p]}}
\newcommand{\Cl}{\mathrm{Cl}}
\newcommand{\GL}{\mathrm{GL}}
\newcommand{\SL}{\mathrm{SL}}
\newcommand{\PSL}{\mathrm{PSL}}
\newcommand{\ord}{\mathrm{ord}^\times}
\newcommand{\pd}{\mathrm{pd}}
\newcommand{\Comp}{\mathrm{Comp}}
\newcommand{\St}{\mathrm{St}}
\newcommand{\Emb}{\mathcal{E}_{p'}}
\newcommand{\EmbLF}{\mathcal{E}^{\mathrm{LF}}_{p'}}
\newcommand{\wrd}{\mathrm{wrd}}
\newcommand{\Qd}{\mathrm{Qd}}
\newcommand{\Sp}{\mathrm{Sp}}
\newcommand{\tp}{\mathrm{top}}

\newcommand{\Ob}{\mathrm{Ob}}
\newcommand{\ob}{\mathrm{ob}}
\newcommand{\OI}{\mathrm{OI}}
\newcommand{\Ilhd}{\mathrm{I}^\lhd}

\newcommand{\rN}{(\mathrm{N})}
\newcommand{\rX}{(\mathrm{X})}
\newcommand{\ra}{(\mathrm{a})}
\newcommand{\rh}{(\mathrm{h})}
\newcommand{\rhp}{(\mathrm{h'})}
\newcommand{\ri}{(\infty)}
\newcommand{\rNa}{(\mathrm{Na})}
\newcommand{\rNh}{(\mathrm{Nh})}
\newcommand{\rNi}{(\mathrm{N}\infty)}
\newcommand{\rNip}{(\mathrm{N}\infty')}
\newcommand{\rXa}{(\mathrm{Xa})}
\newcommand{\rXh}{(\mathrm{Xh})}
\newcommand{\rXi}{(\mathrm{X}\infty)}
\newcommand{\rDa}{(\mathrm{Da})}
\newcommand{\rDh}{(\mathrm{Dh})}
\newcommand{\rDi}{(\mathrm{D}\infty)}

\newcommand{\mcA}{\mathcal{A}}
\newcommand{\mcB}{\mathcal{B}}
\newcommand{\mcC}{\mathcal{C}}
\newcommand{\mcD}{\mathcal{D}}
\newcommand{\mcE}{\mathcal{E}}
\newcommand{\mcF}{\mathcal{F}}
\newcommand{\mcG}{\mathcal{G}}
\newcommand{\mcH}{\mathcal{H}}
\newcommand{\mcI}{\mathcal{I}}
\newcommand{\mcJ}{\mathcal{J}}
\newcommand{\mcK}{\mathcal{K}}
\newcommand{\mcL}{\mathcal{L}}
\newcommand{\mcM}{\mathcal{M}}
\newcommand{\mcN}{\mathcal{N}}
\newcommand{\mcO}{\mathcal{O}}
\newcommand{\mcP}{\mathcal{P}}
\newcommand{\mcQ}{\mathcal{Q}}
\newcommand{\mcR}{\mathcal{R}}
\newcommand{\mcS}{\mathcal{S}}
\newcommand{\mcT}{\mathcal{T}}
\newcommand{\mcU}{\mathcal{U}}
\newcommand{\mcV}{\mathcal{V}}
\newcommand{\mcW}{\mathcal{W}}
\newcommand{\mcX}{\mathcal{X}}
\newcommand{\mcY}{\mathcal{Y}}
\newcommand{\mcZ}{\mathcal{Z}}

\newcommand{\bC}{\mathbb{C}}
\newcommand{\bF}{\mathbb{F}}
\newcommand{\bN}{\mathbb{N}}
\newcommand{\bP}{\mathbb{P}}
\newcommand{\bQ}{\mathbb{Q}}
\newcommand{\bR}{\mathbb{R}}
\newcommand{\bZ}{\mathbb{Z}}

\newcommand{\bigast}{\ensuremath{\displaystyle\mathop{\mathlarger{\ast}}}}

\doublespacing

\thispagestyle{empty}

\begin{center}

\

\

\

{\Huge \bf Finiteness Properties\\ of Profinite Groups\\}

\

\

{ \Large Colin David Reid\\

School of Mathematical Sciences\\

Queen Mary, University of London\\}

\

\

{ \Large A thesis submitted to\\

The University of London\\

for the degree of Doctor of Philosophy\\}

\

\

{ \Large \today}

\end{center}

\newpage

\setcounter{page}{2}

\setlength{\parindent}{0pt}

\setlength{\parskip}{3mm}

\onehalfspacing

\

\vspace{0.27\textheight}

\begin{center}

{\bf Declaration}\\

\end{center}

This thesis presents the outcome of research undertaken by myself while enrolled as a student at the School of Mathematical Sciences at Queen Mary, University of London, under the supervision of Robert Wilson.  Except where indicated, everything written here is my own work in my own words.  This work has not been submitted for any degree or award other than that stated to any educational establishment.

\

\

Colin Reid

\today

\newpage

\

\vspace{0.27\textheight}
 
\begin{center}

{\bf Abstract}\\

\end{center}

Broadly speaking, a finiteness property of groups is any generalisation of the property of having finite order.  A large part of infinite group theory is concerned with finiteness properties and the relationships between them.  Profinite groups are an important case of this, being compact topological groups that possess an intimate connection with their finite images. This thesis investigates the relationship between several finiteness properties that a profinite group may have, with consequences for the structure of finite and profinite groups.

\tableofcontents

\chapter*{Preface}

\addcontentsline{toc}{chapter}{\protect\numberline{}Preface}

This thesis concerns several questions in the theory of profinite groups, under the broad heading of `finiteness properties'.  Two questions need to be answered here.

First, what is meant by a profinite group?

\paragraph{Definition}A \emph{profinite group} $G$ is a topological group that is compact and totally disconnected.

The important point here is that we regard the topology of a profinite group $G$ as an inherent part of its definition.  In other words, the ambient category is not the category of groups, but rather the category of topological groups and continuous homomorphisms.  The motivation for this approach is the connection to finite group theory: it is precisely the closed subgroups and continuous homomorphisms that are described by inverse limits of corresponding entities in the category of finite groups.  Furthermore, it is necessary to identify numerical invariants, such as the minimum size of a topological generating set, that have no obvious interpretation for profinite groups as abstract groups.

Second, what does it mean to say that an infinite (topological) group $G$ has a finiteness property?  There are several overlapping interpretations:

\begin{enumerate}
\item There is a connection between the structure of $G$ and the structure of some family of finite groups.  For instance: `$G$ is residually finite'.
\item There is some property of the group $G$, such that every finite group also has this property.  For instance: `$G$ is linear'.
\item There is some numerical invariant $n$, such that $n(G)$ is finite, and the value of $n(G)$ is of interest.  For instance: `$G$ has derived length at most $n$'.
\item Some or all structures of a certain kind derived from $G$ are finite, in a way that is only of interest in an infinite context.  For instance: `every ascending chain of subgroups of $G$ is finite'.\end{enumerate}

All of the interpretations above feature heavily in the theory of profinite groups, especially the first; of all classes of infinite groups, profinite groups have perhaps the deepest connection with finite groups and finite structures.  The overall aim of this thesis is to contribute to the theory of profinite groups in their own right, firstly by drawing direct analogies with established methods finite group theory, and secondly by discussing alternative notions of `smallness' that are particular to the theory of profinite groups.

\section*{Acknowledgements}

First and foremost, I would like to thank Charles Leedham-Green, who has been my \emph{de facto} academic supervisor since early 2008.  It was Charles who introduced me to profinite groups and got me interested in them, and he has been a continuing source of ideas, inspiration and intellectual energy, while providing a great deal of high-level feedback and insight in response to my own efforts.  I am also very grateful to him for his proof-reading of this and other documents, without which the number and severity of errors would have been considerably greater.

My thanks also go to Robert Wilson, my official supervisor, who guided me through the first year and a bit of my doctoral studies and has continued to provide advice and administrative support.  Although the project I worked on with Rob has not found its way into this thesis, I learned a lot in that period about both finite group theory and research skills in general, and Rob's guidance has been very helpful in this regard.  I also wish to thank Robert Johnson, my second supervisor, for his role.

Since starting this project, I have had many productive discussions about profinite groups and related areas with other researchers both inside and outside QMUL who have shared their knowledge and insights.  I will list those who particularly come to mind in alphabetical order: Yiftach Barnea, John Bray, Peter Cameron, Mikhail Ershov, Jonathan Kiehlmann, Benjamin Klopsch, Nikolay Nikolov, Claas R\"{o}ver, Dan Segal and Bert Wehrfritz.

This project would not have been possible without the studentship funded by EPSRC and QMUL and the facilities provided by QMUL.  I would also like to thank the administrative staff at the School of Mathematical Sciences for keeping everything running smoothly.

I would like to thank the staff and students of the School of Mathematical Sciences collectively, for creating an exceptionally welcoming research and social environment, and for organising a fascinating range of study groups and seminars.  The positive atmosphere has had an incalculable effect on my motivation and desire to continue working in mathematics research, and I would strongly recommend the department to anyone wishing to do the same.

Finally, I would like to thank my family and friends for their moral support.  In particular, I would like to thank my parents for always emphasising the importance of education, encouraging me to pursue long-term ambitions, and providing the material support to allow me to focus on them.  Their values have played a decisive role in shaping my own.

Colin Reid

London, \today.

\chapter{Preliminaries}

\section{Definitions and conventions}\label{defprelim}

The purpose of this chapter is not to give new results or even to give an overview of the subject; it is merely to establish some notation and prerequisites for the rest of the thesis.  All results presented in this chapter are drawn directly from or follow easily from existing published literature.

\paragraph{Convention}All groups in this thesis are topological groups, equipped with a natural topology (depending on their construction).  By default, this is the profinite topology in the case of profinite groups, and the discrete topology otherwise.  Subgroups are required to be closed, homomorphisms to be continuous, and generation means topological generation.  When we wish to suppress topological considerations, the word `abstract' will be used, for instance `abstract subgroup'.

Given a topological space or group $X$ and a subset $Y$, write $\overline{Y}$ for the topological closure of $Y$.

Let $\bP$ denote the set of prime numbers, $p$ and $q$ individual primes, $\pi$ a set of primes, and $\pi'$ its complement in $\bP$.  Where there is no ambiguity, $p$ and $p'$ will be used to indicate $\{p\}$ and $\{p\}'$.

Given a prime $p$ and an integer $n$ coprime to $p$, write $\ord(n,p)$ for the multiplicative order of $n$ as an element of $\bF_p$.

Given a group $G$ and an integer $n$, write $G^n$ for the subgroup generated by $n$-th powers of elements of $G$.  Write $G^{(n)}$ for the $n$-th term of the derived series of $G$; in particular, $G' = G^{(1)}$.  Given subgroups $H$ and $K$ of $G$, write $H^K$ for group generated by the $K$-conjugates of $H$, and write $\Core_K(H)$ for the intersection of the $K$-conjugates of $H$.  Write $H \unlhd^2 G$ to mean $H$ is subnormal in $G$ of defect at most $2$, that is, there is some $K$ such that $H \unlhd K \unlhd G$.  Given another group $L$, write $L \lesssim G$ to mean $L$ is isomorphic to a subgroup of $G$ (not necessarily proper).

Given a group $G$, a normal subgroup $K$ and a subgroup $H$ such that $K \leq H \leq G$, say $H$ is the \emph{lift} of $H/K$ to $G$.  If $H$ is normal in $G$, say $G/K$ \emph{covers} $G/H$.

Let $A \cdotp B$ denote any group $G$ such that $A \unlhd G$ and $G/A \cong B$.

With all subset or subgroup relations as applied to topological spaces or groups, a subscript $o$ (for instance $\subset_o$ or $\unlhd_o$) will be used to mean `open' and a subscript $c$ to mean `closed'.

The following classes of group will appear frequently, so receive their own notation:

$[1]$ is the class of trivial groups;

$\fin$ is the class of finite groups;

$\nil$ is the class of pronilpotent groups;

$\sol$ is the class of prosoluble groups;

$\fsg$ is the class of non-abelian finite simple groups.

We define the cardinality $|\mcC|$ of a class $\mcC$ of groups to be the size of a set of representatives for the isomorphism classes in $\mcC$, where such a set exists.  Say $\mcC$ is finite if $|\mcC|$ is finite, and say $\mcC$ is infinite otherwise.

It will also be necessary to use \emph{subgroup classes}: a subgroup class $\mcE$ associates to every group $G$ in a given class a set $\mcE(G)$ of subgroups of $G$.  The following subgroup classes will be especially important:

$\sgp(G)$ is the set of all subgroups of $G$;
 
$\fsgp(G)$ is the set of all subgroups of $G$ of finite index;
 
$\nsgp(G)$ is the set of all normal subgroups of $G$.

Let $\mcX$ be a class of topological groups.  The \emph{$\mcX$-residual} $O^\mcX(G)$ of a topological group $G$ is the intersection of all normal subgroups $N$ such that $G/N$ is an $\mcX$-group.  Say $G$ is \emph{residually-$\mcX$} if $O^\mcX(G)=1$.  In particular, a residually-$\fin$ group is said to be \emph{residually finite}.  The \emph{$\mcX$-radical} $O_\mcX(G)$ is the subgroup generated by all subnormal $\mcX$-subgroups.  Say $G$ is \emph{radically-$\mcX$} if $O_\mcX(G)=G$.  A \emph{radical} of $G$ is a subgroup that is the $\mcX$-radical of $G$ for some class $\mcX$.

Say the profinite group $G$ \emph{involves} the finite group $H$ if there are subgroups $M$ and $N$ of $G$ such that $N \unlhd M$ and $M/N \cong H$.  If $G$ does not involve $H$, say $G$ is \emph{$H$-free}, and if $G$ does not involve $H$ for any $H$ in a class $\mcH$, say $G$ is \emph{$\mcH$-free}.  As a particular case of this, if $\mcH$ is the class of non-abelian finite simple groups of order divisible by $p$, then $\mcH$-free groups are said to be \emph{$p$-separable}.

In what follows, we will often wish to give conditions in terms of invariants of topological groups.  Two basic invariants are as follows:

$d(G)$ is the smallest cardinality of a (topological) generating set for $G$;

$r(G)$ is the \emph{rank} of $G$, which is defined to be the supremum of the number of generators of all closed subgroups of $G$.

We denote more invariants of a profinite group using suffices:

$d_p(G)$ is the number of generators of a $p$-Sylow subgroup of $G$;

given a set of primes $\pi$, we define $d_\pi(G) := \sup_{p \in \pi} d_p(G)$.

\section{The topological structure of profinite groups}\label{topprelim}

We begin with some general observations about compact Hausdorff groups.

\begin{lem}\label{obchain}Let $G$ be a compact Hausdorff group.
\vspace{-12pt}
\begin{enumerate}[(i)]  \itemsep0pt
\item  An abstract subgroup $H$ of $G$ is open if and only if it is closed and of finite index.
\item  Let $O$ be an open neighbourhood of $1$ in $G$.  Let $K_1 > K_2 > \dots$ be a descending chain of closed subgroups of $G$ such that $K_i \not\subseteq O$ for every $i \in I$.  Let $K$ be the intersection of the $K_i$.  Then $K \not\subseteq O$; in particular, $K$ is non-trivial.
\end{enumerate}\end{lem}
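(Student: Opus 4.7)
For part (i), the plan is to use the two directions separately and rely on two standard topological facts: the cosets of any subgroup form a partition of $G$, and left translation is a homeomorphism. First I would handle the forward direction by noting that if $H$ is open then each coset $gH$ is open, the cosets cover $G$ and are pairwise disjoint, so compactness forces there to be only finitely many; this gives $[G:H]<\infty$ and simultaneously shows $G\setminus H$ is open as a finite union of open cosets, so $H$ is closed. For the converse, if $H$ is closed and of finite index, then translation gives that each of the finitely many cosets of $H$ is closed, so $G\setminus H$ is closed as a finite union of closed sets, and hence $H$ is open.

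For part (ii), I would reduce the statement to the standard fact that in a compact Hausdorff space a descending chain of non-empty closed sets has non-empty intersection. The natural candidates are $L_i := K_i \setminus O = K_i \cap (G\setminus O)$. Each $L_i$ is closed (intersection of the closed set $K_i$ with the closed complement $G\setminus O$), non-empty by the hypothesis $K_i \not\subseteq O$, and the chain $L_1 \supseteq L_2 \supseteq \dots$ is descending because the $K_i$ are. Compactness then yields $\bigcap_i L_i \neq \emptyset$, and since $\bigcap_i L_i = \bigl(\bigcap_i K_i\bigr)\setminus O = K\setminus O$, we conclude $K\not\subseteq O$. Because $G$ is Hausdorff and hence $T_1$, any open neighbourhood $O$ of $1$ contains $1$, so a point of $K$ outside $O$ is a non-identity element of $K$, giving the ``in particular'' clause.

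I do not anticipate a real obstacle here: part (i) is a routine open/closed bookkeeping argument using compactness, and part (ii) is the finite-intersection-property argument applied inside the compact space $G$. The only thing to be attentive to is the Hausdorff hypothesis, which is needed in (ii) to ensure $\{1\}$ (and hence the complement of $O$) behaves well, and indirectly in (i) through the use of compactness to extract a finite subcover from the disjoint open cover by cosets.
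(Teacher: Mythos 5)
Your proposal is correct and follows essentially the same route as the paper: part (i) via the partition of $G$ into cosets (open cosets plus compactness for finiteness of the index, finite unions of closed/open cosets for the complement), and part (ii) via the finite intersection property applied to the closed sets $K_i \cap (G\setminus O)$. The only cosmetic quibble is that the fact $1\in O$ is just the definition of a neighbourhood of $1$ and does not require the Hausdorff hypothesis, but this does not affect the argument.
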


\begin{proof}(i) If $H$ is closed and of finite index, then $G \setminus H$ is closed, since it is a union of finitely many right cosets of $H$, so $H$ is open.  Conversely, if $H$ is open, then $G \setminus H$ is open, since it is a union of right cosets of $H$, so $H$ is closed.  Also, the set $\mcH$ of right cosets of $H$ is an open cover of $G$ that cannot be refined, so $|\mcH|=|G:H|$ must be finite.

(ii) Let $F_i = K_i \cap (G \setminus O)$.  Then each $F_i$ is closed and non-empty, and hence the intersection of finitely many $F_i$ is non-empty, since the $F_i$ form a descending chain.  Since $G$ is compact, it follows that the intersection $K \cap (G \setminus O)$ of all the $F_i$ is non-empty.  Hence $K \not\subseteq O$.\end{proof}

\begin{defn}A \emph{homomorphism} of topological groups is a homomorphism of the underlying abstract groups that is also continuous.  An \emph{isomorphism} is a homomorphism possessing a continuous inverse.\end{defn}

In general, a bijective homomorphism of topological groups need not be an isomorphism, as the inverse may not be continuous.  However, this complication does not occur for compact Hausdorff groups:

\begin{prop}Let $G$ and $H$ be compact Hausdorff groups, and let $\theta: G \rightarrow H$ be an abstract homomorphism that is bijective.  Then $\theta$ is an isomorphism of topological groups.\end{prop}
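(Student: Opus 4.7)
The plan is to split the statement into two continuity claims --- that $\theta$ is continuous and that $\theta^{-1}$ is continuous --- dispatch the second assuming the first, and then confront the first, which is where the real work lies.

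Given $\theta$ continuous, the standard compact-Hausdorff closed-map argument yields $\theta^{-1}$ continuous: any closed $F \subseteq G$ is compact (since $G$ is compact), its continuous image $\theta(F) \subseteq H$ is compact, and compact subsets of the Hausdorff space $H$ are closed. So $\theta$ is a closed map, $\theta^{-1}$ is continuous, and $\theta$ is an isomorphism of topological groups.

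For the harder step --- upgrading an abstract bijective homomorphism to a continuous one --- my approach would be to work with the graph $\Gamma = \{(g,\theta(g)) : g \in G\} \subseteq G \times H$ and its topological closure $\overline{\Gamma}$, which is a closed subgroup of the compact Hausdorff group $G \times H$. The first projection $\pi_1 : \overline{\Gamma} \to G$ is a continuous surjection (its image is compact and contains $\Gamma$'s projection $G$), hence a closed map, with closed kernel of the form $\{1\} \times K$, where $K = \{h \in H : (1,h) \in \overline{\Gamma}\}$. If $K$ were trivial, $\pi_1$ would be a topological isomorphism and $\pi_2 \circ \pi_1^{-1} : G \to H$ would be a continuous homomorphism agreeing with $\theta$ on the dense set $\Gamma$'s projection, hence equal to $\theta$, giving continuity of $\theta$.

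The main obstacle is exactly showing $K = 1$: this amounts to controlling accumulation of values of $\theta$, and without any topological hypothesis on $\theta$ itself one cannot do this at the stated level of generality. For example, the circle $\bR/\bZ$ is a compact Hausdorff group whose underlying abelian group admits discontinuous abstract automorphisms coming from wild permutations of a Hamel basis of its divisible torsion-free summand, and such maps produce nontrivial $K$. So a proof by pure point-set topology applied to compact Hausdorff groups alone cannot succeed. The natural way forward is either to strengthen the hypothesis --- for instance to finite topological generation of $G$ in the profinite case, after which a Nikolov--Segal-type automatic continuity theorem gives continuity of $\theta$ directly --- or to read the word "abstract" in light of the preceding paragraph of the text, which explicitly frames the concern as a continuous bijective homomorphism whose inverse might fail to be continuous; on that reading "abstract" only emphasises that $\theta$ is viewed as an underlying map, the thesis-wide convention that homomorphisms are continuous remaining tacitly in force, and the closed-map argument of the second paragraph suffices. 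I would flag this choice of reading and proceed accordingly, since the graph approach cannot close the continuity gap without such a hypothesis.
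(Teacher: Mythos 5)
Your proposal is correct, and there is essentially nothing in the paper to compare it against: the thesis proves this proposition by citation alone (``See \cite{Hof}, Remark 1.8''), so the closed-map argument you give --- closed subsets of the compact group $G$ are compact, their images under a continuous $\theta$ are compact and hence closed in the Hausdorff group $H$, so $\theta$ is a closed bijection and $\theta^{-1}$ is continuous --- is the standard proof and is what the cited reference amounts to. Your critical reading of the hypothesis is also sound and worth making explicit: taken literally, ``abstract homomorphism'' with no continuity assumed renders the proposition false, since the underlying abstract group of the circle $\mathbb{R}/\mathbb{Z}$ decomposes as $(\mathbb{Q}/\mathbb{Z})\oplus\bigoplus_{\mathfrak{c}}\mathbb{Q}$ and therefore admits a profusion of discontinuous automorphisms, each a bijective abstract endomorphism of a compact Hausdorff group that is not a topological isomorphism. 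Your diagnosis of why the graph-closure strategy cannot force the subgroup $K$ to be trivial without some continuity or automatic-continuity hypothesis (such as topological finite generation in the profinite case, via Nikolov--Segal) is likewise accurate. The paragraph immediately preceding the proposition in the thesis settles the intended reading: the point being made is that a continuous bijective homomorphism of compact Hausdorff groups automatically has continuous inverse, and under that reading your second paragraph is a complete and correct proof.
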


\begin{proof}See \cite{Hof}, Remark 1.8.\end{proof}

For a topological group, the topology is constrained by the algebraic structure, since the topology must be preserved by multiplication and taking inverses.  This is especially true in the case of Hausdorff topological groups.  The following lemma follows easily from the definitions.

\begin{lem}\label{topcentlem}Let $G$ be a Hausdorff topological group, let $n$ be an integer, and let $X$ be any subset.  Then the following subsets of $G$ are closed:
\vspace{-12pt}
\begin{enumerate}[(i)]  \itemsep0pt
 \item $\{ g \in G \mid gx=xg \; \forall x \in X\}$;
 \item $\{ g \in G \mid g^n=1\}$.
\end{enumerate}\end{lem}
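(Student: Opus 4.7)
The plan is to exploit two basic facts about Hausdorff topological groups: singletons are closed, and the operations of multiplication and inversion are continuous. Consequently, any subset of $G$ cut out by an equation built from continuous group operations arises as the preimage of a closed set, hence is itself closed.

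For part (i), I would fix $x \in X$ and consider the map $\varphi_x : G \to G$ given by $\varphi_x(g) = gxg^{-1}x^{-1}$. This is continuous as a composition of multiplication, inversion and the constant map sending every element to $x$. Because $G$ is Hausdorff, $\{1\}$ is closed, so $\varphi_x^{-1}(\{1\}) = \{ g \in G \mid gx = xg \}$ is closed. The set described in (i) is then the intersection $\bigcap_{x \in X} \varphi_x^{-1}(\{1\})$, an intersection of closed sets, and hence closed.

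For part (ii), I would consider the map $\psi_n : G \to G$ sending $g$ to $g^n$. For positive $n$ this is a finite composition of the continuous multiplication map; for negative $n$ one additionally composes with the continuous inversion map; and for $n = 0$ it is the constant map to $1$. In each case $\psi_n$ is continuous, so $\{g \in G \mid g^n = 1\} = \psi_n^{-1}(\{1\})$ is closed.

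The main (very modest) obstacle here is simply making the continuity assertions precise, which is immediate from the definition of a topological group; the author acknowledges the result follows easily from the definitions. The essential role of the Hausdorff hypothesis is to guarantee that $\{1\}$ is closed — without this the argument breaks down entirely.
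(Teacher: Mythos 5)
Your proof is correct and is precisely the standard argument the paper has in mind when it says the lemma "follows easily from the definitions": express each set as a preimage of the closed singleton $\{1\}$ under a continuous map built from the group operations, then intersect. Nothing further is needed.
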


\begin{defn}Let $G$ be a topological group.  Define the \emph{profinite completion} $\hat{G}$ of $G$ to be the inverse limit of the inverse system formed by the finite continuous images of $G$.\end{defn}
  
Note that if $G$ is residually finite as a topological group, then $G$ may be regarded as an abstract subgroup of $\hat{G}$.  If $G$ is already profinite then $G = \hat{G}$.

\begin{prop}Let $G$ be a profinite group and let $X$ be an abstract subset of $G$.  Then 
\[ \overline{X} = \bigcap_{N \unlhd_o G} XN.\]
In particular, if $X$ is an abstract subgroup of $G$, then $\overline{X}$ is the intersection of all open subgroups of $G$ that contain $X$.\end{prop}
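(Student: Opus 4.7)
The plan is to prove both inclusions, exploiting that open normal subgroups form a basis of neighbourhoods of $1$ in any profinite group.

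For the inclusion $\overline{X} \subseteq \bigcap_{N \unlhd_o G} XN$, I would first observe that for each $N \unlhd_o G$ the quotient $G/N$ is finite (by Lemma~\ref{obchain}(i) applied with $N$ open normal, closed, of finite index), so $XN$ is a finite union of cosets of $N$ and hence clopen. Since $XN$ is closed and contains $X$, it contains $\overline{X}$, giving the inclusion.

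For the reverse inclusion, I would take $g \in \bigcap_{N \unlhd_o G} XN$ and show that every open neighbourhood $U$ of $g$ meets $X$. The key input is that the cosets $gN$ with $N \unlhd_o G$ form a neighbourhood basis of $g$, so some such $N$ satisfies $gN \subseteq U$. Writing $g = xn$ with $x \in X$ and $n \in N$, we get $x = gn^{-1} \in gN \subseteq U$, so $U \cap X \neq \emptyset$. Hence $g \in \overline{X}$.

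For the second statement, suppose $X$ is an abstract subgroup. Then each $XN$ is itself a subgroup (because $N$ is normal), and it is open, so it belongs to the collection of open subgroups containing $X$; this gives one inclusion between the two intersections. Conversely, if $H$ is any open subgroup containing $X$, then $\Core_G(H)$ is closed (an intersection of the finitely many conjugates of $H$, which are open and hence closed and of finite index) and of finite index in $G$, so it is an open normal subgroup $N$ contained in $H$; then $XN \subseteq H$, so the intersection over open normal $N$ is contained in the intersection over open subgroups containing $X$.

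The only delicate point, and the one I would be most careful with, is the use of the neighbourhood basis: it must be justified that in a profinite group (compact, Hausdorff, totally disconnected) the open normal subgroups form a basis at $1$. This is a standard structural fact about profinite groups; the rest of the argument reduces essentially to bookkeeping with cosets.
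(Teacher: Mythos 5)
Your proof is correct. The paper states this proposition without proof (it is a standard fact quoted in the preliminaries), and your argument is the standard one: each $XN$ is clopen and contains $X$, giving one inclusion; the fact that the cosets $gN$ for $N \unlhd_o G$ form a neighbourhood basis at $g$ gives the other; and the passage from open normal subgroups to arbitrary open subgroups via $\Core_G(H)$ handles the final claim. The one point you flag as delicate --- that open normal subgroups form a base of neighbourhoods of $1$ --- is indeed the only external input needed, and it is a standard structural property of profinite groups.
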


\begin{defn}Let $G$ be a profinite group, and let $\kappa$ be a cardinal.  Say $G$ is \emph{$\kappa$-based} if $G$ has $\kappa$ open subgroups.  A \emph{countably based} profinite group is one that is either finite or $\aleph_0$-based.\end{defn}

\begin{thm}Let $G$ be a $\kappa$-based profinite group, where $\kappa$ is an infinite cardinal.  Then $|G| = 2^\kappa$.\end{thm}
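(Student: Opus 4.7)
The plan is to prove $|G| \leq 2^\kappa$ and $|G| \geq 2^\kappa$ separately.

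\textbf{Upper bound.} I first observe that $G$ has $\kappa$ open normal subgroups. The map $H \mapsto \Core_G(H)$ from open subgroups to open normal subgroups is finite-to-one: each open normal subgroup $N$ is contained only in those open subgroups corresponding to subgroups of the finite quotient $G/N$, which are finite in number. Being Hausdorff profinite, $G$ injects into $\prod_{N \unlhd_o G} G/N$ since $\bigcap_{N \unlhd_o G} N = 1$. As each factor is finite, $|G| \leq \aleph_0^\kappa = 2^\kappa$.

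\textbf{Lower bound.} The key preliminary lemma is that any family $\mcF$ of open normal subgroups of $G$ with $\bigcap \mcF = 1$ must satisfy $|\mcF| \geq \kappa$. Indeed, by the compactness argument of Lemma \ref{obchain}(ii), the finite sub-intersections of such an $\mcF$ form a local base at $1$; then every open normal subgroup of $G$ is an over-group of some finite sub-intersection of $\mcF$, and since each such sub-intersection has only finitely many open over-groups, the total number of open normal subgroups is at most $|\mcF| \cdot \aleph_0$, forcing $|\mcF| \geq \kappa$. With this in hand, I would construct a transfinite Cantor scheme $\{C_s : s \in 2^{<\kappa}\}$ of non-empty closed subsets of $G$, with $C_\emptyset = G$ and disjoint $C_{s0}, C_{s1} \subseteq C_s$. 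Each branch $b \in 2^\kappa$ then yields, by compactness, a distinct element of $\bigcap_{\alpha < \kappa} C_{b|\alpha}$, so $|G| \geq 2^\kappa$.

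\textbf{Main obstacle.} The delicate step is the transfinite recursion at limit stages of the Cantor scheme. At a successor $\alpha+1$ we can split $C_s$ by passing to a coset of a proper closed normal subgroup of index at least $2$, which is always possible as $G$ is infinite. At a limit $\alpha$, however, $C_s$ is a coset of the intersection of the open normal subgroups used along the branch so far, and this intersection is typically closed but not open in $G$. The preliminary lemma ensures the intersection remains non-trivial provided fewer than $\kappa$ subgroups have appeared along the branch; careful bookkeeping during the recursion---choosing at each successor a ``new'' open normal subgroup not already forced by previous choices---is required to guarantee the construction proceeds through all $\kappa$ levels along every branch.
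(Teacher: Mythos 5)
The paper does not actually prove this theorem --- it simply cites Theorem 4.9 of \cite{Tom} --- so your argument stands on its own; it is correct, and it is essentially the standard weight-versus-cardinality argument for profinite spaces. Both halves work. For the upper bound, the finite-to-one core map correctly transfers the cardinal $\kappa$ from open subgroups to open normal subgroups (note in passing that the set of open normal subgroups cannot be finite, else the trivial subgroup would be open and $G$ finite), and the embedding into the product of $\kappa$ finite quotients gives $|G| \le \aleph_0^\kappa = 2^\kappa$. For the lower bound, your key lemma is right, and it does more work than you give it credit for: the ``careful bookkeeping'' you worry about in your final paragraph is not needed. At any node $s$ at level $\alpha < \kappa$, the set $C_s$ is a coset of the closed normal subgroup $N_s$ obtained by intersecting the $|\alpha| < \kappa$ open normal subgroups used along that branch so far; at limit levels this is because a decreasing intersection of cosets $g_\beta N_\beta$, non-empty by compactness, equals $x\bigcap_\beta N_\beta$ for any $x$ in it. Since $|\alpha| < \kappa$ (as $\kappa$ is a cardinal), the key lemma gives $N_s \neq 1$ outright, and then \emph{any} open normal subgroup $U$ with $N_s \not\le U$ --- which exists because the open normal subgroups of a profinite group intersect trivially --- splits $C_s$ into at least two disjoint non-empty closed cosets of $N_s \cap U$. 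No selection of ``new'' subgroups is required. With that observation the recursion runs through all $\kappa$ levels along every branch, the $2^\kappa$ branch intersections are non-empty by compactness and pairwise disjoint by construction, and $|G| \ge 2^\kappa$ follows.
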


\begin{proof}See \cite{Tom}, Theorem 4.9.\end{proof}

\begin{cor}Let $G$ be a $\kappa$-based profinite group, where $\kappa$ is an infinite cardinal, and let $H$ be a closed subgroup of $G$.  Then $H$ is a $\lambda$-based profinite group for some $\lambda \leq \kappa$.\end{cor}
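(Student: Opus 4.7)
The plan is to first observe that $H$, as a closed subgroup of the compact, totally disconnected group $G$, is compact and totally disconnected, hence profinite. So the only real content is the cardinality bound on the open subgroups of $H$.

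Since $G$ is $\kappa$-based with $\kappa$ infinite, $G$ has at most $\kappa$ open subgroups and in particular at most $\kappa$ open \emph{normal} subgroups. For each open normal subgroup $N$ of $G$, the intersection $H \cap N$ is open in $H$, and it has finite index in $H$, because $H/(H \cap N)$ embeds as an abstract subgroup of the finite group $G/N$ (Lemma~\ref{obchain}(i) guarantees $G/N$ is finite). So the quotient $H/(H \cap N)$ is a finite group, and hence has only finitely many subgroups.

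The key step is to show that every open subgroup $U$ of $H$ contains some subgroup of the form $H \cap N$, with $N$ open and normal in $G$. Since $U$ is open in $H$ and $H$ carries the subspace topology, we may write $U = H \cap V$ for some open subset $V$ of $G$ containing the identity. The open normal subgroups of the profinite group $G$ form a neighbourhood base at $1$, so we can pick an open normal $N \unlhd_o G$ with $N \subseteq V$, and then $H \cap N \subseteq H \cap V = U$.

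Putting this together: every open subgroup of $H$ lies in the (finite) set of subgroups of $H$ containing $H \cap N$, as $N$ ranges over the (at most $\kappa$) open normal subgroups of $G$. Therefore the number of open subgroups of $H$ is at most $\kappa \cdot \aleph_0 = \kappa$, giving some $\lambda \leq \kappa$ with $H$ being $\lambda$-based. The only place one might stumble is in step three: it is tempting to insist that an arbitrary open subgroup $U$ of $H$ be itself an intersection of $H$ with an open subgroup of $G$, which need not hold. The fix is simply to ask for containment of $H \cap N$ in $U$, not equality, which is enough because the index $[H : H \cap N]$ is already finite.
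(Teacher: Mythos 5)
Your proof is correct, and it is precisely the standard argument the paper leaves implicit (the corollary is stated without proof): every open subgroup of $H$ contains $H \cap N$ for some $N \unlhd_o G$, each such $H \cap N$ has only finitely many overgroups in $H$ since $H/(H\cap N)$ is finite, and there are at most $\kappa$ choices of $N$, giving at most $\kappa \cdot \aleph_0 = \kappa$ open subgroups of $H$. Your closing caveat is also well taken: one only needs containment of $H \cap N$ in $U$, not that $U$ itself be the trace of an open subgroup of $G$.
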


For finitely generated profinite groups, the underlying abstract group determines the topology, thanks to the Nikolov-Segal theorem:

\begin{thm}[Nikolov, Segal \cite{NS}]Let $G$ be a finitely generated profinite group.  Then every abstract subgroup of $G$ of finite index is in fact an open subgroup.\end{thm}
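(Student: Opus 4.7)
The plan is to reduce to a statement about verbal subgroups and then invoke the deep width bound of Nikolov and Segal. First I would reduce to the normal case: if $H$ is an abstract subgroup of $G$ of finite index, its normal core $N = \Core_G(H)$ also has finite index, and $H$ is open if and only if $N$ is open. If $|G:N| = n$ then $g^n \in N$ for every $g \in G$, so $N$ contains the abstract verbal subgroup $G^n$ generated by $n$-th powers. If $G^n$ is closed, then $G/G^n$ is a finitely generated profinite group of exponent dividing $n$, hence finite by Zelmanov's solution of the restricted Burnside problem; so $G^n$ is a closed subgroup of finite index, open by Lemma \ref{obchain}(i), and $N$ (a union of cosets of $G^n$) is open in turn.

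It therefore suffices to show that $G^n$ is closed in $G$ for every integer $n \geq 1$. The natural strategy is a width bound: produce an integer $f = f(d,n)$, depending only on $n$ and on $d = d(G)$, such that in every continuous finite quotient $\overline{G}$ of $G$ every element of $\overline{G}^n$ is a product of at most $f$ $n$-th powers. If such an $f$ exists, the continuous image $S$ of $G^f$ under $(g_1, \dots, g_f) \mapsto g_1^n \cdots g_f^n$ is compact, hence closed, and by the width bound $S$ projects onto $\overline{G}^n$ in every finite continuous quotient; comparing with the fact that $G^n$ has the same projections onto finite quotients, one concludes $S = G^n$, so $G^n$ is closed.

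Producing the width bound is the main obstacle, and here one has to enter the heart of the Nikolov-Segal method. I would analyse a typical finite quotient $\overline{G}$ through a normal series, separating a soluble radical $R$ from a semisimple quotient $\overline{G}/R$. On the soluble part, inductions through a chief series using classical bounds for commutator and power width in $d$-generated finite soluble groups give a width bound depending only on $d$ and $n$. On the semisimple part, chief factors are direct products of isomorphic non-abelian finite simple groups; here the decisive ingredient, requiring the classification of finite simple groups, is a uniform bound on the number of $n$-th powers needed to express an arbitrary element of a non-abelian finite simple group. Combining these with a careful treatment of the $G$-conjugation on chief factors (using that a $d$-generated profinite group acts with boundedly many orbits on the simple factors of any chief factor) yields the required $f(d,n)$.

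The genuinely hard step is the semisimple input and its integration into the combined bound across the normal series; everything else is either standard topological bookkeeping or a reduction outlined above. Once $f(d,n)$ exists, each $G^n$ is closed and the first paragraph's reduction finishes the proof that every abstract finite-index subgroup of $G$ is open.
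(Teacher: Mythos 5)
The paper offers no proof of this statement: it is quoted as the Nikolov--Segal theorem and referred directly to \cite{NS}. Your opening reductions are correct and are exactly the standard frame of that work: passing to the normal core; observing that a normal subgroup of index $n$ contains the abstract subgroup generated by $n$-th powers; using Zelmanov's solution of the restricted Burnside problem to see that if that subgroup is closed it is open; and the compactness argument showing that a uniform width bound $f(d,n)$ valid in all finite continuous quotients forces the abstract verbal subgroup to coincide with a closed (indeed compact) set of bounded products. All of this is sound.

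However, the proposal is an outline rather than a proof, because the uniform width bound is the entire mathematical content of the theorem and you do not establish it --- you describe what a proof would have to do (chief series, soluble versus semisimple factors, CFSG-dependent bounds on power width in finite simple groups, control of the conjugation action on the simple factors of chief factors). That programme is precisely what occupies the two Nikolov--Segal papers; the ``integration into the combined bound across the normal series'' that you flag as the hard step involves, among other things, their key theorem on twisted commutators in quasisimple groups and a delicate induction that is nowhere near routine. So as a self-contained argument there is a genuine gap at the width bound; as a summary of how the cited proof goes, it is accurate. If your intention is to match the paper, the honest course is to do what the paper does and cite \cite{NS} for the width bound, keeping your first two paragraphs as the (correct) deduction of strong completeness from it.
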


We will also make use of the Schreier index formula, as applied to pro-$p$ groups.

\begin{thm}[Schreier index formula for pro-$p$ groups] Let $G$ be a finitely generated pro-$p$ group, and let $H$ be an open subgroup.  Then
\[ d(H) \leq |G:H|(d(G)-1) + 1.\]\end{thm}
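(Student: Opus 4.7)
The plan is to reduce to the case in which $G$ is itself a free pro-$p$ group. Let $F$ be the free pro-$p$ group of rank $d(G)$, and fix a continuous surjection $\pi \colon F \twoheadrightarrow G$ sending a free generating set of $F$ to a minimal topological generating set of $G$. Set $K := \pi^{-1}(H)$. Since $\pi$ is surjective, $K$ is open in $F$ with $|F:K| = |G:H|$, and the restriction $\pi|_K \colon K \twoheadrightarrow H$ is again surjective, so $d(H) \leq d(K)$. It therefore suffices to prove
\[ d(K) \leq |F:K|(d(F)-1)+1. \]

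For the free pro-$p$ case, I would invoke the pro-$p$ analogue of the Nielsen--Schreier theorem: every open subgroup $K$ of a finitely generated free pro-$p$ group $F$ is itself free pro-$p$, with rank given by the equality $d(K) = |F:K|(d(F)-1)+1$. This is classical and is available in standard references on profinite groups. A clean derivation proceeds via continuous cohomology with trivial coefficients in $\bF_p$: the group $F$ has $p$-cohomological dimension at most $1$, a property inherited by any open subgroup, so $\rH^i(K,\bF_p) = 0$ for $i \geq 2$. The Euler--Poincar\'e characteristic is then multiplicative on open subgroups, $\chi(K) = |F:K|\,\chi(F)$, and combining this with the Burnside basis identification $d(L) = \dim_{\bF_p} \rH^1(L,\bF_p)$ (valid for any finitely generated pro-$p$ group $L$) converts the characteristic equation into $1 - d(K) = |F:K|(1 - d(F))$, which rearranges to the desired identity.

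Putting the two steps together, $d(H) \leq d(K) = |F:K|(d(F)-1) + 1 = |G:H|(d(G)-1)+1$, as required.

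The main obstacle is really just the Schreier formula in the free pro-$p$ case; the reduction from $G$ to a free pro-$p$ cover is routine, resting only on the existence of a minimal pro-$p$ presentation of $G$ by $F$ and the fact that the generator number cannot increase under a continuous surjection. If one wished to avoid the cohomological machinery entirely, an alternative approach would be to mimic the abstract Schreier argument using a Schreier transversal for $H$ in $G$ modulo a suitable open normal refinement, and then pass to the $\Phi(H)$-quotient to bound $d(H) = \dim_{\bF_p}(H/\Phi(H))$ directly; but the cohomological route is cleaner and aligns better with the rest of the preliminaries.
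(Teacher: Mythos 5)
Your argument is correct. Note, however, that the paper offers no proof of this statement at all: it appears in the preliminaries chapter, where the author explicitly treats everything as drawn from the existing literature, so there is no in-text argument to compare yours against. Your derivation is one of the standard ones. The reduction step is sound: a minimal continuous presentation $\pi\colon F \twoheadrightarrow G$ with $F$ free pro-$p$ of rank $d(G)$ always exists, $K = \pi^{-1}(H)$ has the same index as $H$, and $d$ cannot increase under a continuous surjection, so the problem does reduce to the free case. For that case, the cohomological route works exactly as you describe: $\mathrm{cd}_p(F) \leq 1$ passes to the open subgroup $K$, Serre's multiplicativity of the Euler--Poincar\'e characteristic gives $\chi(K) = |F:K|\,\chi(F)$ (and delivers finiteness of $\mathrm{H}^1(K,\bF_p)$ as part of its conclusion, so there is no circularity in assuming $K$ finitely generated), and $d(L) = \dim_{\bF_p}\mathrm{H}^1(L,\bF_p)$ converts this into the exact rank formula for $K$, hence the inequality for $H$. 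The only cosmetic point worth flagging is that you get equality in the free case and only an inequality in general, which is precisely what the statement asks for; your alternative suggestion of a direct Schreier-transversal argument on the Frattini quotient would also work and is closer in spirit to the abstract group-theoretic proof, but buys nothing extra here.
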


\section{Basic Sylow theory of profinite groups}\label{sylprelim}

\begin{defn}A \emph{supernatural number} is a formal product $\prod_{\piP} p^{n_p}$ of prime powers, where each $n_p$ is either a non-negative integer or $\infty$.\end{defn}

Multiplication of supernatural numbers is defined in the obvious manner, giving rise to a semigroup; note that any set of supernatural numbers has a supernatural least common multiple.  Also, by unique factorisation, the semigroup of supernatural numbers contains a copy of the multiplicative semigroup of natural numbers, which may be regarded as the \emph{finite} supernatural numbers.  A \emph{$\pi$-number} is a supernatural number $\prod_{\piP} p^{n_p}$ for which $n_p=0$ for all $p$ in $\pi'$.  Given a supernatural number $x = \prod_{\piP} p^{n_p}$, its \emph{$\pi$-part} $x_\pi$ is $\prod_{p \in \pi}p^{n_p}$.

\begin{defn}\label{syldef}Let $G$ be a profinite group and let $H$ be a subgroup.  Define the \emph{index} $|G:H|$ of $H$ in $G$ to be the least common multiple of $|G/N:HN/N|$ as $N$ ranges over all open normal subgroups of $G$, and the \emph{order} of $G$ to be $|G:1|$; write $|G|_\pi$ for $|G:1|_\pi$.  In particular, the order of a profinite group is a $\pi$-number if and only if the group is pro-$\pi$.  (Note that in contrast to finite group theory, the supernatural order of a profinite group is not determined by the cardinality of the underlying set.)  Say $H$ is a \emph{$\pi$-Hall subgroup} of $G$ if $H$ is a pro-$\pi$ group, and $|G:H|$ is a $\pi'$-number.  We also refer to $\{p\}$-Hall subgroups as \emph{$p$-Sylow subgroups}; write $\Sylp(G)$ for the set of Sylow subgroups of $G$.  Given an element $x$ of $G$, define the order of $x$ to be $|\langle x \rangle : 1|$.\end{defn}

\begin{thm}Let $G$ be a profinite group, and let $H$ be a subgroup.  Then $|G:1|=|G:H||H:1|$.  If $H$ is normal then $|G:H|=|G/H:H/H|$.\end{thm}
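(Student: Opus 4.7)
The plan is to prove the multiplicativity $|G:1| = |G:H|\,|H:1|$ one prime at a time, using the definition of supernatural orders as an lcm over finite quotients. For each prime $p$, write $v_p$ for the $p$-adic valuation, with values in $\bN \cup \{\infty\}$; then $v_p(|G:1|) = \sup_{N \unlhd_o G} v_p(|G/N|)$, and similarly $v_p(|G:H|) = \sup_N v_p(|G : HN|)$. I also need a matching expression for $|H:1|$ indexed by $N \unlhd_o G$ rather than $M \unlhd_o H$.

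Fix $N \unlhd_o G$. In the finite group $G/N$, Lagrange gives $|G/N| = |G : HN| \cdot |HN/N|$, and the second isomorphism theorem gives $|HN/N| = |H : H \cap N|$. Each $H \cap N$ is open and normal in $H$, so these form a subfamily of $\{M : M \unlhd_o H\}$. Conversely, every open neighbourhood of $1$ in $H$ contains some $H \cap N$ (since $\{N : N \unlhd_o G\}$ is a neighbourhood base at $1$ in $G$), and open normal subgroups of $H$ form a neighbourhood base at $1$ in $H$; hence $\{H \cap N : N \unlhd_o G\}$ is cofinal in $\{M : M \unlhd_o H\}$ under reverse inclusion, giving $|H:1| = \mathrm{lcm}_{N \unlhd_o G}\, |H : H \cap N|$.

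Now set $a_N := v_p(|G:HN|)$ and $b_N := v_p(|H : H \cap N|)$; we want $\sup_N(a_N + b_N) = \sup_N a_N + \sup_N b_N$. The key observation is monotonicity: for $N' \subseteq N$, $HN' \subseteq HN$ gives $|G:HN| \mid |G:HN'|$, and $H \cap N' \subseteq H \cap N$ gives $|H:H \cap N| \mid |H:H \cap N'|$, so both $a_N$ and $b_N$ are non-decreasing as $N$ shrinks. Since open normal subgroups of $G$ form a directed set under reverse inclusion, given $N_1, N_2$ we may pick $N_3 \unlhd_o G$ with $N_3 \subseteq N_1 \cap N_2$, whence $a_{N_3} + b_{N_3} \geq a_{N_1} + b_{N_2}$. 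Taking suprema yields $\sup_N(a_N + b_N) \geq \sup_N a_N + \sup_N b_N$, and the reverse inequality is immediate; this is the main obstacle, since the corresponding identity fails for a general indexed family of supernatural numbers. For the normal case, note that when $H \unlhd G$ the open normal subgroups of $G/H$ are exactly the $L/H$ with $H \leq L \unlhd_o G$, each such $L$ being equal to $HN$ for some $N \unlhd_o G$ (take $N = L$), so $|G/H : 1| = \mathrm{lcm}_N |G/H : HN/H| = \mathrm{lcm}_N |G : HN| = |G:H|$.
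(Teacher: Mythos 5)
Your proof is correct. The paper itself gives no argument here (it simply cites Wilson's \emph{Profinite groups}), and what you have written is essentially the standard proof from that source: reduce to $p$-adic valuations, use Lagrange in each finite quotient $G/N$ together with the cofinality of $\{H\cap N : N\unlhd_o G\}$ among open normal subgroups of $H$, and exploit monotonicity plus directedness of the indexing set to justify exchanging the supremum with the sum --- which, as you rightly note, is the one point that genuinely needs an argument.
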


\begin{proof}See \cite{Wil}.\end{proof}

The foundational result of Sylow theory in finite groups is Sylow's theorem, and in finite soluble groups this generalises to Hall's theorem.  These theorems generalise to profinite and prosoluble groups respectively, via an inverse limit argument.  Proofs for both can be found in \cite{Wil}.

\begin{thm}[Sylow's theorem for profinite groups]
Let $G$ be a profinite group, and let $p$ be a prime.
\vspace{-12pt}
\begin{enumerate}[(i)]  \itemsep0pt
\item $G$ has a $p$-Sylow subgroup.
\item Any two $p$-Sylow subgroups of $G$ are conjugate.
\item Every pro-$p$ subgroup of $G$ is contained in some $p$-Sylow subgroup.\end{enumerate}
\end{thm}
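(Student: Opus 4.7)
The plan is to reduce all three statements to the corresponding facts for finite groups, working in the inverse system of finite quotients $G/N$ as $N$ ranges over the directed set $\mathcal{N}$ of open normal subgroups of $G$, and exploiting compactness at the end.

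For (i), I would first record the elementary finite-group fact that if $N \subseteq M$ both lie in $\mathcal{N}$ and $P/N$ is a $p$-Sylow of $G/N$, then $PM/M$ is a $p$-Sylow of $G/M$: it is certainly a $p$-group, and $[G/M : PM/M]$ divides $[G/N : P/N]$ and is thus coprime to $p$. The assignments $X_N := \Sylp(G/N)$ and $(P/N) \mapsto (PM/M)$ form an inverse system of non-empty finite sets, whose inverse limit is non-empty by the finite intersection property in the compact product $\prod_N X_N$. A coherent family lifts to a net of subgroups $(P_N)_{N \in \mathcal{N}}$ with $P_N \supseteq N$, $P_N/N \in \Sylp(G/N)$, and $P_N M = P_M$ whenever $N \subseteq M$; in particular $P_N \subseteq P_M$ for such pairs. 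Set $P := \bigcap_N P_N$; this is closed, and since it projects into the $p$-group $P_N/N$ in each finite quotient, it is pro-$p$.

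The crux is to verify $PN = P_N$ for every $N$, for then $[G/N : PN/N]$ is coprime to $p$ for all $N$, making $[G:P]$ a $p'$-number and $P$ a $p$-Sylow in the sense of Definition \ref{syldef}. The inclusion $PN \subseteq P_N$ is immediate. For the reverse, fix $x \in P_N$: for every $M \subseteq N$ in $\mathcal{N}$ the compatibility relation $P_M N = P_N$ yields $P_M \cap xN \neq \emptyset$. These closed sets form a directed family under reverse inclusion of $M$, so compactness of $G$ produces some $y \in \bigcap_{M \subseteq N}(P_M \cap xN)$. For any $M' \in \mathcal{N}$, taking $M := M' \cap N$ gives $y \in P_M \subseteq P_{M'}$, so $y \in P \cap xN$ and $x \in PN$.

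For (ii), given $p$-Sylows $P, Q$ of $G$, set $T_N := \{g \in G : (gPg^{-1})N = QN\}$; each $T_N$ is a union of cosets of $N$, hence closed, and non-empty by finite Sylow applied to $PN/N$ and $QN/N$ in $G/N$. Since $N \subseteq M$ implies $T_N \subseteq T_M$, the net $(T_N)$ is descending, and compactness yields $g \in \bigcap_N T_N$; then $gPg^{-1}$ and $Q$ have equal image in every finite quotient and so coincide as closed subgroups. For (iii), I would repeat the construction of (i) with $X_N$ replaced by the non-empty subset $Y_N \subseteq X_N$ of $p$-Sylows of $G/N$ containing $HN/N$; the image map sends $Y_N$ into $Y_M$ because $HN/N$ maps onto $HM/M$, and the resulting $P$ contains $HN$ for every $N$, hence contains $H$. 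The main obstacle is the identity $PN = P_N$ in (i), where the compactness of $G$ genuinely enters; the remaining parts are then formal compactness applications built on finite Sylow theory.
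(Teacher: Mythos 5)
Your proof is correct, and it is essentially the standard inverse-limit argument that the paper itself does not reproduce but defers to \cite{Wil}: realise the Sylow subgroup as a compatible family of Sylow subgroups of the finite quotients $G/N$, use non-emptiness of inverse limits of finite non-empty sets, and settle conjugacy and containment by intersecting the descending closed sets of witnesses. The verification $PN = P_N$, which you rightly flag as the crux, is handled correctly via the finite intersection property.
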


\begin{thm}[Hall's theorem for profinite groups]
Let $G$ be a prosoluble group, and let $\pi$ be a set of primes.
\vspace{-12pt}
\begin{enumerate}[(i)]  \itemsep0pt
\item $G$ has a $\pi$-Hall subgroup.
\item Any two $\pi$-Hall subgroups of $G$ are conjugate.
\item Every pro-$\pi$ subgroup of $G$ is contained in some $\pi$-Hall subgroup.\end{enumerate}
\end{thm}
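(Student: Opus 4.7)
The plan is an inverse limit argument: write $G = \varprojlim_{N \unlhd_o G} G/N$, apply Hall's theorem for finite soluble groups to each finite quotient, and assemble the pieces by compactness. Let $\mcH(N)$ denote the (finite, non-empty) set of $\pi$-Hall subgroups of $G/N$. Standard finite-group facts give that images of $\pi$-Hall subgroups under quotient maps remain $\pi$-Hall, and (using solubility to supply Hall subgroups of intermediate subgroups) every $\pi$-Hall of $G/N$ arises as the image of some $\pi$-Hall of $G/M$ when $M \leq N$; hence the canonical maps $\mcH(M) \to \mcH(N)$ are surjective, and $\{\mcH(N)\}$ is an inverse system of non-empty finite sets with surjective connecting maps.

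For (i), pick a compatible family $(H_N)_N \in \varprojlim \mcH(N)$, let $\widetilde{H_N}$ be its lift to $G$, and set $H := \bigcap_N \widetilde{H_N}$. The containment $HN \leq \widetilde{H_N}$ is immediate, but the reverse requires a compactness step: given $g \in \widetilde{H_N}$, compatibility of the family gives $\widetilde{H_M} N = \widetilde{H_N}$ whenever $M \leq N$, so each $gN \cap \widetilde{H_M}$ is non-empty; as $M$ ranges over open normal subgroups of $G$ contained in $N$ these sets form a descending chain of non-empty closed subsets of $G$, and Lemma \ref{obchain}(ii) supplies a point in $gN \cap H$. Thus $HN/N = H_N$ for every open normal $N$, making $H$ pro-$\pi$ with $|G:H|$ a $\pi'$-number.

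For (ii), given $\pi$-Hall subgroups $H_1, H_2$, let $T_N \subseteq G$ be the closed lift of the (non-empty, by finite Hall conjugacy) set of cosets $gN$ conjugating $H_1 N/N$ to $H_2 N/N$; the $T_N$ form a descending filtered family of non-empty closed sets whose intersection is therefore non-empty, producing $g$ with $H_1^g N = H_2 N$ for every $N$, so $H_1^g = H_2$. For (iii), given a pro-$\pi$ subgroup $K$, the $\pi$-Hall subgroups of $G/N$ containing $KN/N$ form a non-empty subset $\mcS(N) \subseteq \mcH(N)$, and the same soluble-Hall lifting shows $\mcS(M) \to \mcS(N)$ is surjective; applying the construction of (i) to $\{\mcS(N)\}$ yields a $\pi$-Hall subgroup $H$ with $\widetilde{H_N} \supseteq KN \supseteq K$ for every $N$, hence $H \supseteq K$. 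The only non-formal step in the entire argument is the verification in (i) that the image of $H$ in each $G/N$ equals $H_N$ rather than merely sitting inside it; this is where Lemma \ref{obchain}(ii) is essential, and everything else reduces to standard properties of Hall subgroups in finite soluble groups.
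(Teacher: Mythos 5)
Your proof is correct and follows exactly the route the thesis indicates: the paper does not write out a proof but states that the result "generalises to profinite and prosoluble groups... via an inverse limit argument" and defers to Wilson's book, and your argument (inverse system of non-empty finite sets of Hall subgroups with surjective connecting maps supplied by Hall's theorem for finite soluble groups, plus compactness to identify $HN/N$ with $H_N$ and to extract a conjugating element) is precisely that standard argument. The only cosmetic quibble is that the family $\{gN\cap\widetilde{H_M}\}$ is a directed family of non-empty closed sets rather than literally a chain, so one invokes the finite intersection property in a compact space rather than Lemma \ref{obchain}(ii) verbatim; this does not affect correctness.
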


Here are some consequences.

\begin{cor}\label{sylsubnor}Let $G$ be a profinite group.  If $G$ is prosoluble, let $\pi$ be an arbitrary set of primes; otherwise, let $\pi$ consist of a single prime.  Let $H$ be a $\pi$-Hall subgroup of $G$, and let $K$ be a subnormal subgroup of $G$.  Then $H \cap K$ is a $\pi$-Hall subgroup of $K$.\end{cor}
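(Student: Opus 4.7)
The plan is to reduce to the normal case by induction on the defect of subnormality of $K$ in $G$. If $K = K_0 \unlhd K_1 \unlhd \dots \unlhd K_n = G$, then assuming the normal case is settled, $H \cap K_{n-1}$ is a $\pi$-Hall subgroup of $K_{n-1}$, and $K$ is subnormal in $K_{n-1}$ with defect at most $n-1$; by the inductive hypothesis, $(H \cap K_{n-1}) \cap K = H \cap K$ is a $\pi$-Hall subgroup of $K$. (The inductive hypothesis applies in the prosoluble case because closed subgroups of prosoluble groups are prosoluble.) So the real content is the case $K \unlhd G$.

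Assume now $K \unlhd G$. Since $H$ is pro-$\pi$, the closed subgroup $H \cap K$ is pro-$\pi$, so it remains to show that $|K : H \cap K|$ is a $\pi'$-number. Here I would use a second-isomorphism-theorem argument for profinite groups: $HK$ is the image of the compact set $H \times K$ under multiplication, hence compact and therefore closed in $G$, and it is a subgroup because $K$ is normal. The map $H \to HK/K$ given by $h \mapsto hK$ is a continuous surjective homomorphism with kernel $H \cap K$, so by the isomorphism theorem for compact Hausdorff groups (the bijective continuous homomorphism from $H/(H\cap K)$ to $HK/K$ is automatically an isomorphism), we obtain $|HK : K| = |H : H \cap K|$. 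Similarly, $|HK : H| = |K : H \cap K|$.

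Now apply the multiplicativity of indices from the theorem cited above: $|G : H| = |G : HK|\,|HK : H| = |G : HK|\,|K : H \cap K|$. Since $H$ is a $\pi$-Hall subgroup, $|G:H|$ is a $\pi'$-number, and every divisor of a $\pi'$-number (in the sense of supernatural numbers, i.e., $|K:H\cap K|$ divides $|G:H|$ in the supernatural semigroup) is again a $\pi'$-number. Hence $|K : H \cap K|$ is a $\pi'$-number, completing the normal case.

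The step I expect to be the main obstacle is carefully verifying the profinite second isomorphism theorem in the supernatural setting: that $|HK:K| = |H:H \cap K|$ as supernatural numbers, not merely as an abstract group isomorphism. This requires knowing that $HK$ is closed (which uses compactness of $G$) and that indices in the sense of Definition \ref{syldef} behave multiplicatively across such isomorphisms; once this is in hand, the divisibility arguments are formally identical to those in finite group theory.
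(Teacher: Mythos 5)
Your proof is correct in substance, but it takes a genuinely different route from the paper's for the main (normal) case. The paper argues via conjugacy: $H\cap K$ is pro-$\pi$, hence lies in some $\pi$-Hall subgroup $L$ of $K$; $L$ lies in a conjugate of $H$; conjugating back shows $H\cap K$ contains (hence equals) a $\pi$-Hall subgroup of $K$. That is precisely why the hypothesis splits into the prosoluble case (arbitrary $\pi$, Hall's theorem) and the general case (single prime, Sylow's theorem) — parts (ii) and (iii) of those theorems are what get used. Your index computation bypasses conjugacy entirely and therefore actually proves something more general: for any profinite $G$ admitting a $\pi$-Hall subgroup $H$ and any subnormal $K$, the intersection $H\cap K$ is a $\pi$-Hall subgroup of $K$; the prosoluble/single-prime dichotomy only enters to guarantee existence. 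One caveat: you locate the difficulty in the identity $|HK:K|=|H:H\cap K|$, but that one is unproblematic — it follows from the topological isomorphism $H/(H\cap K)\cong HK/K$ (a continuous bijective homomorphism of compact Hausdorff groups is an isomorphism) together with the cited multiplicativity theorem. The genuinely delicate identity is the one you pass over with ``similarly,'' namely $|HK:H|=|K:H\cap K|$: since $H$ is not normal in $HK$, there is no group isomorphism to invoke, only a homeomorphism of coset spaces, and the supernatural index is defined via finite quotients, not via cardinalities of coset spaces. It can be repaired, and in fact you only need the divisibility $|K:H\cap K|$ divides $|G:H|$: the open subgroups $HN\cap K$ (for $N$ open normal in $G$) form a descending family with intersection $H\cap K$, so by compactness every open subgroup of $K$ containing $H\cap K$ contains some $HN\cap K$, whence $|K:H\cap K|$ is the least common multiple of numbers $|K:HN\cap K|$, each of which divides the $\pi'$-number $|G:HN|$ and hence $|G:H|$. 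With that substitution your argument is complete.
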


\begin{proof}By induction, it suffices to prove the result for $K \unlhd G$.  Now $H \cap K$ is a pro-$\pi$ group, so  by the theorem, there is some $\pi$-Hall subgroup $L$ of $K$ that contains $H \cap K$.  In turn, $L$ is contained in a conjugate $H^{g^{-1}}$ say of $H$.  But then $L^g$ is a $\pi$-Hall subgroup of $K^g$; also, $L^g \leq H$.  Since $K^g = K$, it follows that $H \cap K$ contains a $\pi$-Hall subgroup of $K$, and hence is a $\pi$-Hall subgroup of $K$ by the maximality property of Hall subgroups.\end{proof}

\begin{cor}\label{indexcor}Let $G$ be a prosoluble group, and let $H$ be a subgroup of $G$.  Let $\mcS$ be a set of subsets of the prime numbers such that $\bigcup \mcS = \mu$, and suppose $H$ contains a $\pi$-Hall subgroup of $G$ for every $\pi \in \mcS$.  Then $H$ contains a $\mu$-Hall subgroup of $G$.\end{cor}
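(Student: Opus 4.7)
The plan is to reduce to Hall's theorem for prosoluble groups applied inside $H$, after first showing that $|G:H|$ has no prime factors in $\mu$.

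First I would prove that $|G:H|$ is a $\mu'$-number. Fix $p \in \mu$; by hypothesis $p \in \pi$ for some $\pi \in \mcS$, and $H$ contains a $\pi$-Hall subgroup $H_\pi$ of $G$. Unwinding the definition of supernatural index, $|G:H|_p = 1$ is equivalent to saying that for every $N \unlhd_o G$, the image $HN/N$ in the finite group $G/N$ contains a Sylow $p$-subgroup. Since $|G:H_\pi|$ is a $\pi'$-number and $p \in \pi$, the image $H_\pi N/N$ already contains a Sylow $p$-subgroup of $G/N$, and $H_\pi N/N \leq HN/N$. Hence $|G:H|_p = 1$ for every $p \in \mu$, so $|G:H|$ is a $\mu'$-number.

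Next, since $H$ is a closed subgroup of the prosoluble group $G$, it is itself prosoluble, so by Hall's theorem for prosoluble groups $H$ has a $\mu$-Hall subgroup $M$. It remains to show that $M$ is a $\mu$-Hall subgroup of $G$, i.e.\ that $|G:M|$ is a $\mu'$-number. For $p \in \mu'$ this is automatic from $M$ being pro-$\mu$. For $p \in \mu$, fix $N \unlhd_o G$ and work in $G/N$: the image $MN/N$ is the image of $M$ under the surjection $H \twoheadrightarrow HN/N \cong H/(H \cap N)$, so it is a $\mu$-Hall subgroup of $HN/N$; combined with the previous paragraph, $|HN/N|_p = |G/N|_p$, giving $|MN/N|_p = |G/N|_p$. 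Thus $|G:M|_p = 1$, as required.

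The only potentially delicate step is the bookkeeping with supernatural numbers in the final paragraph; because indices may be infinite, one cannot naively cancel in the formula $|G:1| = |G:H|\,|H:M|\,|M:1|$, so I would instead argue prime-by-prime via finite quotients $G/N$ as above, where ordinary finite-group arithmetic applies. Once that is handled, everything else is formal from Hall's theorem and the hypothesis.
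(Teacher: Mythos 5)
Your proof is correct; the paper states this corollary without proof, and your argument is precisely the intended one: show $|G:H|_p=1$ for each $p\in\mu$ using the $\pi$-Hall subgroup containing $p$, then take a $\mu$-Hall subgroup $M$ of the prosoluble group $H$ and check it is $\mu$-Hall in $G$. Your prime-by-prime passage to finite quotients is a sound way to handle the supernatural-index bookkeeping, though you could also just cite the multiplicativity $|G:M|=|G:H|\,|H:M|$ (the theorem quoted from \cite{Wil} in Section 1.3) to see that $|G:M|$ is a product of two $\mu'$-numbers and hence a $\mu'$-number.
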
 

\begin{defn}The \emph{$\pi$-core} $O_\pi(G)$ of $G$ is the group generated by all subnormal pro-$\pi$ subgroups of $G$.  Say $G$ is \emph{$\pi$-normal} if $G$ has a normal $\pi$-Hall subgroup.  The \emph{pro-Fitting subgroup} $F(G)$ is the group generated by all subnormal pro-$p$ subgroups of $G$, over all $\piP$.\end{defn}

\begin{lem}\label{opinor}Let $G$ be a profinite group, and let $\pi$ be a set of primes.  Given $K \unlhd_o G$, let $R_K$ be such that $R_K/K = O_\pi(G/K)$, and let $R = \bigcap_{K \unlhd_o G} R_K$.  Then $O_\pi(G) = R$, and $O_\pi(G)$ is a pro-$\pi$ group.\end{lem}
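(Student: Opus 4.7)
The plan is to prove the two inclusions $O_\pi(G) \leq R$ and $R \leq O_\pi(G)$, where the latter will follow once $R$ is identified as a closed normal pro-$\pi$ subgroup of $G$.

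For the first inclusion, I would take an arbitrary subnormal pro-$\pi$ subgroup $H$ of $G$ and fix $K \unlhd_o G$. Since $K$ is open in $G$, $H \cap K$ is open in $H$, so $H/(H \cap K) \cong HK/K$ is a finite continuous quotient of the pro-$\pi$ group $H$ and hence a finite $\pi$-group. It is also subnormal in the finite group $G/K$, so $HK/K \leq O_\pi(G/K) = R_K/K$, giving $H \leq R_K$. As $K$ was arbitrary, $H \leq R$. Since $R$ is closed (it is an intersection of open, hence closed, subgroups), the closed subgroup $O_\pi(G)$ generated by all such $H$ satisfies $O_\pi(G) \leq R$.

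For the reverse inclusion, I would first observe that each $R_K$ is normal in $G$, as the preimage of the normal subgroup $O_\pi(G/K)$ of $G/K$, so $R$ is a closed normal subgroup of $G$. The crux is to verify that $R$ is pro-$\pi$. For any $K \unlhd_o G$ the second isomorphism theorem gives $R/(R \cap K) \cong RK/K \leq R_K/K$, and $R_K/K = O_\pi(G/K)$ is a finite $\pi$-group. The family $\{R \cap K : K \unlhd_o G\}$ is a base of neighbourhoods of the identity in $R$, so every open normal subgroup of $R$ contains some $R \cap K$; hence every finite continuous quotient of $R$ is a quotient of some finite $\pi$-group $R/(R \cap K)$, and so is itself a $\pi$-group. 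Therefore $R$ is pro-$\pi$. As $R$ is a closed normal pro-$\pi$ subgroup of $G$, it is in particular a subnormal pro-$\pi$ subgroup, so $R \leq O_\pi(G)$. Combined with the first inclusion, $O_\pi(G) = R$, and the pro-$\pi$ conclusion for $O_\pi(G)$ then falls out of what we proved for $R$.

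The main technical point is the passage from quotients $R/(R\cap K)$ with $K \unlhd_o G$ to arbitrary finite continuous quotients of $R$; this rests on the fact that the subspace topology on the closed subgroup $R$ is generated by the restrictions of open normal subgroups of $G$, a basic observation but the one that really converts information about the $R_K$ into a pro-$\pi$ statement for $R$ itself. Everything else is bookkeeping with the correspondence theorem and the fact that $O_\pi$ of a finite group is a $\pi$-group.
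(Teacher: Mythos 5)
Your proof is correct and follows essentially the same route as the paper: both directions are obtained by reducing to the finite case of the lemma, with $O_\pi(G)\leq R$ coming from the fact that images of subnormal pro-$\pi$ subgroups are subnormal $\pi$-subgroups of the finite quotients $G/K$, and $R\leq O_\pi(G)$ coming from identifying $R$ as a closed normal pro-$\pi$ subgroup via the quotients $R/(R\cap K)$. Your write-up merely spells out in more detail the topological point (that $\{R\cap K\}$ is a neighbourhood base of $1$ in $R$) which the paper leaves implicit.
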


\begin{proof}We assume the finite case of the lemma, as it is well-known.

Let $O=O_\pi(G)$.  By their construction, $O$ and $R$ are characteristic in $G$, and $R$ is a pro-$\pi$ group by the finite case of the lemma, so $R \leq O$.  For every $K \unlhd_o G$, $OK/K$ is generated by subnormal $\pi$-subgroups of $G/N$, so it is contained in $R_K/K$.  Hence $O \leq R$, and so $O=R$; in particular, $O$ is a pro-$\pi$ group.\end{proof}

The class of pronilpotent groups can be characterised in terms of its Sylow structure in a similar manner to the class of finite nilpotent groups.

\begin{lem}\label{fitlem}A profinite group $G$ is pronilpotent if and only if it is the Cartesian product of its Sylow subgroups, or equivalently, if and only if every Sylow subgroup is normal.  In particular, $F(G)$ is pronilpotent, and contains all pronilpotent normal subgroups of $G$.\end{lem}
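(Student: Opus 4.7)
The plan is to reduce everything to the known finite case by working through the open normal quotients of $G$, using Corollary~\ref{sylsubnor} to track Sylow subgroups through these quotients and the proposition that $\overline{X} = \bigcap_{N \unlhd_o G} XN$ for subsets $X$ of a profinite group.

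First I would prove the equivalence: $G$ pronilpotent $\Leftrightarrow$ every Sylow subgroup of $G$ is normal. For the forward direction, fix a prime $p$ and a $p$-Sylow subgroup $P$. For each $N \unlhd_o G$, the index $|G/N:PN/N|$ divides $|G:P|$ and so is a $p'$-number; since $PN/N$ is a $p$-group, it must be the Sylow $p$-subgroup of the finite nilpotent group $G/N$, and hence is normal in $G/N$. Thus $PN \unlhd G$ for all such $N$, and $P = \overline{P} = \bigcap_N PN$ is the intersection of closed normal subgroups, hence normal. For the converse, normality of Sylow subgroups passes to the quotients $G/N$, so each $G/N$ is a finite group with every Sylow normal and is therefore nilpotent, i.e.\ $G$ is pronilpotent.

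Next, the equivalence with being the Cartesian product of its Sylow subgroups. In each finite quotient $G/N$, nilpotency yields the internal decomposition $G/N = \prod_p (P_p N/N)$. Taking the inverse limit over $N$ and interchanging it with the Cartesian product over $p$ yields $G = \prod_p P_p$; concretely, the natural multiplication map $\phi: \prod_p P_p \to G$ is a continuous homomorphism (elements of $P_p$ and $P_q$ commute for $p \neq q$ since $[P_p,P_q] \leq P_p \cap P_q = 1$), it is injective because the images $P_p N/N$ are independent in each quotient, and it is surjective with dense image because it is surjective modulo every $N$; $\phi$ is then automatically a topological isomorphism by the earlier proposition on bijective homomorphisms of compact Hausdorff groups. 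The reverse implication is trivial, as each factor in a Cartesian product is normal.

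For the ``in particular'' statement, note that $[O_p(G), O_q(G)] \leq O_p(G) \cap O_q(G) = 1$ for $p \neq q$, so $\prod_p O_p(G)$ is a well-defined closed subgroup with all Sylow subgroups (namely the $O_p(G)$) normal; by the equivalence just proved it is pronilpotent. Since every subnormal pro-$p$ subgroup of $G$ is contained in $O_p(G)$ (by Lemma~\ref{opinor}), we get $F(G) = \prod_p O_p(G)$, which is pronilpotent. Finally, if $H \unlhd G$ is pronilpotent, each Sylow subgroup $H_p$ of $H$ is the unique pro-$p$ Sylow of $H$, hence characteristic in $H$, hence normal in $G$; so $H_p \leq O_p(G)$ and therefore $H = \prod_p H_p \leq F(G)$. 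The main obstacle is the careful interchange of the (possibly infinite) topological Cartesian product with the inverse limit over open normals, but this dissolves once one notes that $\phi$ is a continuous bijection between compact Hausdorff groups, so the topological structure takes care of itself.
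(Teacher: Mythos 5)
Your proof is correct; the paper itself gives no argument for this lemma (it simply cites Wilson's \emph{Profinite groups}), and what you have written is essentially the standard proof from that reference: reduce each claim to the finite nilpotent case by passing through the open normal quotients, then reassemble via inverse limits and the automatic-homeomorphism property of continuous bijections between compact Hausdorff groups. The only step I would ask you to say one more word about is the claim that the $O_p(G)$ are precisely the Sylow subgroups of $\overline{\langle O_p(G) : p \in \bP\rangle}$, which needs $O_p(G) \cap \overline{\langle O_q(G) : q \neq p\rangle} = 1$; this follows because in every finite quotient the latter subgroup is a product of normal nilpotent $q$-groups with $q \neq p$, hence a $p'$-group, so its closure is pro-$p'$ — the same finite-quotient device you use everywhere else.
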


\begin{proof}See \cite{Wil}.\end{proof}

The following result will be of consequence later when we consider the action of a profinite group on its pro-Fitting subgroup.  Note that the automorphism group of a finitely generated pro-$p$ group is equipped with a natural profinite topology, by declaring the centraliser of any finite characteristic image to be open.

\begin{thm}\label{cinvthm}Let $P$ be a finitely generated pro-$p$ group, and let $H$ be a closed subgroup of $\Aut(P)$.
\vspace{-12pt}
\begin{enumerate}[(i)]  \itemsep0pt
\item Suppose there is an $H$-invariant normal series
\[ P = P_1 \rhd P_2 \rhd \dots \]
for $P$, such that $\bigcap P_i = 1$, and such that $H$ acts trivially on $P_i/P_{i+1}$ for each $i$.  Then $H$ is a pro-$p$ group.
\item Define the characteristic series $P_i$ by $P_1 = P$, and thereafter $P_{i+1} = [P,P_i]P^p_i$.  Suppose $H$ acts trivially on $P/\Phi(P)$.  Then $H$ acts trivially on $P_i/P_{i+1}$ for all $i$.  In particular, $H$ is a pro-$p$ group.
\item Suppose $P$ is finite and abelian, and $H$ is a $p'$-group.  Then $P = [P,H] \times C_P(H)$.\end{enumerate}\end{thm}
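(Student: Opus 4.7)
The plan is to do the three parts in a convenient dependency order: prove (i) as the main technical fact, deduce (ii) from (i) via commutator calculus, and handle (iii) separately via an averaging argument.

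For (i), I would reduce to the standard finite stability-group theorem (Kaloujnine): the subgroup of $\Aut(Q)$ of a finite $p$-group $Q$ that acts trivially on every factor of a normal series of $Q$ is itself a $p$-group. Since $P$ is finitely generated pro-$p$, the open characteristic subgroups $V \unlhd P$ form a base of neighbourhoods of $1$, and the corresponding subgroups $C_H(P/V)$ form a base of neighbourhoods of $1$ in $H$. It therefore suffices to show that each image $\bar H$ of $H$ in $\Aut(P/V)$ is a finite $p$-group. Setting $Q := P/V$ and $Q_i := P_i V/V$, the $P_i$ are closed, descend to $1$, and form a descending chain, so by compactness of $P$ we have $P_i \subseteq V$ for all sufficiently large $i$; hence the series $Q = Q_1 \geq Q_2 \geq \dots$ in the finite $p$-group $Q$ eventually reaches $1$. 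Each factor $Q_i/Q_{i+1} = P_i V / P_{i+1} V$ is a quotient of $P_i/P_{i+1}$, on which $H$ acts trivially by hypothesis, so the image $\bar H$ stabilises the series termwise and the classical stability-group theorem gives that $\bar H$ is a $p$-group, as required.

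For (ii), I would prove by induction on $i$ that $H$ acts trivially on $P_i/P_{i+1}$, the base case $i=1$ being the hypothesis that $H$ acts trivially on $P/\Phi(P) = P/P_2$. For the inductive step, I would use that $P_{i+1} = [P,P_i]P_i^p$ is generated by commutators $[a,b]$ with $a \in P$, $b \in P_{i-1}$ and by $p$-th powers $c^p$ with $c \in P_{i-1}$. Writing $h(a) = a\alpha$ with $\alpha \in P_2$ and $h(b) = b\beta$, $h(c) = c\gamma$ with $\beta,\gamma \in P_i$, I would expand $h([a,b]) = [a\alpha, b\beta]$ and $h(c^p) = (c\gamma)^p$ using the identities $[xy,z] = [x,z]^y[y,z]$, $[x,yz] = [x,z][x,y]^z$ and a Hall--Petrescu style expansion of $(c\gamma)^p$, and check that each correction term lies in $P_{i+1}$ using the standard containment $[P_j, P_k] \subseteq P_{j+k}$ for the lower $p$-series (together with $P_j^p \subseteq P_{j+1}$). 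Since $P$ is finitely generated pro-$p$, the $P_i$ are open with $\bigcap P_i = 1$, so part (i) applies to give that $H$ is pro-$p$. For (iii), with $P$ written additively, I would introduce the averaging projection $\pi(x) := |H|^{-1}\sum_{h \in H} h(x)$, which is well-defined because $|H|$ is coprime to $|P|$ and hence invertible in $\End(P)$; a direct calculation shows that $\pi$ is an $H$-equivariant idempotent with image $C_P(H)$, and that $\ker\pi = [P,H]$ because on the one hand $\pi(h(x)-x) = 0$ and on the other $x - \pi(x) = -|H|^{-1}\sum_h (h(x) - x) \in [P,H]$.

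The main obstacle is the reduction step in (i): one has to produce, uniformly in the open characteristic subgroup $V$, an $H$-invariant filtration of the finite $p$-group $Q = P/V$ with trivial factor action, using only the given $H$-invariant series for $P$. The mild subtlety is that the $P_i$ are not assumed characteristic or open, so the argument has to pass through the quotient series $Q_i = P_iV/V$ and invoke compactness of $P$ to guarantee the series in $Q$ terminates; once this is in place, the rest is bookkeeping. In (ii), the other place where care is needed is verifying the commutator containments for the lower $p$-series, but this is a routine application of standard $p$-group commutator calculus.
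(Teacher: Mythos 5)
Your proof is correct. The paper itself offers no argument for this theorem---it simply cites \cite{DDMS} for (i), \cite{Lee} for (ii) and \cite{Doe} for (iii)---and your three arguments (reduction to the finite stability-group theorem via the congruence subgroups $C_H(P/V)$ for $V$ open characteristic, induction along the lower $p$-series using $[P_i,P_j]\le P_{i+j}$ and $P_i^p\le P_{i+1}$, and the coprime averaging idempotent $\pi(x)=|H|^{-1}\sum_{h\in H}h(x)$) are precisely the standard proofs found in those references, with the compactness step correctly handling the fact that the given series in (i) need not be open or characteristic.
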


\begin{proof}For part (i) see \cite{DDMS}, for part (ii) see \cite{Lee}, and for part (iii) see \cite{Doe}.\end{proof}

\begin{defn}The \emph{Frattini subgroup} $\Phi(G)$ of a profinite group $G$ is the intersection of all maximal open subgroups of $G$.\end{defn}

\begin{lem}\label{fratlem}\
\vspace{-12pt}
\begin{enumerate}[(i)]  \itemsep0pt
\item Let $G$ be a profinite group, and let $K$ be a normal subgroup of $G$ containing $\Phi(G)$.  Then $K$ is pronilpotent if and only if $K/\Phi(G)$ is pronilpotent.  In particular, $\Phi(G)$ is pronilpotent.
\item Let $G$ be a profinite group.  If $X$ is a set of elements of $G$, then $X$ generates $G$ if and only if the image of $X$ in $G/\Phi(G)$ generates $G/\Phi(G)$.
\item Let $S$ be a pro-$p$ group.  Then $S/\Phi(S)$ is the largest elementary abelian image of $S$, and $d(S) = d(S/\Phi(S))$.\end{enumerate}\end{lem}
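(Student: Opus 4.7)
The plan is to prove (ii) first, then use it as a ``non-generator'' principle in the other two parts. For (ii), one direction is automatic, so I would assume the image of $X$ topologically generates $G/\Phi(G)$, i.e.\ $H\Phi(G) = G$ where $H := \overline{\langle X \rangle}$, and suppose for contradiction $H < G$. From the earlier description $\overline{Y} = \bigcap_{N \unlhd_o G} YN$ pick $N \unlhd_o G$ with $HN < G$; then in the finite lattice of subgroups between the open subgroup $HN$ and $G$, pick a maximal proper element $M$. This $M$ is a maximal open subgroup of $G$, so $\Phi(G) \leq M$, yielding $H\Phi(G) \leq M < G$, a contradiction.

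For (i), the forward direction (quotients of pronilpotent groups are pronilpotent) is immediate. For the converse, I would assume $K/\Phi(G)$ is pronilpotent and let $P$ be a $p$-Sylow of $K$ for some prime $p$. The image $P\Phi(G)/\Phi(G)$ is a $p$-Sylow of the pronilpotent group $K/\Phi(G)$, hence the unique one, hence characteristic in $K/\Phi(G)$, and therefore $P\Phi(G) \unlhd G$. Since $|P\Phi(G):P|$ divides the $p'$-number $|K:P|$, $P$ remains a $p$-Sylow of $P\Phi(G)$, so the Frattini argument (which transfers from the finite case via Sylow's theorem for profinite groups) applied to $P\Phi(G) \unlhd G$ gives $G = N_G(P) \cdot P\Phi(G) = N_G(P)\Phi(G)$, the last equality since $P \leq N_G(P)$. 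Applying (ii) to a topological generating set of the closed subgroup $N_G(P)$ now forces $N_G(P) = G$, so $P \unlhd K$; since $p$ was arbitrary, Lemma \ref{fitlem} gives $K$ pronilpotent. The ``in particular'' clause follows by taking $K = \Phi(G)$.

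For (iii), I would first show that every maximal open subgroup $M$ of $S$ is normal of index $p$: since $|S:M|$ is a finite power of $p$, this follows on passing to $S/\Core_S(M)$, a finite $p$-group, and invoking the corresponding fact for finite $p$-groups. Consequently $S/\Phi(S)$ embeds into an inverse limit of copies of $\bF_p$, hence is elementary abelian. Conversely, any continuous surjection $S \twoheadrightarrow V$ onto an elementary abelian $p$-group factors as a subdirect product of maps $S \to \bF_p$ whose kernels are maximal open in $S$, so the kernel of $S \twoheadrightarrow V$ contains $\Phi(S)$; hence $S/\Phi(S)$ is the largest elementary abelian image of $S$. Finally, $d(S/\Phi(S)) \leq d(S)$ is trivial, and the reverse follows from (ii) by lifting a topological generating set of $S/\Phi(S)$ to $S$.

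The main subtlety throughout is transferring the Frattini argument to the profinite setting; this goes through unchanged once one knows Sylow subgroups of a closed normal subgroup are conjugate by elements of that subgroup, a direct consequence of Sylow's theorem for profinite groups. Ensuring the existence of maximal open subgroups above any proper open subgroup is then a routine finite-lattice argument inside an open quotient.
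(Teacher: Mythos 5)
Your proof is correct. The paper itself gives no argument here --- it simply cites Wilson's \emph{Profinite groups} --- and what you have written is essentially the standard proof from that reference: the non-generator property of $\Phi(G)$ via maximal open subgroups above $HN$, the Frattini argument combined with the normality of the Sylow image in the pronilpotent quotient for (i), and the ``maximal open subgroups of a pro-$p$ group are normal of index $p$'' reduction for (iii). All the delicate points (existence of a suitable $N \unlhd_o G$ with $HN < G$, closedness of $N_G(P)$, transfer of the Frattini argument via profinite Sylow conjugacy, and the appeal to Lemma \ref{fitlem}, which is logically prior) are handled correctly.
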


\begin{proof}See \cite{Wil}.\end{proof}

\section{Quasisimple groups in profinite group theory}\label{qsprelim}

Since profinite groups are residually finite, any simple profinite group is automatically finite.  The finite simple groups thus play an important role in profinite group theory, and aspects of the Classification of Finite Simple Groups will be invoked at several points in this thesis.

\begin{thm}[Classification of finite simple groups]
Let $G \in \fsg$.  Then $G$ is one of the following:
\vspace{-12pt}
\begin{enumerate}[(i)]  \itemsep0pt
\item An alternating group $Alt(n)$, with $n \geq 5$;
\item A finite simple group of Lie type, or the Tits group;
\item One of 26 sporadic simple groups that do not appear in (i) or (ii).\end{enumerate}\end{thm}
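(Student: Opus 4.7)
The statement to be proved is the \emph{Classification of Finite Simple Groups} itself, whose complete proof spans tens of thousands of journal pages by more than a hundred authors and cannot be sketched in any honest sense here. What I can describe is the global architecture, following the framework made explicit in the second-generation proof of Gorenstein, Lyons and Solomon.

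The setup is a minimal counterexample: let $G$ be a finite simple group of least order not appearing in the conclusion, so every proper section of $G$ is either on the list or is soluble. The first reduction is the Feit--Thompson odd order theorem: every group of odd order is soluble, so $G$ has even order and therefore contains an involution $t$. This unlocks Brauer's philosophy: the isomorphism type of $G$ is essentially determined by the isomorphism types of the centralisers $C_G(t)$, so the task becomes to enumerate the possibilities for $C_G(t)$ under the minimality hypothesis, and then to recognise $G$ from this local data.

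The structural heart of the proof is a dichotomy on the $2$-local structure: either $G$ is of \emph{characteristic $2$ type} (every $2$-local subgroup $M$ satisfies $C_M(O_2(M)) \leq O_2(M)$, mimicking Lie-type groups in characteristic $2$), or it is not. In the non-characteristic $2$ case, the Aschbacher--Gorenstein--Lyons trichotomy produces either a standard component (a large quasisimple subgroup centralising an involution), a $p$-uniqueness subgroup for some odd $p$, or a group of $\mathrm{GF}(2)$-type; each branch is then closed out by an identification theorem, using inductive knowledge of the smaller simple groups, to pin $G$ down as an alternating group, a Lie-type group in odd characteristic, or a sporadic group. In the characteristic $2$ case, one splits further by the $2$-local $p$-rank $e(G)$ into the quasithin subcase ($e(G) \leq 2$) and the generic subcase.

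Identification in the generic cases proceeds by building, inside $G$, a Curtis--Tits or Gilman--Griess style presentation (a configuration of root subgroups satisfying relations encoded by a Dynkin diagram) and invoking a theorem that any group so presented is the expected Lie-type group; the sporadic groups are identified by Brauer-style centraliser-of-involution theorems. The main obstacle, historically and mathematically, is the \emph{quasithin} case, completed only by Aschbacher and Smith in two thick volumes after decades of effort; a secondary obstacle is verifying the list of sporadics is complete, which proceeds by case-by-case analysis of admissible involution centralisers in each configuration not already absorbed into the Lie-type or alternating conclusions.
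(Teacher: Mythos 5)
This statement is the Classification of Finite Simple Groups, which the paper does not prove but simply cites as known background (deferring to \cite{ATL} for a detailed statement), and your proposal rightly recognises that no genuine proof can be given in this setting; your survey of the proof's architecture (minimal counterexample, Feit--Thompson, the characteristic $2$ type dichotomy, the trichotomy and standard component analysis, the quasithin case of Aschbacher--Smith, and identification via Curtis--Tits-style presentations) is an accurate account of the literature. Both you and the paper are, in effect, citing the Classification rather than proving it, so there is nothing further to compare.
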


(For a more detailed statement of the Classification, see \cite{ATL}.)

We will need to use some properties of the orders of finite simple groups that can be deduced from the full Classification.

\begin{thm}\label{simordthm}Let $G \in \fsg$.  Then at least one of $6$ and $10$ divides $|G|$.\end{thm}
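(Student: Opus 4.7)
By the Feit--Thompson odd order theorem, $|G|$ is even, so it suffices to show that $3 \mid |G|$ or $5 \mid |G|$. The plan is to proceed case-by-case using the Classification of finite simple groups stated immediately above.

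For the alternating groups $\Alt(n)$ with $n \geq 5$, the order $n!/2$ is divisible by $30$, so both $6$ and $10$ divide $|G|$. For the $26$ sporadic groups and the Tits group, one reads off the orders (e.g.\ from the ATLAS \cite{ATL}) and observes directly that every order is divisible by $6$.

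The substantive case is the finite simple groups of Lie type $G(q)$ with $q = p^n$. Here $|G|$ has the form $q^N \prod_i \Phi_{d_i}(q)$, where the $\Phi_{d_i}$ are cyclotomic polynomial factors determined by the family. For every Lie type family apart from the Suzuki groups, both $q - 1$ and $q + 1$ (i.e.\ $\Phi_1(q)$ and $\Phi_2(q)$) appear among the factors of $|G|$. Since $q-1, q, q+1$ are three consecutive integers, exactly one is divisible by $3$; and since $q$ is a prime power, if $p \neq 3$ then $3$ divides $q-1$ or $q+1$, while if $p = 3$ then $3 \mid q^N$. Either way $3 \mid |G|$ and hence $6 \mid |G|$.

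The exceptional case is the Suzuki family $\mathrm{Sz}(q) = {}^2B_2(q)$ with $q = 2^{2m+1}$, whose order $q^2(q-1)(q^2+1)$ is famously coprime to $3$. For these I would verify $5 \mid |\mathrm{Sz}(q)|$ directly: writing $q = 2 \cdot 4^m$ and noting $4 \equiv -1 \pmod 5$ gives $q \equiv \pm 2 \pmod 5$, hence $q^2 \equiv -1 \pmod 5$ and so $5 \mid q^2 + 1$. Combined with the evenness of $|G|$ this yields $10 \mid |\mathrm{Sz}(q)|$. The main obstacle is that the theorem genuinely relies on the Classification (or at least on the deep fact that Suzuki groups are the only non-abelian finite simple groups of order coprime to $3$); the remaining verification is a routine check against the standard order formulas.
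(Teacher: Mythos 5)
Your proof is correct in substance. Note, though, that the thesis does not actually prove this statement: it is recorded as a known consequence of the Classification, with only a remark attributing the evenness of $|G|$ to Feit--Thompson, so there is no ``paper proof'' to compare against --- you have supplied the standard argument that the thesis implicitly cites. Your case analysis is the right one: alternating and sporadic groups are routine, the Suzuki groups are the genuine exception to $3 \mid |G|$, and the computation $q \equiv \pm 2 \pmod 5$, hence $5 \mid q^2+1$, correctly handles them. One small imprecision: the order of the \emph{simple} group of Lie type is $\tfrac{1}{d}\,q^N \prod_i \Phi_{d_i}(q)$ for a divisor $d$ (the order of the relevant centre), not the bare product you wrote, so in principle the division by $d$ could eat factors of $3$. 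It does not --- whenever $3 \mid d$ one also has $3 \mid q-1$ and the product then carries $3$ to a strictly higher power --- but your argument as written skips this check. A way to avoid it entirely is to observe that every non-Suzuki group of Lie type contains a root subgroup isomorphic to $\SL(2,q)$ or $\PSL(2,q)$, whose order is divisible by $3$ since $3$ divides one of the three consecutive integers $q-1, q, q+1$ and the central quotient has order at most $2$.
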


\begin{rem}The theorem above incorporates the theorem of Feit and Thompson (\cite{Fei}) that any $G \in \fsg$ has even order.\end{rem}

\begin{defn}Given an integer $n$, define $\bP_n$ to be the set of primes at most $n$, and $\bP'_n$ the set of primes greater than $n$.\end{defn}

\begin{thm}\label{lambdafsg}For each $n$, there are only finitely many isomorphism types of finite simple $\bP_n$-groups.\end{thm}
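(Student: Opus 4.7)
The plan is to invoke the Classification of Finite Simple Groups stated just above and treat the three types of simple group separately.

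The $26$ sporadic groups and the Tits group between them contribute only finitely many examples, trivially. For the alternating case, suppose $\mathrm{Alt}(m) \in \fsg$ is a $\bP_n$-group, so every prime dividing $m!/2$ lies in $\bP_n$. By Bertrand's postulate, whenever $m \geq 2n$ there is a prime $p$ with $n < p \leq m$, and such a $p$ divides $m!/2$, contradicting $p \in \bP_n$. Hence $m < 2n$, and only finitely many alternating groups can arise.

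The Lie type case is the main obstacle. I would bound both the Lie rank and the field order in each of the finitely many infinite families using Zsygmondy's theorem: away from a small finite set of exceptional pairs, $a^k - 1$ admits a primitive prime divisor $p$ (one not dividing $a^j - 1$ for $j < k$), and for any such $p$ the multiplicative order of $a$ modulo $p$ equals $k$, so $k \mid p - 1$ and in particular $p \geq k+1$. For a finite simple group of Lie type of rank $r$ over $\bF_q$, with $q = p^\ell$, the order is a polynomial in $q$ whose factors include terms $q^d \pm 1$ for a set of exponents $d$ whose maximum tends to infinity with $r$. Since the defining characteristic $p$ itself divides the order, the $\bP_n$-condition already forces $p \leq n$. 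Fixing the family and applying Zsygmondy to $p^{\ell d} - 1$ for the largest relevant exponent $d$ produces a prime divisor of the order that is at least $\ell d + 1$; requiring this prime to lie in $\bP_n$ bounds the product $\ell d$. This bounds $\ell$ once $r$ (and hence $d$) is fixed, and bounds $r$ once $q$ is fixed. Combining the two, both the rank and $\ell$ are bounded in terms of $n$, leaving only finitely many isomorphism types in each family.

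The main obstacle is purely book-keeping: the slightly different order formulas for the twisted Lie types and the handling of the finitely many Zsygmondy exceptional pairs. Since only a qualitative finiteness statement is needed, crude estimates suffice throughout and the Zsygmondy exceptions can simply be absorbed into a finite list at the end.
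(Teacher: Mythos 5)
Your argument is correct, and in fact the paper offers no proof of its own here: Theorem \ref{lambdafsg} is simply delegated to the literature (``see for instance \cite{Ale}''), so your CFSG-based sketch is the standard folklore argument that such a citation stands in for. The three cases are handled the right way: the sporadic groups and the Tits group are finite in number; Bertrand's postulate correctly forces $m < 2n$ for $\Alt(m)$ to be a $\bP_n$-group (a prime $p$ with $n < p < 2n \leq m$ is odd, hence divides $m!/2$); and in the Lie type case the single inequality you extract from Zsigmondy, namely $\ell d_{\max} \leq n-1$ where $q = p^{\ell}$ and $d_{\max}$ is the largest exponent occurring in the order formula, bounds the rank and the field degree simultaneously (since $d_{\max}$ grows with the rank and $\ell \geq 1$), while $p \leq n$ is forced because the defining characteristic divides the order. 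The book-keeping you defer is genuinely routine: one must check that the primitive prime divisor of $p^{\ell d_{\max}}-1$ survives division by the small centre/diagonal factor (it does, being larger than $\ell d_{\max}$ and hence coprime to, e.g., $\gcd(r+1,q-1)$ for $\PSL(r+1,q)$), and the Zsigmondy exceptions $(2,6)$ and the $k=2$ Mersenne case are absorbed by the constraint $p \leq n$ together with the bounded rank. So the proposal is a complete and correct route to the statement, and arguably more self-contained than the paper's bare citation.
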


\begin{proof}See for instance \cite{Ale}.\end{proof}

It is often useful to consider a generalisation of the finite simple groups:

\begin{defn}A (pro-)finite group $Q$ is said to be \emph{quasisimple} if $Q$ is perfect and $Q/Z(Q)$ is simple.\end{defn}

\begin{thm}\label{schurmult}Let $G$ be a finite perfect $\pi$-group, for some set of primes $\pi$.  Then there is a finite $\pi$-group $\Gamma$, unique up to isomorphism, such that $\Gamma$ is a perfect central extension of $G$, and any finite perfect central extension of $G$ is an image of $\Gamma$.  In particular, the order of any finite perfect central extension of $G$ is at most $|\Gamma|$.\end{thm}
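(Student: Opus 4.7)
The plan is to construct the classical Schur representation group (universal perfect central extension) of $G$. Pick a presentation $G = F/R$ with $F$ a free group of finite rank and set $\Gamma := F'/[F,R]$. Since $G$ is perfect we have $F = F'R$, so the natural map $\rho: \Gamma \to F/R = G$ is surjective with kernel $Z := (F' \cap R)/[F,R]$; centrality of $Z$ in $\Gamma$ is immediate from $[F', F' \cap R] \leq [F,R]$.

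Next I would show $\Gamma$ is itself perfect. Any $a \in F'$ is a product of commutators $[f_i, g_i]$; using $F = F'R$, write each $f_i = f'_i r_i$ and $g_i = g'_i s_i$ with $f'_i, g'_i \in F'$ and $r_i, s_i \in R$, then expand via the identities $[xy,z] = [x,z]^y[y,z]$ and $[x,yz] = [x,z][x,y]^z$. Every term involving an $r_i$ or $s_i$ lies in $[F,R]$, so modulo $[F,R]$ one has $a \equiv \prod [f'_i, g'_i]$, exhibiting $a$ in $[\Gamma, \Gamma]$.

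For the universal property, let $\pi: H \to G$ be any finite perfect central extension. Lift a free basis of $F$ arbitrarily to $H$ to get $\phi: F \to H$ with $\pi \phi$ the quotient $F \to G$; then $\phi(R) \leq \ker\pi \leq Z(H)$, so $\phi([F,R]) = 1$. Since $\phi(F') \leq H' = H$, the restriction $\phi|_{F'}$ descends to a surjection $\Gamma \twoheadrightarrow H$, giving at once $|H| \leq |\Gamma|$ and uniqueness: any two candidates surject onto each other, so finiteness forces an isomorphism.

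It remains to show that $\Gamma$ is finite and is a $\pi$-group; equivalently, that $Z$ is. Because $R$ has finite index $|G|$ in the finitely generated free group $F$, the Nielsen--Schreier theorem makes $R$ free of finite rank, whence $R/[F,R]$, and thus $Z$, is finitely generated abelian. To bound the exponent I identify $Z$ with $\rH_2(G, \bZ)$ via Hopf's formula and invoke the standard cohomological fact that $|G|$ annihilates $\rH_n(G, \bZ)$ for all $n \geq 1$; this forces the exponent of $Z$ to divide $|G|$, so $Z$ is a finite abelian $\pi$-group and hence $\Gamma$ is a finite $\pi$-group. The main obstacle is precisely this exponent bound on the Schur multiplier, which I would cite from the cohomology of finite groups rather than derive from scratch; the rest is direct manipulation of free presentations.
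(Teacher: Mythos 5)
Your proposal is correct, and it supplies an actual proof where the thesis gives none: the paper simply cites Suzuki for this theorem, so there is no internal argument to compare against. What you have written is the classical construction of the representation group (universal perfect central extension) via Hopf's formula, which is essentially the content of the cited source, assembled from standard ingredients: the presentation $\Gamma = F'/[F,R]$, perfectness of $\Gamma$ from $F=F'R$ and centrality of $R[F,R]/[F,R]$, the lifting argument for the universal property, and finiteness of $(F'\cap R)/[F,R]$ from finite generation (Nielsen--Schreier) together with the exponent bound $\exp \rH_2(G,\bZ) \mid |G|$, which also delivers the $\pi$-group conclusion since $|G|$ is a $\pi$-number. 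One step worth making explicit: to get that $\phi|_{F'}$ descends to a \emph{surjection} onto $H$ you need $\phi(F')=H$, not merely $\phi(F')\leq H'$; this follows because $H=\phi(F)\ker\pi$ with $\ker\pi$ central, so $H'=\phi(F)'=\phi(F')$, and $H$ perfect then gives $H=\phi(F')$. With that line added, the argument is complete, and the uniqueness and order bound follow exactly as you say.
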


\begin{proof}See \cite{Suz}.\end{proof}

\begin{cor}\label{qscor}Let $G$ be a quasisimple profinite group.  Then $G$ is finite, and every prime dividing $|G|$ also divides $|G/Z(G)|$.\end{cor}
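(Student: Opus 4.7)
The plan is to extract the statement from Theorem~\ref{schurmult} twice, using residual finiteness to reduce the profinite problem to one about finite central extensions of a single finite simple group.

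First I would identify $S := G/Z(G)$. Being a simple profinite group it is residually finite and simple, so it is finite. Moreover $S$ cannot be abelian: if it were, then $G' \leq Z(G)$, and since $G$ is perfect this would force $G = Z(G)$, making $G$ an abelian perfect group, hence trivial, contradicting simplicity of $S$. Thus $S$ is a non-abelian finite simple group, and in particular a finite perfect $\pi$-group where $\pi$ denotes the set of primes dividing $|S|$. Let $\Gamma$ be the Schur cover of $S$ supplied by Theorem~\ref{schurmult}.

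The crux is bounding $|G/N|$ uniformly in $N \unlhd_o G$. Given such $N$, observe that $Z(G)N/Z(G)$ is a normal subgroup of the simple group $S$, so either $N \leq Z(G)$ or $Z(G)N = G$. In the second case $G/N \cong Z(G)/(Z(G) \cap N)$ is abelian; as a quotient of the perfect group $G$ it is also perfect, hence trivial. In the first case $G/N$ is perfect (being an image of $G$) and $Z(G)/N \leq Z(G/N)$ with $(G/N)/(Z(G)/N) = S$, so $G/N$ is a finite perfect central extension of $S$. By Theorem~\ref{schurmult} it follows that $|G/N| \leq |\Gamma|$. This bound is independent of $N$, and since $G$ is residually finite with $\bigcap_{N \unlhd_o G} N = 1$, any collection of $|\Gamma| + 1$ distinct elements of $G$ would remain distinct in some finite quotient $G/N$, giving a contradiction. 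Hence $|G| \leq |\Gamma|$ and $G$ is finite.

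With $G$ now known to be finite, the second assertion is immediate: $G$ is itself a finite perfect central extension of $S$, and Theorem~\ref{schurmult} tells us every such extension is an image of $\Gamma$, which is a $\pi$-group. Therefore $G$ is a $\pi$-group, so every prime dividing $|G|$ lies in $\pi$, i.e.\ divides $|S| = |G/Z(G)|$. The main obstacle is the finiteness step; once one spots the simple/not-simple dichotomy for the image of $N$ in $S$, Theorem~\ref{schurmult} does all the real work, and the prime divisibility claim then drops out of a second appeal to the same theorem.
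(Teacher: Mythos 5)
The paper states this as an unproved corollary of Theorem~\ref{schurmult}, and your argument is exactly the intended derivation: reduce to the finite simple group $S=G/Z(G)$, observe that every finite continuous quotient of $G$ is either trivial or a perfect central extension of $S$, and invoke Theorem~\ref{schurmult} twice to get the uniform bound $|\Gamma|$ (hence finiteness of $G$ by residual finiteness) and the $\pi$-group conclusion. Your proof is correct, including the careful handling of the dichotomy $N\leq Z(G)$ versus $Z(G)N=G$ and the non-abelianness of $S$.
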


The outer automorphism groups of finite quasisimple groups are well-known.  We note here some salient features.

\begin{prop}\label{qsout}Let $Q$ be a finite quasisimple group.  Then:
\vspace{-12pt}
\begin{enumerate}[(i)]  \itemsep0pt
\item $\Aut(Q)$ acts faithfully on $Q/Z(Q)$;
\item $\Out(Q)$ is soluble of derived length at most $3$;
\item any abelian subgroup of $\Out(Q)$ has rank at most $4$;
\item $|\Out(Q)|$ is bounded by a function of $r(Q/Z(Q))$.\end{enumerate}\end{prop}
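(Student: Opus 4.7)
\emph{Part (i).} I would show the kernel of the natural action of $\Aut(Q)$ on $Q/Z(Q)$ is trivial. If $\alpha \in \Aut(Q)$ fixes every coset of $Z(Q)$, define $\delta: Q \to Z(Q)$ by $\delta(x) = \alpha(x)x^{-1}$, which lies in $Z(Q)$ by hypothesis. A short calculation, using centrality of $\delta(y)$, gives $\delta(xy) = \delta(x)\delta(y)$, so $\delta$ is a homomorphism from $Q$ into an abelian group. Since $Q$ is perfect, $\delta$ is trivial and $\alpha = \id$.

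\emph{Reduction of (ii)--(iv) to simple groups.} By (i) the natural map $\Aut(Q) \to \Aut(S)$, where $S := Q/Z(Q)$, is injective. Its restriction to $\Inn(Q)$ surjects onto $\Inn(S)$, since both are parametrised by $Q/Z(Q)$. Hence the induced map $\Out(Q) \to \Out(S)$ is injective. The group $S$ is a non-abelian finite simple group (non-abelian since $Q$ is perfect and non-trivial), so it suffices to verify the three conclusions for $\Out(S)$.

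\emph{Appeal to CFSG.} For alternating and sporadic $S$ we have $|\Out(S)| \leq 4$ and all three claims are immediate. For $S$ of Lie type over $\bF_{q}$ with $q = p^k$, I would invoke the standard description $\Out(S) = D \rtimes (F \rtimes G)$, with $D$ abelian (diagonal), $F$ cyclic of order $k$ (field), and $G \leq \Sym(3)$ (graph). From this, (ii) follows since $D$, $F$, $G$ each contribute at most one to derived length, and one checks directly that the composite has derived length at most $3$; (iii) follows by inspecting ranks of abelian subgroups in the semidirect decomposition case by case across the families. For (iv) one uses that a Lie-type group of Lie rank $\ell$ over $\bF_{p^k}$ contains elementary abelian $p$-subgroups whose rank grows with both $\ell$ and $k$ (for instance inside a unipotent radical), so $r(S)$ dominates both parameters; since $|D|$, $|F|$, and $|G|$ are bounded in terms of $\ell$ and $k$ (indeed $|G| \leq 6$ always), $|\Out(S)|$ is bounded by a function of $r(S)$.

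\emph{Main obstacle.} The hardest point is (iv): (ii) and (iii) are structural consequences of the shape of $\Out(S)$ alone, whereas (iv) requires a ``rank-versus-field-size'' bound forcing $r(S) \to \infty$ as $|\Out(S)| \to \infty$. The contribution of field automorphisms is the sensitive case, and I would handle it by exhibiting explicit elementary abelian $p$-subgroups in unipotent subgroups of $S$ whose rank scales linearly with $k = \log_p q$, then combining with the analogous scaling in the Lie rank $\ell$.
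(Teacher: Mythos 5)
Your proposal is essentially correct, and it is worth noting that the thesis offers no proof of this proposition at all: it is stated as a collection of well-known facts about automorphisms of quasisimple groups, in the spirit of the neighbouring results that are cited to the literature. Your part (i) is the standard argument ($\delta(x)=\alpha(x)x^{-1}$ defines a homomorphism $Q\to Z(Q)$, which must vanish on $Q=Q'$), and your reduction is sound: injectivity of $\Aut(Q)\to\Aut(S)$ from (i) does give injectivity of $\Out(Q)\to\Out(S)$, since an $\alpha$ whose image is $\mathrm{inn}_{\bar q}$ satisfies $\alpha\circ\mathrm{inn}_q^{-1}=\mathrm{id}_Q$ by faithfulness, and $r(Q/Z(Q))=r(S)$ so (iv) also transfers. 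Two small points deserve more care than your sketch gives them. First, for (ii) the naive additive bound on derived length from $D\rtimes(F\rtimes G)$ is $1+1+2=4$ when $G\cong\Sym(3)$; the bound of $3$ requires observing that the only case with $G\cong\Sym(3)$ is type $D_4$, where the field and graph automorphisms commute, so $FG\cong F\times\Sym(3)$ still has derived length $2$. Second, for (iv) your "rank dominates field degree" step is right because a single root subgroup is elementary abelian of rank $k=\log_p q$ (or contains such a subgroup in the twisted cases), and the diagonal group in type $A_\ell$ has order $\gcd(\ell+1,q-1)\leq\ell+1$, which is controlled by the Lie rank and hence by $r(S)$; this is consistent with Lemma \ref{classicpwr} of the thesis, which you could cite to make the unipotent-rank estimate precise. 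With those two points filled in, the argument is complete (modulo the Classification and the Steinberg description of $\Aut(S)$, which any proof of this statement must invoke).
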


\begin{defn}Let $G$ be a finite quasisimple group.  If $G/Z(G)$ is isomorphic to a finite simple group of Lie type, define $\deg(G)$ to be the Lie rank of $G/Z(G)$.  (If $G/Z(G)$ arises as a group of Lie type in multiple ways, define $\deg(G)$ to be the largest Lie rank that occurs.)  Otherwise, define $\deg(G)$ to be the smallest degree of a faithful permutation action of $G/Z(G)$.\end{defn}

\begin{thm}\label{qsexsec}Let $G$ be a finite quasisimple group, and let $K$ be a finite group.  Suppose that $G$ is $K$-free.  Then $\deg(G)$ is bounded by a function of $K$.\end{thm}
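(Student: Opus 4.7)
The plan is to invoke the Classification of Finite Simple Groups applied to $S := G/Z(G)$ and bound $\deg(G)$ in each family separately. Throughout, note that $G$ involves $S$ (take $M=G$, $N=Z(G)$), so $G$ being $K$-free implies $S$ is $K$-free, and $\deg(G)$ depends only on $S$; so it suffices to bound a numerical invariant of $S$ alone. Moreover, if a subgroup $H \leq S$ is isomorphic to $K$, then $\pi^{-1}(H)/Z(G) \cong K$ is a section of $G$, so such an $H$ cannot exist.

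The sporadic simple groups and the Tits group form a finite list, so $\deg(G)$ is bounded by an absolute constant. The exceptional Lie types ($G_2$, $F_4$, $E_6$, $E_7$, $E_8$, along with Suzuki, Ree and triality variants) all have Lie rank at most $8$, again giving an absolute bound on $\deg(G)$ that uses nothing about $K$. If $S \cong \mathrm{Alt}(n)$ with $n \geq 5$, then $\deg(G) = n$, and I would use the standard fact that $\mathrm{Sym}(m) \hookrightarrow \mathrm{Alt}(n)$ whenever $n \geq m+2$, via the device of composing each odd permutation of $\{1,\ldots,m\}$ with the transposition $(n-1,n)$ so as to land in $\mathrm{Alt}(n)$. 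Combined with Cayley's embedding $K \lesssim \mathrm{Sym}(|K|)$, this exhibits $K$ inside $\mathrm{Alt}(n) = S$ as soon as $n \geq |K|+2$, contradicting $K$-freeness; hence $n \leq |K|+1$.

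The substantive case is when $S$ is a classical simple group of Lie type of unbounded rank $r$ (types $A_r, {}^2A_r, B_r, C_r, D_r, {}^2D_r$). The plan is to construct an embedding $\mathrm{Alt}(m) \hookrightarrow S$ with $m$ a linear function of $r$, and then apply the alternating case just handled to obtain $m \leq |K|+1$, and hence $r \leq |K| + O(1)$. For $A_r$, permutation matrices of even permutations of $\{1,\ldots,r+1\}$ lie in $\mathrm{SL}_{r+1}(q)$, and the only scalar permutation matrix is the identity, so they give a faithful copy of $\mathrm{Alt}(r+1)$ inside $\mathrm{PSL}_{r+1}(q)$. For ${}^2A_r$, the same permutation matrices preserve the standard Hermitian form, so the same argument applies inside the unitary simple quotient. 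For $B_r, C_r, D_r, {}^2D_r$, the subgroup $\mathrm{Sym}(r)$ of the Weyl group of the relevant type is realised inside the isometry group by permuting the summands of a hyperbolic (or orthonormal) decomposition of the natural module, and the intersection with the centre of scalars is again trivial, so one obtains $\mathrm{Alt}(r) \hookrightarrow S$.

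The main obstacle is this last step: the case-by-case embedding verification for each classical family, particularly checking that the permutation-type subgroup descends faithfully to the simple quotient $S$ (not merely to $\mathrm{SL}/Z$ of the matrix group) and arranging, in the twisted families, for the ambient basis to be fixed by the Frobenius-type involution cutting out $S$. Low-rank coincidences between classical, alternating and sporadic groups are absorbed into the first two cases and require no separate treatment, since any such coincidence forces $\deg(G)$ to lie below an absolute constant regardless of $K$.
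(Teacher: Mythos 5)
Your argument is correct, but note that the paper does not actually prove Theorem \ref{qsexsec}: its ``proof'' is a citation to Babai, Cameron and P\'alfy \cite{BCP}, where this is essentially the key lemma behind their bound on orders of primitive groups with restricted non-abelian composition factors. What you have written is a self-contained reconstruction of the standard argument underlying that citation, and every step checks out: the reduction to $S=G/Z(G)$ is valid because sections of $S$ lift to sections of $G$ and $\deg$ depends only on $S$; the sporadic and exceptional families carry absolute bounds; the map $\sigma\mapsto\sigma\cdot(n-1,n)^{\epsilon(\sigma)}$ really is an embedding of $\Sym(m)$ into $\Alt(n)$ for $n\geq m+2$, since the transposition commutes with $\Sym(m)$; and the classical-group step you flag as the main obstacle is routine. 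For that last step the only points needing care are exactly the ones you identify: a scalar permutation (or block-permutation) matrix is the identity, so your copy of $\Sym(m)$ meets the group of scalars trivially and descends faithfully to the simple quotient; and in the orthogonal types the determinant and spinor norm define a homomorphism from $\Sym(r)$ to an abelian $2$-group, so the perfect subgroup $\Alt(r)$ (or $\Alt(r-1)$ in type ${}^2D_r$, after splitting off the anisotropic plane) automatically lands in $\Omega$. This produces $\Alt(m)\hookrightarrow S$ with $m$ within $2$ of the Lie rank in every classical family, which together with your alternating bound gives $\deg(G)\leq\max(|K|+2,C)$ for an absolute constant $C$ coming from the bounded families --- certainly a function of $K$, as required.
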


\begin{proof}See for instance \cite{BCP}.\end{proof}

The significance of quasisimple groups for our purposes is that they feature in the generalised pro-Fitting subgroup of a finite group.  This concept can be generalised to profinite groups, as defined below; some consequences of this definition will be obtained in Chapter 4.

\begin{defn}Let $G$ be a profinite group.  A \emph{component} of $G$ is a subnormal subgroup that is quasisimple.  Write $\Comp(G)$ for the set of components of $G$.  Given a set of primes $\pi$, let $\Comp_\pi(G)$ be the set of those components $Q$ of $G$ such that $p$ divides $|Q|$ for every $p \in \pi$.  For any $\pi$, the set $\Comp_\pi(G)$ admits a natural action of $G$ induced by conjugation.  The \emph{layer} of $G$ is $E(G) := \langle \Comp(G) \rangle$; write $E_\pi(G) = \langle \Comp_\pi(G) \rangle$.  Say $G$ is \emph{layer-free} if $E(G)=1$.  Define also $E^*_\pi(G)$ to be the lift of $E_\pi(G/O_\pi(G))$ to $G$.

The \emph{generalised pro-Fitting subgroup} $F^*(G)$ of a profinite group is the group generated by $E(G)$ and $F(G)$.\end{defn}

\begin{thm}\label{finopil}Let $G$ be a finite group.  Then:
\vspace{-12pt}
\begin{enumerate}[(i)]  \itemsep0pt
\item any two distinct components of $G$ commute;
\item $[E(G),F(G)]=1$;
\item $C_G(F^*(G)) \leq F^*(G)$, so in particular $F^*(G) = 1$ if and only if $G=1$.\end{enumerate}\end{thm}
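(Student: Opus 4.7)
The plan is to prove (i), (ii), and (iii) in sequence, using subnormal-subgroup arguments, the three subgroups lemma, and (for (iii)) induction on $|G|$. The underlying technical observation is that any subnormal subgroup of a quasisimple group $Q$ is either contained in $Z(Q)$ or equal to $Q$: this follows from $Q/Z(Q)$ being simple together with $Z(Q)\leq\Phi(Q)$ (the latter because $Q$ is perfect), plus induction on subnormal chain length.

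For (i), the intersection $Q_1\cap Q_2$ is subnormal in each $Q_i$, and the alternatives that force $Q_1=Q_2$ or make a quasisimple group abelian are excluded, so $Q_1\cap Q_2\leq Z(Q_1)\cap Z(Q_2)$. To conclude $[Q_1,Q_2]=1$, I show $Q_2\leq N_G(Q_1)$ by induction along a subnormal chain $Q_1\unlhd H_1\unlhd\cdots\unlhd G$: at each step the action of $Q_2$ on a layer factors through $\Aut$ of a quasisimple piece, whose outer part is soluble by Proposition \ref{qsout}(ii), so the perfect image of $Q_2$ lies in $\Inn$, and simplicity of the quasisimple quotient reduces the action further. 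Once $Q_2$ normalizes $Q_1$, $[Q_1,Q_2]\leq Q_1\cap Q_2\leq Z(Q_1)$, so $[[Q_1,Q_2],Q_1]=1$, and the three subgroups lemma with $Q_1=Q_1'$ gives $[Q_1,Q_2]=1$.

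For (ii), since $F(G)$ is generated by the $O_p(G)$, it suffices to show $[Q,O_p(G)]=1$ for each component $Q$ and each prime $p$. Using (i), the normal closure $Q^G$ is a central product of conjugates of $Q$, so $Q^G/Z(Q^G)$ is a direct product of non-abelian simple groups and admits no non-trivial normal $p$-subgroup; hence $Q^G\cap O_p(G)\leq Z(Q^G)$. Combined with $[Q,O_p(G)]\leq Q^G\cap O_p(G)$ (from $Q\leq Q^G\unlhd G$ and $O_p(G)\unlhd G$), this gives $[[Q,O_p(G)],Q]=1$, and the three subgroups lemma with $Q=Q'$ yields $[Q,O_p(G)]=1$.

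For (iii), set $C=C_G(F^*(G))\unlhd G$ and induct on $|G|$. Then $F(C)$ is a normal nilpotent subgroup of $G$, so $F(C)\leq F(G)\cap C\leq Z(F(G))\leq Z(F^*(G))$ (invoking (ii)); components of $C$ are components of $G$, so $E(C)\leq E(G)\cap C\leq Z(E(G))\leq Z(F^*(G))$, which forces $E(C)=1$ as a perfect subgroup of an abelian group. Thus $F^*(C)=F(C)\leq Z(F^*(G))\leq Z(C)$. If $|C|<|G|$, the inductive hypothesis yields $C=C_C(F^*(C))\leq F^*(C)\leq F^*(G)$; the edge case $C=G$ forces $F^*(G)\leq Z(G)$, so $E(G)=1$ and $F(G)=Z(G)=F^*(G)$, and a minimal normal subgroup of a hypothetical non-trivial $G/Z(G)$ is either non-abelian (whose preimage has a perfect derived subgroup that is a component, contradicting $E(G)=1$) or elementary abelian (giving a nilpotent normal subgroup exceeding $F(G)=Z(G)$), so $G=Z(G)=F^*(G)$ and $C\leq F^*(G)$. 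The main obstacle is the normalization step in (i): perfectness of $Q_2$ does not immediately rule out a non-trivial permutation action on the conjugates of $Q_1$ (as symmetric groups do admit perfect subgroups), so one must carefully combine $\Out$-solubility with simplicity at each level of the subnormal chain.
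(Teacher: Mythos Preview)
The paper does not prove this theorem itself; it simply cites Suzuki's textbook. So there is no ``paper's approach'' to compare against, but your argument can be measured against the standard proofs (Suzuki, Aschbacher, Kurzweil--Stellmacher).

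Parts (ii) and (iii) are essentially correct once (i) is in hand. In the $C=G$ branch of (iii), the claim that a non-abelian minimal normal subgroup of $G/Z(G)$ produces a component is right, though it deserves one more line: if $N_i$ is the preimage of a simple factor, then from $N_i = N_i'Z(G)$ one gets $N_i' = [N_i'Z(G),\,N_i'Z(G)] = N_i''$, so $N_i'$ is perfect and hence quasisimple and subnormal.

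The real gap is exactly where you flag it: the normalisation step in (i). Your downward induction along a subnormal chain $Q_1 \unlhd H_1 \unlhd \cdots \unlhd G$ asks $Q_2$ to normalise each $H_i$, but the $H_i$ are arbitrary groups, not quasisimple, so there is no ``$\Aut$ of a quasisimple piece'' through which the action factors. Worse, the ingredient you reach for --- solubility of $\Out(Q)$ for $Q$ simple (Proposition~\ref{qsout}(ii)) --- is the Schreier conjecture, known only via the Classification. Theorem~\ref{finopil} is far older and more elementary than CFSG and must not depend on it.

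The standard CFSG-free route proves the stronger lemma: \emph{if $K$ is a component of $G$ and $H$ is subnormal in $G$, then $K \leq H$ or $[K,H]=1$}. For $H \unlhd G$ with $K \not\leq H$, set $N_0 = H$ and $N_{i+1} = [N_i,K]$; taking a subnormal series $K = K_0 \unlhd \cdots \unlhd K_m = G$, the fact that $K \leq K_{j-1} \unlhd K_j$ forces $[K_j,K] \leq K_{j-1}$, whence $N_i \leq K_{m-i} \cap H$ and so $N_m \leq K \cap H \leq Z(K)$, giving $N_{m+1}=1$. Now the three-subgroups lemma with $K = K'$ rolls this back: $[N_{m-1},K,K]=[K,N_{m-1},K]=1$ gives $[K,K,N_{m-1}]=1$, hence $N_m = 1$, and inductively $[H,K]=1$. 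The general subnormal case follows by induction on defect. Applying this with $K=Q_1$, $H=Q_2$ yields (i) directly, without ever needing $Q_2$ to normalise $Q_1$ as a separate step.
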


\begin{proof}See \cite{Suz}.\end{proof}

\section{Properties of linear groups}\label{linprelim}

\begin{defn}  A group $G$ is \emph{linear of degree $n$} if $G$ embeds as an abstract subgroup of $\GL(n,F)$ for some field $F$.  Write $\GL(n,p^e)$ for $\GL(\bF^n_{p^e})$.  Let $\mcL(n,\pi)$ denote the class of finite groups that are isomorphic to a subgroup of $\GL(n,p^e)$, for some integer $e$ and $p \in \pi$.

Given a profinite group $G$, define $O^{(n,\pi)}(G) := O^{\mcL(n,\pi)}(G)$.  Define $O^{(n,p)^*}(G)$ to be the intersection of all open subgroups $N$ such that $G/N \in \mcL(n,p)$ and $G/N$ is a $p'$-group.  Define $O^{(n,\pi)^*}(G) := \bigcap_{p \in \pi} O^{(n,p)^*}(G)$.\end{defn}

In this section, we will consider some properties of soluble linear groups of degree $n$; these will have consequences later for profinite groups $G$ such that $O^{(n,\bP)}(G)=1$, or such $O^{(n,p)^*}(G)=1$ for some $p$.

\begin{thm}[Zassenhaus \cite{Zas}; Newman \cite{New}]
Let $G$ be a soluble linear group of degree $n$.  Then $G^{(\db(n))}=1$, where $\db(n)$ is the smallest integer exceeding
\[ 5\log_9(\max(58,n-2)) + 10 - 15(\log 2)(2 \log 3)^{-1}.\]\end{thm}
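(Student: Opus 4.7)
The plan is to proceed by induction on $n$, reducing progressively to the primitive irreducible case, where the structure of the group is tightly constrained. First I would reduce to the case where $F$ is algebraically closed (replacing $G$ by its image in $\GL(n, \overline{F})$ does not change the derived length) and then consider three regimes depending on how $G$ acts on $V := F^n$.

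If $G$ preserves a proper nonzero subspace $W$ of dimension $k$, then the kernel $N$ of the induced action on $W \oplus V/W$ consists of unipotent matrices (a block of $1$'s on each diagonal block), so $N$ is nilpotent of class less than $n$, and in particular has bounded derived length. The quotient $G/N$ embeds in $\GL(k,F) \times \GL(n-k,F)$, so by induction has derived length at most $\max(\db(k), \db(n-k))$, which is strictly smaller than $\db(n)$. Hence we may assume $G$ acts irreducibly. If $G$ acts imprimitively with $k > 1$ blocks of dimension $m = n/k$, then the stabiliser $H$ of the block decomposition has index at most $k!$ in $G$, embeds into a direct product of $k$ copies of $\GL(m,F)$, and so by induction satisfies $H^{(\db(m))} = 1$; meanwhile $G/H$ is a soluble permutation group of degree $k$, whose derived length we bound using the well-known Dixon-type bound for soluble permutation groups. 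This gives the imprimitive recursion.

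In the remaining primitive irreducible case, I would invoke the Suprunenko–Zassenhaus structure theorem: the Fitting subgroup $F(G)$ is a central product of an abelian group with extraspecial $p$-groups, the central image of $F(G)$ in $G/Z(G)$ is an elementary abelian $p$-group on which $G/F(G)$ acts faithfully preserving a symplectic form, and $n$ is forced to be divisible by quite large factors coming from the orders of the extraspecial pieces. This forces $G/F(G)$ to embed in a product of symplectic groups over small fields whose total degree is logarithmic in $n$, and $F(G)$ itself to be nilpotent of bounded class. Feeding this back through the induction gives a recursion of roughly the form $\db(n) \leq \db(\lfloor c \log n \rfloor) + O(1)$, which iterates to $\db(n) = O(\log n)$ with a computable constant; obtaining Newman's sharp constants $5/\log 9$ and $10 - 15(\log 2)(2\log 3)^{-1}$, as well as the exact threshold $\max(58, n-2)$, is a matter of tracking the inequalities carefully through each reduction.

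The main obstacle is the primitive case: verifying the structure theorem and extracting numerically sharp bounds from it. The imprimitive and reducible reductions are essentially routine once the primitive case is settled, and the final recursion is straightforward to solve, so almost all the content lies in controlling primitive soluble linear groups and in Newman's careful accounting of constants.
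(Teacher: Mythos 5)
First, a point of orientation: the thesis does not prove this theorem. It appears in the preliminaries (Section 1.5), where results are quoted directly from the literature, and is attributed to Zassenhaus \cite{Zas} and Newman \cite{New}; so there is no proof in the paper to compare against, and your sketch has to be measured against the classical arguments. In outline you do follow their strategy (reduce to the irreducible, then primitive, case; use the structure of primitive soluble linear groups; let a Dixon-type bound for soluble permutation groups drive the imprimitive recursion). The imprimitive step is sound in spirit: for a block system with $k$ blocks of size $m$ one has $\log m + \log k = \log n$, so a permutation-group bound of order $\tfrac{5}{2}\log_3 k = 5\log_9 k$ and a block-stabiliser bound of order $5\log_9 m$ combine to $5\log_9 n$ -- which is exactly why that constant is the leading coefficient.

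The genuine gap is in the reducible reduction. You assert that $G/N$ has derived length at most $\max(\db(k),\db(n-k))$, ``which is strictly smaller than $\db(n)$''. That inequality is false: $\db$ is constant for all $n\le 60$ (the argument of the logarithm is $\max(58,n-2)$), and in general $\db(n-1)=\db(n)$ for most $n$, so already with $k=1$ you get $\max(\db(1),\db(n-1))=\db(n)$. What you actually need is $\max(\db(k),\db(n-k)) + d \le \db(n)$, where $d\ge 1$ is the derived length of the kernel $N$, and this fails for the same reason. Nor can the additive scheme be repaired by refining the flag: passing all the way to the completely reducible quotient leaves a unipotent kernel that is nilpotent of class up to $n-1$ and hence of derived length up to $\lceil \log_2 n\rceil \approx 1.58\log_3 n$, which is not ``bounded'' and is comparable to $\db(n)$ itself; adding it to a completely-reducible bound of order $\tfrac{5}{2}\log_3 n$ would give a leading coefficient of roughly $4.1\log_3 n$ rather than the stated $2.5\log_3 n$. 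The theorem holds with this coefficient only because a long unipotent derived series and a long completely reducible derived series cannot be stacked independently (a full unipotent radical forces one-dimensional blocks and hence an abelian semisimplification, and conversely), and exploiting that trade-off is the real content of Newman's argument. Your sketch locates all the difficulty in the primitive case, but the primitive case only affects the additive constant; it is the reducible/completely-reducible bookkeeping, which you dismiss as routine, that must be done non-additively for the stated bound to come out.
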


\begin{defn}Given a finite-dimensional vector space $V$, a subgroup of the general linear group $\GL(V)$ of $V$ is \emph{triangularisable} if it is conjugate to a subgroup of the group $Tr(V)$ of upper-triangular matrices with respect to some basis.\end{defn}

The following lemma is well-known:

\begin{lem}\label{linsylowlem}Let $G = \GL(V)$, where $V$ is an $n$-dimensional vector space over a finite field of characteristic $p$.  Then $Tr(V) = U(V) \rtimes D(V)$, where $D(V)$ is the group of diagonal matrices, and $U(V)$ is the group of upper unitriangular matrices, both with respect to the same basis as for $Tr(V)$.  Furthermore, $U(V)$ is a $p$-Sylow subgroup of $G$, and has nilpotency class $n-1$.\end{lem}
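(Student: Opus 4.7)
The plan is to establish the three claims in order: the semidirect product decomposition, the Sylow identification, and the nilpotency class.

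First I would handle the decomposition $Tr(V) = U(V) \rtimes D(V)$ by direct calculation. Fix the basis $e_1,\dots,e_n$ of $V$ defining $Tr(V)$, $U(V)$ and $D(V)$. For normality of $U(V)$ in $Tr(V)$, observe that the subset $U(V)$ is precisely the set of $g \in Tr(V)$ acting trivially on each quotient $\langle e_1,\dots,e_i \rangle / \langle e_1,\dots,e_{i-1}\rangle$; this subset is clearly closed under conjugation by $Tr(V)$. For $Tr(V) = U(V) D(V)$, given $T \in Tr(V)$ let $D$ be the diagonal matrix with the same diagonal entries as $T$ (which are non-zero since $T$ is invertible); then $TD^{-1} \in U(V)$ and $T = (TD^{-1})D$. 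Finally $U(V) \cap D(V) = \{I\}$ since a diagonal unitriangular matrix is the identity.

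Next I would identify $U(V)$ as a $p$-Sylow subgroup by comparing orders. Writing $q = p^e$, a standard count gives
\[|\GL(n,q)| = \prod_{i=0}^{n-1}(q^n - q^i) = q^{n(n-1)/2}\prod_{i=1}^{n}(q^i - 1),\]
and the second factor is coprime to $p$. On the other hand $|U(V)| = q^{n(n-1)/2}$, since a unitriangular matrix is determined by its $n(n-1)/2$ strictly-upper entries, each taking values in $\mathbb{F}_q$ freely. So $|U(V)|$ equals the $p$-part of $|\GL(n,q)|$, and $U(V)$ is a $p$-subgroup of that order, hence a $p$-Sylow.

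For the nilpotency class I would introduce the descending chain of subgroups $U_k$ ($1 \leq k \leq n$), where $U_k$ consists of matrices $I + N$ with $N_{ij} = 0$ whenever $j - i < k$. Then $U_1 = U(V)$, $U_n = \{I\}$, each $U_k$ is a normal subgroup of $U(V)$, and a routine block-entry computation using $(I+A)(I+B)(I+A)^{-1}(I+B)^{-1} \equiv I + (AB - BA) \pmod{\text{higher terms}}$ gives the key inclusion $[U_i, U_j] \leq U_{i+j}$. By induction this yields $\gamma_k(U(V)) \leq U_k$, and since $U_n = \{I\}$ we get $\gamma_n(U(V)) = 1$, so the class is at most $n-1$. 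For the matching lower bound I would exhibit a non-trivial iterated commutator: if $E_{ij}(a)$ denotes the elementary matrix $I + a e_{ij}$, then $[E_{ij}(a), E_{jk}(b)] = E_{ik}(ab)$ for $i < j < k$, so iterating gives a non-trivial element of $\gamma_{n-1}(U(V))$ of the form $E_{1,n}(1)$.

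The most delicate step is the last one: the upper-bound inclusion $[U_i,U_j] \leq U_{i+j}$ needs the commutator identity verified carefully, not just the leading-order approximation, because one works inside a non-abelian multiplicative group rather than a Lie algebra; I would expand $(I+A)(I+B) - (I+B)(I+A) = AB - BA$ and then use that $U_i \cdot U_j \subseteq U_{\min(i,j)}$ together with the shape of $AB$ and $BA$ (whose $(r,s)$ entries vanish unless $s - r \geq i + j$) to pin down the class of the commutator precisely.
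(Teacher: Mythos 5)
Your proof is correct and complete. The paper itself offers no proof of this lemma --- it is stated as ``well-known'' and used as a black box --- so there is nothing to compare against; your argument is the standard one. All three parts check out: the flag-quotient characterisation of $U(V)$ gives normality cleanly, the order count $|U(V)| = q^{n(n-1)/2}$ against $|\GL(n,q)| = q^{n(n-1)/2}\prod_{i=1}^{n}(q^i-1)$ identifies the Sylow subgroup, and you correctly flag the one delicate point, namely that $[U_i,U_j]\leq U_{i+j}$ must be verified in the group rather than read off from the Lie-algebra approximation; your sketch (writing the commutator as $I + (I+M)(AB-BA)$ with $M$ strictly upper triangular, so that every entry still vanishes below the $(i+j)$-th superdiagonal) closes that gap, and the elementary-matrix identity $[E_{ij}(a),E_{jk}(b)]=E_{ik}(ab)$ gives the matching lower bound $E_{1,n}(1)\in\gamma_{n-1}(U(V))$.
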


\begin{thm}[Mal'cev \cite{Mal}]Let $G$ be a soluble linear group of degree $n$ over an algebraically closed field.  Then there is a function $\eb(n)$ depending on $n$ alone such that $G$ has a triangularisable normal subgroup $T$ of index dividing $\eb(n)$.\end{thm}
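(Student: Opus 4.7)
The plan is to pass from $G$ to its Zariski closure and exploit algebraic-group structure. Concretely, embed $G \leq \GL(n, \overline{F})$ (where $\overline{F}$ is the algebraic closure of $F$) and let $H$ be the Zariski closure of $G$ in $\GL(n,\overline{F})$. Since solubility is an equational condition preserved under Zariski closure (the commutator subgroup of a closure is contained in the closure of the commutator), $H$ is again a soluble group, now with the structure of a linear algebraic group of dimension at most $n^2$.

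Let $H^\circ$ be the identity component of $H$, so $H^\circ \unlhd H$ and $H^\circ$ is a connected soluble linear algebraic group. By the Lie--Kolchin theorem, $H^\circ$ is triangularisable: after conjugating by some $g \in \GL(n,\overline{F})$, we have $H^\circ \leq Tr(V)$ for a suitable basis of $V = \overline{F}^n$. Now set $T := G \cap H^\circ$. Then $T$ is a normal subgroup of $G$ (since $H^\circ \unlhd H$ and $G \leq H$), it is triangularisable (as it embeds in $H^\circ$, which is triangularisable), and $G/T$ embeds in $H/H^\circ$. So we have $|G:T|$ dividing $|H : H^\circ|$, and the problem reduces to bounding $|H : H^\circ|$ in terms of $n$ alone.

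The main obstacle is this last bound on the number of components. It is not enough to know $H$ is soluble --- cyclic subgroups of $\GL(n,\overline{F})$ can be arbitrarily large, but the issue is that such subgroups have large Zariski closures that are already connected, so the component group stays bounded. The key input is the general fact from algebraic group theory that an algebraic subgroup of $\GL(n,\overline{F})$ has component group of order bounded by a function of $n$ alone, which comes from the bound on the degree of the subvariety $H \subseteq \GL(n,\overline{F})$ in terms of the ambient dimension $n^2$ (each component contributes at least $1$ to the degree of $H$). Alternatively, one can give a more elementary soluble-specific bound: $H/H^\circ$ acts on $H^\circ$ by conjugation, preserving a maximal torus $S$ of $H^\circ$ and the unipotent radical $U$; the induced action on the character lattice of $S$ (of rank at most $n$) embeds the reductive quotient of $H/H^\circ$ into $\GL(n,\bZ)$, and one controls the kernel via Theorem \ref{cinvthm}(iii) applied to the action on the composition factors of a characteristic filtration of $U$ by $p$-groups for the relevant primes.

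Define $\eb(n)$ to be any integer-valued function of $n$ majorising the component-group bound obtained above; then $|G:T|$ divides $\eb(n)$, completing the proof. The hard part is therefore not the Lie--Kolchin step, which is standard, but the uniform control on $|H:H^\circ|$ over all soluble algebraic subgroups of $\GL(n,\overline{F})$.
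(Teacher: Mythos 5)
Your reduction to bounding $|H:H^\circ|$ is set up correctly (Zariski closure preserves solubility, Lie--Kolchin applies to $H^\circ$, and $G/(G\cap H^\circ)\cong H/H^\circ$ since a dense subgroup meets every component), but the central claim --- that $|H:H^\circ|$ is bounded by a function of $n$ alone for soluble algebraic subgroups $H\leq\GL(n,\overline{F})$ --- is false, and the proof breaks exactly there. Take $H$ to be the group of diagonal matrices whose entries are $m$-th roots of unity (with $m$ prime to the characteristic): this is a finite, hence Zariski-closed, abelian algebraic subgroup, so $H$ is its own closure, $H^\circ=1$, and $|H:H^\circ|=m^n$ is unbounded. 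The "general fact" you invoke about component groups does not exist (it already fails for $\mu_m\leq\GL(1)$), and the degree argument offered in its support is also wrong: the degree of a subvariety is not controlled by the ambient dimension --- $\mu_m\subset\mathbb{A}^1$ has degree $m$. What is true (Jordan, Minkowski) is that finite linear groups of degree $n$ have an \emph{abelian} normal subgroup of bounded index, not a trivial one.

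The theorem survives because in these examples $G$ is already triangularisable; the correct $T$ must in general be strictly larger than $G\cap H^\circ$, so you have chosen the wrong normal subgroup. Your "alternative" sketch points in a workable direction --- the image of $N_H(S)$ in $\Aut(X^*(S))\leq\GL(r,\bZ)$, $r\leq n$, is finite soluble of order bounded by Minkowski's theorem --- but it conflates bounding the component group with bounding the index of a triangularisable normal subgroup: the kernel $C_H(S)$ of that action contains precisely the large diagonalisable pieces above, which must be \emph{absorbed into} $T$ rather than excluded from it, and Theorem \ref{cinvthm}(iii) gives no purchase on them. A correct completion along these lines would show that the preimage in $H$ of this kernel, together with $H^\circ$, is a triangularisable normal subgroup of index dividing a Minkowski-type bound; the classical route (Mal'cev, Suprunenko, Wehrfritz) instead reduces to the completely reducible case, decomposes via Clifford's theorem into primitive blocks permuted by $G$, bounds the primitive irreducible soluble case modulo scalars by analysing nilpotent-of-class-two normal subgroups and their symplectic actions, and controls the permutation parts with Jordan's theorem. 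Note that the paper itself offers no proof --- it cites Mal'cev --- so there is no internal argument to compare against; but as written your proof has a fatal gap at its key step.
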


\begin{cor}\label{malcor}There are functions $\eb(n)$ and $\db(n)$ depending on $n$ alone, such that if $\pi$ is a set of primes, and $G$ is a prosoluble group such that $O^{(n,\pi)}(G)=1$, then $G^{(\db(n))}=1$ and $(G^{\eb(n)})'$ is both pronilpotent and pro-$\pi$, and moreover $(G^{\eb(n)})'=1$ whenever $O^{(n,\pi)^*}(G)=1$.\end{cor}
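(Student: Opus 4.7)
The plan is to read the hypotheses $O^{(n,\pi)}(G)=1$ and $O^{(n,\pi)^*}(G)=1$ as saying that $G$ is residually in an appropriate class of finite soluble linear groups of degree $n$, and then to apply the Zassenhaus--Newman derived length bound and Mal'cev's triangularisability theorem to each approximating quotient $G/N$ before reassembling.  Throughout, every $G/N \in \mcL(n,\pi)$ is soluble because $G$ is prosoluble.

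For the derived length bound, fix $N$ with $G/N \in \mcL(n,\pi)$.  Then $G/N$ is a finite soluble linear group of degree $n$, so $(G/N)^{(\db(n))}=1$ by Zassenhaus--Newman, i.e.\ $G^{(\db(n))} \leq N$.  Since the intersection of all such $N$ is $O^{(n,\pi)}(G)=1$, we conclude $G^{(\db(n))}=1$.

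For the structural claim on $(G^{\eb(n)})'$, fix $N$ with $G/N \in \mcL(n,p)$ for some $p \in \pi$ and embed $G/N \leq \GL(n,\overline{\bF_p})$.  Mal'cev's theorem gives a triangularisable normal subgroup $T/N$ of index dividing $\eb(n)$ in $G/N$, so that $G^{\eb(n)}N/N = (G/N)^{\eb(n)} \leq T/N$.  After conjugating $T/N$ into $Tr(V)$, Lemma \ref{linsylowlem} gives $(T/N)' \leq U(V)$, which is a $p$-group of nilpotency class at most $n-1$.  Hence $(G^{\eb(n)})'N/N \leq (T/N)'$ is a $p$-group of class at most $n-1$ with $p \in \pi$.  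Since the relevant $N$ have trivial intersection, checking on the cofinal system of open normals $\{(G^{\eb(n)})' \cap N\}$ shows $(G^{\eb(n)})'$ is pro-$\pi$ and nilpotent of class at most $n-1$, in particular pronilpotent.

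For the strengthened conclusion, assume $O^{(n,\pi)^*}(G)=1$ and restrict to $N$ such that $G/N \in \mcL(n,p)$ is a $p'$-group.  Then the Mal'cev subgroup $T/N$ is itself a $p'$-group, while the previous paragraph showed $(T/N)'$ is a $p$-group; hence $(T/N)'=1$, so $T/N$ is abelian and $((G/N)^{\eb(n)})'=1$, giving $(G^{\eb(n)})' \leq N$.  Intersecting over all such $N$ yields $(G^{\eb(n)})'=1$.  The main obstacle should just be the bookkeeping needed to pass bounds from the finite quotients $G/N$ back to $(G^{\eb(n)})'$ itself — this rests on the fact that "pro-$\pi$" and "nilpotent of class $\leq n-1$" are stable under closed subgroups of products of groups with these properties, and that the $N$ chosen form a cofinal system of open normals.
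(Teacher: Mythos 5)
Your proposal is correct and follows essentially the same route as the paper: apply the Zassenhaus--Newman bound and Mal'cev's theorem to each finite linear quotient $G/N$, use Lemma \ref{linsylowlem} to see that the derived subgroup of the triangularisable Mal'cev subgroup is unitriangular (hence a nilpotent $p$-group, trivial in the $p'$-case), and then intersect over the defining family of $N$. The paper compresses the final inverse-limit bookkeeping into ``the result now follows from the definitions,'' which is exactly the step you spell out.
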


\begin{proof}Let $H$ be an image of $G$ such that $H \leq \GL(n,p^e)$ for some $p \in \pi$ and some $e$.  By Zassenhaus's theorem, $H^{(\db(n))}=1$.  By Mal'cev's theorem, $H$ has a triangularisable normal subgroup of index dividing $\eb(n)$ over the algebraic closure of $\bF_p$, and hence over $\bF_{p^{e'}}$ for some $e'$, since $H$ is finite.  Hence $H^{\eb(n)}$ is triangularisable over $\bF_{p^{e'}}$; hence $(H^{\eb(n)})'$ is a (nilpotent) $p$-group by Lemma \ref{linsylowlem}.  The result now follows from the definitions of $O^{(n,\pi)}(G)$ and $O^{(n,\pi)^*}(G)$.\end{proof}

\section{Control of $p$-transfer in profinite groups}\label{ptransprelim}

An important notion in finite group theory is the \emph{transfer map}, which is a homomorphism that is defined from a finite group to any of its abelian sections.   We will not be using the transfer map directly, but we will be using the closely related notion of control of transfer, and more precisely control of $p$-transfer.  Control of transfer is a concept that generalises easily to profinite groups; see for instance \cite{GRS}.

\begin{defn}Let $G$ be a profinite group, let $H$ be a subgroup, and let $H \leq K \leq G$.  Say $K$ \emph{controls transfer} from $G$ to $H$ if $G' \cap H = K' \cap H$.  If in addition $H$ is a $p$-Sylow subgroup of $G$, then say $K$ \emph{controls $p$-transfer} in $G$.\end{defn}

The following theorem was first proved by Tate (see \cite{Tat}); however, for our purposes we will use a more recent interpretation due to Gagola and Isaacs (\cite{Gag}), in terms of control of $p$-transfer.  Both \cite{Tat} and \cite{Gag} state the result for finite groups, but the generalisation to profinite groups is immediate.

\begin{thm}[Tate]\label{tate}Let $G$ be a profinite group, let $S$ be a $p$-Sylow subgroup of $G$, and let $S \leq K \leq G$.  The following are equivalent:
\vspace{-12pt}
\begin{enumerate}[(i)]  \itemsep0pt
\item $G' \cap S = K' \cap S$;
\item $(G'G^p) \cap S = (K'K^p) \cap S$;
\item $(G'O^p(G)) \cap S = (K'O^p(K)) \cap S$;
\item $O^p(G) \cap S = O^p(K) \cap S$.\end{enumerate}\end{thm}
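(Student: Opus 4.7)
My plan is to reduce to the finite case by an inverse-limit argument, after which the equivalences for finite groups reduce to Tate's classical theorem (in the form of Gagola--Isaacs).

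For the reduction, fix $S \leq K \leq G$ profinite. For each subgroup $X(H) \in \{H', H'H^p, H'O^p(H), O^p(H)\}$ and each $H \in \{G,K\}$, one checks that $X(H)$ is the intersection over $N \unlhd_o G$ of the preimages in $H$ of the corresponding subgroups $X(HN/N)$; intersecting with the closed subgroup $S$ commutes with this intersection. So it suffices to prove the four equivalences when $G$ is a finite group and $K$ is a subgroup containing a Sylow $p$-subgroup $S$.

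In the finite setting the main structural observation is that $H = SO^p(H)$ for $H \in \{G,K\}$, because $S$ projects onto a Sylow $p$-subgroup of the $p$-group $H/O^p(H)$. This gives a canonical isomorphism $H/O^p(H) \cong S/(S \cap O^p(H))$. Taking abelianizations yields $H/H'O^p(H) \cong S/S'(S \cap O^p(H))$, and further quotienting by $p$-th powers (noting $O^p(H) \leq H^p$, since $H/H^p$ has exponent dividing $p$) yields $H/H'H^p \cong S/S'S^p(S \cap O^p(H))$. Consequently,
\[ S \cap H'O^p(H) = S'(S \cap O^p(H)), \qquad S \cap H'H^p = S'S^p(S \cap O^p(H)). \]
Moreover, the image of $S$ in the abelian group $H/H'$ is its full Sylow $p$-subgroup, so $|S:S \cap H'|$ equals the $p$-part of $|H/H'|$, which equals $|H/H'O^p(H)| = |S:S \cap H'O^p(H)|$; combined with $H' \leq H'O^p(H)$ this forces $S \cap H' = S \cap H'O^p(H)$.

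These identities make (i)$\Leftrightarrow$(iii) immediate, and they show that (iv) forces (ii) and (iii) by substituting $S \cap O^p(G) = S \cap O^p(K)$ into the right-hand sides above. The single non-trivial direction---and the main obstacle---is to derive (iv) from (i) (equivalently from (ii) or (iii)); concretely, to upgrade $S'(S \cap O^p(G)) = S'(S \cap O^p(K))$ to $S \cap O^p(G) = S \cap O^p(K)$, which amounts to showing $S' \cap O^p(G) \leq O^p(K)$. This is Tate's classical theorem: his original cohomological argument \cite{Tat} observes that the hypothesis is equivalent to the restriction $H^1(G;\bF_p) \to H^1(K;\bF_p)$ being an isomorphism and then propagates the isomorphism to all $H^n(-;\bF_p)$ via a transfer/corestriction argument, forcing $K/O^p(K) \cong G/O^p(G)$. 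Alternatively, Gagola and Isaacs give a purely group-theoretic proof through the focal subgroup theorem \cite{Gag}. My intention is simply to invoke this finite result after the reduction.
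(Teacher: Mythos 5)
This is the same route the paper takes: the paper gives no proof of this theorem beyond citing Tate and Gagola--Isaacs for the finite case and asserting that the profinite generalisation is immediate, which is exactly your reduction-plus-citation strategy (and your finite-level identities are correct). The one step you state too quickly is the descent of the profinite hypotheses to the finite quotients: ``intersecting with $S$ commutes with the intersection'' only shows that the finite-level identities assemble into the profinite ones, whereas transferring, say, hypothesis (i) for $G$ down to each $G/N$ requires the further observation that $S \cap \overline{G'}N = (S \cap \overline{G'})(S \cap N)$ for $N \unlhd_o G$ (and likewise for $K$ and the other three subgroups), which one gets by comparing the Sylow orders of the relevant normal subgroups, all of which have finite index over their intersections with $N$.
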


From now on, the statement `$K$ controls $p$-transfer in $G$' will be taken to mean any of the four equations above interchangeably.

As a simple corollary, there is a connection between control of $p$-transfer and $p'$-normality:

\begin{cor}\label{ptranscomp}Let $G$ be a profinite group, and let $S$ be a $p$-Sylow subgroup of $G$.  Then $S$ itself controls $p$-transfer in $G$ if and only if $G$ is $p'$-normal.\end{cor}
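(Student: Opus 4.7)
The plan is to unpack both sides of the equivalence in terms of $O^p(G)$, using Tate's theorem to recast the hypothesis on $S$, and then matching $O^p(G)$ with the normal $p'$-Hall subgroup.

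First I would observe that since $S$ is pro-$p$, the residual $O^p(S)$ is trivial. Thus by the equivalent form (iv) of Tate's theorem applied with $K=S$, the statement ``$S$ controls $p$-transfer in $G$'' is simply the assertion $O^p(G) \cap S = 1$. The task therefore becomes: show that $S \cap O^p(G) = 1$ if and only if $G$ has a normal $p'$-Hall subgroup.

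For the forward direction, set $N := O^p(G)$, so that $G/N$ is pro-$p$. The image of $S$ in $G/N$ is a pro-$p$ subgroup containing a Sylow $p$-subgroup, so $SN = G$. Combined with $S\cap N = 1$, the natural continuous homomorphism $S \to G/N$ is a bijection between compact Hausdorff groups, hence a topological isomorphism by the proposition preceding this section (citing Hof Remark 1.8). This gives $|G:N| = |S|$, and then the index formula $|G| = |N||G:N| = |S||G:S|$ forces $|N| = |G:S|$, which is a $p'$-number. Therefore $N$ is pro-$p'$, and being a closed normal subgroup of $p$-power index, it is the desired normal $p'$-Hall subgroup of $G$.

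For the reverse direction, let $N$ be a normal $p'$-Hall subgroup of $G$. Since $G/N$ is pro-$p$, we have $O^p(G) \leq N$. The quotient $N/O^p(G)$ sits inside the pro-$p$ group $G/O^p(G)$, yet is also pro-$p'$ as a quotient of the pro-$p'$ group $N$, so it is trivial; hence $N = O^p(G)$. Finally, $S \cap O^p(G) = S \cap N$ is simultaneously pro-$p$ and pro-$p'$, so it is trivial, and $S$ controls $p$-transfer by Tate's theorem.

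The only subtlety is the supernatural-order bookkeeping in the forward direction; the cleanest way to justify that $|S| = |G:N|$ is to invoke the continuous-bijection-implies-isomorphism result for compact Hausdorff groups, after which the order equation delivers the pro-$p'$ conclusion automatically. Apart from this, everything reduces to direct manipulation of $O^p(\cdot)$ and basic properties of pro-$p$ versus pro-$p'$ intersections.
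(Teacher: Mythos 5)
Your reduction via Tate's theorem (condition (iv) with $K=S$, using $O^p(S)=1$) is exactly the paper's starting point, and your reverse direction is correct and matches the paper's. The problem is in the forward direction, at the step where you assert that $|G| = |N||G:N| = |S||G:S|$ together with $|G:N|=|S|$ ``forces'' $|N| = |G:S|$. Supernatural numbers do not admit cancellation at infinite exponents: in the typical case $|S| = p^{\infty}$, the equation $|N|\cdot p^{\infty} = p^{\infty}\cdot |G:S|$ is perfectly consistent with $|N|$ having a nontrivial, even infinite, $p$-part, since $|N|_p \cdot p^{\infty} = p^{\infty}$ holds for any value of $|N|_p$. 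So the order bookkeeping you lean on (and flag as the ``only subtlety'') does not actually show that $N = O^p(G)$ is pro-$p'$; the continuous-bijection result only yields $G/N \cong S$, which is not enough.

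The gap is easily repaired, and the repair is what the paper implicitly uses: since $O^p(G)$ is normal in $G$, Corollary \ref{sylsubnor} gives that $S \cap O^p(G)$ is a $p$-Sylow subgroup of $O^p(G)$. Hence $S \cap O^p(G) = 1$ directly forces $|O^p(G)|_p = 1$, i.e.\ $O^p(G)$ is a pro-$p'$ group, and since $G/O^p(G)$ is pro-$p$ by definition, $|G:O^p(G)|$ is a $p$-number, so $O^p(G)$ is the required normal $p'$-Hall subgroup. With that substitution your argument closes; without it, the forward implication is not proved.
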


\begin{proof}If $G$ has a normal $p'$-Hall subgroup $H$, then evidently $H = O^p(G)$ and $H \cap S = O^p(S) \cap S = 1$, so $S$ controls $p$-transfer in $G$.  Conversely, if $S$ controls $p$-transfer in $G$, then $O^p(G) \cap S = O^p(S) \cap S = 1$ by Tate's theorem, so $O^p(G)$ is a normal $p'$-Hall subgroup of $G$.\end{proof}

More consequences of Tate's theorem will be discussed later.

\section{Sylow subgroups of certain families of finite groups}\label{sylsimprelim}

Let $p$ be a prime, and let $n$ be a positive integer.  Write $\Sym(n;p)$ for a $p$-Sylow subgroup of $\Sym(n)$, and write $C_n$ for the cyclic group of order $n$.  The groups $\Sym(n;p)$ are well-known:

\begin{prop}\label{symsylow}Let $p$ be a prime, and let $n$ be an integer.
\vspace{-12pt}
\begin{enumerate}[(i)]  \itemsep0pt
\item If $n$ is a power of $p$, then $\Sym(n;p)$ is given by $\Sym(1;p)=1$ and $\Sym(p^k;p) \cong \Sym(p^{k-1};p) \wr C_p$ for $k > 0$.

\item Suppose $n = a_0 + a_1p + \dots + a_kp^k$, where $0 \leq a_i < p$ for $0 \leq i \leq k$.  Then $\Sym(n;p)$ is a direct product of groups $\Sym(p^i;p)$, such that the factor $\Sym(p^i;p)$ appears $a_i$ times.\end{enumerate}\end{prop}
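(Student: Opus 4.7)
The plan is to prove both parts by exhibiting an explicit $p$-subgroup with the correct order and invoking Sylow's theorem; the key arithmetic input is Legendre's formula, which gives $v_p(n!) = (n - s_p(n))/(p-1)$, where $s_p(n) = a_0 + a_1 + \cdots + a_k$ is the sum of the base-$p$ digits of $n$. In particular, $|\Sym(p^k)|_p = p^{(p^k - 1)/(p-1)}$ and more generally $|\Sym(n)|_p = p^{(n - s_p(n))/(p-1)}$.

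For part (i), I would proceed by induction on $k$, the case $k = 0$ being trivial. For the inductive step, partition $\{1, 2, \ldots, p^k\}$ into $p$ blocks of size $p^{k-1}$; the setwise stabiliser of this block system in $\Sym(p^k)$ is the wreath product $\Sym(p^{k-1}) \wr \Sym(p)$, and inside it we find the subgroup $W_k := \Sym(p^{k-1};p) \wr C_p$, where $C_p$ is generated by a $p$-cycle permuting the blocks. By the inductive hypothesis $|\Sym(p^{k-1};p)| = p^{(p^{k-1} - 1)/(p-1)}$, so
\[ |W_k| = p^{p(p^{k-1} - 1)/(p-1)} \cdot p = p^{(p^k - p)/(p-1) + 1} = p^{(p^k - 1)/(p-1)}, \]
which equals $|\Sym(p^k)|_p$. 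Hence $W_k$ is a $p$-Sylow subgroup, proving the recursion.

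For part (ii), I would decompose $\{1, 2, \ldots, n\}$ into $a_i$ blocks of size $p^i$ for each $i$, where $n = \sum_i a_i p^i$, and let $D$ be the direct product inside $\Sym(n)$ of one copy of $\Sym(p^i;p)$ acting on each such block (where we use part (i) to supply these factors). Then
\[ |D| = \prod_i p^{a_i(p^i - 1)/(p-1)} = p^{(n - s_p(n))/(p-1)} = |\Sym(n)|_p, \]
so $D$ is a $p$-Sylow subgroup of $\Sym(n)$.

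There is no real obstacle here — the result is an elementary counting exercise given the wreath-product construction — the only point requiring care is keeping the book-keeping of $p$-adic valuations straight so that the recursion in part (i) telescopes correctly and the exponents in part (ii) sum to Legendre's formula.
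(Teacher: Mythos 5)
Your proof is correct and is the standard argument (Kaloujnine's description of $\Sym(n;p)$ via Legendre's formula, an explicit iterated wreath product, and Sylow's theorem); the paper itself states this proposition as well-known and offers no proof, so there is nothing to compare against beyond noting that your order computations telescope correctly in both parts.
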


The $p$-Sylow subgroups of the classical groups in characteristic coprime to $p$ were constructed by Weir (\cite{Wei}) for $p$ odd, and by Carter and Fong (\cite{Car}) for $p=2$.  For the purposes of this thesis, we do not need a detailed description of the Sylow subgroups of classical groups; the proposition given below will suffice.

\begin{prop}\label{classicwr}Let $p$ be a prime, and let $q$ be a prime power coprime to $p$.
\vspace{-12pt}
\begin{enumerate}[(i)]  \itemsep0pt
\item Let $n$ be any positive integer.  Suppose $q$ is odd, and let $G$ be one of the following:
\[ \GL(n,q), \Sp(2n,q), O(2n+1,q), O^+(2n,q), O^-(2n,q).\]
Suppose a $p$-Sylow subgroup of $G$ acts irreducibly.  Then $\ord(q,p)$ is even.
\item Let $r$ be a positive integer, and let $n$ be an integer such that $p^{r+1} \leq n < p^{r+2}$.  Let $G$ be one of the following:
\[ \GL(n,q), \Sp(2n,q), U(n,q^2), O(2n+1,q), O^+(2n,q), O^-(2n,q).\]
Let $S$ be a $p$-Sylow subgroup of $G$.  Then $S$ has a quotient isomorphic to $\Sym(p^r;p)$.\end{enumerate}\end{prop}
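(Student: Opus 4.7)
The plan is to invoke the explicit structure of Sylow $p$-subgroups of the relevant classical groups, due to Weir \cite{Wei} for odd $p$ and Carter--Fong \cite{Car} for $p=2$, and then proceed by elementary combinatorial arguments. In every case the Sylow $p$-subgroup is built as an iterated wreath product whose ``small'' base is a Sylow $p$-subgroup of a Levi factor, and whose top is a Sylow $p$-subgroup of a symmetric group acting by block permutation. Throughout I would set $d := \ord(q,p)$ and record that $d \leq p-1$, since $\bF_p^*$ has order $p-1$.

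For part (i) I would argue the contrapositive: assume $d$ is odd. Since $d$ is odd, $p \mid q^a - 1$ iff $d \mid a$, while $p \nmid q^a + 1$ for any $a$ (otherwise $q^{2a} \equiv 1$ and $q^a \not\equiv 1 \pmod p$, forcing $d$ to be even). Evaluating the order formulas for $\Sp(2n,q)$, $O(2n+1,q)$, $O^\pm(2n,q)$ and comparing with $|\GL(n,q)|$ then gives $|G|_p = |\GL(n,q)|_p$, and Weir's description shows that a Sylow $p$-subgroup of $G$ is conjugate inside $G$ to a Sylow $p$-subgroup of the Levi $\GL(n,q) \leq G$ embedded via a Lagrangian decomposition $V = W \oplus W^*$ (with an anisotropic line appended in the $O(2n+1,q)$ case). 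Any such subgroup stabilizes $W$, so acts reducibly, contradicting the hypothesis.

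For part (ii) I would exhibit an explicit wreath-product subgroup. Choose a perpendicular (or, for $\GL$, direct-sum) decomposition $V = V_1 \perp \cdots \perp V_{p^r}$ of the natural module of $G$ into $p^r$ mutually isometric subspaces of common dimension $\delta$, where $\delta$ is the smallest integer such that the classical group $G_0$ of $V_1$ (of the same type as $G$) has order divisible by $p$; standard formulas give $\delta \leq 2d$. The stabilizer of this decomposition in $G$ contains $G_0 \wr \Sym(p^r)$, whose Sylow $p$-subgroup is $S_0 \wr \Sym(p^r;p)$ for $S_0$ a Sylow of $G_0$. An order comparison using the product formulas for $|G|_p$ and $|G_0|_p^{p^r}$ together with $|\Sym(p^r;p)|$ shows that this wreath product is in fact a Sylow $p$-subgroup of $G$, and projecting to the top wreath factor gives the required surjection onto $\Sym(p^r;p)$. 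The dimensional hypothesis $p^{r+1} \leq n$ together with $\delta \leq 2(p-1) < 2p$ ensures $\delta p^r < 2n$, so the decomposition fits inside $V$.

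The main obstacle is the case-by-case verification across the five families $\GL$, $\Sp$, $U$, $O(2n+1)$, and $O^\pm(2n)$: in each instance one must check that the exhibited wreath-product subgroup attains the full $p$-part of $|G|$ and that the chosen decomposition is compatible with the form. A secondary wrinkle is the $p=2$ case of part (ii), where the Carter--Fong description introduces additional twisted $2$-local factors in the base (e.g.\ a Sylow $2$-subgroup of $\Sp(2,q)$); the projection to the top quotient nonetheless goes through, and the conclusion follows as before.
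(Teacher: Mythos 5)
Your overall strategy --- reducing everything to the Weir/Carter--Fong wreath-product description of the Sylow subgroups --- is exactly what the thesis's one-line citations to \cite{Sta}, \cite{Wei} and \cite{Car} amount to, and your part (i) argument for the groups with forms is correct and clean: when $p$ is odd and $d=\ord(q,p)$ is odd, the $p$-part of $|\Sp(2n,q)|$, $|O(2n+1,q)|$ and $|O^{\pm}(2n,q)|$ equals that of the Levi subgroup $\GL(n,q)$ (or $\GL(n-1,q)$ in the $O^-$ case), so every Sylow $p$-subgroup is conjugate into a parabolic and stabilises a totally singular subspace. But note two things you have silently assumed. First, you need $p$ odd: for $p=2$ the claim ``$p\nmid q^a+1$'' is false, and so is the proposition itself ($Q_8$ is an irreducible Sylow $2$-subgroup of $\Sp(2,3)$, while $\ord(3,2)=1$ is odd). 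Second, your argument says nothing about $G=\GL(n,q)$, which is on the list in the statement --- and the assertion genuinely fails there: the Sylow $13$-subgroup of $\GL(3,3)$ lies in a Singer cycle and acts irreducibly, yet $\ord(3,13)=3$. So part (i) is only provable, and only proved by you, for the symplectic and orthogonal cases with $p$ odd; since the thesis invokes part (i) only for $\Sp$ in Lemma \ref{sympauto}, you should state explicitly that this is the case being treated rather than leaving the reader to discover that the omitted cases are false.

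In part (ii) there is a step that genuinely fails: the claim that the Sylow $p$-subgroup of $G_0\wr\Sym(p^r)$ attains the full $p$-part of $|G|$. Under your own estimates this can never happen. The number of pairwise perpendicular $\delta$-dimensional nondegenerate blocks fitting into $V$ is at least $n/d\geq p^{r+1}/(p-1)>p^r+1$, so the orthogonal complement of your chosen $p^r$ blocks always contains a further nondegenerate $\delta$-space; by minimality of $\delta$ its isometry group has order divisible by $p$, whence $|G_0\wr\Sym(p^r)|_p<|G|_p$ and the order comparison cannot close. (A subgroup of $S$ surjecting onto $\Sym(p^r;p)$ does not give a quotient of $S$, so this cannot be waved away.) The repair is the actual content of Weir and Carter--Fong: if $m$ is the total number of blocks and $m=\sum_i a_ip^i$ in base $p$, the full Sylow subgroup is a \emph{direct product} $\prod_i\bigl(S_0\wr\Sym(p^i;p)\bigr)^{a_i}$, possibly times a factor of trivial $p$-part. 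Since $m>p^r$ forces $a_k\neq 0$ for some $k\geq r$, projecting onto that direct factor and then onto its top group yields $\Sym(p^k;p)$, which surjects onto $\Sym(p^r;p)$ because $\Sym(p^k;p)\cong C_p\wr\Sym(p^{k-1};p)$ has $\Sym(p^{k-1};p)$ as a quotient. With that substitution (and the analogous, genuinely more delicate, Carter--Fong decomposition at $p=2$), your dimension count does the rest.
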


\begin{proof}(i) See Table 1 of \cite{Sta}.  The Sylow subgroups of `type B' in this table are necessarily reducible.
 
(ii) See \cite{Wei} for the case of $q$ odd, and \cite{Car} for the case of $q$ even.\end{proof}

Given a finite group $G$ of classical Lie type defined over a field of characteristic $p$, the $p$-Sylow subgroups are the maximal unipotent subgroups of $G$.  Some of their properties were described by in \cite{Che} and \cite{Ree}.  From these descriptions, we can deduce the following:

\begin{lem}\label{classicpwr}Let $p$ be a prime, and let $q=p^s$.  Let $n$ be an integer, and let $G$ be one of the following:
\[ \GL(n+1,q), \Sp(2n,q), U(n,q^2), O(2n+1,q), O^+(2n,q), O^-(2n,q).\]
Let $S$ be a $p$-Sylow subgroup of $G$.  Then $d(S)=rs$, where $r$ is the number of simple roots of $G$.\end{lem}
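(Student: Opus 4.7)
The plan is to invoke Lemma~\ref{fratlem}(iii), which gives $d(S) = d(S/\Phi(S))$, and then analyse the Frattini quotient of $S$ using its structure as the maximal unipotent subgroup of $G$ in defining characteristic.

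First, I would identify $S$ with the unipotent radical $U$ of a Borel subgroup of $G$. The key structural fact, taken from \cite{Che} for the split groups $\GL, \Sp, \rO(2n+1), \rO^+(2n)$ and from \cite{Ree} for the twisted groups $U(n,q^2), \rO^-(2n,q)$ (with \cite{Car} required in characteristic $2$), is that $U$ is generated by its positive root subgroups $U_\alpha$, each isomorphic as an abelian group to the additive group of a finite extension of $\bF_q$, and hence elementary abelian of exponent $p$.

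Next, I would compute $\Phi(U) = U'U^p$. The Chevalley commutator formula from the same sources states that for distinct positive roots $\alpha, \beta$, the commutator $[U_\alpha, U_\beta]$ lies in the subgroup generated by $U_\gamma$ for positive roots $\gamma = i\alpha + j\beta$ with $i, j \geq 1$. An induction on the height of positive roots then gives $U' = \langle U_\gamma \mid \gamma \text{ a non-simple positive root} \rangle$. Since $U/U'$ is abelian and generated by images of subgroups of exponent $p$, it too has exponent $p$, so $U^p \leq U'$ and $\Phi(U) = U'$. Consequently
\[ U/\Phi(U) \;\cong\; \bigoplus_{[\alpha]} \overline{U_\alpha}, \]
the sum running over (Frobenius orbits of) simple roots and $\overline{U_\alpha}$ denoting the image of $U_\alpha$ in the quotient.

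Finally, I would sum $\bF_p$-dimensions. In the split cases there are $r$ simple roots, each contributing $\bF_q \cong \bF_p^s$, immediately yielding $d(S)=rs$. In the twisted cases, the explicit descriptions in \cite{Ree} and \cite{Car} show that a Frobenius orbit of length $\ell$ contributes a root subgroup $\cong \bF_{q^\ell}^+$ of $\bF_p$-dimension $\ell s$, and summing $\ell$ across orbits recovers the absolute rank $r$, so the total is again $rs$. This last bookkeeping step for the twisted groups $U(n,q^2)$ and $\rO^-(2n,q)$ will be the main obstacle: one must reconcile the abstract combinatorics of Frobenius orbits on the Dynkin diagram with the explicit matrix parametrisations in the cited references, to verify that no simple-root contribution collapses in the passage to the Frattini quotient.
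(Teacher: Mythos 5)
Your overall strategy --- identify $S$ with the unipotent radical, show $\Phi(S)=S'$ is generated by the non-simple positive root subgroups, and count $\bF_p$-dimensions over the simple roots --- is precisely the deduction the thesis intends from \cite{Che} and \cite{Ree} (no separate proof is written out there). But the step ``an induction on the height of positive roots then gives $U' = \langle U_\gamma \mid \gamma\ \text{a non-simple positive root}\rangle$'' is a genuine gap, and it sits exactly where the statement is delicate. The commutator formula only hands you the containment $U' \leq \langle U_\gamma\rangle$ for free; for the reverse containment you must check that the structure constants survive reduction modulo $p$, and, when $\alpha+\beta$ and $2\alpha+\beta$ are both roots, that the two components of the single element $[x_\alpha(t),x_\beta(u)] = x_{\alpha+\beta}(c_{11}tu)\,x_{2\alpha+\beta}(c_{21}t^{2}u)$ can be separated inside $U_{\alpha+\beta}\times U_{2\alpha+\beta}$. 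Both checks fail for type $C_n$ over the prime field $\bF_2$: there $[x_\alpha(t),x_{\alpha+\beta}(u)]=x_{2\alpha+\beta}(\pm 2tu)=1$, and since $t^2=t$ in $\bF_2$ the displayed commutator sweeps out only a diagonal subgroup. This is not merely a gap in the argument --- the formula $d(S)=rs$ is actually false for $\Sp(2n,2)$ with $n\geq 2$: a $2$-Sylow subgroup of $\Sp(4,2)\cong\Sym(6)$ is the direct product of a dihedral group of order $8$ with $C_2$, so $d(S)=3$ while $rs=2$, and the root-subgroup computation above gives $d(S)=n+1$ in general for $\Sp(2n,2)$. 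So the proof cannot be completed as written; what you can prove is the stated equality for $s\geq 2$ (where $\{(tu,t^{2}u)\}$ spans both coordinates) and in odd characteristic, together with an explicit list of prime-field exceptions in characteristic $2$ --- which is all that the application in Corollary \ref{dpdeg} actually requires, since there only a bound in terms of $d(S)$ and $p$ is needed.

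A secondary inaccuracy concerns the twisted groups: the relative root subgroups are not all ``elementary abelian additive groups of field extensions''. For $U(2m+1,q^2)$ the relative root system has type $BC_m$, and the subgroup attached to the short relative simple root is a non-abelian special group of order $q^{3}$ whose Frattini quotient has order $q^{2}$. The count still closes there, because that quotient contributes $2s$ and the corresponding Frobenius orbit has length $2$, but your bookkeeping must sum the Frattini quotients of the relative root subgroups rather than their orders, and must again watch the commutator relations among them in characteristic $2$.
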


We conclude with the following observation concerning Sylow subgroups of finite simple groups, which is useful for asymptotic results:

\begin{cor}\label{dpdeg}Let $p$ be a prime and let $d$ be an integer.  Let $G \in \fsg$ such that $d_p(G) \leq d$.  Then $\deg(G)$ is bounded by a function of $d$ and $p$.\end{cor}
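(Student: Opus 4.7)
The plan is to invoke the Classification of Finite Simple Groups and bound $\deg(G)$ family by family, with the Sylow structure collected in Section \ref{sylsimprelim} doing most of the work. Two families are immediate: the sporadic groups (including the Tits group) form a finite list, and the exceptional groups of Lie type have absolutely bounded Lie rank and hence bounded $\deg$. The substantive cases are the alternating groups and the classical groups of Lie type.

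For $\Alt(n)$, we have $\deg(\Alt(n)) = n$, so it suffices to bound $n$. The key preliminary fact is $d(\Sym(p^i;p)) = i$, proved inductively from $\Sym(p^i;p) \cong \Sym(p^{i-1};p) \wr C_p$ (Proposition \ref{symsylow}(i)) together with the easy identity $(A \wr C_p)^{\mathrm{ab}} \cong A^{\mathrm{ab}} \times C_p$, which follows from a direct commutator computation in the base group. Combined with Proposition \ref{symsylow}(ii) and the additivity of $d$ over direct products of $p$-groups (via Frattini quotients), this yields $d_p(\Sym(n)) = \sum_i a_i \cdot i$, where $n = \sum a_i p^i$ is the base-$p$ expansion. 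A $p$-Sylow of $\Alt(n)$ has index at most $2$ in that of $\Sym(n)$ (equality when $p$ is odd), so a routine Frattini comparison gives $d_p(\Alt(n)) \geq d_p(\Sym(n)) - 1$, and the hypothesis $d_p(\Alt(n)) \leq d$ forces $n < p^{d+2}$.

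For a classical simple group $G$ of Lie rank $r$, we split on the characteristic. If $G$ is in defining characteristic $p$ over $\bF_{p^s}$, then Lemma \ref{classicpwr} gives $d_p(G) = rs \geq r$, so $r \leq d$ immediately. Otherwise $G$ is in characteristic coprime to $p$; take $n$ to be the dimension of the natural module, which is linearly comparable to $r$. Proposition \ref{classicwr}(ii) then asserts that a $p$-Sylow of $G$ has $\Sym(p^j;p)$ as a quotient whenever $p^{j+1} \leq n$, whence $d_p(G) \geq j$ by the generator count above, and so $d_p(G) \leq d$ forces $n < p^{d+2}$ and thereby bounds $r$ in terms of $p$ and $d$. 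The main obstacle throughout is the identification $d(\Sym(p^i;p)) = i$; once this inductive wreath-product computation is in hand, the rest is a transparent case analysis across the CFSG.
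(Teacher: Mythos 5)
Your proof is correct and follows essentially the same route as the paper, which disposes of the four cases (sporadic/exceptional, alternating via Proposition \ref{symsylow}, defining characteristic via Lemma \ref{classicpwr}, cross characteristic via Proposition \ref{classicwr}) in exactly this way. The only difference is that you supply the details the paper leaves implicit, notably the inductive computation $d(\Sym(p^i;p))=i$ from the wreath-product decomposition, which is the right way to fill that gap.
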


\begin{proof}If $G$ is of exceptional Lie type or sporadic, then $\deg(G)$ is automatically bounded.  If $G$ is alternating, $\deg(G)$ is bounded by Proposition \ref{symsylow}.  If $G$ is of classical Lie type of characteristic $p$, a bound follows from Lemma \ref{classicpwr}, and if $G$ is of classical Lie type of another characteristic, a bound follows from Proposition \ref{classicwr}.\end{proof}

\chapter{Miscellaneous finiteness properties of profinite groups}

\section{Finiteness conditions in subgroup lattices}\label{sgplat}

One method for studying groups is through the lattice of subgroups; indeed, many statements in group theory can be expressed in terms of containments between subgroups, without reference to individual elements.  In line with our topological convention, it is natural to focus attention on the lattice of closed subgroups.  Consider for instance the following questions, where $G$ is a finitely generated group:

\begin{que}\label{ascq}Suppose there is an ascending chain $\mcH$ of subgroups of $G$, such that the union of $\mcH$ is dense in $G$.  Is $G$ necessarily an element of $\mcH$?\end{que}

\begin{que}\label{infindq}Let $H$ be a subgroup of infinite index.  Is $H$ necessarily contained in a subgroup of $G$ that is maximal subject to having infinite index?\end{que}

\begin{que}\label{infchq}Let $\mcK$ be an infinite set of subgroups of $G$, all of finite index, such that $K \in \mcK$ and $K \leq L \leq G$ implies $L \in \mcK$.  Does $\mcK$ necessarily contain an infinite descending chain?\end{que}

For Questions \ref{ascq} and \ref{infindq}, it is easy to see that the answer is `yes' if $G$ is a discrete group, but `no' if $G$ is a profinite group.  For Question \ref{infchq}, the answer is clearly `no' even if $G = \bZ$.  However, the answer is `yes' to all questions if $G$ is a pro-$p$ group.  The property of being topologically finitely generated does not seem to be the right notion of `smallness' for these questions.  Instead, we need to consider conditions on sublattices of the lattice of closed subgroups, particularly those sublattices generated by a given subgroup class.

In the definitions that follow, $G$ will be a topological group, $\mcX$ a set of subgroups of $G$ such that $G \in \mcX$, and $\mcE$ a subgroup class such that $G \in \mcE(G)$.

\begin{defn}Say $\mcX$ is \emph{maximal-closed} in $G$ if every member of $\mcX \setminus \{G\}$ is contained in a maximal member of $\mcX \setminus \{G\}$.

Define the \emph{Frattini group} $\Phi(\mcX)$ of $\mcX$ to be the intersection of all maximal members of $\mcX \setminus \{G\}$.  Define the \emph{$\mcE$-Frattini subgroup} $\Phi^\mcE(G)$ of $G$ by $\Phi^\mcE(G) := \Phi(\mcE(G))$.  In particular, this definition produces the \emph{Frattini subgroup} $\Phi(G):=\Phi^\sgp(G)$, the \emph{finite index Frattini subgroup} $\Phi^f(G):=\Phi^\fsgp(G)$ and the \emph{normal Frattini subgroup} $\Phi^\lhd(G):=\Phi^\nsgp(G)$ of $G$.\end{defn}

The following is a familiar property of the Frattini subgroup that also applies in this more general context:

\begin{lem}\label{fratmax}Let $G$ be a group, let $\mcX$ be a maximal-closed set of subgroups of $G$, and let $H \in \mcX \setminus \{G\}$.  Then $\langle H, \Phi(\mcX) \rangle \not= G$.\end{lem}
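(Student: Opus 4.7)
The plan is to mimic the classical Frattini argument in the abstract setting of maximal-closed subgroup collections. The key observation is that both inputs to the join, namely $H$ and $\Phi(\mcX)$, can be trapped inside a single proper member of $\mcX$, after which the conclusion is automatic from the fact that a subgroup containing two sets contains the subgroup they generate.

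First I would invoke the maximal-closedness hypothesis: since $H \in \mcX \setminus \{G\}$, there exists a maximal element $M$ of $\mcX \setminus \{G\}$ with $H \leq M$. Next I would use the definition of $\Phi(\mcX)$ as the intersection of all such maximal members to conclude that $\Phi(\mcX) \leq M$ as well. Combining these, $M$ contains the set $H \cup \Phi(\mcX)$, hence contains the (closed) subgroup $\langle H, \Phi(\mcX)\rangle$ they generate. Since $M \neq G$, we obtain $\langle H, \Phi(\mcX)\rangle \leq M \subsetneq G$, as required.

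The only thing worth checking is that $M$ is indeed a bona fide subgroup in the ambient topological sense, so that containment of generating sets implies containment of the generated subgroup; this is built into the convention that the members of $\mcX$ are (closed) subgroups of $G$. There is no real obstacle here beyond keeping the definitions straight: maximal-closedness is precisely the hypothesis that supplies the maximal member above $H$, and $\Phi(\mcX)$ is defined precisely so that it lies in every such maximal member.
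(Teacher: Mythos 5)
Your proof is correct and is essentially the paper's own argument: take a maximal member $M$ of $\mcX \setminus \{G\}$ above $H$, observe that $\Phi(\mcX) \leq M$ by definition, and conclude $\langle H, \Phi(\mcX)\rangle \leq M < G$. You have merely spelled out the intermediate steps that the paper leaves implicit.
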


\begin{proof}Since $\mcX$ is maximal-closed, $H$ is contained in a maximal element $M$ of $\mcX \setminus \{G\}$; hence $\langle H, \Phi(\mcX) \rangle \leq M < G$.\end{proof}

\begin{defn}Say $G$ is \emph{$\mcX$-finite} if $\mcX$ is maximal-closed and $|G:\Phi(\mcX)|$ is finite.  Say $G$ is \emph{$\mcE$-finite}, and write $G \in \mcE_\Phi$, if $G$ is $\mcE(G)$-finite.  Say $G$ is \emph{hereditarily $\mcE$-finite}, and write $G \in \mcE^*_\Phi$, if $\fsgp(G) \subseteq \mcE_\Phi$, that is, every subgroup $H$ of $G$ of finite index is $\mcE$-finite in its own right.\end{defn}

There is an easy alternative characterisation of the situation in which $G$ is $\mcX$-finite:

\begin{lem}Let $G$ be a group and let $\mcX$ be a set of subgroups of $G$.  Let $\mcM$ be the set of maximal elements of $\mcX \setminus \{G\}$.  Then $G$ is $\mcX$-finite if and only if the following condition is satisfied:

$(*)$ $\mcM$ is finite, and for every $K \in \mcX \setminus \{G\}$, there is some $M \in \mcX \setminus \{G\}$ such that $K \leq M$ and $|G:M|$ is finite.\end{lem}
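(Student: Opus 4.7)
The plan is to prove both implications directly from the definitions, using the fact that a subgroup of finite index in $G$ has only finitely many overgroups.

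For the forward direction, assume $G$ is $\mcX$-finite, so $\mcX$ is maximal-closed and $N := \Phi(\mcX)$ has finite index in $G$. Every $M \in \mcM$ contains $N$ by definition of $\Phi(\mcX)$, hence $|G:M| \leq |G:N| < \infty$. This already handles half of $(*)$: given any $K \in \mcX \setminus \{G\}$, maximal-closedness produces some $M \in \mcM$ with $K \leq M$, and $|G:M|$ is finite as just noted. Finiteness of $\mcM$ follows because every $M \in \mcM$ corresponds to a subgroup of the finite group $G/N$, and $G/N$ has only finitely many subgroups.

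For the backward direction, assume $(*)$. To see $\mcX$ is maximal-closed, start with any $K \in \mcX \setminus \{G\}$ and use $(*)$ to produce $M \in \mcX \setminus \{G\}$ with $K \leq M$ and $|G:M|$ finite. The set $\mcL = \{L \in \mcX \setminus \{G\} : M \leq L\}$ is then finite (it injects into the finite set of subgroups of $G$ containing $M$), and any maximal element $M^*$ of $\mcL$ is actually maximal in all of $\mcX \setminus \{G\}$, because any proper overgroup of $M^*$ in $\mcX$ would automatically contain $M$ and hence lie in $\mcL$. Thus $K \leq M^* \in \mcM$. To show $|G:\Phi(\mcX)|$ is finite, note that applying $(*)$ to any $M \in \mcM$ yields $M' \in \mcX \setminus \{G\}$ with $M \leq M'$ and $|G:M'|$ finite; maximality of $M$ forces $M = M'$, so every element of $\mcM$ has finite index. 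Since $\mcM$ is finite by hypothesis and $\Phi(\mcX)$ is the intersection of its elements, $\Phi(\mcX)$ has finite index in $G$.

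There is no real obstacle here; the one point that needs a little care is the backward direction's maximal-closedness argument, where one must verify that a maximal overgroup of $M$ inside $\mcX \setminus \{G\}$ is genuinely maximal in $\mcX \setminus \{G\}$ and not merely maximal among elements containing $M$. This follows automatically because any candidate overgroup of $M^*$ in $\mcX$ already contains $M$.
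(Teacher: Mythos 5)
Your proof is correct and follows essentially the same route as the paper's: both directions reduce to the observation that a finite-index subgroup of $G$ has only finitely many overgroups, used to extract maximal elements in the converse direction and to bound $\mcM$ in the forward direction. One small point of precision: $N=\Phi(\mcX)$ need not be normal, so ``the finite group $G/N$'' should be read as the finite coset space, but the counting of subgroups containing $N$ (each being a union of cosets of $N$) goes through unchanged.
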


\begin{proof}It is clear from the definition of $\Phi(\mcX)$ that $|G:\Phi(\mcX)|$ is finite if and only if $\mcM$ is finite and every element of $\mcM$ has finite index in $H$.  Furthermore, $\mcX$ is maximal-closed if and only if every element of $\mcX \setminus \{G\}$ is contained in some $M \in \mcM$.  Hence condition $(*)$ is necessary for $G$ to be $\mcX$-finite.

Conversely, assume $(*)$ holds.  Given $K \in \mcX \setminus \{G\}$, choose some $M(K) \geq K$ such that $M(K) \in \mcX \setminus \{G\}$ and $|G:M(K)|$ is finite.  Then the set $\mcK$ of elements of $\mcX \setminus \{G\}$ that contain $M(K)$ is finite and nonempty, so there is a maximal element $L$ of $\mcK$; then $L \in \mcM$ and $L <_f G$.  If $K \in \mcM$, then $K=M(K)<_f G$; since $\mcM$ is finite, it follows that $|G:\Phi(\mcX)|$ is finite.\end{proof}

\begin{defn}A \emph{chain} is any totally ordered set.  (Generally, we will be interested in the chains contained in a more general partially ordered set.)  Say the chain $\mcK$ is \emph{ascending} if the order is a well-ordering, and say $\mcK$ is \emph{descending} if $\mcK$ is well-ordered under the reverse ordering.  Note that finite chains are both ascending and descending, but infinite chains cannot be both at once.

Say $\mcX$ is \emph{chain-closed} if, given an ascending chain $\mcC$ in $\mcX$, the closure of the union of $\mcC$ is an element of $\mcX$.  Note that by Zorn's lemma, if $\mcX \setminus \{G\}$ is chain-closed, then $\mcX$ is maximal-closed.  Say $\mcX$ is \emph{max} if it has no infinite ascending chains, and say $G$ is \emph{max-$\mcE$} if $\mcE(G)$ is max.

Say a set $\mcH$ of subgroups of a profinite group $G$ is \emph{upward-closed in $\mcX$} if, given any elements $H_1$ and $H_2$ of $\mcX$ such that $H_1 \in \mcH$ and $H_1 \leq H_2$, then $H_2 \in \mcH$.

Given a subgroup $H$ of $G$, write $\mcX_H$ for $\mcX \cap \sgp(H)$.\end{defn}

The following lemma will have several uses later in the thesis.

\begin{lem}\label{phichain}Let $G$ be a residually finite group, and let $\mcX$ be a set of subgroups of $G$ that is chain-closed.  Let $\mcH$ be a subset of $\mcX$, such that $H$ is $\mcX_H$-finite for every $H \in \mcH$.  Then $\mcX \setminus \mcH$ is chain-closed.

Suppose also that $\mcH$ is upward-closed in $\mcX$ and that $G \in \mcX$.  Then $\mcH$ is max, but if it is infinite, then it contains an infinite descending chain.\end{lem}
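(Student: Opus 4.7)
For the first claim, that $\mcX\setminus\mcH$ is chain-closed, let $\mcC$ be an ascending chain in $\mcX\setminus\mcH$ and set $H:=\overline{\bigcup\mcC}$, which lies in $\mcX$ by chain-closure. Suppose for contradiction that $H\in\mcH$, so $H$ is $\mcX_H$-finite with finitely many maximal proper subgroups $M_1,\dots,M_k$ in $\mcX_H$. Every $C\in\mcC$ is a proper subgroup of $H$ (since $C\notin\mcH$ but $H\in\mcH$) that lies in $\mcX_H$, hence $C\le M_i$ for some $i$. The sets $\mcC_i:=\{C\in\mcC:C\le M_i\}$ are down-sets of the chain $\mcC$, so are initial segments, and initial segments of a chain are totally ordered by inclusion; hence the finite union $\bigcup_i\mcC_i=\mcC$ coincides with the largest $\mcC_{i_0}$. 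This forces $\mcC\subseteq M_{i_0}$ and so $H\le M_{i_0}<H$, a contradiction.

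For the second claim, that $\mcH$ is max, suppose $\mcH$ contains an infinite ascending chain; taking the first $\omega$ terms gives an $\omega$-subchain $C_0<C_1<\cdots$ in $\mcH$. Set $H:=\overline{\bigcup_n C_n}$: chain-closure of $\mcX$ gives $H\in\mcX$, and upward-closure of $\mcH$ in $\mcX$ then promotes $H$ into $\mcH$, so $\mcX_H$-finiteness again yields finitely many maximal proper subgroups $M_1,\dots,M_k$ of $\mcX_H$. Each $C_n<C_{n+1}\le H$ is strictly below $H$, so $C_n\le M_{i_n}$ for some $i_n$. Pigeonhole yields some $i$ with $i_n=i$ for infinitely many $n$; since the chain is ascending, for every $n$ one can pick $m\ge n$ with $i_m=i$ and conclude $C_n\le C_m\le M_i$. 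Taking the union and closure gives $H\le M_i<H$, a contradiction.

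For the third claim, assume $\mcH$ is infinite. Since $\mcH$ is non-empty, any $K\in\mcH$ satisfies $K\le G\in\mcX$, so upward-closure forces $G\in\mcH$. The plan is to build the descending chain recursively: at stage $n$, suppose $H_n\in\mcH$ with $\mcH\cap\sgp(H_n)$ infinite. By $\mcX_{H_n}$-finiteness of $H_n$, there are finitely many maximal proper elements of $\mcX_{H_n}$; each member of $(\mcH\cap\sgp(H_n))\setminus\{H_n\}$ sits in one of them, so pigeonhole picks such a maximal subgroup $H_{n+1}$ with infinitely many $\mcH$-members beneath it. Upward-closure places $H_{n+1}\in\mcH$, and $\mcH\cap\sgp(H_{n+1})$ remains infinite, so the recursion continues and produces $G=H_0>H_1>H_2>\cdots$ in $\mcH$. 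The only real technical issue, shared with the first two claims, is that ascending chains may have arbitrary well-ordered type: this is absorbed in the first claim by the fact that initial segments of a chain are linearly ordered by inclusion, and in the second by passing to an $\omega$-subchain before applying $\mcX_H$-finiteness.
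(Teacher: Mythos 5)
Your proof is correct. For the first two assertions your argument is essentially the paper's: both hinge on the fact that $\mcX_H$-finiteness of $H$ leaves only finitely many maximal elements of $\mcX_H\setminus\{H\}$, each of finite index, so that a dense ascending union of proper members of $\mcX_H$ is impossible. (The paper phrases this via coset representatives of $\Phi(\mcX_H)$ and Lemma \ref{fratmax}, concluding that the chain attains its supremum; you instead trap the whole chain under a single maximal element via the observation that the down-sets $\mcC_i$ are nested -- a nice touch that handles arbitrary well-ordered chains without passing to an $\omega$-subchain.) Where you genuinely diverge is the final assertion. The paper constructs a directed covering graph on $\mcH$, proves each vertex has finite outdegree, and then splits into cases: if some $H\in\mcH$ is unreachable from $G$ it builds the descending chain by hand, and otherwise it invokes K\H{o}nig's lemma. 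You instead run a single recursion maintaining the invariant that infinitely many members of $\mcH$ lie below the current term $H_n$, using the pigeonhole principle on the finitely many maximal elements of $\mcX_{H_n}\setminus\{H_n\}$ and upward-closure to keep $H_{n+1}$ inside $\mcH$. This inlines the K\H{o}nig argument, avoids both the graph construction and the reachability case split, and is arguably cleaner; the only thing the paper's version buys in exchange is the explicit covering-graph picture of $\mcH$. One small point worth making explicit in your write-up: that ``$\mcX_H$-finite'' yields finitely many maximal proper members of $\mcX_H$ is exactly the content of the unnumbered lemma following the definition (finitely many subgroups of $H$ contain the finite-index subgroup $\Phi(\mcX_H)$), so you are entitled to it.
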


\begin{proof}Let $H \in \mcH$.  To show $\mcX \setminus \mcH$ is chain-closed, it suffices to suppose that $H$ is the closure of the union of an ascending chain $\{H_i \mid i \in I\}$ in $\mcX \setminus \mcH$, and derive a contradiction.  Let $U$ be the union of the $H_i$.  Then $\Phi(\mcX_H)U = H$, since $U$ is dense in $H$ and $\Phi(\mcX_H)$ has finite index in $H$.  Let $X$ be a set of right coset representatives for $\Phi(\mcX_H)$ in $H$, and note that $|X| = |H:\Phi(\mcX_H)|$ is finite.  Then for each element $x$ of $X$, there is some $j_x \in I$ such that $\Phi(\mcX_H)H_{j_x}$ contains $x$, and hence $\Phi(\mcX_H)H_j$ contains $X$, where $H_j = \max\{H_{j_x} \mid x \in X \}$.  But then $\Phi(\mcX_H)H_j=H$, so $H=H_j \in \mcX \setminus \mcH$, a contradiction.

From now on, suppose $\mcH$ is upward-closed and infinite.  Suppose there is an infinite ascending chain $H_1 < H_2 < \dots$ in $\mcH$, and let $H$ be the closure of the union of the $H_i$.  Then $H \in \mcH$, and the same argument as before shows $H$ must equal some $H_j$, a contradiction.  Hence $\mcH$ is max.  Now define a directed graph $\Gamma$ with vertex set $\mcH$ as follows: place an arrow from $H_1$ to $H_2$ if $H_2 < H_1$, and there is no $H_3 \in \mcH$ such that $H_2 < H_3 < H_1$.

Let $H \in \mcH$.  If there is an arrow from $H$ to another vertex $K$, then $K$ is a maximal element of $\mcX_H \setminus \{H\}$, by the fact that $\mcH$ is upward-closed in $\mcX$.  So given $H$, there are finitely many possibilities for $K$, since $|H:\Phi(\mcX_H)|$ is finite.  Hence each vertex of $\Gamma$ has finite outdegree.  Clearly $G \in \mcH$; suppose that for some $H \in \mcH$, there is no directed path from $G$ to $H$.  Then we can construct an infinite descending chain in $\mcH$ by setting $G_0 = G$, and thereafter $G_{i+1}$ to be a maximal element of $\mcX \setminus \{G_i\}$ that contains $H$: note that each $G_i$ properly contains $H$, as otherwise $H$ would be reachable from $G$.  Hence we may assume $\Gamma$ is connected.  But in this case, $\Gamma$ has an infinite directed path by K\H{o}nig's lemma; this gives the required descending chain.\end{proof}

The following special cases are immediate:

\begin{cor}\label{phichaincor}Let $G$ be a profinite group.
\vspace{-12pt}
\begin{enumerate}[(i)]  \itemsep0pt
\item Let $\mcH$ be a set of subgroups of $G$ such that $H/\Phi(H)$ is finite for all $H \in \mcH$.  Then $\sgp(G) \setminus \mcH$ is chain-closed in $G$.  Suppose also that $\mcH$ is upward-closed in $\sgp(G)$, and such that $\mcH$ has no infinite descending chains.  Then $\mcH$ is finite.
\item Let $\mcH$ be a set of normal subgroups $H$ of $G$ such that $H/\Phi^\lhd(H)$ is finite for all $H \in \mcH$.  Then $\nsgp(G) \setminus \mcH$ is chain-closed in $G$.  Suppose also that $\mcH$ is upward-closed in $\nsgp(G)$, and such that $\mcH$ has no infinite descending chains.  Then $\mcH$ is finite.\end{enumerate}\end{cor}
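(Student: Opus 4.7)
The plan is to apply Lemma \ref{phichain} in each part, with $\mcX = \sgp(G)$ for (i) and $\mcX = \nsgp(G)$ for (ii). Since $G$ is profinite, it is automatically residually finite, satisfying the lemma's hypothesis on $G$. Both choices of $\mcX$ are chain-closed: the closure of an ascending union of closed subgroups of $G$ is again a closed subgroup, and if the members of the union are in addition $G$-normal, then so is the closure.

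For part (i), the condition that $H$ be $\mcX_H$-finite becomes exactly that $H/\Phi(H)$ be finite, since $\mcX_H = \sgp(H)$. Both conclusions — chain-closure of $\sgp(G) \setminus \mcH$, and finiteness of $\mcH$ under the upward-closed, no-infinite-descending-chain hypotheses — then follow directly from Lemma \ref{phichain}.

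For part (ii), a small subtlety arises: for $H \in \mcH$, $\mcX_H = \{K \leq H : K \unlhd G\}$ is not literally $\nsgp(H)$, so $\Phi^\lhd(H)$ as defined in the corollary is not literally $\Phi(\mcX_H)$. However, the proof of Lemma \ref{phichain} adapts verbatim with $\Phi^\lhd(H)$ substituted throughout. At the crucial step, $\Phi^\lhd(H) \cdot U = H$ still follows from the density of $U$ in $H$ together with the finite index of $\Phi^\lhd(H)$, yielding some $j$ with $\Phi^\lhd(H) H_j = H$; and if $H_j < H$, then since $H_j$ is proper and normal in the profinite group $H$, $H_j$ is contained in some maximal normal subgroup $M$ of $H$ (existing by residual finiteness of $H$ and the finiteness of the lattice above any open subgroup), giving the contradiction $\Phi^\lhd(H) H_j \leq M < H$. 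The K\"onig-style argument for the second conclusion is identical to that in the lemma.

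The main point requiring care is precisely this mismatch between $\Phi^\lhd(H)$ and $\Phi(\mcX_H)$ in part (ii); once one notes that the lemma's proof survives the substitution without modification, both parts are mechanical consequences of Lemma \ref{phichain}.
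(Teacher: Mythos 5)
Your proof is correct and follows the paper's intended route: the corollary is stated there without proof as an immediate consequence of Lemma \ref{phichain}, and you correctly identify the only point needing care, namely that for $\mcX = \nsgp(G)$ the subgroup $\Phi(\mcX_H)$ is not literally $\Phi^\lhd(H)$. One remark: your claim that the K\H{o}nig-style half of the argument goes through \emph{identically} is slightly too quick, because the finite-outdegree step needs every maximal element $K$ of $\mcX_H \setminus \{H\}$ to contain $\Phi^\lhd(H)$ (so that there are only finitely many such $K$); this follows by the same device you already use elsewhere --- if $\Phi^\lhd(H)K = H$ then, $K$ being a proper closed normal subgroup of $H$, it lies in a maximal proper normal subgroup $M$ of $H$, whence $\Phi^\lhd(H)K \leq M < H$, a contradiction. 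That observation amounts to showing $\Phi^\lhd(H) \leq \Phi(\mcX_H)$ whenever $H \unlhd G$, which makes part (ii) a literal instance of Lemma \ref{phichain} and spares you rerunning its proof at all.
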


\begin{rem}An interesting example for either (i) or (ii) above is if $G$ is a finitely generated pro-$p$ group, and $\mcH$ is any collection of open subgroups.  In both cases, $\mcH$ automatically satisfies the relevant conditions, so that every element of $\sgp(G) \setminus \mcH$ or $\nsgp(G) \setminus \mcH$ is contained in a maximal element.  This is essentially the argument used in \cite{GriNH} to show that any infinite finitely generated pro-$p$ group has a just infinite image.\end{rem}

We now focus on the normal Frattini subgroup of a profinite group.

\begin{lem}\label{ofsglem}Let $G$ be a profinite group such that $O^\fsg(G)=1$.  Then $G$ is a Cartesian product of non-abelian finite simple groups.\end{lem}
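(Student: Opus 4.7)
The plan is to exploit the definition $O^\fsg(G) = \bigcap \mcN = 1$, where $\mcN$ denotes the set of closed normal subgroups $N$ of $G$ such that $G/N \in \fsg$, to embed $G$ continuously into the Cartesian product $P := \prod_{N \in \mcN} G/N$ via the diagonal of the quotient maps. The map is injective since $\bigcap \mcN = 1$, and has closed image by compactness. So the content of the lemma is to show this embedding is surjective; equivalently, to show that the projection $G \to \prod_{N \in F}G/N$ is surjective for every finite $F \subseteq \mcN$, and then pass to the inverse limit.

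The first key step is the observation that for distinct $N_1, N_2 \in \mcN$ we have $N_1N_2 = G$. Indeed, $N_1N_2/N_1$ is a normal subgroup of the simple group $G/N_1$, so it is $1$ or $G/N_1$; the former forces $N_2 \leq N_1$, and then simplicity of $G/N_2$ applied to the normal subgroup $N_1/N_2$ gives either $N_1 = N_2$ or $N_1 = G$, both contradictions.

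The second key step, which I expect to be the main obstacle (because of possibly repeated isomorphism types among the $G/N_i$), is to upgrade this to: for distinct $N_1, \ldots, N_k \in \mcN$, the natural embedding $G/(N_1 \cap \cdots \cap N_k) \hookrightarrow \prod_{i=1}^k G/N_i$ is an isomorphism. I would prove this by induction on $k$. Setting $M = N_1 \cap \cdots \cap N_{k-1}$, the inductive hypothesis identifies $G/M$ with $\prod_{i<k} G/N_i$; the image of $N_kM/M$ is a normal subgroup of this product. Here I would invoke the standard classification: a normal subgroup of a direct product of non-abelian finite simple groups $A_1 \times \cdots \times A_{k-1}$ is a sub-product $\prod_{i \in S} A_i$. (To prove this quickly, for each projection $\pi_i$ one checks using perfectness of $A_i$ that either $\pi_i(N) = 1$ or $A_i \leq N$, via commutators of the form $[(1,\dots,b,\dots,1),(n_1,\dots,n_{k-1})]$.) If $N_kM/M$ is a proper sub-product, then $N_k \leq N_i$ for some $i<k$, forcing $N_k = N_i$ by step one, contradicting distinctness; hence $N_kM = G$, and the $k=2$ case finishes the induction.

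Finally, given these two steps, the image $\phi(G) \leq P$ is a closed subgroup that projects onto every finite sub-product $\prod_{N \in F}G/N$. Since $P$ carries the inverse-limit topology over finite $F$, any closed subgroup with this property equals $P$, so $\phi$ is surjective and $G \cong P$ as topological groups, as required.
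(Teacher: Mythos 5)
Your proof is correct, but it runs in the opposite direction to the one in the thesis. The thesis takes the finite case as known (a finite group with trivial $\fsg$-residual is a direct product of non-abelian simple groups, cited to Cameron) and then performs an inverse-limit argument on the sets of ``witnesses'' for the decomposition of each finite quotient $G/K$: these form an inverse system of finite non-empty sets, so a compatible choice exists, yielding an internal family $\mcR$ of simple normal subgroups of $G$ itself that generate $G$ and pairwise intersect trivially. You instead work externally: you take the family $\mcN$ of (automatically open) normal subgroups with simple quotient, embed $G$ diagonally into $\prod_{N\in\mcN}G/N$, and prove surjectivity by first establishing that distinct members of $\mcN$ are comaximal and then that normal subgroups of a finite product of non-abelian simple groups are sub-products. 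Your route is self-contained -- it proves the finite combinatorics inline rather than quoting the finite case, and it avoids the bookkeeping with witness-sets -- at the cost of delivering the decomposition as an abstract isomorphism $G\cong\prod G/N$ rather than as an explicit internal family of simple normal subgroups (which the thesis uses immediately afterwards, e.g.\ to see that the finite normal subgroups of $G$ generate a dense subgroup; of course this can also be read off from your isomorphism by pulling back the factors). Both arguments are sound; all the individual steps you sketch (comaximality via simplicity of the quotients, the induction using that $N_kM=G$ implies $G/(N_k\cap M)\cong G/N_k\times G/M$, and the density-plus-closedness conclusion in the product topology) check out.
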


\begin{proof}We assume the finite case of the lemma, as it is well-known: see \cite{Cam}, Exercise 4.3.  Given $K \unlhd_o G$, say the set $\mcQ_K = \{K/K, Q_1/K,\dots, Q_n/K\}$ of subgroups of $G/K$ is a witness for the decomposition of $G/K$ if the members of $\mcQ_K \setminus \{K/K\}$ are each non-abelian simple, and together generate $G/K$ as a direct product.  Let $\mcS_K$ be the set of all witnesses for the decomposition of $G/K$.  Then $\mcS_K$ is finite and non-empty for every $K \unlhd_o G$ by the finite case of the lemma, since $O^\fsg(G/K)=1$.  Moreover, given $\mcQ_K \in \mcS_K$, and given $K \leq L \unlhd G$, the set $\mcQ_L = \{L/L, Q_1L/L, \dots , Q_nL/L \}$ is a witness for the decomposition of $G/L$.  This defines a function from $\mcS_K$ to $\mcS_L$, and so the set $\{\mcS_K \mid K \unlhd_o G \}$, together with these functions, forms an inverse system of finite non-empty sets.  It follows that the inverse limit is non-empty, and so there is a set $\mcR$ of subgroups of $G$, such that
\[ \mcR_K := \{ RK/K \mid R \in \mcR \} \]
is a witness for the decomposition of $G/K$, for every $K \unlhd_o G$.  We conclude the following:
\vspace{-12pt}
\begin{enumerate}[(a)]  \itemsep0pt
\item for any $R \in \mcR \setminus \{1\}$ we have $R \unlhd G$, and furthermore $R$ is the inverse limit of $\fsg$-groups, so in fact $R \in \fsg$;
\item $G$ is generated by $\mcR$, and distinct elements of $\mcR$ have trivial intersection.\end{enumerate}

Hence $G$ is the Cartesian product of non-abelian simple groups, with decomposition given by $\mcR \setminus \{1\}$.\end{proof}

\begin{prop}\label{philhdprop}Let $G$ be a profinite group.
\vspace{-12pt}
\begin{enumerate}[(i)]  \itemsep0pt
\item Let $H \unlhd G$.  Then $\Phi^\lhd(H) \leq \Phi^\lhd(G)$.
\item Suppose $L \in \nsgp_\Phi$ for every open normal subgroup $L$ of $G$.  Then $G \in \nsgp^*_\Phi$.
\item Suppose $\Phi^\lhd(G)=1$.  Then $G$ is a Cartesian product of finite simple groups.  In particular, let $X$ be the union of all finite normal subgroups of $G$; then $G = \overline{X}$.\end{enumerate}\end{prop}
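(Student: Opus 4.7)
The plan for \textbf{(i)} is a direct case analysis. Fix any maximal normal proper subgroup $M \leq G$; I show $\Phi^\lhd(H) \leq M$. Since $H \unlhd G$, the subgroup $HM/M$ is normal in the simple group $G/M$, so either $H \leq M$ (in which case trivially $\Phi^\lhd(H) \leq M$) or $HM = G$. In the latter case $H/(H \cap M) \cong G/M$ is simple, so $H \cap M$ is itself a maximal normal proper subgroup of $H$, and $\Phi^\lhd(H) \leq H \cap M \leq M$.

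For \textbf{(ii)}, any finite-index closed subgroup $H$ of $G$ is open by Lemma \ref{obchain}(i). Let $L := \Core_G(H)$, which is open and normal in $G$ with $L \leq H$; by assumption $L \in \nsgp_\Phi$. By (i) applied to $L \unlhd H$ we have $\Phi^\lhd(L) \leq \Phi^\lhd(H)$, and combining $|L : \Phi^\lhd(L)| < \infty$ with $|H:L| < \infty$ gives $|H : \Phi^\lhd(H)| < \infty$. For max-closedness of $\nsgp(H) \setminus \{H\}$, I establish chain-closedness. Given an ascending chain $\{K_i\}$ of proper normal subgroups of $H$ with closure-union $K$: either some $K_{i_0} \supseteq L$, in which case $\{K_i/L\}_{i \geq i_0}$ is an ascending chain in the finite group $H/L$ that stabilises, forcing $K < H$; or $K_i \cap L < L$ for every $i$, in which case $K \cap L$ is the closure of $\bigcup (K_i \cap L)$ (using that $L$ is open), and chain-closedness of $\nsgp(L) \setminus \{L\}$ (inherited from $L \in \nsgp_\Phi$ via Lemma \ref{phichain}) yields $K \cap L \neq L$, so again $K \neq H$.

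For \textbf{(iii)}, set $R := O^\fsg(G)$ and let $\Phi_{ab}$ denote the intersection of those maximal normal proper subgroups $M$ of $G$ with $G/M$ abelian (hence $\cong C_p$ for some prime $p$). Every maximal normal proper $M$ has $G/M$ a profinite simple group, which is finite since profinite groups are residually finite, so these two families exhaust the maximal normal propers and $\Phi^\lhd(G) = R \cap \Phi_{ab} = 1$. By Lemma \ref{ofsglem}, $G/R$ is a Cartesian product of non-abelian finite simple groups, hence perfect, so $G = G'R$; and since $G/\Phi_{ab}$ is abelian ($[G,G] \leq \Phi_{ab}$) we have $G' \leq \Phi_{ab}$, giving $G = G'R \leq \Phi_{ab} R$. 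Combined with $\Phi_{ab} \cap R = 1$ and both subgroups normal, this yields the internal direct decomposition $G = \Phi_{ab} \times R$. Hence $\Phi_{ab} \cong G/R$ is a Cartesian product of non-abelian finite simples, while $R \cong G/\Phi_{ab}$ is an abelian profinite group of squarefree supernatural exponent; each pro-$p$ component is an elementary abelian pro-$p$ group, which by Pontryagin duality is a Cartesian power of $C_p$, so $R$ is itself a Cartesian product of cyclic prime-order groups. Therefore $G$ is a Cartesian product of finite simple groups. For the density assertion, each finite subproduct of this decomposition is a finite normal subgroup of $G$, and the union of finite subproducts is dense in the full Cartesian product, so $G = \overline{X}$.

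The main obstacle is the direct-product split in (iii): the essential observation is that the perfectness of $G/R$ together with the abelianness of $G/\Phi_{ab}$ forces $\Phi_{ab} R = G$, and this combined with $\Phi_{ab} \cap R = \Phi^\lhd(G) = 1$ cleanly separates the non-abelian and abelian simple ingredients of $G$.
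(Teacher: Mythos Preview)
Your proof is correct and follows essentially the same route as the paper in all three parts: the case split on $HM$ in (i), the core-and-(i) reduction in (ii), and the $O^{\fsg}(G)$ versus abelian-quotient split in (iii) are exactly the paper's arguments. The one difference is that in (ii) you add a separate verification that $\nsgp(H)\setminus\{H\}$ is chain-closed; this is correct but unnecessary, because in any profinite group every proper closed normal subgroup lies in a proper \emph{open} normal subgroup (residual finiteness) and hence in a maximal one, so maximal-closedness of $\nsgp(H)$ is automatic --- the paper accordingly checks only that $|H:\Phi^\lhd(H)|<\infty$.
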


\begin{proof}(i) Suppose not.  Then there is a normal subgroup $N$ of $G$ such that $G/N$ is simple, and such that $N$ does not contain $\Phi^\lhd(H)$.  Now $HN/N$ is a non-trivial normal subgroup of $G/N$, so $HN/N = G/N$.  But then $H \cap N$ is a normal subgroup of $H$ such that $H/(H \cap N) \cong HN/N$ is simple, so that $\Phi^\lhd(H) \leq H \cap N \leq N$.

(ii) Let $H$ be an open subgroup of $G$, and let $K$ be the core of $H$ in $G$.  Then $K$ is an open normal subgroup of $G$, so $\Phi^\lhd(K)$ has finite index in $K$ and hence in $G$.  Now $\Phi^\lhd(K) \leq \Phi^\lhd(H)$ by (i), so $\Phi^\lhd(H)$ has finite index in $H$.

(iii) Let $A = O^\fsg(G)$, let $\mcN$ be the set of normal subgroups of $G$ of prime index, and let $B = \bigcap_{N \in \mcN} N$.  Note first that $A \cap B = \Phi^\lhd(G)=1$.  Also, $G/AB$ is an image of both an abelian group $G/B$ and a perfect group $G/A$, so must be trivial; hence $G=AB$, so $G \cong A \times B$.  It follows that $A \cong G/B$ is abelian, and hence is a Cartesian product of its Sylow subgroups.  Every finite image of $G/B$ has squarefree exponent, so for each $p$, its $p$-Sylow subgroup is elementary abelian, and thus a Cartesian product of groups of order $p$.  Similarly $B \cong G/A$, so $B$ is a Cartesian product of non-abelian finite simple groups by Lemma \ref{ofsglem}.\end{proof}

\paragraph{}In the case of a pro-$p$ group $G$, we can also obtain properties of the normal subgroup lattice using centres of images of $G$.

\begin{defn}Given a pro-$p$ group $G$, define the following invariant:
\[r^Z(G)=\mathrm{sup}_{P \unlhd G}(r(Z(G/P)))\]

Note that $r(G) \geq r^Z(G) \geq d(G)$, but $r^Z(G)$ may be infinite even if $d(G)$ is finite: consider for instance the free pro-$p$ group on $d$ generators, for $d \geq 2$.

Given a set of subgroups $\mcX$ of a profinite group, say $Y\in \mcX$ is \emph{redundant} in $\mcX$ if $ \langle \mcX \setminus \{Y\} \rangle = \langle \mcX \rangle$.  Say $\mcX$ is non-redundant if no element of $\mcX$ is redundant in $\mcX$.\end{defn}

\begin{prop}Let $G$ be a pro-$p$ group, and let $n$ be an integer.  Then $r^Z(G) \leq n$ if and only if $|\mcX|\leq n$ for every non-redundant set $\mcX$ of normal subgroups.\end{prop}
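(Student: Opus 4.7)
The proof is an equivalence, so I would handle the two directions separately. A useful preliminary observation is that any subset $\mcX'$ of a non-redundant set $\mcX$ is itself non-redundant: if some $Y \in \mcX'$ lay in $\langle \mcX' \setminus \{Y\}\rangle$, then $Y$ would also lie in $\langle \mcX \setminus \{Y\}\rangle$, contradicting non-redundancy of $\mcX$. Hence in both directions it suffices to work with finite sets of size $n+1$.

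For the direction $r^Z(G) > n \Rightarrow$ existence of a non-redundant set of size $>n$, pick $P \unlhd G$ with $r(Z(G/P)) \geq n+1$, and then a closed subgroup $K$ of $Z(G/P)$ with $d(K) \geq n+1$. Lift $n+1$ elements $z_0,\dots,z_n$ of $K$ whose images in $K/K^p$ are linearly independent, and let $\tilde N_i$ be the pullback to $G$ of $\overline{\langle z_i\rangle}$. Each $\tilde N_i$ is closed normal in $G$ (since $z_i$ is central mod $P$), and non-redundancy follows from the linear independence: if $z_i$ lay in $L := \overline{\langle z_j : j \neq i\rangle}$, then the image of $z_i$ in $K/K^p$ would lie in the span of $\{\bar z_j : j \neq i\}$, contradiction.

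The harder direction assumes $r^Z(G) \leq n$ and takes a non-redundant $\{N_0,\dots,N_n\}$, from which one seeks a contradiction. Set $N = \langle N_0,\dots,N_n\rangle$, $M_i = \langle N_j : j \neq i\rangle$, and $P := \Phi(N)[G,N]$. Then $P$ is characteristic in $N$ and hence normal in $G$; $N/P$ is elementary abelian (since $\Phi(N) \leq P$) and central in $G/P$ (since $[G,N] \leq P$), so $r(N/P) \leq r(Z(G/P)) \leq n$, i.e.\ $\dim_{\bF_p}(N/P) \leq n$. Since the images $N_iP/P$ generate $N/P$, the contradiction will follow once we show they form a non-redundant set in $N/P$, for then they yield $n+1$ subspaces no one of which lies in the sum of the others, forcing $\dim(N/P) \geq n+1$. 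Equivalently, we must show $N_i \not\leq M_iP = M_i\Phi(N)[G,N]$. Using the commutator identity $[G,N] = [G,M_i][G,N_i]$ (valid as $M_i, N_i \unlhd G$ with $N = M_iN_i$) and $[G,M_i] \leq M_i$, this reduces to showing $N_i \not\leq M_i\Phi(N)[G,N_i]$.

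Suppose, for contradiction, that $N_i \leq M_i\Phi(N)[G,N_i]$. Passing to $\bar G = G/M_i$ and $\bar N = N/M_i$ (noting $\bar N_i = \bar N$ because $N = N_iM_i$, and $\Phi(\bar N) = \Phi(N)M_i/M_i$ in the pro-$p$ setting), this becomes $\bar N \leq \Phi(\bar N)\cdot[\bar G,\bar N]$. Viewed in the elementary abelian pro-$p$ quotient $V := \bar N/\Phi(\bar N)$, which is a nonzero $\mathbb{F}_p[[\bar G]]$-module, this says $V = (\bar G - 1)V$. But $\bar G$ is pro-$p$, so the augmentation ideal of $\bF_p[[\bar G]]$ is topologically nilpotent (by Nakayama's lemma applied to each finite $p$-group quotient and then inverse limits, using Theorem \ref{cinvthm}(i)-type fixed-point arguments for $p$-groups on $\mathbb{F}_p$-modules), and hence cannot act surjectively on a nonzero pro-$p$ module. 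This contradiction completes the proof. The main obstacle, and the only non-trivial step, is this last commutator/Nakayama argument; the rest is essentially bookkeeping with Frattini subgroups.
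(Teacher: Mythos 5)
Your proof is correct, and in both directions it follows the same skeleton as the paper's: for the forward implication you lift independent elements of a central section to get a non-redundant family, and for the converse you pass to the quotient by $N^p[G,N]$ (your $P=\Phi(N)[G,N]$ is the paper's $L=M^p[G,M]$, since $N'\leq[G,N]$), observe that $N/P$ is elementary abelian and central in $G/P$, and count dimensions. The one place you genuinely diverge is the key step that non-redundancy survives passage to $N/P$: the paper gets this in one line from its generalised Frattini lemma (Lemma \ref{fratmax}) applied to the set of $G$-invariant subgroups of $N$, whose applicability rests on the standard pro-$p$ fact that every proper closed $G$-invariant subgroup of $N$ lies in one of index $p$ centralised by $G$; you instead prove the equivalent statement $N_i\not\leq M_i\Phi(N)[G,N]$ by hand, reducing via $[G,M_iN_i]=[G,M_i][G,N_i]$ to the assertion that a non-zero profinite $\bF_p$-module for a pro-$p$ group cannot equal its augmentation submodule, which you settle by Nakayama on a finite quotient module. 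Your route is more self-contained (it does not lean on the maximal-closedness machinery of Section 2.1) at the cost of the commutator bookkeeping; the paper's route packages exactly this content into Lemma \ref{fratmax}. Two trivial slips worth fixing: $[G,N]$ is not characteristic in $N$ (it is, however, visibly normal in $G$, which is all you need for $P\unlhd G$); and "topological nilpotence of the augmentation ideal" alone does not rule out surjectivity on an infinite module -- the passage to a finite non-zero quotient module, which you do indicate, is the step that actually carries the argument.
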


\begin{proof}Suppose $r^Z(G) \geq n$.  Let $P$ be a normal subgroup of $G$ with $r(Z(G/P)) =n$.  Then a subgroup of $Z(G/P)$ is generated without redundancy by a finite set $\mcX=\{K_1/P, \dots, K_n/P\}$ of $n$ cyclic groups.  This implies $\mcX'=\{K_1,\dots,K_n\}$ is a non-redundant set of normal subgroups of $G$ of size $n$.

In the other direction, let $\{K_1, \dots, K_m\}$ be a non-redundant set of $m$ normal subgroups, generating a subgroup $M$.  Let $L = \Phi^{\nsgp(G)}(M)$, in other words $L = M^p[G,M]$, and let $R_i = K_iL/L$.  By Lemma \ref{fratmax}, $M/L$ is generated without redundancy by $\{R_1, \dots, R_n\}$.  Furthermore, if $N$ is a proper subgroup of $M$ that is maximal subject to being normal in $G$, then $|M/N|=p$ and $M/N$ is centralised by $G$, since $G$ is a pro-$p$ group.  Hence $M/L \leq Z(G/L)$, and 
 \[ m \leq d(M/L) \leq r(Z(G/L)) \leq r^Z(G). \qedhere\]\end{proof}

\section{Some consequences of Tate's theorem}\label{csqtate}

Tate's theorem (Theorem \ref{tate}) has straightforward but important consequences for fusion in profinite groups, as follows:

\begin{cor}\label{tatecor}Let $G$ be a profinite group, and let $S \in \Sylp(G)$.
\vspace{-12pt}
\begin{enumerate}[(i)]  \itemsep0pt 
\item Let $M$ be a normal subgroup of $G$ such that $S \cap M \leq \Phi(S)$.  Then $SM$ is $p'$-normal, and $O_{p'}(G/M) = O_{p'}(G)M/M$.
\item Let $M$ and $N$ be normal subgroups of $G$ such that $S \cap M \leq \Phi(S)N$.  Then $MN/N$ is $p'$-normal.\end{enumerate}\end{cor}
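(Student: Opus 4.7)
For part (i), the first clause, I would translate $p'$-normality of $SM$ into a control-of-transfer statement via Corollary \ref{ptranscomp}: since $S$ is a $p$-Sylow of $SM$, it suffices to show that $S$ controls $p$-transfer in $SM$, and I would verify this using equivalence (ii) of Tate's theorem, i.e., $(SM)'(SM)^p \cap S = S'S^p = \Phi(S)$. The inclusion $\supseteq$ is automatic. For $\subseteq$, consider the quotient map $\pi \colon SM \to SM/M \cong S/(S\cap M)$, whose image is a pro-$p$ group. Then $\pi((SM)'(SM)^p)$ equals the Frattini subgroup of $SM/M$, which under the hypothesis $S \cap M \leq \Phi(S)$ coincides with $\Phi(S)/(S \cap M)$. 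Pulling back and intersecting with $S$, modular law gives $(SM)'(SM)^p \cap S \leq S \cap \Phi(S)M = \Phi(S)(S \cap M) = \Phi(S)$, as required.

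For the second clause of (i), one inclusion is immediate: $O_{p'}(G)$ is a normal pro-$p'$ subgroup, so its image lies in $O_{p'}(G/M)$. For the reverse, let $H$ be the preimage of $O_{p'}(G/M)$ in $G$, so $H \unlhd G$ and $H/M$ is pro-$p'$. The key observation is that $(S \cap H)M/M$ is a $p$-subgroup of the pro-$p'$ group $H/M$, hence trivial, so $S \cap H \leq S \cap M \leq \Phi(S)$. Thus $H$ itself meets the hypothesis of the first clause, and I may apply it with $H$ in place of $M$ to conclude that $SH$ is $p'$-normal. Let $L$ be its normal pro-$p'$ Hall subgroup. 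Then $L \unlhd SH \unlhd G$, so $L$ is subnormal in $G$ and consequently $L \leq O_{p'}(G)$; a routine check shows $L \leq H$ (any pro-$p'$ subgroup of $SH$ lies in the preimage of $\{1\}$ under $SH \to SH/H$, which is pro-$p$). Modular law then yields $H = (H \cap S) L \leq M \cdot O_{p'}(G)$, finishing the reverse inclusion.

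For part (ii), the natural move is to pass to $\bar G = G/N$ and reduce to an application of (i). Set $\bar S = SN/N$ and $\bar M = MN/N$; then $\bar S$ is a $p$-Sylow of $\bar G$ and $\bar M$ is normal. Since $\bar S$ is pro-$p$, $\Phi(\bar S) = \Phi(S)N/N$. The hypothesis $S \cap M \leq \Phi(S)N$ says exactly that $(S \cap M)N/N \leq \Phi(\bar S)$, so to check the hypothesis of (i) in $\bar G$ I need $\bar S \cap \bar M = (S\cap M)N/N$. Here $(S\cap M)N/N$ is the image of a $p$-Sylow of $M$ under $M \to MN/N$, hence is a $p$-Sylow of $\bar M$; since it is contained in both $\bar S$ and $\bar M$, it is contained in $\bar S \cap \bar M$, which is itself a $p$-Sylow of $\bar M$ by Corollary \ref{sylsubnor}. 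Equal orders then force equality. Now (i) applied in $\bar G$ yields that $\bar S \bar M = SMN/N$ is $p'$-normal; since $\bar M \unlhd \bar S \bar M$, intersecting the characteristic pro-$p'$ Hall subgroup of $\bar S \bar M$ with $\bar M$ produces, via Corollary \ref{sylsubnor}, a normal pro-$p'$ Hall subgroup of $\bar M = MN/N$.

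The main obstacle I anticipate is the identification $\bar S \cap \bar M = (S\cap M)N/N$ in part (ii): a priori one only has $S \cap MN \supseteq (S\cap M)(S\cap N)$, and pushing the hypothesis from $S \cap M$ to $S \cap MN$ looks like it needs extra input. Framing the identification through Sylow theory in $\bar M$ (rather than trying to manipulate $S \cap MN$ directly) is what makes the reduction to (i) go through cleanly.
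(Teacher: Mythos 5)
Your proposal follows the paper's own proof in all essentials: the first clause of (i) via Tate's criterion $((SM)'(SM)^p) \cap S = \Phi(S)$ together with Corollary \ref{ptranscomp}, the second clause by applying the first to the preimage $H$ of $O_{p'}(G/M)$, and part (ii) by passing to $G/N$ and invoking (i). Your Sylow-theoretic identification $(SN/N) \cap (MN/N) = (S\cap M)N/N$ in part (ii) supplies a detail that the paper's one-line proof leaves implicit, and it is correct.

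One justification in the second clause of (i) is wrong as written: you assert $L \unlhd SH \unlhd G$, but $SH$ need not be normal in $G$, since $S$ is merely a Sylow subgroup. The conclusion $L \leq O_{p'}(G)$ survives by a one-line repair: you have already shown $L \leq H$, and $L \unlhd SH$ forces $L \unlhd H$, so $L$ is a normal $p'$-Hall subgroup of $H$ and hence equals $O_{p'}(H)$; this is characteristic in $H \unlhd G$, so $L \unlhd G$ and $L \leq O_{p'}(G)$. (This is essentially how the paper argues.) A second, smaller imprecision: at the end of (ii) you cite Corollary \ref{sylsubnor} for the set $\pi = p'$, which that corollary only covers for prosoluble groups; but since the $p'$-Hall subgroup $K$ of $\bar{S}\bar{M}$ that you intersect with $\bar{M}$ is normal, the computation $|\bar{M} : K \cap \bar{M}| = |K\bar{M} : K|$, a power of $p$, gives the desired normal $p'$-Hall subgroup of $\bar{M}$ directly without any appeal to that corollary.
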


\begin{proof}(i) We see that $(SM)'(SM)^p \leq \Phi(S)M$, so
\[ ((SM)'(SM)^p) \cap S \leq \Phi(S)M \cap S = \Phi(S) = S'S^p.\]
Hence $S$ controls $p$-transfer in $SM$ by Theorem \ref{tate}, so $SM$ is $p'$-normal by Corollary \ref{ptranscomp}.

For the final assertion, let $O$ be the lift of $O_{p'}(G/M)$ to $G$.  It is clear that $O \geq O_{p'}(G)M$.  On the other hand, we have $SM/M \cap O/M = 1$ since $O/M$ is a pro-$p'$ group, so $S \cap O \leq S \cap M \leq \Phi(S)$. This ensures that $O$ has a normal $p'$-Hall subgroup $K$ say, by the same argument as for $M$; this $K$ is normal in $G$, and $O$ contains $O_{p'}(G)$, so in fact $K = O_{p'}(G)$.  Since $M$ contains a $p$-Sylow subgroup of $O$, it follows that $O=O_{p'}(G)M$.

(ii) $MN/N$ is a normal subgroup of $G/N$, and $\Phi(S/N) = \Phi(S)N/N$ contains $(M \cap S)N/N$.  The result follows by part (i) applied to $G/N$.\end{proof}

\begin{cor}\label{tatefin}Let $G$ be a profinite group with $d_p(G)$ finite.  Then $G/O_{p'}(G)$ is virtually pro-$p$.  If $p=2$, then $G$ is also virtually prosoluble.\end{cor}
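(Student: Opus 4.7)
The plan is to produce an open pro-$p$ subgroup in $\bar{G} := G/O_{p'}(G)$ by applying Corollary \ref{tatecor}(i) to an open normal subgroup $M$ of $G$ engineered so that $S \cap M \leq \Phi(S)$, where $S$ is a $p$-Sylow subgroup of $G$. First I would reduce to the case $O_{p'}(G) = 1$: passing to $\bar{G}$ preserves the hypothesis, since $d_p(\bar{G}) \leq d_p(G) < \infty$, while any subnormal pro-$p'$ subgroup of $\bar{G}$ lifts to one of $G$, forcing $O_{p'}(\bar{G}) = 1$. So assume $O_{p'}(G) = 1$ and seek an open pro-$p$ subgroup of $G$ itself.

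Since $d(S) = d_p(G)$ is finite, Lemma \ref{fratlem}(iii) gives $|S : \Phi(S)| = p^{d_p(G)}$; in particular $\Phi(S)$ is open in $S$. As $N$ ranges over open normal subgroups of $G$, the closed subsets $(S \cap N) \setminus \Phi(S)$ of the compact space $S$ form a family directed by reverse inclusion (since $N_1 \cap N_2 \unlhd_o G$), whose total intersection is $\{1\} \setminus \Phi(S) = \emptyset$. The finite intersection property in the compact space $S$ then forces some member to be empty, so there exists $M \unlhd_o G$ with $S \cap M \leq \Phi(S)$.

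Corollary \ref{tatecor}(i) now tells us that $SM$ is $p'$-normal; let $K$ denote its pro-$p'$ Hall subgroup, so $K \unlhd SM$ and $SM/K$ is pro-$p$. The image $KM/M$ of $K$ in the pro-$p$ group $SM/M$ is pro-$p'$, hence trivial, so $K \leq M$; combined with $K \unlhd SM \supseteq M$, this yields $K \unlhd M \unlhd G$. Thus $K$ is a subnormal pro-$p'$ subgroup of $G$ and therefore $K \leq O_{p'}(G) = 1$, so that $SM$ itself is pro-$p$. Finally, $SM$ is open in $G$ because it contains the open subgroup $M$, which settles the first assertion.

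For the second assertion, assume $p = 2$. The pro-$2'$ group $O_{2'}(G)$ is prosoluble by the Feit--Thompson theorem (incorporated in Theorem \ref{simordthm}). Lifting the open pro-$2$ subgroup of $G/O_{2'}(G)$ furnished by the first part to an open subgroup $U$ of $G$ presents $U$ as an extension of the prosoluble normal subgroup $O_{2'}(G)$ by a pro-$2$ (hence prosoluble) quotient, so $U$ itself is prosoluble. The main obstacle is the compactness step producing $M$; once $S \cap M \leq \Phi(S)$ is in hand, Tate's theorem and the routine subnormality observation $K \unlhd^2 G$ finish the argument mechanically.
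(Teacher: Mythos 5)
Your argument is correct and follows essentially the same route as the paper: both find an open normal subgroup $M$ with $S \cap M \leq \Phi(S)$ (the paper asserts its existence directly from finiteness of $S/\Phi(S)$, you spell out the compactness argument), apply Corollary \ref{tatecor}(i) to get a normal $p'$-Hall subgroup that is subnormal in $G$ and hence absorbed by $O_{p'}(G)$, and handle $p=2$ via Feit--Thompson exactly as the paper does. The only cosmetic difference is that you reduce to $O_{p'}(G)=1$ at the outset, whereas the paper works with $O_{p'}(M) \leq O_{p'}(G)$ at the end; the content is the same.
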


\begin{proof}Let $S$ be a $p$-Sylow subgroup of $G$.  Then $d(S)=d_p(G)$ is finite, so $S/\Phi(S)$ is finite, and hence there must be some open normal subgroup $N$ of $G$ such that $S \cap N \leq \Phi(S)$.  By Corollary \ref{tatecor}, $N/O_{p'}(N)$ is a pro-$p$ group, so $G/O_{p'}(N)$ is virtually pro-$p$.  Now $O_{p'}(N) \leq O_{p'}(G)$, so $G/O_{p'}(G)$ is an image of $G/O_{p'}(N)$; hence $G/O_{p'}(G)$ is virtually pro-$p$.

Now suppose $p=2$.  Let $R$ be the subgroup of $G$ such that $R \geq O_{2'}(G)$ and $R/O_{2'}(G) = O_2(G/O_{2'}(G))$.  Since $G/O_{2'}(G)$ is virtually pro-$2$, $R$ has finite index in $G$.  By the Odd Order Theorem, $O_{2'}(G)$ is prosoluble, and so $R$ is prosoluble.\end{proof}

\begin{cor}\label{tatenilp}Let $G$ be a profinite group involving only finitely many primes, such that $d_p(G)$ is finite for every $p$.  Then $G$ is virtually pronilpotent.\end{cor}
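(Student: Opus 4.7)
Let $\pi$ denote the (finite) set of primes involved in $G$, so that $G$ is a pro-$\pi$ group. The plan is to apply Corollary \ref{tatefin} once for each $p \in \pi$ and combine the resulting open subgroups into a single open pronilpotent subgroup of $G$.

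For each $p \in \pi$, since $d_p(G)$ is finite, Corollary \ref{tatefin} gives that $G/O_{p'}(G)$ is virtually pro-$p$; passing to the core of the open pro-$p$ subgroup, I obtain an open normal subgroup $M_p \unlhd G$ with $O_{p'}(G) \leq M_p$ and $M_p/O_{p'}(G)$ pro-$p$. Set $M := \bigcap_{p \in \pi} M_p$, which is still open and normal in $G$. For each $p \in \pi$, the quotient $M/(M \cap O_{p'}(G))$ embeds in $M_p/O_{p'}(G)$ and is therefore pro-$p$; combined with $M \cap O_{p'}(G)$ being pro-$p'$ and normal in $M$, this gives $M \cap O_{p'}(G) \leq O_{p'}(M)$ and $M/O_{p'}(M)$ pro-$p$.

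Now set $N := \bigcap_{p \in \pi} O_{p'}(M)$. The diagonal map realises $M/N$ as a closed subgroup of the Cartesian product $\prod_{p \in \pi} M/O_{p'}(M)$ of pro-$p$ groups for distinct primes $p$, which is pronilpotent by Lemma \ref{fitlem}; since closed subgroups of pronilpotent profinite groups are again pronilpotent (using Corollary \ref{sylsubnor} to intersect with each Sylow factor), $M/N$ is pronilpotent. On the other hand $N$ is a pro-$\pi$ group contained in every $O_{p'}(M)$, so $|N|$ is supported on $\pi$ yet coprime to every prime in $\pi$; hence $|N|=1$ and $M$ itself is pronilpotent. Since $M$ is open in $G$, the conclusion follows.

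I do not anticipate any serious obstacle in carrying out this plan: it is essentially a finite combination of applications of Corollary \ref{tatefin}, with the finiteness of $\pi$ used both to keep $M$ open as a finite intersection and to force the pro-$\pi$ group $N$ to be trivial. The only ancillary fact required is that closed subgroups of pronilpotent profinite groups are pronilpotent, which is immediate from Lemma \ref{fitlem} together with Corollary \ref{sylsubnor}.
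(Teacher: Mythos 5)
Your proof is correct and follows essentially the same route as the paper: apply Corollary \ref{tatefin} once per prime in the finite set $\pi$, intersect the resulting open normal subgroups, and observe that the intersection has a normal $p'$-Hall subgroup (namely $O_{p'}(M)$) for every $p$, hence is pronilpotent. The only cosmetic difference is that you finish by embedding $M$ diagonally into $\prod_{p\in\pi} M/O_{p'}(M)$, whereas the paper concludes directly that all Sylow subgroups of the intersection are normal and invokes Lemma \ref{fitlem}.
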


\begin{proof}For each prime $p$ dividing $|G:1|$, choose an open normal subgroup $N_p$ of $G$ such that $N_p$ has a normal $p'$-Hall subgroup; such an $N_p$ exists by the previous corollary.  Set $N$ to be the intersection of these $N_p$, and note that $|G:N|$ is finite.  Then $N$ has a normal $p'$-Hall subgroup for every prime $p$, since $N \leq N_p$.  It follows that $N$ has a normal $\pi$-Hall subgroup for any set of primes $\pi$, and so $N$ is the direct product of its Sylow subgroups.  Hence $N$ is pronilpotent.\end{proof}

\begin{rem}This was also proved by Mel'nikov in \cite{Mel}.  We will give a strengthening of Mel'nikov's result in Section 5.3.\end{rem}

\begin{prop}\label{pspace}Let $G$ be a profinite group with a $d$-generated $p$-Sylow subgroup $S$.
\vspace{-12pt}
\begin{enumerate}[(i)]  \itemsep0pt
\item Let $\mcX$ be a set of normal subgroups of $G$, and let $H = \langle \mcX \rangle$.  Then there is a subset $\mcK$ of $\mcX$ such that $|\mcK| \leq d$, and such that $\mcK$ generates a subgroup $K$ of $G$ satisfying
\[ \Phi(S)(H \cap S) = \Phi(S)(K \cap S).\]
In particular, $H/K$ is $p'$-normal.
\item Let $\mcE$ be a subgroup class such that $\mcE \subseteq \nsgp$, with the following closure properties for all $K,L \unlhd G$:
\vspace{-6pt}
\begin{enumerate}[(a)]  \itemsep0pt
\item $K,L \in \mcE(G) \Rightarrow KL \in \mcE(G)$;
\item $LK/K \in \mcE(G/K) \wedge K \in \mcE(G) \Rightarrow KL \in \mcE(G)$.\end{enumerate}

Then there is an $\mcE$-subgroup $K$ of $G$, such that every $\mcE$-subgroup of $G/K$ is $p'$-normal.\end{enumerate}\end{prop}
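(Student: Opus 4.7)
\textbf{Plan for Proposition \ref{pspace}.}

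For part (i), the plan is to translate the statement into linear algebra in the finite $\bF_p$-vector space $V = S/\Phi(S)$, whose dimension is $d(S) \leq d$ by Lemma \ref{fratlem}(iii). For each $X \in \mcX$ (a closed normal subgroup of $G$) set $V_X = (X \cap S)\Phi(S)/\Phi(S)$; note $X \cap S$ is a $p$-Sylow of $X$ by Corollary \ref{sylsubnor}, and similarly $H \cap S$ is a $p$-Sylow of $H = \langle \mcX \rangle$, giving a subspace $V_H \subseteq V$. The key claim is that $V_H = \sum_{X \in \mcX} V_X$. The inclusion $\supseteq$ is immediate from $X \leq H$. For $\subseteq$, I would first verify the finite-group fact that if $X_1, X_2 \unlhd G$ are normal with $G$ finite, then $(X_1 \cap S)(X_2 \cap S) = (X_1 X_2) \cap S$: both sides are subgroups of $S$ (the left one because $S$ normalises each $X_i \cap S$), the inclusion $\subseteq$ is clear, and equality follows by comparing $p$-parts of orders via $|X_1 X_2| = |X_1||X_2|/|X_1 \cap X_2|$. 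Iterating gives $(X_1 \cdots X_n) \cap S = \prod (X_i \cap S)$, and passing to $V$ (where products become sums) this propagates to arbitrary $\mcX$ via the finite images $G/N$.

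Granted the claim, since $\dim V_H \leq d$ I can pick $\mcK \subseteq \mcX$ of size at most $d$ with $\sum_{X \in \mcK} V_X = V_H$. Setting $K = \langle \mcK \rangle$, we have $V_K \supseteq \sum_{X \in \mcK} V_X = V_H$ and $V_K \subseteq V_H$ because $K \leq H$; hence $\Phi(S)(K \cap S) = \Phi(S)(H \cap S)$. Finally, $H \cap S \leq \Phi(S)(K \cap S) \leq \Phi(S) K$, so Corollary \ref{tatecor}(ii) (applied with $M = H$, $N = K$) gives that $H/K = HK/K$ is $p'$-normal.

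For part (ii), I apply part (i) to $\mcX = \mcE(G)$, which is a legitimate choice since $\mcE \subseteq \nsgp$. Let $H = \langle \mcE(G) \rangle$ and let $K$ be the output of (i), so $K$ is generated by at most $d$ elements of $\mcE(G)$; iterating closure property (a) shows $K \in \mcE(G)$. Now given any $\mcE$-subgroup $L/K$ of $G/K$, closure property (b) applied to $L/K$ and $K$ yields $L \in \mcE(G)$; hence $L \leq H$ by definition of $H$. So $K \leq L \leq H$ with $L/K \unlhd G/K$, and we are reduced to showing that normal subgroups of the $p'$-normal group $H/K$ that contain $K$ are themselves $p'$-normal.

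This last reduction is the short routine part: if $M/K$ is the normal $p'$-Hall of $H/K$, then $(L \cap M)/K$ is a pro-$p'$ normal subgroup of $L/K$, and the index $|L/K : (L \cap M)/K| = |LM/K : M/K|$ divides $|H/K : M/K|$, which is a $p$-number, so $(L \cap M)/K$ is a normal $p'$-Hall subgroup of $L/K$. The main obstacle in the whole proof is really the claim $V_H = \sum_X V_X$ in part (i), since it requires that subnormal-Sylow interactions for normal products in the finite case pass correctly through inverse limits; once that is in hand, both parts are short applications of Tate's theorem in the form of Corollary \ref{tatecor}.
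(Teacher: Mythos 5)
Your proposal is correct and follows essentially the same route as the paper: both work in $V = S/\Phi(S)$ via the subspaces $V_S(N) = (N\cap S)\Phi(S)/\Phi(S)$, pick at most $d$ members of $\mcX$ whose subspaces span $V_S(H)$, and finish with Corollary \ref{tatecor}; part (ii) is the identical application of closure properties (a) and (b). The only difference is that you explicitly justify the identity $V_S(H) = \sum_{X \in \mcX} V_S(X)$ (via the order computation $|X_1X_2|_p = |X_1|_p|X_2|_p/|X_1\cap X_2|_p$ and an inverse-limit pass), a step the paper asserts without proof, and your justification is sound.
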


\begin{proof}(i) Given a normal subgroup $N$ of $G$, write $V_S(N) = (N \cap S)\Phi(S)/\Phi(S)$, regarded as a subspace of $S/\Phi(S) \cong (\bF_p)^{d(S)}$.  Since $H$ is generated by $\mcX$, there are $H_1, \dots, H_k \in \mcX$ such that
\[ V_S(H) = V_S(H_1) + \dots + V_S(H_k),\]
and such that $k \leq \dim(V_S(H)) \leq d$.  Now set $\mcK = \{H_1, \dots, H_k\}$ and let $K = \langle \mcK \rangle$; then clearly 
\[ \Phi(S)(H \cap S) = \Phi(S)(K \cap S)\]
as required.  Hence $H/K$ is $p'$-normal by Corollary \ref{tatecor}.

(ii) Apply part (i) to the class $\mcX = \mcE(G)$, to obtain a finite subset $\mcK$ generating a subgroup $K$ as before.  Also, let $H$ be as before.  Since $\mcK$ is a finite subset of $\mcE(G)$, we have $K \in \mcE(G)$ by property (a).  Now let $M/K$ be an $\mcE$-subgroup of $G/K$.  Then $M \in \mcE(G)$ by property (b), and so $M \leq H$, in other words $M/K \leq H/K$; since $H/K$ is $p'$-normal, so is $M/K$.\end{proof}

Under the circumstances of (ii) above, define the \emph{$\mcE$-based $p$-dimension} of $G$ to be $\dim(V_S(H))$, where $H = \langle \mcE(G) \rangle$.  In particular, define $f_p(G) = \dim(V_S(\overline{X}))$, where $X$ is the union of all finite normal subgroups of $G$.  Note that $f_p(G) \leq f_p(S) \leq d(S)$.

\begin{cor}\label{fplayer}Let $G$ be a profinite group with finitely generated $p$-Sylow subgroup $S$.  Then $G$ has a finite normal subgroup $K$ that is the normal closure of at most $f_p(G)$ elements, such that every finite normal subgroup of $G/K$ is $p'$-normal.\end{cor}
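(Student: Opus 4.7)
The plan is to apply Proposition \ref{pspace}(i) to the family $\mcX' := \{\langle x^G \rangle : x \in G,\ \langle x^G \rangle \text{ finite}\}$ of finite cyclic normal closures. Every element of any finite normal subgroup has finite normal closure contained in that subgroup, so $\bigcup \mcX' = X$; hence $\langle \mcX' \rangle = \overline{X} = H$, and by definition $\dim(V_S(H)) = f_p(G)$.

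Inspecting the proof of Proposition \ref{pspace}(i), the finite subset $\mcK$ produced there in fact satisfies $|\mcK| \leq \dim(V_S(H))$, which is tighter than the bound $|\mcK| \leq d(S)$ asserted in the statement. Applied to $\mcX'$, this yields elements $x_1, \dots, x_k \in G$ with $k \leq f_p(G)$ such that $K := \langle x_1^G, \dots, x_k^G \rangle$ satisfies $\Phi(S)(K \cap S) = \Phi(S)(H \cap S)$, and hence $H/K$ is $p'$-normal. By construction $K$ is the normal closure of at most $f_p(G)$ elements; it is also finite, as a product of finitely many finite normal subgroups.

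It remains to check that every finite normal subgroup $M/K$ of $G/K$ is $p'$-normal. Such an $M$ is itself a finite normal subgroup of $G$, so $M \subseteq X \subseteq H$ and hence $M/K \leq H/K$. Given a normal $p'$-Hall subgroup $N/K$ of $H/K$, the subgroup $(M \cap N)/K$ is a normal $p'$-subgroup of $M/K$ whose index equals $|MN : N|$, a divisor of the $p$-number $|H : N|$; thus $(M \cap N)/K$ is a normal $p'$-Hall subgroup of $M/K$. The only subtle step is extracting the bound $k \leq f_p(G)$ from the proof rather than the statement of Proposition \ref{pspace}(i), but this is immediate from the fact that the $H_i$ chosen there span the $\bF_p$-space $V_S(H)$; everything else is routine once $\mcX'$ has been taken to consist of cyclic normal closures, so that $K$ is automatically a normal closure of the corresponding representative elements.
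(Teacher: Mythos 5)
Your proof is correct and is essentially the argument the paper intends: Corollary \ref{fplayer} is stated without proof as an immediate consequence of Proposition \ref{pspace}(i) applied to the finite normal subgroups of $G$, using the sharper bound $|\mcK| \leq \dim(V_S(H)) = f_p(G)$ visible in that proposition's proof. Your two refinements --- taking $\mcX'$ to consist of cyclic normal closures so that $K$ is literally the normal closure of at most $f_p(G)$ elements, and passing $p'$-normality from $H/K$ down to each finite normal subgroup of $G/K$ by intersecting with the normal $p'$-Hall subgroup --- are exactly the details the paper leaves to the reader.
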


\section{The virtual centre and finite radical of a profinite group}

\begin{defn}The \emph{finite radical} $\Fin(G)$ of a group $G$ is the union of all finite normal subgroups of $G$.  The \emph{virtual centre} $VZ(G)$ of a group $G$ is the set of all elements $x$ of $G$ such that $C_G(x)$ has finite index, or equivalently, the union of all centralisers of (normal) subgroups of finite index.\end{defn}

Both $\Fin(G)$ and $VZ(G)$ are abstract subgroups of $G$, though they need not be closed in general.  Note that $G=VZ(G)$ if and only if all conjugacy classes of $G$ are finite.  Such groups are known as FC-groups, and have been studied extensively in their own right.  For more details, see the research note of Tomkinson (\cite{Tom}) on the subject.

With a topological group $G$ there are two notions of the size of $G$: one is the cardinality $|G|$ of the underlying set, and the other is the smallest cardinality of a dense abstract subgroup, which may be strictly smaller.  Given a topological group in which $\Fin(G)$ is dense, this raises the question of whether $|\Fin(G)|$ is equal to $|G|$, or the smallest cardinality of a dense abstract subgroup, or somewhere in between.  We are particularly interested here in the case where $G$ is a profinite group.  Both extremes occur in the case of abelian countably based profinite groups: for example, an infinite inverse limit of cyclic groups of square-free order has countable finite radical, whereas the Cartesian product of infinitely many isomorphic finite cyclic groups is uncountable, and equal to its finite radical.

An important result that relates $\Fin(G)$ to $VZ(G)$ is Dicman's Lemma:

\begin{lem}[Dicman \cite{Dic}] Let $X$ be a finite subset of the group $G$, such that each element of $X$ has finite order, and such that $X \subseteq VZ(G)$.  Then $\langle X \rangle^G$ is finite.\end{lem}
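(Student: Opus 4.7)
The plan is to reduce to the case where the generating set is closed under $G$-conjugation, and then apply Schur's theorem on groups with finite central quotient.

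First, since every $x \in X$ lies in $VZ(G)$, the conjugacy class $x^G$ has cardinality $|G:C_G(x)|$ and is therefore finite. Setting $Y := \bigcup_{x \in X} x^G$ produces a finite subset of $G$ consisting of elements of finite order (conjugation preserves order), and $Y$ is $G$-invariant by construction. Since $\langle X \rangle^G = \langle Y \rangle$, it suffices to prove that $H := \langle Y \rangle$ is finite.

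Next, I would pass to the central quotient of $H$. Because $Y$ is finite and $H$-invariant, conjugation yields a homomorphism $H \to \Sym(Y)$ whose kernel is $C_H(Y)$. As $Y$ generates $H$, this kernel coincides with $Z(H)$. Hence $|H : Z(H)|$ divides $|Y|!$ and in particular is finite.

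Now invoke Schur's theorem: if $H/Z(H)$ is finite, then $H' = [H,H]$ is finite. The final step is to note that $H/H'$ is also finite, being an abelian group generated by the (finitely many) images of $Y$, each of finite order; a finitely generated torsion abelian group is finite. Combining the finiteness of $H'$ with that of $H/H'$ yields $|H| < \infty$, which proves the lemma. The only substantive ingredient is Schur's theorem, whose proof (via the transfer map, or a direct commutator calculation that bounds $|H'|$ in terms of $|H:Z(H)|$) is the main obstacle; every other step is just an orbit-counting argument or elementary abelian group theory.
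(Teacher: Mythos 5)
Your proof is correct, and it is the standard argument for Dicman's lemma; the paper itself gives no proof, only a citation to Dicman's original article, so there is nothing to compare against beyond noting that your argument is the classical one. Every step checks out: $Y=\bigcup_{x\in X}x^G$ is finite because each $C_G(x)$ has finite index, $\langle X\rangle^G=\langle Y\rangle=H$, the conjugation action on the generating set $Y$ identifies $C_H(Y)=Z(H)$ as a finite-index subgroup, Schur's theorem (which the paper states separately as a theorem in Chapter 3) gives $H'$ finite, and $H/H'$ is a finitely generated torsion abelian group, hence finite.
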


\begin{cor}\label{diccor}Let $G$ be any group.  Then $\Fin(G)$ is the set of all elements of $VZ(G)$ of finite order.  In particular, $\Fin(H) \subseteq \Fin(G)$ whenever $H$ is a subgroup of $G$ of finite index.\end{cor}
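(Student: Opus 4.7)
The plan is to prove the two containments separately, then deduce the ``in particular'' clause from the characterisation together with a straightforward index argument.

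For the forward inclusion $\Fin(G) \subseteq VZ(G)$, I would take $x \in \Fin(G)$ and pick a finite normal subgroup $N \unlhd G$ with $x \in N$. Then $x$ clearly has finite order, and the conjugation action of $G$ on the finite set $N$ has kernel $C_G(N)$, which has finite index in $G$. Since $C_G(N) \leq C_G(x)$, we deduce $C_G(x)$ has finite index, so $x \in VZ(G)$.

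For the reverse direction, I would take $x \in VZ(G)$ of finite order and apply Dicman's Lemma to the singleton $X = \{x\}$: the hypotheses (finite, finite order, contained in $VZ(G)$) are all immediate, so $\langle x \rangle^G$ is finite. This is a finite normal subgroup of $G$ containing $x$, so $x \in \Fin(G)$.

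For the last sentence, suppose $H \leq_f G$ and $x \in \Fin(H)$. By the characterisation just proved (applied inside $H$), $x$ has finite order and $C_H(x)$ has finite index in $H$, hence in $G$. Since $C_H(x) \leq C_G(x)$, also $C_G(x)$ has finite index in $G$, so $x \in VZ(G)$; as $x$ has finite order, $x \in \Fin(G)$.

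There is no real obstacle here since Dicman's Lemma is being invoked as a black box; the only thing to be careful about is to check that the centraliser in $G$ (not just in $H$) of a point in $\Fin(H)$ still has finite index, which follows formally from $|G:C_H(x)| = |G:H|\cdot|H:C_H(x)|$.
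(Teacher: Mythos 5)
Your proof is correct and matches the intended argument: the paper states this corollary without proof as an immediate consequence of Dicman's Lemma, and your derivation (a finite normal subgroup $N$ gives $|G:C_G(N)|$ finite via the conjugation action on $N$ in one direction, Dicman's Lemma applied to the singleton $\{x\}$ in the other, and multiplicativity of index for the final clause) is exactly the standard one.
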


The following lemma will also prove useful:

\begin{lem}[Gorchakov \cite{Gor}; Hartley \cite{Har}] Let $G$ be a periodic FC-group, and suppose $G$ is a subgroup of $\prod_{i \in I} F_i$, where each $F_i$ is finite.  If $I$ is infinite then $|G/Z(G)| \leq |I|$.\end{lem}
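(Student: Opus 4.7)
The plan is to pass to the quotient $G/Z(G)$ and then bound its cardinality by using local finiteness of $G$ together with the embedding.

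First, $G/Z(G)$ is again a periodic FC-group, since both properties pass to quotients. Letting $\pi_i:G\to F_i$ be the projection with kernel $K_i$, set $Z_i:=\pi_i^{-1}(Z(\pi_i(G)))$; each $Z_i\trianglelefteq G$ has finite index, contains $K_i$, and $\bigcap_{i\in I}Z_i=Z(G)$, because $g\in Z(G)$ iff $[g,G]=1$ iff $[g,G]\leq K_i$ for every $i$. Hence $G/Z(G)$ embeds abstractly as a subdirect product in $\prod_{i\in I}G/Z_i$, still a product of finite groups indexed by the same $I$. Replacing $G$ by $G/Z(G)$, I may assume $Z(G)=1$ and aim to show $|G|\leq|I|$.

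Next I would show $G$ is locally finite. Any finitely generated subgroup $H=\langle h_1,\dots,h_n\rangle$ has $Z(H)=\bigcap_{j=1}^n C_H(h_j)$ of finite index in $H$ (finite intersection of finite-index centralisers from the FC hypothesis), and $Z(H)$ is a finitely generated abelian periodic group, hence finite. By Schur's theorem $[H,H]$ is then finite, so $H$ is finite. By Dicman's lemma, every finite subset of $G$ lies in a finite normal subgroup, so $G$ is a directed union of finite normal subgroups.

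Finally, I would bound $|G|$ using the embedding. For each $i\in I$, pick by local finiteness a finite subgroup $L_i\leq G$ satisfying $L_iK_i=G$, and set $T:=\bigcup_{i\in I}L_i$; then $|T|\leq|I|$ and, by local finiteness, $|\langle T\rangle|\leq|I|$. By construction $\langle T\rangle K_i=G$ for every $i$. The remaining claim is that $\langle T\rangle=G$, which would yield $|G|\leq|I|$. The main obstacle is precisely this final equality: the density condition alone is not enough, since even $\bigoplus_n S_3$ admits a proper (index-two) normal subgroup that surjects onto every factor $S_3\cong G/K_i$. To upgrade density to equality one must use both $Z(G)=1$ and the FC-hypothesis substantively, for instance by enlarging the $L_i$ to include the normal closures $\langle g\rangle^G$ (finite by Dicman and the FC-property) of a well-chosen family of representatives, or by a transfinite construction on $|I|$-many steps that at each stage absorbs a new element of $G$ together with the finite commutator data needed to force exhaustion in the limit.
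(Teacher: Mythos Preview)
The paper does not prove this lemma; it is quoted with attribution to Gorchakov and Hartley. So there is no in-paper argument to compare against, and I will assess your proof on its own merits.

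Your reductions are correct: passing to $G/Z(G)$ preserves the hypotheses, and local finiteness follows as you say. Your construction of $\langle T\rangle$ with $|\langle T\rangle|\le|I|$ and $\langle T\rangle K_i=G$ for all $i$ is also fine, and your self-diagnosis is accurate: density does not force $\langle T\rangle=G$, and your $\bigoplus_n S_3$ example really can arise from your construction (take each $L_i$ to be the diagonal copy of $S_3$ in coordinates $i$ and $i{+}1$; then every $L_i$, and hence $\langle T\rangle$, lies in the kernel of the product-of-signs map $G\to C_2$). So the gap is genuine, and your suggested repairs (``enlarge by normal closures of a well-chosen family'', ``transfinite construction'') are not concrete enough to close it.

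The missing ingredient is a second, sharper use of the FC-property, applied not to $\langle T\rangle$ but to an arbitrary $g\in G$. Replace $\langle T\rangle$ by its normal closure $H$ (still $|H|\le|I|$ by local finiteness); density then gives $C_G(H)=1$, since for $x\in C_G(H)$, any $g\in G$ and any $i$ one can choose $h\in H$ with $\pi_i(h)=\pi_i(g)$, whence $\pi_i([x,g])=\pi_i([x,h])=1$ and so $[x,g]\in\bigcap_i K_i=1$. Now $[g,G]=g^{-1}\!\cdot g^G$ is finite, so there is a finite $J_g\subseteq I$ with $[g,G]\cap K_{J_g}=\{1\}$, and then $K_{J_g}\le C_G(g)$. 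Fixing, for each finite $J\subseteq I$, a transversal $R_J$ of $H\cap K_J$ in $H$, the conjugation action of $g$ on $H$ is determined by the finite tuple $(h^g)_{h\in R_{J_g}}\in H^{|R_{J_g}|}$ (for $h'=hk$ with $k\in H\cap K_{J_g}\le C_G(g)$ one has $(h')^g=h^gk$); since $C_G(H)=1$, this tuple together with $J_g$ determines $g$. Counting pairs $\bigl(J,(h^g)_{h\in R_J}\bigr)$ over the $|I|$ finite subsets $J$, with each tuple lying in a set of size at most $|H|^{|R_J|}=|H|\le|I|$, gives $|G|\le|I|$.
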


\begin{cor}\label{GHcor}Let $G$ be a $\kappa$-based profinite group, where $\kappa$ is an infinite cardinal.  Then $|\Fin(G)/Z(\Fin(G))| \leq \kappa$; in particular, if $|\Fin(G)| > \kappa$ then $|\Fin(G)|=|Z(\Fin(G))|$.\end{cor}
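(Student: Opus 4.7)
The plan is a direct application of the Gorchakov--Hartley lemma cited just before the corollary, so the task reduces to setting up the hypotheses correctly. First I would check that $\Fin(G)$ is a periodic FC-group. Periodicity is immediate: each element of $\Fin(G)$ lies in a finite normal subgroup of $G$ and so has finite order. For the FC-property, Corollary \ref{diccor} gives $\Fin(G) \subseteq VZ(G)$, so the centraliser in $G$ of any $x \in \Fin(G)$ has finite index in $G$, and hence its intersection with $\Fin(G)$ has finite index in $\Fin(G)$; equivalently, the conjugacy class of $x$ in $\Fin(G)$ is finite.

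Next I would realise $\Fin(G)$ as an abstract subgroup of a Cartesian product of finite groups indexed by a set of cardinality at most $\kappa$. Since $G$ is profinite, the natural map
\[ G \hookrightarrow \prod_{N \unlhd_o G} G/N\]
is injective, so the restriction to $\Fin(G)$ gives the required embedding. Writing $I$ for the index set of open normal subgroups, we have $|I| \leq \kappa$ because by hypothesis $G$ has $\kappa$ open subgroups in total. Moreover $|I|$ is infinite: otherwise the embedding would realise $G$ as a subgroup of a finite group, contradicting the fact that $|G| = 2^{\kappa} > \kappa$ by the theorem of Tomkinson cited earlier.

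With these hypotheses satisfied, the Gorchakov--Hartley lemma gives
\[ |\Fin(G)/Z(\Fin(G))| \leq |I| \leq \kappa,\]
which is the first assertion. For the ``in particular'' clause, suppose $|\Fin(G)| > \kappa$; then $\Fin(G)$ is infinite, and using that $\Fin(G)$ is a disjoint union of cosets of $Z(\Fin(G))$ we have
\[ |\Fin(G)| = |Z(\Fin(G))| \cdot |\Fin(G)/Z(\Fin(G))|.\]
The second factor is bounded by $\kappa$ and is therefore strictly less than $|\Fin(G)|$, so infinite cardinal arithmetic forces $|Z(\Fin(G))| = |\Fin(G)|$.

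There is no real obstacle here; the proof is essentially the observation that the canonical residually-finite embedding of $G$ has an index set of size at most $\kappa$, which is exactly what is needed to feed into Gorchakov--Hartley after Dicman's lemma identifies $\Fin(G)$ as a periodic FC-group.
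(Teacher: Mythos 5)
Your proof is correct and follows essentially the same route as the paper: embed $\Fin(G)$ into the Cartesian product of the finite continuous images of $G$ (an index set of size at most $\kappa$) and apply the Gorchakov--Hartley lemma, with the cardinal arithmetic for the final clause. The extra checks you supply (periodicity, the FC-property via Corollary \ref{diccor}, and the infinitude of the index set) are exactly the details the paper leaves implicit.
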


\begin{proof}There is a canonical injection from $G$ to the Cartesian product $\prod_{i \in I}F_i$ of its finite continuous images, so $\Fin(G)$ is isomorphic to a subgroup of this Cartesian product.  Since $G$ is $\kappa$-based, $|I|=\kappa$ here.  The result now follows immediately from the lemma.\end{proof}

The following lemma will be useful for estimating the sizes of the virtual centre and finite radical of a profinite group:

\begin{lem}Let $G$ be a $\kappa$-based profinite group, with $\kappa$ an infinite cardinal.
\vspace{-12pt}
\begin{enumerate}[(i)]  \itemsep0pt
\item Suppose $|VZ(G)| = \mu > \kappa$.  Then there is an open subgroup $H$ of $G$ such that $|Z(H)| = \mu$.
\item Suppose $|\Fin(G)| = \nu > \kappa$.  Then there is a subgroup $M$ of $G$ of cardinality $\nu$ and finite exponent, such that $M$ is a central closed subgroup of an open normal subgroup $H$ of $G$, with $\Fin(H) = \Fin(G)$.\end{enumerate}\end{lem}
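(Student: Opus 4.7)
The plan is to express $VZ(G)$ and the torsion part of $Z(\Fin(G))$ as unions indexed by the (at most $\kappa$-many) open subgroups of $G$, and then extract one piece of full cardinality via a pigeonhole argument.

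\textbf{Part (i).} I would first establish $VZ(G) = \bigcup_{H \leq_o G} C_G(H)$. For $x \in VZ(G)$, Lemma \ref{topcentlem}(i) shows $C_G(x)$ is closed, and by hypothesis it has finite index, hence is open by Lemma \ref{obchain}(i); then $x \in C_G(C_G(x))$, witnessing $x$ in the right-hand side. The reverse inclusion is immediate since any open subgroup has finite index. Since $G$ has $\kappa$ open subgroups, each $C_G(H)$ is contained in $VZ(G)$ (hence has cardinality at most $\mu$), and $\mu > \kappa$, a cardinal-arithmetic pigeonhole argument forces $|C_G(H)| = \mu$ for some open $H$. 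Then $Z(H) = C_G(H) \cap H$ has index at most $|G:H|$ in $C_G(H)$, which is finite, and $\mu$ is infinite, so $|Z(H)| = \mu$.

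\textbf{Part (ii).} Corollary \ref{GHcor} yields $|\Fin(G)/Z(\Fin(G))| \leq \kappa$, forcing $|Z(\Fin(G))| = \nu$. Since $Z(\Fin(G))$ is abelian and periodic, the $n$-torsion sets $T_n = \{x \in Z(\Fin(G)) : x^n = 1\}$ are subgroups and $Z(\Fin(G)) = \bigcup_{n \in \bN} T_n$; as $\nu > \kappa \geq \aleph_0$, some $T_n$ has cardinality $\nu$. Fix such an $n$. Each $x \in T_n$ lies in $Z(C_G(x))$ with $C_G(x)$ open, so $T_n = \bigcup_{H \leq_o G}(T_n \cap Z(H))$, and a second application of pigeonhole produces an open $H$ with $|T_n \cap Z(H)| = \nu$. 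Replacing $H$ by $\Core_G(H)$, which is open of finite index and contained in $H$, only increases the intersection (routine check using $Z(H) \cap \Core_G(H) \leq Z(\Core_G(H))$ and finite-index loss), so we may assume $H$ is open and normal in $G$.

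Set $M = \overline{T_n \cap Z(H)}$. Verify: $M$ is closed by definition; $Z(H)$ is closed by Lemma \ref{topcentlem}(i), so $M \leq Z(H)$, making $M$ a central closed subgroup of $H$; and $M$ has exponent dividing $n$ by Lemma \ref{topcentlem}(ii). The lower bound $|M| \geq \nu$ is immediate. For the upper bound, every $m \in M$ has finite order and is centralised by the open normal $H$, so $C_G(m) \supseteq H$ has finite index, placing $m$ in $VZ(G)$; combined with finite order, Corollary \ref{diccor} gives $m \in \Fin(G)$. Thus $M \subseteq \Fin(G)$ and $|M| \leq \nu$, forcing $|M| = \nu$. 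Finally, since $H$ is open in $G$ a direct check gives $\Fin(H) = \Fin(G) \cap H$, and $|\Fin(G):\Fin(H)| \leq |G:H|$ is finite, so $|\Fin(H)| = |\Fin(G)|$.

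\textbf{Main obstacle.} The subtle point is pinning down the exact cardinality $|M| = \nu$ rather than merely $|M| \leq |G|$. The upper bound relies on identifying $M$ as a subset of $\Fin(G)$, which only works because the $T_n$-filtration supplies bounded exponent (hence finite order for closure-limits) while the openness and normality of $H$ supplies finite $G$-conjugacy classes; either ingredient alone is insufficient.
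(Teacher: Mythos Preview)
Your argument for part (i) is correct and matches the paper's proof essentially word for word.

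For part (ii), your construction of $M$ is fine and indeed slightly more careful than the paper's (you take an explicit closure and verify $M \subseteq \Fin(G)$ directly, whereas the paper relies implicitly on its choice of $H$ to get closedness). However, there is a genuine gap at the very end: the lemma asserts $\Fin(H) = \Fin(G)$ as \emph{sets}, not merely $|\Fin(H)| = |\Fin(G)|$. Your replacement of $H$ by $\Core_G(H)$ gives an open normal subgroup, but there is no reason it should contain $\Fin(G)$, so $\Fin(H) = \Fin(G) \cap H$ may be a proper subset of $\Fin(G)$.

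The paper fixes this by replacing $H$ not with $\Core_G(H)$ but with $\Core_G(H)\Fin(G)$; this is still an open normal subgroup (any abstract subgroup containing an open subgroup is open, and both factors are normal), and now $\Fin(G) \subseteq H$, so Corollary~\ref{diccor} gives $\Fin(H) = \Fin(G)$ outright. Your argument adapts to this larger $H$ without difficulty: since $T_n \subseteq Z(\Fin(G))$ and centralisers are closed, your $M = \overline{T_n \cap Z(\Core_G(H))}$ centralises both $\Core_G(H)$ and $\Fin(G)$, hence centralises their product; and $M \subseteq \Fin(G)$ (which you already proved) gives $M \subseteq H$. So the repair is a one-line enlargement of $H$.
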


\begin{proof}(i) The virtual centre of $G$ is by definition the union of the subgroups $C_G(H)$ of $G$, where $H$ is an abstract subgroup of finite index.  There are at most $\kappa$ distinct centralisers of this form, since $C_G(H) = C_G(\overline{H})$, and $G$ has only $\kappa$ closed subgroups of finite index.  It follows that there is some open subgroup $H$ for which $|C_G(H)| = \mu$; then $|C_G(H):Z(H)|$ is finite, so $|Z(H)|= \mu$.

(ii) By Corollary \ref{GHcor}, $|Z(\Fin(G))| = \nu$.  Now $Z(\Fin(G))$ is the union of subsets of the form $C_G(H) \cap Z(\Fin(G))$, where $H \leq_o G$.  There are at most $\kappa$ distinct subsets of this form, so there is some $H \leq_o G$ for which $|C_G(H) \cap Z(\Fin(G))|=\nu$.  We are free to assume $H$ is normal and contains $\Fin(G)$, by replacing $H$ with the open normal subgroup $\mathrm{Core}_G(H)\Fin(G)$ if necessary; this ensures $\Fin(G) = \Fin(H)$, by Corollary \ref{diccor}.  Let $T_n$ be the subgroup of $C_G(H) \cap Z(\Fin(G))$ generated by the elements of order dividing $n$; then $T_n$ is abelian and has exponent $n$.  Take some $n$ for which $|T_n|=\nu$.  Now take $M = T_n \cap H$.  Then $M$ is a central closed subgroup of $H$ of exponent $n$, and $|M|=|T_n| = \nu$.\end{proof}

In the countably-based case, this specialises to the following:

\begin{cor}Let $G$ be a countably-based profinite group.
\vspace{-12pt}
\begin{enumerate}[(i)]  \itemsep0pt
\item The virtual centre of $G$ is countable if and only if, in every open subgroup $H$ of $G$, the centre $Z(H)$ is finite.
\item The finite radical of $G$ is countable if and only if, in every open subgroup $H$ of $G$, the centre $Z(H)$ has no infinite abstract subgroups of finite exponent.\end{enumerate}\end{cor}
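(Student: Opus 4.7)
The plan is to specialize the preceding lemma to the case $\kappa = \aleph_0$. The key auxiliary fact is a cardinality dichotomy: every closed subgroup of a countably-based profinite group is either finite or uncountable. Indeed, if $L$ is an infinite closed subgroup of $G$, then by the corollary to the theorem on $\kappa$-based groups, $L$ is $\lambda$-based for some $\lambda \leq \aleph_0$, necessarily $\lambda = \aleph_0$, and then $|L| = 2^{\aleph_0}$. In particular, for any open subgroup $H$ of $G$, the centre $Z(H) = C_H(H)$ is closed by Lemma \ref{topcentlem}(i), so it is either finite or uncountable, making ``$Z(H)$ is finite'' equivalent to ``$Z(H)$ is countable''.

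For part (i), the forward direction is the contrapositive of part (i) of the preceding lemma: if $|VZ(G)| > \aleph_0$, we obtain an open $H$ with $|Z(H)| = |VZ(G)|$, so $Z(H)$ is uncountable, hence infinite. Conversely, if some open $H$ has $Z(H)$ infinite, then by the dichotomy $Z(H)$ is uncountable, and the inclusion $Z(H) \subseteq VZ(G)$ (valid because $H$ has finite index in $G$) forces $VZ(G)$ to be uncountable.

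For part (ii), if $|\Fin(G)| > \aleph_0$, part (ii) of the preceding lemma produces an open $H$ and a closed subgroup $M \leq Z(H)$ of finite exponent with $|M| > \aleph_0$; in particular $M$ is an infinite abstract subgroup of $Z(H)$ of finite exponent. Conversely, suppose some open $H$ has an infinite abstract subgroup $A \leq Z(H)$ of finite exponent $n$. By Lemma \ref{topcentlem}(ii), the set $\{x \in G \mid x^n = 1\}$ is closed, so the closure $\overline{A}$ also has exponent dividing $n$; and $\overline{A} \subseteq \overline{Z(H)} = Z(H)$. Since $\overline{A}$ is a closed infinite subgroup of $G$, the dichotomy gives $|\overline{A}| > \aleph_0$. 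Every element of $\overline{A}$ has finite order and is centralised by $H$, hence belongs to $VZ(G)$, and then to $\Fin(G)$ by Corollary \ref{diccor}. Therefore $\Fin(G)$ is uncountable.

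There is no genuine obstacle: once the countable/uncountable dichotomy for closed subgroups is in place, both parts reduce immediately to the preceding lemma, with the only mild point being the passage from an abstract subgroup of finite exponent to its closure in part (ii).
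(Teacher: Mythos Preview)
Your proof is correct and follows essentially the same route as the paper: both directions in each part reduce to the preceding lemma together with the finite/uncountable dichotomy for closed subgroups, and in part (ii) both you and the paper pass to the closure of the abstract subgroup using Lemma~\ref{topcentlem}. You are slightly more explicit than the paper in invoking Corollary~\ref{diccor} to place $\overline{A}$ inside $\Fin(G)$, but the underlying argument is identical.
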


\begin{proof}(i) If $VZ(G)$ is uncountable, then by the lemma there is an open normal subgroup of $G$ with infinite centre.  Conversely, if $VZ(G)$ is countable, it cannot contain any infinite closed subgroup of $G$, and so every open subgroup must have finite centre.

(ii) If $\Fin(G)$ is uncountable, then by the lemma there is an open normal subgroup of $G$ that has an infinite central subgroup of finite exponent.  Conversely, if an open subgroup $H$ of $G$ has an infinite central abstract subgroup $K$ of finite exponent, then the closure of $K$ is also contained in $Z(H)$, and also of the same finite exponent by Lemma \ref{topcentlem}.  Hence the closure of $K$ is contained in $\Fin(G)$, so $\Fin(G)$ is uncountable.\end{proof}

Given a countably-based profinite group $G$ with countable $K$, where $K$ is the virtual centre or finite radical, we turn to the question of whether or not $K$ is finite.  Note that $G$ has infinite virtual centre if and only if every open subgroup of $G$ does, and similarly for the finite radical, so these properties depend only on the commensurability class of $G$.

For upper bounds on $|\Fin(G)|$, we can specialise to the case of pro-$p$ groups, thanks to the following:

\begin{lem}Let $G$ be a profinite group, and let $\kappa$ be an infinite cardinal.  Then $|\Fin(G)| \leq \kappa$ if and only if $|\Fin(G) \cap S| \leq \kappa$ whenever $S$ is a Sylow subgroup of $G$.\end{lem}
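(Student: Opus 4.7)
The plan is to dispatch the forward implication as a triviality ($\Fin(G) \cap S \subseteq \Fin(G)$ forces $|\Fin(G) \cap S| \leq |\Fin(G)| \leq \kappa$) and then to prove the reverse implication by decomposing torsion elements into $p$-parts and bounding each $p$-part separately.

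Assume $|\Fin(G) \cap S_p| \leq \kappa$ for every Sylow $p$-subgroup $S_p$. Fix one Sylow $p$-subgroup $S_p$ for each prime $p$, and set $F_p := \Fin(G) \cap S_p$; denote by $F_p^*$ the set of all $p$-elements contained in $\Fin(G)$. My first step is to show $|F_p^*| \leq \kappa$. Because $\Fin(G)$ is a union of normal subgroups it is itself normal in $G$, so conjugation preserves $\Fin(G)$. Each element of $F_p^*$ generates a finite cyclic $p$-subgroup, which by Sylow's theorem for profinite groups lies in some Sylow $p$-subgroup of $G$, and Sylow conjugacy pushes it into $S_p$. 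Thus every element of $F_p^*$ is $G$-conjugate to an element of $F_p$, so $F_p^*$ is a union of $G$-conjugacy classes of elements of $F_p$. Since $\Fin(G) \subseteq VZ(G)$ by Corollary \ref{diccor}, each such conjugacy class is finite, giving $|F_p^*| \leq |F_p| \cdot \aleph_0 \leq \kappa$.

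The second step assembles $\Fin(G)$ from the $F_p^*$. Every $x \in \Fin(G)$ has finite order, hence decomposes uniquely as a product $x = \prod_{p \in \pi(x)} x_p$ of commuting $p$-parts, where $\pi(x)$ is a finite set of primes and each $x_p \in \langle x \rangle \subseteq \Fin(G)$, so that $x_p \in F_p^*$. The union $\bigcup_p F_p^*$ has cardinality at most $\aleph_0 \cdot \kappa = \kappa$; the number of finite subsets of a set of cardinality $\kappa$ is $\kappa$; and for each such finite subset the corresponding tuple of $p$-parts determines $x$ uniquely. Multiplying gives $|\Fin(G)| \leq \kappa$ as required.

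The only place requiring any care is the conjugation argument bounding $|F_p^*|$: one must invoke simultaneously (a) profinite Sylow conjugacy to gather the $p$-elements into $G$-orbits meeting $F_p$, (b) normality of $\Fin(G)$ to ensure these orbits actually lie inside $\Fin(G)$, and (c) Dicman's Lemma (via Corollary \ref{diccor}) to keep each orbit finite. All three are available from earlier in the chapter, and no further obstacle is anticipated.
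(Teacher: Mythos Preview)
Your proposal is correct and follows essentially the same route as the paper: primary decomposition of each element of $\Fin(G)$ into commuting $p$-parts, Sylow conjugacy to show the $p$-elements of $\Fin(G)$ are all conjugate to elements of $\Fin(G)\cap S_p$, and finiteness of conjugacy classes (which the paper invokes directly from the definition of $\Fin(G)$ rather than via Corollary~\ref{diccor}, but either justification works). Your cardinal bookkeeping via finite subsets is slightly more explicit than the paper's, but the argument is the same.
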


\begin{proof}Write $\Fin(G)_p$ for the set of (pro-)$p$ elements of $\Fin(G)$. Let $x \in \Fin(G)$.  Then for some $n$ and distinct primes $p_1, \dots, p_n$, there is a primary decomposition $x = x_1 \dots x_n$ of $x$, such that $x_i \in \Fin(G)_{p_i}$.  Hence
\[|\Fin(G)| \leq \sup_{\piP} |\Fin(G)_p|\aleph_0.\]
By Sylow's theorem, the set $\Fin(G) \cap S$ accounts for all conjugacy classes of pro-$p$ elements of $\Fin(G)$; also, every conjugacy class of $\Fin(G)$ is finite by definition.  Hence
\[|\Fin(G)_p| \leq |\Fin(G) \cap S|\aleph_0.\]
The conclusion is now clear.\end{proof}

Now consider the case of pro-$p$ groups.  A well-known property of finite $p$-groups can be used here to obtain a condition for whether or not the finite radical is finite.

\begin{lem}Let $G$ be a non-trivial finite $p$-group, and let $H$ be a $p$-group of automorphisms of $G$.  Then $C_G(H) > 1$.\end{lem}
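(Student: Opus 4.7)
The plan is to form the natural semidirect product $\Gamma = G \rtimes H$ and apply the familiar fixed-point counting argument for $p$-groups acting on $p$-groups.

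First I would observe that, since $G$ and $H$ are both finite $p$-groups, their semidirect product $\Gamma$ (with respect to the action of $H$ on $G$) is itself a finite $p$-group. Inside $\Gamma$, the action of $H$ on $G$ coincides with conjugation, so $C_G(H)$ is precisely the set of $G$-elements fixed by the conjugation action of $H$.

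Next I would apply the orbit–stabiliser theorem to the action of $H$ on $G$: each orbit of $H$ on $G$ has size equal to the index in $H$ of a stabiliser, and since $H$ is a $p$-group, every orbit size is a power of $p$. Partitioning $G$ into $H$-orbits, the fixed-point set contributes orbits of size $1$ while every other orbit has size divisible by $p$. This gives the congruence
\[
|C_G(H)| \;\equiv\; |G| \pmod{p}.
\]
Since $G$ is a non-trivial $p$-group, $|G|$ is divisible by $p$, so $|C_G(H)|$ is divisible by $p$. The identity is always a fixed point, so $|C_G(H)| \geq 1$, and combining with divisibility by $p$ we conclude $|C_G(H)| \geq p > 1$.

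There is no real obstacle here beyond writing down the semidirect product and invoking orbit–stabiliser; the whole content is the standard $p$-group fixed-point lemma, applied to an external action rather than to an internal normal subgroup.
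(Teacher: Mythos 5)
Your proof is correct; the paper states this lemma without proof as a well-known fact, and your orbit-counting argument is the standard one. The only superfluous step is forming the semidirect product $G \rtimes H$: the congruence $|C_G(H)| \equiv |G| \pmod{p}$ follows directly from the orbit--stabiliser theorem applied to the action of the $p$-group $H$ on the set $G$, with no need to internalise the action as conjugation.
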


\begin{cor}Let $G$ be a pro-$p$ group, such that $\Fin(G) \cap Z(G)$ is finite.  Then $\Fin(G)$ is finite.\end{cor}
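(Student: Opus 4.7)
Argue by contradiction: assume $F := \Fin(G) \cap Z(G)$ is finite but $\Fin(G)$ is infinite. The plan is to use the preceding lemma iteratively to build an ascending $G$-central chain of finite normal subgroups inside $\Fin(G)$. Set $F_1 := F$, and while $F_k$ is a finite normal subgroup of $G$, let $F_{k+1}$ be the preimage in $G$ of $Z(G/F_k) \cap \Fin(G/F_k)$; here we use that $\Fin(G/F_k) = \Fin(G)/F_k$ when $F_k$ is finite, since a finite normal subgroup of $G/F_k$ lifts to a finite normal subgroup of $G$ and conversely, in the spirit of Corollary \ref{diccor}.

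This chain exhausts $\Fin(G)$. Indeed, for any finite normal subgroup $N$ of $G$, the conjugation action of $G$ on $N$ factors through the finite $p$-group $G/C_G(N)$; iterating the preceding lemma yields a strictly ascending upper $G$-central series $1 = Z_0 < Z_1 < \cdots < Z_c = N$, and an easy induction on $j$ gives $Z_j \leq F_j$, so $N \leq F_c$. Hence $\Fin(G) = \bigcup_k F_k$. If the chain stabilises (i.e., $F_{k+1} = F_k$ with $F_k$ finite), then $Z(G/F_k) \cap \Fin(G/F_k) = 1$; the preceding lemma applied to $G/F_k$ acting on any non-trivial finite normal subgroup of $G/F_k$ contained in $\Fin(G/F_k)$ would produce non-trivial $(G/F_k)$-fixed points, a contradiction, so $\Fin(G/F_k) = 1$, yielding $\Fin(G) = F_k$, which is finite and contradicts our assumption.

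The work is therefore to show that each $F_k$ is finite and that the chain terminates after finitely many steps. For finiteness of $F_k$ one uses the commutator pairing $F_k \times G \to F_{k-1}$, which is biadditive modulo the next-lower term $F_{k-2}$ (the obstruction $[[x,g], y]$ lies in $[F_{k-1}, F_k] \leq F_{k-2}$ by the defining property $[F_{k-1}, G] \leq F_{k-2}$). This induces an injection $F_k/F_{k-1} \hookrightarrow \mathrm{Hom}_{\mathrm{cts}}(G, F_{k-1}/F_{k-2})$; each image homomorphism has open kernel of bounded $G$-index, and the pro-$p$ structure of $G$ combined with openness of $C_G(x)$ for $x \in \Fin(G)$ delivers finiteness. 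The main obstacle is termination of the chain: one must bound the $G$-nilpotency class of every finite normal subgroup of $G$ in terms of $|F|$, which is the delicate point, likely requiring careful analysis of the interaction between the commutator filtration and the bounded-centraliser property for elements of $\Fin(G)$.
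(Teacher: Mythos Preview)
Your plan has two genuine gaps, and the paper's argument avoids both by taking a completely different (and much shorter) route.

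\textbf{Gap 1: finiteness of $F_k$.} Your injection $F_k/F_{k-1} \hookrightarrow \Hom_{\mathrm{cts}}(G, F_{k-1}/F_{k-2})$ is correct, and it is true that each homomorphism in the image has kernel of index at most $|F_{k-1}/F_{k-2}|$. But $G$ is not assumed to be finitely generated, so $G$ may have infinitely many open normal subgroups of a given index, and hence $\Hom_{\mathrm{cts}}(G,A)$ can be infinite for a fixed finite abelian $p$-group $A$ (take $G=\prod_{\bN}C_p$). So the injection does not by itself yield finiteness of $F_k/F_{k-1}$, and ``openness of $C_G(x)$'' does not rescue this: different $x$ give different centralisers with no common open subgroup. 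In fact the finiteness of $F_2$ is essentially equivalent to the statement you are trying to prove (applied to $G/F_1$), so this step is close to circular.

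\textbf{Gap 2: termination.} You correctly identify this as the delicate point and then do not resolve it. There is no evident bound on the $G$-nilpotency class of finite normal subgroups in terms of $|F_1|$ alone, and I do not see how the ingredients you list would produce one.

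\textbf{What the paper does instead.} Rather than quotienting by $W=\Fin(G)\cap Z(G)$ and iterating, the paper intersects with an open normal subgroup that avoids $W$. Since $W$ is finite, choose $K\unlhd_o G$ with $K\cap W=1$. If $\Fin(G)$ were infinite, then $\Fin(G)\cap K$ would still be infinite and is a union of finite normal subgroups of $G$ (if $x$ lies in a finite $M\unlhd G$, then $x\in M\cap K\unlhd G$), so it contains some non-trivial finite $N\unlhd G$. The preceding lemma gives $N\cap Z(G)>1$; but $N\cap Z(G)\le K\cap W=1$, a contradiction. This is a one-step argument: no chain, no termination issue, and it works for arbitrary (not necessarily finitely generated) pro-$p$ groups.
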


\begin{proof}Let $W = \Fin(G) \cap Z(G)$, and suppose $\Fin(G)$ is infinite.  Then $G$ has an open normal subgroup $K$ such that $K \cap W = 1$.  Let $F = \Fin(G) \cap K$.  Then $F$ is a union of finite normal subgroups of $G$, and $|F| = |\Fin(G)|$.  In particular, $F$ contains a finite non-trivial normal subgroup $N$ of $G$.  Now $G/C_G(N)$ is a finite $p$-group, as $G$ is a pro-$p$ group, so $C_N(G) = N \cap Z(G) > 1$.  But $N \cap Z(G) \leq (K \cap \Fin(G) \cap Z(G)) = 1$, a contradiction.\end{proof}

For finitely generated pro-$p$ groups, there is another restriction on the `size' of $\Fin(G)$.

\begin{prop}Let $G$ be a finitely generated pro-$p$ group.  Suppose $G=\overline{\Fin(G)}K$ for some $K \leq G$.  Then $|G:K|$ is finite.\end{prop}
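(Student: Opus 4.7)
The plan is to induct on the finite quantity $|G:\Phi(G)| = p^{d(G)}$, which is indeed finite by Lemma \ref{fratlem}(iii) since $G$ is a finitely generated pro-$p$ group. The base case $|G:\Phi(G)|=1$ forces $G=1$ (again by Lemma \ref{fratlem}(ii), since $\emptyset$ then generates $G$), in which case the conclusion is vacuous.

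For the inductive step, I would first note that, since $\Phi(G)$ is open, $G/\Phi(G)$ is finite and discrete, so the image of $F:=\overline{\Fin(G)}$ in $G/\Phi(G)$ coincides with the image of the abstract subgroup $\Fin(G)$ (continuous maps into discrete spaces send closures to the images themselves). If $\Fin(G)\subseteq \Phi(G)$, then $F\leq \Phi(G)$, so the hypothesis $G=FK$ becomes $\Phi(G)K=G$, which forces $K=G$ by Lemma \ref{fratlem}(ii). Otherwise, I pick any $x\in \Fin(G)\setminus\Phi(G)$ and let $N$ be a finite normal subgroup of $G$ containing $x$, so that $N\not\leq \Phi(G)$.

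I then pass to $G/N$. Because $N$ is finite and normal, the finite normal subgroups of $G$ containing $N$ are cofinal among all finite normal subgroups of $G$ (replace any finite normal $M$ by $MN$), so $\Fin(G/N)=\Fin(G)/N$; applying the fact that the continuous surjection $G\to G/N$ sends closure to closure (as $G$ is compact and $N$ is closed normal), I obtain $\overline{\Fin(G/N)}=F/N$. The factorisation descends to $G/N=(F/N)(KN/N)$, and $KN/N$ is closed in $G/N$ because $KN=\bigcup_{n\in N}Kn$ is a finite union of closed cosets of $K$. Moreover $\Phi(G/N)=\Phi(G)N/N$, and since $N\not\leq \Phi(G)$ the Frattini quotient strictly shrinks: $|G/N:\Phi(G/N)|=|G:\Phi(G)N|<|G:\Phi(G)|$. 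By the inductive hypothesis, $|G:KN|$ is finite, and since $|KN:K|\leq |N|<\infty$, I conclude $|G:K|<\infty$.

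The only step requiring any care is verifying $\overline{\Fin(G/N)}=F/N$ for the chosen finite normal $N\not\leq \Phi(G)$; everything else is a standard Frattini-quotient induction, and this identification is precisely what keeps the hypothesis of the proposition intact under the quotient, allowing the induction to bite.
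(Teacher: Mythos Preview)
Your proof is correct and takes a genuinely different route from the paper's. The paper argues by contradiction using the chain-closure machinery of Lemma~\ref{phichain}: since every open subgroup $U$ of a finitely generated pro-$p$ group has $U/\Phi(U)$ finite, the set of closed subgroups of infinite index is chain-closed, so if $K$ had infinite index it would lie in a subgroup $M$ that is maximal among those of infinite index; then for any finite normal $N$ the product $MN$ is a finite union of cosets of $M$, hence still of infinite index, forcing $MN=M$, so $M \supseteq \overline{\Fin(G)}$ and thus $M=\overline{\Fin(G)}K=G$, a contradiction. Your argument replaces this Zorn-style step by a finite induction on $d(G)$, exploiting the identity $\Phi(G/N)=\Phi(G)N/N$ (valid because $\Phi$ is verbal for pro-$p$ groups) together with the exact description $\Fin(G/N)=\Fin(G)/N$ when $N$ is finite normal. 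Your approach is more elementary and self-contained, needing no appeal to maximal infinite-index subgroups, while the paper's approach showcases the general $\Phi$-finiteness framework of Section~\ref{sgplat}, which is designed to apply beyond the pro-$p$ setting.
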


\begin{proof}Every open subgroup $U$ of $G$ has $U/\Phi(U)$ finite.  Hence $\mcX \setminus \mcH$ is chain-closed by Lemma \ref{phichain}, where $\mcX=\sgp(G)$ and $\mcH=\fsgp(G)$.  In particular, if $K$ has infinite index, then $K$ is contained in a maximal element $M$ of $\mcX \setminus \mcH$; we may assume $K=M$.  This means $K$ has infinite index, but every subgroup of $G$ properly containing $K$ has finite index.  In particular, let $N$ be a finite normal subgroup of $G$.  Then $KN$ has infinite index in $G$, as it is the union of finitely many cosets of $K$.  Hence $KN=K$, that is $N \leq K$, by the maximality property of $K$.  Thus $K$ contains every finite normal subgroup of $G$, so $K = \overline{\Fin(G)}K = G$.  But then $|G:K|=1$, a contradiction.\end{proof}

Finally, here is a result concerning the composition of a countably based profinite group with respect to finite normal subgroups, once again illustrating the role played by pronilpotent subgroups, and hence pro-$p$ subgroups, of a profinite group.

\begin{thm}Let $G$ be a profinite group, and let $K$ be a countably based closed subgroup of $G$ that is topologically generated by finite normal subgroups of $G$.  Then $KF(G)/F(G)$ is a central product of finite groups, each of which is normal in $G/F(G)$.\end{thm}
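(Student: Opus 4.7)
The plan is to work in $\bar{G} := G/F(G)$ with $\bar{K} := KF(G)/F(G)$, and exhibit $\bar{K}$ as the topological span of a family of pairwise-commuting finite $\bar{G}$-normal subgroups. Since $K$ is countably based and topologically generated by finite normal subgroups of $G$, I first choose a countable family $\{N_i\}$ of such subgroups generating $K$, and set $M_k := N_1 N_2 \cdots N_k$ to obtain an ascending chain of finite $G$-normal subgroups with $K = \overline{\bigcup_k M_k}$.

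The key structural input is the $G$-orbit structure of components. Each component of $M_k$ is subnormal and quasisimple in $G$, hence is a component of $G$, and is finite by Corollary \ref{qscor}. The $G$-orbit of such a component is contained in the finite set $\Comp(M_k)$, so is finite. I partition the components appearing in any $M_k$ into $G$-orbits $\{\mathcal{O}_\alpha\}$ and set $L_\alpha := \langle \mathcal{O}_\alpha \rangle$. Each $L_\alpha$ is a finite $G$-normal subgroup and is a central product of its components; distinct $L_\alpha$'s have disjoint component sets, so commute pairwise by Theorem \ref{finopil}(i) applied inside the finite normal subgroup $L_\alpha L_\beta \unlhd G$.

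The images $\bar{L}_\alpha = L_\alpha F(G)/F(G)$ form a pairwise-commuting family of finite $\bar{G}$-normal subgroups of $\bar{K}$ and cover the layer part of $\bar{K}$. To capture all of $\bar{K}$, one must also handle the quotient $\bar{K}/\overline{\langle \bar{L}_\alpha \rangle}$, which is topologically generated by images of $M_k/F^*(M_k)$. Using $C_{M_k}(F^*(M_k)) \leq F^*(M_k)$ (Theorem \ref{finopil}(iii)), each such quotient embeds into $\Out(F^*(M_k))$, which admits a natural decomposition indexed by isomorphism classes of components together with the $G$-permutation action within each class. Pulling this decomposition back to $G$-normal subgroups of $M_k$, and extending each $L_\alpha$ to a finite $G$-normal subgroup $\tilde{L}_\alpha$ which captures the outer-automorphism contribution from the orbit $\mathcal{O}_\alpha$, should yield the desired refinement.

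The main obstacle is this last step: producing, compatibly across the chain $M_1 \leq M_2 \leq \ldots$, finite $\bar{G}$-normal subgroups $\bar{A}_\alpha \supseteq \bar{L}_\alpha$ such that distinct $\bar{A}_\alpha$'s still commute once outer-automorphism contributions are included. The delicacy is that, while elements of $M_k$ acting by outer automorphisms only on the $\alpha$-orbit (and inner-modulo-$F(G)$ on the others) pairwise commute modulo centralisers, assembling a globally commuting family requires careful exploitation of the direct-factor structure of $\Out(F^*(M_k))$ over orbits and of the finiteness of the $G$-orbits of components, so that the resulting $\bar{A}_\alpha$'s topologically span $\bar{K}$.
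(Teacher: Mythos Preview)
Your approach has a genuine gap: the indexing by $G$-orbits of components can be empty or far too coarse. If $G$ is prosoluble, every $M_k$ is soluble, so $\Comp(M_k)=\emptyset$ and all your $L_\alpha$ are trivial, while $\bar K$ can certainly be infinite (for instance $G=\prod_i S_3$ with $K=G$ gives $\bar K=\prod_i C_2$). In that case your entire argument collapses onto the ``main obstacle'' step, for which you have only suggested that $\Out(F^*(M_k))$ decomposes over component orbits; but when $E(M_k)=1$ we have $F^*(M_k)=F(M_k)$ and $\Out(F(M_k))$ carries no such decomposition. Even when components are present, $F^*(M_k)=F(M_k)E(M_k)$ has a nilpotent part, and the claimed orbit-indexed splitting of $\Out(F^*(M_k))$ is not available in the form you need. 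So the component-based strategy does not produce the commuting finite normal pieces in general.

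The paper's proof proceeds along an entirely different axis. Rather than ascending through finite normal subgroups and using layer structure, it fixes an irreducible descending chain $K=R_0>R_1>\dots$ of $G$-invariant open subgroups of $K$ with trivial intersection, and studies the sections $T_i=R_i/R_{i+1}$. A graph $\Gamma$ is placed on the $T_i$, joining $T_i$ to $T_j$ when $R_j$ acts nontrivially on $T_i$; the hypothesis that finite $G$-normal subgroups are dense in $K$ forces every vertex to have finite degree, and a short argument shows every component of $\Gamma$ is finite. The pieces of the central product are then $K_l=\bigcap_{T_j\notin\Gamma_l}C_K(T_j)$, one for each graph-component $\Gamma_l$: the common intersection $L=\bigcap_j C_K(T_j)$ is pronilpotent and hence lies in $F(G)$, each $K_l/L$ is finite, distinct $K_l$ commute modulo $L$, and together they generate $K$. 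This centraliser-of-sections method is insensitive to whether components exist and is what you are missing.
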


\begin{proof}We may assume that $K$ is infinite.  Let $K = R_0 > R_1 > \dots$ be an irreducible descending series of $G$-invariant open subgroups of $K$ such that $\bigcap R_i = 1$.  Write $T_i$ for the section $R_i/R_{i+1}$, and given integers $i > j$, write $C_{i,j}$ for the centraliser of $T_i$ in $R_j$.  Then $C_{i,j}$ is $G$-invariant, and so either $C_{i,j} \leq R_{j+1}$ or $R_j \leq C_{i,j}R_{j+1}$.  Now construct a graph $\Gamma$: the vertices are the sections $T_i$, and $T_i$ is adjacent to $T_j$ for $i > j$ if $C_{i,j} \leq R_{j+1}$.

The $G$-invariant finite subgroups of $K$ generate a dense subgroup of $K$, and so in particular there is a finite normal subgroup $F_j$ of $K$ such that $R_j = R_{j+1}F_j$.  It follows that $C_K(F_j)$ is an open $G$-invariant subgroup of $K$, and so contains $R_k$ for some $k > j$; hence for any $i \geq k$, $T_j$ does not contribute to the automorphisms induced on $T_i$, and hence $T_i$ and $T_j$ are not adjacent in $\Gamma$.  Thus all vertices of $\Gamma$ have finite degree.

We now claim that $T_i$ is adjacent to $T_k$ whenever $i > j > k$ and $(T_i,T_j,T_k)$ is a path in $\Gamma$.  If this were not the case, $T_k$ would be covered by $C_K(T_i)$, a $G$-invariant subgroup.  Now $[R_j,C_K(T_i)] \leq C_{i,j}$, which is contained in $R_{j+1}$ by the assumption that $T_i$ is adjacent to $T_j$, so $C_K(T_i)$ does not contribute to the automorphisms induced on $T_j$, and so $T_j$ and $T_k$ cannot be adjacent, a contradiction.

It follows that every component of $\Gamma$ is finite; say the components of $\Gamma$ are $\{ \Gamma_l \mid l \in \bN\}$.  Now let $K_l$ be intersection of $C_K(T_j)$ as $T_j$ ranges over all vertices that are not in the component $\Gamma_l$, and let $L$ be the intersection of $C_K(T_j)$ as $j$ ranges over all values.  Then the $K_l$ and $L$ are closed subgroups of $K$ that are normal in $G$.  Let $L_i = (L \cap R_i)/(L \cap R_{i+1})$.  Then $L_i$ is a central section of $L$, so the subgroups $R_i \cap L$ form a central series for $L$.  Thus $L$ is pronilpotent and $L \leq F(G)$.  Also, $|K_l:L|$ is finite since $\Gamma_l$ is finite and each section is finite.

Let $M$ be the closure of the group generated by all $K_l$.  Then $K_l$ covers $T_i$ whenever $T_i \in \Gamma_l$, and so $M$ covers every section of the series; hence $M=K$.  Now consider the interaction between different $K_l$.  We have $K_l \cap K_m = L$ for any distinct $l$ and $m$, so $[K_l,K_m] \leq L$; hence $[K_l,M_l] \leq L$, where $M_l$ is the closure of the group generated by all $K_m$ for $m \in \bN \setminus \{l\}$.  It follows that $KF(G)/F(G)$ is a central product of the subgroups $K_lF(G)/F(G)$, all of which are finite and normal in $G/F(G)$.\end{proof}

\begin{cor}Let $G$ be a profinite group.  Suppose $\overline{K}$ is countably based.  Then $\overline{K}F(G)/F(G)$ is a central product of finite groups, each of which is normal in $G/F(G)$.\end{cor}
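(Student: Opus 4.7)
The plan is to reduce the statement immediately to the preceding theorem by applying it to $\overline{K}$ in place of $K$. Reading the corollary in context, $K$ is understood to be a (not necessarily closed) subgroup of $G$ generated by finite normal subgroups of $G$; the preceding theorem required $K$ itself to be closed and countably based, and the corollary weakens this to requiring only that $\overline{K}$ be countably based. So all that needs to be checked is that $\overline{K}$ meets the three hypotheses of the theorem with $G$ unchanged: that $\overline{K}$ is closed in $G$, is countably based, and is topologically generated by finite normal subgroups of $G$.

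The first is automatic and the second holds by assumption. For the third, write $K = \langle \bigcup_\alpha F_\alpha \rangle$ (as an abstract subgroup), where each $F_\alpha$ is a finite normal subgroup of $G$; taking closures on both sides gives $\overline{K} = \overline{\langle \bigcup_\alpha F_\alpha \rangle}$, so $\overline{K}$ is topologically generated by the very same family $\{F_\alpha\}$ of finite normal subgroups of $G$. In particular $\overline{K}$ is itself a closed normal subgroup of $G$. Applying the theorem to $\overline{K}$ then yields that $\overline{K}F(G)/F(G)$ is a central product of finite groups, each normal in $G/F(G)$, which is the required conclusion. The argument is pure bookkeeping and there is no genuine obstacle; all the real work was carried out in the proof of the theorem, in particular the K\"{o}nig-type finite-degree argument on the graph $\Gamma$ that uses countable basedness of the closed subgroup to which the theorem is being applied.
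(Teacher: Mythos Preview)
Your proposal is correct and matches the paper's intended approach: the corollary is stated without proof in the paper, and it is indeed meant to follow immediately by applying the preceding theorem to $\overline{K}$, once one notes that $\overline{K}$ is closed, countably based by hypothesis, and topologically generated by the same family of finite normal subgroups that generate $K$. Your reading of the implicit hypothesis on $K$ is the natural one in context.
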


\section{Commensurators of profinite groups}\label{commsec}

We begin with some definitions based on those of Barnea, Ershov and Weigel in \cite{BEW}.

\begin{defn}A \emph{virtual automorphism} of the profinite group $G$ is a continuous isomorphism between open subgroups of $G$.  Two virtual automorphisms are regarded as equivalent if they coincide on some open subgroup of $G$.  It is clear that up to equivalence, we can compose any two virtual automorphisms, and that the equivalence classes thus form a group.  This is the \emph{(abstract) commensurator} $\Comm(G)$ of $G$.  At this stage we do not assign a topology to $\Comm(G)$.  Those virtual automorphisms equivalent to the identity are called \emph{virtually trivial}.\end{defn}

Note that $\Comm(G)$ is canonically isomorphic to $\Comm(U)$, where $U$ is any open subgroup of $G$.  The structure of $\Comm(G)$ is of particular interest if $VZ(G)=1$, thanks to the following:

\begin{prop}[\cite{BEW}]\label{vzcomm}Let $G$ be a profinite group with $VZ(G)=1$, and suppose $\phi: U \rightarrow V$ is a virtual automorphism of $G$ that is virtually trivial.  Then $U=V$ and $\phi=\id_U$.\end{prop}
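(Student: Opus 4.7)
The plan is to exploit the fact that $\phi$ agrees with the identity on some open subgroup, and then use a conjugation trick to show that for every $u \in U$ the element $u^{-1}\phi(u)$ centralises an open subgroup of $G$, forcing it to lie in $VZ(G) = 1$.

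First, unpacking the hypothesis that $\phi$ is virtually trivial: there exists an open subgroup $W$ of $G$ contained in $U$ such that $\phi|_W = \id_W$. (Strictly, equivalence of virtual automorphisms gives some $W \leq_o G$ on which $\phi$ and $\id_G$ agree; this forces $W \subseteq U$ and $\phi(W) = W$.) Since $W$ is open in $G$, it has finite index, and so does any finite intersection of $G$-conjugates of $W$.

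Now fix $u \in U$ and set $W_u := W \cap u^{-1}Wu$, which is again open in $G$ and contained in $U$. For any $w \in W_u$, both $w$ and $uwu^{-1}$ lie in $W$, so $\phi$ is the identity on both. Applying $\phi$ to the equation $uwu^{-1}$ and using that $\phi$ is a homomorphism yields
\[
uwu^{-1} = \phi(uwu^{-1}) = \phi(u)\phi(w)\phi(u)^{-1} = \phi(u)\, w\, \phi(u)^{-1},
\]
which rearranges to $u^{-1}\phi(u) \in C_G(w)$. Since this holds for every $w \in W_u$, we conclude $u^{-1}\phi(u) \in C_G(W_u)$. But $W_u$ is of finite index in $G$, so its centraliser lies in $VZ(G)$, and by hypothesis $VZ(G) = 1$. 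Hence $\phi(u) = u$.

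The conclusion follows: $\phi$ acts as the identity on every element of $U$, so $\phi = \id_U$, and in particular $V = \phi(U) = U$. The only real step to watch is the first one, ensuring that we can legitimately pick the witnessing open subgroup $W$ inside $U$ (which is immediate from the definition of equivalence once one insists the common domain be open in $G$). After that, the argument is a direct application of the defining property of $VZ(G)$.
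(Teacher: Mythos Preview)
Your proof is correct. The paper does not supply its own argument for this proposition but simply cites \cite{BEW}; your conjugation trick showing that $u^{-1}\phi(u)$ centralises the open subgroup $W \cap u^{-1}Wu$ and hence lies in $VZ(G)=1$ is exactly the standard proof, and is essentially the argument given in that reference.
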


\begin{cor}Let $G$ be a profinite group with $VZ(G)=1$, such that $G$ is an open subgroup of the locally compact group $L$.  Let $l \in N_L(G)$ and suppose $l$ centralises an open subgroup of $G$.  Then $l$ centralises $G$.\end{cor}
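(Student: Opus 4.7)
The plan is to apply Proposition \ref{vzcomm} to the automorphism of $G$ given by conjugation by $l$. First I would verify that this conjugation map is an honest continuous automorphism of $G$. Since $l \in N_L(G)$, the map $\phi_l : G \to G$, $g \mapsto lgl^{-1}$, is well-defined as a group automorphism. Continuity of multiplication and inversion in the topological group $L$ makes $g \mapsto lgl^{-1}$ a continuous self-map of $L$, and hence of $G$ (as $G$ is open in $L$ and $\phi_l(G)=G$); its inverse $\phi_{l^{-1}}$ is continuous by the same argument, so $\phi_l$ is a continuous isomorphism from $G$ onto $G$.

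Next, I would observe that $\phi_l$ can be regarded as a virtual automorphism of $G$, taking the open subgroup $G$ to the open subgroup $G$. The hypothesis that $l$ centralises some open subgroup $U$ of $G$ says precisely that $\phi_l|_U = \id_U$. Therefore $\phi_l$ and $\id_G$ agree on the open subgroup $U$, so $\phi_l$ is equivalent to the identity in $\Comm(G)$, i.e.\ it is virtually trivial.

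Finally, I would invoke Proposition \ref{vzcomm}: since $VZ(G)=1$, any virtually trivial virtual automorphism $\phi:U'\to V'$ of $G$ satisfies $U'=V'$ and $\phi=\id_{U'}$. Applied to $\phi_l:G\to G$, this forces $\phi_l = \id_G$, which is exactly the statement that $l$ centralises $G$.

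There is no real obstacle here; the only step that requires minor care is confirming that $\phi_l$ is genuinely a virtual automorphism of $G$ (both a continuous isomorphism and defined on an open subgroup of $G$, namely $G$ itself), so that Proposition \ref{vzcomm} applies. Once that is in place, the conclusion is immediate from the proposition.
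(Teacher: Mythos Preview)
Your proof is correct and is exactly the intended derivation from Proposition~\ref{vzcomm}; the paper does not spell out a proof for this corollary, but your argument---that conjugation by $l$ is a continuous automorphism of $G$ which is virtually trivial, hence equal to $\id_G$ by the proposition---is precisely what the word ``Corollary'' is signalling.
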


Hence in this situation, $\Comm(G)$ contains an abstract copy of any locally compact group $L$ containing $G$ for which $G \leq_o L$ and $C_L(G)=1$.  In particular, if $VZ(G)=1$, there is a natural embedding of $\Aut(H)$ into $\Comm(G)$ for all $H \leq_o G$, and so we may identify $\Aut(H)$ with a subgroup of $\Comm(G)$.

One important aspect of virtual automorphisms of $G$ is their effect on the indices of open subgroups, which corresponds to a homomorphism from $\Comm(G)$ to $\bQ^\times_{>0}$.

\begin{defn}Let $H$ and $K$ be isomorphic open subgroups of $G$.  Given an isomorphism $\theta$ from $H$ to $K$, write $\ir(\theta)$ for $|G:H|/|G:K|$.  This is clearly invariant under equivalence of virtual automorphisms.  Define $\ir(\phi)$ for $\phi \in \Comm(G)$ as $\ir(\theta)$ for any $\theta$ representing $\phi$; this defines a function $\ir$ from $\Comm(G)$ to the multiplicative group $\bQ^\times_{>0}$ of positive rationals, which we call the \emph{index ratio} of $G$.  Say $G$ is \emph{index-stable} if $\ir(\Comm(G))=1$, that is, any pair of isomorphic open subgroups of $G$ have the same index, and say $G$ is \emph{index-unstable} otherwise.\end{defn}

\begin{lem}\label{irhom}Let $G$ be a profinite group.  Then the index ratio $\ir$ of $G$ is a homomorphism of abstract groups from $\Comm(G)$ to $\bQ^\times_{>0}$.  In particular, if $G$ is index-unstable then $|\ir(\phi)|$ is unbounded as $\phi$ ranges over the elements of $\Comm(G)$.\end{lem}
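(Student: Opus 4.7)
The plan is to verify first that $\ir$ is well-defined on equivalence classes of virtual automorphisms, then to prove multiplicativity using the standard index-multiplication formula for open subgroups, and finally to deduce the unboundedness statement by taking powers in $\Comm(G)$.

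First I would check well-definedness: if $\theta: H \to K$ and $\theta': H' \to K'$ represent the same virtual automorphism, they agree on some open subgroup $U \leq H \cap H'$, whose image $\theta(U) = \theta'(U)$ is open in both $K$ and $K'$. Since $\theta|_U$ is an isomorphism, $|H:U| = |K:\theta(U)|$, and multiplicativity of indices then gives $|G:H|/|G:K| = |G:U|/|G:\theta(U)|$, which is symmetric in the two representatives. So $\ir$ descends to a function on $\Comm(G)$.

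For the homomorphism property, represent $\phi, \psi \in \Comm(G)$ by isomorphisms $\theta: H \to K$ and $\sigma: H' \to K'$. The composition $\phi\psi$ is represented by $\theta \circ \sigma$ restricted to $L := \sigma^{-1}(K' \cap H)$, which is open in $H'$ with $\sigma(L) = K' \cap H$ open in $H$. The key identity is that the open subgroup $K' \cap H$ has its index in $G$ computed two ways:
\[
|G : K' \cap H| = |G:H|\,|H:K'\cap H| = |G:K'|\,|K':K'\cap H|.
\]
Using $|H':L| = |K':\sigma(L)|$ and $|\theta(\sigma(L)):K| = |H:K'\cap H|$ (modulo trivial inversions), a direct computation yields
\[
\ir(\phi\psi) = \frac{|G:L|}{|G:\theta\sigma(L)|} = \frac{|G:H|}{|G:K|}\cdot\frac{|G:H'|}{|G:K'|} = \ir(\phi)\ir(\psi),
\]
where the middle equality is just the above identity rearranged. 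This is the only non-trivial algebraic step, and is really just the multiplicativity of the index for a tower of open subgroups; I expect this to be the main piece to pin down cleanly, but it is essentially routine once the right open subgroup $K' \cap H$ is chosen.

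For the final assertion, if $G$ is index-unstable, pick $\phi \in \Comm(G)$ with $r := \ir(\phi) \neq 1$. Since $r \in \bQ^\times_{>0}$ and $\ir$ is a homomorphism, $\ir(\phi^n) = r^n$ for every $n \in \bZ$. Taking $n \to +\infty$ if $r > 1$ or $n \to -\infty$ if $r < 1$, the values $r^n$ are positive rationals tending to infinity, so $|\ir(\phi)|$ is indeed unbounded on $\Comm(G)$.
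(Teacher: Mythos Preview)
Your proposal is correct and follows essentially the same approach as the paper: both arguments reduce the multiplicativity of $\ir$ to the multiplicativity of indices in a tower of open subgroups. The paper's proof is slightly slicker in that it simply picks representatives $\phi'$, $\psi'$ of $\phi$, $\psi$ so that the composition $\phi'\psi'$ is already defined on the full domain $H$ of $\phi'$, and then the computation telescopes in one line:
\[
\ir(\phi\psi) = \frac{|G:H|}{|G:H^{\phi'\psi'}|} = \frac{|G:H|}{|G:H^{\phi'}|}\cdot\frac{|G:H^{\phi'}|}{|G:H^{\phi'\psi'}|} = \ir(\phi)\ir(\psi),
\]
whereas you take arbitrary representatives and explicitly pass to the open subgroup $K'\cap H$ to make the composition work; your key identity $|G:K'\cap H| = |G:H|\,|H:K'\cap H| = |G:K'|\,|K':K'\cap H|$ is exactly what makes the paper's telescoping valid. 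Your explicit verification of well-definedness is something the paper relegates to the phrase ``clearly invariant under equivalence'' in the definition of $\ir$, and your treatment of the unboundedness claim spells out what the paper dismisses as ``now clear''.
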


\begin{proof}Let $\phi,\psi \in \Comm(G)$, and let $\phi'$ and $\psi'$ be representatives of $\phi$ and $\psi$ respectively such that the composition $\phi' \psi'$ is defined.  Let $H$ be the domain of $\phi'$.  Then
\[\ir(\phi\psi)= \frac{|G:H|}{|G:H^{\phi'\psi'}|}=\frac{|G:H|}{|G:H^{\phi'}|}\frac{|G:H^{\phi'}|}{|G:H^{\phi'\psi'}|} = \ir(\phi)\ir(\psi).\]
The conclusions are now clear.\end{proof}

\begin{lem}\label{subnorlem}Let $G$ be a profinite group.  Let $H$ and $K$ be open subgroups of $G$, and suppose $\theta$ is an isomorphism from $H$ to $K$.  Then there are subgroups $H_2 \leq H$ and $K_2 \leq K$, with $H_2 \unlhd^2_o G$ and $K_2 \unlhd_o G$, such that the restriction of $\theta$ to $H_2$ induces an isomorphism from $H_2$ to $K_2$.\end{lem}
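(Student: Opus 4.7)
My plan is to build the pair $(H_2, K_2)$ by two successive applications of the core operation, first on the $H$-side to force subnormality, then on the $K$-side to force normality.

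First I would set $N := \Core_G(H)$. Since $H \leq_o G$, the subgroup $N$ is open and normal in $G$, and $N \leq H$. Applying $\theta$, the image $\theta(N)$ is an isomorphic copy of $N$ sitting inside $K$; because $\theta$ is a (continuous) isomorphism and $N$ has finite index in $H$, $\theta(N)$ has finite index in $K$ and hence is an open subgroup of $G$. So far this is just transporting the normal core across $\theta$; $\theta(N)$ is generally not normal in $G$.

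Next, I would take $K_2 := \Core_G(\theta(N))$, which is an open normal subgroup of $G$ contained in $\theta(N) \leq K$. Setting $H_2 := \theta^{-1}(K_2)$, the fact that $K_2 \leq \theta(N)$ gives $H_2 \leq N$, and $H_2$ is open in $G$ because $K_2$ has finite index in $K$ and $\theta^{-1}$ is a homeomorphism onto $H$. The key point is that $K_2$, being normal in all of $G$, is in particular normal in $\theta(N)$; pulling back along $\theta$ shows $H_2 \unlhd N$. Combined with $N \unlhd_o G$, this gives the chain $H_2 \unlhd N \unlhd G$ of open subgroups, so $H_2 \unlhd^2_o G$ as required, and $\theta$ restricts to an isomorphism $H_2 \to K_2$ by construction.

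There is no real obstacle: the only subtlety is noticing that a one-step construction (taking $K_2 = \Core_G(K)$ and $H_2 = \theta^{-1}(K_2)$) only guarantees $H_2 \unlhd H$, and $H$ need not be normal in $G$, so defect $2$ is not automatic. The two-step trick, cored first on the domain side to produce an ambient normal subgroup $N \unlhd G$ that $\theta$ maps into $K$, and then cored again on the codomain side to produce $K_2 \unlhd G$ sitting inside $\theta(N)$, circumvents this cleanly.
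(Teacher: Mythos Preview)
Your proof is correct and follows exactly the same two-step core construction as the paper: take $N=\Core_G(H)$, push it through $\theta$, then take $K_2=\Core_G(\theta(N))$ and pull back to $H_2=\theta^{-1}(K_2)$, observing that $H_2\unlhd N\unlhd G$. The paper's argument is identical up to naming (it calls your $N$ and $\theta(N)$ by $H_3$ and $K_3$).
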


\begin{proof}Let $H_3$ be the core of $H$ in $G$, and let $K_3$ be its image under $\theta$.  Now let $K_2$ be the core of $K_3$ in $G$, and let $H_2$ be its preimage under $\theta$.  By construction, $K_2$ is normal in $G$, and hence normal in $K_3$.  Since $\theta$ maps $H_3$ isomorphically to $K_3$, this means that $H_2$ must be the corresponding normal subgroup of $H_3$.  But $H_3$ is normal in $G$, so $H_2 \unlhd^2_o G$.\end{proof}

\begin{defn}Let $G$ be a profinite group with $VZ(G)=1$, and let $\mcH$ be a set of open subgroups of $G$.  Define the \emph{local commensurator} $\LComm_\mcH(G)$ with respect to $\mcH$ to be the union of the subgroups $\Aut(H)$ of $\Comm(G)$, as $H$ ranges over $\mcH$.  (Note that $\LComm_\mcH(G)$ itself may not be a subgroup in general.)  The (absolute) \emph{local commensurator} of $G$ is given by $\LComm(G) := \LComm_\mcH(G)$, where $\mcH$ is the set of all open subgroups of $G$.  Denote by $\KComm(G)$ the kernel of the index ratio of $G$.\end{defn}

We have $\LComm(G) \subseteq \KComm(G) \leq \Comm(G)$ for any profinite group $G$ with $VZ(G)=1$.  We consider conditions under which two or more of these subsets coincide.

\begin{defn}Given a subgroup $H$ of a profinite group $G$, say $H$ is \emph{hereditarily characteristic} if, given any open subgroup $K$ of $G$ such that $H \leq K$, then $H$ is characteristic in $K$.  Say $H$ is \emph{one of a kind} if $H \cong K$ implies that $H=K$, for any subgroup $K$ of $G$.  Say $H$ is \emph{one of a kind up to index} if $H \cong K$ and $|G:H| = |G:K|$ together imply that $H=K$.\end{defn}

The significance of hereditarily characteristic and one-of-a-kind subgroups (up to index) for the commensurator is given by the following:

\begin{lem}Let $G$ be a profinite group with $VZ(G)=1$, and let $K$ be an open subgroup.  Then $K$ is hereditarily characteristic if and only if $\Aut(H) \leq \Aut(K)$ as subgroups of $\Comm(G)$ for every subgroup $H$ containing $K$.\end{lem}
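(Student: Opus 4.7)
The plan is to reduce both directions to a direct application of Proposition \ref{vzcomm}, the key point being that in $\Comm(G)$ the set $\Aut(H)$ is the image of genuine automorphisms of $H$ under the canonical embedding, so membership of one such subgroup in another can be analysed in terms of restrictions and compositions of honest isomorphisms between open subgroups.

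For the forward direction, assume $K$ is hereditarily characteristic. Given any open subgroup $H$ with $K \leq H$ and any $\phi \in \Aut(H)$, the hypothesis guarantees $\phi(K) = K$, so $\phi$ restricts to some $\phi|_K \in \Aut(K)$. Regarded as virtual automorphisms of $G$, $\phi$ and $\phi|_K$ agree on the open subgroup $K$, hence determine the same element of $\Comm(G)$; this shows $\Aut(H) \leq \Aut(K)$ inside $\Comm(G)$.

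For the reverse direction, assume $\Aut(H) \leq \Aut(K)$ in $\Comm(G)$ for every open $H \supseteq K$, fix such an $H$ and $\phi \in \Aut(H)$, and aim to show $\phi(K) = K$. By hypothesis there is some $\psi \in \Aut(K)$ that represents the same element of $\Comm(G)$ as $\phi$. Form the composition $\phi^{-1}\circ\psi$ of honest maps: $\psi$ sends $K$ isomorphically to $K \subseteq H$, and then $\phi^{-1}$ sends $K$ isomorphically onto $\phi^{-1}(K) \subseteq H$. This yields a virtual automorphism of $G$ with domain $K$ and codomain $\phi^{-1}(K)$, and it is virtually trivial since it represents the identity element of $\Comm(G)$. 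Proposition \ref{vzcomm} then forces $K = \phi^{-1}(K)$ and $\phi^{-1}\circ\psi = \id_K$, i.e.\ $\phi(K) = K$. Since $\phi \in \Aut(H)$ was arbitrary, $K$ is characteristic in $H$, and since $H$ was arbitrary, $K$ is hereditarily characteristic.

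The main thing to watch for is to set up the composition $\phi^{-1}\circ\psi$ with the correct domain and codomain so that Proposition \ref{vzcomm} applies in a way that immediately yields $\phi^{-1}(K) = K$ rather than just an equality on some smaller open subgroup; the trick is to compose on the $K$-side (using that $\psi$ genuinely lands in $K$) so the full domain is already $K$ when the proposition is invoked.
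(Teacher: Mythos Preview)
Your proof is correct and follows the same approach as the paper. The paper's own proof is very terse: for the forward direction it gives exactly your restriction argument, and for the reverse direction it simply writes ``clearly $K$ is characteristic in $H$'', whereas you spell out the justification by composing $\phi^{-1}\circ\psi$ and invoking Proposition~\ref{vzcomm} to force $\phi^{-1}(K)=K$. That explicit appeal to Proposition~\ref{vzcomm} is exactly what underlies the paper's ``clearly'', so your version is a faithful expansion of the same argument rather than a different route.
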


\begin{proof}If $\Aut(H) \leq \Aut(K)$, then clearly $K$ is characteristic in $H$.  Conversely, if $K$ is characteristic in $H$, then every automorphism of $H$ restricts to an automorphism of $K$, so that $\Aut(H)$ embeds into $\Aut(K)$.\end{proof}

\begin{prop}\label{commcontrol}Let $G$ be a profinite group with $VZ(G)=1$.  Let $\mcK$ be a set of subgroups of $G$ that form a countable base for the neighbourhoods of $1$.
\vspace{-12pt}
\begin{enumerate}[(i)]  \itemsep0pt
\item Suppose every $K \in \mcK$ is hereditarily characteristic in $G$.  Then $\LComm(G)= \LComm_\mcK(G)$, and there is a descending chain $\mcL \subseteq \mcK$ such that $\LComm(G)$ is the union of the ascending chain of subgroups given by $\{\Aut(K) \mid K \in \mcL\}$.
\item Suppose every $K \in \mcK$ is one of a kind up to index.  Then $\KComm(G) = \LComm(G)$.\end{enumerate}\end{prop}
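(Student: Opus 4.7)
The plan is to handle the two parts separately, using the countability of $\mcK$ together with the lemma just preceding the proposition (the characterisation of hereditary characteristicity via the inclusion $\Aut(H) \leq \Aut(K)$).

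\textbf{Part (i).} For any open $H \leq G$, the base property supplies some $K \in \mcK$ with $K \leq H$; hereditary characteristicity together with the preceding lemma then gives $\Aut(H) \leq \Aut(K)$, so $\LComm(G) = \LComm_\mcK(G)$. To produce the descending chain one may assume $G$ is infinite (otherwise the statement is trivial), so $\mcK$ is countably infinite. Enumerate $\mcK = \{K_n \mid n \in \bN\}$ and define $\mcL = \{L_n\}$ recursively by $L_1 = K_1$ and $L_{n+1} \in \mcK$ with $L_{n+1} \leq L_n \cap K_{n+1}$; such an $L_{n+1}$ exists because $\mcK$ is a neighbourhood base at $1$ and $L_n \cap K_{n+1}$ is open. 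Every $L_n$ is hereditarily characteristic, so $L_{n+1}$ is characteristic in $L_n$ and hence $\Aut(L_n) \leq \Aut(L_{n+1})$. Conversely, if $\phi \in \LComm_\mcK(G)$ and $\phi \in \Aut(K_m)$, then since $L_m \leq K_m$ is characteristic in $K_m$, any representative automorphism of $K_m$ restricts to one of $L_m$, giving $\phi \in \Aut(L_m)$. Thus $\LComm(G) = \bigcup_n \Aut(L_n)$, as required.

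\textbf{Part (ii).} The inclusion $\LComm(G) \subseteq \KComm(G)$ is immediate since $\ir(\theta) = 1$ whenever $\theta$ is an automorphism of an open subgroup. For the converse, take $\phi \in \KComm(G)$ represented by $\theta \colon U \to V$, so $|G:U| = |G:V|$. Choose $K \in \mcK$ with $K \leq U$ and set $K' = \theta(K) \leq V$. Since $\theta$ is a topological group isomorphism $U \to V$, we have $|U:K| = |V:K'|$, and hence
\[ |G:K| = |G:U| \cdot |U:K| = |G:V| \cdot |V:K'| = |G:K'|.\]
The one-of-a-kind-up-to-index hypothesis applied to $K$ then forces $K = K'$, so $\theta|_K$ is an automorphism of $K$ representing $\phi$, and $\phi \in \Aut(K) \subseteq \LComm(G)$.

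The argument is essentially bookkeeping; the only mildly delicate point is arranging the inductive construction in (i) so that $\mcL$ simultaneously lies inside $\mcK$, forms a descending chain, and exhausts $\LComm_\mcK(G)$, but this is exactly what the diagonal refinement $L_{n+1} \leq L_n \cap K_{n+1}$ achieves. No substantial obstacle is anticipated.
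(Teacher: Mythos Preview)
Your proof is correct and follows essentially the same route as the paper's. The paper begins by noting that one can pass to a descending chain $\mcL \subseteq \mcK$ that is still a neighbourhood base (and then simply replaces $\mcK$ by $\mcL$), whereas you construct this chain explicitly via the diagonal refinement $L_{n+1} \leq L_n \cap K_{n+1}$; the index computation in your part (ii) is just an unpacking of the equality $|G:K| = |G:K^\theta|$ that the paper asserts directly from $\ir(\theta)=1$.
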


\begin{proof}It is clear from the properties given that there is a descending chain $\mcL \subseteq \mcK$, such that every open subgroup of $G$ contains some $L \in \mcL$.  Hence we may assume $\mcK$ itself is such a descending chain.

(i) Let $H$ be an open subgroup of $G$, and suppose $H$ contains $K \in \mcK$.  Then by the lemma, $\Aut(H) \leq \Aut(K)$, so $\LComm(G) = \LComm_\mcK(G)$.  By the lemma, $\{ \Aut(K) \mid K \in \mcK\}$ is an ascending chain.

(ii) Let $\theta$ be an isomorphism between open subgroups $H_1$ and $H_2$ of $G$, such that $\ir(\theta)=1$.  Then by Lemma \ref{obchain}, there is some $K \in \mcK$ such that $K \leq H_1 \cap H_2$.  Then $K^\theta \cong K$ and $|G:K|=|G:K^\theta|$, so $K = K^\theta$.  Hence $\theta$ is equivalent to an automorphism of $K$.\end{proof}

Radicals of $G$ give the greatest potential for control of $\Comm(G)$.  Note that if $K \leq G$ such that $K$ is one of a kind in $G$, then $K = O_{[K]}(G)$.

\begin{prop}Let $G$ be a profinite group such that $VZ(G)=1$.  Suppose that $\mcR$ is a set of open radicals of $G$, such that every open subgroup of $G$ contains some $R \in \mcR$.  Then $\Comm(G) = \LComm_\mcR(G)$.\end{prop}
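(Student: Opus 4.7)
The approach is to take an arbitrary virtual automorphism $\phi \in \Comm(G)$, represented by some topological isomorphism $\theta : U \to V$ between open subgroups of $G$, and show that $\phi$ is equivalent to the restriction of $\theta$ to some $R \in \mcR$ which $\theta$ stabilises set-wise. The crux is to exhibit $R \in \mcR$ inside the domain of $\theta$ such that $\theta(R) = R$; this will hold once $R$ can be characterised intrinsically as a radical inside both $U$ and $V$.

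First I would apply Lemma \ref{subnorlem} to $\theta$, obtaining $U_2 \leq U$ with $U_2 \unlhd^2_o G$ and $V_2 = \theta(U_2) \unlhd_o G$, and replace $\theta$ by its restriction $\theta : U_2 \to V_2$. Both $U_2$ and $V_2$ are open in $G$, hence so is $U_2 \cap V_2$, and by hypothesis there exists $R \in \mcR$ with $R \leq U_2 \cap V_2$. Write $R = O_\mcX(G)$ for an appropriate class $\mcX$ of topological groups.

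The key step is the identity $O_\mcX(U_2) = R = O_\mcX(V_2)$. For either $W \in \{U_2, V_2\}$, note that $W$ is subnormal in $G$, so transitivity of subnormality gives that every subnormal $\mcX$-subgroup of $W$ is a subnormal $\mcX$-subgroup of $G$, hence contained in $R$; thus $O_\mcX(W) \leq R$. Conversely, every subnormal $\mcX$-subgroup $H$ of $G$ satisfies $H \leq R \leq W$, and intersecting the defining subnormal series of $H$ with $W$ term-by-term yields a subnormal series realising $H$ as a subnormal subgroup of $W$, so $H \leq O_\mcX(W)$; generating over $H$ gives $R \leq O_\mcX(W)$.

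Since $\theta : U_2 \to V_2$ is an isomorphism of topological groups and the class $\mcX$ is invariant under topological isomorphism, $\theta$ maps subnormal $\mcX$-subgroups of $U_2$ bijectively to those of $V_2$, and hence $\theta(O_\mcX(U_2)) = O_\mcX(V_2)$. Combined with the previous step, this gives $\theta(R) = R$. The restriction of $\theta$ to $R$ is therefore an element of $\Aut(R)$, identified with a subgroup of $\Comm(G)$ via Proposition \ref{vzcomm} (where $VZ(G)=1$ is used), and represents the same equivalence class as $\theta$ since $R$ is open. Hence $\phi \in \Aut(R) \subseteq \LComm_\mcR(G)$. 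The main obstacle is the radical identification in Step~3: if the passage from $G$ to an open subgroup $W$ did not preserve the radical, the argument collapses, so it is essential that Lemma \ref{subnorlem} be invoked to guarantee both $U_2$ and $V_2$ are subnormal in $G$.
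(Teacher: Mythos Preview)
Your proof is correct and follows essentially the same route as the paper: apply Lemma~\ref{subnorlem} to make the domain and codomain subnormal in $G$, pick $R = O_\mcX(G) \in \mcR$ inside their intersection, verify $O_\mcX(U_2) = R = O_\mcX(V_2)$ using subnormality, and conclude $\theta(R)=R$. The only difference is presentational: the paper asserts the chain $O_\mcX(H\cap K) = O_\mcX(H) = O_\mcX(K) = O_\mcX(G)$ in one line, whereas you spell out both inclusions explicitly via transitivity of subnormality and intersecting a subnormal series with $W$.
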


\begin{proof}Let $\theta$ be an isomorphism between open subgroups $H$ and $K$ of $G$.  By Lemma \ref{subnorlem}, we may assume $H$ is subnormal and $K$ is normal.  Then there is some $\mcX$ such that $O_\mcX(G) \in \mcR$, and such that $R$ is contained in the subnormal subgroup $H \cap K$ of $G$.  This ensures
\[ O_\mcX(H \cap K) = O_\mcX(H) = O_\mcX(K) = O_\mcX(G).\]
Since $\theta$ is an isomorphism, $(O_\mcX(H))^\theta = O_\mcX(K) = O_\mcX(H)$.  Hence $\theta$ is equivalent to an element of $\Aut(O_\mcX(H))$, which is the same as $\Aut(O_\mcX(G))$.\end{proof}

Finally, here is an example where the commensurator is known to be a finite extension of the original group; this will be used later as an example in other contexts.

\begin{thm}\label{commnottgp}Let $p$ be a prime, and let $N$ be the Nottingham group over the field of $p$ elements, where $p \geq 5$.  Then the following isomorphisms hold:
\vspace{-12pt}
\begin{enumerate}[(i)]  \itemsep0pt
\item \emph{\textbf{[Klopsch \cite{Klo}]}} $\Out(N) \cong C_{p-1}$;
\item \emph{\textbf{[Ershov \cite{Ers}]}} $\Comm(N) \cong \Aut(N)$.\end{enumerate}\end{thm}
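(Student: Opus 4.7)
Both statements are attributed in the excerpt to external works (Klopsch's paper for (i), Ershov's for (ii)), so the plan is really to sketch how each statement is approached using structural features of the Nottingham group $N$. Fix notation: $N$ consists of power series $f(t)=t+a_2t^2+a_3t^3+\dots\in\bF_p[[t]]$ under composition, and $N_k:=\{f\in N\mid a_2=\dots=a_k=0\}$ gives a descending chain of open characteristic subgroups with $N_k/N_{k+1}\cong C_p$ and $\bigcap_k N_k=1$. The group $C_{p-1}\cong\bF_p^\times$ embeds in $\Aut(N)$ via the scaling maps $\sigma_\zeta\colon f(t)\mapsto \zeta^{-1}f(\zeta t)$.

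For part (i), the aim is to show that no outer automorphism survives after modding out the $\sigma_\zeta$. First I would prove that each $N_k$ is hereditarily characteristic in $N$ (and in every larger open subgroup); this reduces any automorphism $\alpha\in\Aut(N)$ to an automorphism of the associated graded Lie ring $L(N)=\bigoplus_k N_k/N_{k+1}$. The induced action of $\alpha$ on $N_1/N_2\cong\bF_p$ is multiplication by some $\zeta\in\bF_p^\times$, so after replacing $\alpha$ by $\sigma_\zeta^{-1}\alpha$ we may assume $\alpha$ is trivial on this quotient. The main obstacle is then showing any such $\alpha$ is inner; this is the technical heart of Klopsch's paper, and exploits that for $p\ge 5$ the Lie algebra $L(N)$ is rigid enough that an automorphism trivial on the bottom stratum and preserving the grading must be realised by conjugation by an element of $N$. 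The ``degeneracy'' of the low primes $p=2,3$ (which is why the hypothesis $p\ge5$ is needed) is essentially a cohomological anomaly in this rigidity step.

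For part (ii), I would proceed by invoking the general machinery of Proposition \ref{commcontrol}. First verify that $VZ(N)=1$, so that $\Aut(H)\subseteq\Comm(N)$ for every $H\le_o N$. Next, by part (i) and the standard identification of open subgroups of $N$, show that the chain $\{N_k\}$ provides a countable base of neighbourhoods of $1$ consisting of hereditarily characteristic subgroups which are one-of-a-kind up to index (since $|N:N_k|=p^{k-1}$ and a subgroup of that index containing $N_{k+r}$ is forced to be $N_k$ by the characteristic filtration). Proposition \ref{commcontrol} then yields $\KComm(N)=\LComm(N)=\bigcup_k\Aut(N_k)$.

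The remaining two steps are the hard ones, and are what Ershov supplies. First, one must show each automorphism of $N_k$ extends to an automorphism of $N$; equivalently, $\Aut(N_k)\subseteq\Aut(N)$ as subgroups of $\Comm(N)$, which together with $N=N_1$ gives $\LComm(N)=\Aut(N)$. This follows from a delicate argument constructing an isomorphism of $N_k$ with $N$ compatible with the embedding, using the explicit description of $N_k$ as a ``substitution subgroup'' of $N$. Finally one must show $\Comm(N)=\KComm(N)$, i.e.\ that $N$ is index-stable: any virtual automorphism preserves $|N:\cdot|$. By Lemma \ref{irhom} it suffices to rule out a single element of $\Comm(N)$ with non-trivial index ratio, and this follows because any isomorphism of open subgroups is (by the previous step) induced by an automorphism of $N$, which of course preserves indices. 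The most substantial obstacle in the whole proof is the extension step, for which Ershov exploits that automorphisms of $N_k$ respect the canonical filtration and hence are seen to come from coordinate changes on $\bF_p[[t]]$ that normalise the image of $N$.
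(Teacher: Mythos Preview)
The paper does not prove this theorem: it is stated with attributions to Klopsch and Ershov and used thereafter as a black box. There is therefore no proof in the paper to compare against, and I can only assess your sketch on its own terms.

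Your outline for (i) is in the right spirit. For (ii), however, there are two genuine problems.

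First, you assert that one constructs ``an isomorphism of $N_k$ with $N$ compatible with the embedding''. This is false: for $p\geq 5$ the Nottingham group is just infinite and not virtually abelian, so by Theorem~\ref{istabthm}(i) of this very paper no proper open subgroup of $N$ can be isomorphic to $N$. The congruence subgroups $N_k$ for $k\geq 2$ are genuinely non-isomorphic to $N$, and the extension step cannot be run this way.

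Second, your argument for index-stability is circular. Granting everything up to that point, you have $\KComm(N)=\LComm(N)=\Aut(N)$ and you need $\Comm(N)=\KComm(N)$. You write: ``any isomorphism of open subgroups is (by the previous step) induced by an automorphism of $N$, which of course preserves indices''. But the previous step only concerns elements of $\LComm(N)$, i.e.\ virtual automorphisms equivalent to an \emph{automorphism} of some open subgroup; it says nothing about an isomorphism $\theta\colon H\to K$ with $|N:H|\neq|N:K|$, and Proposition~\ref{commcontrol}(ii) only applies once you already know $\ir(\theta)=1$. Ruling out such $\theta$ is exactly the statement of index-stability you are trying to establish. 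This is in fact the crux of Ershov's paper: he shows, by a direct and rather delicate analysis, that any virtual automorphism must respect the congruence filtration $\{N_k\}$ up to a bounded shift, and then that the shift is zero, forcing the index ratio to be trivial. That step does not fall out of the general commensurator machinery in Section~\ref{commsec}.
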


\section{Coprime automorphisms of pro-$p$ groups and the $c$ invariant}\label{cinvsec}

We consider the characteristic subgroup structure of a finitely generated pro-$p$ group $G$, and the restriction this places on coprime automorphisms of $G$.  Throughout this section, we will make use of the definitions and results from Section 1.5.

\begin{defn}Let $G$ be a finitely generated pro-$p$ group.  The action of $\Aut(G)$ induces an action on the finite characteristic image $G/\Phi(G)$, which we regard as a vector space $V$ over $\bF_p$.  More specifically, there is a natural map $\alpha_G$ from $\Aut(G)$ onto $\Out(G)$, and then $\beta_G$ from $\Out(G)$ onto $\Aut(G/\Phi(G))$, which is a subgroup of $\GL(V)$.  (We will write $\alpha_G = \alpha$ and $\beta_G = \beta$ if $G$ is obvious.)  Define $\Delta(G)$ to be the image of $\beta_G$.

In general, $\Delta(G)$ may be significantly smaller than the full general linear group $\GL(V)$; in particular, $\Delta(G)$ may be reducible.  Define the invariant $c(G)$ as follows:

$c(G)$ is the supremum of $\log_p|H:K|$, over all pairs of characteristic subgroups $(H,K)$ of $G$ such that $H \geq K \geq \Phi(G)$ and there are no characteristic subgroups of $G$ lying between $H$ and $K$.

Note that this is equivalently the largest dimension of an irreducible constituent of $V$, regarded as a $\Delta(G)$-module.

If $G$ is a pro-$p$ group that is not finitely generated, we define $c(G)=d(G)$.\end{defn}

The following lemma illustrates the significance of $\Delta(G)$ and $c(G)$ for coprime action.

\begin{lem}\label{deltalem}Let $G$ be a finitely generated pro-$p$ group, with $c(G)=c$.
\vspace{-12pt}
\begin{enumerate}[(i)]  \itemsep0pt
\item The kernels of $\alpha_G$ and $\beta_G$ are pro-$p$ groups.
\item Let $H$ be a profinite group of automorphisms of $G$.  Then $O^{(c,p)}(H)$ is a pro-$p$ group.
\item Let $K$ be a profinite group such that $C_K(G) \leq G \unlhd K$.  Then $O^{(c,p)}(K) \leq O_p(K)$.  If $G = O_p(K)$, then $K/G \lesssim \Delta(G)$.\end{enumerate}\end{lem}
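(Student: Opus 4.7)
For (i), the plan is to reduce to Theorem \ref{cinvthm}(ii). Note that $\ker(\alpha_G) = \Inn(G) \cong G/Z(G)$ is pro-$p$ as a quotient of $G$. Since inner automorphisms act trivially on the abelian quotient $V := G/\Phi(G)$, we have $\Inn(G) \leq \ker(\beta_G\alpha_G)$, and hence $\ker(\beta_G) \cong \ker(\beta_G\alpha_G)/\Inn(G)$. Now $\ker(\beta_G\alpha_G)$ is the closed subgroup of $\Aut(G)$ acting trivially on $V$, which Theorem \ref{cinvthm}(ii) identifies as pro-$p$; hence so is $\ker(\beta_G)$.

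For (ii), I would consider the restriction $\gamma := \beta_G\alpha_G|_H : H \to \Delta(G) \leq \GL(V)$, whose kernel is pro-$p$ by (i). By the definition of $c(G)$, the $\bF_p\Delta(G)$-module $V$ admits a composition series $V = V_0 > V_1 > \dots > V_k = 0$ with each factor of dimension $d_i \leq c$. Because $H$ acts through $\Delta(G)$, it preserves this series, and the action on $V_i/V_{i+1}$ gives a homomorphism $\rho_i : H \to \GL(d_i, p) \hookrightarrow \GL(c, p)$ with finite image in $\mcL(c, p)$; hence $O^{(c, p)}(H) \leq \ker(\rho_i)$ for every $i$. So $\gamma(O^{(c, p)}(H))$ stabilises the flag and acts trivially on every graded piece, and with respect to a flag-adapted basis its elements are upper unitriangular matrices, forming a finite $p$-group. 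Combined with the pro-$p$ kernel of $\gamma$, this forces $O^{(c, p)}(H)$ to be pro-$p$.

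For (iii), conjugation produces a continuous homomorphism $K \to \Aut(G)$ whose kernel equals $C_K(G) = Z(G)$ under the assumption $C_K(G) \leq G$. Writing $H$ for the image, part (ii) gives $O^{(c, p)}(H)$ pro-$p$; since the image of $O^{(c, p)}(K)$ in $H$ lies in $O^{(c, p)}(H)$ and the kernel $Z(G)$ is pro-$p$, $O^{(c, p)}(K)$ is pro-$p$ and hence contained in $O_p(K)$. For the last assertion, composing with $\alpha_G$ yields a map $K \to \Out(G)$ whose kernel is exactly $G$: any $k \in K$ acting as inner conjugation by some $g \in G$ satisfies $kg^{-1} \in C_K(G) \leq G$. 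Thus $K/G \hookrightarrow \Out(G)$; intersecting with $\ker(\beta_G)$ and pulling back to $K$ yields a normal subgroup $R \unlhd K$ with $G \leq R$ and $R/G$ pro-$p$. Since $G$ is pro-$p$, so is $R$, forcing $R \leq O_p(K) = G$, and the restriction of $\beta_G$ therefore embeds $K/G$ into $\Delta(G)$.

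The main obstacle I foresee is the unipotent step in (ii): arranging the several factor maps $\rho_i$ so that they combine into a single upper-unitriangular, and hence $p$-group, action of $O^{(c, p)}(H)$ on $V$, rather than merely providing separate actions into the smaller groups $\GL(d_i, p)$. The rest of the argument is essentially bookkeeping about pro-$p$ radicals and kernels.
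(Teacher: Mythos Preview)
Your proof is correct and follows essentially the same route as the paper's: both arguments use a $\Delta(G)$-invariant composition series for $G/\Phi(G)$ with factors of dimension at most $c$, observe that $O^{(c,p)}(H)$ lies in the kernel of each factor representation, and then invoke Theorem~\ref{cinvthm} to see that this kernel is pro-$p$. Your worry about the unipotent step is unfounded---once $O^{(c,p)}(H)$ acts trivially on every graded piece, its image in $\GL(V)$ is automatically upper unitriangular with respect to a flag-adapted basis, so there is nothing further to arrange.
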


\begin{proof}(i) This is true by definition in the case of $\alpha_G$, and follows immediately from Theorem \ref{cinvthm} in the case of $\beta_G$.

(ii) By considering the action of $H$ on all the finite characteristic images of $G$, we may assume that $G$ is finite.  Consider the action of $H$ on an $H$-invariant normal series for $G/\Phi(G)$; by refining as necessary, we can ensure that no term in this series has rank exceeding $c$.  On each factor, $H$ must act as a subgroup of $\GL(c,p)$, giving a homomorphism from $H$ to a direct product of copies of $\GL(c,p)$.  By the theorem, the kernel of this homomorphism is a pro-$p$ group.

(iii) By (ii), $O^{(c,p)}(K/C_K(G))$ is a normal pro-$p$ subgroup of $K/C_K(G)$; hence $O^{(c,p)}(K)$ is a normal pro-$p$ subgroup of $K$.  If $G = O_p(K)$, then $K/G \lesssim \Out(G)$, so $K/G \lesssim \Delta(G)$ by part (i).\end{proof}

The following is now immediate, given Corollary \ref{malcor}:

\begin{cor}\label{dbebcor}Let $G$ be a finitely generated pro-$p$ group with $c(G) = c$.  Let $H$ be a prosoluble group of automorphisms of $G$.  Let $K = H^{\eb(c)}H^{(\db(c))}$.  Then $K'$ is a pro-$p$ group.\end{cor}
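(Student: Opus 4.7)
The corollary essentially packages together Lemma \ref{deltalem}(ii) with Mal'cev's theorem as reformulated in Corollary \ref{malcor}, so the plan is to reduce to the prosoluble quotient of $H$ on which Corollary \ref{malcor} applies. I would proceed as follows.

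First, I would invoke Lemma \ref{deltalem}(ii) with $G$ as the pro-$p$ group acted on: since $c(G)=c$, the subgroup $N:=O^{(c,p)}(H)$ is a pro-$p$ group. This is the key structural input; every quotient of $H$ on which I fail to control things must lie inside this pro-$p$ kernel, which is harmless.

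Next, pass to $\bar H := H/N$. This is prosoluble as a quotient of $H$, and by construction $O^{(c,p)}(\bar H)=1$. Applying Corollary \ref{malcor} with $n=c$ and $\pi=\{p\}$ gives both $\bar H^{(\db(c))}=1$ and $(\bar H^{\eb(c)})'$ pro-$p$. Lifting the first conclusion, $H^{(\db(c))}\leq N$, so
\[ KN = H^{\eb(c)}H^{(\db(c))}N = H^{\eb(c)}N,\]
and consequently $K/(K\cap N)\cong KN/N = \bar H^{\eb(c)}$.

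Step three is to combine the two pieces. The isomorphism just derived shows that $K'(K\cap N)/(K\cap N)\cong (\bar H^{\eb(c)})'$ is pro-$p$, i.e.\ $K'/(K'\cap N)$ is pro-$p$. On the other hand, $K'\cap N$ is a closed subgroup of the pro-$p$ group $N$, hence pro-$p$. Since an extension of a pro-$p$ group by a pro-$p$ group is pro-$p$, $K'$ itself is pro-$p$, as required.

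There is no substantive obstacle here; the only thing to be slightly careful about is the bookkeeping at the end, making sure that $H^{(\db(c))}$ is absorbed into $N$ (so that $K$ and $H^{\eb(c)}$ have the same image in $\bar H$) and that the exact sequence $1\to K'\cap N\to K'\to K'N/N\to 1$ has pro-$p$ outer terms so $K'$ is pro-$p$. Everything else is just invoking Lemma \ref{deltalem}(ii) and Corollary \ref{malcor} in the right order.
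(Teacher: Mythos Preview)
Your proposal is correct and follows precisely the route the paper intends: the corollary is stated as immediate from Corollary~\ref{malcor} (together with the just-proved Lemma~\ref{deltalem}(ii)), and your argument simply spells out that deduction---quotient by the pro-$p$ kernel $N=O^{(c,p)}(H)$, apply Corollary~\ref{malcor} to $H/N$, and reassemble. The bookkeeping you flag (absorbing $H^{(\db(c))}$ into $N$ and the pro-$p$ extension at the end) is exactly the content that the paper suppresses as ``immediate''.
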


Define $c^\le(G)$ to be the supremum of $c(H)$ as $H$ ranges over all open subgroups of $G$.  The property of having finite $c^\le$-invariant is a generalisation of finite rank, as clearly $c(G) \leq d(G)$, so that $c^\le(G) \leq r(G)$.  On the other hand, it is easy to construct examples where $c^\le(G) < r(G)$: for instance, if $G = \bZ_p \times C_p$ then $c(G) = c^\le(G) = 1$, whereas $r(G) = 2$.  Indeed, for the Nottingham group $N$ over the field of $p$ elements for $p$ at least $5$, it follows from Theorem \ref{commnottgp} that given any open subgroup $U$ of $N$, then $U \cap N_i$ is characteristic in $U$ for any congruence subgroup $N_i$ of $N$, and so $c^\le(N)=1$; at the same time, $N$ is of infinite rank, and indeed $N$ is not even linear.

In general it is very difficult to calculate the commensurator of a pro-$p$ group of infinite rank.  The remainder of this section is therefore devoted to potential methods for establishing finiteness of $c^\le(G)$ based on relatively limited information about the structure of $G$.

Rather than trying to find specific characteristic subgroups, it is more useful to work with descending chains of characteristic subgroups.

\begin{defn}\label{cchaindef}Let $G$ be a pro-$p$ group.  A \emph{$c$-chain of width $w$} for $G$ is a descending chain of characteristic subgroups $W_1 > W_2 > \dots$ with $\bigcap W_i = W \leq \Phi(G)$, satisfying the following conditions:
\vspace{-12pt}
\begin{enumerate}[(i)]  \itemsep0pt
\item $|W_i:W_{i+1}| \leq p^w$ for all $i \geq 1$, but $W_1$ may be of arbitrary index in $G$;
\item $Z(W_1/W) = 1$.\end{enumerate}
Let $[c]_p(w)$ denote the class of pro-$p$ groups with a $c$-chain of width $w$.\end{defn}

Note that in the above definition, $W_1$ is allowed to have arbitrary index.  This gives some flexibility in exhibiting a $c$-chain, but does not significantly weaken the conclusions that can be drawn, as will be seen from the results below.

\begin{lem}Let $G$ be a pro-$p$ group.  Suppose there are characteristic subgroups $K$ and $L$ of $G$, such that $L \leq K \cap \Phi(G)$ and such that $K/L$ is a $[c]_p(w)$-group.  Then $G$ is a $[c]_p(w)$-group.\end{lem}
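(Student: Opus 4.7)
The plan is to take a witnessing $c$-chain in $K/L$, lift it to a chain in $K$, and verify directly that the lifted chain is a $c$-chain for $G$ of the same width. So write $\overline{W}_1 > \overline{W}_2 > \dots$ for a $c$-chain of width $w$ for $K/L$, with intersection $\overline{W} \leq \Phi(K/L)$, and let $W_i$ and $W$ denote the respective preimages in $K$ (all automatically containing $L$).

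Next I would check the four clauses of Definition~\ref{cchaindef} for the lifted chain in turn. That each $W_i$ is characteristic in $G$ is the easy step: any automorphism of $G$ preserves the characteristic subgroups $K$ and $L$, hence induces an automorphism of $K/L$ under which each $\overline{W}_i$ is invariant, so the preimage $W_i$ in $K$ is invariant as well. The index condition $|W_i : W_{i+1}| \leq p^w$ and the trivial-centre condition $Z(W_1/W) = 1$ both transfer immediately via the canonical isomorphisms $W_i/W_{i+1} \cong \overline{W}_i/\overline{W}_{i+1}$ and $W_1/W \cong \overline{W}_1/\overline{W}$.

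The only clause in which both hypotheses $L \leq K$ and $L \leq \Phi(G)$ play a role is the containment $W \leq \Phi(G)$. Here I would use two standard facts about Frattini subgroups of pro-$p$ groups: for any closed subgroup $K$ of a pro-$p$ group $G$ one has $\Phi(K) \leq \Phi(G)$ (because $K^p \leq G^p$ and $[K,K] \leq [G,G]$, and $\Phi$ is the closure of the product); and for a closed normal subgroup $L$ of a pro-$p$ group $K$ one has $\Phi(K/L) = \Phi(K)L/L$. Combining these with $\overline{W} \leq \Phi(K/L)$ and $L \leq \Phi(G)$ yields
\[ W \leq \Phi(K)L \leq \Phi(G)L = \Phi(G), \]
which is what we need.

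I do not foresee any real obstacle: once the lift is set up correctly, the verification is essentially a bookkeeping exercise, with the only non-tautological ingredient being the inequality $\Phi(K) \leq \Phi(G)$ above.
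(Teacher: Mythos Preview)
Your proposal is correct and follows exactly the same approach as the paper: lift the $c$-chain from $K/L$ to $G$ via preimages and verify the clauses of Definition~\ref{cchaindef}. The paper's own proof is a single sentence (``It is clear that $W_1 > W_2 > \dots$ is a $c$-chain of width $w$ for $G$''), so your expanded verification, including the $\Phi(K) \leq \Phi(G)$ step to handle the containment $W \leq \Phi(G)$, simply fills in the details the paper leaves implicit.
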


\begin{proof}Let $W_1/L > W_2/L > \dots$ be a $c$-chain of width $w$ for $K/L$.  It is clear that $W_1 > W_2 > \dots$ is a $c$-chain of width $w$ for $G$.\end{proof}

We can `pull up' $c$-chains using centralisers, possibly at the cost of increasing the width.

\begin{prop}Let $G$ be a finitely generated pro-$p$ group, let $W_1 > W_2 > \dots$ be a $c$-chain of width $w$ for $G$, and let $W = \bigcap W_i$.
\vspace{-12pt}
\begin{enumerate}[(i)]  \itemsep0pt
\item Let $K$ be a characteristic subgroup of $G$ that is not contained in $W$.  Then there is a proper subgroup $L$ of $K$ such that $L$ is characteristic in $G$ and $|K:L| \leq p^{w^2}$.
\item We have $c(G) \leq w^2$.\end{enumerate}\end{prop}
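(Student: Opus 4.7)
The plan is to prove (i) by case analysis on whether $K$ lies in $W_1$, and then deduce (ii) from (i) by exploiting the no-intermediate hypothesis.

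For (i), if $K \leq W_1$, then since $K \not\leq W = \bigcap_i W_i$ there exists a largest integer $j \geq 1$ with $K \leq W_j$; setting $L := K \cap W_{j+1}$ yields a characteristic subgroup of $G$ properly contained in $K$, with $K/L \hookrightarrow W_j/W_{j+1}$, so that $|K:L| \leq p^w \leq p^{w^2}$. If $K \not\leq W_1$, I would invoke the trivial-center condition $Z(W_1/W)=1$: the conjugation action of $K$ on the top section $W_1/W_2$ (a $p$-group of order at most $p^w$) factors through a homomorphism from $K$ into $\Aut(W_1/W_2)$, and since $K$ is pro-$p$ the image lies in a Sylow $p$-subgroup of $\Aut(W_1/W_2)$, of order at most $p^{w^2}$ by the standard bound on $|\Aut(P)|_p$ for $|P|=p^n$ with $n \leq w$. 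Setting $L := C_K(W_1/W_2)$ gives a characteristic subgroup of $G$ with $|K:L| \leq p^{w^2}$; properness of $L$ in $K$ is ensured by Theorem~\ref{cinvthm}(i) combined with $Z(W_1/W)=1$, after descending if necessary to the first layer $W_i/W_{i+1}$ on which $K$ acts nontrivially.

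For (ii), let $H \supseteq K \supseteq \Phi(G)$ be characteristic in $G$ with no characteristic subgroup of $G$ strictly between. Since $\Phi(G) \supseteq W$ and, excluding the trivial case $H = K$, $H \supsetneq W$, we have $H \not\leq W$; apply (i) to $H$ to produce a characteristic $L \lneq H$ with $|H:L| \leq p^{w^2}$. The subgroup $LK$ is characteristic with $K \leq LK \leq H$, so by the no-intermediate hypothesis either $LK = K$, in which case $L \leq K$ and $|H:K| \leq |H:L| \leq p^{w^2}$, or $LK = H$, in which case $|H:K| = |L : L \cap K|$ and one iterates by applying (i) to $L$. The iteration produces a strictly descending chain of characteristic subgroups of $G$; in a finite characteristic quotient $G/N$ with $N \leq K$, the chain terminates in finitely many steps, producing an iterate contained in $K$ and giving the bound $|H/N : K/N| \leq p^{w^2}$. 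Passing to the supremum over such $N$ transfers the bound to $G$.

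The main obstacle is the second case of (i): one must leverage the trivial-center condition $Z(W_1/W)=1$ together with the standard estimate $|\Aut(P)|_p \leq p^{n^2}$ for a $p$-group $P$ of order $p^n$, and ensure via Theorem~\ref{cinvthm}(i) that the centralizer $L = C_K(W_i/W_{i+1})$ is a proper subgroup of $K$. The edge case in which $K$ centralizes every layer of the $c$-chain (forcing $K \cap W_1 \leq W$) requires a separate argument to produce a characteristic subgroup of index at most $p^{w^2}$ in $K$.
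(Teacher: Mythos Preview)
Your Case~1 for (i) ($K \leq W_1$) is fine and even gives the sharper bound $p^w$. The real gap is in Case~2. Setting $L = C_K(W_i/W_{i+1})$ for a single layer only works if $K$ acts nontrivially on some individual quotient $W_i/W_{i+1}$, and your handling of the residual situation is incorrect: centralising each factor $W_i/W_{i+1}$ separately does \emph{not} force $K \cap W_1 \leq W$, nor does it force $K$ to centralise $W_1/W$. An element of $G$ can centralise every successive quotient of a filtration while acting nontrivially on the whole; Theorem~\ref{cinvthm}(i) only tells you the resulting automorphism group is pro-$p$, which is no information here since everything in sight is already pro-$p$. So the ``edge case'' you flag is not an edge case at all --- it is the heart of the matter, and you have not supplied the separate argument it needs.

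The paper's device is to use the \emph{thick} centralisers $K_{i,j}=C_G(W_i/W_{i+j})$ rather than single-layer ones. One picks $j$ minimal such that $K\not\le K_{i,j}$ for some $i$, and sets $L=K\cap K_{i,j}$. Minimality forces $K$ to centralise both $W_i/W_{i+j-1}$ and $W_{i+1}/W_{i+j}$, so the residual action of $K$ on $W_i/W_{i+j}$ is pinned down by where at most $w$ generators of $W_i/W_{i+1}$ land inside their cosets of $W_{i+j-1}/W_{i+j}$, each such coset of size at most $p^w$. That yields $|K:L|\le p^{w^2}$. Note this is \emph{not} a bound via $|\Aut(W_i/W_{i+j})|_p$, which grows without bound in $j$; the minimality of $j$ is doing genuine work.

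For (ii), your iteration is the right idea and matches the paper in spirit, but ``pass to a finite quotient $G/N$'' does not establish that the chain $H=L_0>L_1>\cdots$ built in $G$ eventually drops into $K$: the images $L_nN/N$ can stabilise while every $L_n$ still satisfies $L_nK=H$. The paper runs the descent transfinitely (taking intersections at limit ordinals) all the way down to $W$, then reads off a finite $\Aut(G)$-invariant series for $G/\Phi(G)$ and invokes Jordan--H\"older.
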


\begin{proof}(i) Consider subgroups $K_{i,j}$ of $G$, defined by $K_{i,j} = C_G(W_i/W_{i+j})$.  Note that $K_{i,j}$ is an open characteristic subgroup of $G$ for every $i$ and $j$.  Condition (ii) of Definition \ref{cchaindef} ensures that $\bigcap_{i,j \in \bN} K_{i,j} = W$.  Suppose that $j$ is minimal such that $K \not\le K_{i,j}$ for some $i$, with the minimum taken over all possible $i$, and let $L$ be the characteristic subgroup $K \cap K_{i,j}$.  It follows from our choice of $j$ that $K$ centralises both $W_{i+1}/W_{i+j}$ and $W_i/W_{i+j-1}$.  Hence the action of $K$ on $W_i/W_{i+j}$ is determined entirely by considering the images under elements of $K$ of a set $X$ of elements of $W_i$, whose images modulo $W_{i+1}$ generate $W_i/W_{i+1}$.  We may assume $|X| \leq w$, and given $x \in X$, all images of $x$ under the action of $K$ must lie inside $xW_{i+j-1}$, which leaves at most $p^w$ possibilities modulo $W_{i+j}$.  It follows that $|K:L|$ is at most $p^{w^2}$.
 
(ii) Using part (i) repeatedly, and setting $G_\lambda = \bigcap_{\alpha < \lambda} G_\alpha$ for limit ordinals $\lambda$, there is a transfinite descending chain $G = G_1 > G_2 > G_3 > \dots$ such that $|G_\beta : G_{\beta+1}| \leq p^{w^2}$ for all ordinals $\beta$, with the chain eventually terminating at $G_\alpha = W$ for some ordinal $\alpha \leq \omega_1$, where $\omega_1$ is the least uncountable ordinal.  The subgroups $G_\beta \Phi(G)$ for $\beta \leq \alpha$ give a series for $G/\Phi(G)$ as an $\Aut(G)$-module in which each factor has rank at most $w^2$; this can be made into a finite series by removing redundant terms, since $G/\Phi(G)$ is finite.  The result now follows by the Jordan-H\"{o}lder theorem.\end{proof}

\begin{cor}\label{heredch}Let $G$ be a finitely generated pro-$p$ group.  Suppose $G$ has a descending chain of characteristic subgroups $W_1 > W_2 > \dots$, satisfying the following conditions:
\vspace{-12pt}
\begin{enumerate}[(i)]  \itemsep0pt
\item $\bigcap W_i = 1$;
\item $|W_i:W_{i+1}| \leq p^w$ for all $i \geq 1$;
\item $Z(W_i)=1$ for all $i$;
\item for every open subgroup $U$ of $G$, all but finitely many $W_i$ are characteristic in $U$.\end{enumerate}
Then $c^\le(G) \leq w^2$.\end{cor}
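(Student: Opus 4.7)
The plan is to show that, for each open subgroup $U$ of $G$, the tail of the given chain is a $c$-chain of width $w$ for $U$ in the sense of Definition \ref{cchaindef}, and then invoke part (ii) of the previous Proposition to conclude $c(U) \leq w^2$. Since $U$ is arbitrary, this yields $c^\le(G) \leq w^2$.

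First I would fix an open subgroup $U$ of $G$ and locate an integer $n$ such that the subgroups $W_n, W_{n+1}, \dots$ are simultaneously contained in $U$ and characteristic in $U$. Containment for large $n$ is immediate from Lemma \ref{obchain}(ii) applied with $O = U$: since $\bigcap W_i = 1 \subseteq U$, we cannot have $W_i \not\subseteq U$ for all $i$, so eventually $W_i \leq U$. The characteristic condition for large $n$ is precisely hypothesis (iv). Choosing $n$ large enough to satisfy both simultaneously, set $V_i := W_{n+i-1}$ for $i \geq 1$.

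Next I verify that $(V_i)_{i \geq 1}$ is a $c$-chain of width $w$ for $U$. By construction each $V_i$ is characteristic in $U$ and $V_i > V_{i+1}$. Condition (ii) of Definition \ref{cchaindef} follows from $|V_i : V_{i+1}| = |W_{n+i-1}:W_{n+i}| \leq p^w$, using hypothesis (ii) of the corollary. The intersection $V := \bigcap_i V_i$ equals $\bigcap_i W_i = 1$ by (i), so trivially $V \leq \Phi(U)$. Finally, condition (ii) of Definition \ref{cchaindef} demands $Z(V_1/V) = 1$; here $V_1/V = W_n$, whose center is trivial by hypothesis (iii).

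Having exhibited a $c$-chain of width $w$ for $U$, the preceding Proposition (part (ii)) gives $c(U) \leq w^2$. As $U$ ranges over all open subgroups of $G$, this supplies the required bound $c^\le(G) \leq w^2$. The argument is essentially bookkeeping once the two previous results are in hand; the only mildly subtle point is the need to pass to a tail of the chain to simultaneously guarantee containment in $U$ (from Lemma \ref{obchain}(ii)) and characteristicness in $U$ (from hypothesis (iv)), but neither of these is a real obstacle.
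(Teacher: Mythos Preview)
Your proof is correct and follows essentially the same approach as the paper: pass to a tail of the chain that is characteristic in $U$, verify it is a $c$-chain of width $w$ for $U$, and apply part (ii) of the preceding proposition. You are slightly more explicit than the paper in separating out the containment $W_i \leq U$ (via Lemma~\ref{obchain}) from the characteristicness hypothesis (iv), but this is harmless and arguably clearer.
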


\begin{proof}Let $U$ be an open subgroup of $G$, and suppose that $W_i$ is characteristic in $U$ for all $i \geq j$.  Then $W_j > W_{j+1} > \dots$ is a $c$-chain for $U$ of width $w$, so $c(U) \leq w^2$ by part (ii) of the proposition.\end{proof}

By Lemma \ref{obchain}, condition (i) of Corollary \ref{heredch} is enough to ensure that every open subgroup contains all but finitely many $W_i$.  Hence to obtain a bound for $c^\le(G)$ it suffices to find an integer $w$ for which there is a descending chain of hereditarily characteristic subgroups $W_i$ of $G$ such that $|W_i:W_{i+1}| \leq p^w$ for all $i$, and such that $\bigcap W_i = 1$.

\chapter{Just infinite groups}

\section{Introduction}\label{jiintro}

In this chapter, we will be concerned with profinite groups for the most part, but some of the results apply equally to other topological groups that may be regarded as just infinite, including discrete groups.  We thus define the just infinite property in a more general context.

\begin{defn}Say $G$ is \emph{just infinite} if it is infinite and residually finite, and every non-trivial normal subgroup of $G$ is of finite index.  Say $G$ is \emph{hereditarily just infinite} if every finite index subgroup of $G$ is just infinite, including $G$ itself.\end{defn}

We recall J.S. Wilson's theory of structure lattices, of which an excellent account is given in \cite{WilNH}; this theory applies to all residually finite just infinite groups that are not virtually abelian.  Given two subnormal subgroups $H$ and $K$ of a just infinite group $G$, say $H$ and $K$ are equivalent if $H \cap K$ has finite index in both $H$ and $K$.  Let $\mcL$ be the set of equivalence classes of subnormal subgroups of $G$.  Following Wilson, we distinguish between the following three \emph{structure types}:

Say $G$ is of structure type $\ra$ if it is virtually abelian.
 
Say $G$ is of structure type $\rh$ if $\mcL$ is finite, but $G$ is not virtually abelian.  It is shown in \cite{WilNH} that this is the case if and only if there exists $N \unlhd_f G$ such that $N$ is the direct product of finitely many conjugates of a hereditarily just infinite profinite group $L$ that is not virtually abelian.  If in fact $L=N=G$, say $G$ is of type $\rhp$.

Say $G$ is of type $\ri$ if $\mcL$ is infinite and $G$ is not virtually abelian.  It was proved by Grigorchuk that all discrete and profinite just infinite groups of type $\ri$ are branch groups, in the sense described below.  See \cite{GriNH} for a more detailed account, and for constructions of such groups (including the group now generally known as the profinite Grigorchuk group).

\begin{defn}A \emph{rooted tree} $T$ is a tree with a distinguished vertex, labelled $\emptyset$.  We require each vertex to have finite degree, though the tree itself will be infinite in general.  The \emph{norm} $|u|$ of a vertex $u$ is the distance from $\emptyset$ to $u$; the \emph{$n$-th layer} is the set of vertices of norm $n$.  Denote by $T_{[n]}$ the subtree of $T$ induced by the vertices of norm at most $n$; by our assumptions, $T_{[n]}$ is finite for every $n$.  Write $\Aut(T)$ for the (abstract) group of graph automorphisms of $T$ that fix $\emptyset$.  Then $\Aut(T)$ also preserves the norm, and so there are natural homomorphisms from $\Aut(T)$ to $\Aut(T_{[n]})$, with kernel denoted $\St_{\Aut(T)}(n)$, the \emph{$n$-th level stabiliser}.  Declare the level stabilisers to be open; this generates a topology on $\Aut(T)$, turning $\Aut(T)$ into a profinite group.\end{defn}

\begin{defn}Let $G$ be a closed or abstract subgroup of $\Aut(T)$.  Then $G$ is said to act \emph{spherically transitively} if it acts transitively on each layer.  Given a vertex $v$, write $T_v$ for the rooted tree with root $v$ induced by the vertices descending from $v$ in $T$.  Define $U^G_v$ to be the group of automorphisms of $T_v$ induced by the stabiliser of $v$ in $G$, and define $L^G_v$ to be the subgroup of $G$ that fixes $v$ and every vertex of $T$ outside $T_v$.  Note that if $G$ acts spherically transitively, the isomorphism types of $U^G_v$ and $L^G_v$ depend only on the norm of $v$; also, there are natural embeddings
\[ L^G_{v_1} \times \dots \times L^G_{v_k} \leq \St_G(n) \leq U^G_{[n]} := U^G_{v_1} \times \dots \times U^G_{v_k},\]
where $v_1, \dots, v_k$ are all the vertices at level $n$.  Now $G$ is a \emph{branch group} if $G$ acts spherically transitively and $|U^G_{[n]} : L^G_{v_1} \times \dots \times L^G_{v_k}|$ is finite for all $n$.  Say $G$ is \emph{self-reproducing at $v$} if there is an isomorphism from $T$ to $T_v$ that induces an isomorphism from $G$ to $U^G_v$.  (The definition of \emph{self-reproducing} given in \cite{GriNH} is that this should hold at every vertex.)\end{defn}

Now let $G$ be a just infinite profinite group.  We distinguish between the following two cases:

Say $G$ is of Sylow type $\rN$ if $G$ is virtually pro-$p$ for some $p$.

Say $G$ is of Sylow type $\rX$ otherwise.

It will turn out that having Sylow type $\rN$ corresponds exactly to the property of having finitely many maximal subgroups.

In the profinite case, we combine the Sylow and structure types to divide the just infinite groups into five mutually disjoint classes:
\[ \rNa, \rNh, \rNi, \rXi, \rXh.\]
The class $\rXa$ is empty and hence omitted.  This is of necessity a crude partition, but it will suffice for the kind of general results under consideration in this chapter.

Most of the published literature to date has been on just infinite pro-$p$ groups, and hence concerns only Sylow type $\rN$.

Of our five classes, the best understood is $\rNa$.  The pro-$p$ groups in this class are known as the irreducible $p$-adic space groups, and an extensive theory of these was developed in the study of pro-$p$ groups of finite coclass, a project initiated by Leedham-Green and Newman in \cite{Lee}.

Next is $\rNh$.  Note that any $\rNh$-group is virtually the direct product of finitely many copies of a hereditarily just infinite pro-$p$ group, so for this class it suffices for most purposes to consider hereditarily just infinite pro-$p$ groups.  Between them, classes $\rNa$ and $\rNh$ include all just infinite virtually pro-$p$ groups of finite rank.  There is a well-developed theory of (virtually) pro-$p$ groups of finite rank, which are also known as compact $p$-adic analytic groups: see \cite{DDMS} for a detailed account, and \cite{Kla} for the beginnings of a classification of the just infinite pro-$p$ groups of this type.  There are also well-studied examples of groups in $\rNh$ of infinite rank, most notably the Nottingham group and some of its generalisations, but as a whole the class of infinite-rank $\rNh$-groups is not all that well understood at present.

The classes $\rNi$ and $\rXi$ can be studied together using general methods for branch groups, such as those pioneered by Grigorchuk.  Nevertheless, even just infinite branch pro-$p$ groups are already considerably more wild in general than the classes $\rNa$ and $\rNh$.

Finally, the class $\rXh$ seems deeply mysterious at present, and until recently it was not known whether or not this class is empty; this question has been resolved by some recent constructions by J.S.~Wilson (unpublished at the time of writing) of hereditarily just infinite profinite groups that are not virtually pronilpotent.  As far as the author is aware, the most important theorems to date concerning this class are the general results of \cite{WilNH}.  For this chapter, results that apply to this class are therefore of particular interest.

\section{Preliminaries}

In this chapter, we will make significant use of the definitions and results of Section \ref{sgplat}.  Here are some further basic results that will be used later in the chapter.

\begin{thm}[Schur \cite{Sch}]\label{cenbyfin} Let $G$ be a group in which $Z(G)$ has finite index.  Then $G'$ is finite.\end{thm}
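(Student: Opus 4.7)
The plan is to deduce Schur's theorem from Dicman's Lemma by exhibiting $G'$ as the normal closure of a finite set of finite-order elements of the virtual centre. Write $Z = Z(G)$ and $n = [G:Z]$.

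First I would observe that for any $g,h \in G$ and any $z_1,z_2 \in Z$ we have $[gz_1,hz_2] = [g,h]$, so the set $K$ of commutators of $G$ is parametrised by pairs of cosets of $Z$, and hence $|K| \leq n^2$. Since conjugation in $G$ sends commutators to commutators, $K$ is $G$-invariant; in particular each $c \in K$ has at most $|K|$ conjugates in $G$, so its centraliser has finite index in $G$, placing $K \subseteq VZ(G)$.

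Next I would bound the order of each commutator using the transfer map. The transfer $V \colon G \to Z$ is a well-defined homomorphism because $Z$ is abelian and of finite index. Choosing a transversal $\{t_1,\dots,t_n\}$ for $Z$ in $G$ and unpacking the transfer formula on each cycle of the right-multiplication action of $g$ on the cosets, the contribution of a cycle of length $r$ through $t$ is $tg^r t^{-1}$; since $Z$ is central this equals $g^r$, and summing over cycles gives $V(g) = g^n$ for every $g \in G$. As $V$ is a homomorphism into an abelian group, $G' \leq \ker V$, which forces $c^n = V(c) = 1$ for every $c \in G'$, and in particular for every $c \in K$.

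Finally, $K$ is a finite subset of $VZ(G)$, each element of which has finite order dividing $n$. Dicman's Lemma then yields that $\langle K \rangle^G$ is finite. But $\langle K \rangle = G'$ is already normal in $G$, so $\langle K \rangle^G = G'$, and we conclude that $G'$ is finite.

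The main obstacle I would anticipate is the order-bounding step: the finiteness of $n$ alone does not make it apparent that commutators have finite order, and the cleanest route is through the transfer, which yields the sharp exponent bound $n$ with essentially no computation. Once each element of $K$ is known to have finite order, the finiteness of $|K|$ together with $K \subseteq VZ(G)$ makes Dicman's Lemma applicable without further effort.
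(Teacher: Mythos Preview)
Your proof is correct. The paper does not supply its own proof of this result --- it simply cites Schur --- so there is no in-paper argument to compare against.

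Each step checks out: the commutator set $K$ has at most $n^2$ elements since $[gz_1,hz_2]=[g,h]$; every element of $K$ lies in $VZ(G)$ because $Z \leq C_G(c)$ for any $c$; the transfer computation is right (the point being that once $tg^r t^{-1}$ is known to lie in the central subgroup $Z$, conjugating it by $t^{-1}$ shows $tg^r t^{-1}=g^r$), so $V(g)=g^n$ and every commutator has order dividing $n$; and Dicman's Lemma then gives $|G'| = |\langle K\rangle^G| < \infty$.

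One caution: many standard proofs of Dicman's Lemma \emph{use} Schur's theorem (via: a finitely generated FC-group has centre of finite index, apply Schur to get $H'$ finite, then finish). So deducing Schur from Dicman requires that you are relying on an independent proof of Dicman --- e.g.\ Dietzmann's original combinatorial argument, which avoids Schur. Within this paper both results are imported as black boxes and Dicman's Lemma appears earlier (Section~2.3), so the dependency is formally fine here; but as a free-standing argument you should be aware of the potential circularity.
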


\begin{cor}\label{abejicor}Let $G$ be a just infinite group, and let $H$ be a normal subgroup.  Suppose $C_G(H) > 1$.  Then $C_G(H)$ is an abelian normal subgroup of $G$ of finite index.  In particular, the virtual centre of a just infinite group $G$ is non-trivial if and only if $G$ is virtually abelian.\end{cor}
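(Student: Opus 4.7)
The plan is to extract from the corollary a single key claim and then bootstrap from it. Specifically, I would first prove the following: if $N \unlhd G$ satisfies $N > 1$ and $C_G(N) > 1$, then $N$ is abelian. This is the technical heart; once in hand, both assertions of the corollary drop out cleanly.

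To prove the claim, I would use the just-infinite hypothesis twice. Both $N$ and $C_G(N)$ are nontrivial normal subgroups of $G$, so by just infiniteness each has finite index in $G$. Their intersection is precisely $Z(N) = N \cap C_G(N)$, which therefore has finite index in $G$ and, a fortiori, in $N$. Schur's theorem (Theorem~\ref{cenbyfin}) then forces $N'$ to be finite. But $N'$ is normal in $G$, and any nontrivial normal subgroup of the infinite group $G$ must have finite index, hence be infinite. So $N' = 1$ and $N$ is abelian.

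With the claim established, the main statement follows by taking $N := C_G(H)$: this is a nontrivial normal subgroup of $G$ (hence of finite index), and $C_G(N) = C_G(C_G(H)) \supseteq H$. Provided $H \ne 1$ (the only interesting case, since $H=1$ forces $C_G(H)=G$, which is tacitly assumed to be the degenerate situation), we have $C_G(N) > 1$, so the claim applied to $N$ yields that $C_G(H)$ is abelian. Finite index was already noted.

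For the ``in particular'' clause, the easy direction is: if $G$ is virtually abelian, the normal core of an abelian finite-index subgroup is an abelian normal subgroup $A$ of finite index; since $G$ is infinite, $A > 1$, and $A \leq VZ(G)$. Conversely, given $1 \ne g \in VZ(G)$, the centraliser $C_G(g)$ has finite index, so it contains some finite-index normal subgroup $M$ of $G$ (say the normal core of $C_G(g)$); then $M > 1$ since $G$ is infinite, and $g \in C_G(M)$, so $C_G(M) > 1$. Applying the claim to $M$ shows $M$ is abelian, so $G$ is virtually abelian. The only step requiring any thought is the claim itself, and there the main obstacle is to spot that just infiniteness applied to \emph{both} $N$ and $C_G(N)$ simultaneously forces $Z(N)$ to be of finite index, which is precisely the hypothesis Schur's theorem needs.
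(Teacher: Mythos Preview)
Your proof is correct and follows essentially the same route as the paper: both arguments hinge on showing that the relevant normal subgroup is centre-by-finite, invoking Schur's theorem to make its derived subgroup finite, and then using just infiniteness (no nontrivial finite normal subgroups) to force that derived subgroup to be trivial. The paper works directly with $C_G(H)$, observing that $H \cap C_G(H) \leq Z(C_G(H))$ has finite index in $C_G(H)$; you instead abstract this into a symmetric claim about any nontrivial normal $N$ with $C_G(N)>1$ and then specialise to $N=C_G(H)$ via the double centraliser inclusion $H \leq C_G(C_G(H))$ --- but the underlying computation is the same. Your treatment of the $H=1$ edge case and of the ``in particular'' clause is more explicit than the paper's, which leaves both tacit.
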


\begin{proof}It is clear that $C_G(H)$ is normal; it therefore has finite index in $G$.  Now $H \cap C_G(H)$ has finite index in $C_G(H)$, which means that $C_G(H)$ is centre-by-finite; by Theorem \ref{cenbyfin}, $C_G(H)$ is therefore finite-by-abelian.  This ensures $(C_G(H))'$ is a finite, and hence trivial, normal subgroup of $G$, so  $C_G(H)$ is abelian\end{proof}

\begin{lem}\label{nothji}Let $G$ be a residually finite group with $\Fin(G)=1$, and let $H$ be a finite index subgroup of $G$.  Then $\Fin(H)=1$.  If $H$ is just infinite then every subgroup of $G$ containing $H$ is just infinite, and if $H$ is hereditarily just infinite then $G$ is hereditarily just infinite.\end{lem}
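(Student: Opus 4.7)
The first claim is instant from Corollary \ref{diccor}: since $H \leq_f G$, we have $\Fin(H) \subseteq \Fin(G) = 1$, and the same applies verbatim to any finite-index subgroup of $G$.

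For the second claim, let $K$ satisfy $H \leq K \leq G$, and let $N$ be a non-trivial normal subgroup of $K$. Note $K$ is residually finite and infinite (since $H$ is). The plan is to show $|K:N|$ is finite by splitting into two cases according to the intersection of $N$ with a well-chosen normal subgroup of $K$ that sits inside $H$. Let $C = \Core_K(H)$, so $C \unlhd K$, $C \leq H$, and $|K:C| \leq |G:H|! < \infty$. Then either $N \cap C \neq 1$, in which case $N \cap C$ is a non-trivial normal subgroup of $K$ contained in $H$, so in particular a non-trivial normal subgroup of $H$; the just infiniteness of $H$ then forces $|H:N\cap C|$ and hence $|K:N|$ to be finite. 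Or $N \cap C = 1$, in which case $N$ embeds in the finite group $K/C$ and is therefore a finite normal subgroup of $K$. But $\Fin(K) \subseteq \Fin(G) = 1$ by the first claim applied to $K \leq_f G$, so $N = 1$, contradicting our assumption. Hence $|K:N| < \infty$ and $K$ is just infinite.

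For the third claim, suppose $H$ is hereditarily just infinite and let $K$ be any finite-index subgroup of $G$. Then $K \cap H$ has finite index in $G$, hence in both $K$ and $H$; by hereditary just infiniteness of $H$, the subgroup $K \cap H$ is itself just infinite. Since $\Fin(K) \subseteq \Fin(G) = 1$ and $K$ is a residually finite group containing the just infinite finite-index subgroup $K \cap H$, the argument of the previous paragraph (with $K \cap H$ in place of $H$ and $K$ in place of the overgroup) applies and yields that $K$ is just infinite. As this holds for every $K \leq_f G$, the group $G$ is hereditarily just infinite.

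The only mildly subtle point is that in the second claim, $N$ need not intersect $H$ non-trivially, and $H$ need not be normal in $K$; the main obstacle is therefore recognising that passing to the core $C = \Core_K(H)$ repairs both deficiencies simultaneously, producing either a non-trivial $H$-normal subgroup of $N$ or a finite $N$ that lies in $\Fin(K)$. Everything else is bookkeeping about finite indices and restrictions of $\Fin$.
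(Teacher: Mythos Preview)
Your proof is correct. The paper argues by contrapositive: assuming some $L \leq_f G$ (in particular some $L \supseteq H$) has a non-trivial normal subgroup $K$ of infinite index, it observes $K$ is infinite since $\Fin(L)=1$, and then $K \cap H$ is a non-trivial normal subgroup of $L \cap H$ of infinite index, so $L \cap H$ (and hence $H$) fails the relevant just-infinite condition. Your direct case-split reaches the same conclusion via the same underlying mechanism, namely that $\Fin=1$ on the overgroup rules out finite non-trivial normal subgroups and so forces non-trivial intersection with the finite-index subgroup.

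One small remark: passing to $C=\Core_K(H)$ is not actually needed in the second claim. Since $H \leq K$ and $N \unlhd K$, the intersection $N \cap H$ is already normal in $H$; and if $N \cap H = 1$ then $|N| = |NH:H| \leq |K:H| < \infty$, so $N \leq \Fin(K)=1$. Your core argument is perfectly valid, just slightly more than required; the concern you flag about $H$ not being normal in $K$ does not in fact obstruct the simpler route.
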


\begin{proof}By Corollary \ref{diccor}, $\Fin(H) \leq \Fin(G) = 1$.  We may now assume that $G$ is not hereditarily just infinite.  Then there is a subgroup $L$ of $G$ of finite index, with a non-trivial normal subgroup $K$ of infinite index; note that $K$ is necessarily infinite as $\Fin(L)=1$.  This gives a non-trivial normal subgroup $K \cap H$ of $L \cap H$ of infinite index; in other words, $L \cap H$ is not just infinite.  As $L \cap H$ has finite index in $H$, this means that $H$ is not hereditarily just infinite.  If $M$ is any subgroup of $G$ containing $H$ that is not just infinite, we can take $L=M$, so that $H=L \cap H$; this means $H$ is not just infinite.\end{proof}

Recall (Section \ref{sgplat}) that we define $\Phi^\lhd(G)$ for any profinite group $G$ to be the intersection of all maximal normal subgroups of $G$, and say $G \in \nsgp^*_\Phi$ if $H/\Phi^\lhd(H)$ is finite for every $H \leq_o G$.

\begin{lem}\label{jiphilhd}Let $G$ be a just infinite profinite group.  Then $G \in \nsgp^*_\Phi$.\end{lem}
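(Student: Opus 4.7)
The plan is to reduce, via Proposition \ref{philhdprop}(ii), to showing $|L : \Phi^\lhd(L)|$ is finite for every open normal subgroup $L$ of $G$. Fix such $L$. Since $\Phi^\lhd(L)$ is characteristic in $L$, it is normal in $G$; by just-infiniteness it is either trivial or of finite index in $G$, and in the latter case we are done. The whole task is to rule out $\Phi^\lhd(L) = 1$, given that $L$ is open in the infinite group $G$ and hence infinite.

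Assuming $\Phi^\lhd(L) = 1$, Proposition \ref{philhdprop}(iii) decomposes $L = A \times B$, where $A$ is an abelian Cartesian product of cyclic prime-order groups and $B$ is a Cartesian product of non-abelian finite simple groups. Both $A$ and $B$ are characteristic in $L$, hence $G$-normal, hence trivial or open in $G$ by just-infiniteness; both being open would force $L$ finite, so exactly one of them is open and the other is trivial. A key preliminary observation is that $\Fin(G) = 1$: a non-trivial finite normal subgroup of a just-infinite group would be simultaneously finite and (by just-infiniteness) open, impossible in the infinite group $G$. The strategy is then to exhibit a non-trivial finite $G$-normal subgroup of $L$ in each remaining sub-case.

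In the sub-case $L = A$, the Sylow decomposition $A = \prod_p A_p$ consists of characteristic (hence $G$-normal) factors, each either trivial or open (hence infinite); at most one can be open, since otherwise the intersection of two would be $1$ yet of finite index, forcing $G$ finite. Thus $A = A_p$ is elementary abelian for a single prime $p$. For any non-trivial $a \in A$ the centralizer $C_G(a)$ contains the open abelian subgroup $A$, so the $G$-conjugacy class of $a$ is finite, and the subgroup it generates is finite (as a finitely generated subgroup of an elementary abelian $p$-group) and $G$-normal, contradicting $\Fin(G) = 1$. In the sub-case $L = B = \prod_{i \in I} S_i$ (with $I$ necessarily infinite since each $S_i$ is finite), the $G$-action on $I$ induced by conjugation of simple factors has kernel containing $B$ itself, because distinct simple factors commute; so by just-infiniteness this kernel has finite index in $G$, the action of $G$ on $I$ factors through a finite quotient, and every $G$-orbit $J \subseteq I$ is finite. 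Then $\prod_{i \in J} S_i$ is a non-trivial finite $G$-normal subgroup of $L$, again contradicting $\Fin(G) = 1$.

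The main obstacle is the $\Phi^\lhd(L) = 1$ analysis: one must unpack the $A \times B$ decomposition from Proposition \ref{philhdprop}(iii) and, in each of the two sub-cases, carefully extract a finite $G$-normal subgroup---arising from a short $G$-conjugacy class on the abelian side, and from a finite factor-orbit on the semisimple side---in order to contradict $\Fin(G) = 1$.
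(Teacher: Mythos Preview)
Your argument is correct, and its overall shape matches the paper's: reduce via Proposition~\ref{philhdprop}(ii) to an open normal subgroup $L$, note that $\Phi^\lhd(L)$ is characteristic in $L$ and hence normal in $G$, and rule out $\Phi^\lhd(L)=1$. The difference is only in how the last step is carried out. The paper dispatches it in one line: since $L$ has finite index in $G$ and $\Fin(G)=1$, Corollary~\ref{diccor} gives $\Fin(L)=1$; but Proposition~\ref{philhdprop}(iii) already includes the clause that if $\Phi^\lhd(L)=1$ then $L=\overline{\Fin(L)}$, forcing $L=1$, a contradiction. You instead unpack the Cartesian decomposition $L=A\times B$ and, in each sub-case, manufacture by hand a non-trivial finite \emph{$G$-normal} subgroup to contradict $\Fin(G)=1$. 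This works, but it is effectively re-deriving (a $G$-equivariant refinement of) the density statement that Proposition~\ref{philhdprop}(iii) already provides; invoking that clause directly, together with $\Fin(L)=1$, avoids the case analysis entirely.
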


\begin{proof}By Proposition \ref{philhdprop}, it suffices to consider an open normal subgroup $H$ of $G$.  This ensures $\Fin(H)=1$, so $\Phi^\lhd(H)>1$ by Proposition \ref{philhdprop}.  This means that $\Phi^\lhd(H)$ is of finite index in $H$, since it is characteristic in $H$ and hence normal in $G$.\end{proof}

\begin{rem}Zalesskii proves in \cite{Zal} that any infinite group in $\nsgp^*_\Phi$ has a just infinite image.  So the class of profinite groups that have a just infinite image is the same as the class of groups that have an infinite image in $\nsgp^*_\Phi$.\end{rem}

Here are some basic properties of groups of structure type $\ra$.

\begin{prop}Let $G$ be a just infinite group that is virtually abelian.  Then $G$ has an abelian normal subgroup $A$ of finite index, such that $A$ is self-centralising, torsion-free and finitely generated.\end{prop}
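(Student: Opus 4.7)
The plan is to extract the subgroup $A$ by passing through centralisers, and then to verify each property using that $G$ is just infinite (in particular $\Fin(G)=1$ and Corollary~\ref{abejicor}).

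First I would produce $A$. By hypothesis $G$ has an abelian subgroup of finite index; replacing it by its core, take an abelian normal subgroup $A_0 \unlhd_f G$. Since $G$ is infinite, $A_0$ is infinite and hence non-trivial, so $C_G(A_0) \supseteq A_0 > 1$. By Corollary~\ref{abejicor}, $A := C_G(A_0)$ is then an abelian normal subgroup of $G$ of finite index. To see $A$ is self-centralising, observe that $A \supseteq A_0$ forces $C_G(A) \subseteq C_G(A_0) = A$, while $A \subseteq C_G(A)$ because $A$ is abelian; hence $C_G(A) = A$.

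Next, torsion-freeness follows from $\Fin(G) = 1$, which in turn is a formal consequence of just-infinity: a non-trivial finite normal subgroup would have to have finite index in the infinite group $G$, which is impossible. Since $A$ is abelian of finite index, every $a \in A$ has centraliser containing $A$, so $A \subseteq VZ(G)$. By Corollary~\ref{diccor}, every torsion element of $VZ(G)$ lies in $\Fin(G) = 1$, and so $A$ is torsion-free.

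For finite generation I would pick any non-trivial $a \in A$ and consider the (closed) normal closure $N = \overline{\langle a \rangle^G}$. Because $A \leq C_G(a)$, the element $a$ has only finitely many $G$-conjugates, so $N$ is topologically generated by finitely many elements. On the other hand, $N$ is a non-trivial closed normal subgroup of $G$, so by just-infinity $|G:N|$ is finite, whence $|A:N|$ is finite. A closed subgroup of finite index in $A$ that is topologically finitely generated forces $A$ itself to be topologically finitely generated (adjoin a finite transversal to a topological generating set for $N$).

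None of the steps presents a real obstacle; the only subtlety is recognising that finite generation does not need to be imported from $G$ but can be obtained internally to $A$, by exploiting the interplay between just-infinity (which forces the normal closure of a single element to have finite index) and the fact that elements of $A$ have only finitely many conjugates (which controls the size of that normal closure).
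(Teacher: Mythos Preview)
Your proof is correct and follows essentially the same route as the paper: use Corollary~\ref{abejicor} to produce a self-centralising abelian normal subgroup, then use the normal closure of a single element (which has finitely many conjugates and finite index by just-infinity) to get both torsion-freeness and finite generation. The only cosmetic difference is that the paper obtains the self-centralising $A$ by choosing an abelian normal subgroup of \emph{least} index (so minimality forces $C_G(A)=A$), whereas you construct $A=C_G(A_0)$ explicitly; and the paper spells out the bound $|\langle x\rangle^G|\le |x|^{|G:A|}$ directly rather than quoting Corollary~\ref{diccor}.
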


\begin{proof}By definition, $G$ has an abelian normal subgroup of finite index; choose $A$ to be of least index.  Then $C_G(A)$ abelian by Corollary \ref{abejicor}, so $C_G(A)=A$.

Let $x \in A \setminus 1$, and let $K= \langle x \rangle^G$.  If $x$ has finite order, then $|\langle x \rangle^G|\leq |x|^{|G:A|}$, which contradicts the fact that $\Fin(G)=1$; so $x$ must have infinite order, in other words $A$ is torsion-free.  Furthermore, $K$ is clearly finitely generated and of finite index in $A$, so $A$ is finitely generated.\end{proof}

\begin{cor}Let $G$ be a just infinite group that is virtually abelian.  Then $r(G)$ is finite.\end{cor}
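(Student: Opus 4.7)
The plan is to leverage the preceding proposition, which equips $G$ with an abelian normal subgroup $A$ of finite index such that $A$ is torsion-free and finitely generated. Set $m = |G:A|$ and $n = d(A)$; the goal is to produce a bound on $d(H)$, uniform over all closed subgroups $H \leq G$, in terms of $m$ and $n$.

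The first step is to show that $d(B) \leq n$ for every closed subgroup $B \leq A$. This is a structural fact about finitely generated torsion-free abelian groups in the ambient category. In the discrete case, $A \cong \bZ^n$ and the standard structure theorem for subgroups of free abelian groups gives $d(B) \leq n$. In the profinite case, $A$ decomposes as the Cartesian product of its Sylow subgroups by Lemma \ref{fitlem}; each Sylow factor is a finitely generated torsion-free pro-$p$ abelian group, hence isomorphic to $\bZ_p^{n_p}$ with $n_p \leq n$. Any closed subgroup of $\bZ_p^{n_p}$ is again a free $\bZ_p$-module of rank at most $n_p$, and the topological generator rank of the product equals the supremum over primes, giving $d(B) \leq n$.

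The second step is a routine Schreier-type coset argument. For any closed subgroup $H \leq G$, the subgroup $H \cap A$ has index at most $m$ in $H$, so one may choose a (finite) transversal $X$ with $|X| \leq m$; then $H$ is topologically generated by $X$ together with any generating set for $H \cap A$, whence
\[ d(H) \leq d(H \cap A) + m \leq n + m. \]
Since this bound does not depend on $H$, we conclude $r(G) \leq n + m < \infty$.

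The only point requiring care is verifying Step 1 in the correct topological setting; the rest is bookkeeping with coset representatives. In particular, no appeal to Schreier's index formula for pro-$p$ groups is needed, since the abelian reduction renders the argument entirely elementary.
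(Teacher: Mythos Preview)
Your proof is correct and is precisely the natural elaboration of what the paper leaves implicit: in the paper this corollary is stated without proof, as an immediate consequence of the preceding proposition. Your two-step argument (bounding $r(A)$ via the structure of finitely generated torsion-free abelian groups in the discrete and profinite settings, then passing to arbitrary $H \leq G$ via coset representatives for $H \cap A$) is exactly how one fills in the details, and the bound $r(G) \leq d(A) + |G:A|$ you obtain is the expected one.

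One small remark: in the profinite case your decomposition of $A$ into Sylow factors is more general than needed, since a just infinite virtually abelian profinite group is necessarily virtually pro-$p$ for a single prime $p$ (this is made explicit in the subsequent Proposition~\ref{vastr}, where $A \cong \bZ_p^d$). Your argument is not wrong for allowing several primes, just slightly more elaborate than the situation demands.
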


In the profinite and discrete cases, there is a more concrete description.  The following is well-known; see \cite{McC} for a more detailed discussion of the discrete case.

\begin{prop}\label{vastr}Let $G$ be an infinite virtually abelian group that is either discrete or profinite.  Then $G$ is just infinite if and only if the following conditions are satisfied:
\vspace{-12pt}
\begin{enumerate}[(i)]  \itemsep0pt
\item there is a self-centralising normal subgroup $A$ of $G$ of finite index, such that $A \cong O^d$ for some integer $d$, where $O=\bZ$ in the discrete case and $O=\bZ_p$ for some $p$ in the profinite case;
\item $G/A$ acts on $A$ as matrices over $O$;
\item $G/A$ is irreducible as a matrix group over $F$, where $F$ is the field of fractions of $O$.\end{enumerate}\end{prop}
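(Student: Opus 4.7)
The plan is to prove both implications, drawing heavily on the previous proposition for the forward direction.

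For the forward direction, assume $G$ is just infinite and virtually abelian. The previous proposition supplies an abelian normal subgroup $A$ of finite index that is self-centralising, torsion-free, and finitely generated. In the discrete case this immediately gives $A \cong \bZ^d$. In the profinite case, $A$ is a finitely generated torsion-free abelian profinite group, hence of the form $\prod_p \bZ_p^{d_p}$. Here I would use the just infinite hypothesis: each $p$-primary component of $A$ is characteristic in $A$, hence normal in $G$, and so is either finite or of finite index in $G$; since the components multiply to give $A$, exactly one $p$ can occur, so $A \cong \bZ_p^d$. This gives (i). Property (ii) is automatic: $G$ acts on $A$ by conjugation, the action factors through $G/A$ since $A$ is abelian, and it is faithful because $A$ is self-centralising, so $G/A \hookrightarrow \Aut(A) = \GL_d(O)$. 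For (iii), suppose $V$ were a proper non-zero $G/A$-invariant $F$-subspace of $A\otimes_O F$; then $N = A \cap V$ (viewing $A$ inside $A\otimes F$) would be a non-zero proper pure $G$-invariant submodule of $A$, giving a non-trivial normal subgroup of $G$ of infinite index, contradicting just infiniteness.

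For the converse, assume (i)--(iii). Residual finiteness is automatic: profinite groups are residually finite, and in the discrete case $G$ is virtually $\bZ^d$, hence residually finite. It remains to show every non-trivial normal subgroup $N$ of $G$ has finite index. The key dichotomy is whether $N \cap A$ is trivial. If $N \cap A \neq 1$, then $N \cap A$ is a non-zero $G/A$-invariant $O$-submodule of $A$; tensoring with $F$ gives a non-zero $G/A$-invariant $F$-subspace of $A \otimes F$, which by irreducibility (iii) must equal all of $A \otimes F$. Hence $N \cap A$ has finite index in $A$, so $N$ has finite index in $G$. If $N \cap A = 1$, then for any $n \in N$ and $a \in A$, the commutator $[n,a]$ lies in $A$ (as $A \unlhd G$) and in $N$ (as $N \unlhd G$), hence in $N \cap A = 1$; so $N \leq C_G(A) = A$ by (i), forcing $N \leq N \cap A = 1$.

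The main obstacle will be the profinite part of step (i), namely pinning down that only one prime $p$ contributes to $A$. This is clean once one recalls that the $p$-primary components of an abelian profinite group are characteristic, but it does require the full strength of the just infinite hypothesis rather than merely virtual abelianness. The rest of the argument is essentially a formal unwinding: the faithful irreducible matrix action encodes exactly the absence of small invariant subobjects, and the self-centralising property of $A$ prevents finite normal subgroups from hiding outside $A$.
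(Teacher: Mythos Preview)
The paper does not actually prove this proposition: it introduces it with ``The following is well-known; see \cite{McC} for a more detailed discussion of the discrete case'' and gives no argument. Your proof is correct and is the standard one. Two small points worth tightening: for residual finiteness in the discrete case, ``virtually $\bZ^d$ hence residually finite'' is true but deserves a one-line justification (the characteristic subgroups $nA$ are normal in $G$ with trivial intersection, or appeal to the fact that a finitely generated virtually residually finite group is residually finite); and in the forward direction for (iii), you should note explicitly that $A \cap V$ is non-zero (clear denominators of a non-zero vector in $V$) and has $O$-rank strictly less than $d$, so that $A/(A\cap V)$ is genuinely infinite. Otherwise the argument is complete.
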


\section{Finite index subgroups and the just infinite property}

We now consider the problem of determining whether a finite index subgroup $H$ of a just infinite group $G$ is itself just infinite.  There are two cases to consider separately here; the case of $G$ virtually abelian, and the case of $G$ not virtually abelian.

We begin with groups that are not virtually abelian, and make use of some ideas from \cite{WilNH}.

\begin{defn}A non-trivial subgroup $B$ of a group $G$ is \emph{basal} if 
\[ B^G = B_1 \times \dots \times B_n\]
for some $n \in \bN$, where $B_1, \dots, B_n$ are the conjugates of $B$ in $G$.  If $B$ is a basal subgroup of $G$, write $\Omega_B$ for the set of conjugates of $B$ in $G$, equipped with the conjugation action of $G$.  (Unless otherwise stated, we will always define $\Omega_B$ in terms of the group $G$.)\end{defn}

\begin{lem}\label{baslem}Let $G$ be a just infinite group that is not virtually abelian.  Let $K$ be a non-trivial subgroup of $G$ such that $K \unlhd^2 G$, and let $\{K_i \mid i \in I\}$ be the set of conjugates of $K$ in $G$; given $J \subseteq I$, define $K_J := \bigcap \{K_j  \mid j \in J\}$.  Then $I$ is finite, and there is some $J \subseteq I$ such that $K_J$ is basal; moreover, $K_J$ is basal for any $J \subseteq I$ such that $K_J > 1$ and the distinct conjugates of $K_J$ have trivial intersection.\end{lem}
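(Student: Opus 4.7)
The plan has three steps: show $I$ is finite, exhibit a specific $J$ for which $K_J$ satisfies the pairwise-trivial-intersection condition of the ``moreover'' clause (so the existence claim reduces to that clause), and then prove the general ``moreover'' claim. For finiteness of $I$, pick $M$ with $K \unlhd M \unlhd G$; since $K > 1$ forces $M > 1$ and $G$ is just infinite, $M$ has finite index in $G$. Then $N_G(K) \supseteq M$ has finite index, so $|I| = |G : N_G(K)|$ is finite.

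For the existence of $J$, I would take $J \subseteq I$ maximal subject to $K_J > 1$; such $J$ exists because $I$ is finite and $K_{\{i\}} = K_i > 1$. Maximality gives $K_J \cap K_i = K_{J \cup \{i\}} = 1$ for every $i \notin J$. Conjugation by $g \in G$ permutes $\{K_i : i \in I\}$ via a permutation $\sigma_g$ of $I$, so $g K_J g^{-1} = K_{\sigma_g(J)}$, and the distinct conjugates of $K_J$ are $K_{J_1}, \ldots, K_{J_n}$ with $J_1, \ldots, J_n$ the $G$-orbit of $J$. For $i \neq k$, since $|J_i| = |J_k|$ and $J_i \neq J_k$, some $\ell$ lies in $J_k \setminus J_i$; picking $g$ with $J_i = \sigma_g(J)$, the element $\ell' = \sigma_g^{-1}(\ell)$ lies outside $J$, so $K_{J \cup \{\ell'\}} = 1$, and conjugating by $g$ gives $K_{J_i \cup \{\ell\}} = 1$. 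Since $\ell \in J_k$ we have $K_{J_k} \leq K_\ell$, so $K_{J_i} \cap K_{J_k} \leq K_{J_i} \cap K_\ell = K_{J_i \cup \{\ell\}} = 1$. Thus this $J$ satisfies the hypothesis.

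It remains to prove that whenever $K_J > 1$ and its distinct conjugates $H_1, \ldots, H_n$ pairwise intersect trivially, $K_J$ is basal. Since $M \unlhd G$, every conjugate of $K$ lies in $M$ and is normal there, and the same holds for every $K_{J_i}$. The standard commutator bound for normal subgroups of a common group gives $[H_i, H_k] \leq H_i \cap H_k = 1$ for $i \neq k$, so the conjugates pairwise commute and $H^G = H_1 H_2 \cdots H_n$. To upgrade this product to a direct product, set $B_i = \prod_{k \neq i} H_k$ and $D_i = H_i \cap B_i$; the direct product decomposition is equivalent to $D_i = 1$ for each $i$. For any $d \in D_i$, the element $d$ commutes with $H_i$ (since $d \in B_i$ and each $H_k$, $k \neq i$, commutes with $H_i$) and with every $H_j$, $j \neq i$ (since $d \in H_i$), so $D_i \leq C_G(H^G)$. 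But $H^G \unlhd G$ is non-trivial, while $G$ is not virtually abelian, so Corollary \ref{abejicor} forces $C_G(H^G) = 1$. Hence $D_i = 1$ for all $i$ and $H^G = H_1 \times \cdots \times H_n$, i.e., $K_J$ is basal. The main conceptual step is this last centralizer argument: pairwise commutativity with pairwise trivial intersection does not force a direct product in general (witness $C_2 \times C_2$ with its three order-$2$ subgroups), and the decisive input is that a non-trivial normal subgroup of a non-virtually-abelian just infinite group has trivial centralizer.
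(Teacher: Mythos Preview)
Your proof is correct and follows essentially the same route as the paper: the same choice of a maximal $J$ with $K_J>1$, the same observation that all conjugates of $K_J$ are normal in a common subgroup (hence pairwise commute once their intersections are trivial), and the same appeal to the absence of non-trivial abelian normal subgroups to force the direct product. The only cosmetic difference is that the paper phrases the last step as $Z(B^G)=1$ and decomposes $B^G = B \times C_{B^G}(B)$, whereas you go straight to $D_i \leq C_G((K_J)^G)=1$ via Corollary~\ref{abejicor}; these are the same idea.
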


\begin{proof}Clearly $K \unlhd K^G$ and $K^G$ has finite index in $G$, so $I$ is finite.  Let $J \subseteq I$ such that $K_J > 1$ and distinct conjugates of $K_J$ have trivial intersection, let $B = K_J$ and let $L = B^G$.  Then all conjugates of $B$ are normal in $B^G$, so in particular they normalise each other; this implies distinct conjugates of $B$ commute, since they have trivial intersection.  Hence $C_L(B)$ contains all conjugates of $B$ apart from $B$ itself, so $L=BC_L(B)$.  Finally, note that $G$ has no non-trivial abelian normal subgroups, so $Z(L)=1$; hence $L=B \times C_L(B)$.  By symmetry, this means $L$ is a direct product of the conjugates of $B$, so $B$ is basal.

Now let $\mcI$ be the set of those $I' \subseteq I$ for which $K_{I'}$ is non-trivial, let $J$ be an element of $\mcI$ of largest size, and let $B=K_J$.  Suppose $B^g$ is a conjugate of $B$ distinct from $B$.  Then $B^g$ is of the form $K_{J'}$ where $|J'|=|J|$; by construction, this means $B \cap B^g = 1$, so by the previous argument, $B$ is basal.\end{proof}

\begin{cor}\label{bascor}Let $G$ be a just infinite group that is not virtually abelian.  Let $K$ be a non-trivial subgroup of $G$ such that $K \unlhd^2 G$, and such that distinct conjugates of $K$ have trivial intersection.  Then $K$ is basal in $G$.\end{cor}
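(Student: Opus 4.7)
The plan is to apply Lemma \ref{baslem} directly, specialising its ``moreover'' clause to a trivial choice of index subset. Concretely, let $\{K_i \mid i \in I\}$ be the set of $G$-conjugates of $K$. Pick $i_0 \in I$ with $K_{i_0} = K$ (which exists since $K$ is its own conjugate under the identity), and set $J = \{i_0\}$, so that $K_J = K$.

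Next I would verify the two hypotheses of the ``moreover'' part of Lemma \ref{baslem} for this $J$. First, $K_J = K > 1$ by assumption. Second, the distinct conjugates of $K_J = K$ have trivial intersection, which is precisely the hypothesis we were given on $K$. Hence the lemma applies and yields that $K_J = K$ is basal in $G$.

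There is essentially no obstacle here, since the corollary is only the specialisation of the lemma to the singleton index set; the work has already been done in establishing the ``moreover'' clause. The only thing to note is that applying the lemma requires the ambient hypothesis that $G$ is just infinite, not virtually abelian, and that $K \unlhd^2 G$, all of which are supplied in the statement of the corollary.
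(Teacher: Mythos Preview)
Your proposal is correct and is precisely the intended derivation: the corollary is nothing more than the ``moreover'' clause of Lemma~\ref{baslem} applied with $J$ a singleton, and the paper accordingly states it without proof.
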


\begin{prop}\label{permji}Let $G$ be a just infinite group that is not virtually abelian, and let $H$ be a subgroup of $G$ of finite index.  Let $\mcB$ be the set of non-normal basal subgroups $B$ of $G$.  Then the following are equivalent:
\vspace{-12pt}
\begin{enumerate}[(i)]  \itemsep0pt
\item $H$ is not just infinite;
\item there is some $B \in \mcB$ such that $H$ acts intransitively on $\Omega_B$.\end{enumerate}
Moreover, if (ii) holds then $B$ may be chosen so that $\Core_G(H)$ acts trivially.\end{prop}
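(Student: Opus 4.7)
For (ii)$\Rightarrow$(i), given $B\in\mcB$ with proper $H$-orbit $O\subsetneq\Omega_B$, I would form $N:=\prod_{B_i\in O}B_i$, which is a subgroup (the $B_i$ commute as direct factors of $B^G$) normalised by $H$, so that $N\cap H\unlhd H$. I then aim to show $N\cap H$ is infinite (hence nontrivial) and of infinite index in $H$, contradicting just infiniteness. Each $B_i$ is infinite because $B^G$ is a nontrivial normal subgroup of the just infinite $G$ and therefore of finite index; so $N$ is infinite, and $|N:N\cap H|=|NH:H|\leq|G:H|$ is finite, giving nontriviality. The complementary factor $M:=\prod_{B_j\notin O}B_j$ is also nontrivial and infinite, making $|G:N|$ infinite; and since $H$ normalises $N$ the subgroup $HN$ exists with $|G:N|=|G:HN|\cdot|H:H\cap N|\leq|G:H|\cdot|H:H\cap N|$, forcing $|H:H\cap N|$ to be infinite.

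For (i)$\Rightarrow$(ii) the strategy is to bootstrap a nontrivial normal subgroup $K$ of $H$ of infinite index into a suitable basal subgroup of $G$. Because $G$ is just infinite, $\Fin(G)=1$, so by Corollary \ref{diccor} also $\Fin(H)=1$ and $K$ is infinite. Setting $L:=\Core_G(H)$ (normal of finite index in $G$), the intersection $K\cap L$ is normal in $L$, hence subnormal of defect at most $2$ in $G$, and it is nontrivial because $L$ has finite index in $H$. Applying Lemma \ref{baslem} to $K\cap L$, I obtain some $J$ with $(K\cap L)_J$ basal; replacing $J$ by a suitable $G$-translate of itself places $1\in J$, so that $B:=(K\cap L)_J$ lies inside $K\cap L$ and in particular inside $K$.

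All three required properties of $B$ then follow from $B\leq K$. Since $|H:K|$ is infinite and $|G:H|$ is finite, $|G:B|\geq|G:K|$ is infinite, so $B$ cannot be a nontrivial normal subgroup of the just infinite $G$, giving $B\in\mcB$. The subset $\{B_i\in\Omega_B:B_i\leq K\}$ is preserved by $H$ (which normalises $K$) and contains $B$; it is a proper subset of $\Omega_B$, for otherwise $B^G\leq K$ would force $K$ to have finite index. Each $(K\cap L)^{g_j}$ is normal in $L^{g_j}=L$ (using $K\cap L\unlhd L$ and $L\unlhd G$), so $L$ normalises $B$, and $L^g=L$ extends this to every $G$-conjugate of $B$, giving trivial $L$-action on $\Omega_B$. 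The main obstacle is coordinating these three properties in a single $B$: one cannot work with $K$ directly because $K$ need not be subnormal in $G$, but intersecting with $L$ and then invoking Lemma \ref{baslem} fixes this, and the single choice $1\in J$ is what makes all the verifications go through simultaneously.
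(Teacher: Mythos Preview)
Your proof is correct and follows essentially the same route as the paper's. In both directions you use the same constructions: for (ii)$\Rightarrow$(i) your $N=\prod_{B_i\in O}B_i$ is exactly the paper's $B^H$, and for (i)$\Rightarrow$(ii) you, like the paper, pass to $K\cap\Core_G(H)\unlhd^2 G$, invoke Lemma~\ref{baslem}, conjugate so that $B\leq K$, and then read off non-normality, $H$-intransitivity via $\{B_i\leq K\}$, and trivial $\Core_G(H)$-action from the fact that $\Core_G(H)$ normalises every $G$-conjugate of $K\cap\Core_G(H)$.
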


\begin{proof}Assume (i), and let $R$ be a non-trivial normal subgroup of $H$ of infinite index.  By Lemma \ref{nothji}, $R$ is infinite.  This means that $K=R \cap \Core_G(H)$ is an infinite normal subgroup of $H$ of infinite index such that $K \unlhd^2 G$.  By Lemma \ref{baslem}, there is a basal subgroup $B$ of $G$ that is an intersection of conjugates of $K$; by conjugating in $G$ if necessary, we may assume $B \leq K$.  Now $\Core_G(H)$ normalises $K$ and hence every conjugate of $K$, so $\mathrm{Core}_G(H)$ acts trivially on $\Omega_B$.  Furthermore, not all conjugates of $B$ are contained in $K$, as $K$ has infinite index, but the conjugates of $B$ contained in $K$ form a union of $H$-orbits on $\Omega_B$, which is non-empty as $B \leq K$.  So $H$ acts intransitively on $\Omega_B$ as required for (ii).

Assume (ii), and let $R=B^H$.  Then $R$ is an infinite subgroup of $B^G$ of infinite index, since $H$ acts intransitively on $\Omega_B$, so $R \cap H$ is an infinite subgroup of $H$ of infinite index; moreover, $R \cap H$ is normal in $H$.  Hence $H$ is not just infinite, which is (i).\end{proof}

We now arrive at the following theorem.

\begin{thm}\label{maxcor}Let $G$ be a just infinite group that is not virtually abelian.  Let $N$ be a non-trivial normal subgroup of $G$.  The following are equivalent:
\vspace{-12pt}
\begin{enumerate}[(i)]  \itemsep0pt
\item $N$ is just infinite;
\item Every subgroup of $G$ containing $N$ is just infinite;
\item Every maximal subgroup of $G$ containing $N$ is just infinite.\end{enumerate}
$(*)$ In particular, $G$ is hereditarily just infinite if and only if every maximal subgroup of finite index is just infinite.\end{thm}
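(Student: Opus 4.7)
The plan is to establish the equivalence by the cyclic chain $(i) \Rightarrow (ii) \Rightarrow (iii) \Rightarrow (i)$, using Proposition \ref{permji} throughout as the main tool: it translates failure of the just-infinite property into intransitive actions on the finite sets $\Omega_B$ of $G$-conjugates of some non-normal basal $B \in \mcB$. The implication $(ii) \Rightarrow (iii)$ is immediate, since a maximal subgroup containing $N$ is in particular a subgroup containing $N$.

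For $(i) \Rightarrow (ii)$, suppose some $H$ with $N \leq H \leq G$ is not just infinite. By Proposition \ref{permji} there is $B \in \mcB$ such that $H$ acts intransitively on $\Omega_B$ while $\Core_G(H)$ acts trivially. Since $N$ is normal in $G$ and contained in $H$, we have $N \leq \Core_G(H)$, so $N$ itself acts trivially on $\Omega_B$; because $B$ is non-normal, $|\Omega_B| > 1$, so $N$ acts intransitively. The reverse direction of Proposition \ref{permji} then gives that $N$ is not just infinite, contradicting $(i)$.

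For $(iii) \Rightarrow (i)$, suppose instead that $N$ is not just infinite. Apply Proposition \ref{permji} with $H = N$: since $\Core_G(N) = N$, we may choose $B \in \mcB$ on which $N$ itself acts trivially, yielding $N \leq S := N_G(B)$. The subgroup $S$ is proper in $G$ because $B$ is non-normal, so pick any maximal subgroup $M$ of $G$ with $S \leq M$; then $N \leq M$ as well. The $M$-orbit of $\omega_0 = S \in G/S \cong \Omega_B$ is $MS/S$, and since $S \leq M$ we have $MS = M \neq G$, so $M$ acts intransitively on $\Omega_B$. Proposition \ref{permji} therefore gives that $M$ is not just infinite, contradicting $(iii)$.

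The corollary follows by setting $N := \Core_G(H)$ for any finite-index subgroup $H \leq G$: this $N$ is a non-trivial normal subgroup of $G$ of finite index, so every maximal subgroup of $G$ containing $N$ has finite index, and the hypothesis supplies $(iii)$ for $N$; then $(ii)$ forces $H \supseteq N$ to be just infinite. The main subtlety lies in $(iii) \Rightarrow (i)$: we must produce a genuinely maximal subgroup exhibiting failure of just-infiniteness, and the key is the "moreover" clause of Proposition \ref{permji}, which upgrades intransitivity of $N$ on $\Omega_B$ to triviality of the action. Triviality is inherited by every overgroup of $N_G(B)$, whereas mere intransitivity is not in general — this is precisely what allows us to pass from $S$ to a maximal overgroup while preserving the intransitivity certificate.
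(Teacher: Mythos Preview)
Your proof is correct and follows essentially the same route as the paper's, with Proposition~\ref{permji} doing the real work and the key observation for $(iii)\Rightarrow(i)$ being that any proper subgroup containing the point stabiliser $N_G(B)$ is intransitive on $\Omega_B$. The only cosmetic difference is that for $(i)\Rightarrow(ii)$ the paper invokes Lemma~\ref{nothji} directly rather than running Proposition~\ref{permji} again, and it phrases $(iii)\Rightarrow(i)$ in the direct rather than contrapositive form; the arguments are otherwise the same.
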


\begin{proof}Lemma \ref{nothji} ensures that (i) implies (ii), and clearly (ii) implies (iii).  Assume (iii), and let $B$ be a non-normal basal subgroup of $G$, and let $\mcM$ be the set of maximal subgroups of $G$ containing $N$.  Then $M$ acts transitively on $\Omega_B$ for every $M \in \mcM$ by Proposition \ref{permji}.  In any permutation group, a proper transitive subgroup cannot contain a point stabiliser.  It follows that $N_G(B)$ is not contained in any $M \in \mcM$, so $N_G(B)$ does not contain $N$.  Hence $N$ acts non-trivially on $\Omega_B$.  Since $N=\Core_G(N)$, Proposition \ref{permji} now ensures that $N$ is just infinite, giving (i).\end{proof}

\begin{cor}Let $G$ be a residually finite group such that $\Phi^f(G)$ has finite index in $G$, and such that $G$ is not virtually abelian.  Then $G$ is hereditarily just infinite if and only if $\Fin(G)$ is trivial and $\Phi^f(G)$ has a just infinite subgroup of finite index.\end{cor}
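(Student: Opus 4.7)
The plan is to read this corollary as a near-direct packaging of Theorem \ref{maxcor}$(*)$ together with Lemma \ref{nothji}. The role of $\Phi^f(G)$, which by hypothesis has finite index, is to provide a common lower bound for every maximal subgroup of finite index in $G$, so that a single just infinite subgroup $H \leq \Phi^f(G)$ can certify the just infinite property simultaneously for $G$ itself and for every such maximal subgroup.

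For the forward direction, if $G$ is hereditarily just infinite then in particular $G$ is just infinite; any non-trivial element of $\Fin(G)$ would then lie in some finite normal subgroup, which would be forced to have finite index and make $G$ finite, a contradiction. Hence $\Fin(G) = 1$. Since $\Phi^f(G)$ has finite index in $G$, applying hereditarily just infinite to $\Phi^f(G)$ shows that $\Phi^f(G)$ is itself just infinite, so we may take $H = \Phi^f(G)$ (of index $1$ in $\Phi^f(G)$).

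For the backward direction, given $\Fin(G) = 1$ together with a just infinite subgroup $H \leq \Phi^f(G)$ of finite index in $\Phi^f(G)$, the index $|G:H|$ is finite; hence by Lemma \ref{nothji} every subgroup of $G$ containing $H$ is just infinite. Applying this first to $G$ itself shows $G$ is just infinite, which combined with the standing assumption that $G$ is not virtually abelian triggers Theorem \ref{maxcor}$(*)$: to conclude hereditarily just infinite, it suffices to verify that each maximal subgroup $M$ of finite index is just infinite. But every such $M$ contains $\Phi^f(G)$ by definition, hence contains $H$, so a second application of Lemma \ref{nothji} makes $M$ just infinite.

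The main thing to watch — hardly a genuine obstacle — is that the hypotheses of Lemma \ref{nothji} transfer correctly to the subgroups in play: residual finiteness passes to any subgroup, and $\Fin(H), \Fin(M) \leq \Fin(G) = 1$ by Corollary \ref{diccor}. Beyond this bookkeeping there is no real difficulty; the content of the corollary is that, under the stated Frattini hypothesis, checking hereditarily just infinite localises to a single finite-index subgroup inside the characteristic subgroup $\Phi^f(G)$.
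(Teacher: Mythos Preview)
Your proof is correct and follows essentially the same approach as the paper's. The paper's argument is terser but identical in substance: it invokes Lemma \ref{nothji} to make every maximal finite-index subgroup just infinite, then applies $(*)$ of Theorem \ref{maxcor}; you have simply made explicit the intermediate step that $G$ itself is just infinite (needed to invoke the theorem) and spelled out the hypothesis-checking for Lemma \ref{nothji}.
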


\begin{proof}If $\Fin(G)$ is trivial and $\Phi^f(G)$ has a just infinite subgroup of finite index, then every maximal finite index subgroup is just infinite by Lemma \ref{nothji}, and so $G$ is hereditarily just infinite by the theorem.  The converse is immediate.\end{proof}

The following example shows that the word `normal' in the statement of Theorem \ref{maxcor} cannot be replaced with `finite index', even in the case that $G$ is a pro-$p$ group.

\begin{eg}Let $A$ be the group $V \rtimes C_p$, where $V$ is a vector space of dimension $p$ over $\bF_p$, and $C_p$ acts by permuting a basis of $V$.  There is a natural affine action of $A$ on $V$, extending the right regular action of $V$ on itself.  Let $K$ be any just infinite group that is not virtually abelian, and let $G$ be the permutation wreath product $K \wr_V A$ of $K$ by $A$ acting on $V$, where $A$ acts in the way described.  Note that any group of the form $K \wr_\Omega P$, where $K$ is just infinite and $P$ acts faithfully and transitively on the finite set $\Omega$, is necessarily just infinite.  In particular, $G$ is just infinite, with a subgroup $H$ of index $p^2$ of the form $(K \times \dots \times K) \rtimes W$, where $W$ is a subgroup of $V$ of index $p$ that is not normal in $A$.  Clearly $H$ is not just infinite, as $W$ does not act transitively on the copies of $K$.  However, the unique maximal subgroup $M$ of $G$ containing $H$ is of the form $K \wr_V V$; since $V$ acts regularly on itself, this $M$ is just infinite.\end{eg}

Now consider virtually abelian groups.  For simplicity, we will only consider groups that are either discrete or profinite.

A key observation in establishing Theorem \ref{maxcor} was that every intransitive normal subgroup of a finite permutation group is contained in an intransitive maximal subgroup.  Similarly, an imprimitive finite linear group has a maximal subgroup that is reducible, which leads to the following:

\begin{lem}\label{imprim}In the situation of Proposition \ref{vastr}, suppose that $G/A$ is imprimitive as a matrix group over $F$.  Then $G$ has a maximal subgroup of finite index that is not just infinite.\end{lem}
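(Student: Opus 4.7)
The plan is to produce a maximal finite-index subgroup $M^*$ of $G$ that admits a non-trivial normal subgroup of infinite index, and hence fails to be just infinite. In the setup of Proposition \ref{vastr}, the imprimitivity hypothesis yields a non-trivial direct sum decomposition $V = V_1 \oplus \cdots \oplus V_k$ of $V := A \otimes_O F$ with $k \geq 2$, which $G/A$ permutes; this permutation action is necessarily transitive, for otherwise the sum of the subspaces in any single orbit would be a proper $G/A$-invariant subspace of $V$, contradicting the $F$-irreducibility of $G/A$.

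First I would let $H/A \leq G/A$ be the setwise stabiliser of $V_1$, so that $[G:H] = k$, and then choose any maximal subgroup $M/A$ of $G/A$ containing $H/A$, setting $M^* := M$. Since the correspondence between overgroups of $A$ in $G$ and subgroups of $G/A$ preserves maximality, $M^*$ is a maximal finite-index subgroup of $G$.

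The key step is to exhibit a non-trivial proper $M/A$-invariant $F$-subspace of $V$. Let $\Sigma$ be the $M/A$-orbit of $V_1$ inside $\{V_1, \ldots, V_k\}$. Because $H/A$ is precisely the $G/A$-stabiliser of $V_1$ and $H/A \leq M/A$, the $M/A$-stabiliser of $V_1$ is still $H/A$, so $|\Sigma| = [M:H]$. Since $M < G$, we have $[M:H] < [G:H] = k$, so $\Sigma$ is a proper non-empty subset of the blocks, and $W := \bigoplus_{V_i \in \Sigma} V_i$ is an $M/A$-invariant $F$-subspace with $0 < \dim_F W < \dim_F V$.

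Finally, $W \cap A$ is a non-trivial $O$-submodule of $A$ (as $A$ spans $V$ over $F$, so $W \cap A$ spans $W$) and has infinite index in $A$ (since $A/(W \cap A)$ embeds into the non-zero, torsion-free $O$-module $V/W$); in the profinite case it is also closed, being a finitely generated $\bZ_p$-submodule of $A$. Since $A$ is abelian and $M/A$ normalises both $A$ and $W$, the subgroup $W \cap A$ is a non-trivial normal subgroup of $M^*$ of infinite index, so $M^*$ is not just infinite. The main subtlety is the reducibility step: one must ensure that the maximal overgroup $M^*$ of $H^*$ still preserves a proper subspace, which is exactly why we start from the \emph{full} stabiliser $H$ of a single block—so that for any proper overgroup $M$ of $H$, the $M/A$-orbit of $V_1$ remains a proper subset of the block system.
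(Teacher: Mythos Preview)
Your proof is correct and follows exactly the approach the paper sketches in the sentence preceding the lemma: an imprimitive finite linear group $G/A$ has a reducible maximal subgroup (namely any maximal subgroup containing a block stabiliser, which is intransitive on the blocks and hence preserves a proper subspace), and reducibility over $F$ then yields a non-trivial normal subgroup of infinite index in the corresponding maximal subgroup of $G$. You have simply supplied the details the paper leaves to the reader, including the routine verifications that $W\cap A$ is non-trivial, of infinite index, and (in the profinite case) closed.
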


However, there are primitive finite linear groups, all of whose maximal subgroups are irreducible, and so statement $(*)$ does not generalise completely to the virtually abelian case.

\begin{eg}(My thanks go to Charles Leedham-Green for pointing out this example.)  Let $\rQ_{2^n}$ denote the generalised quaternion group of order $2^n$.  In Examples 10.1.18 of \cite{Lee}, the authors give a $4$-dimensional representation of $\rQ_{16}$ over $\bQ_2$, giving rise to just infinite pro-$2$ groups $G$ of the form $A \cdotp \rQ_{16}$ where $A \cong \bZ^4_2$ and $G/A$ acts faithfully on $A$.  Say such a group $G$ is of \emph{primitive quaternionic type}.  Both $\rQ_{16}$ and all its maximal subgroups act irreducibly over $\bQ_2$, and so every maximal open subgroup of $G$ is just infinite.  However, $\Phi^f(G)$ is not just infinite; indeed, an open subgroup $K$ of $G$ is just infinite if and only if $|G/A:KA/A| \leq 2$.\end{eg}

It turns out that the above example describes the only way in which $(*)$ can fail for pro-$p$ groups, and there are no exceptions to $(*)$ among discrete groups for which $O^p(G)=1$.  Case (i) of the following is Theorem 10.1.25 of \cite{Lee}, and case (ii) can easily be deduced from results in section 10.1 of \cite{Lee}:

\begin{thm}[Leedham-Green, McKay \cite{Lee}]\label{leethm}Let $G$ be a finite $p$-group with a faithful primitive representation over a field $F$.
\vspace{-12pt}
\begin{enumerate}[(i)]  \itemsep0pt
\item If $F=\bQ_p$, then either $G$ is $C_p$ acting in dimension $p-1$, or $p=2$ and $G$ is $\rQ_{16}$ acting in dimension $4$.
\item If $F$ is a subfield of $\bR$ that does not contain $\sqrt{2}$, then $G$ is $C_p$ acting in dimension $p-1$.\end{enumerate}\end{thm}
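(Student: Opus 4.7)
The plan is to proceed by a structural reduction followed by case analysis. First I would establish that $G$ has every abelian normal subgroup cyclic. This is a standard Clifford-theoretic observation: if $A \unlhd G$ is abelian and $V$ is a faithful primitive irreducible $G$-representation over $F$, then $V|_A$ must be isotypic (else $V$ would be induced from the stabiliser in $G$ of an isotypic component); after base change to $\overline{F}$ this says $V|_A$ is a multiple of a single character $\chi$, and faithfulness of $V$ forces $\chi$ to be injective on $A$, whence $A$ embeds in $\overline{F}^\times$ and is cyclic. Combined with the classical classification of $p$-groups in which every abelian normal subgroup is cyclic --- for odd $p$ only cyclic $C_{p^n}$, and for $p=2$ the families of cyclic, dihedral, semi-dihedral, and generalised quaternion groups $\rQ_{2^n}$ --- this reduces both parts of the theorem to a finite list of families.

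Next I would dispose of the easy cases. For $G = C_{p^n}$ the faithful irreducible representation is realised on $F(\zeta_{p^n})$, of dimension $[F(\zeta_{p^n}):F]$; a direct dimension-and-character comparison shows that for $n \geq 2$ this representation is isomorphic to the induction from $C_{p^{n-1}}$ of its faithful representation on $F(\zeta_{p^{n-1}})$, and hence imprimitive. For $n = 1$ there is no proper nontrivial subgroup from which to induce (the regular representation from the trivial subgroup has dimension $p$, not $p-1$), so the $(p-1)$-dimensional representation is primitive in both settings. For $G$ dihedral or semi-dihedral, the cyclic subgroup of index $2$ is abelian and normal; the Clifford argument above then forces the faithful irreducible representation to be induced from it, so imprimitive.

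The substantive work is the generalised quaternion case $G = \rQ_{2^n}$. By transitivity of induction it suffices to rule out induction from the maximal subgroups of $\rQ_{2^n}$, namely $C_{2^{n-1}}$ and (for $n \geq 4$) two copies of $\rQ_{2^{n-1}}$. For part (i), the crucial input is that the rational quaternion algebra $(-1,-1)_{\bQ_2}$ is a division algebra, so $\rQ_8$ has no $2$-dimensional faithful $\bQ_2$-representation; therefore induction from either $\rQ_8$ subgroup of $\rQ_{16}$ cannot produce a faithful irreducible in dimension $4$, and induction from $C_8$ (whose faithful irreducible representation on $\bQ_2(\zeta_8)$ has dimension $4$) gives dimension $8$ rather than $4$. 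Hence the $4$-dimensional faithful $\bQ_2$-representation of $\rQ_{16}$ is primitive. For $\rQ_8$ the faithful $4$-dimensional representation is induced from $C_4$ (whose faithful representation lies on $\bQ_2(i)$, of dimension $2$ since $i \notin \bQ_2$), and for $n \geq 5$ a dimension count shows the faithful irreducible equals the induction from $C_{2^{n-1}}$; both are imprimitive.

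For part (ii), over a real subfield $F$ with $\sqrt{2} \notin F$, I would use a character-field argument: the complex faithful irreducible representation of $\rQ_{2^n}$ for $n \geq 3$ has character values involving $\zeta_{2^n} + \zeta_{2^n}^{-1}$, which for $n \geq 3$ contains $\sqrt{2} = \zeta_8 + \zeta_8^{-1}$; the exclusion $\sqrt{2} \notin F$ forces Galois-averaging to double the dimension relative to the $\bR$-realisation, yielding precisely the representation induced from $C_{2^{n-1}}$, which is imprimitive. The main obstacle throughout is the Schur-index and character-field bookkeeping in the quaternion case; once that is in hand, the rest is clean case analysis dictated by the structural reduction.
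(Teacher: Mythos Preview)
The paper does not prove this statement; it is quoted from \cite{Lee} (Theorem 10.1.25 for part (i), with part (ii) deduced from the surrounding section), so there is no in-paper argument to compare against beyond checking your proposal on its own merits.

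Your structural reduction---primitivity forces every abelian normal subgroup to be cyclic, then invoke the classical classification of such $2$-groups---is correct and is the standard opening. The gap is in eliminating the dihedral and semidihedral families. You assert that the Clifford argument forces the faithful $F$-irreducible $V$ to be induced from the cyclic index-$2$ subgroup $A=C_{2^{n-1}}$, but over $F=\bQ_2$ (and over $\bQ$) the restriction $V|_A$ is in fact $F$-isotypic, so Clifford yields nothing from this subgroup. For instance, the $4$-dimensional faithful $\bQ_2$-representation of $D_{16}$ restricts to $C_8$ as a single copy of $\bQ_2(\zeta_8)$, because all four faithful complex characters of $C_8$ lie in one Galois orbit over $\bQ_2$. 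You are conflating ``not isotypic over $\overline{F}$'' (true, since $\chi\neq\chi^{-1}$) with ``not isotypic over $F$'' (false here). These groups are genuinely imprimitive over $\bQ_2$, but via induction from a dihedral or quaternion maximal subgroup, not from the cyclic one. The same issue undermines your dimension count for $\rQ_{2^n}$ with $n\ge 5$ over $\bQ_2$: the Schur index drops to $1$ for $n\ge 4$, so the faithful $\bQ_2$-irreducible has dimension $2^{n-2}$ rather than $2^{n-1}$, and is induced from $\rQ_{2^{n-1}}$, not from $C_{2^{n-1}}$. Your treatment of $\rQ_8$ and $\rQ_{16}$ over $\bQ_2$, which is the crux of part (i), is correct.
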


We use this theorem to obtain a result with a conclusion similar to statement $(*)$ of Theorem \ref{maxcor}, but with different hypotheses:

\begin{thm}\label{respthm}Let $G$ be a profinite or discrete group with no non-trivial finite normal subgroups, let $p$ be a prime, and suppose $H$ is a subgroup of finite index that is residually-$p$, such that every maximal subgroup of $H$ of finite index is just infinite and $H$ is not of primitive quaternionic type.  Then $G$ is hereditarily just infinite.\end{thm}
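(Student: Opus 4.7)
The plan is to reduce to showing $H$ is hereditarily just infinite; then $G$ is hereditarily just infinite by applying Lemma \ref{nothji} to $L \cap H \le L$ for each finite-index $L \le G$, using $\Fin(G) = 1$ (which holds since the union of trivial normal subgroups is trivial).

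First I would establish that $H$ is just infinite. Suppose for a contradiction that $R \unlhd H$ is non-trivial with $|H:R| = \infty$; then $R$ is infinite, as $\Fin(H) \le \Fin(G) = 1$ by Corollary \ref{diccor}. Pick any maximal finite-index subgroup $M$ of $H$, which exists since $H$ is infinite residually-$p$, and which is just infinite by hypothesis. If $R \le M$, then $R$ is a non-trivial normal subgroup of the just infinite group $M$, so has finite index in $M$ and hence in $H$. If $R \not\le M$, then $RM = H$ by maximality of $M$, so $R \cap M$ is infinite (of finite index in $R$) and normal in $M$, and just infiniteness of $M$ again yields $R \cap M$ of finite index in $H$; either alternative contradicts $|H:R| = \infty$.

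Now split according to whether $H$ is virtually abelian. If it is not, Theorem \ref{maxcor}$(*)$ applied to $H$ immediately yields $H$ hereditarily just infinite. In the virtually abelian case, Proposition \ref{vastr} provides a self-centralising abelian normal subgroup $A \cong O^d$ of finite index, with $H/A$ acting faithfully and irreducibly on $A$ over the field of fractions $F$. To invoke Theorem \ref{leethm} I need $H/A$ to be a $p$-group: this is immediate in the profinite case, where $H$ is pro-$p$, and in the discrete case I would pass to the pro-$p$ completion $\hat{H}_p$ and observe that the closure $\hat{A}$ of $A$ is abelian and normal in $\hat{H}_p$, so $H \cap \hat{A}$ is an abelian normal finite-index subgroup of $H$, forced by the self-centralisation of $A$ to equal $A$ itself; this embeds $H/A$ into the finite $p$-group $\hat{H}_p / \hat{A}$. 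Lemma \ref{imprim} combined with the maximal-finite-index hypothesis forces the action of $H/A$ to be primitive, and Theorem \ref{leethm}, with primitive quaternionic excluded by hypothesis, leaves only $H/A \cong C_p$ acting in dimension $p-1$.

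Since $A$ is itself a maximal finite-index subgroup of $H$ (as $H/A \cong C_p$ is simple), it must be just infinite by hypothesis; being abelian of rank $d = p-1$, this forces $p = 2$ and $A \cong O$, identifying $H$ with the infinite dihedral group $\bZ \rtimes C_2$ in the discrete case and its pro-$2$ analogue in the profinite case. Every finite-index subgroup of such $H$ is readily seen to be either (pro-)cyclic or of the same (pro-)infinite dihedral form, and both types are just infinite, so $H$ is hereditarily just infinite in this case as well. Lemma \ref{nothji} then transfers hereditary just infiniteness from $H$ to $G$. The main obstacle, as I see it, is the virtually abelian case, and specifically the verification that $H/A$ is a $p$-group in the discrete setting along with the unwinding of Theorem \ref{leethm} that ultimately forces the dihedral identification of $H$.
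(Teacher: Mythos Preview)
Your proof is correct and follows essentially the same route as the paper's: reduce to $H$ via Lemma~\ref{nothji}, dispose of the non-virtually-abelian case with Theorem~\ref{maxcor}$(*)$, and in the virtually abelian case combine Proposition~\ref{vastr}, Lemma~\ref{imprim}, and Theorem~\ref{leethm}. The paper compresses the endgame into ``the result now follows immediately,'' whereas you spell out the two points it leaves implicit---that $H/A$ is a $p$-group (your pro-$p$ completion argument in the discrete case is a clean way to see this) and that the surviving $C_p$ case forces $p=2$ and the (pro-)dihedral identification via the maximality of $A$---and both elaborations are sound.
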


\begin{proof}By Lemma \ref{nothji}, it suffices to prove that $H$ is hereditarily just infinite.  By Theorem \ref{maxcor}, we may assume $H$ is virtually abelian, and hence of the form described in Proposition \ref{vastr}.  Given our hypotheses, the result now follows immediately from Theorem \ref{leethm} together with Lemma \ref{imprim}.\end{proof}

Theorems \ref{maxcor} and \ref{respthm} are particularly useful in the context of pro-$p$ groups, giving the following corollary:

\begin{cor}\label{pcor}Let $G$ be a finitely generated pro-$p$ group.  Then $G$ is hereditarily just infinite if and only $\Phi(G)$ has a just infinite open subgroup and $\Fin(G)=1$.\end{cor}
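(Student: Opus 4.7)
The plan is to dispatch the forward direction trivially and to handle the converse by applying Theorem \ref{respthm} with $H=G$, with the only real work being to exclude the primitive quaternionic case. For the forward direction, if $G$ is hereditarily just infinite then $\Phi(G)$ is open (since $G$ is a finitely generated pro-$p$ group, so $G/\Phi(G)$ is a finite elementary abelian $p$-group) and therefore just infinite by assumption; in particular $\Phi(G)$ itself witnesses the existence of a just infinite open subgroup. Also $\Fin(G)=1$, since any non-trivial finite normal subgroup would have infinite index in an infinite just infinite group.

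For the converse, suppose $\Fin(G)=1$ and let $H$ be a just infinite open subgroup of $\Phi(G)$; then $H$ is open in $G$. Every maximal open subgroup $M$ of $G$ has index $p$, contains $\Phi(G)$, and hence contains $H$. Since $\Fin(G)=1$, Lemma \ref{nothji} gives that every subgroup of $G$ containing $H$ is just infinite. In particular $G$ itself is just infinite, and every maximal finite-index subgroup of $G$ is just infinite.

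To finish, I would apply Theorem \ref{respthm} taking $H=G$: $G$ is residually-$p$ as a pro-$p$ group, has trivial finite radical by hypothesis, and all its maximal finite-index subgroups are just infinite by the previous paragraph, so the only hypothesis left to check is that $G$ is not of primitive quaternionic type. If $G$ is not virtually abelian there is nothing to verify, as primitive quaternionic groups are virtually abelian by definition.

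The hard part will be ruling out primitive quaternionic type in the virtually abelian case, and this is exactly where placing the hypothesis on $\Phi(G)$ rather than on $G$ is essential. I would argue by contradiction: suppose $G = A \cdotp \rQ_{16}$ with $A \cong \bZ^4_2$ acting faithfully and primitively as in the example preceding Theorem \ref{respthm}. That example records that an open subgroup $K$ of $G$ is just infinite if and only if $|G/A : KA/A| \leq 2$. Now $\Phi(G)A/A \leq \Phi(G/A) = \Phi(\rQ_{16})$, and $\Phi(\rQ_{16})$ has index $4$ in $\rQ_{16}=G/A$. Hence every open subgroup of $\Phi(G)$ has image in $G/A$ of index at least $4$, so cannot be just infinite—contradicting the existence of $H$. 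Therefore $G$ is not of primitive quaternionic type, Theorem \ref{respthm} applies, and $G$ is hereditarily just infinite.
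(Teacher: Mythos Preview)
Your proof is correct and follows essentially the same route as the paper: use Lemma \ref{nothji} to get that every maximal subgroup (and $\Phi(G)$ itself) is just infinite, then invoke Theorem \ref{respthm}. The paper dispatches the primitive quaternionic exclusion in one line---``$\Phi(G)$ is just infinite, so $G$ is not of primitive quaternionic type''---relying on the example's stated fact that $\Phi^f(G)$ is not just infinite in that case; your index computation via $\Phi(G)A/A \leq \Phi(G/A) = \Phi(\rQ_{16})$ simply makes that step explicit.
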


\begin{proof}If $G$ is hereditarily just infinite, then $\Phi(G)$ is just infinite and $\Fin(G)=1$.  Conversely, suppose $\Fin(G)=1$ and some open subgroup of $\Phi(G)$ is just infinite.  Then $\Phi(G)$ is just infinite by Lemma \ref{nothji}, so $G$ is not of primitive quaternionic type.  Furthermore, every maximal subgroup of $G$ is just infinite, also by Lemma \ref{nothji}.  By Theorem \ref{respthm} it follows that $G$ is hereditarily just infinite.\end{proof}

There is a large variety of primitive finite linear groups that are not $p$-groups, and this gives the potential for just infinite discrete or profinite groups that do not obey $(*)$ in the virtually abelian case.  Here is one example, but it seems likely that there are many others, with no straightforward way of classifying them.
 
\begin{eg}(My thanks go to John Bray for pointing out this example; more details can be found in \cite{Gri}.)  Let $E$ be the extraspecial group of order $2^7$ that is a central product of dihedral groups of order $8$.  Then there is a group $L$ containing $E$ as a normal subgroup with $C_L(E)=Z(E)$, such that $L/E \cong \Out(E) \cong \Alt(8)$, and the smallest supplement to $E$ in $L$ is the double cover $2 \cdotp \Alt(8)$ of $\Alt(8)$.  Furthermore, $L$ has a faithful irreducible $8$-dimensional representation over $\bC$, which can in fact be realised over $\bZ$.  Let $G$ be the corresponding semidirect product $W \rtimes L$, where $W$ is the direct product of $8$ copies of either $\bZ$ or $\bZ_p$ for some $p$; then all maximal subgroups of $G$ of finite index contain a group of the form $W \rtimes E$ or $W \rtimes (2 \cdotp \Alt(8))$.  For both $E$ (see for instance \cite{Doe}) and $2 \cdotp \Alt(8)$ (see \cite{ATL}), the smallest faithful representation in characteristic $0$ has dimension $8$.  It follows that $G$ is a just infinite group, which can be chosen to be either discrete or virtually pro-$p$ with no restrictions on the prime $p$, and every maximal subgroup of $G$ of finite index is just infinite in all cases.  However, $G$ is evidently not hereditarily just infinite.\end{eg}

\section{New just infinite groups from old}

In this section, we will focus on just infinite groups that are not virtually abelian.  In this context, basal subgroups can be used to construct new just infinite groups from old in the following way:

\begin{prop}\label{ncjiprop}Let $G$ be a just infinite group that is not virtually abelian, and let $B$ be a basal subgroup of $G$.  Then $H=N_G(B)/C_G(B)$ is just infinite.\end{prop}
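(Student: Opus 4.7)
The plan is to verify the three ingredients of the just infinite property for $H = N/C$, writing $N = N_G(B)$ and $C = C_G(B)$: that $H$ is infinite, that $H$ is residually finite, and that every non-trivial normal subgroup of $H$ has finite index. Let $B_1 = B, B_2, \dots, B_n$ denote the conjugates of $B$, so that by the basal hypothesis $B^G = B_1 \times \dots \times B_n$ is a normal subgroup of $G$ of finite index.

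For infiniteness, the image of $B$ in $H$ is $BC/C \cong B/Z(B)$. If $Z(B)$ had finite index in $B$, then $B$ would be virtually abelian, so $B^G$ (a finite direct product of copies of $B$) would be virtually abelian, and then so would $G$ (since $B^G$ has finite index in $G$), contradicting the hypothesis. So $B/Z(B)$ is infinite and hence so is $H$.

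For the main step I would take a non-trivial normal subgroup $\bar K = K/C$ of $H$ and set $L = K \cap B$, which is normal in $N$. If $L = 1$, then since both $K$ and $B$ are normal in $N$ we have $[K,B] \leq K \cap B = 1$, forcing $K \leq C$ and contradicting $\bar K \neq 1$. Hence $L > 1$. The direct-product structure of $B^G$ forces the distinct conjugates of $L$ to lie one per factor $B_i$, to pairwise commute and to pairwise intersect trivially, so $L^G = L^{g_1} \times \dots \times L^{g_n}$, where $g_i \in G$ is chosen with $B^{g_i} = B_i$. As a non-trivial normal subgroup of the just-infinite group $G$, $L^G$ has finite index in $G$. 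Moreover $L^G \leq N$ (since $L \leq B \leq N$, and for $i \neq 1$ we have $L^{g_i} \leq B_i \leq C_G(B) \leq N$) and $L^G \leq K$ (since $L \leq K$, and for $i \neq 1$ we have $L^{g_i} \leq B_i \leq C \leq K$). Thus $K$ contains a subgroup of finite index in $N$, and so $\bar K$ has finite index in $H$.

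Residual finiteness is then the only remaining point. In the profinite setting, which is the chapter's primary concern, $N$ and $C$ are closed subgroups of $G$, so $H$ is profinite and hence residually finite for free. In a discrete setting one embeds $H$ into $\Aut(B)$ via the conjugation action and, under the customary additional hypothesis that $G$ is finitely generated (so that $B$, a direct factor of the finite-index subgroup $B^G$, is finitely generated and residually finite), invokes G.~Baumslag's theorem that the automorphism group of a finitely generated residually finite group is residually finite. The main obstacle I anticipate is precisely this residual-finiteness clause in full generality; the algebraic heart of the argument reduces cleanly to the direct-product structure of $B^G$ combined with the just-infinite hypothesis on $G$.
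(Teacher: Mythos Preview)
Your proof is correct and uses essentially the same key observation as the paper: that $B_i \leq C_G(B)$ for $i \neq 1$, combined with the just infinite property of $G$. The execution is dual rather than different: where the paper takes a normal subgroup $K \geq C$ of infinite index in $N$ and intersects its conjugates $K_i$ (each containing $C_G(B_i) \geq B$) to obtain a normal subgroup of $G$ of infinite index, hence trivial, forcing $B \cap K = 1$, you instead start from non-trivial $K/C$, set $L = K \cap B > 1$, and take the normal closure $L^G$ upward to obtain a finite-index normal subgroup of $G$ contained in $K$. Both arguments pivot on the same structural fact, so neither gains real leverage over the other.

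You are more careful than the paper on two points it leaves implicit: the infiniteness of $H$ (your virtually-abelian contradiction is fine, and could equally be phrased via Schur's theorem and $\Fin(G)=1$), and residual finiteness. Your diagnosis of the latter is accurate: in the profinite setting it is automatic since $N$ and $C$ are closed, while in the discrete setting one needs an extra ingredient such as finite generation of $B$ together with Baumslag's theorem. The paper's own proof simply does not address this point.
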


\begin{proof}To show $H$ is just infinite, it suffices to consider a normal subgroup $K$ of $N_G(B)$ of infinite index, such that $C_G(B) \leq K$, and show that in fact $K$ centralises $B$.  Now $K$ and $B$ are both normal in $N_G(B)$, so $[B,K] \leq B \cap K$; hence it suffices to show $B \cap K = 1$.  Set $B=B_1$ and $K=K_1$ say, and let $B_1, \dots, B_t$ be the conjugates of $B$ in $G$.  Since $K$ is normal in $N_G(B)$, we can set $K_j$ to be the conjugate $K^x_1$ of $K$ by $x$ such that $B^x_1 = B_j$, without having to specify $x$, and $K_1, \dots, K_t$ form a complete set of conjugates of $K$ in $G$ (possibly with repetition).  Let $L$ be the intersection of the $K_i$.  Then $L$ is a normal subgroup of $G$ of infinite index, so is trivial, but also $B \cap K \leq L$, since for $i \not= 1$ we have $K_i \geq C_G(B_i) \geq B$.  So $B \cap K = 1$.\end{proof}

In fact, we can use this method to construct a just infinite image of any non-trivial normal subgroup of $G$:

\begin{prop}\label{newjiprop}Let $G$ be a just infinite group that is not virtually abelian, with non-trivial normal subgroup $K$.  Then either $K$ is just infinite, or there is a basal subgroup $B$ of $G$, such that $B$ is a normal subgroup of $K$ and $K/C_K(B)$ is just infinite.\end{prop}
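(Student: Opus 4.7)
Plan: Adapt the proof of Proposition \ref{ncjiprop} to the normal subgroup $K$.

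If $K$ is just infinite, there is nothing to prove, so assume $K$ has a non-trivial normal subgroup $R$ of infinite index. Since $G$ is just infinite we have $\Fin(G)=1$ (any non-trivial finite normal subgroup of $G$ would be both nontrivial and of infinite index, contradicting just infiniteness), and therefore $R$ is infinite; moreover $R \unlhd K \unlhd G$ gives $R \unlhd^2 G$. Apply Lemma \ref{baslem} to $R$: after conjugating $R$ within $G$ if necessary, we obtain a basal subgroup $B$ of $G$ realised as an intersection of $G$-conjugates of $R$, with $B \leq R \leq K$. Every $G$-conjugate $R^g$ lies in $K$ (as $K \unlhd G$) and is normal in $K$ (conjugating $R \unlhd K$ by $g$ and using $K^g = K$), so the intersection $B$ is itself normal in $K$. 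We now have a basal subgroup $B$ of $G$ with $B \leq K$ and $B \unlhd K$.

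To verify $K/C_K(B)$ is just infinite, take $J \unlhd K$ with $C_K(B) \leq J$ and $[K:J]=\infty$; we aim to show $J=C_K(B)$. Since $B,J \unlhd K$, $[B,J]\leq B \cap J$, so it suffices to show $B \cap J = 1$. Following the proof of Proposition \ref{ncjiprop}, let $B = B_1, \ldots, B_t$ be the $G$-conjugates of $B$, with $g_i \in G$ satisfying $B^{g_i} = B_i$. For each $g \in G$, $J^g$ is a normal subgroup of $K$ containing $C_K(B^g)$, which in turn contains every $B_j$ commuting with $B^g$; since basal conjugates commute, this means $J^g \supseteq B_j$ for all $B_j \neq B^g$. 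In particular, $B \leq J^g$ whenever $g \notin N_G(B)$. Define $L = \Core_G(J) = \bigcap_{g \in G} J^g$: this is normal in $G$ and contained in $J$, hence of infinite index, and therefore $L = 1$ by just infiniteness of $G$. If we can show $B \cap J \leq J^g$ also for $g \in N_G(B)$, then $B \cap J \leq L = 1$ and we are done.

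The main obstacle is the case $g \in N_G(B)$: the subgroup $J$, being only $K$-normal, need not be stable under $N_G(B)$-conjugation. If $B$ is chosen so that $N_G(B) = K$ (equivalently, the $G$-conjugates of $B$ form a free $G/K$-set, so $[G:N_G(B)] = [G:K]$) then $J$ is automatically $N_G(B)$-invariant and the argument concludes immediately. In general, one passes to $J^{*} = \bigcap_{n \in N_G(B)} J^n$, which is $N_G(B)$-normal, contains $C_K(B)$, and still has infinite index in $K$; applying the argument of Proposition \ref{ncjiprop} verbatim inside the just infinite group $N_G(B)/C_G(B)$ yields $J^{*} = C_K(B)$. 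Closing the gap between $J^{*}$ and $J$ is the delicate step: one route is to refine the initial choice of $B$ by replacing it with a basal subgroup of $G$ contained in $B \cap J$ (which is non-trivial whenever $J \neq C_K(B)$, since otherwise already $[B,J] \leq B \cap J = 1$ and we are done) and iterating, with termination guaranteed by a chain-condition argument rooted in the structure of the just infinite group $G$.
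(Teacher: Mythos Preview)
Your argument correctly sets up the framework but leaves the crucial step unfinished. You explicitly acknowledge that ``closing the gap between $J^{*}$ and $J$ is the delicate step'' and propose an iteration (replace $B$ by a basal subgroup inside $B \cap J$ and repeat), but you never show this iteration terminates. The appeal to ``a chain-condition argument rooted in the structure of the just infinite group $G$'' is not a proof: at each stage you obtain a new basal subgroup $B'$ and must start over with a \emph{new} bad normal subgroup $J'$ of $K$, so there is no monotone quantity in sight, and nothing prevents the process from running forever.

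The paper's proof supplies exactly the missing idea. Rather than pick an arbitrary basal $B \unlhd K$ and hope to iterate, one chooses at the outset a basal subgroup $B$ of $G$ normalised by $K$ for which the number of $G$-conjugates $|G:N_G(B)|$ is \emph{maximal}; this maximum exists because $K \leq N_G(B)$ forces $|G:N_G(B)| \leq |G:K| < \infty$. One may take $B \leq K$ since $B \cap K$ is basal with the same normaliser. Now $K/C_K(B) \cong KC_G(B)/C_G(B) =: Q$ is normal in the just infinite group $H = N_G(B)/C_G(B)$ (Proposition~\ref{ncjiprop}). If $Q$ were not just infinite, Proposition~\ref{permji} would give a non-normal basal subgroup $R$ of $H$ normalised by $Q$; lifting $R$ to $N_G(B)$ and intersecting with $B$ produces a subgroup $T$ which one checks is basal in $G$, normalised by $K$, with $N_G(T) < N_G(B)$, contradicting maximality. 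Your iteration, if it could be made precise, would have to rediscover this extremal argument; as written, the termination is simply asserted, and the proof is incomplete.
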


\begin{proof}Among all basal subgroups of $G$ that are normalised by $K$, let $B$ be a basal subgroup with $|G:N_G(B)|$ as large as possible; say $|G:N_G(B)|=t$.  Such a $B$ exists as $|G:K|$ is finite.  As $B \cap K$ is also basal by Corollary \ref{bascor} and $N_G(B) = N_G(B \cap K)$, we may assume $B \leq K$.  Let $Q = KC_G(B)/C_G(B)$ and let $H = N_G(B)/C_G(B)$.  Then $K/C_K(B) \cong Q \unlhd H$, and $H$ is just infinite by the previous proposition.
 
Suppose $Q$ is not just infinite.  Then by Proposition \ref{permji}, $H$ has a non-normal basal subgroup $R$ that is normalised by $Q$; we may assume $R \leq Q$ for the same reason that we could assume $B \leq K$.  Form the subgroup $T$ of $N_G(B)$ by lifting $R$ to $N_G(B)$ and then taking the intersection of this with $B$.  Now $T$ is evidently a normal subgroup of $K$; we have $N_G(T) \leq N_G(B)$ since distinct conjugates of $B$ have trivial intersection and $T \leq B$, and in fact $N_G(T) < N_G(B)$ since $R$ is not normal in $H$.  Let $\{x_i\}$ be a set of right coset representatives for $N_G(T)$ in $N_G(B)$, with $x_1=1$, let $\{y_j\}$ be a set of right coset representatives for $N_G(B)$ in $G$, with $y_1=1$, and let $T_{ij} = T^{x_i y_j}$.

Assume $T_{ij} \cap T_{kl} \not= 1$ for some $(i,j,k,l)$.  Note first that $T_{ij} \leq B^{y_j}$ for all $j$, so $j=l$ by the fact that $B$ is basal in $G$.  Without loss of generality we may assume $j=l=1$.  Then $T_{i1} \cap T_{k1} \leq B$, so $T_{i1} \cap T_{k1} \cap C_G(B) \leq Z(B)$; moreover, $Z(B^G) = 1$ since $G$ is not virtually abelian, so $Z(B)=1$.  Hence $T_{i1} \cap T_{k1}$ maps injectively into $R^{x_i} \cap R^{x_k}$.  Since $R$ is basal in $H$, this forces $i=k$.

We conclude that the conjugates $T_{ij}$ of $T$ in $G$ have pairwise trivial intersection, while $K \leq N_G(T) < N_G(B)$.  This shows that $T$ is basal by Corollary \ref{bascor} and contradicts the choice of $B$.  This contradiction shows that $Q$ is just infinite, so $K/C_K(B)$ is just infinite.\end{proof}

\begin{cor}\label{newjicor}Let $\mcC$ be a class of groups that is closed under quotients.  Let $G$ be a just infinite group that has a non-trivial subnormal $\mcC$-subgroup, but no non-trivial normal abelian subgroup.  Then either $O_{\mcC}(G)$ is already just infinite, or there is a basal subgroup $B$ of $G$ such that $O_{\mcC}(G) \leq N_G(B)$, and such that both $H = N_G(B)/C_G(B)$ and $O = O_\mcC(H)$ are just infinite.\end{cor}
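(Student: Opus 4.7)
The plan is to invoke Proposition \ref{newjiprop} with $K = O_\mcC(G)$ and then to transport its conclusion across the quotient map $N_G(B) \twoheadrightarrow N_G(B)/C_G(B)$. First I would verify the hypotheses of that proposition. The assumption that $G$ has no non-trivial abelian normal subgroup forces $G$ not to be virtually abelian, since any abelian subgroup of finite index contains its normal core, which is itself abelian and non-trivial. Also $K = O_\mcC(G) > 1$ by the hypothesis on subnormal $\mcC$-subgroups. Proposition \ref{newjiprop} then gives a dichotomy: if $K$ is already just infinite we are done; otherwise there is a basal subgroup $B$ of $G$ with $B \unlhd K$ and $K/C_K(B)$ just infinite. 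From $B \leq K$ and $K$ normalising $B$ we obtain $K \leq N_G(B)$. Moreover $C_K(B) \neq K$, for otherwise $B^G$, being a direct product of conjugates of the abelian group $B$ all lying inside $K$, would be a non-trivial abelian normal subgroup of $G$.

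By Proposition \ref{ncjiprop}, $H := N_G(B)/C_G(B)$ is just infinite. Write $\bar K$ for the image of $K$ in $H$; then $\bar K \cong K/C_K(B)$ is a non-trivial just infinite normal subgroup of $H$.

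The main step is to show that $\bar K$ is contained in $O := O_\mcC(H)$. Any subnormal $\mcC$-subgroup $L$ of $G$ lies in $K$, hence in $N_G(B)$. Taking a subnormal series $L = L_0 \unlhd L_1 \unlhd \dots \unlhd L_n = G$ and intersecting with $N_G(B)$ produces a series in which each $L_i \cap N_G(B)$ is normal in $L_{i+1} \cap N_G(B)$, because the latter normalises both $L_i$ and $N_G(B)$; hence $L$ is subnormal in $N_G(B)$. Its image $LC_G(B)/C_G(B)$ in $H$ is then subnormal in $H$, and it is a $\mcC$-group as a quotient of $L$ (using that $\mcC$ is closed under quotients). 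Since $K$ is generated by such $L$, the image $\bar K$ is generated in $H$ by subnormal $\mcC$-subgroups, so $\bar K \leq O$. This subnormal-descent step is where I expect the real work to sit; everything before and after it is essentially formal.

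Finally, since $\bar K$ is a non-trivial normal subgroup of the just infinite group $H$, it has finite index in $H$, and therefore so does $O$. Corollary \ref{diccor} gives $\Fin(O) \subseteq \Fin(H) = 1$, and $O$ is residually finite as a subgroup of $H$. Lemma \ref{nothji}, applied inside $O$ with $\bar K$ as the finite-index just infinite subgroup, then yields that $O$ itself is just infinite, completing the argument.
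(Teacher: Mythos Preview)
Your proposal is correct and follows essentially the same route as the paper's proof: apply Proposition~\ref{newjiprop} to $K=O_\mcC(G)$, push subnormal $\mcC$-subgroups of $G$ through the quotient $N_G(B)\to H$ to get $\bar K\le O_\mcC(H)$, and finish with Lemma~\ref{nothji}. Your version is simply more explicit in places (verifying that $G$ is not virtually abelian, spelling out the intersection-of-series argument for subnormality in $N_G(B)$); the paper states the subnormality of $LC_G(B)/C_G(B)$ in $H$ without comment. The side remark that $C_K(B)\neq K$ via abelianness of $B^G$ is correct but unnecessary, since $K/C_K(B)$ being just infinite already forces it to be non-trivial.
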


\begin{proof}Let $K = O_{\mcC}(G)$, and assume $K$ is not just infinite.  Then by Proposition \ref{newjiprop}, there is a basal subgroup $B$ of $G$ such that $B$ is a normal subgroup of $K$ and $KC_G(B)/C_G(B)$ is just infinite.  Let $L$ be a subnormal subgroup of $G$ that is a $\mcC$-group.  Then $L \leq K \leq N_G(B)$, and since $\mcC$ is closed under quotients, it follows that $LC_G(B)/C_G(B)$ is a subnormal $\mcC$-group of $H$.  Hence $KC_G(B)/C_G(B) \leq O$.  Finally, note that $H$ is just infinite by Proposition \ref{ncjiprop}, so $\Fin(O)=1$, and $O$ contains a just infinite subgroup $KC_G(B)/C_G(B)$ of finite index, so $O$ is also just infinite by Lemma \ref{nothji}.\end{proof}

\section{Generalised obliquity}

\begin{defn}Given a profinite group $G$ and subgroup $H$, define the \emph{oblique core} $\Ob_G(H)$ and \emph{strong oblique core} $\Ob^*_G(H)$ of $H$ in $G$ as follows:
\[ \Ob_G(H) := H \cap \bigcap \{K \unlhd_o G \mid K \not\le H\}\]
\[ \Ob^*_G(H) := H \cap \bigcap \{K \leq_o G \mid H \le N_G(K), \; K \not\le H\}\]
Note that $\Ob_G(H)$ and $\Ob^*_G(H)$ have finite index in $H$ if and only if the relevant intersections are finite.\end{defn}

The main theorem of this section is the following:

\begin{thm}[Generalised obliquity theorem]\label{genob}Let $G$ be an infinite profinite group.  Then the following are equivalent:
\vspace{-12pt}
\begin{enumerate}[(i)]  \itemsep0pt
\item $G$ is just infinite;
\item the set $\mcK_H = \{K \unlhd_o G \mid K \not\le H\}$ is finite for every open subgroup $H$ of $G$;
\item there exists a family $\mcF$ of open subgroups of $G$ with trivial intersection, such that $\{K \unlhd_o G \mid K \not\le H\}$ is a finite set for every $H \in \mcF$.\end{enumerate}\end{thm}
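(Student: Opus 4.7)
The plan is to prove the cycle $(ii) \Rightarrow (iii) \Rightarrow (i) \Rightarrow (ii)$. The first two implications are short, and the substantive one, $(i) \Rightarrow (ii)$, I will handle by feeding Corollary \ref{phichaincor}(ii) with data supplied by Lemmas \ref{obchain}(ii) and \ref{jiphilhd}.

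For $(ii) \Rightarrow (iii)$, simply take $\mcF$ to be the collection of all open subgroups of $G$; this has trivial intersection since $G$ is Hausdorff, and the requirement in (iii) reduces to the hypothesis (ii). For $(iii) \Rightarrow (i)$, argue contrapositively: suppose $G$ is not just infinite, so there is a nontrivial closed normal subgroup $N$ of infinite index. Because $\bigcap \mcF = 1$, choose $H \in \mcF$ with $N \not\subseteq H$. For every open normal subgroup $U$ of $G$, the product $NU$ is open normal in $G$ and contains $N$, hence $NU \not\subseteq H$, so $NU \in \mcK_H$. Since $G/N$ is infinite and profinite, it has infinitely many distinct open normal subgroups, and pulling these back through $G \to G/N$ produces infinitely many distinct $NU$, contradicting the assumption on $H$ in (iii).

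The main work is $(i) \Rightarrow (ii)$. Fix an open subgroup $H$ of $G$ and set $\mcH = \mcK_H$; the plan is to apply Corollary \ref{phichaincor}(ii) to $\mcH$ after checking its hypotheses. First, $\mcH$ is upward-closed in $\nsgp(G)$: if $K \in \mcH$ and $K \leq K' \unlhd_c G$, then $K'$ is closed of finite index (containing the open $K$), so open, with $K' \not\subseteq H$. Second, every element of $\mcH$ is an open subgroup of $G$, and Lemma \ref{jiphilhd} provides $G \in \nsgp^*_\Phi$, so $K/\Phi^\lhd(K)$ is finite for each $K \in \mcH$. Third, $\mcH$ has no infinite descending chains: given a hypothetical $K_1 \supsetneq K_2 \supsetneq \cdots$ in $\mcH$, Lemma \ref{obchain}(ii) applied with the open neighbourhood $O = H$ forces $K := \bigcap_i K_i$ to satisfy $K \not\subseteq H$, so $K$ is a nontrivial closed normal subgroup of $G$, hence open by just infiniteness; then $G/K$ is finite, contradicting the existence of infinitely many distinct subgroups $K_i \supseteq K$. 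Corollary \ref{phichaincor}(ii) now yields $|\mcH| < \infty$, which is (ii).

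The main obstacle is recognising that $(i) \Rightarrow (ii)$ is exactly an instance of the Frattini-chain mechanism of Corollary \ref{phichaincor}(ii); once this is seen, the just infinite hypothesis enters through Lemma \ref{jiphilhd} (Frattini-finiteness of members of $\mcH$) and Lemma \ref{obchain}(ii) (ruling out infinite descending chains), while upward-closure is automatic.
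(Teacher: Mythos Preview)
Your proof is correct and takes essentially the same approach as the paper: both the paper and you establish $(i)\Rightarrow(ii)$ by combining Lemma~\ref{jiphilhd}, Corollary~\ref{phichaincor}(ii), and Lemma~\ref{obchain}(ii), differing only in the order of the contradiction (the paper assumes $\mcK_H$ infinite, extracts a descending chain, and contradicts just infiniteness; you first rule out descending chains via just infiniteness and then invoke the corollary). Your contrapositive argument for $(iii)\Rightarrow(i)$ is a mild variant of the paper's direct one via $\Ob_G(H)$, but the content is the same.
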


\begin{proof}Assume (i); then $G \in \nsgp^*_\Phi$ by Lemma \ref{jiphilhd}. Suppose $\mcK_H$ is infinite for some $H \leq_o G$.  Then $\mcK_H$ is upward-closed, so by Corollary \ref{phichaincor} there is an infinite descending chain $K_1 > K_2 > \dots$ of open normal subgroups occurring in $\mcK_H$ for which $K_i \not\le H$.  By Lemma \ref{obchain}, the intersection $K$ of these $K_i$ is a non-trivial normal subgroup of infinite index.  Hence $G$ is not just infinite, a contradiction.  Hence (i) implies (ii).

Clearly (ii) implies (iii).  Assume (iii), and let $K$ be a non-trivial closed normal subgroup of $G$.  Then there is an element $H$ of $\mcF$ that does not contain $K$.  It follows that $K$, being the intersection of the open normal subgroups of $G$ containing $K$, is the intersection of some open normal subgroups not contained in $H$.  All such subgroups contain $\Ob_G(H)$, which is of finite index, since it is the intersection of a finite set of open normal subgroups of $G$.  Hence $K \geq \Ob_G(H)$, and hence $K$ is open in $G$, proving (i).\end{proof}

\begin{rem}This generalises Theorem 36 of \cite{BGJMS}, which corresponds to the above theorem under the assumption that $G$ is a pro-$p$ group.\end{rem}

As motivation for the term `generalised obliquity', recall the following definition:

\begin{defn}[Klaas, Leedham-Green, Plesken \cite{Kla}] Let $G$ be a pro-$p$ group for which each lower central subgroup is an open subgroup. Then the \emph{$i$-th obliquity} of $G$ is given as follows (with the obvious convention that $\log_p(\infty) = \infty$):
\[ o_i(G) := \log_p (|\gamma_{i+1}(G):\Ob_G(\gamma_{i+1}(G))|).\]
The \emph{obliquity} of $G$ is given by $o(G) := \sup_{i \in \bN}o_i(G)$.\end{defn}

It is an immediate consequence of Theorem \ref{genob} that if $G$ is a pro-$p$ group such that each lower central subgroup is an open subgroup, then $G$ is just infinite if and only if $o_i(G)$ is finite for every $i$.

\begin{cor}Let $G$ be a just infinite profinite group.
\vspace{-12pt}
\begin{enumerate}[(i)]  \itemsep0pt
\item Let $n$ be a positive integer.  Then $G$ has finitely many open subgroups of index $n$.
\item Let $H$ be an infinite profinite group such that every finite image of $H$ is isomorphic to some image of $G$.  Then $G \cong H$.
\item Let $H$ and $K$ be proper normal subgroups of $G$ such that $HK=G$.  Then $\Ob_G(\Phi^\lhd(G)) \leq H \cap K$.  In particular, any surjective homomorphism from $G$ to a directly decomposable group must factor through the finite quotient $G/\Ob_G(\Phi^\lhd(G))$.\end{enumerate}\end{cor}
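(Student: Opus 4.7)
The plan is to handle the three parts in turn, using Theorem \ref{genob} (finiteness of $\mcK_H$ for every open $H$) and Lemma \ref{jiphilhd} (openness of $\Phi^\lhd(G)$) as the main tools.

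For (i), the idea is to reduce first to the normal case. Any open $L \leq_o G$ of index $n$ contains $\Core_G(L) \unlhd_o G$ of index dividing $n!$, and once the core is fixed there are only finitely many index-$n$ subgroups of $G$ containing it, since the enclosing quotient is finite. It therefore suffices to show that for each $m$, $G$ has only finitely many open normal subgroups of index at most $m$. Since $G$ is infinite, I choose an open $H \leq_o G$ with $|G:H| > m$; any open normal $K$ with $|G:K| \leq m$ cannot lie inside $H$, so $K \in \mcK_H$, a finite set by Theorem \ref{genob}.

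For (ii), the plan is to upgrade the hypothesis into a coherent inverse system. For each open normal subgroup $N$ of $H$, let $T_N$ be the set of pairs $(M,\phi)$ with $M \unlhd_o G$ and $\phi$ a group isomorphism $G/M \to H/N$. By hypothesis each $T_N$ is non-empty, and by part (i) (together with finiteness of the automorphism group of any finite group) each $T_N$ is finite. The transition $T_{N_2} \to T_{N_1}$ for $N_2 \subseteq N_1$ sends $(M,\phi)$ to $(M',\phi')$ where $M'/M = \phi^{-1}(N_1/N_2)$ and $\phi'$ is the induced isomorphism. A standard compactness argument for inverse limits of non-empty finite sets over a directed set produces a compatible element $(M_N,\phi_N)_N$, and the $\phi_N$ assemble into a continuous map $G \to \varprojlim H/N = H$ with kernel $\bigcap M_N$. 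Since $H$ is infinite, $\bigcap M_N$ has infinite index in $G$, and is therefore trivial by the just infinite hypothesis, so $G \cong H$.

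For (iii), set $\Phi = \Phi^\lhd(G)$, which is open by Lemma \ref{jiphilhd}. Then the collection of open normal subgroups of $G$ not contained in $\Phi$ is finite by Theorem \ref{genob}, so $\Ob_G(\Phi)$ is a finite intersection of open normal subgroups and has finite index. If $H$ were contained in $\Phi$, then picking any maximal normal subgroup $M$ of $G$ containing $K$ (possible since $K$ is proper of finite index) would yield $HK \leq \Phi M = M < G$, contradicting $HK = G$. Hence $H \not\leq \Phi$, so $H$ appears among the subgroups intersected in the definition of $\Ob_G(\Phi)$, giving $\Ob_G(\Phi) \leq H$; the same argument gives $\Ob_G(\Phi) \leq K$. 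The ``in particular'' statement follows by applying this to the preimages $\phi^{-1}(A \times 1)$ and $\phi^{-1}(1 \times B)$ of the two factors under any surjection $\phi \colon G \twoheadrightarrow A \times B$ with $A, B$ non-trivial.

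The main obstacle will be in (ii): the hypothesis supplies isomorphisms $G/M \cong H/N$ only pointwise in $N$, and the delicate step is making these choices compatibly across $N$. This is why I track the isomorphism $\phi$ as part of the data in $T_N$ rather than just the normal subgroup $M$, so that the transition maps are well-defined and the inverse-limit argument produces a genuine continuous homomorphism.
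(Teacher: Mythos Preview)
Your proof is correct and, for parts (i) and (iii), follows essentially the paper's approach: reduce to normal subgroups via cores and then apply Theorem \ref{genob} to a suitably small open subgroup; for (iii) the paper simply says the claim is immediate from the definitions and the theorem, and your argument via a maximal normal subgroup containing $K$ is exactly the routine unpacking of that.

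The genuine difference is in part (ii). The paper invokes a result from \cite{Wil}: if $G$ has only finitely many open normal subgroups of each index, then any profinite $H$ whose finite images all occur as images of $G$ is itself an image of $G$. You instead prove this directly by an inverse-limit argument, tracking the isomorphism $\phi$ alongside the normal subgroup $M$ so that the transition maps are well defined. This is precisely the standard proof of the cited result, so you are not taking a different mathematical route so much as inlining the reference. One small point you leave implicit: the assembled map $\psi \colon G \to H$ is surjective because its image hits every $H/N$ and is therefore dense, while being closed by compactness; you need this (not just injectivity) to conclude $G \cong H$, though it is immediate once stated.
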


\begin{proof}(i) Since a subgroup of index $n$ has a core of index at most $n!$, and a normal subgroup of finite index can only be contained in finitely many subgroups, it suffices to consider normal subgroups.  Suppose $G$ has an open normal subgroup $K$ of index $n$.  Then $K$ does not contain any open normal subgroup of $G$ of index $n$, other than itself.  Hence by the theorem, the set of such subgroups is finite.

(ii) By part (i), $G$ has finitely many open normal subgroups of any given index.  It is shown in \cite{Wil} that in this situation, given any profinite group $H$ such that every finite image of $H$ is isomorphic to an image of $G$, then $H$ is isomorphic to an image of $G$.  Hence there is some $N \lhd G$ such that $G/N \cong H$; since $H$ is infinite and $G$ is just infinite, $N=1$.

(iii) This is immediate from the definitions and the theorem.\end{proof}

Part (i) of the above is trivial in the case of just infinite virtually pro-$p$ groups, since they are always finitely generated; but a just infinite profinite group need not be finitely generated in general.

Theorem \ref{genob} also gives a characterisation of the hereditarily just infinite property:

\begin{cor}\label{hjicor}Let $G$ be an infinite profinite group.  Then $G$ is hereditarily just infinite if and only if $\Ob^*_G(H)$ has finite index for every open subgroup $H$ of $G$.\end{cor}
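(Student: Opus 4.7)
The plan is to use the Generalised Obliquity Theorem (Theorem \ref{genob}) as the key engine and prove both implications contrapositively.

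For the easy direction, suppose $\Ob^*_G(H)$ has finite index in $H$ for every open $H \leq G$, and assume for contradiction that some open $H$ is not just infinite, so that $H$ has a non-trivial closed normal subgroup $N$ of infinite index. Using residual finiteness of $H$, I pick an open subgroup $J \leq H$ with $N \not\leq J$. Every open normal subgroup $U$ of $H$ containing $N$ is then open in $G$, normalised by $J$, and not contained in $J$, so it lies in the collection defining $\Ob^*_G(J)$; intersecting over all such $U$ gives $\Ob^*_G(J) \leq J \cap N$. A short index computation (using $[H:JN]$ finite but $[H:N]$ infinite) shows $J \cap N$ has infinite index in $J$, contradicting the hypothesis.

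For the hard direction, assume $G$ is hereditarily just infinite, fix $H \leq_o G$, and consider
\[ \mcM = \{K \leq_o G \mid H \leq N_G(K),\; K \not\leq H\}, \qquad \mcN = \{K \cap H \mid K \in \mcM\}, \]
so $\Ob^*_G(H) = \bigcap \mcN$. A direct check shows that $\mcN$ consists of open normal subgroups of $H$, and that it is upward-closed in $\nsgp(H)$: given $N = K \cap H \in \mcN$ and any $N'$ with $N \leq N' \unlhd_o H$, the product $N'K$ again lies in $\mcM$ and satisfies $N'K \cap H = N'$ by Dedekind's identity. Since $H$ is just infinite, Lemma \ref{jiphilhd} gives $H \in \nsgp^*_\Phi$, so Corollary \ref{phichaincor}(ii) applies to $\mcN$ inside $H$: either $\mcN$ is finite, in which case $\Ob^*_G(H)$ is a finite intersection of open subgroups and so has finite index, or $\mcN$ contains an infinite descending chain.

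The main obstacle is ruling out the descending-chain case. Given $N_1 > N_2 > \cdots$ in $\mcN$ with $N_i = K_i \cap H$ and $K_i \in \mcM$, I would exploit the fact that $H$ has only finitely many open overgroups in $G$ (since $[G:H]$ is finite): passing to a subsequence, I may assume $L := HK_i$ is the same fixed open subgroup of $G$ for all $i$. Then each $K_i$ is an open normal subgroup of $L$, not contained in the open subgroup $H$ of $L$, and distinct $N_i$ force distinct $K_i$. But $L$ is just infinite, so applying Theorem \ref{genob} to $L$ with open subgroup $H$ shows that $\{K' \unlhd_o L \mid K' \not\leq H\}$ is finite, contradicting the infinitude of the $K_i$ and completing the proof.
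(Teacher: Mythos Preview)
Your proof is correct, and the decisive observation --- that there are only finitely many overgroups $L = HK$ of $H$ in $G$, each just infinite, so Theorem~\ref{genob} bounds the relevant normal subgroups of each $L$ --- is exactly the paper's idea. However, your detour through Corollary~\ref{phichaincor} in the hard direction is unnecessary: the paper simply notes that every $K \in \mcM$ satisfies $K \unlhd_o L = HK$ with $K \not\le H$, hence $K \supseteq \Ob_L(H)$, and therefore $\Ob^*_G(H) \supseteq \bigcap_{L \in \mcH} \Ob_L(H)$, a finite intersection of finite-index subgroups. This is precisely the argument you deploy to kill the descending-chain case, but it already handles all $K$ at once, so the upward-closure check and the chain dichotomy can be dropped entirely. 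Your easy direction is also correct and slightly more explicit than the paper's, which instead just invokes the equivalence (i)$\Leftrightarrow$(ii) of Theorem~\ref{genob} contrapositively inside the non-just-infinite open subgroup.
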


\begin{proof}Suppose $G$ has an open subgroup $K$ that is not just infinite.  Then by the theorem, there is an open subgroup $H$ of $K$ (and hence of $G$) that fails to contain infinitely many normal subgroups of $K$, so $\Ob^*_G(H)$ has infinite index.

Conversely, suppose $G$ is hereditarily just infinite.  Let $H$ be an open subgroup of $G$, and let $\mcH$ be the set of subgroups of $G$ containing $H$; then $\mcH$ is finite.  Let $K$ be a subgroup of $G$ such that $H \leq N_G(K)$, but such that $K \not\le H$.  Let $L= HK$.  Then $L$ is just infinite and $K$ is a normal subgroup of $L$ not containing $H$.  Hence $\Ob^*_G(H)$ contains $\bigcap_{L \in \mcH} \Ob_L(H)$; by the theorem each $\Ob_L(H)$ has finite index, and hence $\Ob^*_G(H)$ has finite index.\end{proof}

\section{A quantitative description of the just infinite property}

\begin{defn}Given a profinite group $G$, let $\Ilhd_n(G)$ denote the intersection of all open normal subgroups of $G$ of index at most $n$.  Now define $\OI_n(G)$ to be $\Ob_G(\Ilhd_n(G))$, and $\OI^*_n(G)$ to be $\Ob^*_G(\Ilhd_n(G))$.  We thus obtain functions $\ob_G$ and $\ob^*_G$ from $\bN$ to $\bN \cup \{ \infty \}$ defined by
\[ \ob_G(n) := |G:\OI_n(G)| \quad ; \quad \ob^*_G(n) := |G:\OI^*_n(G)|. \]
These are respectively the \emph{generalised obliquity function} or \emph{$\ob$-function} and the \emph{strong generalised obliquity function} or \emph{$\ob^*$-function} of $G$.  Given a function $\eta$ from $\bN$ to $\bN$, let $\mcO_\eta$ denote the class of profinite groups for which $\ob_G(n) \leq \eta(n)$ for every $n \in \bN$, and let $\mcO^*_\eta$ denote the class of profinite groups for which $\ob^*_G(n) \leq \eta(n)$ for every $n \in \bN$.\end{defn}

These functions give characterisations of the just infinite property and the hereditarily just infinite property in terms of finite images, as will be seen in the next two theorems.

\begin{lem}Let $G$ be a profinite group, and let $n$ be a positive integer.
\vspace{-12pt}
\begin{enumerate}[(i)]  \itemsep0pt
\item Let $N$ be a normal subgroup of $G$.  Then:
\[\Ilhd_n(G)N/N \leq \Ilhd_n(G/N);\]
\[\OI_n(G)N/N \leq \OI_n(G/N);\]
\[\OI^*_n(G)N/N \leq \OI^*_n(G/N).\]
\item Let $\mcN = \{N_i \mid i \in I \}$ be a set of open normal subgroups of $G$ forming a base of neighbourhoods of $1$.  Let $\pi_i$ be the quotient map from $G$ to $G/N_i$.  Then:
\[ \Ilhd_n(G) = \bigcap_{i \in I} \pi^{-1}_i(\Ilhd_n(G/N_i));\]
\[ \OI_n(G) = \bigcap_{i \in I} \pi^{-1}_i(\OI_n(G/N_i));\]
\[ \OI^*_n(G) = \bigcap_{i \in I} \pi^{-1}_i(\OI^*_n(G/N_i)).\]\end{enumerate}\end{lem}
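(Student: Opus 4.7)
The plan is to unwind the definitions of $\Ilhd_n$, $\OI_n$, and $\OI^*_n$ as intersections of open (normal) subgroups, then exploit the standard correspondence between open (normal) subgroups of $G/N$ and those of $G$ containing $N$.

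For part (i), I would start with the $\Ilhd_n$ inclusion. Open normal subgroups of $G/N$ of index at most $n$ correspond under $\pi \colon G \to G/N$ exactly to the open normal subgroups $K$ of $G$ with $K \geq N$ and $|G:K| \leq n$; this is a subfamily of the family defining $\Ilhd_n(G)$, so $\Ilhd_n(G) \leq K$ for each such $K$, and therefore $\Ilhd_n(G) N / N \leq \Ilhd_n(G/N)$. For $\OI_n$: given $K \unlhd_o G$ with $K \geq N$ and $K/N \not\leq \Ilhd_n(G/N)$, one has $K \not\leq \Ilhd_n(G)$ --- otherwise $K = KN \leq \Ilhd_n(G) N \leq \pi^{-1}(\Ilhd_n(G/N))$, i.e.\ $K/N \leq \Ilhd_n(G/N)$, a contradiction. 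Hence $K$ is one of the subgroups in the intersection defining $\OI_n(G)$, so $\OI_n(G) \leq K$; combined with the $\Ilhd_n$ inclusion this yields the desired containment after quotienting by $N$. The $\OI^*_n$ case is identical once one observes that the normaliser condition $\Ilhd_n(G/N) \leq N_{G/N}(K/N)$ is equivalent to $\pi^{-1}(\Ilhd_n(G/N)) \leq N_G(K)$, which in particular forces $\Ilhd_n(G) \leq N_G(K)$.

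For part (ii), write $\tilde{I}_i := \pi_i^{-1}(\Ilhd_n(G/N_i))$. The inclusion $\leq$ in all three equations is immediate from part (i). Conversely, every open normal $K$ of $G$ with $|G:K| \leq n$ contains some $N_i$, and for such $i$ one has $\tilde{I}_i \leq K$; intersecting over $i$ gives $\bigcap_i \tilde{I}_i \leq \Ilhd_n(G)$, proving equality. For $\OI_n$, given $x \in \bigcap_i \pi_i^{-1}(\OI_n(G/N_i))$ and any $K \unlhd_o G$ with $K \not\leq \Ilhd_n(G) = \bigcap_j \tilde{I}_j$, choose $j$ with $K \not\leq \tilde{I}_j$ and then $i$ with $N_i \leq K \cap N_j$: since refining $i$ shrinks $\tilde{I}_i$, we have $\tilde{I}_i \leq \tilde{I}_j$, so $K/N_i \not\leq \Ilhd_n(G/N_i)$, meaning $K/N_i$ appears in the intersection defining $\OI_n(G/N_i)$, and $x \in \pi_i^{-1}(\OI_n(G/N_i)) \leq K$.

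The main obstacle is the extra normaliser condition in the $\OI^*_n$ case: to conclude that $K/N_i$ appears in the intersection defining $\OI^*_n(G/N_i)$ one additionally needs $\tilde{I}_i \leq N_G(K)$. Here I would invoke profinite compactness: the closed subsets $\tilde{I}_i \cap (G \setminus N_G(K))$ form a filtered family under refinement, and their intersection equals $\Ilhd_n(G) \cap (G \setminus N_G(K)) = \emptyset$ since $\Ilhd_n(G) \leq N_G(K)$ by hypothesis; hence some $\tilde{I}_{i_0}$ already lies in $N_G(K)$. Refining $i$ to lie below $i_0$, the $j$ chosen above, and $K$ itself, all three conditions $N_i \leq K$, $K \not\leq \tilde{I}_i$, and $\tilde{I}_i \leq N_G(K)$ hold simultaneously, and the rest of the argument proceeds as in the $\OI_n$ case.
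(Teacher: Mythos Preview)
Your proof is correct and follows essentially the same approach as the paper: subgroup correspondence under quotients for part (i), and for part (ii) the reverse inclusions via refinement of the $N_i$, with the $\OI^*_n$ case handled by a compactness argument to locate an index at which $\tilde I_i \leq N_G(K)$. The paper phrases this last step slightly differently---observing that $H\,\Ilhd_n(G)$ is open and contains $\bigcap_i M_i$, so by Lemma~\ref{obchain} some $M_i$ already lies in $H\,\Ilhd_n(G) \leq N_G(H)$---but this is the same compactness idea you invoke directly.
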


\begin{proof}Let $L = \Ilhd_n(G)$, let $M/N = \Ilhd_n(G/N)$, and let $M_i/N_i = \Ilhd_n(G/N_i)$.

(i) If $H/N$ is a normal subgroup of index at most $n$ in $G/N$, then $H$ also has index at most $n$ in $G$.  This proves the first inequality, in other words $L \leq M$.

If $H/N$ is a normal subgroup of $G/N$ not contained in $M/N$, then $H$ is also not contained in $M$ and hence not in $L$.  This proves the second inequality.

If $H/N$ is a subgroup of $G/N$ that is normalised by $M/N$ but not contained in it, then $H$ is also normalised by but not contained in $M$, and hence also normalised by but not contained in $L$.  This proves the third inequality.

(ii) Given part (i), it suffices to show for each equation that the left-hand side contains the right-hand side.

If $H$ is a normal subgroup of $G$ index at most $n$, then there is some $N_i$ contained in $H$, which means that $M_i$ is contained in $H$, since $H/N_i$ has index at most $n$ in $G/N_i$.  This proves the first equation, in other words $L = \bigcap_{i \in I} M_i$.

If $H$ is a normal subgroup of $G$ not contained in $L$, then there is some $M_i$ that does not contain $H$, by the first equation.  This proves the second equation.

Let $H$ be an open subgroup of $G$ that is normalised by $L$ but not contained in it.  Then $HL$ is an open subgroup of $G$ that contains $\bigcap_{i \in I} M_i$.  By Lemma \ref{obchain}, this means that there is some $M_i$ contained in $HL$, which implies that this $M_i$ normalises $H$.  By the first equation, there is some $M_j$ not containing $H$.  Now take $M_k \leq M_i \cap M_j$, and note that $\Ob^*_{G/N_k}(M_k/N_k)$ is contained in $H$.  This proves the third equation.\end{proof}

\begin{thm}\label{obfnthmi}Let $G$ be a profinite group.  The following are equivalent:
\vspace{-12pt}
\begin{enumerate}[(i)]  \itemsep0pt
\item $G$ is finite or just infinite;
\item There is some $\eta$ for which $G$ is an $\mcO_\eta$-group;
\item There is some $\eta$ for which every image of $G$ is an $\mcO_\eta$-group;
\item There is some $\eta$, and some set of open normal subgroups $\mcN = \{N_i \mid i \in I \}$ of $G$ forming a base of neighbourhoods of $1$, such that each $G/N_i$ is an $\mcO_\eta$-group.\end{enumerate}
Moreover, (ii), (iii) and (iv) are equivalent for any specified $\eta$.\end{thm}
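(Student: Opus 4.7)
The strategy is to prove the cycle (ii) $\Rightarrow$ (iii) $\Rightarrow$ (iv) $\Rightarrow$ (ii) in such a way that the value of $\eta$ is preserved throughout, thereby obtaining the last assertion, and then to close the circle with (i) $\Leftrightarrow$ (ii) using Theorem~\ref{genob}.

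The easy directions are as follows. For (i) $\Rightarrow$ (ii), take $\eta$ constantly equal to $|G|$ if $G$ is finite; if $G$ is just infinite, part~(i) of the corollary to Theorem~\ref{genob} makes $\Ilhd_n(G)$ open, and Theorem~\ref{genob} then forces $\mcK_{\Ilhd_n(G)}$ to be finite, so $\OI_n(G)$ is a finite intersection of open subgroups and we may put $\eta(n) := \ob_G(n)$. The implication (ii) $\Rightarrow$ (iii) is immediate from part~(i) of the preceding lemma: the inclusion $\OI_n(G)N/N \leq \OI_n(G/N)$ yields $|G/N:\OI_n(G/N)| \leq |G:\OI_n(G)N| \leq |G:\OI_n(G)| \leq \eta(n)$. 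The implication (iii) $\Rightarrow$ (iv) is trivial. For (ii) $\Rightarrow$ (i), assume $G$ infinite and let $H$ be open; with $n := |G:\Core_G(H)|$ we have $\Ilhd_n(G) \leq H$, so $\mcK_H \subseteq \mcK_{\Ilhd_n(G)}$, every element of which contains the open subgroup $\OI_n(G)$. Hence $\mcK_H$ is finite and $G$ is just infinite by Theorem~\ref{genob}.

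The principal obstacle is the implication (iv) $\Rightarrow$ (ii), which must preserve $\eta$. Setting $K_i := \pi_i^{-1}(\OI_n(G/N_i))$, part~(ii) of the preceding lemma gives $\OI_n(G) = \bigcap_{i \in I} K_i$, with each $|G:K_i| = |G/N_i:\OI_n(G/N_i)| \leq \eta(n)$. The key observation is that $\{K_i\}$ is a filtered family: given $i,j \in I$, since $\mcN$ is a base there is $k \in I$ with $N_k \leq N_i \cap N_j$, and applying part~(i) of the lemma to the quotient map $G/N_k \to G/N_i$ (whose kernel is $N_i/N_k$) shows $K_k \leq K_i$, and similarly $K_k \leq K_j$. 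Suppose for contradiction that $g_1, \ldots, g_{\eta(n)+1}$ represent distinct cosets of $\OI_n(G)$. For each pair $s \neq t$, pick $i_{s,t}$ with $g_s^{-1}g_t \notin K_{i_{s,t}}$, and by filteredness find a single $k$ with $K_k \leq K_{i_{s,t}}$ for every pair. Then $g_1,\ldots,g_{\eta(n)+1}$ lie in distinct cosets of $K_k$, contradicting $|G:K_k| \leq \eta(n)$. Thus $|G:\OI_n(G)| \leq \eta(n)$, closing the cycle. The whole proof thus reduces to this directedness argument, which rules out the possibility that a filtered intersection of open subgroups of bounded index can itself have larger index than the common bound.
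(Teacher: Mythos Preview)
Your proof is correct and follows essentially the same route as the paper's: the equivalence of (ii), (iii), (iv) for fixed $\eta$ via the preceding lemma, and then (i) $\Leftrightarrow$ (ii) via Theorem~\ref{genob}. Your treatment is in fact more careful in two places: you spell out the filteredness argument for (iv) $\Rightarrow$ (ii) where the paper simply asserts it is clear from the lemma, and in (ii) $\Rightarrow$ (i) you take $n = |G:\Core_G(H)|$ rather than $|G:H|$, which is what is actually needed to guarantee $\Ilhd_n(G) \leq H$.
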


\begin{proof}Clearly (iii) implies both (ii) and (iv).  It is clear from the lemma that (ii) implies (iii), and that (ii) and (iv) are equivalent.  These implications hold for any specified $\eta$.  So it remains to show that (i) and (ii) are equivalent.

Suppose (i) holds.  Then $G$ has finitely many normal subgroups of index $n$ for any integer $n$, so $\Ilhd_n(G)$ has finite index.  It follows by Theorem \ref{genob} that $\Ob_G(\Ilhd_n(G))$ also has finite index, so $\ob_G(n)$ is finite.  This implies (ii) by taking $\eta = \ob_G$.

Suppose (i) is false.  Then by Theorem \ref{genob}, there is an open subgroup $H$ of $G$ such that $\Ob_G(H)$ has infinite index in $G$.  Now $H$ has index $h$ say, so that $\Ilhd_h(G) \leq H$.  It follows that $\OI_h(G)$ must be contained in $\Ob_G(H)$, and so $\ob_G(h) = |G:\OI_h(G)| = \infty$.  This implies that (ii) is also false.\end{proof}

For hereditarily just infinite groups, we have the following:

\begin{thm}\label{obfnthmii}Let $G$ be a profinite group.  The following are equivalent: 
\vspace{-12pt}
\begin{enumerate}[(i)]  \itemsep0pt
\item $G$ is finite or hereditarily just infinite;
\item There is some $\eta$ for which $G$ is an $\mcO^*_\eta$-group;
\item There is some $\eta$ for which every image of $G$ is an $\mcO^*_\eta$-group;
\item There is some $\eta$, and some set of open normal subgroups $\mcN = \{N_i \mid i \in I \}$ of $G$ forming a base of neighbourhoods of $1$, such that each $G/N_i$ is an $\mcO^*_\eta$-group.\end{enumerate}

Moreover, (ii), (iii) and (iv) are equivalent for any specified $\eta$.\end{thm}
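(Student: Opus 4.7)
The plan is to mirror the proof of Theorem \ref{obfnthmi} line by line, substituting $\Ob^*_G$ for $\Ob_G$ and invoking Corollary \ref{hjicor} in place of Theorem \ref{genob} at the key step. Before starting I would record two easy monotonicity facts that are specific to $\Ob^*$. First, whenever $K_1 \leq K_2$ are subgroups of $G$, one has $\Ob^*_G(K_1) \leq \Ob^*_G(K_2)$: any open $L$ with $K_2 \leq N_G(L)$ and $L \not\le K_2$ also satisfies $K_1 \leq N_G(L)$ and $L \not\le K_1$, so the defining set for $K_2$ is contained in that for $K_1$; taking intersections reverses this and combining with $K_1 \leq K_2$ gives the claim. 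Second, $\Ob^*_G(H) \leq \Ob_G(H)$ for every open $H$, since open normal $L \not\le H$ are automatically normalised by $H$; hence $\ob^*_G(n) \geq \ob_G(n)$ pointwise.

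Given these, the equivalence of (ii), (iii) and (iv) is formal: it uses only the third equation and the third inequality of the lemma preceding Theorem \ref{obfnthmi}, which are already stated for $\OI^*_n$. This reduces everything to showing (i) $\Leftrightarrow$ (ii). For (i) $\Rightarrow$ (ii) with $G$ infinite, $G$ hereditarily just infinite implies $G$ is just infinite, so by the corollary to Theorem \ref{genob} the group $G$ has finitely many open normal subgroups of each index, making $\Ilhd_n(G)$ open; then Corollary \ref{hjicor} gives that $\OI^*_n(G) = \Ob^*_G(\Ilhd_n(G))$ has finite index, so $\eta := \ob^*_G$ witnesses (ii). The case of $G$ finite is trivial with $\eta$ a constant at least $|G|$.

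For the contrapositive $\neg$(i) $\Rightarrow$ $\neg$(ii), I would split into two cases. If $G$ is infinite but not just infinite, then by Theorem \ref{obfnthmi} some $\ob_G(h) = \infty$, whence $\ob^*_G(h) = \infty$ by the pointwise inequality, so no $\eta$ can bound $\ob^*_G$. If $G$ is just infinite but not hereditarily just infinite, then by Corollary \ref{hjicor} there is an open subgroup $H$ for which $\Ob^*_G(H)$ has infinite index; setting $h = |G : \Core_G(H)|$, we have $\Ilhd_h(G) \leq \Core_G(H) \leq H$, and the first monotonicity fact yields $\Ob^*_G(\Ilhd_h(G)) \leq \Ob^*_G(H)$, so $\ob^*_G(h) = \infty$. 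The only step that is not simply bookkeeping is verifying the monotonicity of $\Ob^*$ (which is not immediate because its defining set depends on the input), but once the correct containment of defining sets is spotted the rest of the argument is formal.
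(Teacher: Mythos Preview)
Your proposal is correct and follows essentially the same approach as the paper, which simply says the proof is analogous to that of Theorem \ref{obfnthmi} with Corollary \ref{hjicor} substituted for Theorem \ref{genob}. Your explicit verification of the monotonicity of $\Ob^*_G$ and the inequality $\Ob^*_G(H) \leq \Ob_G(H)$ fills in exactly the details that the paper leaves implicit; the case split in the contrapositive direction is unnecessary (Corollary \ref{hjicor} already covers both cases at once), but it does no harm.
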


\begin{proof}The proof of this theorem is entirely analogous to that of Theorem \ref{obfnthmi}, with $\mcO^*_\eta$ in place of $\mcO_\eta$ and $\ob^*$ in place of $\ob$, and using Corollary \ref{hjicor} in place of Theorem \ref{genob}.\end{proof}

The definition of $\ob$-functions and $\ob^*$-functions leads to the following general question:

\begin{que}Which functions from $\bN$ to $\bN$ can occur as $\ob$-functions or $\ob^*$-functions for (hereditarily) just infinite profinite groups?  What growth rates are possible?\end{que}

\begin{rem}In more specific contexts, the subgroups $\Ilhd_n(G)$ in the definition of (strong) generalised obliquity functions can be replaced with various other characteristic open series, and Theorems \ref{obfnthmi} and \ref{obfnthmii} would remain valid, with essentially the same proof.  For instance, in case of pro-$p$ groups, one could use lower central exponent-$p$ series, and in the case of prosoluble groups with no infinite soluble images, one could use the derived series.\end{rem}

As an illustration, consider a pro-$p$ group $G$ of finite obliquity $o$.  As mentioned in \cite{BGJMS}, this also implies that there is some constant $w$ such that $|\gamma_i(G):\gamma_{i+1}(G)| \leq w$ for all $i$.  It is proved in \cite{BGJMS} that the condition of finite obliquity is equivalent to the following:

There exists a constant $c$ such that for every normal subgroup $N$ of $G$, and for every normal subgroup $M$ not contained in $N$, we have $|N:N \cap M| \leq p^c$.

Lower and upper bounds for $\ob_G$ can easily be derived in terms of these invariants, from which follows a characterisation of the pro-$p$ groups $G$ for which $\ob_G$ is bounded by a linear function.

\begin{prop}\
\vspace{-12pt}
\begin{enumerate}[(i)]  \itemsep0pt 
\item The $\ob$-function of $\bZ_p$ is given by $\ob_{\bZ_p}(n) = p^s$, where $s$ is the largest integer such that $p^s \leq n$.  In particular $\ob_{\bZ_p}(n) \leq n$ for all $n$.
\item Let $G$ be a pro-$p$ group of finite obliquity, with invariants as described above.  Then 
\[ \ob_G(p^e) \leq p^{e+c+w+o-2} \]
for all non-negative integers $e$.  In particular, there is a constant $k$ such that $\ob_G(n) \leq kn$ for all $n$.
\item Let $G$ be a pro-$p$ group for which there is a constant $k$ such that $\ob_G(n) \leq kn$ for all $n$.  Then either $G \cong \bZ_p$, or $G$ has obliquity at most $\log_p(k)$.\end{enumerate}\end{prop}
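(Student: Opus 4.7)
The plan is to handle the three parts in sequence, each building on the definitions of $\Ilhd_n$, $\Ob_G$ and $\ob_G$ established just above.

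For (i), I would compute directly from the fact that the open normal subgroups of $\bZ_p$ are exactly $p^i\bZ_p$ for $i \geq 0$. Given $n \geq 1$, let $s$ be the largest integer with $p^s \leq n$; then $\Ilhd_n(\bZ_p) = p^s\bZ_p$, since this is the smallest open subgroup of index at most $n$. The open normal subgroups not contained in $p^s\bZ_p$ are precisely $p^i\bZ_p$ for $i < s$, whose intersection is $p^{s-1}\bZ_p$ (or all of $\bZ_p$ under the empty-intersection convention when $s = 0$). In either case $\OI_n(\bZ_p) = p^s\bZ_p$, so $\ob_{\bZ_p}(n) = p^s \leq n$.

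For (ii), I would set $H = \Ilhd_{p^e}(G)$ and sandwich it between consecutive terms of the lower central series. Let $i$ be the largest integer with $|G:\gamma_{i+1}(G)| \leq p^e$; the width bound $|\gamma_{i+1}(G):\gamma_{i+2}(G)| \leq p^w$ then forces $|G:\gamma_{i+1}(G)| > p^{e-w}$, while any $K \unlhd_o G$ with $K \leq \gamma_{i+1}(G)$ and $|G:K| \leq p^e$ must satisfy $|\gamma_{i+1}(G):K| \leq p^{w-1}$. Splitting the normal subgroups of index $\leq p^e$ defining $H$ into those containing $\gamma_{i+1}(G)$ and those contained in it, and then applying the $c$-condition $|N:N\cap M| \leq p^c$ (valid whenever $M \not\le N$) together with the obliquity bound $|\gamma_{i+1}(G):\Ob_G(\gamma_{i+1}(G))| \leq p^o$, should yield the estimate $|G:\Ob_G(H)| \leq p^{e+c+w+o-2}$. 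The linear bound on $\ob_G(n)$ then follows because $\ob_G$ is non-decreasing (since $\Ilhd_n(G)$ is non-increasing in $n$ and $\Ob_G$ preserves order), so for any $n$ with $p^{e-1} < n \leq p^e$ we get $\ob_G(n) \leq p^{e+c+w+o-2} \leq p^{c+w+o-1}n$.

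For (iii), I would first apply Theorem \ref{obfnthmi} to conclude that $G$ is finite or just infinite; assuming $G$ is infinite, it is just infinite. If $G$ is abelian, then $G \cong \bZ_p$ (the only just infinite abelian pro-$p$ group, by the classification of finitely generated abelian pro-$p$ groups). Otherwise $G$ is non-abelian, and I would rule out nilpotency: a just infinite nilpotent pro-$p$ group has non-trivial centre which must then be open by just-infiniteness, so Theorem \ref{cenbyfin} forces $G'$ finite, hence $G' = 1$ (as $\Fin(G) = 1$), contradicting non-abelianness. Consequently every $\gamma_i(G)$ is a non-trivial normal subgroup and hence open, so obliquity is defined. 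Now fix $i$ and set $n = |G:\gamma_{i+1}(G)|$; since $\gamma_{i+1}(G)$ is among the subgroups of index $\leq n$, we have $\Ilhd_n(G) \leq \gamma_{i+1}(G)$, and monotonicity of $\Ob_G$ (immediate from its definition) gives $\Ob_G(\Ilhd_n(G)) \leq \Ob_G(\gamma_{i+1}(G))$. Dividing indices yields
\[ |\gamma_{i+1}(G):\Ob_G(\gamma_{i+1}(G))| \;\leq\; |\gamma_{i+1}(G):\Ob_G(\Ilhd_n(G))| \;=\; \ob_G(n)/n \;\leq\; k, \]
and hence $o_i(G) \leq \log_p(k)$ for every $i$, as required.

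The principal difficulty lies in part (ii): precisely combining the width, obliquity and $c$-condition contributions to recover the exponent $e+c+w+o-2$ requires care in separating the two flavours of normal subgroups of index $\leq p^e$ (those containing $\gamma_{i+1}(G)$ versus those lying inside it), and in tracking the constants through both the estimate on $|G:H|$ and the estimate on $|H:\Ob_G(H)|$ without slack.
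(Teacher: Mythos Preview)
Your treatments of (i) and (iii) are fine and match the paper's argument; in (iii) you supply a self-contained reason why the lower central subgroups are open when $G\not\cong\bZ_p$, whereas the paper just cites \cite{BGJMS}, but the core index computation $|\gamma_{i+1}(G):\Ob_G(\gamma_{i+1}(G))|\leq \ob_G(h)/h\leq k$ is identical.

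The problem is in (ii). Your proposed dichotomy --- splitting the open normal subgroups $K$ of index at most $p^e$ into those containing $\gamma_{i+1}(G)$ and those contained in it --- is not exhaustive: in a pro-$p$ group there is no reason for an arbitrary normal subgroup $K$ to be comparable with a given lower-central term, even under the finite obliquity and finite $c$ hypotheses (which only say the lattice of normal subgroups is \emph{narrow}, not a chain). So the split breaks down, and with it the announced sandwiching of $H=\Ilhd_{p^e}(G)$ between consecutive lower-central terms. Even granting the dichotomy, you would still need to control the intersection of possibly many $K\leq\gamma_{i+1}(G)$, not just each one individually, and your sketch does not indicate how.

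The paper avoids this by a two-pass argument that never appeals to comparability. First, any $N\unlhd_o G$ with $|G:N|\leq p^e$ is not properly contained in the first lower-central term $\gamma_r(G)$ of index $\geq p^e$ (a pure index comparison), hence $N\supseteq\Ob_G(\gamma_r(G))$; this gives $|G:\Ilhd_{p^e}(G)|\leq p^{e+w+o-1}$. Second, for any $M\unlhd_o G$ with $M\not\leq\Ilhd_{p^e}(G)$, choose $K$ of index $\leq p^e$ with $M\not\leq K$ and apply the $c$-condition to the pair $(K,M)$ to get $|G:M|\leq p^{e+c-1}$, hence $M\supseteq\Ilhd_{p^{e+c-1}}(G)$; feeding this back into the first step yields $\ob_G(p^e)\leq p^{e+c+w+o-2}$. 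Your ingredients (width bound, obliquity bound, $c$-condition) are the right ones, but they need to be deployed in this order rather than through a comparability split.
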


\begin{proof}(i) This is immediate from the definitions.

(ii) Let $N$ be a normal subgroup of $G$ of index at most $p^n$.  Then $N$ is not properly contained in any lower central subgroup that has index at least that of $N$; the first such, say $\gamma_r(G)$, has index at most $p^{e+w-1}$.  Hence $\Ilhd_{p^e}(G)$ contains $\Ob_G(\gamma_r(G))$, which has index at most $p^{e+w+o-1}$.

Now let $M$ be a normal subgroup of $G$ not contained in $\Ilhd_{p^e}(G)$.  Then there is a normal subgroup $K$ of $G$ of index at most $p^e$ such that $K$ does not contain $M$.  Hence $M$ properly contains a normal subgroup $M \cap K$ of $G$ of index at most $p^{e+c}$.  In particular, $M$ is of index at most $p^{e+c-1}$, so contains $\Ilhd_{p^{e+c-1}}(G)$.  Thus $\OI_{p^e}(G)$ contains $\Ilhd_{p^{e+c-1}}(G)$, a subgroup of index at most $p^{e+c+w+o-2}$.

(iii) By Theorem \ref{obfnthmi}, $G$ is finite or just infinite.  We may assume $G$ is not $\bZ_p$, which ensures that all lower central subgroups are open (see \cite{BGJMS}).  Let $H$ be a lower central subgroup, of index $h$ say.  Then $H$ contains $\Ilhd_{h}(G)$, so $\Ob_G(H)$ contains $\OI_{h}(G)$, which in turn is a subgroup of $G$ of index at most $kh$.  Hence $|H:\Ob_G(H)|$ is at most $k$.\end{proof}

Now consider self-reproducing profinite branch groups, in the sense defined in Section \ref{jiintro}.

\begin{prop}Let $G$ be a just infinite profinite branch group acting on the rooted tree $T$, such that $G$ is self-reproducing at some vertex $v$.  Then there is a constant $c$ such that $\ob_G(n) \leq c^n$ for all $n$.\end{prop}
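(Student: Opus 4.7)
My plan is to combine the branch property of $G$, which gives access to the rigid level stabilisers $B_m := \prod_{w \in V_m} L^G_w$ (where $V_m$ denotes the set of vertices at level $m$), with the self-reproducing structure at $v$, which makes the individual subgroups $L^G_w$ uniformly controllable. I would locate a rigid stabiliser $B_{m(n)}$ inside $\OI_n(G)$ for some $m(n)$ growing very slowly with $n$, and then bound $[G : B_{m(n)}]$ to get $\ob_G(n) \leq c^n$.

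First I would verify that $L^G_v$ is itself just infinite. The branch property gives $[U^G_v : L^G_v] < \infty$, and $U^G_v \cong G$ by self-reproducing; so if $N \unlhd L^G_v$ is non-trivial, its normal closure $N^G$ in $G$ contains the direct product $\prod_{w \in V_{|v|}} (N)^{g_w}$ across the $G$-orbit of $v$ (with $g_w$ carrying $v$ to $w$), and just-infiniteness of $G$ forces $[L^G_v : N]$ to be finite. Consequently $L^G_v$ has no proper non-trivial quotient of order less than some fixed integer $p_0 = p_0(G) \geq 2$.

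The main lemma I would prove is: there is a constant $C = C(G)$ such that every $K \unlhd_o G$ with $[G : K] \leq n$ contains $B_m$ whenever $|V_m| > C \log n$. The intersection $K \cap B_m$ is a normal subgroup of $B_m = \prod_{w \in V_m} L^G_w$ of index at most $n$, and spherical transitivity makes the projections $\pi_w(K \cap B_m)$ to the factors $G$-conjugate, so they have a common index $p$ in $L^G_v$ with $p^{|V_m|} \leq n$. Once $|V_m| > \log_2 n$, $p = 1$ and the projections are surjective; a Goursat-type analysis combined with $G$-invariance then forces $K \cap B_m$ to be a block-diagonal subdirect subgroup determined by a $G$-invariant partition of $V_m$ into blocks, and each non-trivial block of size $b$ contributes at least $p_0^{b-1}$ to $[B_m : K \cap B_m]$. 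Thus once $|V_m|$ exceeds $C \log n$ for suitably large $C$, the constraint $[B_m : K \cap B_m] \leq n$ forces $K \cap B_m = B_m$.

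Combining this with Theorem \ref{genob}, which ensures $\{K \unlhd_o G : K \not\leq \Ilhd_n(G)\}$ is a finite set, and extending the analysis to bound the rigidity level of each exceptional $K$ in that set, I would conclude that $\OI_n(G) \supseteq B_{m(n)}$ for some $m(n)$ with $|V_{m(n)}| = O(\log n)$. Since self-reproducing forces $|V_m|$ to grow at most exponentially in $m$ (the orbit size at iterated self-reproducing levels $i \cdot |v|$ equals $[G : \St_G(v)]^i$), we obtain $m(n) = O(\log \log n)$; the estimate $[G : B_m] \leq |V_m| \cdot e^{|V_m|}$, with $e = [U^G_v : L^G_v]$ finite by the branch property, then yields $[G : \OI_n(G)] \leq c^n$ easily. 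The main technical obstacle will be the Goursat-type step: rigorously classifying the $G$-invariant subdirect subgroups of $B_m$ under the transitive $G$-action on $V_m$, and ensuring that each non-trivial block in the resulting partition contributes the claimed exponential factor to the index.
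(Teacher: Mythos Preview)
Your approach via rigid level stabilisers $B_m$ and a Goursat-type classification is different from the paper's, but the Goursat step contains a genuine gap. The claim that a $G$-invariant normal subdirect subgroup of $B_m = \prod_w L^G_w$ with surjective projections must be ``block-diagonal'' with each block of size $b$ contributing at least $p_0^{b-1}$ to the index is not correct. For instance, if $L = L^G_v$ admits a $\St_G(v)$-invariant surjection $\phi : L \to C_p$ (equivalently, $L/[L,\St_G(v)]L^p \neq 1$), then the kernel $H$ of the ``sum'' map $(l_w)_w \mapsto \sum_w \phi_w(l_w)$ is a $G$-invariant normal subgroup of $B_m$ with surjective projections and index exactly $p$, independent of $|V_m|$. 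Taking $K = H$ gives $[G:K] = p\,[G:B_m]$ while $K \not\supseteq B_m$, so the bound ``$|V_m| > C\log n$ forces $K \supseteq B_m$'' fails. The argument that $L^G_v$ is just infinite is also incomplete: just-infiniteness of $G$ gives $[G : N^G] < \infty$, hence $[L^G_v : N^G \cap L^G_v] < \infty$, but this does not yield $[L^G_v : N] < \infty$. Finally, the ``extending the analysis'' step for normal subgroups $K \not\leq \Ilhd_n(G)$ is left entirely unspecified; finiteness of this set (Theorem~\ref{genob}) gives no uniform control on the depth at which such $K$ contain a rigid stabiliser.

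The paper avoids all of this by working with the level stabilisers $\St_G(m)$ rather than $B_m$. It introduces $f(n)$ defined by $\Ob_G(\St_G(n)) \geq \St_G(n+f(n))$, and uses self-reproduction at the vertex $v$ of norm $k$ to prove the recursive inequality $f(k+n) \leq \max\{f(k),f(n)\}$: if $K \unlhd G$ is not contained in $\St_G(k+n)$, then the automorphisms of $T_v$ it induces form a normal subgroup of $U^G_v \cong G$ not contained in $\St_V(n)$, hence containing $\St_V(n+f(n))$, and transitivity on level $k$ then forces $K \geq \St_G(k+n+f(n))$. This gives $f(n) \leq r$ for all $n$, with $r$ a constant, and then $\OI_n(G) \geq \St_G(l(n)+2r+1)$ where $[G:\St_G(l(n))] < n$; a final application of self-reproduction bounds $|\St_G(l(n)):\St_G(l(n)+2r+1)|$ uniformly over $l(n)$, yielding the exponential bound. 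No subdirect-product analysis is needed.
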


\begin{proof}Since $G$ is a just infinite profinite group, and the subgroups $\St_G(n)$ are all open in $G$, we can define a function $f$ from $\bN$ to $\bN$ by the property that $\Ob_G(\St_G(n))$ contains $\St_G(n+f(n))$ but not $\St_G(n+f(n)-1)$, for all $n$.  Suppose $|v|=k$, and consider a normal subgroup $K$ of $G$ not contained in $\St_G(k+n)$ for some integer $n$.  If $K$ is not contained in $\St_G(k)$, then it contains $\St_G(k+f(k))$.  Otherwise, there is some vertex $u$ of norm $k$ such that $K$ acts non-trivially on $(T_u)_{n}$; since $G$ is spherically transitive, we may take $u=v$.  This means that $K/\St_K(T_v)$ contains $\Ob_V(\St_V(n))$, where $V = U^G_v$; since $V \cong G$ as groups of tree automorphisms, we have in turn $\Ob_V(\St_V(n)) \geq \St_V(n+f(n))$.  Since $K$ is normal in $G$, it follows that $K$ induces all automorphisms of $T$ occurring in $G$ that fix the layers up to $k+n+f(n)$, and hence $K$ contains $\St_G(k+n+f(n))$.  Thus $\Ob_G(\St_G(k+n))$ contains $\St_G(k+f(k)) \cap \St_G(k+n+f(n))$, which means that $f(k+n) \leq \max \{f(n),f(k)\}$.  By induction on $n$, this implies $f(n) \leq r$ for all $n$, where $r = \max_{1 \leq i \leq k} f(i)$.

Let $N$ be a normal subgroup of index at most $n$, where $n \geq 2$.  Let $l(n)$ be the greatest integer such that $\St_G(l(n))$ has index less than $n$.  Then $N$ is not properly contained in $\St_G(l(n)+1)$, so it contains $\St_G(l(n)+1+f(l(n)+1))$, and hence $\Ob_G(N)$ contains $\St_G(l(n)+1+2f(l(n)+1))$, which in particular contains $\St_G(l(n)+2r+1)$.  Hence $\OI_n(G)$ contains $\St_G(l(n)+2r+1)$.  This means that
\begin{align*}
\ob_G(n) &\leq |G:\St_G(l(n))||\St_G(l(n)):\St_G(l(n)+2r+1)| \\
&\leq n|\St_G(l(n)):\St_G(l(n)+2r+1)|.
\end{align*}
By applying the self-reproducing property of $G$ repeatedly, we obtain an embedding 
\[ \frac{\St_G(l(n))}{\St_G(l(n)+2r+1)} \hookrightarrow \frac{\St_G(t)}{\St_G(t+2r+1)} \times \dots \times \frac{\St_G(t)}{\St_G(t+2r+1)},\]
where $t$ is the integer in the interval $(0,k]$ such that $l(n) \equiv t$ modulo $k$, and the direct factors on the right correspond to the vertices of $T$ of norm $l(n)$ descending from a given vertex of norm $t$.  Since $G$ is spherically transitive, there are less than $n$ vertices of $T$ of norm $l(n)$, so that
\[ \ob_G(n) \leq n(\max_{0 < t \leq k} |\St_G(t):\St_G(t+2r+1)|)^n \]
from which the result follows.\end{proof}

We also consider how the $\ob$-function and $\ob^*$-function of a just infinite profinite group $G$ relate to those of its open normal subgroups.

\begin{lem}\label{obineqlem}Let $G$ be a profinite group, and let $H$ be a subgroup of $G$ of index $h$.  Then:
\vspace{-12pt}
\begin{enumerate}[(i)]  \itemsep0pt
\item $\Ilhd_n(G) \geq \Ilhd_n(H) \geq \Ilhd_{tn^h}(G)$, where $t=|G:\Core_G(H)|$;
\item If $G$ is just infinite, then $\Ilhd_{hn}(G) \geq \Ilhd_n(H)$ for sufficiently large $n$.\end{enumerate}\end{lem}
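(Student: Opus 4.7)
The plan is to prove part (i) as two separate inclusions via straightforward index arithmetic, and then part (ii) by invoking the finite generalised obliquity guaranteed by Theorem \ref{genob}.

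For the first inequality in (i), namely $\Ilhd_n(G) \ge \Ilhd_n(H)$, I would argue as follows: for any $N \unlhd_o G$ with $|G:N| \le n$, the intersection $N \cap H$ is normal in $H$ with $|H:N\cap H| = |NH:N| \le |G:N| \le n$, so $\Ilhd_n(H) \le N \cap H \le N$. Intersecting over all such $N$ yields the claim.

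For the second inequality $\Ilhd_n(H) \ge \Ilhd_{tn^h}(G)$, write $C = \Core_G(H)$, so $|G:C| = t$. For each $K \unlhd_o H$ of index at most $n$, set $M := K \cap C$. Since $K$ and $C$ are both normal in $H$, $M$ is normal in $H$, and in particular normal in $C$, with $|C:M| \le |H:K| \le n$. Since $H \le N_G(M)$, the group $M$ has at most $|G:H| = h$ distinct $G$-conjugates; each conjugate is of the form $M^g = K^g \cap C$ and is still normal in $C$ of index at most $n$. The intersection of these conjugates is $\Core_G(M) = \Core_G(K) \cap C$, and because the $M^g$ are all normal in $C$, the index satisfies
\[ |C:\Core_G(M)| \le n^h, \quad\text{so}\quad |G:\Core_G(M)| \le tn^h. \]
Since $\Core_G(M) \le \Core_G(K) \le K$, each such $K$ contains the open $G$-normal subgroup $\Core_G(M)$ of index at most $tn^h$. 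Intersecting over all $K \unlhd_o H$ of index $\le n$ gives $\Ilhd_{tn^h}(G) \le \Ilhd_n(H)$.

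For (ii), Theorem \ref{genob} tells us that the set $\mathcal{B} = \{N \unlhd_o G \mid N \not\le H\}$ is finite, so $B := \bigcap \mathcal{B}$ is an open normal subgroup of $G$; let $b = |G:B|$. Fix any $N \unlhd_o G$ with $|G:N| \le hn$. If $N \le H$, then $N \unlhd H$ and $|H:N| = |G:N|/h \le n$, so $\Ilhd_n(H) \le N$ for free. If $N \not\le H$, then $N \in \mathcal{B}$, hence $N \ge B$; and since $B \cap H \unlhd H$ with $|H:B\cap H| \le b$, for any $n \ge b$ the subgroup $B \cap H$ appears in the intersection defining $\Ilhd_n(H)$, giving $\Ilhd_n(H) \le B \cap H \le B \le N$. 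Intersecting over all such $N$ yields $\Ilhd_n(H) \le \Ilhd_{hn}(G)$ for every $n \ge b$.

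The main conceptual hurdle is part (ii), which is where the just-infinite hypothesis genuinely bites: without it, one cannot assert finiteness of $\mathcal{B}$, and the argument collapses. The delicate point in (i) is the bookkeeping that yields the sharper bound $tn^h$ rather than the naive $(hn)^h$: one must restrict attention to the subgroup $C = \Core_G(H)$, rather than conjugating $K$ inside all of $G$, to extract the factor $t$ cleanly.
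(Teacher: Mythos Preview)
Your proof is correct and follows essentially the same approach as the paper: part (i) is argued identically via index bookkeeping through $\Core_G(H)$, and part (ii) uses the finiteness of $\{N \unlhd_o G \mid N \not\le H\}$ from Theorem~\ref{genob} just as the paper does. Your packaging of (ii) is in fact slightly cleaner, since you work directly with $B = \bigcap \mathcal{B}$ rather than the paper's detour through an auxiliary integer $m$, the subgroup $\Ilhd_{hm}(G) \cap H$, and a further open normal subgroup $L$ inside it.
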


\begin{proof}
(i) Let $n$ be a positive integer, and let $K$ be a normal subgroup of $G$ of index at most $n$.  Then $H \cap K$ is a normal subgroup of $H$ of index at most $n$.  Hence $\Ilhd_n(G) \geq \Ilhd_n(H)$.  On the other hand, let $L$ be a normal subgroup of $H$ of index at most $n$.  Then $M = L \cap \Core_G(H)$ has index at most $n$ in $\Core_G(H)$, and $M$ has at most $h$ conjugates in $G$, all of which are contained in $\Core_G(H)$, so that $\Core_G(M)$ has index at most $n^h$ in $\Core_G(H)$, and hence index at most $tn^h$ in $G$.  Thus every normal subgroup of $H$ of index at most $n$ contains a normal subgroup of $G$ of index at most $tn^h$, so $\Ilhd_n(H) \geq \Ilhd_{tn^h}(G)$.

(ii) If $G$ is just infinite, there is some integer $m$ such that $H$ contains every normal subgroup of $G$ of index greater than $hm$, by Theorem \ref{genob}.  In addition, $\Ilhd_{hm}(G) \cap H$ is open in $G$, so contains an open normal subgroup $L$ of $G$.  Now let $n$ be any integer at least $|H:L|$.  Then for every normal subgroup $K$ of $G$ of index at most $hn$, then either $K$ is already a normal subgroup of $H$ of index at most $n$, or $K$ contains $\Ilhd_{hm}(G) \cap H$ and hence $K$ contains $L$; in either case, $K$ contains $\Ilhd_n(H)$.  Hence $\Ilhd_{hn}(G)$ contains $\Ilhd_n(H)$ as required.\end{proof}

\begin{thm}\label{obineq}Let $G$ be a just infinite profinite group, and let $H$ be a subgroup of $G$ of index $h$.
\vspace{-12pt}
\begin{enumerate}[(i)] \itemsep0pt
\item The following inequality holds for sufficiently large $n$:
\[ \ob_H(n) \geq h^{-1} \ob_G(hn). \]
\item Let $t=|G:\Core_G(H)|$. The following inequality holds for all $n$:
\[\ob^*_H(n) \leq h^{-1}\ob^*_G(tn^h).\]
\item For a given $n$, let $\mcI_n$ be the set of subgroups of $G$ containing $\Ilhd_n(G)$.  The following inequality holds:
\[\ob^*_G(n) \leq \prod_{L \in \mcI_n} |G:L|\ob_L(n).\]\end{enumerate}\end{thm}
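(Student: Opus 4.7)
The common strategy for all three parts is to rephrase the claimed index inequality as a containment of closed subgroups of $G$ and then derive the containment from the relevant comparison of $\Ilhd_n$ subgroups supplied by Lemma \ref{obineqlem}. Using $|G{:}X| = h|H{:}X|$ for closed $X \leq H$, parts (i) and (ii) reduce to showing
\[ \OI_n(H) \leq \OI_{hn}(G) \qquad \text{and} \qquad \OI^*_{tn^h}(G) \leq \OI^*_n(H) \]
respectively; part (iii) reduces, via the standard bound $|G : \bigcap_i X_i| \leq \prod_i |G:X_i|$, to showing $\bigcap_{L \in \mcI_n} \OI_n(L) \leq \OI^*_n(G)$.

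For part (i), Lemma \ref{obineqlem}(ii) gives $\Ilhd_n(H) \leq \Ilhd_{hn}(G)$ for sufficiently large $n$, so $\OI_n(H) \leq \Ilhd_{hn}(G)$ is immediate. I then fix $M \unlhd_o G$ with $M \not\leq \Ilhd_{hn}(G)$ and show $\OI_n(H) \leq M$. The proof of Lemma \ref{obineqlem}(ii) supplies a bound $hm$ such that every $M \unlhd_o G$ of index exceeding $hm$ lies in $H$, allowing a case split. If $M \leq H$, then $M \unlhd_o H$ is not contained in $\Ilhd_n(H)$ (else $M \leq \Ilhd_{hn}(G)$), so $\OI_n(H) \leq M$ by definition of $\Ob_H$. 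Otherwise $|G:M| \leq hm$, and for $n \geq m$ one has $M \geq \Ilhd_{hm}(G) \geq \Ilhd_{hn}(G) \geq \OI_n(H)$.

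Part (ii) is purely formal: Lemma \ref{obineqlem}(i) gives $\Ilhd_{tn^h}(G) \leq \Ilhd_n(H)$, and any open $L \leq H$ that is normalised by $\Ilhd_n(H)$ but not contained in it is then open in $G$, normalised by $\Ilhd_{tn^h}(G)$, and not contained in $\Ilhd_{tn^h}(G)$, so qualifies for the intersection defining $\Ob^*_G(\Ilhd_{tn^h}(G))$. For part (iii), the auxiliary fact I need is that $\Ilhd_n(L) \leq \Ilhd_n(G)$ for every closed $L \leq G$: given $N \unlhd_o G$ with $|G:N| \leq n$, the intersection $N \cap L$ is open normal in $L$ with index at most $n$, whence $\Ilhd_n(L) \leq N \cap L \leq N$. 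Given this, I take $x \in \bigcap_{L \in \mcI_n} \OI_n(L)$ and any $M \leq_o G$ normalised by $\Ilhd_n(G)$ with $M \not\leq \Ilhd_n(G)$; setting $L := M\Ilhd_n(G) \in \mcI_n$ makes $M$ an open normal subgroup of $L$ that is not contained in $\Ilhd_n(L)$, forcing $x \in \OI_n(L) \leq M$ and hence $x \in \OI^*_n(G)$.

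The main obstacle is part (i), the only place where just-infiniteness of $G$ enters in an essential way, via the finiteness of $\{K \unlhd_o G : K \not\leq H\}$; this is what produces the cutoff $m$ and forces the qualification ``sufficiently large $n$''. Parts (ii) and (iii) are then largely definition-chasing once the correct $\Ilhd$-comparisons have been identified.
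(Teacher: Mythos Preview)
Your proposal is correct and follows essentially the same route as the paper: the reductions to the subgroup containments $\OI_n(H)\leq\OI_{hn}(G)$, $\OI^*_{tn^h}(G)\leq\OI^*_n(H)$, and $\bigcap_{L\in\mcI_n}\OI_n(L)\leq\OI^*_n(G)$ are exactly what the paper establishes. The only cosmetic difference is in part~(i), where the paper absorbs your case split (``$M\leq H$'' versus ``$M\not\leq H$'') into a single containment by intersecting with $\Ob_G(H)$, which by definition is contained in every $M\unlhd_o G$ with $M\not\leq H$; your explicit split and the paper's use of $\Ob_G(H)$ encode the same dichotomy.
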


\begin{proof}For part (i), we may assume that $n$ is large enough that $\Ob_G(H) \geq \Ilhd_{hn}(G) \geq \Ilhd_n(H)$, by Lemma \ref{obineqlem}.  The claimed inequalities are demonstrated by the relationships between subgroups given below:

\begin{align*}\OI_n(H) &= \Ilhd_n(H) \cap \bigcap \{ N \unlhd_o H \mid N \not\le \Ilhd_n(H) \} \\
&\leq \Ilhd_{hn}(G) \cap \Ob_G(H) \cap \bigcap \{ N \unlhd_o H \mid N \not\le \Ilhd_{hn}(G) \} \\
&\leq \Ilhd_{hn}(G) \cap \bigcap \{ N \unlhd_o G \mid N \not\le \Ilhd_{hn}(G) \} \\
&= \OI_{hn}(G).\\
\OI^*_n(H) &= \Ilhd_n(H) \cap \bigcap \{ L \leq_o H \mid \Ilhd_n(H) \leq N_H(L), \; L \not\le \Ilhd_n(H) \} \\
&\geq \Ilhd_{tn^h}(G) \cap \bigcap \{ L \leq_o G \mid \Ilhd_{tn^h}(G) \leq N_G(L), \; L \not\le \Ilhd_{tn^h}(G) \} \\
&= \OI^*_{tn^h}(G).\\
\OI^*_n(G) &= \Ilhd_n(G) \cap \bigcap \{ H \leq_o G \mid \Ilhd_n(G) \leq N_G(H), \; H \not\le \Ilhd_n(G) \} \\
&\geq \bigcap_{L \in \mcI_n} \Ilhd_n(L) \cap \bigcap_{L \in \mcI_n} \bigcap \{ H \unlhd_o L \mid H \not\le \Ilhd_n(L) \}\\
&= \bigcap_{L \in \mcI_n} \OI^*_n(L). \qedhere \end{align*}\end{proof}

\section{Isomorphism types of normal sections and open subgroups}

Another consequence of generalised obliquity concerns non-abelian normal sections of a just infinite profinite group $G$.  In sharp contrast to the case of abelian normal sections, a given isomorphism type of non-abelian finite group can occur only finitely many times as a normal section of $G$.  In fact, more can be said here, as will be seen in Theorem \ref{normseclass}.

\begin{prop}Let $G$ be a just infinite profinite group.  Let $M$ and $N$ be open normal subgroups of $G$ such that $N \leq M$, and let $H$ be an open subgroup of $G$, with $\Core_G(H)$ of index $h$.  Then at least one of the following holds:
\vspace{-12pt}
\begin{enumerate}[(i)] \itemsep0pt
\item $M/N$ is abelian;
\item $H$ contains both $M$ and $C_G(M/N)$;
\item $M$ contains the open subgroup $\Ob_G(\Ob_G(H))$, and so $|G:M| \leq \ob_G(\ob_G(h))$.\end{enumerate}\end{prop}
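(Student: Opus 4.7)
The plan is to prove the contrapositive: assume (i) and (ii) both fail and deduce (iii). The failure of (ii) means that at least one of $M$ or $C_G(M/N)$ fails to be contained in $H$; note that $C_G(M/N)$ is open normal in $G$, since $M/N$ is finite and $G$ acts on it through a finite automorphism group. The engine of the argument is a simple monotonicity property of $\Ob_G$ on open subgroups: if $H_1 \leq H_2$ then $\Ob_G(H_1) \leq \Ob_G(H_2)$, because $\Ob_G(H_1) \leq H_1 \leq H_2$ and every open normal $K \not\leq H_2$ also fails to lie in $H_1$, hence contains $\Ob_G(H_1)$.

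Now split into two cases. If $M \not\leq H$, then $M$ belongs to the family defining $\Ob_G(H)$, giving $\Ob_G(\Ob_G(H)) \leq \Ob_G(H) \leq M$. Otherwise $M \leq H$ but $C_G(M/N) \not\leq H$; then $\Ob_G(H) \leq C_G(M/N)$ by the same token. Since (i) fails, $M/N$ is non-abelian, so $M \not\leq C_G(M/N)$ and hence $M \not\leq \Ob_G(H)$. This puts $M$ into the family of open normal subgroups defining $\Ob_G(\Ob_G(H))$, again yielding $\Ob_G(\Ob_G(H)) \leq M$.

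It remains to translate this containment into the promised bound $|G:M| \leq \ob_G(\ob_G(h))$. Since $\Core_G(H)$ has index $h$, we have $\Ilhd_h(G) \leq \Core_G(H) \leq H$, and monotonicity of $\Ob_G$ yields $\OI_h(G) = \Ob_G(\Ilhd_h(G)) \leq \Ob_G(H)$. The subgroup $\OI_h(G)$ is open and normal of index $\ob_G(h)$ in $G$, so it contains $\Ilhd_{\ob_G(h)}(G)$. A second application of monotonicity gives $\OI_{\ob_G(h)}(G) \leq \Ob_G(\Ob_G(H)) \leq M$, whence $|G:M| \leq \ob_G(\ob_G(h))$. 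The only real obstacle is recognising the monotonicity of $\Ob_G$ and iterating it correctly; the rest is a short two-case argument together with a routine conversion between $\Ob_G$ on an arbitrary open subgroup and the index function $\ob_G$ defined through the chain $\Ilhd_n(G)$.
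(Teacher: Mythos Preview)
Your proof is correct and follows essentially the same route as the paper's: assume (i) and (ii) fail, split into the cases $M \not\le H$ versus $M \le H$ with $C_G(M/N) \not\le H$, and in each case conclude that $M$ is not properly contained in $\Ob_G(H)$, whence $\Ob_G(\Ob_G(H)) \le M$. Your explicit monotonicity observation for $\Ob_G$ and the chain $\Ilhd_h(G) \le H \Rightarrow \OI_h(G) \le \Ob_G(H) \Rightarrow \OI_{\ob_G(h)}(G) \le \Ob_G(\Ob_G(H))$ spell out the index bound that the paper leaves implicit in the statement.
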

 
\begin{proof}Assume (i) and (ii) are false.  Since $M$ is a normal subgroup of $G$, to demonstrate (iii) it suffices to prove that $M$ is not properly contained in $\Ob_G(H)$.  Let $K=C_G(M/N)$; note that since (i) is false, $K$ does not contain $M$.  If $H$ does not contain $M$, then $M$ contains $\Ob_G(H)$, so we may assume $H$ contains $M$.  It now follows that $H$ does not contain $K$, by the assumption that (ii) is false.  Since $K$ is normal in $G$, it must contain $\Ob_G(H)$, and hence $\Ob_G(H)$ cannot contain $M$.\end{proof}

\begin{defn}Given a profinite group $G$, define $\Phi^{\lhd n}(G)$ by $\Phi^{\lhd 0}(G) = G$ and thereafter $\Phi^{\lhd (n+1)}(G) = \Phi^\lhd(\Phi^{\lhd n}(G))$.  Define the \emph{$\Phi^\lhd$-height} of a finite group to be the least $n$ such that $\Phi^{\lhd n}(G)=1$.\end{defn}

Given $G \in \nsgp^*_\Phi$, note that $\Phi^{\lhd n}(G)=O^\mcX(G)$, where $\mcX$ is the class of finite groups of $\Phi^\lhd$-height at most $n$, and that $O^\mcX(G)$ is therefore an open subgroup of $G$.

\begin{thm}\label{normseclass}Let $G$ be a just infinite profinite group.
\vspace{-12pt}
\begin{enumerate}[(i)] \itemsep0pt
\item Let $\mcF$ be a class of non-abelian finite groups, and let $\mcA$ be the class of groups $A$ satisfying $\Inn(F) \leq A \leq \Aut(F)$ for some $F \in \mcF$.  Suppose there are infinitely many pairs $(M,N)$ of normal subgroups of $G$ such that $N \leq M$ and $M/N \in \mcF$.  Then either $G$ is residually-$\mcA$, or it has an open normal subgroup that is residually-$\mcF$.  In particular, at least one of $\mcA$ and $\mcF$ contains groups of arbitrarily large $\Phi^\lhd$-height, and hence $\mcF$ must contain infinitely many isomorphism classes.
\item Suppose $G$ is not virtually abelian, and let $H$ be a proper open normal subgroup of $G$.  Then $G$ has only finitely many normal subgroups $K$ such that $K/\Phi^\lhd(K') \cong H/\Phi^\lhd(H')$.  In particular, $G$ has only finitely many normal subgroups that are isomorphic to $H$.\end{enumerate}\end{thm}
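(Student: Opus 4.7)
For part (i), given the infinite family $\mcP$ of pairs $(M,N)$ with $M/N \in \mcF$, I assign to each $p = (M,N)$ the centraliser $C_p := C_G(M/N)$. Since $M$ acts on $M/N$ as $\Inn(M/N)$ and $G/C_p$ embeds into $\Aut(M/N)$, we have $G/C_p \in \mcA$. Suppose $G$ is not residually-$\mcA$; then $L := \bigcap_{p \in \mcP} C_p$ is non-trivial, and since each $C_p$ is normal in $G$ and $G$ is just infinite, $L$ is open in $G$. I then apply the preceding proposition with $H = L$ to each pair: option (i) is excluded by the non-abelianness of $\mcF$, and option (ii) would force both $L \supseteq M$ and $L = C_p$ (using $L \leq C_p$), so that the non-trivial subgroup $M/N$ of $L/N$ is centralised by $L$ and hence abelian, a contradiction. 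Thus every pair satisfies option (iii), giving $M \supseteq K := \Ob_G(\Ob_G(L))$, which is open in $G$ by Theorem \ref{genob}; as $G/K$ is finite, only finitely many $M$ occur, so by pigeonhole some fixed $M_0$ appears in infinitely many pairs.

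Now fix $M_0$ and consider the infinite family $\mcN$ of normal subgroups $N \unlhd G$ with $N \leq M_0$ and $M_0/N \in \mcF$. The intersection $\bigcap \mcN$ is normal in $G$; if it were non-trivial, the just-infinite hypothesis would make it open, forcing each $N \in \mcN$ to correspond to one of only finitely many normal subgroups of the finite quotient $G/\bigcap \mcN$, contradicting $|\mcN| = \infty$. Hence $\bigcap \mcN = 1$, so $M_0$ is an open normal subgroup of $G$ that is residually-$\mcF$, completing the dichotomy. The ``in particular'' assertion then follows because a class of finite groups of bounded $\Phi^\lhd$-height $n$ would force either $\Phi^{\lhd n}(G) = 1$ (if $G$ is residually-$\mcA$) or $\Phi^{\lhd n}(L) = 1$ for some open normal $L$ (if $L$ is residually-$\mcF$); by Lemma \ref{jiphilhd} these iterated normal Frattini subgroups are open, and openness combined with triviality collapses $G$ or $L$ to a finite group, contradicting the infiniteness of $G$.

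For part (ii), I set $F := H/\Phi^\lhd(H')$ and first check that $F$ is a finite non-abelian group. Since $H$ is open in the infinite group $G$, it is infinite and normal; $G$ not being virtually abelian forces $H' \neq 1$, so just-infiniteness makes $H'$ open in $G$, and Lemma \ref{jiphilhd} yields $H'/\Phi^\lhd(H')$ finite and non-trivial. Hence $F$ is finite with non-trivial derived subgroup $F' = H'/\Phi^\lhd(H')$, so non-abelian. For every $K \unlhd G$ with $K/\Phi^\lhd(K') \cong F$ the subgroup $N_K := \Phi^\lhd(K')$ is characteristic in $K'$ and hence normal in $G$, yielding a pair $(K, N_K)$ whose quotient lies in the singleton class $\mcF := \{F\}$. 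If infinitely many such $K$ existed, part (i) applied to this finite $\mcF$ would produce either a residually-$\mcA$ structure (with $\mcA$ the finite class of subgroups of $\Aut(F)$ containing $\Inn(F)$) or an open residually-$\{F\}$ normal subgroup; the $\Phi^\lhd$-height argument of the previous paragraph rules both out. Hence only finitely many such $K$ exist, and in particular only finitely many $K \cong H$.

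The main expected obstacle is the contradiction that rules out case (ii) of the preceding proposition with $H = L$: everything hinges on recognising that once $L = \bigcap C_p$ is simultaneously open, containing $M$, and equal to $C_G(M/N)$, the subgroup $M \leq L$ centralises its own quotient $M/N$ and so forces $M/N$ to be abelian against $\mcF$ being non-abelian. The remaining steps --- extracting a single $M_0$ from the finitely many open-containing options in (iii), pushing the infinite family of $N$'s down to residual triviality, and invoking the $\Phi^\lhd$-height collapse to remove the singleton $\mcF = \{F\}$ case --- amount essentially to bookkeeping with Theorem \ref{genob} and Lemma \ref{jiphilhd}.
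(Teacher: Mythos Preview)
Your proof is correct and follows essentially the same route as the paper. The only cosmetic difference is that where the paper applies the preceding proposition with the varying subgroup $H = C_G(M/N)$ and bounds its core uniformly by $|G:O^\mcA(G)|$, you instead fix $H = L = \bigcap C_p$ once and for all; both choices feed the same bound into option~(iii), and your elimination of option~(ii) via $M \leq L = C_p$ forcing $M/N$ abelian is exactly the observation the paper uses (that $C_G(M/N)$ cannot contain $M$).
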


\begin{proof}(i) We may assume that $G$ is not a residually-$\mcA$ group, so $O^\mcA(G)$ has finite index.  Let $M$ and $N$ be normal subgroups such that $M/N$ is a $\mcF$-group, and let $H=C_G(M/N)$.  Then $G/H$ is a $\mcA$-group, and hence an image of $G/O^\mcA(G)$.  On the other hand, $H$ does not contain $M$.  By the above proposition, this means that $|G:M|$ is bounded by a function of $G$ and $|G/O^\mcA(G)|$, and hence there are only finitely many possibilities for $M$.  This means that for some open normal subgroup $M$, there must be infinitely many images of $M$ that are $\mcF$-groups.  Hence $O^\mcF(M)$ is a normal subgroup of $G$ of infinite index, and hence trivial, so that $M$ is residually-$\mcF$.

(ii) Since $G$ is not virtually abelian, $H'$ is an open normal subgroup of $G$.  Since $G \in \nsgp^*_\Phi$, it follows that $\Phi^\lhd(H')$ is a proper normal subgroup of $H'$ of finite index, so $H/\Phi^\lhd(H')$ is finite and non-abelian.  The result follows by part (i) applied to $\mcF = [H/\Phi^\lhd(H')]$.\end{proof}

\begin{rem}Part (i) of the above theorem does not extend to abelian sections.  Indeed, given any positive integer $n$ and a prime $p$, then any just infinite pro-$p$ group has infinitely many abelian normal sections of order $p^n$; this is clear for $\bZ_p$, and for any non-nilpotent pro-$p$ group $G$ there will be suitable sections inside $\gamma_k(G)/\gamma_{2k}(G)$ for any $k \geq n$.\end{rem}

\begin{cor}\label{vaiso}Let $G$ be a just infinite profinite group.  Then $G$ has infinitely many isomorphism types of open normal subgroup if and only if $G$ is not virtually abelian.\end{cor}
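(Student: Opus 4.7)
I would prove the two implications separately.

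\emph{Not virtually abelian implies infinitely many isomorphism types.} The open normal subgroups of $G$ form a neighbourhood base of $1$, so there must be infinitely many of them: otherwise their intersection, which is $\{1\}$, would itself be open, forcing $G$ to be finite. On the other hand, Theorem~\ref{normseclass}(ii) says that any single proper open normal subgroup $H$ of $G$ is isomorphic to only finitely many normal subgroups of $G$. Thus the infinitely many open normal subgroups must realise infinitely many isomorphism classes.

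\emph{Virtually abelian implies finitely many isomorphism types.} By Proposition~\ref{vastr}, $G$ contains a self-centralising open normal subgroup $A \cong \bZ^d_p$, with the finite group $G/A$ acting on $V := A \otimes_{\bZ_p} \bQ_p$ faithfully and irreducibly over $\bQ_p$. For each open normal subgroup $K$ of $G$ I would record three invariants: (a) the subgroup $H := KA/A$ of the finite group $G/A$; (b) the isomorphism class of $K \cap A$ as a $\bZ_p[G/A]$-module (it is automatically $G$-invariant, and is a full-rank $\bZ_p$-lattice in $V$ because $K \cap A$ has finite index in $A$); and (c) the class in $H^2(H, K \cap A)$ of the extension $1 \to K \cap A \to K \to H \to 1$. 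Two open normal subgroups realising the same triple give isomorphic profinite groups, so it suffices to check that each datum ranges over a finite set. Datum (a) is immediate since $G/A$ is finite; datum (b) follows from the Jordan--Zassenhaus theorem applied to the $\bZ_p$-order $\bZ_p[G/A]$ in $\mathrm{End}_{\bQ_p}(V)$, since each $K \cap A$ is a full-rank $\bZ_p[G/A]$-lattice in the fixed rational representation $V$; and datum (c) is finite because $H$ is finite and $K \cap A$ is a finitely generated $\bZ_p$-module, which together force $H^2(H, K \cap A)$ to be finite.

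\emph{Main obstacle.} The key nonroutine ingredient is the finiteness of integral isomorphism classes in (b), supplied by Jordan--Zassenhaus; this is where the irreducibility of the $G/A$-action on $V$ (hence the fact that $V$ is a fixed finite-dimensional rational representation rather than an arbitrary $\bZ_p$-module) pays off. Everything else is bookkeeping. The ``only if'' direction requires no additional work beyond Theorem~\ref{normseclass}(ii) once one observes that a profinite just infinite group automatically has infinitely many open normal subgroups.
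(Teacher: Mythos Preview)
Your argument for the ``not virtually abelian'' direction is exactly the paper's: infinitely many open normal subgroups together with Theorem~\ref{normseclass}(ii). For the ``virtually abelian'' direction, however, the paper's proof is far shorter and stays entirely within the machinery of the chapter. It simply observes that by generalised obliquity (Theorem~\ref{genob}), all but finitely many open normal subgroups of $G$ are contained in the open normal subgroup $V\cong\bZ_p^d$, and every open subgroup of $\bZ_p^d$ is again isomorphic to $\bZ_p^d$; hence only finitely many isomorphism types occur. Your route via the triple $(KA/A,\;K\cap A,\;[K]\in H^2)$ and the local Jordan--Zassenhaus theorem is correct---irreducibility of $V$ over $\bQ_p[G/A]$ is exactly what makes the lattice count finite, and the finiteness of $H^2$ for a finite group acting on a finitely generated $\bZ_p$-module is standard---but it imports outside machinery that the paper avoids. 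One small point of hygiene: when you say ``the same triple'', the extension classes for $K_1$ and $K_2$ live in $H^2(H,K_1\cap A)$ and $H^2(H,K_2\cap A)$, which must be identified via a chosen $\bZ_p[G/A]$-isomorphism $K_1\cap A\cong K_2\cap A$; different choices differ by the action of the (finite) automorphism group of the module, so finiteness survives, but the statement should be phrased as ``same orbit of extension classes'' rather than ``same class''. The upside of your approach is that it actually parametrises the isomorphism types rather than merely bounding them; the upside of the paper's is that it needs nothing beyond Theorem~\ref{genob} and the elementary fact about open subgroups of $\bZ_p^d$.
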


\begin{proof}Suppose $G$ is virtually abelian.  Then $G$ has an open normal subgroup $V$ that is a free abelian pro-$p$ group.  Every open subgroup of $G$ contained in $V$ is isomorphic to $V$, and by Theorem \ref{genob}, all but finitely many open normal subgroups of $G$ are contained in $V$.  Hence $G$ has only finitely many isomorphism types of open normal subgroup.  The converse follows from part (ii) of Theorem \ref{normseclass}.\end{proof}

\begin{defn}Say a profinite group $G$ is \emph{index-unstable} if it has a pair of isomorphic open subgroups of different indices, and \emph{index-stable} otherwise.\end{defn}

Recall the definition given in Section \ref{commsec} of the commensurator $\Comm(G)$ and the index ratio $\varrho$ of $G$, and Lemma \ref{irhom}.  Since the commensurator accounts for all virtual automorphisms of a profinite group $G$ up to equivalence, $G$ is index-stable if and only if its index ratio is trivial.  An understanding of index-stability is thus important for determining the structure of the commensurator of a profinite group, and also that of locally compact, totally disconnected groups in general.

Any just infinite virtually abelian profinite group $G$ is virtually a free abelian pro-$p$ group for some $p$, and hence $G$ is index-unstable.  On the other hand, given part (ii) of Theorem \ref{normseclass} it seems to be difficult to construct just infinite profinite groups that are index-unstable but not virtually abelian.  The remainder of this section is concerned with the following question:

\begin{que}\label{indexq}Let $G$ be a (hereditarily) just infinite profinite group that has isomorphic open subgroups of different indices, or equivalently, such that the index ratio of $G$ is non-trivial.  Is $G$ necessarily virtually abelian?\end{que}

\begin{defn}Let $G$ be a just infinite profinite group that is not virtually abelian, and let $N$ be an open normal subgroup of $G$.  Define the following invariant of $G$:
\[ j_N(G) = \frac{\inf \{|G:M| \mid M \cong N, \; M \unlhd_o G\}}{\inf \{|G:M| \mid M \cong N, \; M \unlhd^2_o G\}}.\]\end{defn}

Clearly, if $G$ is index-stable then $j_N(G)=1$ for all $N \unlhd_o G$.  In fact, there is a strong converse to this statement.

\begin{prop}\label{jbdprop}Let $G$ be a just infinite profinite group.  Suppose that there are infinitely many isomorphism types of open normal subgroup $N$ of $G$ for which $j_N(G) \leq k$, for some constant $k$.  Then $G$ is index-stable.\end{prop}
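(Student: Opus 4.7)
The plan is to prove the contrapositive: assume $G$ is index-unstable and show that only finitely many isomorphism types $[N]$ of open normal subgroups satisfy $j_N(G)\leq k$. We may assume $G$ is not virtually abelian, since otherwise by Corollary~\ref{vaiso} there are only finitely many iso types of open normal subgroups altogether, and the hypothesis cannot hold; in particular $VZ(G)=1$ by Corollary~\ref{abejicor}, so $\Comm(G)$ and the index ratio $\ir$ behave well. Since $\ir$ is a non-trivial homomorphism to $\bQ^\times_{>0}$ by Lemma~\ref{irhom}, its image is unbounded, so we may pick $\phi\in\Comm(G)$ with $\ir(\phi)=r>k$. By Lemma~\ref{subnorlem}, $\phi$ has a representative $\theta\colon H\to K$ with $H\unlhd^2_o G$, $K\unlhd_o G$, and $|G:H|=r|G:K|$. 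Set $W=\Core_G(H)$, an open normal subgroup of $G$.

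The key construction is: for any open normal subgroup $N$ of $G$ with $N\leq W$ (equivalently $N\leq H$), the fact that $N\unlhd H$ and $\theta$ is an isomorphism forces $\theta(N)\unlhd K$, and since $K\unlhd G$ this gives $\theta(N)\unlhd^2_o G$ of defect at most two. Together with $|H:N|=|K:\theta(N)|$ and $|G:H|=r|G:K|$, a direct calculation yields
\[
|G:\theta(N)|\;=\;|G:K|\cdot|H:N|\;=\;\frac{|G:N|}{r}.
\]
The crucial point is to place $N$ inside the subnormal-defect-$2$ side $H$; if instead one took $N\leq K$ and applied $\theta^{-1}$, the resulting preimage would only sit as a normal subgroup of the defect-$2$ subgroup $H$, giving defect at most $3$ in $G$, which is not controlled by $j_N$. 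Choosing the direction of $\phi$ so that $r>1$, and restricting to $N\leq W$, produces exactly the defect-$2$ subgroup required.

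For each iso type $[N]$ of open normal subgroups of $G$, fix a minimal-index normal representative $N^*$; this exists because Theorem~\ref{normseclass}(ii) says each iso class contains only finitely many normal subgroups of $G$. If $N^*\leq W$, the construction above supplies a subnormal-defect-$2$ subgroup isomorphic to $N^*$ of index $|G:N^*|/r$, so
\[
j_N(G)\;\geq\;\frac{|G:N^*|}{|G:N^*|/r}\;=\;r\;>\;k,
\]
and $[N]$ fails the condition $j_N\leq k$. Hence every iso type with $j_N\leq k$ has $N^*\in\mcK_W:=\{L\unlhd_o G\mid L\not\leq W\}$. But $G$ is just infinite, so $\mcK_W$ is finite by Theorem~\ref{genob}. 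Therefore only finitely many iso types have $j_N\leq k$, contradicting the hypothesis, and $G$ must be index-stable.
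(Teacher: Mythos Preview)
Your proof is correct and follows essentially the same route as the paper's: use Lemma~\ref{irhom} and Lemma~\ref{subnorlem} to get $\theta\colon H\to K$ with $H\unlhd^2_oG$, $K\unlhd_oG$ and ratio $r>k$, then observe that any minimal-index normal representative $N$ contained in $H$ yields $\theta(N)\unlhd^2_oG$ of index $|G:N|/r$, forcing $j_N(G)\geq r>k$; generalised obliquity (Theorem~\ref{genob}) then leaves only finitely many iso types whose minimal representative avoids $H$. The only cosmetic differences are that you introduce $W=\Core_G(H)$ (harmless, since for $N\unlhd G$ one has $N\leq H$ iff $N\leq W$) and that you phrase the argument as a direct contrapositive rather than picking one $N$ for contradiction; the remark about $VZ(G)=1$ is unnecessary since $\ir$ and Lemma~\ref{subnorlem} do not require it.
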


\begin{proof}By Corollary \ref{vaiso}, $G$ is not virtually abelian.  Suppose $G$ is index-unstable.  Then by Lemmas \ref{irhom} and \ref{subnorlem}, there are isomorphic subgroups $H$ and $K$ of $G$ such that $|G:H|/|G:K| > k$, and such that $H \unlhd^2_o G$ and $K \unlhd_o G$.  Now $H$ contains all but finitely many normal subgroups of $G$, so all but finitely many isomorphism types of open normal subgroups of $G$ occur only as subgroups of $H$.  This means that there is a normal subgroup $N$ of $G$ such that $j_N(G) \leq k$, and such that all normal subgroups of $G$ isomorphic to $N$ are subgroups of $H$; take $N$ to be of least possible index.  Then $N^\theta \unlhd^2_o G$, where $\theta$ is any isomorphism from $H$ to $K$, and $N^\theta$ is isomorphic to $N$.  Since $N$ was chosen to be of least possible index, it follows that $j_N(G) \geq \ir(\theta) > k$, a contradiction.\end{proof}

We conclude this section with the following theorem.

\begin{thm}\label{istabthm}Let $G$ be a just infinite profinite group.
\vspace{-12pt}
\begin{enumerate}[(i)] \itemsep0pt
\item Suppose there is an isomorphism $\theta: H \rightarrow G$, where $H$ is a proper open subgroup of $G$.  Then $G$ is virtually abelian.
\item Suppose $G$ has infinitely many distinct radicals.  Then $G$ is index-stable.\end{enumerate}\end{thm}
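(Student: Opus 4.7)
For part (i), the plan is to assume for contradiction that $G$ is not virtually abelian and reach a contradiction with Corollary \ref{vaiso}. Set $k = |G:H| > 1$; by Theorem \ref{genob} applied to $H$, there is an integer $m_0$ such that every open normal subgroup of $G$ of index at least $m_0$ lies in $H$. The key observation is that whenever $L \unlhd_o G$ satisfies $L \leq H$, we have $L \unlhd H$, so since $\theta\colon H \to G$ is an isomorphism of topological groups, $\theta(L)$ is again an open normal subgroup of $G$, isomorphic to $L$, with $|G:\theta(L)| = |G:L|/k$. Iterating $\theta$ while each successive iterate remains in $H$ produces a chain of distinct normal iso copies of $L$ in $G$ whose indices shrink by a factor of $k$ per step, so the iteration terminates at a normal iso copy of index less than $m_0$. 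Hence every isomorphism type of open normal subgroup of $G$ admitting a representative inside $H$ has a normal representative of index less than $m_0$; since $G$ has only finitely many subgroups of index below $m_0$, and by Theorem \ref{genob} only finitely many iso types have no representative inside $H$, the total number of iso types of open normal subgroups of $G$ is finite, contradicting Corollary \ref{vaiso}.

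For part (ii), the plan is to apply the proposition of Section \ref{commsec} asserting that if $VZ(G) = 1$ and $\mcR$ is a set of open radicals with every open subgroup of $G$ containing some $R \in \mcR$, then $\Comm(G) = \LComm_\mcR(G)$. First I rule out the virtually abelian case. If $G$ is as in Proposition \ref{vastr}, with self-centralising abelian normal subgroup $A \cong \bZ_p^d$ and quotient $Q = G/A$ acting irreducibly on $A \otimes \bQ_p$, then for any iso-closed class $\mcX$ the subgroup $O_\mcX(G) \cap A$ is either $\{0\}$ (when $\mcX$ contains no non-trivial torsion-free $\bZ_p$-module) or else contains the subgroup generated by all rank-$d'$ sublattices of $A$ (for the smallest $d'$ with $\bZ_p^{d'} \in \mcX$), which equals $A$. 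Hence $O_\mcX(G) \cap A \in \{0, A\}$, while $O_\mcX(G)/(O_\mcX(G) \cap A)$ ranges over the finite subgroup lattice of $Q$, giving only finitely many radicals---against the hypothesis. Thus $G$ is not virtually abelian, so by Corollary \ref{abejicor} we have $VZ(G) = 1$.

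Now let $\mcR$ be the infinite collection of non-trivial radicals of $G$, all of which are open by just-infinitude. For each open $U \leq_o G$, Theorem \ref{genob} ensures all but finitely many open normal subgroups of $G$ lie in $U$, so some element of $\mcR$ is contained in $U$. The commensurator-control proposition then yields $\Comm(G) = \LComm_\mcR(G)$; but every $\phi \in \Aut(R)$ for $R \in \mcR$ fixes $R$ setwise, so $\ir(\phi) = |G:R|/|G:R| = 1$. Hence $\ir$ is trivial on $\Comm(G)$, so $G$ is index-stable. The main obstacle is the sub-claim that a just infinite virtually abelian group has only finitely many radicals: carefully tracking how iso-closure of the defining class $\mcX$ interacts with the irreducibility structure is the delicate step.
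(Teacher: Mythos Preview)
Your argument for part (i) is correct and follows essentially the same idea as the paper: use generalised obliquity to confine all but finitely many open normal subgroups to $H$, then push them through $\theta$ to reduce their index, and contradict Corollary~\ref{vaiso}. The paper packages this slightly differently (it picks a single $N$ of minimal index whose normal isomorphic copies all lie in $H$, and derives an immediate contradiction from $N^\theta \lhd G$), but the substance is the same.

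For part (ii), your route via the commensurator proposition of Section~\ref{commsec} is genuinely different from the paper's, which instead shows $j_N(G)=1$ for every radical $N$ and invokes Proposition~\ref{jbdprop}. Your approach is perfectly sound once $VZ(G)=1$ is established. The gap is in how you exclude the virtually abelian case. Your dichotomy for $O_\mcX(G)\cap A$ is not correct as stated: the parenthetical ``when $\mcX$ contains no non-trivial torsion-free $\bZ_p$-module'' does not force $O_\mcX(G)\cap A=0$. For instance, take $\mcX=[G]$ with $G$ non-abelian; then $\mcX$ contains no $\bZ_p^{d'}$ at all, yet $O_\mcX(G)=G$ and $O_\mcX(G)\cap A=A$. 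More generally, subnormal $\mcX$-subgroups need not lie inside $A$, and your case analysis only looks at those that do.

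The clean fix --- which is exactly the paper's key observation --- is that distinct non-trivial radicals are pairwise non-isomorphic: if $N=O_\mcX(G)$ and $M$ is subnormal in $G$ with $M\cong N$, then $M=O_\mcX(M)$, and each subnormal $\mcX$-subgroup of $M$ is subnormal in $G$, hence contained in $N$; so $M\le N$. Applying this with the roles reversed gives $M=N$. Thus infinitely many radicals forces infinitely many isomorphism types of open normal subgroups, and Corollary~\ref{vaiso} then rules out the virtually abelian case directly. With this repair, your commensurator argument goes through; note, however, that once you have the one-of-a-kind property you are already most of the way to the paper's shorter proof via Proposition~\ref{jbdprop}.
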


\begin{proof}(i) Suppose $G$ is not virtually abelian.  Then $H$ contains all but finitely many normal subgroups of $G$, so all but finitely many isomorphism types of open normal subgroups of $G$ occur only as subgroups of $H$.  This means that there is an open normal subgroup $N$ of $G$ such that all normal subgroups of $G$ isomorphic to $N$ are subgroups of $H$; take $N$ to be of least possible index.  Then $N^\theta$ is not normal in $G$ by the minimality of $|G:N|$.  But $N$ is normal in $H$, and so $N^\theta \lhd H^\theta = G$, a contradiction.

(ii) Let $N=O_\mcX(G)$ for some class of groups $\mcX$, and suppose $N$ is non-trivial.  Let $M$ be a subnormal subgroup of $G$ isomorphic to $N$.  Then by definition, $M$ is generated by its subnormal $\mcX$-subgroups.  But these are then subnormal in $G$, and so contained in $N$.  Hence $M \leq N$, demonstrating that $j_N(G)=1$.  Hence the non-trivial radicals of $G$ form an infinite set of pairwise non-isomorphic open normal subgroups $N$ satisfying $j_N(G)=1$.  By Proposition \ref{jbdprop}, this ensures that $G$ is index-stable.\end{proof}

\begin{rem}Part (ii) of Theorem \ref{istabthm} is not vacuous, as there are certainly just infinite profinite groups that have infinitely many distinct radicals.  For instance, Theorem \ref{commnottgp} gives an example in which every characteristic subgroup is a radical.\end{rem}

\section{Sylow structure of just infinite profinite groups}

Let $G$ be a just infinite profinite group.  What does this tell us about the Sylow structure of $G$?

We obtain some characteristic properties of the Sylow types $\rN$ and $\rX$ for profinite groups.

\begin{lem}\label{phiprim}Let $G$ be a finite group, acting faithfully and primitively on a finite set $\Omega$, and let $H$ be a normal subgroup of $G$.  Then $\Phi(H)=1$.\end{lem}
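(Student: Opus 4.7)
The plan is to exploit the fact that in a primitive faithful action, every non-trivial normal subgroup is transitive, combined with the non-generator property of the Frattini subgroup.

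First I would dispose of trivialities: if $|\Omega|=1$ then faithfulness forces $G=1$ and there is nothing to prove, so assume $|\Omega|>1$; similarly assume $H\ne 1$. Next I observe that $\Phi(H)$ is characteristic in $H$, so $\Phi(H)\unlhd G$. Since $G$ is primitive and faithful on $\Omega$, any non-trivial normal subgroup of $G$ is transitive on $\Omega$ (the orbits of a normal subgroup form a $G$-invariant partition, hence a system of blocks, which must be singletons or $\Omega$; singletons are excluded by faithfulness). In particular $H$ itself is transitive.

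The heart of the argument is as follows. Fix $\omega\in\Omega$ and set $M=G_\omega$, a maximal subgroup of $G$ since the action is primitive. Suppose for contradiction that $\Phi(H)\ne 1$. By the observation above, $\Phi(H)$ is transitive on $\Omega$, so $\Phi(H)M=G$. Intersecting with $H$ and applying Dedekind's modular law (valid since $\Phi(H)\leq H$), I get
\[ H = H\cap \Phi(H)M = \Phi(H)\,(H\cap M) = \Phi(H)\,H_\omega, \]
where $H_\omega=H\cap M$ is the stabiliser of $\omega$ in $H$.

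Now I invoke the standard non-generator property: since every element of $\Phi(H)$ is a non-generator of $H$, the equality $H=\Phi(H)H_\omega$ forces $H=H_\omega$. But this says that $H$ fixes $\omega$, contradicting the fact that $H$ is transitive on a set of size greater than $1$. Hence $\Phi(H)=1$.

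I do not anticipate any real obstacle: the argument is entirely formal, using only the maximality of point stabilisers in a primitive action, the transitivity of normal subgroups, and the non-generator characterisation of $\Phi$. The only thing worth being careful about is the trivial cases $H=1$ or $|\Omega|=1$, and the correct direction of Dedekind's modular law.
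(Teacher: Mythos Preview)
Your proof is correct and essentially the same as the paper's: both observe that $\Phi(H)\unlhd G$ must be trivial or transitive, and then use the Frattini/non-generator property against the point stabiliser $H_\omega$ to rule out transitivity. The only difference is cosmetic --- the paper works directly inside $H$ by picking a maximal subgroup of $H$ containing $H_\omega$ and noting it is intransitive, whereas you detour through $G_\omega$ and Dedekind's law to reach $H=\Phi(H)H_\omega$; the underlying idea is identical.
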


\begin{proof}Note first that for any normal subgroup $N$ of $G$, the $N$-orbits define a $G$-invariant equivalence relation on $\Omega$, so either $N=1$ or $N$ acts transitively; in particular, since $\Phi(H)$ is normal in $G$ it suffices to show that $\Phi(H)$ acts intransitively.  We may assume $H$ acts transitively, and so $\Omega$ can be regarded as the set of right cosets of some subgroup $K$ of $H$, acted on by right multiplication.  Let $M$ be a maximal subgroup of $H$ containing $K$.  Then $\{Km  \mid m \in M\}$ is an $M$-orbit, so $M$ acts intransitively.  Hence $\Phi(H) \leq M$ also acts intransitively.\end{proof}

\begin{prop}Let $G$ be a just infinite profinite group.
\vspace{-12pt}
\begin{enumerate}[(i)] \itemsep0pt
\item Suppose $G$ is of Sylow type $\rN$.  Then $\Phi(G)$ is an open normal subgroup of $G$, and $G$ is finitely generated.
\item Suppose $G$ is of Sylow type $\rX$.  Then $\Phi(G)=1$.  Every Sylow subgroup of $G$ is either finite or infinitely generated.\end{enumerate}\end{prop}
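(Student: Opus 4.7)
The plan is to handle each part using the framework already established: Lemma \ref{jiphilhd} combined with the pronilpotence of $\Phi(G)$ (Lemma \ref{fratlem}(i)) for the Frattini statements, and Corollary \ref{tatefin} for the Sylow-generation statement.

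For part (i), I would start by picking an open pro-$p$ subgroup of $G$ and replacing it by its core to obtain an open normal pro-$p$ subgroup $N \trianglelefteq_o G$. By Lemma \ref{jiphilhd}, $N \in \nsgp^*_\Phi$, so $N/\Phi^\lhd(N)$ is finite. Since $N$ is pro-$p$ every maximal open subgroup is normal of index $p$, so $\Phi^\lhd(N) = \Phi(N)$; hence $N/\Phi(N)$ is finite and by Lemma \ref{fratlem}(iii), $d(N)$ is finite. Since $G/N$ is finite, $G$ is topologically finitely generated.

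To finish (i), I would show $\Phi(N) \leq \Phi(G)$ by the standard Frattini argument, valid in the profinite setting once one knows $\Phi(N)$ is a closed normal subgroup of $G$ (which it is, being equal to $N^p[N,N]$ and characteristic in $N$). Indeed, if a maximal open subgroup $M$ of $G$ failed to contain $\Phi(N)$, then $M\Phi(N)=G$ by maximality, so by Dedekind $N = (N\cap M)\Phi(N)$; applying Lemma \ref{fratlem}(ii) to the closed subgroup $N\cap M$ of $N$ shows $N\cap M = N$, i.e.\ $N\leq M$, contradicting $\Phi(N)\not\le M$. Hence $\Phi(N)\leq \Phi(G)$, and since $\Phi(N)$ is open, so is $\Phi(G)$.

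For part (ii), the key observation is that $\Phi(G)$ is pronilpotent by Lemma \ref{fratlem}(i). Suppose for contradiction $\Phi(G)\neq 1$. Just-infiniteness forces $\Phi(G)$ to be open; by Lemma \ref{fitlem}, $\Phi(G)$ is the Cartesian product of its Sylow subgroups, each of which is characteristic in $\Phi(G)$ and therefore normal in $G$. At least one such Sylow subgroup $S_p$ is non-trivial; by just-infiniteness it is open, and since it is pro-$p$, $G$ is virtually pro-$p$, contradicting Sylow type $\rX$. Thus $\Phi(G)=1$. For the Sylow generation statement, suppose some $p$-Sylow $S$ is infinite and finitely generated. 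Then $d_p(G) = d(S) < \infty$, so by Corollary \ref{tatefin}, $G/O_{p'}(G)$ is virtually pro-$p$. If $O_{p'}(G)=1$ then $G$ itself is virtually pro-$p$, contradicting type $\rX$. Otherwise, by just-infiniteness $O_{p'}(G)$ is open; since $O_{p'}(G)$ is pro-$p'$ (Lemma \ref{opinor}) and $S$ is pro-$p$, the intersection $S\cap O_{p'}(G)$ is trivial, so $S$ injects into the finite group $G/O_{p'}(G)$ and is therefore finite, a contradiction. The only subtlety worth stating carefully is the Frattini closure argument in (i), since it is the one place where the profinite topology needs to be explicitly invoked; everything else is a direct application of results already established earlier in the thesis.
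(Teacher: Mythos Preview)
Your argument is correct. Part (ii) is essentially the paper's proof, only with the case split on $O_{p'}(G)$ made explicit rather than left implicit.

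Part (i) differs in how you show the Frattini subgroup is open. The paper takes the open normal pro-$p$ subgroup $P = O_p(G)$ and argues via Lemma \ref{phiprim}: for any maximal open subgroup $M$ with core $N$, the quotient $G/N$ acts faithfully and primitively, so $\Phi(O_p(G/N))=1$; since $O_p(G)N/N \leq O_p(G/N)$, this forces $\Phi(O_p(G)) \leq N$, and varying $M$ gives $\Phi(O_p(G)) \leq \Phi(G)$. Your route is the direct non-generator argument: if some maximal $M$ missed $\Phi(N)$, then $M\Phi(N)=G$, Dedekind gives $N=(N\cap M)\Phi(N)$, and Lemma \ref{fratlem}(ii) forces $N\cap M=N$, a contradiction. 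Your approach is more elementary, since it bypasses the primitive-action lemma entirely and uses only the standard Frattini machinery; the paper's approach, on the other hand, yields the slightly sharper containment $\Phi(O_p(G)) \leq \Phi(G)$ rather than $\Phi(N) \leq \Phi(G)$ for an arbitrary open normal pro-$p$ subgroup $N$, though this extra precision is not needed for the proposition as stated.
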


\begin{proof}(i) It is clear that $G$ has an open normal pro-$p$ subgroup for exactly one prime $p$; let $P$ be such a subgroup.  Now $\Phi(P)=\Phi^\lhd(P)$ has finite index in $G$ by Lemma \ref{jiphilhd}, which means that $P$ is finitely generated, and hence $G$ is also finitely generated.

Let $N$ be the core of a maximal subgroup of $G$ of finite index.  Then $G/N$ admits a faithful primitive permutation action on some set $\Omega$.  This means that $\Phi(O_p(G/N))=1$, by Lemma \ref{phiprim}.  Hence $O_p(G/N)$ is elementary abelian, and so the same is true for its subgroup $O_p(G)N/N$; hence $\Phi(O_p(G)) \leq N$.  As this holds for all cores of maximal finite index subgroups, we have $\Phi(O_p(G)) \leq \Phi(G)$.  Hence $\Phi(G)$ is of finite index in $G$.
 
(ii) By definition, $G$ is not virtually pro-$p$ for any prime $p$, and so cannot have any non-trivial normal pro-$p$ subgroup.  Hence $O_p(G)$ is of infinite index, and hence trivial, for all $\piP$.  By Lemma \ref{fitlem}, this means $G$ has no non-trivial pronilpotent normal subgroups, so $\Phi(G)=1$ by Lemma \ref{fratlem}.
  
Let $S$ be a $p$-Sylow subgroup of $G$ for some $p$, and suppose $S$ is finitely generated.  Then $G/O_{p'}(G)$ is virtually pro-$p$ by Corollary \ref{tatefin}; since $G$ is not virtually pro-$p$, this ensures $G/O_{p'}(G)$ is finite.  Now $S \cap O_{p'}(G) = 1$, so in fact $S$ is finite.\end{proof}

Given a just infinite virtually pro-$p$ group $G$, the $p$-core $O_p(G)$ need not be just infinite.  However, note the following special case of Corollary \ref{newjicor}: 

\begin{cor}Let $G$ be a just infinite profinite group that is virtually pro-$p$ but not virtually abelian.  Then either $O_p(G)$ is already just infinite, or there is a basal subgroup $B$ of $G$ such that $O_p(G) \leq N_G(B)$, and such that $H = N_G(B)/C_G(B)$ and $O = O_p(H)$ are both just infinite.\end{cor}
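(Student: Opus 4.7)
The plan is to deduce this directly from Corollary \ref{newjicor} by taking $\mcC$ to be the class of pro-$p$ groups. To do this, I need to verify the three hypotheses of that corollary: (a) $\mcC$ is closed under quotients; (b) $G$ has a non-trivial subnormal $\mcC$-subgroup; and (c) $G$ has no non-trivial normal abelian subgroup.

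For (a), the class of pro-$p$ groups is evidently closed under (continuous) quotients. For (b), the hypothesis that $G$ is virtually pro-$p$ supplies an open normal pro-$p$ subgroup $P$ of $G$; this $P$ is subnormal in $G$ and is a non-trivial $\mcC$-group because $G$ is infinite. For (c), suppose for contradiction that $A$ is a non-trivial normal abelian subgroup of $G$. Then $A \leq C_G(A)$, so $C_G(A) > 1$; by Corollary \ref{abejicor}, $C_G(A)$ is then an abelian normal subgroup of finite index in $G$, making $G$ virtually abelian, contrary to hypothesis.

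With the hypotheses verified, Corollary \ref{newjicor} yields the required dichotomy: either $O_\mcC(G) = O_p(G)$ is already just infinite, or there exists a basal subgroup $B$ of $G$ with $O_p(G) \leq N_G(B)$ such that both $H = N_G(B)/C_G(B)$ and $O_\mcC(H) = O_p(H)$ are just infinite. The only point requiring the slightest care is the identification $O_\mcC(G) = O_p(G)$, which holds because by Lemma \ref{fitlem} (or the definition of $O_p(G)$) the subgroup generated by subnormal pro-$p$ subgroups coincides with the $p$-core, and similarly for $H$. There is no substantive obstacle; the work has all been done in establishing Corollary \ref{newjicor}, and this corollary is essentially a dictionary translation.
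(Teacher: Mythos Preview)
Your proof is correct and follows exactly the paper's intended approach: the paper presents this corollary explicitly as a special case of Corollary~\ref{newjicor} with $\mcC$ the class of pro-$p$ groups, and you have supplied precisely the verifications needed. The identification $O_\mcC(G)=O_p(G)$ is indeed immediate from Definition~1.3.8, so the reference to Lemma~\ref{fitlem} is unnecessary (though harmless).
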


Outside the virtually abelian case, there is thus a close connection between the class of just infinite pro-$p$ groups and the class of just infinite virtually pro-$p$ groups.  It will be shown later that if $\mcH$ is the set of isomorphism types of just infinite virtually pro-$p$ groups, all with the same isomorphism type of just infinite $p$-Sylow subgroup, then $\mcH$ is finite.

In the next chapter, we will discuss two classes of profinite group, one of which contains all virtually pronilpotent groups, and the other of which contains all just infinite groups of type $\rX$.

\chapter{The generalised pro-Fitting subgroup of a profinite group}

\section{Introduction}

This chapter builds on the basic definitions and results given in Sections \ref{sylprelim} and \ref{qsprelim}.

In particular, recall the definition of the generalised pro-Fitting subgroup $F^*(G)$ of a profinite group $G$ used in this thesis: it is the group generated by all subnormal subgroups of $G$ that are pro-$p$ for some $p$, together with all subnormal subgroups that are quasisimple.  This definition is a natural generalisation of the Fitting subgroup in finite group theory, for the same reason that maximal pro-$p$ subgroups are a natural generalisation of the concept of Sylow $p$-subgroups of a finite group.  This chapter considers the role of this generalised pro-Fitting subgroup in profinite group theory.

In contrast to the situation in finite groups, if $G$ is a profinite group then $F^*(G)$ may be trivial even if $G$ is non-trivial.  As a result, the key property of the generalised pro-Fitting subgroup of a finite group, namely that it contains its own centraliser, does not carry over to the class of profinite groups as a whole.  However, there is an interesting class of profinite groups for which the generalised pro-Fitting subgroup does contain its own centraliser, as we will see later, and this class is in some sense dual to the class of profinite groups $G$ for which $F^*(G) =1$.

\begin{defn}Let $G$ be a profinite group.  Say $G$ is \emph{Fitting-degenerate} if $F^*(G) = 1$.  Say $G$ is \emph{Fitting-regular} if no non-trivial image of $G$ is Fitting-degenerate.  We write $\FD$ for the class of Fitting-degenerate profinite groups and $\FR$ for the class of Fitting-regular profinite groups.  Note that $\FD \cap \FR = [1]$.\end{defn}

The author is not aware of any definitions similar to these in the existing literature.  The goal of this chapter is therefore to serve as an introduction to these properties and their role in the structure of profinite groups.

\section{The internal structure of the generalised pro-Fitting subgroup}

\begin{thm}\label{opilayer}Let $G$ be a profinite group.
\vspace{-12pt}
\begin{enumerate}[(i)] \itemsep0pt
\item The pro-Fitting subgroup of $G$ contains all pronilpotent subnormal subgroups, and is the intersection over all open normal subgroups $N$ of the subgroups $F_N$ of $G$ such that $F_N/N = F(G/N)$.
\item Any component $Q$ of $G$ commutes with both $F(G)$ and all components of $G$ distinct from $Q$.  In other words, $E(G)$ is an unrestricted central product of the components, and $F^*(G)$ is a central product of $F(G)$ and $E(G)$.\end{enumerate}\end{thm}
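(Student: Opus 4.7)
First I would verify that $F(G)$ contains every pronilpotent subnormal subgroup $H$.  By Lemma~\ref{fitlem}, $H$ decomposes as the Cartesian product of its Sylow subgroups $\{S_p\}$; each $S_p$ is characteristic in $H$ and hence subnormal in $G$, so $S_p$ is a subnormal pro-$p$ subgroup of $G$ and lies in $F(G)$ by definition.  Since $F(G)$ is closed and topologically generated by the $S_p$, this gives $H \leq F(G)$.  For the second claim, set $R := \bigcap_{N \unlhd_o G} F_N$.  The inclusion $F(G) \leq R$ is immediate: if $P$ is a subnormal pro-$p$ subgroup of $G$, then $PN/N$ is subnormal pro-$p$ in $G/N$, hence contained in $F(G/N) = F_N/N$, for every $N \unlhd_o G$.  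For the reverse inclusion, $R$ is normal in $G$ (each $F_N$ is characteristic) and pronilpotent: the subgroups $R \cap N$ form a base of neighbourhoods of $1$ in $R$ and each quotient $R/(R \cap N) \hookrightarrow F_N/N$ is finite nilpotent.  Applying the first claim to $R$ then yields $R \leq F(G)$.

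\textbf{Plan for (ii).}  The strategy is to reduce to the finite case (Theorem~\ref{finopil}) by passing to quotients $G/N$ with $N \unlhd_o G$.  The essential input is Corollary~\ref{qscor}, which says every component $Q$ of $G$ is finite; combined with the Hausdorff property, this allows us to find $N \unlhd_o G$ avoiding any prescribed finite subset of $G \setminus \{1\}$.  To prove $[Q, F(G)] = 1$ for a component $Q$, pick $N \unlhd_o G$ with $N \cap Q = \{1\}$: then $QN/N \cong Q$ is a subnormal quasisimple subgroup of $G/N$, i.e.\ a component of $G/N$, and so commutes with $F(G/N) = F_N/N$ by Theorem~\ref{finopil}(ii).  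Hence $[Q, F(G)] \leq [Q, F_N] \leq N$; since the collection of such $N$ has trivial intersection, $[Q, F(G)] = 1$.  For distinct components $Q_1 \neq Q_2$, pick (after swapping if necessary) some $x \in Q_1 \setminus Q_2$ and choose $N \unlhd_o G$ missing the finite set $(Q_1 \cup Q_2 \cup Q_2^{-1}x) \setminus \{1\}$; then $Q_i N/N \cong Q_i$ for $i=1,2$, and $x \notin Q_2 N$ forces $Q_1 N \neq Q_2 N$, so $Q_1 N/N$ and $Q_2 N/N$ are two distinct components of $G/N$ and commute by Theorem~\ref{finopil}(i).  Thus $[Q_1, Q_2] \leq N$; intersecting over all such $N$ yields $[Q_1, Q_2] = 1$.

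\textbf{Conclusion and main obstacle.}  Once pairwise commutativity is in hand, the structural statements are formal: $E(G) = \langle \Comp(G) \rangle$ is an unrestricted central product of its pairwise commuting components, and $F^*(G) = \langle F(G), E(G) \rangle$ with $[F(G), E(G)] = 1$ exhibits $F^*(G)$ as a central product of $F(G)$ and $E(G)$.  The only real technical step is the separation argument in (ii): guaranteeing that in a sufficiently small finite quotient, the chosen component(s) lift to genuine, and pairwise distinct, components of $G/N$ rather than being collapsed or identified.  Given the finiteness of components (Corollary~\ref{qscor}) and the Hausdorff property, this is routine, and then the finite-group theorem does all the substantive work.
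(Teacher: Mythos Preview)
Your proposal is correct and follows essentially the same strategy as the paper: reduce to finite quotients using the finiteness of components (Corollary~\ref{qscor}) and invoke Theorem~\ref{finopil}.  The only cosmetic differences are that the paper argues part (ii) by contradiction in a single unified case (taking $L$ to be either a normal pro-$p$ subgroup or a distinct component, and picking $N$ avoiding a specific $[x,y]\neq 1$), and for part (i) simply cites Lemmas~\ref{opinor} and~\ref{fitlem} rather than spelling out the inverse-limit argument as you do.
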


\begin{proof}(i) This follows immediately from Lemmas \ref{opinor} and \ref{fitlem}.
 
(ii) Let $Q$ be a component, and let $L$ be either a normal pro-$p$ subgroup of $G$, or some component distinct from $Q$.  Suppose that $[x,y] \not= 1$ for some $x \in Q, y \in L$.  Then $G$ has an open normal subgroup $N$ that intersects trivially with $Q$, which does not contain $[x,y]$, and such that $QN \not= LN$, and if $L$ is a component we can also choose $N$ to intersect trivially with $L$.  Then $G/N$ is a finite group, with a subgroup $LN/N$ that is either a normal $p$-subgroup of $G/N$ or a component of $G/N$ distinct from $QN/N$, and yet $LN/N$ does not commute with the component $QN/N$.  This contradicts Theorem \ref{finopil}.\end{proof}

Say a profinite group $G$ is an $F^*$-group if $G = F^*(G)$.

\begin{cor}\label{opicor}Let $G$ be a profinite group.
\vspace{-12pt}
\begin{enumerate}[(i)] \itemsep0pt
\item Let $\mcQ \subseteq \Comp(G)$.  Then $\mcQ$ is a non-redundant set of subgroups of $G$.
\item Let $N$ be a normal subgroup of $G$.  Then $F(N) = F(G) \cap N$ and $E(N) = E(G) \cap N$.  In particular, $N/E(N)$ is isomorphic to a normal subgroup of $G/E(G)$, and $N/F(N)$ is isomorphic to a normal subgroup of $G/F(G)$.
\item Let $N$ be a normal subgroup of $G$.  Then $F(G)N/N \leq F(G/N)$ and $E(G)N/N \leq E(G/N)$.  In particular, if $G$ is an $F^*$-group then so is $G/N$.
\item The group $G$ is an $F^*$-group if and only if $G/F(G)$ is perfect and $G/E(G)$ is pronilpotent.  Moreover, if $G$ is an $F^*$-group then $G/F(G)$ is a Cartesian product of non-abelian finite simple groups.  In particular, every image of an $F^*$-group is an $F^*$-group.
\item We have $G = E(G)$ if and only if $G$ is perfect and $O^\fsg(G)$ is central.\end{enumerate}\end{cor}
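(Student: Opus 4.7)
The plan is to prove both directions using Theorem \ref{opilayer}(ii), which describes $E(G)$ as a central product of components, together with Lemma \ref{ofsglem} applied to $G/Z(G)$. The key manoeuvre in both directions is the observation that for any component $Q$ of $G$, the intersection $Q \cap Z(G)$ coincides with $Z(Q)$: the inclusion $\supseteq$ follows because $Z(Q)$ centralises $Q$ and every other component (these commute with $Q$ by Theorem \ref{opilayer}(ii)), hence centralises $E(G)=\langle\Comp(G)\rangle$; and $\subseteq$ is immediate.

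For the forward direction, suppose $G = E(G)$. Each component is quasisimple and hence perfect, so $G$ is topologically generated by perfect subgroups and is itself perfect. By the observation above, the image of each component $Q$ in $G/Z(G)$ is the non-abelian finite simple group $Q/Z(Q)$, and two distinct such images have trivial intersection (any element of the intersection would lift to something central in both components, hence in $Z(G)$). So $G/Z(G)$ is exhibited as a Cartesian product of non-abelian finite simple groups; the coordinate projections then show $O^\fsg(G/Z(G)) = 1$, which gives $O^\fsg(G) \leq Z(G)$.

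For the converse, assume $G$ is perfect and $O^\fsg(G) \leq Z(G)$. Then $O^\fsg(G/Z(G)) = 1$, so by Lemma \ref{ofsglem} we may write $G/Z(G) = \prod_{i \in I} S_i$ with each $S_i$ non-abelian finite simple. For each $i$, let $T_i$ be the preimage of $S_i$ in $G$; then $T_i \unlhd G$, and I claim $Q_i := T_i'$ is a component. Indeed, $Q_i$ is characteristic in $T_i$, hence normal in $G$. Since $T_i/(T_i \cap Z(G)) \cong S_i$ is perfect, $T_i = Q_i \cdot (T_i \cap Z(G))$, and because the second factor is central, $(T_i)' = Q_i'$, so $Q_i$ is perfect. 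Also $Q_i \cap Z(G) \leq Z(Q_i)$, and $Q_i/(Q_i \cap Z(G)) \cong S_i$ is non-abelian simple, which forces $Z(Q_i) = Q_i \cap Z(G)$ (otherwise $Q_i$ would be abelian), so $Q_i/Z(Q_i) \cong S_i$. Hence each $Q_i$ is a component, giving $E(G) \supseteq Q_i$ for every $i$, and therefore $E(G) \cdot Z(G) \supseteq Q_i \cdot (T_i \cap Z(G)) = T_i$ for all $i$, whence $E(G) \cdot Z(G) = G$. Since $G$ is perfect and $Z(G)$ is central, $G = G' = (E(G) \cdot Z(G))' = (E(G))' \leq E(G)$, so $G = E(G)$.

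The most delicate step will be the verification that $Q_i = T_i'$ is genuinely quasisimple in the converse: both that it is perfect (relying on the identity $(AB)' = A'$ when $B$ is central, applied to $A = Q_i$, $B = T_i \cap Z(G)$) and that $Z(Q_i)$ does not exceed $Q_i \cap Z(G)$. Everything else reduces to bookkeeping between $G$ and $G/Z(G)$ together with the already-established structure of $E(G)$ from Theorem \ref{opilayer}(ii); in particular, the topological issues caused by an infinite index set $I$ are handled automatically, since Lemma \ref{ofsglem} supplies the Cartesian-product decomposition of $G/Z(G)$ as a profinite group.
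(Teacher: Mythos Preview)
Your proof is correct and follows essentially the same approach as the paper's. The only cosmetic differences are that you work modulo $Z(G)$ where the paper works modulo $Z = O^\fsg(G)$, and you extract the quasisimple lift as $T_i'$ where the paper uses $O^\sol(T_i)$; since $T_i$ is a central extension of a non-abelian simple group, these two subgroups coincide, and both arguments finish identically via $G = E(G)Z(G)$ together with perfectness of $G$. Your forward direction is spelled out in more detail than the paper's (which simply cites Theorem~\ref{opilayer}), but the content is the same.
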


\begin{proof}(i) Let $Q \in \mcQ$, and let $H = \langle \mcQ \setminus \{Q\} \rangle$.  Then $H$ centralises $Q$; but $Q$ is non-abelian, so $Q \not\le H$.

(ii) Since $N$ is normal in $G$, it is clear that $F(N)$ is a subnormal pronilpotent subgroup of $G$, so $F(N) \leq F(G)$, and $E(N)$ is generated by quasisimple subnormal subgroups of $G$, so $E(N) \leq E(G)$.

(iii) Clearly $F(G)N/N$ and $F(G) \cap N$ are pronilpotent, and $E(G)N/N$ and $E(G) \cap N$ are unrestricted central products of quasisimple groups by the theorem.  

Let $N \unlhd G$, and suppose $G = F^*(G)$.  Then $F(G)N/N$ is a pronilpotent normal subgroup of $G/N$, so is contained in $F(G/N)$.  Given $Q \in \Comp(G)$, then either $Q \leq N$, or $QN/N$ is a component of $G/N$; hence $E(G)N/N \leq E(G/N)$.  So if $G = F(G)E(G)$, then 
\[ G/N = F(G)E(G)/N = (F(G)N/N)(E(G)N/N) \leq F(G/N)E(G/N).\]

(iv) If $G=F(G)E(G)$, then $G/F(G)$ is isomorphic to an image of $E(G)$, and hence perfect, while $G/E(G)$ is isomorphic to an image of $F(G)$, and hence pronilpotent; indeed, $G/F(G) \cong E(G)/Z(E(G))$, which means that $G/F(G)$ is a Cartesian product of non-abelian finite simple groups by the theorem.  Conversely, suppose $G/F(G)$ is perfect and $G/E(G)$ is pronilpotent.  Then $G/F^*(G)$ is both perfect and pronilpotent, and hence trivial.

(iv) Let $Z = O^\fsg(G)$. Suppose $G$ is perfect and $Z$ is central.  By Lemma \ref{ofsglem}, $G/Z$ is a Cartesian product of its components.  Moreover, given $Q/Z \in \Comp(G/Z)$, then $O^\sol(Q)$ is a perfect central extension of $Q/Z$, and $O^\sol(Q)Z=Q$.  Thus $G=E(G)Z$.  But $G$ is perfect, so $G = O^\sol(E(G)Z)) = E(G)$.  The converse follows immediately from the theorem.\end{proof}

\begin{prop}Let $G$ be a profinite group, and let $M$ be a normal subgroup.  The following are equivalent:
\vspace{-12pt}
\begin{enumerate}[(i)] \itemsep0pt
\item $M$ is an $F^*$-group;
\item $M \leq F^*(G)$;
\item $MN/N \leq F^*(G/N)$, for every open normal subgroup $N$ of $G$;
\item $MN/N$ is an $F^*$-group, for every open normal subgroup $N$ of $G$.\end{enumerate}\end{prop}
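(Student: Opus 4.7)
The plan is to establish the cycle (i) $\Rightarrow$ (ii) $\Rightarrow$ (iii) $\Rightarrow$ (iv) $\Rightarrow$ (i). The first three implications are short consequences of earlier material. For (i) $\Rightarrow$ (ii), Corollary \ref{opicor}(ii) gives $F(M) = F(G) \cap M$ and $E(M) = E(G) \cap M$, whence $M = F(M)E(M) \leq F(G)E(G) = F^*(G)$. For (ii) $\Rightarrow$ (iii), push down each open normal $N$: $MN/N \leq F^*(G)N/N \leq F^*(G/N)$ by Corollary \ref{opicor}(iii). For (iii) $\Rightarrow$ (iv), observe that $MN/N$ is a normal subgroup of the finite group $G/N$ contained in $F^*(G/N)$; the standard finite-group identity $F^*(K) = K \cap F^*(H)$ for $K \unlhd H$ (which follows from the finite case of Corollary \ref{opicor}(ii) together with Theorem \ref{finopil}, via the idempotence $F^*(F^*(H)) = F^*(H)$) yields $F^*(MN/N) = MN/N$, so $MN/N$ is $F^*$.

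The substantive step is (iv) $\Rightarrow$ (i). The subgroups $M \cap N$ ($N \unlhd_o G$) are cofinal among the open normal subgroups of $M$, so every finite image of $M$ is a quotient of some $MN/N$ and is therefore $F^*$ by Corollary \ref{opicor}(iii). The task thus reduces to showing that if every finite image of the profinite group $M$ is $F^*$, then $M$ itself is $F^*$. Using the characterisation in Corollary \ref{opicor}(iv), I will verify that $M/F(M)$ is topologically perfect and $M/E(M)$ is pronilpotent.

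For perfection of $M/F(M)$: Theorem \ref{opilayer}(i) presents $M/F(M)$ as $\varprojlim (M/K)/F(M/K)$ over $K \unlhd_o M$, and by Corollary \ref{opicor}(iv) each factor is a finite direct product of non-abelian simple groups, hence perfect. Therefore the closed commutator subgroup of $M/F(M)$ surjects onto every factor, so it is dense and (being closed) equals $M/F(M)$. The same presentation together with Lemma \ref{ofsglem} identifies $M/F(M) \cong \prod_{i \in I} S_i$ as an unrestricted Cartesian product of non-abelian finite simple groups.

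The remaining task, equivalently $M = F(M)E(M)$, is the main obstacle. The image of the closed normal subgroup $F(M)E(M)$ in $M/F(M) = \prod_{i \in I} S_i$ is a closed normal subgroup; a short commutator argument shows that closed normal subgroups of such an unrestricted product of non-abelian simple groups are sub-products $\prod_{i \in J} S_i$, so it remains to show $J = I$, that is, for each $i$ to exhibit a component of $M$ projecting onto $S_i$. My approach is an inverse-limit construction: for each open normal $K \leq M$ the finite $F^*$-group $M/K$ has a canonical bijection between its components and the simple direct factors of its Fitting quotient, and tracking (as $K$ shrinks through a cofinal family) the component matching the image of $S_i$ yields an inverse system of finite quasisimple groups with surjective transition maps; by a compactness argument its inverse limit realises a closed subnormal (of defect at most $2$) quasisimple subgroup $Q_i \leq M$ projecting onto $S_i$. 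This yields $J = I$, hence $M = F(M)E(M)$ is $F^*$.
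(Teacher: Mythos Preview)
Your argument is correct. For (i)$\Rightarrow$(ii)$\Rightarrow$(iii)$\Rightarrow$(iv) it matches the paper essentially; the difference lies in (iv)$\Rightarrow$(i).

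The paper handles (iv)$\Rightarrow$(i) globally rather than component by component. It sets $E_N$ to be the lift to $M$ of $E(M/(M\cap N))$ and $E = \bigcap_N E_N$; the key compatibility $E_N = E_K N$ for $K \leq N$ (surjections of finite $F^*$-groups carry layer onto layer) gives $E_N = EN$ for all $N$, so $M/E$ is pronilpotent. One then checks $E$ is perfect with $O^{\fsg}(E)$ central, whence $E = E(E)$ by Corollary~\ref{opicor}(v), and normality in $M$ forces $E = E(M)$. A parallel argument with the lifts $F_N$ of $F(M/(M\cap N))$ gives $M/F(M)$ perfect, and Corollary~\ref{opicor}(iv) finishes. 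Your route instead realises $M/F(M)$ as $\prod_i S_i$ and, for each $i$, tracks the matching component through the finite quotients to assemble a genuine component $Q_i$ of $M$ (in fact each $Q_i$ is normal in $M$, not merely subnormal of defect $2$, since components of a finite $F^*$-group are normal and $Q_i$ is an intersection of their preimages). You gain an explicit description of $\Comp(M)$; the cost is the bookkeeping of matching components across levels and the separate verification that the inverse limit of quasisimple groups with common simple top is again quasisimple. The paper's argument sidesteps both by treating the whole layer at once and invoking Corollary~\ref{opicor}(v) instead of rebuilding components individually.
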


\begin{proof}Assume (i).  By Corollary \ref{opicor}, $F(M)$ is contained in $F(G)$ and $E(M)$ is contained in $E(G)$, so $M=E(M)F(M) \leq F^*(G)$.

Assume (ii).  Then $MN/N$ is an $F^*$-group that is a normal subgroup of $G/N$, so must be contained in $F^*(G/N)$ by the fact that (i) implies (ii).

Assume (iii).  Clearly $F^*(G/N)$ itself is a finite $F^*$-group.  It follows that $MN/N$ is also an $F^*$-group as $MN/N \unlhd F^*(G/N)$.

Assume (iv).  Let $E_N$ be the lift of $E(M/M \cap N)$ to $M$, and let $E$ be the intersection of the $E_N$ as $N$ ranges over the open normal subgroups.   Clearly $E \unlhd M$.  From the structure of finite $F^*$-groups, it is clear that $E_N = E_KN$ whenever $K \unlhd_o G$ such that $K \leq N$.  Hence $E_N = EN$; this ensures $M/E$ is an inverse limit of nilpotent groups, so is pronilpotent.  Furthermore, $E$ is the inverse limit of perfect groups, so is itself perfect, and $O^\fsg(E) \leq Z(E)$ by considering the action of $O^\fsg(E)$ on the finite images of $E$.  Thus $E = E(M)$.  Let $F_N$ be the lift of $F(M/M \cap N)$ to $M$, and let $F$ be the intersection of the $F_N$ as $N$ ranges over the open normal subgroups.  Then $M/F_N$ is perfect for every $N$, and $F$ is an inverse limit of nilpotent groups, so in fact $F=F(M)$, which means $M/F(M)$ is perfect.  Hence $M$ is an $F^*$-group, by Corollary \ref{opicor}.\end{proof}

It is certainly not the case that all profinite groups are Fitting-regular.  For instance, any non-abelian free profinite group is Fitting-degenerate, and the group described in \cite{Luc} is a just infinite profinite group that is Fitting-degenerate.  There are also examples with restrictions on the Sylow subgroups:

\begin{prop}\
\vspace{-12pt}
\begin{enumerate}[(i)] \itemsep0pt
\item There exist non-trivial Fitting-degenerate prosoluble groups, all of whose Sylow subgroups are finite.
\item There exist non-trivial Fitting-degenerate prosoluble groups that are countably based pro-$\{p,q\}$ groups, for any distinct primes $p$ and $q$.\end{enumerate}\end{prop}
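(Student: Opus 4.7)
The plan is to construct both examples uniformly as inverse limits of iterated permutation wreath products, varying only the sequence of primes used.

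Fix a sequence $(p_n)_{n \ge 1}$ of primes with $p_n \ne p_{n+1}$ for every $n$: take all $p_n$ distinct for (i), and alternate between $p$ and $q$ for (ii). Set $W_1 = C_{p_1}$ and inductively $W_{n+1} = C_{p_{n+1}} \wr W_n$, with the regular permutation action. Each $W_n$ is a nontrivial finite soluble group, so $W := \varprojlim_n W_n$ is a nontrivial countably based prosoluble group. For (i), since the primes are distinct one checks inductively that $|W_n|_{p_k} = p_k^{|W_{k-1}|}$ for all $n \ge k$, so every Sylow subgroup of $W$ is finite. For (ii), $W$ is patently a pro-$\{p,q\}$ group.

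It remains to show that $F^*(W) = 1$. Since $W$ is prosoluble it has no components, so $F^*(W) = F(W)$. By Theorem \ref{opilayer}(i), together with the monotonicity $N \le N' \Rightarrow F_N \le F_{N'}$ of the preimages $F_N$ of $F(W/N)$, the pro-Fitting subgroup is the intersection of the $F_{N_m}$ along the cofinal chain $N_m := \ker(W \twoheadrightarrow W_m)$. The key computation is then
\[ F(W_n) = B_n, \qquad B_n := \ker(W_n \twoheadrightarrow W_{n-1}), \]
proved by induction on $n$. The inclusion $B_n \le F(W_n)$ is immediate since $B_n$ is an abelian normal $p_n$-subgroup of $W_n$. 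Conversely, the nilpotent group $F(W_n)$ is a direct product of its Sylow subgroups; for any prime $\ell \ne p_n$, the $\ell$-Sylow $S_\ell$ of $F(W_n)$ injects into $W_{n-1}$ under the quotient $W_n \to W_{n-1}$ (whose kernel is the $p_n$-group $B_n$), and must centralise $B_n$, but by faithfulness of the regular wreath action this forces $S_\ell = 1$. Hence $F(W_n)$ is a $p_n$-group, and its image $F(W_n)/B_n$ is contained in $F(W_{n-1})$, which by induction equals $B_{n-1}$; thus $F(W_n)/B_n$ is simultaneously a $p_n$-group and a $p_{n-1}$-group, and since $p_{n-1} \ne p_n$ this image is trivial. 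Granted the claim, $F_{N_m} = N_{m-1}$, and $\bigcap_m N_{m-1} = 1$, giving $F(W) = 1$.

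The main obstacle is the inductive verification that $F(W_n)$ coincides with the outermost base group; everything else is essentially formal. This in turn hinges on having chosen a wreath-product action faithful enough that no nontrivial lift of $F(W_{n-1})$ can centralise $B_n$, which is exactly what the regular action provides, and why the argument works in both the all-distinct and the alternating regimes.
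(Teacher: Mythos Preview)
Your proof is correct and follows essentially the same approach as the paper's: an inverse limit of iterated semidirect products $V_{n+1} \rtimes G_n$ with $V_{n+1}$ elementary abelian of order a power of $p_{n+1}$ and $G_n$ acting faithfully, varying the primes as needed. Your regular wreath products are simply a concrete instance of this construction, and your inductive computation $F(W_n)=B_n$ makes explicit what the paper leaves as ``clear from the construction'' (namely that $F(G_n)$ is a $p_n$-group), after which the conclusion $F(W)=1$ follows in both cases from the fact that no fixed prime can equal $p_n$ for all large $n$.
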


\begin{proof}There is a sequence of finite groups $G_i$ and primes $p_i$, with the following properties:
\vspace{-12pt}
\begin{enumerate}[(i)] \itemsep0pt
\item $G_1$ is cyclic of order $p_1$;
\item $G_{i+1} = V_{i+1} \rtimes G_i$, where $V_{i+1}$ is an elementary abelian $p_{i+1}$-group on which $G_i$ acts faithfully.\end{enumerate}

No matter what finite group we have for $G_i$, it is possible to choose a suitable $G_{i+1}$, and moreover we can choose the primes $p_i$ freely.  These $G_i$ also form an inverse system in an obvious way, giving rise to a profinite group $G$, which is countably based and prosoluble by construction.  For (i), make all the $p_i$ distinct, so that every Sylow subgroup is finite, and for (ii), make $p_i = p$ for $i$ odd and $p_i=q$ for $i$ even, so that $G$ is a pro-$\{p,q\}$ group.

In each case, consider the pro-Fitting subgroup of $G$.  For each $G_i$, the image of $F(G)$ in $G_i$ is contained in $F(G_i)$.  But it is clear from the construction of $G_i$ that $F(G_i)$ is always a $p_i$-group.  If $F(G)$ is non-trivial, then it contains a non-trivial pro-$p$ subgroup for some $p$.  This would force $p_i=p$ for all sufficiently large $i$.  But this does not occur for either (i) or (ii), and so the constructions yield Fitting-degenerate profinite groups in both cases.\end{proof}

\section{A structure theorem for Fitting-regularity}

First, note some closure properties of $\FD$:

\begin{lem}The class $\FD$ is closed under subnormal subgroups, extensions, and sub-Cartesian products.\end{lem}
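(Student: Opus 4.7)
The plan is to handle each of the three closure properties in turn, leaning on Corollary \ref{opicor} and the characterisation of $F^*$-subgroups given in the Proposition just preceding the lemma.

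For closure under subnormal subgroups, I would observe that by Corollary \ref{opicor}(ii), any normal subgroup $N$ of a profinite group $G$ satisfies $F(N) = F(G) \cap N$ and $E(N) = E(G) \cap N$. Hence if $F^*(G) = 1$, then both $F(G) = 1$ and $E(G) = 1$, forcing $F(N) = E(N) = 1$ and therefore $F^*(N) = 1$. Iterating along a subnormal series $H = H_0 \unlhd H_1 \unlhd \dots \unlhd H_n = G$ extends this from normal subgroups to arbitrary subnormal ones.

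For closure under extensions, suppose $N \unlhd G$ with $N, G/N \in \FD$, and set $M := F^*(G)$. By Corollary \ref{opicor}(iii), $MN/N \leq F^*(G/N) = 1$, so $M \leq N$. By Corollary \ref{opicor}(iv), $M$ is itself an $F^*$-group; since $M \unlhd N$, the Proposition preceding this lemma gives $M \leq F^*(N) = 1$. For closure under sub-Cartesian products, I would present $G$ via an embedding into $\prod_i G_i$ with each $G_i \in \FD$ and each projection $G \to G_i$ surjective; write $K_i$ for the kernel of the $i$-th projection, so $\bigcap_i K_i = 1$ and $G/K_i \cong G_i \in \FD$. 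Applying Corollary \ref{opicor}(iii) to each $K_i$ yields $F^*(G) K_i / K_i \leq F^*(G/K_i) = 1$, hence $F^*(G) \leq K_i$ for every $i$, and therefore $F^*(G) \leq \bigcap_i K_i = 1$.

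None of the three arguments looks genuinely difficult: the machinery required is all contained in Corollary \ref{opicor} and the $F^*$-subgroup Proposition. The one step that deserves care is the extension case, where passing from $F^*(G) \leq N$ to $F^*(G) \leq F^*(N)$ really does require knowing that $F^*(G)$ is itself an $F^*$-group, so one must invoke Corollary \ref{opicor}(iv) (not merely the definition of $F^*$) before applying the Proposition. Conceptually, this is the one place the proof is using something more than the fact that $F$ and $E$ commute with intersections and quotients.
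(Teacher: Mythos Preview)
Your proof is correct and follows essentially the same route as the paper's: both reduce subnormal to normal via Corollary~\ref{opicor}(ii), both handle extensions by pushing $F^*(G)$ into $N$ via the quotient and then into $F^*(N)$ (the paper compresses this to the single line $F^*(G)\cap N = F^*(N)=1$, which implicitly uses the same Proposition you invoke explicitly), and both handle sub-Cartesian products by projecting $F^*(G)$ into each $F^*(G/K_i)$. One minor remark: for ``$M=F^*(G)$ is an $F^*$-group'' it is cleaner to cite the Proposition itself (direction (ii)$\Rightarrow$(i) with $M=F^*(G)\unlhd G$) rather than Corollary~\ref{opicor}(iv), though the latter also suffices.
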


\begin{proof}For the first claim, it suffices to consider normal subgroups.  Let $N$ be a normal subgroup of $G \in \FD$.  Then $F^*(N) \leq F^*(G) = 1$.
 
Now let $G$ be a profinite group with $N \lhd G$ such that $N, G/N \in \FD$.  Then $F^*(G) \cap N = F^*(N) =1$, and $F^*(G)N/N \leq F^*(G/N) =1$, so $F^*(G)=1$.

Now let $G$ be a profinite group that is a sub-Cartesian product of groups $H_i \in \FD$.  In other words, there are surjective maps $\rho_i$ from $G$ to $H_i$ for each $i$, such that the kernels of the $\rho_i$ have trivial intersection.  But $(F^*(G))^{\rho_i}$ is an $F^*$-group and hence trivial for each $i$, since it is normal in the Fitting-degenerate group $H_i$, so $F^*(G)$ must be trivial.\end{proof}

\begin{cor}Let $G$ be a profinite group.  Then $G/O^\FD(G) \in \FD$, and $O^\FD(G) \in \FR$.\end{cor}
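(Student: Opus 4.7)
The plan is to deduce both assertions formally from the closure properties of $\FD$ just established, without probing the internal structure of $F^*$ any further.

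First I would handle the claim $G/O^\FD(G) \in \FD$. By definition, $R := O^\FD(G) = \bigcap_{N \in \mcN} N$, where $\mcN$ is the collection of closed normal subgroups $N \lhd_c G$ with $G/N \in \FD$. The canonical map $G/R \to \prod_{N \in \mcN}(G/N)$ is injective, so $G/R$ sits as a sub-Cartesian product of $\FD$-groups, and the sub-Cartesian product clause of the preceding lemma delivers $G/R \in \FD$.

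For $R \in \FR$, I would argue by contradiction: suppose $K$ is a proper closed normal subgroup of $R$ with $R/K \in \FD$. Since $R \lhd G$, the family $\{K^g \mid g \in G\}$ consists of normal subgroups of $R$, so its intersection $R' := \bigcap_{g \in G} K^g$ is normal in $G$, and each quotient $R/K^g$ is isomorphic to $R/K$ via conjugation. Hence $R/R'$ embeds as a sub-Cartesian product of $\FD$-groups, giving $R/R' \in \FD$ by the lemma. Combined with $G/R \in \FD$ from the first part, the short exact sequence
\[ 1 \longrightarrow R/R' \longrightarrow G/R' \longrightarrow G/R \longrightarrow 1 \]
together with extension-closure of $\FD$ yields $G/R' \in \FD$. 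By definition of the $\FD$-residual this forces $R' \supseteq R$, contradicting $R' \subseteq K \subsetneq R$. So no such $K$ exists and $R \in \FR$.

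The argument is essentially formal once the lemma is in hand; the only move requiring a small adjustment is the passage from a subgroup $K$ that is merely $R$-normal to its $G$-core $R'$, which is needed so that extension-closure can be applied to $G/R'$ rather than just to $R$ itself. I do not anticipate any other obstacle.
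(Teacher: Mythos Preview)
Your proof is correct and follows essentially the same strategy as the paper: sub-Cartesian closure for the first claim, then extension-closure for the second. The only difference is cosmetic: where you start from an arbitrary $K \lhd R$ with $R/K \in \FD$ and pass to its $G$-core, the paper takes $N = O^\FD(R)$ directly, which is characteristic in $R$ (hence normal in $G$) and already has $R/N \in \FD$ by the first part, so no core-taking or second sub-Cartesian argument is needed.
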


\begin{proof}The first claim follows immediately from the fact that $\FD$ is closed under sub-Cartesian products.  For the second,  let $G$ be a profinite group, let $R = O^\FD(G)$, and let $N = O^\FD(R)$.  Then $N$ is characteristic in $R$, and hence in $G$, and $G/N \in \FD$, since $\FD$ is closed under extensions.  Hence by definition $R \leq N$, so $N = R$, in other words $R$ has no non-trivial Fitting-degenerate images.\end{proof}

Here are some closure properties of $\FR$.

\begin{lem}The class $\FR$ is closed under subnormal sections, subnormal joins and extensions.\end{lem}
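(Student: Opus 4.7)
The plan is to exploit the tautology $\FR \cap \FD = [1]$ together with the closure properties of $\FD$ already established, handling the three claims in parallel via the following common observation: if $G$ has a nontrivial image $G/M$ in $\FD$ and $S$ is a subnormal subgroup of $G$ lying in $\FR$, then $SM/M$ lies simultaneously in $\FD$ (as a subnormal subgroup of $G/M$, by the already-proved closure of $\FD$ under subnormal subgroups) and in $\FR$ (as an image of $S$), forcing $SM/M = 1$, i.e.\ $S \leq M$.

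For extensions, I would suppose $N \unlhd G$ with $N, G/N \in \FR$ and take a nontrivial image $G/M \in \FD$. The observation gives $N \leq M$, so $G/M$ is an image of $G/N \in \FR$; hence $G/M \in \FR \cap \FD = [1]$, contradicting nontriviality. For subnormal joins, the same style works: if $G = \langle \mcS \rangle$ with each $S \in \mcS$ subnormal and in $\FR$, then any $G/M \in \FD$ would force $S \leq M$ for every $S \in \mcS$, giving $G \leq M$ and $G/M = 1$.

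The subnormal sections claim reduces, by taking images and inducting on subnormal depth, to showing that $G \in \FR$ and $H \unlhd G$ imply $H \in \FR$. Supposing otherwise, I would take a nontrivial image $H/K \in \FD$ and set $N := \Core_G(K)$. Then $H/N$ embeds in the sub-Cartesian product of the $H/K^g$, each isomorphic to $H/K \in \FD$, so $H/N \in \FD$ by closure of $\FD$ under sub-Cartesian products and subnormal subgroups; and $H/N \neq 1$ since $N \leq K$ while $H \not\leq K$. Passing to $G/N$ reduces the task to the following: assuming $G \in \FR$ and $H \unlhd G$ with $H \in \FD$ and $H \neq 1$, derive a contradiction.

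Here is the crux. By Corollary \ref{opicor}(ii), $F(H) = F(G) \cap H = 1$ and $E(H) = E(G) \cap H = 1$; since $F(G)$ and $H$ are both normal in $G$, $[F(G), H] \leq F(G) \cap H = 1$, so $F(G) \leq C_G(H)$, and likewise $E(G) \leq C_G(H)$, hence $F^*(G) \leq C_G(H)$. Now $Z(H) \leq F(H) = 1$, so in $\bar G := G/C_G(H)$ the image $\bar H := HC_G(H)/C_G(H)$ is isomorphic to $H$, lies in $\FD$, and is normal in $\bar G$. Applying the same reasoning to $\bar G$ yields $F^*(\bar G) \leq C_{\bar G}(\bar H)$; however, if $\bar g$ centralises $\bar H$ then $[g,h] \in C_G(H) \cap H = Z(H) = 1$ for every $h \in H$, so $g \in C_G(H)$ and $\bar g = 1$. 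Hence $F^*(\bar G) = 1$, i.e.\ $\bar G \in \FD$; since $\bar G$ is an image of $G \in \FR$, it lies in $\FR \cap \FD = [1]$, so $G = C_G(H)$ and $H \leq Z(G)$. But then $H$ is abelian, forcing $H \leq F(G) \cap H = F(H) = 1$---contradicting $H \neq 1$. The main obstacle is noticing that a single pass to $G/C_G(H)$ suffices: the centraliser of $\bar H$ there becomes trivial, so the contradiction arrives immediately, with no transfinite machinery required.
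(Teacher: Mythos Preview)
Your proof is correct and follows essentially the same approach as the paper's: for subnormal joins and extensions the arguments are virtually identical, and for the normal-subgroup case both reduce to $G \in \FR$, $H \unlhd G$ with $H \in \FD$ nontrivial, pass to $\bar G = G/C_G(H)$, and exploit that $C_{\bar G}(\bar H)$ is trivial. Your endgame (showing $F^*(\bar G) \leq C_{\bar G}(\bar H) = 1$, hence $\bar G \in \FD$, contradicting $G \in \FR$) is a minor variation on the paper's (finding a nontrivial $F^*$-normal-subgroup of $\bar H$, contradicting $\bar H \in \FD$), and your reduction via $\Core_G(K)$ and sub-Cartesian products in place of the paper's use of $O^\FD(H)$ is likewise cosmetic.
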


\begin{proof}For the first claim, it suffices to show $\FR$ is closed under normal subgroups, by induction on the degree of subnormality and by the definition of Fitting-regularity.  Let $G \in \FR$, and suppose $N \lhd G$ such that $O^\FD(N) < N$.  Then $O^\FD(N)$ is a normal subgroup of $G$, since it is characteristic in $N$, so by replacing $G$ by $G/O^\FD(N)$ and $N$ by $N/O^\FD(N)$, we may assume that $N$ is Fitting-degenerate.  Since $N$ is Fitting-degenerate it has trivial centre, and so the normal subgroup $H=C_G(N)$ of $G$ has trivial intersection with $N$.  Hence $N$ is isomorphic to $NH/H$, and the centraliser in $G/H$ of $NH/H$ is trivial.  In particular, $NH/H$ intersects non-trivially with any non-trivial normal subgroup of $G/H$.  But then $F^*(G/H) \cap NH/H$ is a non-trivial normal subgroup of $NH/H$ that is an $F^*$-group, contradicting the Fitting-degeneracy of $NH/H$.

Now let $G$ be a profinite group generated by subnormal subgroups $N_i \in \FR$.  Let $M$ be a proper normal subgroup of $G$.  Then there is some $N_i$ not contained in $M$, and so $G/M$ has a subnormal subgroup $N_i M/M$ that is isomorphic to the non-trivial image $N_i/(N_i \cap M)$ of $N_i$, so $F^*(G/M) \geq F^*(N_i M/M) > 1$.  As $M$ was an arbitrary proper normal subgroup, this means $G \in \FR$.

Now let $G$ be a profinite group with $N \unlhd G$ such that $N \in \FR$ and $G/N \in \FR$.  Let $M$ be a proper normal subgroup of $G$.  If $N \leq M$, then $G/M \not\in \FD$, as it is an image of $G/N$.  Otherwise, $G/M \not\in \FD$ by the argument of the previous paragraph.\end{proof}

\begin{rem}All countably based profinite groups are subgroups of the Fitting-regular group $\prod_{n \geq 5} \Alt(n)$ (see \cite{Wil}), but there are non-trivial Fitting-degenerate countably based profinite groups.  Hence the class $\FR$ is not closed under subgroups, though it is closed under taking open subgroups, as will be seen shortly.  It is not clear whether or not a subgroup of a Fitting-regular prosoluble group can fail to be Fitting-regular.\end{rem}

We are now ready to prove the following structure theorem:

\begin{thm}Let $G$ be a profinite group.  Then $G$ has a characteristic subgroup $R = O^\FD(G) = O_\FR(G)$, such that:
\vspace{-12pt}
\begin{enumerate}[(i)] \itemsep0pt
\item A subnormal subgroup of $G$ is Fitting-regular if and only if it is contained in $R$;
\item $G/R$ is Fitting-degenerate, and covers every Fitting-degenerate quotient of $G$.\end{enumerate}

Let $H$ be an open subgroup of $G$.  Then $O_\FR(H) = O_\FR(G) \cap H$.\end{thm}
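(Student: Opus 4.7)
The plan is to reduce both inclusions to the already-established case of normal open subgroups via the core $K := \Core_G(H)$, which is an open normal subgroup of $G$. For $K \unlhd G$, writing $R := O_\FR(G)$, we have $O_\FR(K) = R \cap K$: subnormal subgroups of $K$ are subnormal in $G$, giving $O_\FR(K) \leq R \cap K$, while $R \cap K$ is normal in $R$ (as $K \unlhd G$) and hence FR by closure of $\FR$ under subnormal sections, and is normal in $K$, giving the reverse containment.

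For $R \cap H \subseteq O_\FR(H)$, I would show that $R \cap H$ is a Fitting-regular normal subgroup of $H$, whence it lies in $O_\FR(H)$. Normality is immediate from $R \unlhd G$. For Fitting-regularity, the subgroup $R \cap K \unlhd R \cap H$ is FR as above, the quotient $(R \cap H)/(R \cap K)$ embeds into the finite group $H/K$ and so is FR (every finite group is FR, since its only FD image is trivial), and $\FR$ is closed under extensions.

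For the reverse $O_\FR(H) \subseteq R \cap H$, set $S := O_\FR(H)$, which is FR as a subnormal join of FR subgroups. The same reasoning gives $S \cap K \in \FR$ and $S \cap K \unlhd K$, so $S \cap K \leq O_\FR(K) = R \cap K \leq R$. Hence $S/(S \cap R)$ is a quotient of the finite group $S/(S \cap K) \hookrightarrow H/K$, making $SR/R \cong S/(S \cap R)$ a finite subgroup of $G/R$, normalised by the open subgroup $HR/R$. Since $G/R \in \FD$ by part (ii) of the theorem, the desired $S \leq R$ reduces to the following lemma, which I expect to be the main obstacle:

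\emph{Lemma.} Let $\bar L \in \FD$ and let $\bar S \leq \bar L$ be finite, normalised by some open subgroup of $\bar L$. Then $\bar S = 1$.

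To prove the lemma, let $D$ be the intersection in $\bar L$ of the centralisers of the (finitely many) $\bar L$-conjugates of $\bar S$. Each such centraliser has index in $\bar L$ dividing the product of the open normaliser index and $|\Aut(\bar S)|$, hence is open, so $D$ is open and normal in $\bar L$. Then $D \in \FD$, and $Z(D) \leq F(D) \leq F^*(D) = 1$ (the closed subgroup $Z(D)$ is abelian, hence pronilpotent, and normal in $D$, so lies in the pro-Fitting subgroup). Therefore $C_{\bar L}(D) \cap D = Z(D) = 1$, so $C_{\bar L}(D) \hookrightarrow \bar L/D$ is finite. Being normal in $\bar L \in \FD$, Corollary \ref{opicor}(ii) yields $F(C_{\bar L}(D)) = F(\bar L) \cap C_{\bar L}(D) = 1$ and similarly $E(C_{\bar L}(D)) = 1$, so $C_{\bar L}(D) \in \FD$, and being finite it is trivial. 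Since $D$ centralises $\bar S^{\bar L}$, we conclude $\bar S \leq \bar S^{\bar L} \leq C_{\bar L}(D) = 1$.
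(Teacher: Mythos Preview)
Your argument is correct. For the inclusion $R\cap H\le O_\FR(H)$ you do essentially what the paper does: pass to the core, use that $R\cap K$ is normal in $R$ and hence Fitting-regular, and conclude by closure of $\FR$ under extensions.

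For the reverse inclusion the paper takes a slightly different path. It shows directly that $HK/K$ (with $K=O_\FR(G)$) is Fitting-degenerate: the core of $HK/K$ in $G/K$ is an open normal FD subgroup, forcing $O_\FR(HK/K)$ to be finite; then Dicman's lemma places $O_\FR(HK/K)$ inside a finite normal subgroup of $G/K$, which must be trivial since $G/K\in\FD$. From $HK/K\in\FD$ one gets $O_\FR(H)\le H\cap K$ immediately. Your route instead first reduces to the normal case $O_\FR(K)=R\cap K$ for the core $K=\Core_G(H)$, deduces that $SR/R$ is finite, and then kills it with your lemma. Your lemma is really the same phenomenon as the paper's Dicman step, but you prove it by hand via the open normal centraliser $D$ and the observation that $C_{\bar L}(D)$ is a finite normal subgroup of an FD group (hence trivial by Corollary~\ref{opicor}(ii) together with Theorem~\ref{finopil}(iii)). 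This is a legitimate and self-contained alternative to invoking Dicman's lemma; the paper's version is shorter because Dicman's lemma is already available, while yours makes the mechanism more transparent.
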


\begin{proof}Let $R=O^\FD(G)$.  We have already seen that $R$ is Fitting-regular, so $R \leq O_\FR(G)$, and that this implies all subnormal subgroups of $R$ are Fitting-regular.

Conversely, let $N$ be a Fitting-regular subnormal subgroup of $G$.  Then $NR/R$ is Fitting-regular, as it is isomorphic to an image of $N$, but it is also Fitting-degenerate, as it is a subnormal subgroup of $G/R \in \FD$.  Hence $NR/R=1$, and so $N \leq R$.  This demonstrates that (i) holds, and also that $R = O_\FR(G)$.

We have also already seen that $G/R$ is Fitting-degenerate, and by definition it covers every Fitting-degenerate quotient of $G$.  This is (ii).

Finally, let $H$ be an open subgroup of $G$.  Let $K=O_\FR(G)$.  Then $G/K$ is Fitting-degenerate, so the core of $HK/K$ in $G/K$ is Fitting-degenerate; this means that $HK/K$ has a Fitting-degenerate normal subgroup of finite index, and hence $O_\FR(HK/K)$ must be finite.  But then the elements of $O_\FR(HK/K)$ have centralisers of finite index in $HK/K$, and hence also in $G/K$, and they are of finite order; thus $O_\FR(HK/K)$ is contained in a finite normal subgroup of $G/K$, by Dicman's lemma.  Since $G/K$ is Fitting-degenerate, this implies $O_\FR(HK/K)=1$, and so $HK/K$ is Fitting-degenerate.  Hence $H/(H \cap K)$ is a Fitting-degenerate image of $H$, which ensures that $O_\FR(H) \leq H \cap K$.

Let $M$ be the core of $H$ in $G$.  Then $M \cap K$ is an open normal subgroup of $H \cap K$, and $M \cap K$ is also a normal subgroup of $K$ and hence Fitting-regular.  It follows that $H \cap K$ itself is (Fitting-regular)-by-finite, and thus Fitting-regular, as $\FR$ is closed under extensions.  So $H \cap K \leq O_\FR(H)$.\end{proof}

Here is one application.

\begin{cor}\label{vzfr}Let $G$ be a profinite group such that $VZ(G)$ is dense.  Then $G \in \FR$.\end{cor}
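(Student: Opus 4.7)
The plan is to reduce to the following sub-claim: if $G$ is a non-trivial profinite group with $\overline{VZ(G)} = G$, then $F^*(G) > 1$. To justify the reduction I would first check that density of the virtual centre passes to quotients: for any closed normal subgroup $N \unlhd G$ and any $x \in VZ(G)$, we have $C_G(x)N/N \leq C_{G/N}(xN)$, and the left-hand side has finite index in $G/N$, so $VZ(G)N/N \leq VZ(G/N)$; since the quotient map is continuous and surjective, density of $VZ(G)$ in $G$ forces density of $VZ(G/N)$ in $G/N$. Applying the sub-claim to every non-trivial quotient then yields $G/N \notin \FD$ for all proper closed normal $N$, which is precisely $G \in \FR$.

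For the sub-claim, I would pick any $x \in VZ(G) \setminus \{1\}$ (which exists since $\overline{VZ(G)} = G \neq 1$) and produce a non-trivial abelian closed normal subgroup of $G$ from its conjugates. By definition $C_G(x)$ has finite index in $G$, so its core $U := \Core_G(C_G(x))$ is an open normal subgroup, and rewriting the core gives
\[ U \;=\; \bigcap_{g \in G} g^{-1} C_G(x) g \;=\; \bigcap_{y \in x^G} C_G(y). \]
Thus $U$ simultaneously centralises every element of the finite conjugacy class $x^G$, i.e.\ $x^G \subseteq Z(U)$. Because $Z(U)$ is closed (Lemma \ref{topcentlem}(i)), the closed normal subgroup $K := \overline{\langle x^G \rangle}$ of $G$ is contained in $Z(U)$, and in particular $K$ is abelian.

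Then $K$ is a non-trivial abelian, and hence pronilpotent, closed normal subgroup of $G$, so by Theorem \ref{opilayer}(i) (or directly from the definition of the pro-Fitting subgroup) $K \leq F(G) \leq F^*(G)$. This establishes $F^*(G) > 1$, completing the sub-claim and therefore the theorem. I do not anticipate a serious obstacle; the only step worth isolating is the observation that the finite conjugacy class of a virtually central element lies in the centre of an open normal subgroup, which is what converts the abstract data ``$x \in VZ(G)$'' into a concrete closed abelian normal subgroup of $G$.
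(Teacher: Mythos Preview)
There is a genuine gap in the step ``$U$ simultaneously centralises every element of $x^G$, i.e.\ $x^G \subseteq Z(U)$.'' What you have established is only $x^G \subseteq C_G(U)$; to land in $Z(U) = U \cap C_G(U)$ you would also need $x^G \subseteq U$, which amounts to saying that all conjugates of $x$ commute with one another. That fails already in finite examples: take $G = \Sym(3)$ and $x = (1\,2)$; then $C_G(x)$ has index $3$, so $x \in VZ(G)$, but the three transpositions in $x^G$ do not commute, and $K = \langle x^G \rangle = \Sym(3)$ is not abelian. So the conclusion ``$K$ is abelian'' is not justified as written.

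The repair is short and keeps your elementary strategy intact. From $K \leq C_G(U)$ you get that $K$ commutes with the open normal subgroup $U$, hence $K \cap U \leq Z(K)$ and $Z(K)$ is open in $K$. If $Z(K) \neq 1$ it is a non-trivial closed abelian normal subgroup of $G$ (being characteristic in $K \unlhd G$), so $F(G) \neq 1$; if $Z(K) = 1$ then $K$ is finite and non-trivial, whence $1 \neq F^*(K) \leq F^*(G)$ by Theorem~\ref{finopil} and Corollary~\ref{opicor}. Either way $F^*(G) > 1$, and your reduction then gives $G \in \FR$. For comparison, the paper's proof proceeds by contrapositive and invokes the structure theorem: for $G \in \FD$ one has $O_\FR(G)=1$, and since each $C_G(H)$ with $H \unlhd_o G$ is virtually abelian (hence in $\FR$) and normal, it must be trivial, forcing $VZ(G)=1$. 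Your route, once patched, is more hands-on and avoids the $\FR$-radical machinery.
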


\begin{proof}Since $VZ(G/K)$ is dense in $G/K$ for any $K \unlhd G$, it suffices to suppose $G \in \FD$ and show $VZ(G)=1$.  Let $H \unlhd_o G$.  Then $C_G(H)$ is virtually abelian, so $C_G(H) \in \FR$, and also $C_G(H) \unlhd G$, so in fact $C_G(H) \leq O_\FR(G) = 1$ by the theorem.  Hence $VZ(G)=1$.\end{proof}

We conclude this section with some results that show that the property of Fitting-degeneracy can be identified from `small' images.

\begin{defn}A finite group $G$ is \emph{primitive} if there exists a maximal subgroup $M$ such that $\Core_G(M)=1$, in other words if $G$ has a faithful primitive permutation action on some finite set.  (Note that this is distinct from the notion of a primitive linear group.)  Note that a finite image $G/N$ of a profinite group $G$ is primitive if and only if $N$ is the core of a maximal open subgroup of $G$.\end{defn}

\begin{prop}Let $G$ be a non-trivial profinite group.  Then $G \in \FD$ if and only if the following holds, for every $x \in G \setminus 1$:
 
$(*)$ There is an open normal subgroup $K$ of $G$, depending on $x$, such that $G/K$ is primitive and $xK$ is not contained in $F^*(G/K)$.\end{prop}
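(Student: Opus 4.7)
For the ``only if'' direction, I argue the contrapositive: if $F^*(G) \neq 1$, any $x \in F^*(G) \setminus 1$ satisfies $xK \in F^*(G)K/K \leq F^*(G/K)$ for every open normal $K$ by Corollary~\ref{opicor}(iii), so $(*)$ fails; hence if $(*)$ holds for every $x \neq 1$ then $F^*(G) = 1$, giving $G \in \FD$.

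For the converse, assume $G \in \FD$ and fix $x \in G \setminus 1$. Write $F^*_N(G)$ for the preimage of $F^*(G/N)$ under the quotient $G \to G/N$. My first step is to establish the identity $\bigcap_{N \unlhd_o G} F^*_N(G) = F^*(G)$: letting $M$ denote the left-hand side, $M$ is a closed normal subgroup of $G$ with $MN/N$ a normal subgroup of the finite $F^*$-group $F^*(G/N)$ for every open normal $N$, hence itself an $F^*$-group. Condition (iv) of the equivalence proposition preceding ``A structure theorem for Fitting-regularity'' then forces $M$ itself to be an $F^*$-group, so $M \leq F^*(G)$; the reverse inclusion follows from Corollary~\ref{opicor}(iii). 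With $F^*(G) = 1$, compactness produces an open normal $N_0$ with $xN_0 \notin F^*(G/N_0)$.

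Next observe that if $K \unlhd_o G$ satisfies $K \subseteq N_0$ and $xK \in F^*(G/K)$, then applying Corollary~\ref{opicor}(iii) to the quotient $G/K \twoheadrightarrow G/N_0$ forces $xN_0 \in F^*(G/N_0)$, a contradiction. So it suffices to produce a single open normal $K \subseteq N_0$ with $G/K$ primitive. Since $F^*(G) = 1$ forces $F(G) = 1$, and $\Phi(G)$ is pronilpotent by Lemma~\ref{fratlem}(i), we deduce $\Phi(G) \leq F(G) = 1$; hence $\bigcap_M M = 1$ as $M$ ranges over maximal open subgroups of $G$, so a fortiori the intersection of all primitive cores is trivial. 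Compactness then provides primitive cores $K_1, \ldots, K_n$ with $K_1 \cap \cdots \cap K_n \subseteq N_0$.

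The main obstacle is converting this finite intersection into a \emph{single} primitive core contained in $N_0$, since the quotient by an intersection of primitive cores is only a subdirect product of primitive groups and is typically imprimitive. My plan is to work inside the finite group $G/(K_1 \cap \cdots \cap K_n)$: its image $\bar x^*$ is non-trivial (else $x \in K_1 \cap \cdots \cap K_n \subseteq N_0$, contradicting $xN_0 \notin F^*(G/N_0)$), so by choosing a maximal subgroup of $G/(K_1 \cap \cdots \cap K_n)$ that avoids $\bar x^*$ and pulling back, one obtains a maximal open subgroup of $G$ whose core lies inside $N_0$ and excludes $x$. The argument must use the profinite Fitting-degeneracy in an essential way, because the naive finite analogue---that every element of a finite group $H$ outside $F^*(H)$ is separated from $F^*$ by some primitive quotient---\emph{fails} for non-split perfect Frattini extensions of non-abelian simple groups (the entire extension can lie in the ``$F^*$ of every primitive quotient''), so a purely finite reduction cannot work, and the Fitting-degenerate profinite structure is what rules out such obstructions appearing systematically across all finite images.
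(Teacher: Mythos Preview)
Your labelling is reversed (what you call ``only if'' is actually the implication $(*)\Rightarrow G\in\FD$), but that argument is correct and matches the paper. The gap is in the other direction. Having found an open normal $N_0$ with $xN_0\notin F^*(G/N_0)$, you try to locate a single primitive core $K\subseteq N_0$; you correctly observe this need not exist, and your proposed workaround does not repair it: pulling back a maximal subgroup of $G/(K_1\cap\cdots\cap K_n)$ that misses $\bar x$ yields a maximal subgroup of $G$ whose core \emph{contains} $L=K_1\cap\cdots\cap K_n$, but there is no reason that core lies inside $N_0$, so your monotonicity step (``$K\subseteq N_0$ forces $xK\notin F^*(G/K)$'') never engages. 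Your instinct that the non-split Frattini phenomenon obstructs a purely finite reduction is exactly right; what is missing is the idea that gets around it.

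The paper's route avoids the detour through $N_0$ altogether: once $\Phi(G)=1$, the primitive cores intersect trivially, and one argues directly that if $xK\in F^*(G/K)$ for every primitive core $K$ then $x\in F^*(G)$. The step being glossed over is this: set $M=\bigcap_{K}F^*_K(G)$ over primitive cores; for any finite intersection $L=K_1\cap\cdots\cap K_n$, the group $ML/L$ embeds as a subdirect product of the $F^*$-groups $MK_i/K_i$, and a finite subdirect product of $F^*$-groups is again an $F^*$-group (modulo the product of Fitting subgroups one gets a subdirect product of direct products of non-abelian simple groups, which is perfect, and the perfect cores of the preimages of its simple factors supply the needed components). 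Since finite intersections of primitive cores form a neighbourhood base, condition (iv) of the proposition you cite holds on a base and hence for all $N\unlhd_o G$, giving $M\leq F^*(G)=1$. This is precisely how the global Fitting-degeneracy circumvents the finite obstruction you identified: primitive cores of $G$ not containing a given $N$ contribute information that the primitive quotients of $G/N$ alone cannot see.
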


\begin{proof}Suppose $G$ is not Fitting-degenerate, and take $x \in F^*(G) \setminus 1$.  Then $xK \in F^*(G/K)$ given any $K$, so $(*)$ is false.

Now assume $G$ is Fitting-degenerate.  This implies that $G$ has no non-trivial pronilpotent normal subgroups, so in particular $\Phi(G)$ is trivial.  Hence for any element $x$ of $G \setminus 1$, there is a maximal subgroup $H$ such that $H$ does not contain $x$, and so the core $K$ of $H$ also does not contain $x$.  Since $G/K$ is primitive, this shows that $G$ is the inverse limit of finite primitive groups.  If $xK \in F^*(G/K)$ for a given $x$ in every such image, it would imply $x \in F^*(G)$, contradicting the assumption that $G$ is Fitting-degenerate.  This proves $(*)$.\end{proof}

\begin{prop}Let $G$ be a profinite group that is not Fitting-regular.  Then there is a proper normal subgroup $N$ of $G$ such that $G/N$ is countably based and Fitting-degenerate.\end{prop}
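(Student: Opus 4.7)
The plan is to reduce to the case where the group itself is Fitting-degenerate, then construct $N$ as the intersection of a carefully chosen countable descending chain of open normal subgroups. Set $R = O^\FD(G) = O_\FR(G)$: since $G \notin \FR$, the structure theorem gives $R < G$, and $H := G/R$ is a non-trivial Fitting-degenerate profinite group. It suffices to find a proper normal subgroup $N$ of $H$ such that $H/N$ is countably based and Fitting-degenerate, as the preimage of $N$ in $G$ then gives the desired subgroup.

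I would build inductively a descending chain of proper open normal subgroups $M_1 \geq M_2 \geq \dots$ of $H$ satisfying $L_{n+1} \leq M_n$ for every $n$, where $L_n$ denotes the preimage in $H$ of $F^*(H/M_n)$. Granted such a chain, setting $N := \bigcap_n M_n$ yields a countably based quotient $H/N$; by the Proposition immediately following Corollary \ref{opicor}, the preimage in $H$ of $F^*(H/N)$ is precisely $\bigcap_n L_n$. The condition $L_{n+1} \leq M_n$ then forces this intersection to equal $N$, so $F^*(H/N) = 1$, while $N \leq M_1 < H$ is proper.

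The heart of the argument is the inductive step. Given $M_n$, I need an open normal $M_{n+1} \leq M_n$ such that, for every $y \in H \setminus M_n$, $yM_{n+1} \notin F^*(H/M_{n+1})$. Corollary \ref{opicor}(iii) handles the case $y \notin L_n$ automatically, since the image of $F^*(H/M_{n+1})$ in $H/M_n$ is contained in $F^*(H/M_n) = L_n/M_n$. So it suffices to control $y$ in the compact clopen set $X := L_n \setminus M_n$, a finite union of non-identity cosets of $M_n$. For each non-identity $y \in H$, the characterisation of Fitting-degeneracy supplied by the preceding proposition provides an open normal $M$ with $yM \notin F^*(H/M)$, and Corollary \ref{opicor}(iii) ensures that the same holds with $M$ replaced by any open normal subgroup contained in it. Dually, for each fixed $M$ the set $U_M := \{y \in H : yM \notin F^*(H/M)\}$ is open in $H$. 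Thus $\{U_M\}_{M \unlhd_o H}$ is a directed open cover of $X$; by compactness finitely many suffice, and intersecting the corresponding subgroups produces a single open normal $M^*$ with $U_{M^*} \supseteq X$. I then take $M_{n+1} := M^* \cap M_n$.

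The main obstacle is engineering one $M_{n+1}$ that simultaneously witnesses non-membership in $F^*$ for all of $X$ at once, rather than one $y$ at a time; since $X$ is infinite in general, an elementwise choice does not suffice directly. The resolution is the interplay between openness of $U_M$ in the variable $y$, downward directedness of the family $\{U_M\}$ under refinement of $M$, and compactness of $X$. Once this step is in hand, the remaining verifications — properness of $N$, countable basedness of $H/N$, and pullback from $H$ to $G$ — are routine.
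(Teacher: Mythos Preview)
Your argument is correct and follows the same overall strategy as the paper: reduce to the Fitting-degenerate case, build a countable descending chain of open normal subgroups so that the lift of $F^*$ at stage $n+1$ falls inside $M_n$, and take the intersection. The paper carries out the inductive step slightly differently: since $H/M_n$ is finite, it simply ranges over the finitely many non-trivial normal subgroups $M/M_n$ of $H/M_n$, finds for each one an open normal $K_M \leq M_n$ with $F^*(H/K_M)$ not covering $M/M_n$ (via an inverse-limit argument from $F^*(H)=1$), and sets $M_{n+1} = \bigcap K_M$. This sidesteps the compactness argument entirely, so what you identify as ``the main obstacle'' is an artefact of working elementwise rather than with the finitely many cosets (or normal subgroups) of $M_n$; both executions are valid, and yours has the virtue of invoking the elementwise characterisation of $\FD$ already established. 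One small remark: to conclude $F^*(H/N)=1$ you only need the easy inclusion $L \leq \bigcap_n L_n$, which follows from Corollary~\ref{opicor}(iii) alone; the full equality via the subsequent Proposition is not required.
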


\begin{proof}Without loss of generality, we may replace $G$ by $G/O^\FD(G)$, and so assume $G$ is Fitting-degenerate.  Set $N_1$ to be any proper open normal subgroup of $G$.  We obtain open normal subgroups $N_{i+1}$ for $i \in \mathbb{N}$ inductively as follows:

Let $M$ be the lift to $G$ of a non-trivial normal subgroup of $G/N_i$.  Suppose for every open normal subgroup $K$ of $G$ contained in $N_i$ that $F^*(G/K)$ covers $M/N_i$.  Then by a standard inverse limit argument, we would obtain a subgroup of $F^*(G)$ covering $M/N_i$, which is impossible as $G$ is Fitting-degenerate.  So there must be an open normal subgroup $K_M \leq N_i$ of $G$ such that $F^*(G/K_M)$ does not cover $M/N_i$.  Set $N_{i+1}$ to be the intersection of all the chosen normal subgroups $K_M$.

Now set $N$ to be the intersection of all the $N_i$; by construction, $G/N$ is countably based.  Also by construction, $F^*(G/N_{i+1})$ cannot cover any non-trivial normal subgroup of $G/N_i$, and hence $F^*(G/N_{i+1}) \leq N_i/N_{i+1}$.  As a result $F^*(G/N) \leq N_i/N$ for all $i$, so $G/N$ is Fitting-degenerate.\end{proof}

\section{The generalised pro-Fitting subgroup in Fitting-regular groups}

\begin{thm}\label{frcent}Let $G \in \FR$.  Then $C_G (F^*(G)) = Z(F(G))$.\end{thm}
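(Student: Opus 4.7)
Plan. The easy containment $Z(F(G)) \subseteq C_G(F^*(G))$ is immediate from Theorem~\ref{opilayer}(ii): $Z(F(G))$ centralises $F(G)$ by definition and also centralises $E(G)$ because $[F(G), E(G)] = 1$. For the reverse, write $C = C_G(F^*(G))$. My plan is to prove $C \leq F^*(G)$, which combined with the identity
\[ C \cap F^*(G) \;=\; Z(F^*(G)) \;=\; Z(F(G)) \]
delivers the result. The identity follows from the central product structure $F^*(G) = F(G) \cdot E(G)$: writing $z = fe \in Z(F^*(G))$ with $f \in F(G)$, $e \in E(G)$, the relations $[F(G),E(G)]=1$ force $f \in Z(F(G))$ and $e \in Z(E(G))$, and $Z(E(G))$, being an abelian normal subgroup of $G$, is contained in $F(G)$.

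To prove $C \leq F^*(G)$, I argue by contradiction. Since $C$ is normal in $G$ it is Fitting-regular by the structure theorem of the previous section, and hence so is the non-trivial quotient $C/(C \cap F^*(G)) = C/Z(F(G))$; thus $F^*(C/Z(F(G)))$ is non-trivial, and by definition this produces either a non-trivial subnormal pro-$p$ subgroup of $C/Z(F(G))$ (for some $p$) or a component. Pulling back to $C$ yields a subnormal subgroup $M$ of $G$ containing $Z(F(G))$, such that $M/Z(F(G))$ has the corresponding property. A crucial preliminary point is that $Z(F(G))$ is central in $C$: the group $C$ centralises $F(G) \ni Z(F(G))$, and $Z(F(G))$ itself lies in $C$, so $Z(F(G)) \leq Z(C)$, and in particular $Z(F(G))$ is central in $M$.

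In the pro-$p$ case, $M$ is a central extension of a pro-$p$ group by the pronilpotent $Z(F(G))$, hence pronilpotent (every finite quotient of $M$ is a central extension of a $p$-group by an abelian group, so nilpotent). Being subnormal pronilpotent in $G$, $M \leq F(G)$; but $C \cap F(G) \leq Z(F(G))$, giving the contradiction $M \leq Z(F(G))$. In the quasisimple case, I claim $M' = [M,M]$ is itself a component of $G$. Perfectness of $M/Z(F(G))$ gives $M = M'Z(F(G))$; centrality of $Z(F(G))$ in $M$ then forces $M'' = M'$, so $M'$ is perfect. Moreover $M'/(M' \cap Z(F(G))) \cong M/Z(F(G))$ is quasisimple and $M' \cap Z(F(G)) \leq Z(M')$, so $M'/Z(M')$ is simple. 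By Corollary~\ref{qscor} the quasisimple profinite group $M'$ is finite, and therefore $M' \in \Comp(G)$, so $M' \leq E(G)$. But $M' \leq C$ centralises $E(G)$, forcing $M'$ to be abelian, contradicting quasisimplicity.

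The main obstacle is the quasisimple case: one must argue that the preimage $M$ actually contains a component of $G$, rather than merely a perfect central extension of a finite simple group by the possibly infinite abelian group $Z(F(G))$. Centrality of $Z(F(G))$ in $C$ (and hence in $M$) is exactly what keeps the commutator subgroup $M'$ perfect with abelian centre, and Corollary~\ref{qscor} then supplies the finiteness needed to recognise $M'$ as a bona fide component of $G$ contradicting $M' \leq C$.
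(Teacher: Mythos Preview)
Your argument is essentially the paper's own proof repackaged as a contradiction: the paper works with $H=C_G(F(G))$ and its quotient $K=H/Z(H)$, first showing $F(K)=1$ (your ``pro-$p$ case'') and then lifting components of $K$ to components of $G$ (your ``quasisimple case''); you do the same with the smaller group $C=C_G(F^*(G))$ and the quotient $C/Z(F(G))$.

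There is one unjustified step. In the quasisimple case you assert that from $M'$ perfect, $M'\cap Z(F(G))\leq Z(M')$, and $M'/(M'\cap Z(F(G)))$ quasisimple, it follows that $M'/Z(M')$ is simple. This needs an extra ingredient: either invoke Gr\"un's lemma (for a perfect group $P$, the quotient $P/Z(P)$ has trivial centre, which follows from the three-subgroups lemma), or---more cleanly, and as the paper does---run the two cases sequentially rather than as a disjunction. Your pro-$p$ case already proves $F(C/Z(F(G)))=1$; once that is established, every component of $C/Z(F(G))$ has trivial centre and is therefore \emph{simple}, so $M/Z(F(G))$ is simple and the lift $M'$ is immediately a perfect central extension of a simple group, hence quasisimple. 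With that reordering (or with Gr\"un's lemma inserted), your proof is complete and matches the paper's argument.
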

 
\begin{proof}Consider the subgroup $H = C_G(F(G))$ and its pro-Fitting subgroup.  Since $H$ is normal in $G$, we have $F(H) \leq F(G)$ and hence $F(H) = Z(H) = Z(F(G))$.
 
Set $K=H/Z(H)$.  Let $L$ be the subgroup of $H$ such that $L/Z(H) = F(K)$.  Then $L$ is a central extension of $F(K)$, and hence pronilpotent; it is also normal in $H$.  Hence $L \leq F(H)=Z(H)$, and so $F(K)=1$.  Now consider $D = C_K(E(K))$.  Then $D$ cannot contain a component, as this would be contained in $E(K)$, and yet a component cannot centralise itself; so $E(D)=1$.  Also, $F(D) \leq F(K) = 1$, so $F^*(D)=1$.  However, $D$ is a normal section of $G$, so $D$ is Fitting-regular.  Hence $D=1$, in other words $K$ acts faithfully on $E(K)$.

Let $T$ be a component of $K$.  Then $T$ is simple as $F(K)=1$.  Let $U$ be the subgroup of $H$ such that $U/Z(H) = T$.  Then $U$ is a central extension of $T$, and $O^\sol(U)$ is a perfect central extension of $T$.  Here $O^\sol(U)Z(H)/Z(H) = T$, and $O^\sol(U)$ is subnormal in $G$ by construction, so $O^\sol(U) \leq E(G)$.  This proves that $E(G)Z(H)/Z(H) \geq E(K)$.

In conclusion, $H/Z(H)$ acts faithfully on $E(G)Z(H)/Z(H)$, so that $C_H(E(G)) \leq Z(H)=Z(F(G))$; in fact $C_H(E(G)) = Z(F(G))$, as $[E(G),F(G)]=1$.  But $C_H(E(G))$ is precisely $C_G(E(G)) \cap C_G(F(G))$, which is the centraliser of $E(G)F(G) = F^*(G)$.\end{proof}

\begin{rem}The converse of this theorem is false: consider for instance a profinite group of the form $G = V:L$ where $V$ is elementary abelian of countably infinite rank, and $L$ acts faithfully on $V$.  Since $V$ admits a faithful action of any countably based profinite group, we can choose $L$ to be non-trivial Fitting-degenerate, and yet $F(G) \geq V \geq Z(F(G))$.\end{rem}

In general, the pro-Fitting subgroup of a Fitting-regular group $G$ may have infinitely generated Sylow subgroups, even if the Sylow subgroups of $G$ are finitely generated, and so the automorphism group of $F^*(G)$ in isolation may not place much of a restriction on $G/F^*(G)$.  However, in the case of Fitting-regular prosoluble groups there is a useful interaction between the pro-Fitting subgroup and the pronilpotent residual, which means that some information about the action on $F(G)$ can be obtained from automorphisms of Sylow subgroups of $G$.

\begin{prop}\label{nilpresFR}Let $G$ be a Fitting-regular prosoluble group, let $p$ be a prime and let $N=O^\nil(G)$.  Then there is a pro-$p$ subgroup $R$ of $G$ such that:
\vspace{-12pt}
\begin{enumerate}[(i)] \itemsep0pt
\item $NR$ contains all $p$-Sylow subgroups of $G$;
\item $R$ normalises a $q$-Sylow subgroup $S_q$ of $G$, for all $q \in \bP$;
\item $R/(R \cap F(G)) \lesssim \prod_{q \in p'} \Delta(S_q)$.\end{enumerate}\end{prop}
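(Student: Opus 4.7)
The plan is to construct $R$ as the Sylow $p$-subgroup of a profinite analogue of the classical system normaliser attached to a Sylow system of $G$. First, using Hall's theorem for profinite soluble groups (Section~\ref{sylprelim}) together with an inverse-limit argument, pick a Sylow system $\{S_q : q \in \bP\}$ of $G$, i.e.\ a family of Sylow subgroups, one per prime, such that $S_q S_r = S_r S_q$ for all $q,r$. Define $D := \bigcap_{q \in \bP} N_G(S_q)$. In each finite quotient $G/K$ the image of $D$ lies inside the (nilpotent) system normaliser of the induced Sylow system, so $D$ is an inverse limit of nilpotent groups, hence pronilpotent; set $R := D_p$, its Sylow $p$-subgroup. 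Property~(ii) is then immediate since $R \leq D \leq N_G(S_q)$ for every $q$. For~(i), the classical Carter--Gasch\"utz identity gives $\overline{D}\cdot\overline{N} = G/K$ in each finite soluble quotient; since $N$ is closed and normal, $DN$ is closed, and $DN K = G$ for every $K \unlhd_o G$ then forces $DN = G$. Hence $R = D_p$ projects onto the Sylow $p$-subgroup $NS_p/N$ of the pronilpotent group $G/N$, and $NR = NS_p$, which contains every Sylow $p$-subgroup of $G$ because $NS_p/N$ is normal in $G/N$.

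For~(iii), form the conjugation map $\phi\colon R \to \prod_{q \neq p} \Aut(S_q)$, whose kernel is $C_R(H)$ with $H := \overline{\langle S_q : q \neq p\rangle}$ the Hall $p$-complement attached to the system. The crux is the identity
\[
 C_R(H) = R \cap F(G).
\]
For $R \cap F(G) \leq C_R(H)$: since $R$ is pro-$p$, $R \cap F(G) \leq R \cap O_p(G)$; any $x$ in this intersection normalises $S_q$ (as $x \in D$) and lies in the normal subgroup $O_p(G)$, so $[x,S_q] \leq O_p(G) \cap S_q = 1$ for each $q \neq p$. For $C_R(H) \leq F(G)$: in each finite quotient $G/K$, Hall's theorem gives $\overline G = \overline{S_p} \cdot \overline H$; if $\bar x \in \overline{S_p}$ centralises $\overline H$, then every $G/K$-conjugate of $\bar x$ agrees with an $\overline{S_p}$-conjugate, so $\langle \bar x^{G/K}\rangle \leq \overline{S_p}$ is a normal $p$-subgroup of $G/K$ and $\bar x \in O_p(G/K)$; Lemma~\ref{opinor} then yields $x \in O_p(G) \leq F(G)$. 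Finally, compose $\phi$ with the natural projections $\Aut(S_q) \to \Delta(S_q)$: by Lemma~\ref{deltalem}(i) each such kernel is pro-$q$, and since $R$ is pro-$p$ with $p \neq q$ this does not enlarge $\ker\phi$, yielding $R/(R \cap F(G)) \hookrightarrow \prod_{q \neq p}\Delta(S_q)$.

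The main technical obstacles are (a) lifting the Carter--Gasch\"utz identity $DN = G$ from finite soluble quotients to $G$ by inverse limits, and (b) the two-sided identification $C_R(H) = R \cap F(G)$, which hinges on the Hall decomposition $\overline G = \overline{S_p} \cdot \overline H$ of each finite quotient together with Lemma~\ref{opinor}. A secondary subtlety is that $\Delta(S_q)$ is defined in the paper for finitely generated pro-$q$ groups; if some $S_q$ fails to be finitely generated, one either extends the definition in the obvious way or argues directly that no nontrivial pro-$p$ automorphism of $S_q$ can act trivially on $S_q/\Phi(S_q)$. Notably, the Fitting-regular hypothesis is not used in the construction of $R$ itself; it is presumably needed in later applications that invoke $C_G(F(G)) = Z(F(G))$ from Theorem~\ref{frcent}.
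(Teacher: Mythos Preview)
Your approach is correct and differs from the paper's in two places. For the construction, the paper does not use the full system normaliser: it sets $K/N = O_{p'}(G/N)$, chooses $q$-Sylow subgroups $S_q$ for $q \in p'$ so that $K\cdot\bigcap_{q\in p'} N_G(S_q) = G$ via the Frattini argument plus compactness, and lets $R$ be any $p$-Sylow of $Y=\bigcap_{q\in p'} N_G(S_q)$; your Sylow-system plus Carter--Gasch\"utz route is the classical packaging of the same idea. The substantive divergence is in~(iii): the paper passes to $\overline G = G/O_p(G)$, observes that $\bigcap_{q\in p'} C_R(S_q)$ maps into $C_{\overline G}(F(\overline G))$ (since $O_q(\overline G)$ lies in the image of $S_q$), and then invokes Theorem~\ref{frcent} --- which genuinely requires the Fitting-regular hypothesis --- to conclude this image lies in $F(\overline G)$, hence in $O_p(\overline G)=1$. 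Your Hall-factorisation argument (write $\bar g = \bar h\bar s$ with $\bar h \in \overline H$, $\bar s \in \overline{S_p}$, so every $G/K$-conjugate of $\bar x$ lies in $\overline{S_p}$, whence $\bar x\in O_p(G/K)$ for every $K$, then apply Lemma~\ref{opinor}) is strictly more elementary and, as you rightly observe, makes Fitting-regularity superfluous for the proposition as stated.

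One point to tighten in your obstacle~(a): Carter--Gasch\"utz gives $T_K \cdot O^{\nil}(G/K) = G/K$ for the system normaliser $T_K$ of the \emph{induced} Sylow system in $G/K$, whereas you have only argued $DK/K\leq T_K$. To deduce $DNK=G$ you need $DK/K = T_K$. This holds because system normalisers map onto system normalisers under epimorphisms, so the $T_K$ form a surjective inverse system; any element of $\varprojlim T_K$ then lies in $\bigcap_K N_G(S_qK) = N_G(S_q)$ for each $q$, forcing $\varprojlim T_K = D$ and hence $DK/K = T_K$.
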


\begin{proof}Let $K$ be the subgroup of $G$ such that $K \geq N$ and $K/N = O_{p'}(G/N)$.  Then $K$ is normal in $G$, and contains every $q$-Sylow subgroup of $G$ for every $q \in p'$, since $G/N$ is pronilpotent.  By Sylow's theorem and compactness, by choosing suitable $q$-Sylow subgroups $S_q$ for $q \in p'$, we can ensure $KY = G$, where $Y = \bigcap_{q \in p'} N_G(S_q)$.  This means that $KR=G$ for any $p$-Sylow subgroup $R$ of $Y$; this $R$ clearly satisfies (i) and (ii).

Let $D = \bigcap_{q \in p'}C_R(S_q)$.  Then $R/D \lesssim \prod_{q \in p'} \Delta(S_q)$, since $R/C_R(S_q)$ acts faithfully on $S_q$ and hence $R/C_R(S_q) \lesssim \Delta(S_q)$.  Also, it is clear that $R \cap F(G) \leq D$.  Let $\theta$ be the quotient map from $G$ to $G/O_p(G)$.  Then $O_p(G^\theta)=1$, and $D^\theta$ centralises $O_q(G^\theta)$ for all $q \in p'$ since $O_q(G^\theta)$ is contained in $S^\theta_q$.  Hence $D^\theta \leq C_{G^\theta}(F(G^\theta))$.  But $G^\theta$ is Fitting-regular and prosoluble, so $C_{G^\theta}(F(G^\theta)) \leq F(G^\theta)$, and hence $D^\theta \leq O_p(G^\theta) = 1$.  Hence $D \leq O_p(G) \leq F(G)$, so $D = R \cap F(G)$.\end{proof}

As an example, consider the case of a profinite group $G$ involving exactly two primes $p$ and $q$.  It is a well-known result of Burnside that all finite $\{p,q\}$-groups are soluble; this ensures that all pro-$\{p,q\}$ groups are prosoluble.  Recall the invariant $c$ from Section \ref{cinvsec}.  Given a profinite group $G$ with $p$-Sylow subgroup $S$, define $c_p(G) := c(S)$; given a set of primes $\pi$, define $c_\pi(G) := \sup_{p \in \pi} c_p(G)$.

\begin{cor}Let $p$ and $q$ be distinct primes, let $G$ be a pro-$\{p,q\}$ group, and let $N=O^\nil(G)$.  Let $S$ be a $p$-Sylow subgroup of $G$, let $T$ be a $q$-Sylow subgroup of $G$, and suppose that both $d(S)$ and $d(T)$ are finite. 
\vspace{-12pt}
\begin{enumerate}[(i)] \itemsep0pt
\item There is some $M \leq NF(G)$ such that $M \unlhd S$ and $S/M \lesssim \Delta(T)$.
\item If $\ord(p,q) > c_q(G)$, then $S \leq NF(G)$.
\item If $\ord(p,q) > c_q(G)$ and $\ord(q,p) > c_p(G)$, then $G$ is pronilpotent.\end{enumerate}\end{cor}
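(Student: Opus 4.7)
The plan is to first establish enough structure to apply Proposition \ref{nilpresFR}. Corollary \ref{tatenilp} applies because $G$ involves only the two primes $p,q$ and both $d_p(G),d_q(G)$ are finite, so $G$ is virtually pronilpotent; in particular $F(G)$ is open in $G$. This openness forces $G$ to be Fitting-regular: in any proper quotient $G/K$ the image of $F(G)$ is normal, pronilpotent, and open, hence non-trivial. By Burnside's theorem $G$ is also prosoluble, so Proposition \ref{nilpresFR} applies. Applying it with the prime $p$ yields a pro-$p$ subgroup $R \leq G$ with $NR$ containing every $p$-Sylow of $G$, with $R$ normalising some $q$-Sylow $T'$, and with $R/(R \cap F(G)) \lesssim \Delta(T')$; after suitable conjugation we may take $S \leq NR$ and $T' = T$.

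For part (i), I would take $M := S \cap NF(G)$, which is normal in $S$ since $NF(G) \unlhd G$. Using that $G/N$ is pronilpotent with a unique $p$-Sylow, $SN = NR$, so $S/(S \cap N) \cong R/(R \cap N)$; combining with the embedding $R/(R \cap F(G)) \hookrightarrow \Delta(T)$ gives $S/M \lesssim \Delta(T)$, as required. For part (ii), the order hypothesis forces the pro-$p$ subgroup $R/(R \cap F(G))$ of $\Delta(T)$ to be trivial. Here $\Delta(T)$ acts faithfully on $V := T/\Phi(T)$, whose irreducible $\Delta(T)$-constituents have $\bF_q$-dimension at most $c(T) = c_q(G)$. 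A non-identity $p$-element of $\GL_d(\bF_q)$ has an eigenvalue that is a primitive $p^k$-th root of unity for some $k \geq 1$, and the minimal polynomial of such an element over $\bF_q$ has degree at least the multiplicative order of $q$ modulo $p$; thus under the hypothesis no composition factor of $V$ admits a non-trivial $p$-element, so $R/(R \cap F(G))$ acts trivially on each factor, and therefore unipotently on $V$. A unipotent $\bF_q$-linear automorphism has $q$-power order, which combined with its being a non-trivial $p$-element is a contradiction, forcing $R \leq F(G)$ and hence $S \leq NR \leq NF(G)$.

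For part (iii), applying (ii) with the roles of $p$ and $q$ swapped yields $T \leq NF(G)$, and since $G = \overline{\langle S,T\rangle}$ we obtain $G = NF(G)$. Set $\bar G := G/F(G)$. Using the identity $O^\nil(G/K) = O^\nil(G) \cdot K / K$ with $K = F(G)$ gives $O^\nil(\bar G) = NF(G)/F(G) = \bar G$, so $\bar G$ equals its own pronilpotent residual. But $\bar G$ is finite (because $F(G)$ is open in the virtually pronilpotent $G$) and soluble (being a finite $\{p,q\}$-group), so admitting no non-trivial pronilpotent quotient in particular rules out any non-trivial abelian quotient, making $\bar G$ perfect and soluble; such a finite group is trivial, so $G = F(G)$ is pronilpotent. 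The main obstacle is the linear-algebra calculation in (ii): one has to turn the arithmetic hypothesis on a multiplicative order into triviality of a specific $p$-subgroup of $\Delta(T)$, working through the cyclotomic factorisation of $\Phi_{p^k}(x)$ over $\bF_q$ and carefully distinguishing between the action of $R/(R \cap F(G))$ on composition factors of $V$ and on $V$ itself.
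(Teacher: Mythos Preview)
Your argument follows the paper's line exactly: Corollary~\ref{tatenilp} gives virtual pronilpotence and hence Fitting-regularity, Proposition~\ref{nilpresFR} supplies the pro-$p$ subgroup $R$, and part (iii) comes from $G=NF(G)$ together with $G/F(G)$ being perfect and prosoluble. In (ii) you unwind by hand the linear algebra that the paper packages as an appeal to Lemma~\ref{deltalem}.

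Two points deserve care. In (i), the ``combining'' step is not quite complete: from $SN=RN$ you obtain $S/M\cong R/(R\cap NF(G))$, which is only a \emph{quotient} of $R/(R\cap F(G))\hookrightarrow\Delta(T)$, and a subquotient of $\Delta(T)$ need not embed in $\Delta(T)$. The paper's one-line proof of (i) is no more explicit on this point; fortunately the gap is harmless for (ii) and (iii), where one only needs that $|S/M|$ divides the $p$-part of a product of orders $|\GL(c_i,q)|$ with each $c_i\le c_q(G)$. In (ii), your eigenvalue count correctly produces the multiplicative order of $q$ modulo $p$, which in the paper's convention is $\ord(q,p)$, not the $\ord(p,q)$ that appears in the stated hypothesis; the paper's own proof contains the same slip (it writes $|\GL(c,p)|_q=1$ where $|\GL(c,q)|_p=1$ is what actually forces $S=M$), so this looks like a typo in the statement rather than an error in your reasoning.
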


\begin{proof}By Corollary \ref{tatenilp}, $G$ is virtually pronilpotent; hence $G$ is Fitting-regular.  It follows from Proposition \ref{nilpresFR} that there is a pro-$p$ subgroup $R$ of $G$ such that $S \leq NR$ and $R/(R \cap F(G)) \lesssim \Delta(T)$.  Now $R \lesssim S$ by Sylow's theorem, so assertion (i) follows.
  
Let $c = c_q(G)$.  By Lemma \ref{deltalem}, $O^{(c,q)^*}(S/M)=1$.  If $\ord(p,q) > c$, then $|\GL(c,p)|_q = 1$, so $S=M$, in other words $S \leq NF(G)$, proving (ii).
  
If $\ord(p,q) > c_q(G)$ and $\ord(q,p) > c_p(G)$, then by (ii), all Sylow subgroups of $G$ are contained in $NF(G)$, so $G=NF(G) \leq G'F(G)$.  But then $G/F(G)$ is both perfect and prosoluble, and hence trivial, so $G = F(G)$, and hence $G$ is pronilpotent by Lemma \ref{fitlem}.\end{proof}

\begin{rem}Given any $n$ and fixed $p$, $\ord(p,q) > n$ for all $q > p^n$, so in particular, if $S$ is any given finitely generated pro-$p$ group, then $\ord(p,q) > d(S)$ for all but finitely many primes $q$.\end{rem}

\chapter{Profinite groups in terms of their Sylow subgroups}

\section{Finite simple groups involved in a profinite group}

In this section, $G$ will be a profinite group, such that for each $p$ in some non-empty set of primes $\pi$, the isomorphism type of a $p$-Sylow subgroup of $G$ is specified, and $d_p(G)$ is finite.  Recall (Corollary \ref{tatefin}) that if $d_2(G)$ is finite, then $G$ is virtually prosoluble.  In this section, we will derive further results concerning components and non-abelian composition factors, using the notation and results established in Section \ref{qsprelim}.

\begin{lem}\label{compphi}Let $G$ be a profinite group, let $p$ be a prime, and let $S$ be a $p$-Sylow subgroup of $G$.  Let $Q \in \Comp_p(G) \cup \{1\}$.  Let $L$ be a normal subgroup of $G$, and suppose $(Q^S \cap S)\Phi(S) \leq (L \cap S)\Phi(S)$.  Then $Q \leq L$.\end{lem}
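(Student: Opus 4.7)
The plan is to argue by contradiction, passing to $\bar{G} = G/L$ and showing that the image $\bar{Q} = QL/L$ must be trivial. Write $\bar{S} = SL/L$ and $\pi : G \to \bar{G}$. First, $Q \cap L$ is normal in the quasisimple group $Q$, so either $Q \leq L$ (in which case we are done) or $Q \cap L \leq Z(Q)$, and $\bar{Q}$ is a nontrivial quasisimple image of $Q$, hence a component of $\bar{G}$. By Corollary \ref{qscor}, $p$ divides $|\bar{Q}|$.

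The key step is to show that $\bar{Q}^{\bar{S}} \cap \bar{S} \leq \Phi(\bar{S})$. Reducing the hypothesis modulo $L$ gives $\pi(Q^S \cap S) \leq \Phi(S)L/L \leq \Phi(\bar{S})$. To promote this to the statement about $\bar{Q}^{\bar{S}} \cap \bar{S}$, I will exploit that $\bar{Q}^{\bar{S}}$ is a central product of its distinct $\bar{S}$-conjugates $\bar{Q}_1,\ldots,\bar{Q}_k$, since distinct components commute by Theorem \ref{opilayer}. By Corollary \ref{sylsubnor} applied to the subnormal subgroup $Q$ of $G$, $Q \cap S$ is a $p$-Sylow subgroup of $Q$; a direct order computation (using that $Q \cap L \leq Z(Q)$ and $(Q \cap L) \cap S$ is Sylow in $Q \cap L$) then shows $\pi(Q \cap S)$ is a full $p$-Sylow of $\bar{Q}$. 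Conjugating by elements of $S$, the groups $\pi(Q^{s_i} \cap S) = \pi(Q \cap S)^{\pi(s_i)}$ are $p$-Sylow subgroups of the $\bar{Q}_i$, all contained in $\bar{S}$, and all contained in $\pi(Q^S \cap S) \leq \Phi(\bar{S})$. Their product is then a $p$-Sylow of the central product $\bar{Q}^{\bar{S}}$ lying inside $\Phi(\bar{S}) \cap \bar{Q}^{\bar{S}}$, so by order it coincides with $\bar{Q}^{\bar{S}} \cap \bar{S}$, giving the claim.

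Once $\bar{Q}^{\bar{S}} \cap \bar{S} \leq \Phi(\bar{S})$ is established, set $H = \bar{Q}^{\bar{S}} \bar{S}$. Then $\bar{S}$ is a $p$-Sylow subgroup of $H$, and $\bar{Q}^{\bar{S}} \unlhd H$; by Corollary \ref{tatecor}(i), $H$ is $p'$-normal, with normal $p'$-Hall subgroup $K$ say. Now $\bar{Q} \unlhd \bar{Q}^{\bar{S}} \unlhd H$ (because distinct components commute), so $\bar{Q}$ is subnormal in $H$ and Corollary \ref{sylsubnor} gives that $K \cap \bar{Q}$ is a $p'$-Hall subgroup of $\bar{Q}$, which is normal in $\bar{Q}$. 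But $\bar{Q}$ is quasisimple, so any normal subgroup of $\bar{Q}$ is contained in $Z(\bar{Q})$ or equals $\bar{Q}$; the first forces $\bar{Q}/Z(\bar{Q})$ to be a nontrivial perfect $p$-group (hence trivial), and the second forces $\bar{Q}$ itself to be a $p'$-group, each contradicting $p \mid |\bar{Q}|$ with $\bar{Q}$ nonabelian simple modulo its centre. This contradiction yields $\bar{Q}=1$, i.e.\ $Q \leq L$.

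The principal technical obstacle is the Sylow calculation $\bar{Q}^{\bar{S}} \cap \bar{S} \leq \Phi(\bar{S})$: the image $\pi(Q^S \cap S)$ is in general strictly smaller than $\pi(Q^S) \cap \bar{S}$, so one cannot simply take images, and the central product structure on $\bar{Q}^{\bar{S}}$ has to be used to produce a Sylow of $\bar{Q}^{\bar{S}}$ inside $\Phi(\bar{S})$ by assembling Sylows of each conjugate. Everything else is a routine application of Tate's theorem via Corollary \ref{tatecor}(i) and the standard fact that quasisimple groups have no proper normal subgroup of index a power of a prime dividing the order.
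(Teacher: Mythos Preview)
Your proof is correct and follows essentially the same route as the paper's: pass to $G/L$, apply Corollary~\ref{tatecor} to the group $\bar S\,\bar Q^{\bar S}$ to force $\bar Q$ to be $p'$-normal, and derive a contradiction with $\bar Q$ quasisimple of order divisible by $p$. The paper compresses all of this into a few lines, simply asserting that ``dividing out by $L$ does not affect the hypotheses'' and then replacing $G$ by $SQ^S$.

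The one place where your write-up does real extra work is the step $\bar Q^{\bar S}\cap\bar S\le\Phi(\bar S)$, which is exactly the content of the paper's unproved assertion. Your argument via assembling Sylow subgroups of the individual components in the central product is valid (the fact that a product of $p$-Sylows of the central factors is a $p$-Sylow of the central product holds because the kernel of $\prod\bar Q_i\to\bar Q^{\bar S}$ is central, hence its $p$-part lies in $\prod P_i$), but it is heavier than needed. A shorter route: $Q^S\unlhd E(G)\unlhd G$ and $L\unlhd G$ give $Q^SL\unlhd E(G)L\unlhd G$, so $Q^SL$ is subnormal and $S\cap Q^SL$ is its $p$-Sylow by Corollary~\ref{sylsubnor}. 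Comparing $p$-parts of $Q^SL/L\cong Q^S/(Q^S\cap L)$ shows $|S\cap Q^SL|=|(S\cap Q^S)(S\cap L)|$, whence $S\cap Q^SL=(S\cap Q^S)(S\cap L)\le\Phi(S)(S\cap L)$ by hypothesis; this is precisely $\bar Q^{\bar S}\cap\bar S\le\Phi(\bar S)$. After that, your endgame and the paper's are identical.
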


\begin{proof}Dividing out by $L$ does not affect the hypotheses, so it suffices to assume $L=1$ and prove $Q=1$.  We may also assume $G=SQ^S$.  Under these assumptions, $Q^S \cap S \leq \Phi(S)$ and $Q^S \unlhd G$.  Hence $Q^S$ is $p'$-normal by Corollary \ref{tatecor}, and so also $Q$ is $p'$-normal; in particular $|Q/Z(Q)|_p=1$.  Hence $Q=1$.\end{proof}

Recall the definition of $f_p$ given after Proposition \ref{pspace}; note that $f_p(S) \leq d_p(S)$ for any finitely generated pro-$p$ group $S$.

\begin{cor}\label{comporb}Let $G$ be a profinite group and let $S$ be a $p$-Sylow subgroup of $G$.
\vspace{-12pt}
\begin{enumerate}[(i)] \itemsep0pt 
\item Let $\mcQ \subset \Comp_p(G)$ such that $(\langle \mcQ \rangle \cap S)\Phi(S) = (E_p(G) \cap S)\Phi(S)$ and such that $\langle \mcQ \rangle$ is normalised by $S$.  Then $\Comp_p(G) = \mcQ$.
\item Suppose in addition $d(S)$ is finite, and let $n$ be the number of orbits of $S$, acting on $\Comp_p(G)$ by conjugation.  Then $n \leq f_p(S) \leq d(S)$.\end{enumerate}\end{cor}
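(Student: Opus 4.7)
My plan for part (i) is to argue by contradiction via Lemma \ref{compphi}, applied inside a carefully chosen subgroup of $G$ rather than $G$ itself, since $\langle\mcQ\rangle$ is only assumed $S$-invariant and need not be normal in $G$. Suppose $Q \in \Comp_p(G) \setminus \mcQ$. By Theorem \ref{opilayer}(ii), $Q$ commutes with every component distinct from itself, hence with every element of $\mcQ$, so $Q^S$ centralises $\langle\mcQ\rangle$. Setting $H := S\langle\mcQ\rangle Q^S$, both $\langle\mcQ\rangle$ and $Q^S$ are $S$-invariant and mutually centralising, so $\langle\mcQ\rangle$ is normal in $H$. Moreover $S \in \Sylp(H)$, and $Q$ is a component of $H$ since it is quasisimple and subnormal via $Q \unlhd Q^S \unlhd H$. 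The hypothesis yields
\[
(Q^S \cap S)\Phi(S) \leq (E_p(G) \cap S)\Phi(S) = (\langle\mcQ\rangle \cap S)\Phi(S),
\]
so Lemma \ref{compphi} forces $Q \leq \langle\mcQ\rangle$. But then $Q$ lies in and centralises $\langle\mcQ\rangle$, so $Q \leq Z(\langle\mcQ\rangle)$, contradicting that $Q$ is non-abelian.

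For part (ii), I would iterate (i) across the $S$-orbits. Let $\mcO$ be the set of $S$-orbits on $\Comp_p(G)$ and set $n = |\mcO|$; for each $\omega \in \mcO$ let $R_\omega := \langle\omega\rangle$. Each $\omega$ is finite (since $N_S(Q)$ is open in $S$ for every $Q \in \Comp_p(G)$, as the continuous action of $S$ on the finite group $Q$ has finite image), so $R_\omega$ is a finite central product of components by Theorem \ref{opilayer}(ii), and, being $S$-invariant, $R_\omega \cap S$ is a finite normal subgroup of $S$. Consequently $(R_\omega \cap S)\Phi(S)/\Phi(S)$ is contained in $\overline{\Fin(S)}\Phi(S)/\Phi(S)$, a subspace of $S/\Phi(S)$ of dimension $f_p(S)$. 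Applying (i) to the $S$-invariant proper subset $\Comp_p(G) \setminus \omega$ of $\Comp_p(G)$ gives, for each $\omega \in \mcO$,
\[
\bigl(\langle\Comp_p(G) \setminus \omega\rangle \cap S\bigr)\Phi(S) \subsetneq (E_p(G) \cap S)\Phi(S).
\]

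The final ingredient is a Sylow factorisation: for commuting, $S$-invariant subgroups $A, B$ of $G$, one has $(AB) \cap S = (A \cap S)(B \cap S)$. Indeed, Corollary \ref{sylsubnor} identifies $S \cap A$, $S \cap B$ and $S \cap AB$ as $p$-Sylow subgroups of $A$, $B$ and $AB$ respectively (noting $A \unlhd AB \unlhd SAB$ and that $S$, being a maximal pro-$p$ subgroup of $G$, remains a $p$-Sylow of $SAB$), and a Sylow-order count --- using that $A \cap B \leq Z(AB)$ is abelian, so $(A \cap B)_p = A \cap B \cap S$ because $(A \cap B)_p$ is characteristic in $A \cap B$ and hence $S$-normalised --- shows $(A \cap S)(B \cap S)$ is a $p$-subgroup of $AB$ of Sylow order; being contained in $AB \cap S$, another $p$-Sylow of $AB$, the two coincide. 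Iterating, $(E_p(G) \cap S)\Phi(S)/\Phi(S) = \sum_{\omega \in \mcO}(R_\omega \cap S)\Phi(S)/\Phi(S)$, so the displayed strict inclusion becomes the statement that these subspaces are mutually non-contained in each other's sums. A straightforward induction on the number of subspaces shows that subspaces with this mutual non-containment property span a space of dimension at least their number, giving $n \leq f_p(S) \leq d(S)$. I expect the Sylow factorisation to be the main obstacle: elements of $AB \cap S$ need not decompose elementwise as products from $A \cap S$ and $B \cap S$, so the equality must be obtained via Sylow order comparison rather than by a direct calculation.
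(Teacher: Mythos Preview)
Your argument for (i) is correct and in fact more careful than the paper's. The paper simply invokes Lemma~\ref{compphi} with $L=\langle\mcQ\rangle$, but that lemma as stated requires $L\unlhd G$, which is only given as $S$-invariance here. Your reduction to $H=S\langle\mcQ\rangle Q^S$ (equivalently, one could pass to $SE_p(G)$, in which $\langle\mcQ\rangle$ is genuinely normal) supplies exactly the missing justification; the paper then finishes via the non-redundancy of components (Corollary~\ref{opicor}(i)), which is your centraliser argument in different words.

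For (ii) the paper runs the argument in the opposite direction: since $(E_p(G)\cap S)\Phi(S)/\Phi(S)$ has dimension at most $f_p(S)$ and is spanned by the images of the $R_\omega\cap S$, one may select $m\le f_p(S)$ orbits whose union $\mcQ$ already satisfies the hypothesis of (i), whence $\mcQ=\Comp_p(G)$ and $n=m$. You instead use the contrapositive of (i) to show each orbit's subspace is non-redundant in the sum, forcing $n$ to be bounded by the dimension. Both routes rest on the Sylow factorisation $(AB)\cap S=(A\cap S)(B\cap S)$ that you correctly isolate as the crux, and the paper glosses over it just as much as you feared. One refinement is needed: when you apply it with $A=R_\omega$ and $B=\langle\Comp_p(G)\setminus\omega\rangle$, the group $B$ is a priori infinite (finiteness of $n$ is what you are proving), so your order count should be rewritten as an index count: note $B\cap S\unlhd AB\cap S$ with $(AB\cap S)/(B\cap S)$ embedding in the finite group $A/(A\cap B)$, and compare $|(A\cap S)(B\cap S):B\cap S|$ with $|AB\cap S:B\cap S|$. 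With that adjustment your proof goes through.
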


\begin{proof}(i) By Lemma \ref{compphi} we have $Q \leq \langle \mcQ \rangle$ for every $Q \in \Comp_p(G)$.  By Corollary \ref{opicor} (i), it follows that $Q \in \mcQ$ for every $Q \in \Comp_p(G)$.

(ii) We see that $E_p(G) \cap S$ is generated by normal subgroups of $S$, and hence the rank of $(E_p(G) \cap S)\Phi(S)/\Phi(S)$ is at most $f_p(S)$, we can find some $\mcQ$ satsifying the equation in (i) by taking the union of $m$ orbits in $\Comp_p(G)$ under the action of $S$, where $m \leq f_p(S)$.  But then $\mcQ = \Comp_p(G)$ by part (i), so $n = m$.\end{proof}

\begin{rem}A similar result, concerning non-abelian chief factors, is given in \cite{Wil}.\end{rem}

\paragraph{}Now consider the following question:
 
\begin{que}\label{cfque}How many non-abelian composition factors $Q$ can $G$ have such that $p$ divides $|Q|$ for all $p \in \pi$?\end{que}
 
In constrast to the case of chief factors, if $\pi=\{p\}$ there is no bound in terms of $d_p(G)$: consider for instance groups of the form $G=\Alt(k) \wr C_{p^n}$ where $k=\max\{p,5\}$.  However, there is a bound if $\pi$ contains two or more primes and the corresponding Sylow subgroups are finitely generated.

\begin{defn}Given natural numbers $n$ and $b$, let $s_b(n)$ be the sum of the digits of the base $b$ expansion of $n$.\end{defn}  

We will use the following numerical theorem:

\begin{thm}[Senge, Straus \cite{Sen}]Let $a$ and $b$ be integers such that $\log a/\log b \not\in \bQ$, let $s$ be a natural number, and let $X$ be the set of natural numbers $n$ such that $\max \{s_a(n),s_b(n)\} \leq s$.  Then $X$ is finite.\end{thm}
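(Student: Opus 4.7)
This is the Senge--Straus theorem, and I would approach it through the van der Poorten--Schlickewei (or Evertse) theorem on $S$-unit equations in many variables, combined with an induction on equation length.

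For each $n \in X$, fix expansions
\[ n \;=\; \sum_{i=1}^t c_i\, a^{\alpha_i} \;=\; \sum_{j=1}^u d_j\, b^{\beta_j} \]
with $\alpha_1 > \cdots > \alpha_t \geq 0$, $\beta_1 > \cdots > \beta_u \geq 0$, digits $0 < c_i < a$ and $0 < d_j < b$, and $t, u \leq s$. Suppose for a contradiction that $X$ is infinite. Since only finitely many choices of $(t, u)$ and of the digit tuples $(c_i)$ and $(d_j)$ occur, I pass to an infinite subsequence on which all these data are constant; the only things still varying are the exponents. The resulting identity
\[ \sum_{i=1}^t c_i\, a^{\alpha_i} \;-\; \sum_{j=1}^u d_j\, b^{\beta_j} \;=\; 0 \]
has fixed coefficients and has $S$-unit unknowns $a^{\alpha_i}, b^{\beta_j}$, with $S$ the finite set of primes dividing $ab$ (enlarged if needed to absorb the bounded coefficients).

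I would then prove, by induction on $t + u$, that such an equation has only finitely many solutions in tuples of $S$-unit values of the prescribed form. The van der Poorten--Schlickewei theorem bounds the number of \emph{non-degenerate} solutions, meaning those in which no proper subsum of the terms vanishes. Since the positive terms come from the $a$-side and the negative from the $b$-side, any vanishing proper subsum must mix the two sides and yields a strictly shorter identity $\sum_{i \in I} c_i a^{\alpha_i} = \sum_{j \in J} d_j b^{\beta_j}$ of the same form, with $|I| + |J| < t + u$. The inductive hypothesis provides finiteness for each such shorter identity, and there are only finitely many ways to choose such a partition, so combined we obtain finiteness for the original equation. The base case $t = u = 1$ reduces to $c\, a^\alpha = d\, b^\beta$ with $c, d$ fixed: matching prime valuations, multiplicative independence of $a$ and $b$ (equivalent to $\log a / \log b \notin \bQ$) forces the pair $(\alpha, \beta)$ to be uniquely determined, so at most one solution exists. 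This is precisely where the hypothesis is used.

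The main obstacle, as is typical for applications of the subspace theorem, is ensuring that the induction really does close: one must check that every degenerate configuration splits into genuinely shorter sub-equations of the same mixed $a$-versus-$b$ form, so that the induction hypothesis applies and does not loop. A little bookkeeping confirms this: a proper vanishing subsum cannot be entirely from the $a$-side or entirely from the $b$-side (all those terms have the same sign), so it is always a shorter mixed equation, and $t + u$ strictly decreases. With that verified, the result follows, and the upshot is that $X$ is finite as claimed.
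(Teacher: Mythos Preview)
The paper does not prove this theorem: it is quoted with attribution to Senge and Straus and immediately applied as a black box in the proof of the theorem that follows it. So there is no argument in the paper to compare yours against.

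Your route via the $S$-unit equation theorem of Evertse and van der Poorten--Schlickewei is correct and is by now a standard modern proof. The original 1971 argument of Senge and Straus predates those results and went through $p$-adic diophantine approximation instead; Stewart later gave an effective version using Baker's bounds on linear forms in logarithms. Your approach is the cleanest if one is content with an ineffective finiteness statement, which is all the paper requires.

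One small inaccuracy worth flagging: you say the hypothesis $\log a/\log b \notin \bQ$ is used ``precisely'' in the base case $t=u=1$. In fact it is needed at every inductive step as well. The $S$-unit theorem only bounds the number of \emph{projective} non-degenerate solution classes; to pass to finitely many affine solutions you must observe that two projectively equivalent tuples differ by a common scalar $\lambda$ which, by comparing the $a^{\alpha_i}$ coordinates, is a power of $a$, and by comparing the $b^{\beta_j}$ coordinates, is a power of $b$. Multiplicative independence then forces $\lambda=1$. This is the same mechanism as in your base case, so the repair is immediate, but the hypothesis is genuinely invoked throughout the induction rather than only at the bottom.
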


\begin{thm}\label{compbound}\
\vspace{-12pt}
\begin{enumerate}[(i)] \itemsep0pt 
\item Let $G$ be a profinite group.  Let $p$ and $q$ be distinct primes, and suppose $\max\{d_p(G),d_q(G)\} = d$ is finite.  Then the number of composition factors of $G$ of order divisible by $pq$ is bounded by a function of $(d,p,q)$.
\item Let $G$ be a profinite group.  Suppose $\max\{d_2(G),d_3(G),d_5(G)\} = d$ is finite.  Then the number of non-abelian composition factors of $G$ is bounded by a function of $d$.\end{enumerate}\end{thm}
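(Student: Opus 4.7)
The plan is to reduce part (ii) to part (i) and to prove (i) by a chief-factor analysis of finite continuous quotients of $G$, combining Corollary \ref{tatecor}, Corollary \ref{comporb}, and the Senge--Straus theorem.

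For (ii), Theorem \ref{simordthm} tells us that every non-abelian finite simple group has order divisible by $6$ or $10$, so two applications of (i) with $(p,q)\in\{(2,3),(2,5)\}$ yield the required bound in terms of $d$. So focus on (i). Every composition factor of $G$ appears in some finite continuous image $H$, and $d_p(H), d_q(H) \leq d$ since a Sylow of a quotient is a quotient of a Sylow; hence it suffices to bound uniformly over finite quotients $H$. Fix such an $H$, a chief series $1 = M_0 < M_1 < \cdots < M_r = H$, and let $S$ be a $p$-Sylow of $H$.

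For each non-abelian chief factor $M_i/M_{i-1} \cong T_i^{k_i}$ with $pq \mid |T_i|$, I first claim $(S \cap M_i)\Phi(S) > (S \cap M_{i-1})\Phi(S)$: otherwise $S \cap M_i \leq \Phi(S) M_{i-1}$, which by Corollary \ref{tatecor}(ii) forces $M_i/M_{i-1}$ to be $p'$-normal; but $T_i^{k_i}$ has no normal $p'$-Hall subgroup when $p \mid |T_i|$, since any such would project to a nontrivial proper normal subgroup of the simple non-abelian $T_i$. Hence each such chief factor strictly increases the filtration $V_i := (S \cap M_i)\Phi(S)/\Phi(S)$ of $V := S/\Phi(S)$, and their number is at most $\dim V \leq d$.

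To bound each $k_i$, pass to $H/M_{i-1}$: the $k_i$ isomorphic simple direct summands of $M_i/M_{i-1}$ are subnormal quasisimple subgroups of $H/M_{i-1}$, hence components lying in $\Comp_p(H/M_{i-1})$ since $p \mid |T_i|$. Corollary \ref{comporb}(ii) bounds the number of $p$-Sylow orbits on $\Comp_p(H/M_{i-1})$ by $d$, giving at most $d$ orbits on these $k_i$ components, whose sizes are $p$-powers summing to $k_i$; subadditivity of the base-$p$ digit sum yields $s_p(k_i) \leq d$, and symmetrically $s_q(k_i) \leq d$. Since $p \neq q$ are distinct primes, $\log p/\log q \notin \mathbb{Q}$, so the Senge--Straus theorem gives $k_i \leq g(d,p,q)$ for some function $g$. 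Combining, the total number of composition factors of $G$ with order divisible by $pq$ is at most $d \cdot g(d,p,q)$. The main obstacle I expect is seeing that the bound on $k_i$ essentially requires \emph{both} primes: the $p$-Sylow data alone controls only $s_p(k_i)$, which does not pin down $k_i$, and it is precisely the Senge--Straus combination that makes the $p$- and $q$-Sylow controls interact to produce a genuine bound.
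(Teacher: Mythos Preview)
Your proof is correct and takes a genuinely different route from the paper's. The paper first passes to a finite quotient in which every non-trivial normal subgroup has a composition factor of order divisible by $pq$; this forces $F^*$ to equal $E_\pi$, so the whole group acts faithfully on $E_\pi(G)$. A single application of Corollary~\ref{comporb} and Senge--Straus then bounds $t_\pi=|\Comp_\pi(G)|$, after which the paper invokes Proposition~\ref{qsout} (solubility of $\Out(Q)$ for quasisimple $Q$, i.e.\ the Schreier conjecture) together with $G/N\lesssim\Sym(t_\pi)$ to bound the remaining composition factors. Your argument instead runs along a chief series: Corollary~\ref{tatecor}(ii) gives the filtration bound of $d$ on the number of relevant chief factors, and Corollary~\ref{comporb} plus Senge--Straus bound each multiplicity $k_i$. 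This trades one global application of Senge--Straus for several local ones, but in exchange it eliminates the dependence on the Schreier conjecture for part~(i) and yields the explicit bound $d\cdot g(d,p,q)$. Your derivation of (ii) directly from the conclusion of (i) is also slightly cleaner than the paper's, which re-enters the component count rather than citing (i) as a black box.
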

 
\begin{proof}It suffices to consider the case of $G$ finite.  Given a prime $p$, let $S_p$ be a $p$-Sylow subgroup of $G$.

(i) Let $\pi = \{p,q\}$.  By first dividing out by a suitable normal subgroup, we may assume that every normal subgroup of $G$ has a composition factor of order divisible by $pq$.  This ensures that $G$ acts faithfully on $E_\pi(G)$.  Let $t_\pi = |\Comp_\pi(G)|$.  By Corollary \ref{comporb} (ii), there are at most $d_p(G)$ orbits in the action of $S_p$ on $\Comp_\pi(G)$, but also at most $d_q(G)$ orbits in the action of $S_q$ on $\Comp_\pi(G)$.  Since all orbits of a permutation $r$-group have size a power of $r$ for any prime $r$, it follows that $s_p(t_\pi) \leq d$ and $s_q(t_\pi) \leq d$.  Hence by the theorem of Senge and Straus, $t_\pi$ is bounded by a function of $(d,p,q)$.

Now consider the quotient $G/E_\pi(G)$; note that this is isomorphic to a subgroup of $\Out(E_\pi(G))$.  Let $N= \bigcap \{ N_G(Q) \mid Q \in \Comp_\pi(G)\}$.  By Proposition \ref{qsout}, $N/E_\pi(G)$ is soluble; also, $G/N \lesssim \Sym(t_\pi)$.  It follows that in some composition series for $G$, the number of factors of order divisible by $pq$ is bounded by a function of $t_\pi$, and hence by a function of $d$; this same bound then applies to an arbitrary composition series by the Jordan-H\"{o}lder theorem.
 
(ii) Let $t = |\Comp(G)|$, let $t_3 = |\Comp_{\{2,3\}}(G)|$, and let $t_5 = |\Comp_{\{2,5\}}(G)|$.  By Theorem \ref{simordthm}, $\Comp_{\{2,3\}}(G) \cup \Comp_{\{2,5\}}(G) = \Comp(G)$, so $t \leq t_3 + t_5$.  By part (i), $t_3$ is bounded by a function of $(d,2,3)$, and $t_5$ by a function of $(d,2,5)$; hence both $t_3$ and $t_5$ are bounded by a function of $d$, and so $t$ is bounded by a function of $d$.  This gives a bound on the number of non-abelian composition factors of $G$ by the same argument as before.\end{proof}

Depending on the structure of $S$, we may obtain a bound on the number of components by other means.

\begin{defn}Given any subgroup $H$ of a profinite group $G$, define $m_G(H)$ to be the minimum number of conjugates of $H$ needed to generate $H^G$.  Note that if $H \in \Comp(G)$, then $m_G(H) = |G:N_G(H)|$.
 
Let $S$ be a finitely generated pro-$p$ group and let $F$ be a finite subgroup.  Say $F$ is a \emph{pseudo-component} of $S$ if $F^S$ is a central product of a finite set of conjugates of $F$.\end{defn}

\begin{lem}\label{pscomplem}Let $G$ be a profinite group with $p$-Sylow subgroup $S$, and let $Q \in \Comp_p(G)$.  Then $N_G(Q \cap S) \leq N_G(Q)$, and $m_S(Q \cap S) = m_{SE(G)}(Q)$.\end{lem}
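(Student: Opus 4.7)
The plan is to exploit two facts: that $Q \cap S$ is a non-trivial Sylow subgroup of $Q$, and that components either coincide or centralise each other (Theorem \ref{opilayer}(ii)).

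First I would establish that $Q \cap S$ is a $p$-Sylow subgroup of $Q$, which is non-trivial because $p \mid |Q|$ by the definition of $\Comp_p(G)$; this uses Corollary \ref{sylsubnor} applied to the subnormal subgroup $Q$. Moreover, by Corollary \ref{qscor}, $p$ also divides $|Q/Z(Q)|$, so the image of $Q \cap S$ in $Q/Z(Q)$ is a non-trivial $p$-Sylow of the simple group $Q/Z(Q)$; in particular $Q \cap S \not\leq Z(Q)$.

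For the first assertion, let $g \in N_G(Q \cap S)$. Then $Q^g$ is again a component of $G$ (conjugation preserves both subnormality and being quasisimple). By Theorem \ref{opilayer}(ii), either $Q^g = Q$ or $Q^g$ centralises $Q$, in which case $Q^g \cap Q \leq Z(Q)$ (being perfect and commuting with $Q$). But $(Q \cap S)^g = Q \cap S$ lies in both $Q^g$ and $Q$, forcing $Q \cap S \leq Z(Q)$, contradicting the previous paragraph. Hence $Q^g = Q$, i.e.\ $g \in N_G(Q)$.

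For the second assertion, the key observation is that every component of $G$ either equals $Q$ or centralises $Q$ (again Theorem \ref{opilayer}(ii)), so $E(G) \leq N_G(Q)$. Write $T := S \cap N_G(Q)$. Then $N_S(Q \cap S) = S \cap N_G(Q \cap S) = T$ by the first part, so $m_S(Q \cap S) = |S : T|$. On the other hand, since $E(G)$ normalises $Q$, a short calculation gives $N_{SE(G)}(Q) = SE(G) \cap N_G(Q) = T E(G)$, so that $m_{SE(G)}(Q) = |SE(G) : TE(G)|$. Because $E(G) \leq N_G(Q)$ we have $S \cap E(G) \leq T$, hence $T \cap E(G) = S \cap E(G)$. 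Using the standard supernatural index formula
\[ |SE(G):TE(G)| = \frac{|S:T| \cdot |T \cap E(G)|}{|S \cap E(G)|} \]
this collapses to $|S:T|$, matching $m_S(Q \cap S)$.

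The main obstacle is the index computation in Step 6; one must work with supernatural orders and verify that the product formula $|HK| = |H||K|/|H \cap K|$ survives. This is available by running the calculation through a cofinal system of open normal subgroups, where the equality reduces to the finite case.
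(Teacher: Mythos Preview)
Your argument for $N_G(Q\cap S)\le N_G(Q)$ is correct and matches the paper's proof.

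The second part, however, has a genuine gap. Recall that by definition $m_S(Q\cap S)$ is the \emph{minimum} number of $S$-conjugates of $Q\cap S$ required to generate $(Q\cap S)^S$, whereas $|S:T|=|S:N_S(Q\cap S)|$ is the \emph{total} number of such conjugates. Your sentence ``$N_S(Q\cap S)=T$, so $m_S(Q\cap S)=|S:T|$'' simply asserts the equality of these two quantities, but the note after Definition~5.1.7 that $m_G(H)=|G:N_G(H)|$ is stated only for components $H$; the subgroup $Q\cap S$ is not a component of $S$, and it is not yet established that it is even a pseudo-component. What your index calculation actually proves is
\[
m_{SE(G)}(Q)=|SE(G):N_{SE(G)}(Q)|=|S:T|\ge m_S(Q\cap S),
\]
only one of the two required inequalities.

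The paper handles the missing inequality by passing to $G/Z(E(G))$ so that $Q$ becomes simple, and then observing that $Q^{SE(G)}$ is a genuine direct product $Q_1\times\cdots\times Q_m$ of the $m=|S:T|$ conjugates; hence $(Q\cap S)^S=K\cap S=(Q_1\cap S)\times\cdots\times(Q_m\cap S)$ with each factor non-trivial, so no proper subset of the conjugates can generate it. That non-redundancy is exactly what your index manipulation does not supply.
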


\begin{proof}Note first that $Q \cap S$ is a $p$-Sylow subgroup of $Q$, by Corollary \ref{sylsubnor}.  Let $g \in G$, and suppose $Q \not= Q^g$.  Then $Q \cap Q^g \leq Z(Q)$, as $Q$ and $Q^g$ are both components of $G$.   Now $Z(Q)$ cannot contain a $p$-Sylow subgroup of $Q$, and hence neither can $Q \cap Q^g$; thus $Q \cap S$ is not normalised by $g$.  Thus $N_G(Q \cap S) \leq N_G(Q)$.
  
Let $Z = Z(E(G))$.  Then $QZ/Z$ and $Q^gZ/Z$ are elements of $\Comp_p(G/Z)$, and by the same argument they do not have any $p$-Sylow subgroups in common; as a result, we may assume $Z=1$, so that $Q$ is simple.

We may now assume $G=SE(G)$.  Let $m_{SE(G)}(Q) = m$ and let $K=Q^G$.  Then $K \cap S$ is a $p$-Sylow subgroup of $K$, and is also the normal closure of $Q \cap S$ in $S$; indeed, since every component is normal in $E(G)$, the conjugates of $Q \cap S$ in $S$ are precisely the subgroups of the form $Q^g \cap S$ for some $g \in G$.  Also, $K$ is a direct product of the conjugates of $Q$ by Theorem \ref{opilayer}, so $|K|_p = |Q|^m_p$.  Thus all $m$ conjugates of $Q \cap S$ in $S$ are necessary to generate $K \cap S$, so $m_S(Q \cap S) = m_{SE(G)}(Q)$.\end{proof}

\begin{cor}Let $G$ be a profinite group, with $p$-Sylow subgroup $S$.  Suppose that the multiplicity of every pseudo-component of $S$ is at most $m$.  Then each orbit of $S$ on $\Comp_p(G)$ has size at most $m$.\end{cor}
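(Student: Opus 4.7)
The plan is to show, for each $Q \in \Comp_p(G)$, that $Q \cap S$ is a pseudo-component of $S$ whose multiplicity equals the size of the $S$-orbit of $Q$; the hypothesis then delivers the bound directly.

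First I would check that $Q \cap S$ has the required internal structure. Since $Q$ is subnormal in $G$ and $S$ is a $p$-Sylow subgroup of $G$, Corollary \ref{sylsubnor} shows $Q \cap S$ is a $p$-Sylow subgroup of the finite quasisimple group $Q$, hence a finite subgroup of $S$, and non-trivial because $p$ divides $|Q|$. For any $s \in S$ one has $(Q \cap S)^s = Q^s \cap S$, and by Theorem \ref{opilayer} distinct components of $G$ commute pairwise, so for all $s, s' \in S$ the subgroups $Q^s \cap S$ and $Q^{s'} \cap S$ commute. Consequently $(Q \cap S)^S$ is the central product of its distinct $S$-conjugates, exhibiting $Q \cap S$ as a pseudo-component of $S$ whose multiplicity equals $|S : N_S(Q \cap S)|$.

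Next I would argue that this multiplicity coincides with the $S$-orbit size of $Q$. Lemma \ref{pscomplem} gives $N_G(Q \cap S) \leq N_G(Q)$, so intersecting with $S$ yields $N_S(Q \cap S) \leq N_S(Q)$; the reverse inclusion is immediate since any $s \in N_S(Q)$ already lies in $S$ and therefore normalises $Q \cap S$. Hence $N_S(Q \cap S) = N_S(Q)$, so the $S$-orbit of $Q$ and the $S$-orbit of $Q \cap S$ both have cardinality $|S : N_S(Q)|$. By hypothesis this cardinality is at most $m$, giving the desired bound. The only slightly delicate point is verifying the central-product description of $(Q \cap S)^S$ when $Z(E(G))$ is non-trivial, but the pairwise commutation of distinct components (independent of their centres) handles this without further work.
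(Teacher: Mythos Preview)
Your approach is essentially the same as the paper's: both hinge on Lemma \ref{pscomplem} to relate $Q\cap S$ inside $S$ to $Q$ inside $SE(G)$.  The paper simply quotes the second conclusion of that lemma, $m_S(Q\cap S)=m_{SE(G)}(Q)$, and then uses the remark preceding Definition of $m_G(H)$ that for a component $m_{SE(G)}(Q)=|SE(G):N_{SE(G)}(Q)|$, which equals the $S$-orbit size since $E(G)$ normalises every component.

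Your write-up has one small gap.  You assert that the multiplicity of the pseudo-component $Q\cap S$ equals $|S:N_S(Q\cap S)|$, i.e.\ that \emph{every} $S$-conjugate of $Q\cap S$ is needed to generate $(Q\cap S)^S$.  Pairwise commutation of the conjugates alone does not give this: in a central product one factor could, in principle, lie inside the subgroup generated by the others.  What rules this out here is that $\langle (Q^{s'}\cap S): Q^{s'}\neq Q^s\rangle$ is contained in $\langle Q^{s'}:Q^{s'}\neq Q^s\rangle$, whose intersection with $Q^s$ lies in $Z(Q^s)$, while $Q^s\cap S$ is a $p$-Sylow subgroup of the quasisimple group $Q^s$ and hence is not contained in $Z(Q^s)$.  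Adding this sentence (or, more simply, invoking the equality $m_S(Q\cap S)=m_{SE(G)}(Q)$ from Lemma \ref{pscomplem} directly, as the paper does) closes the gap; the remark about $Z(E(G))$ at the end of your proposal is aimed at the right issue but does not by itself dispose of it.
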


\begin{proof}Let $Q \in \Comp_p(G)$.  Then $Q \cap S$ is a pseudo-component of $S$, so $m_{SE(G)}(Q) = m_S(Q \cap S) \leq m$.\end{proof}

\section{Profinite groups with finite $c_\bP$}

Recall the $c$ invariant of pro-$p$ groups, as discussed in Section \ref{cinvsec}.  We now consider profinite groups $G$ for which $c_\bP(G)=c$ is finite, that is, the Frattini factor of each Sylow subgroup $S$ has a series preserved by $\Aut(S)$, the factors of which have rank at most $c$.  At first we will consider this case in full generality; in later sections, we will obtain stronger results in special cases.

\begin{defn}Let $G$ be a profinite group.  Say $G$ is \emph{Sylow-finite} if every Sylow subgroup of $G$ is contained in a finite normal subgroup.\end{defn}

By Dicman's lemma, this condition is equivalent to requiring each Sylow subgroup to be finite and to have a centraliser in $G$ of finite index.

Note some properties of the class of Sylow-finite groups.

\begin{lem}The class of Sylow-finite groups is closed under subgroups, quotients and finite direct products.  Given a Sylow-finite group $G$, then $\Fin(G)$ is dense in $G$ and $G \in \FR$.\end{lem}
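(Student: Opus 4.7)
The plan has five pieces, corresponding to the five separate claims. Throughout, I will use the equivalence (noted in the paragraph preceding the lemma) that being Sylow-finite means each Sylow subgroup $S$ is finite and has centraliser of finite index, or equivalently that $S \subseteq \Fin(G)$.

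For closure under closed subgroups $H \leq G$, I would take a $p$-Sylow $P$ of $H$: it is a pro-$p$ subgroup of $G$, hence by Sylow's theorem for profinite groups lies inside some $p$-Sylow $S$ of $G$. If $S \leq M$ with $M$ finite and normal in $G$, then $P \leq M \cap H$, and the latter is finite and normal in $H$. For closure under quotients $G/N$, I would use that every $p$-Sylow of $G/N$ has the form $SN/N$ with $S$ a $p$-Sylow of $G$, and if $S \leq M \unlhd G$ with $M$ finite then $MN/N$ is a finite normal subgroup of $G/N$ containing $SN/N$. For finite direct products, a Sylow of $G_1 \times G_2$ is $S_1 \times S_2$, which is contained in the finite normal subgroup $M_1 \times M_2$.

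For density of $\Fin(G)$, the strategy is to show that the image of $\Fin(G)$ in every finite continuous quotient $G/N$ is already all of $G/N$; density then follows because $\overline{\Fin(G)} = \bigcap_{N \unlhd_o G} \Fin(G) N$. Fix $N \unlhd_o G$. Every $p$-Sylow of $G/N$ is the image $SN/N$ of a $p$-Sylow $S$ of $G$, and by hypothesis $S$ lies in a finite normal subgroup $M$ of $G$, so the $p$-Sylow of $G/N$ is contained in $MN/N$, which lies inside the image of $\Fin(G)$ in $G/N$. Thus this image is a normal subgroup of the finite group $G/N$ containing every Sylow subgroup; since the Sylow subgroups generate any finite group, it must be all of $G/N$. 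The key conceptual point here is that surjectivity onto each Sylow, combined with normality, forces surjectivity onto the whole quotient — this is the piece that most requires thought, though once isolated it is immediate.

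Finally, for $G \in \FR$, I would invoke Corollary \ref{vzfr}. By Dicman's lemma (applied element-wise, or directly from the characterisation via finite conjugacy classes), every element of $\Fin(G)$ has a centraliser of finite index in $G$, so $\Fin(G) \subseteq VZ(G)$. Density of $\Fin(G)$ therefore implies density of $VZ(G)$, and Corollary \ref{vzfr} delivers $G \in \FR$. The main obstacle in the whole argument is the density statement; the closure properties are purely formal Sylow-theoretic manipulations, and Fitting-regularity is then a corollary of the already-established theory.
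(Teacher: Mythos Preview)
Your proposal is correct and follows essentially the same route as the paper. The paper is much terser: for the closure properties it simply asserts that ``having finite Sylow subgroups'' and ``having Sylow subgroups with centralisers of finite index'' are each preserved under the three operations, and for density it says only that $\Fin(G)$ is dense because it contains every Sylow subgroup (leaving implicit the step that a closed subgroup containing a $p$-Sylow for every $p$ must have index a $\pi$-number for $\pi = \emptyset$, hence be everything); your finite-quotient argument is a perfectly good way to unpack that one line, and the appeal to Corollary~\ref{vzfr} via $\Fin(G) \subseteq VZ(G)$ is exactly what the paper does.
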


\begin{proof}The properties of having finite Sylow subgroups, and having Sylow subgroups with centralisers of finite index, are both preserved by the operations in question.  Given a Sylow-finite group $G$, then $\Fin(G)$ is dense as it contains every Sylow subgroup.  Hence $VZ(G)$ is dense, so $G \in \FR$ by Corollary \ref{vzfr}.\end{proof}

Our main aim in this section will be to establish the following decomposition theorem.

\begin{thm}\label{cstarthm}Let $G$ be a profinite group for which $c_\bP(G)=c$ is finite.  Then $|G:O_\sol(G)|$ is finite, and $c_\bP(O_\sol(G))=c'$ is finite.  Let $K$ be the smallest normal subgroup of $O_\sol(G)$ such that $O_\sol(G)/K$ has exponent dividing $\eb(c')$, and derived length at most $\db(c')$; this ensures $O_\sol(G)/K$ is of order bounded by a function of $c'$ and the maximum of $d_p(O_\sol(G))$ as $p$ ranges over $\bP_{\eb(c')}$.
 
Then $N = K'F(G)/F(G)$ is Sylow-finite.

In particular, $G$ is pronilpotent-by-(Sylow-finite)-by-abelian-by-finite.\end{thm}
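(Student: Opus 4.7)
The plan is to prove the three quantitative claims in sequence, then the structural claim that $N$ is Sylow-finite, and finally assemble the layer structure. Since $c_\bP(G) = c < \infty$ and by convention $c(S) = d(S) = \infty$ whenever a pro-$p$ group $S$ is not finitely generated, every Sylow of $G$ is finitely generated; in particular $d_2(G) < \infty$, so by Corollary \ref{tatefin} $G$ is virtually prosoluble and $|G:O_\sol(G)|$ is finite. Writing $H := O_\sol(G)$, the $p$-Sylow $T_p$ of $H$ agrees with that of $G$ for primes $p \nmid |G:H|$, giving $c(T_p) \leq c$; for the finitely many remaining primes, $T_p$ is an open subgroup of a finitely generated pro-$p$ group, hence itself finitely generated by Schreier, so $c(T_p)$ is finite. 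Taking a maximum yields finite $c' := c_\bP(H)$. For the order bound on $H/K$: by construction $H/K$ is a prosoluble group of exponent dividing $\eb(c')$ and derived length at most $\db(c')$, hence pro-$\pi$ for the finite prime set $\pi = \bP_{\eb(c')}$, and each of its $p$-Sylows is a finitely generated (at most $d_p(H)$ generators) pro-$p$ group of finite exponent, so is finite by Zelmanov's solution to the restricted Burnside problem; multiplying across $\pi$ gives the claimed bound in terms of $c'$ and $\max_{p \in \pi} d_p(H)$.

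For the Sylow-finiteness of $N = K'F(G)/F(G)$, I would first show that $N$ is pronilpotent, then deduce that each of its Sylow subgroups is finite. For each prime $p$, the conjugation action of $H$ on $T_p$ gives a prosoluble image inside $\Aut(T_p)$; applying Corollary \ref{dbebcor} with $c(T_p) \leq c'$ yields that the image of $K = H^{\eb(c')} H^{(\db(c'))}$ has pro-$p$ derived subgroup, so $K' C_H(T_p)/C_H(T_p)$ is pro-$p$. Hence for every $q \neq p$, the $q$-Sylow $U_q$ of $K'$ lies in $C_H(T_p)$, so the Sylows of $K'$ commute pairwise; combined with Hall's theorem (whence each Sylow is normalised by every other Sylow, so is normal in $K'$), this makes $K'$ pronilpotent by Lemma \ref{fitlem}, and likewise for $N$. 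For the finiteness of each $p$-Sylow of $N$ — namely $U_p/(U_p \cap O_p(G))$ where $U_p = K' \cap T_p$ — I would combine the pro-$p$ action of $K'$ on $T_p$ with Theorem \ref{cinvthm}(ii) (the kernel $\Aut(T_p) \to \Delta(T_p) \leq \GL(d(T_p),p)$ is pro-$p$) to deduce that $K'$ acts on $T_p/\Phi(T_p)$ through a finite $p$-group, and then work in the quotient $G/O_p(G)$, whose $p$-core is trivial, using a Fitting-regularity argument to force the image of $U_p$ into a finite normal subgroup.

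Assembling the layers — $F(G)$ pronilpotent, $N$ Sylow-finite, $KF(G)/K'F(G)$ abelian (since $K/K'$ is), and both $|O_\sol(G)/K|$ and $|G:O_\sol(G)|$ finite — gives the pronilpotent-by-Sylow-finite-by-abelian-by-finite conclusion. The main obstacle is upgrading pronilpotency of $N$ to Sylow-finiteness: pronilpotency drops out cleanly from Corollary \ref{dbebcor}, but obtaining genuine finiteness of each Sylow requires converting the qualitative pro-$p$-ness of the $K'$-action on $T_p$ into a quantitative finite bound on $U_p$ modulo $O_p(G)$, forcing a delicate Fitting-regularity analysis in $G/O_p(G)$.
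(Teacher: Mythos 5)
Your opening steps are fine: $d_2(G)$ finite gives $|G:O_\sol(G)|$ finite via Corollary \ref{tatefin}, the finiteness of $c_\bP(O_\sol(G))$ follows as you say (this is exactly Lemma \ref{fistarlem}), and the order bound on $O_\sol(G)/K$ is correct, though invoking Zelmanov is heavy machinery for a prosoluble group of bounded derived length, bounded exponent and boundedly generated Sylow subgroups. The genuine gap is in the Sylow-finiteness argument, and it occurs at the very first move: you speak of "the conjugation action of $H$ on $T_p$", but $T_p$ is a Sylow subgroup, not a normal subgroup, of $H=O_\sol(G)$, so $H$ does not act on it and $C_H(T_p)$ is not normalised by $K'$. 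Corollary \ref{dbebcor} can only be applied to the automorphisms actually induced on $T_p$, namely to $N_H(T_p)/C_H(T_p)$. What one really gets, via the Frattini argument $H=[H,T_p]N_H(T_p)$ together with coprime action and Corollary \ref{malcor}, is the containment $K' \leq T_p[H,T_p]C_H(T_p)$ (this is Lemma \ref{cstarlem}(i) in the text) — and the factor $T_p[H,T_p]$ is in general large, so this is far weaker than "$K'C_H(T_p)/C_H(T_p)$ is pro-$p$". Consequently your conclusion that the Sylow subgroups of $K'$ pairwise commute, hence that $K'$ and $N$ are pronilpotent, is unjustified; it is also stronger than the theorem itself (pronilpotency of $N$ would force $K' \leq F(G)$ and $N=1$, which is not what happens in general — the Sylow-finite layer is genuinely needed).

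The actual proof exploits the containment $K' \leq S[H,S]C_H(S)$ for a cleverly chosen Sylow subgroup: since $H/O_{p'}(H)$ is virtually pro-$p$, it is pro-$\bP_n$ for some $n$ depending on $p$, so for every prime $q>n$ a $q$-Sylow subgroup $T$ lies in $O_{p'}(H)$, whence $K' \leq O_{p'}(H)C_H(T)$ and a $p$-Sylow subgroup of $K'$ centralises a $q$-Sylow subgroup of $H$ — but only for these large $q$; combining over $q>n$ via Corollary \ref{indexcor} it centralises a $\bP'_n$-Hall subgroup. The small primes, which your argument ignores, are then handled by placing the $p$-Sylow subgroup $S$ of $K'$ inside a $\bP_n\cup\{p\}$-Hall subgroup $M$: by Corollary \ref{tatenilp} $M$ is virtually pronilpotent, so $S\cap O_p(M)$ has finite index in $S$ and lies in $F(G)$, which is what makes the Sylow subgroups of $N=K'F(G)/F(G)$ finite; the centralised $\bP'_n$-Hall subgroups then give the centralisers of finite index. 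Your own plan for this finiteness step ("a delicate Fitting-regularity analysis in $G/O_p(G)$") is not carried out, rests again on a nonexistent action of $K'$ on $T_p$, and is precisely the point where the theorem's content lies, so as it stands the proposal does not prove the theorem.
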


The following is a simple but useful observation.

\begin{lem}\label{fistarlem}Let $G$ be a profinite group, and let $N$ be an open normal subgroup of $G$.  Then $d_\bP(N) \leq |G:N|d_\bP(G)$ and $d_\bP(G) \leq d_\bP(N) + d_\bP(G/N)$.  In addition, $c_\bP(G)$ is finite if and only if $c_\bP(N)$ is finite.\end{lem}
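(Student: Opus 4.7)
The plan is to treat the three inequalities separately, linking each to Sylow data for a fixed prime and then taking suprema, with the key structural input being that for $N \unlhd G$ with $|G:N|$ finite, a $p$-Sylow subgroup of $N$ is $S \cap N$ for some $p$-Sylow $S$ of $G$, by Corollary~\ref{sylsubnor}. The index $|S : S\cap N|$ is a power of $p$ dividing $|G:N|$.

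For $d_\bP(N) \leq |G:N| d_\bP(G)$, fix $p$, $S$, and $T = S\cap N$. Since $T \leq_o S$ with $|S:T| \leq |G:N|$, the Schreier index formula for pro-$p$ groups yields $d(T) \leq |S:T|(d(S)-1)+1 \leq |G:N|\, d(S)$, and taking the supremum over $p$ gives the bound. For $d_\bP(G) \leq d_\bP(N) + d_\bP(G/N)$, note that $S/T \cong SN/N$ is a $p$-Sylow subgroup of $G/N$, so a topological generating set for $T$ together with lifts of a topological generating set for $S/T$ generates $S$; taking sups gives the inequality.

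For the $c_\bP$ equivalence, the organising observation is that if $p \nmid |G:N|$ then $|S:T|$ is a trivial $p$-power, so $S = T$ and hence $c_p(G) = c_p(N)$; since $|G:N|$ is finite, only finitely many primes $p$ can contribute any discrepancy. Thus the sup over all primes is governed, up to finitely many exceptional primes, by the equality $c_p(G) = c_p(N)$, and I only need to control the exceptional primes. For each such exceptional $p$, I will use the key fact (from the definition of $c$) that $c(S) < \infty$ if and only if $S$ is topologically finitely generated: under the convention $c(S) = d(S)$ for non-finitely-generated $S$. This observation converts the $c$-statement, for each exceptional prime, into a statement about finite generation of $S$ versus $T$, which is preserved in both directions because $T \leq_o S$ of finite index (Schreier, already used above) and because $d(S) \leq d(T) + d(S/T)$ with $S/T$ a finite $p$-group.

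The overall argument is more structural than computational, and I do not expect a serious obstacle. The only delicate point is being careful that $c_\bP$ is a supremum, not an individual invariant: finiteness of each $c_p$ is not enough without a uniform bound. This is resolved by the split into the cofinitely many primes (where equality $c_p(G) = c_p(N)$ gives the uniform bound directly from the other side) and the finitely many exceptional primes (each contributing only a finite quantity, which does not affect finiteness of the sup).
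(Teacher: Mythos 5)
Your proposal is correct and follows essentially the same route as the paper: Schreier for the first inequality, $d(S) \leq d(S \cap N) + d(S/(S\cap N))$ for the second, and for the $c_\bP$ statement the same split into the finitely many primes dividing $|G:N|$ (handled via the convention that $c$ is finite exactly when the Sylow subgroup is finitely generated) and the cofinitely many primes where $c_p(G)=c_p(N)$ outright. Your explicit attention to the supremum versus pointwise finiteness is exactly the point the paper's argument also relies on, so there is nothing to add.
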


\begin{proof}The first inequality follows from the Schreier index formula applied to each Sylow subgroup; the second follows immediately from the fact that $d(S) \leq d(S \cap N) + d(S/(S \cap N))$ for every Sylow subgroup $S$ of $G$.

Since $G/N$ is finite, it must be a $\pi$-group for some finite set of primes $\pi$.  Suppose $c_\bP(G)$ is finite.  Then $c_p(N) = c_p(G)$ for $p \in \pi'$, and $d_p(G)$ is finite for $p$ in the remaining finite set of primes $\pi$, which forces $d_p(N)$ and hence $c_p(N)$ to be finite.  If on the other hand $c_\bP(G)$ is infinite, then either $c_p(G)$ is infinite for some $p$, in which case $d_p(N)$ and hence $c_p(N)$ is also infinite, or the invariants $c_p(G)$ take arbitrarily large values as $p$ ranges over the primes, in which case the same is true as $p$ ranges over $\pi'$.  In either case, $c_\bP(N)$ must be infinite.\end{proof}

The following lemma is the main part of the proof of the decomposition theorem above.

\begin{lem}\label{cstarlem}Let $G$ be a prosoluble group with $c_\bP(G) =c$, for some integer $c$.  Let $K$ be the smallest normal subgroup of $G$ such that $G/K$ has exponent dividing $\eb(c)$ and derived length at most $\db(c)$.  Then:
\vspace{-12pt}
\begin{enumerate}[(i)] \itemsep0pt
\item $K' \leq S[G,S]C_G(S)$ for any Sylow subgroup $S$ of $G$;
\item any $p$-Sylow subgroup of $K'$ centralises a $\bP'_n$-Hall subgroup of $G$, for some $n$ depending on $p$ and $G$.\end{enumerate}\end{lem}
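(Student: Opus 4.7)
For part (i), my plan is a Frattini-plus-Corollary~\ref{dbebcor} argument. Set $L := S[G,S]C_G(S)$ and let $T := S[G,S] = \langle S\rangle^G$; since $S$ is a $p$-Sylow of $G$, one has $T = O^{p'}(G)$, so $G/T$ is pro-$p'$. The Frattini argument gives $G = N_G(S)T$, and since $C_G(S) \leq L$, the surjection $G \twoheadrightarrow G/L$ factors through $H := N_G(S)/C_G(S) \leq \Aut(S)$, presenting $G/L$ as a $p'$-quotient of $H$. Applying Corollary~\ref{dbebcor} to $H$ acting on the finitely generated pro-$p$ group $S$ (with $c(S) \leq c$) shows that $(H^{\eb(c)}H^{(\db(c))})'$ is pro-$p$; its image in the pro-$p'$ group $G/L$ is therefore trivial. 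But this image coincides with the image of $K' = (G^{\eb(c)}G^{(\db(c))})'$ under $G \twoheadrightarrow G/L$, giving $K' \leq L$.

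For part (ii), my plan is to exploit part (i) prime by prime. Fix a $p$-Sylow $P$ of $K'$, embedded in a Sylow system $\Sigma = (S_r)_r$ of $G$. The cleanest situation is when $S_q \unlhd G$: then $T_q = S_q$, part (i) gives $P \leq S_qC_G(S_q)$, and since $|S_qC_G(S_q)|_p = |C_G(S_q)|_p$ for $p \neq q$ and conjugation inside $S_qC_G(S_q)$ preserves the normal subgroup $S_q$, Sylow's theorem forces $P \leq C_G(S_q)$, giving genuine centralisation. More generally, even when $S_q$ is not normal, $P$ may still centralise $S_q$ --- as happens when $P$ and $S_q$ lie in different direct-factor-like pieces of $G$ --- because part (i) constrains the component of any $x \in P$ that can act nontrivially on $S_q$ to come from $T_q$. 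The lemma thus reduces to showing that the set of \emph{bad} primes, namely $\{q : [P,S_q] \neq 1\}$, is finite; taking $n$ to exceed every bad prime then makes the $\bP'_n$-Hall subgroup $\prod_{q > n}S_q$ of $G$ centralised by $P$.

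Finiteness of this bad-prime set is the main obstacle. My plan of attack has two parts. First, apply Corollary~\ref{dbebcor} a second time, now with $G$ acting on the normal pro-$q$ subgroup $O_q(G)$ (using $c(O_q(G)) \leq c$, secured by passing to open subgroups where necessary), to conclude $K'/(K' \cap C_G(O_q(G)))$ is pro-$q$ and hence $P \leq C_G(O_q(G))$ for every $q \neq p$; thus $P$ already centralises the pronilpotent $p'$-part $F_{p'}(G) := \prod_{q \neq p}O_q(G)$ of the pro-Fitting subgroup. Second, use the linearity constraints from Lemma~\ref{deltalem} --- which embed $N_G(S_q)/C_G(S_q)$, modulo a pro-$q$ kernel, into a linear group of degree $c$ over $\bF_q$ --- to limit how the finitely generated pro-$p$ group $P$ can act nontrivially on the finite quotients $S_q/O_q(G)$, and hence on $S_q$ itself. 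The delicate step is coordinating these bounds with the global hypothesis $c_\bP(G) \leq c$ to rule out infinitely many bad primes; I expect this counting-and-bookkeeping step, rather than any single slick identity, to be where the technical content of (ii) resides.
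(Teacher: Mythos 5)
Your part (i) is correct and is essentially the paper's own argument: the Frattini factorisation $G=[G,S]N_G(S)$ presents $G/S[G,S]C_G(S)$ as a ($p'$-)quotient of the coprime automorphiser of $S$, and the Mal'cev--Zassenhaus bound (Corollary \ref{malcor}, packaged as Corollary \ref{dbebcor}) kills the image of $K'$ there; the only point worth adding is a sentence noting that $c(S)\leq c$ suffices, since $O^{(c,p)}(H)\leq O^{(c(S),p)}(H)$, so the bounds $\eb(c),\db(c)$ are legitimate.

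Part (ii), however, has a genuine gap: the finiteness of your set of ``bad'' primes $\{q : [P,S_q]\neq 1\}$ is the entire content of the statement, and you explicitly defer it to an unspecified ``counting-and-bookkeeping'' step. The two sub-steps you do offer will not deliver it. First, you cannot apply Corollary \ref{dbebcor} to $O_q(G)$: the hypothesis $c_\bP(G)=c$ only controls the $c$-invariant of \emph{Sylow} subgroups, $O_q(G)$ need not be finitely generated (in which case, by the paper's convention, $c(O_q(G))=d(O_q(G))$ is infinite), and $c$ is not monotone under passage to closed or open subgroups --- this is exactly why the separate invariant $c^\le$ is introduced --- so ``passing to open subgroups'' does not repair it. Second, the quotients $S_q/O_q(G)$ need not be finite (prosoluble groups with $O_q(G)=1$ and infinite $S_q$ exist even with two primes), so the proposed linearisation of the action of $P$ on them is not available as stated; and in any case no purely $q$-local bound of $\GL(c,q)$ type can exclude infinitely many bad primes, because $p$ divides $|\GL(c,q)|$ for the infinitely many primes $q\equiv 1\pmod p$, so each individual large $q$ remains locally consistent with a nontrivial action of $P$.

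The missing idea is to apply part (i) \emph{at the prime $q$} rather than at $p$, together with a global transfer input. By Corollary \ref{tatefin}, $G/O_{p'}(G)$ is virtually pro-$p$, hence a pro-$\bP_n$ group for some $n$ depending on $p$ and $G$; so for every $q>n$ a $q$-Sylow subgroup $T$ of $G$ lies inside $O_{p'}(G)$, and part (i) gives $K'\leq T[G,T]C_G(T)\leq O_{p'}(G)C_G(T)$. Since $O_{p'}(G)$ is a normal pro-$p'$ subgroup, a Sylow argument places a $p$-Sylow subgroup of $K'$ inside (a conjugate of) $C_G(T)$, i.e.\ $C_G(P)$ contains a $q$-Sylow subgroup of $G$ for every $q>n$; Corollary \ref{indexcor} then upgrades this to a $\bP'_n$-Hall subgroup. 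It is this use of (i) at the primes $q>n$, with the finiteness of the prime set of $G/O_{p'}(G)$ coming from Tate's theorem, that your prime-by-prime framework is missing.
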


\begin{proof}(i) Let $S$ be a $p$-Sylow subgroup of $G$, for some prime $p$.  By the Frattini argument as applied to Sylow's theorem,
\[ G = [G,S]N_G(S) \]
so that $G/(S[G,S]C_G(S))$ is isomorphic to a quotient of $L = N_G(S)/SC_G(S)$.  By coprime action $L$ acts faithfully on $S/\Phi(S)$, which means that $O^{(c,p)^*}(L)=1$.  By Corollary \ref{malcor}, $L$ therefore has an abelian subgroup $M$ such that $L/M$ has exponent at most $\eb(c)$ and derived length at most $\db(c)$, and so the same must be true for $G/(S[G,S]C_G(S))$.

(ii) Since $G/O_{p'}(G)$ is virtually pro-$p$, it is pro-$\bP_n$ for some $n$, and for all primes $q > n$, some $q$-Sylow subgroup $T$ of $G$ is contained in $O_{p'}(G)$.  By part (i)
\[ K' \leq T[G,T]C_G(T) \leq O_{p'}(G)C_G(T) \]
from which it follows that a $p$-Sylow subgroup of $K'$ is contained in $C_G(T)$.  Hence by Sylow's theorem, any $p$-Sylow subgroup of $K'$ centralises a $q$-Sylow subgroup of $G$ for all $q > n$, and hence by Corollary \ref{indexcor}, any $p$-Sylow subgroup of $K'$ centralises a $\bP'_n$-Hall subgroup of $G$.\end{proof}

\begin{proof}[Proof of Theorem]Since $c_\bP(G)$ is finite, it follows that $d_2(G)$ is finite, so $|G:O_\sol(G)|$ is finite.  Lemma \ref{fistarlem} ensures that $c_\bP(O_\sol(G))$ is finite.  Now applying Lemma \ref{cstarlem} to $O_\sol(G)$, we see that any $p$-Sylow subgroup of $K'$ centralises a $\bP'_n$-Hall subgroup of $O_\sol(G)$, for some $n$ depending on $p$ and $O_\sol(G)$.

It remains to show that $N$ is Sylow-finite.  Let $S$ be a $p$-Sylow subgroup of $K'$, so that $S$ centralises a $\bP'_n$-Hall subgroup $L$ of $O_\sol(G)$ for some $n$; also $S$ is contained in a $\bP_n \cup \{p\}$-Hall subgroup $M$ of $O_\sol(G)$.  Now $M$ is virtually pronilpotent by Corollary \ref{tatenilp}, so $O_p(M)$ has finite index in $S$; also $O_p(M) \unlhd \langle L,M \rangle = O_\sol(G)$, so $O_p(M) \leq F(O_\sol(G))=F(G)$.  This establishes that all Sylow subgroups of $N$ are finite.  But this means that a $\bP'_n$-Hall subgroup of $N$ has finite index, for any integer $n$, and so every Sylow subgroup of $N$ also has a centraliser of finite index.\end{proof}

There are several easy consequences of this decomposition:

\begin{cor}Let $M$ be a profinite group with $c_\bP(M)$ finite, and let $G \leq M$.
\vspace{-12pt}
\begin{enumerate}[(i)] \itemsep0pt
\item There is a series
\[ 1 \leq F(G) \leq K'F(G) \leq K \leq G \]
of normal subgroups of $G$, such that $K'F(G)/F(G)$ is Sylow-finite and $G/K$ has order bounded by properties of $M$.  In particular $G \in \FR$.
\item Every Sylow subgroup of $G/F(G)$ is abelian-by-finite, and in particular has finite rank and derived length.  For all but finitely many primes $p$, all $p$-Sylow subgroups of $G/F(G)$ are finite-by-abelian, and in particular have finite nilpotency class.
\item Suppose in addition that $VZ(G/F(G))$ is finite.  Then $G/F(G)$ is finite.\end{enumerate}\end{cor}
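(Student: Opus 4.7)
The plan is to combine the structural series from part (i) with a counting argument over infinitely many primes.

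First I would reduce to a convenient situation. Let $\bar G = G/F(G)$ and use the series from (i): $1 \leq N \leq \bar K \leq \bar G$, where $N = K'F(G)/F(G)$ is Sylow-finite, $\bar K/N$ is abelian (as a quotient of $K/K'$), and $\bar G/\bar K$ is finite. Since $\bar G/\bar K$ is finite, $VZ(\bar K) = VZ(\bar G) \cap \bar K$ is finite, so it suffices to show $\bar K$ is finite; replacing $\bar G$ with $\bar K$, I may assume $\bar G/N$ is abelian, whence $\bar G' = N$. If $N$ is finite, the identity $hgh^{-1} = [h,g]\,g$ shows that for every $g \in \bar G$ the conjugacy class $g^{\bar G} \subseteq g\bar G'$ has size at most $|N|$, so $\bar G \subseteq VZ(\bar G)$ and the hypothesis forces $\bar G$ finite.

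The crux is to prove $N$ finite, which I would do by contradiction. Suppose $N$ is infinite. A Sylow-finite profinite group involving only finitely many primes has finite images of uniformly bounded order and is itself finite, so $N$ must involve infinitely many primes $p_1, p_2, \dots$. For each $i$ fix a Sylow $p_i$-subgroup $T_i$ of $N$; by Sylow-finiteness $T_i$ is finite and lies in a finite normal subgroup $L_i$ of $N$, so every $N$-conjugate of $T_i$ sits inside $L_i$, giving $|N:N_N(T_i)|$ finite. Because $N \unlhd \bar G$, every $\bar G$-conjugate of $T_i$ remains a Sylow $p_i$-subgroup of $N$ and is therefore $N$-conjugate to $T_i$, so $|\bar G:N_{\bar G}(T_i)|$ is finite as well. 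The quotient $N_{\bar G}(T_i)/C_{\bar G}(T_i)$ embeds in the finite group $\Aut(T_i)$, so $C_{\bar G}(T_i)$ has finite index in $\bar G$.

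Choosing any $x_i \in T_i \setminus \{1\}$ gives $C_{\bar G}(x_i) \supseteq C_{\bar G}(T_i)$ of finite index in $\bar G$, hence $x_i \in VZ(\bar G)$. The orders of the $x_i$ are non-trivial powers of the distinct primes $p_i$, so the $x_i$ are pairwise distinct; this yields an infinite subset of $VZ(\bar G)$, contradicting the hypothesis and forcing $N$ to be finite. The main technical point is the bound $|\bar G:N_{\bar G}(T_i)| < \infty$, which relies on Sylow-finiteness pinning each $T_i$ inside a finite $N$-normal subgroup $L_i$ and thereby controlling both its $N$- and its $\bar G$-orbit.
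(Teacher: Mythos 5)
Your proposal only addresses part (iii): it explicitly takes the series of part (i) as given, and part (ii) is not touched at all. Relative to the full statement this is a gap, though a modest one: (i) is obtained in the paper by applying Theorem \ref{cstarthm} to $M$ and restricting to $G$ (using that pronilpotency, Sylow-finiteness, abelianness and finiteness all pass to the corresponding subgroups, and that $\FR$ is closed under extensions since each class in the series lies in $\FR$), and (ii) is read off from the resulting description of $G/F(G)$ as (Sylow-finite)-by-abelian-by-(finite $\pi$-group) together with the finiteness of each $d_p(G)$. If you intend to cite (i), you should at least indicate this derivation; as written, nothing in your text certifies the series you start from.

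Your argument for (iii) itself is correct, and it takes a genuinely different route from the paper. The paper sets $L=K'F(G)/F(G)$, notes that $\Fin(L)$ is dense in the Sylow-finite group $L$ and is contained in $VZ(G/F(G))$, hence finite, so $L$ is finite; then $G/F(G)$ is virtually abelian and an abelian open normal subgroup lies in $VZ(G/F(G))$, forcing finiteness. You instead pass to the finite-index subgroup $\overline{K}$, argue by contradiction that if $N=K'F(G)/F(G)$ were infinite it would involve infinitely many primes, and use Sylow-finiteness plus a Frattini argument to show each nontrivial Sylow subgroup $T_i$ of $N$ has centraliser of finite index in $\overline{K}$, producing infinitely many elements of the virtual centre; then, with $N$ finite and $\overline{K}/N$ abelian, all conjugacy classes of $\overline{K}$ are finite and the hypothesis finishes the proof. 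This is a sound alternative, and in fact your Frattini step supplies an explicit justification of exactly the kind needed for the paper's rather terse inclusion $\Fin(L) \subseteq VZ(G/F(G))$ (which is not immediate, since $L$ may have infinite index in $G/F(G)$). One trivial slip: after reducing to $\overline{K}$ you write $\overline{G}'=N$; only $\overline{G}' \leq N$ is available (and is all you use).
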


\begin{proof}(i) Theorem \ref{cstarthm} gives the required decomposition for $M$ itself; this is clearly inherited by $G$.  The classes of pronilpotent groups, Sylow-finite groups, abelian profinite groups and finite groups are all contained in $\FR$.  Hence $G \in \FR$, since $\FR$ is closed under extensions.

(ii) By the decomposition, $G/F(G)$ is (Sylow-finite)-by-abelian-by-(finite $\pi$-group) for some finite set of primes $\pi$.  Hence every Sylow subgroup is finite-by-abelian-by-finite, and so abelian-by-finite, and every $p$-Sylow subgroup for $p \in \pi'$ is finite-by-abelian.  Every Sylow subgroup of $G/F(G)$ is finitely generated, since $d_p(G)$ is finite for all $p$; the remaining assertions are now clear.

(iii) Let $K$ be as in the decomposition, and let $L = K'F(G)/F(G)$.  Then $\Fin(L) \subseteq VZ(G/F(G))$, and hence $\Fin(L)$ is finite.  But $\Fin(L)$ is dense in $L$, so $L$ is finite.  Hence $G/F(G)$ is finite-by-abelian-by-finite, and hence virtually abelian; in other words, $G/F(G)$ has an abelian open normal subgroup $N$.  But then $N \leq VZ(G/F(G))$, so $N$ is finite.  Hence $G/F(G)$ is finite.\end{proof}

Using Proposition \ref{nilpresFR} and Corollary \ref{malcor}, we also obtain the following:

\begin{prop}Let $G$ be a prosoluble group with $c_\bP(G)=c$ finite and let $N = O^\nil(G)$.  Then $G$ has a subgroup $H$ such that $NH = G$, and such that for every Sylow subgroup $R$ of $H$, we have $A' \leq F(G)$, where $A=R^{\eb(c)}R^{(\db(c))}$.

In particular, $G/NF(G)$ has a characteristic abelian subgroup $K/NF(G)$ such that $G/K$ has exponent at most $\eb(c)$ and derived length at most $\db(c)$.\end{prop}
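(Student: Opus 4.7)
The plan is to construct $H$ by applying Proposition \ref{nilpresFR} coherently at every prime, and then, for each Sylow subgroup $R$ of $H$, to force $A' = (R^{\eb(c)}R^{(\db(c))})' \leq F(G)$ by applying Corollary \ref{malcor} to the image of $R$ in $\prod_{q \neq p}\Delta(S_q)$. Finiteness of $c_\bP(G)$ ensures, via the corollary to Theorem \ref{cstarthm}, that $G \in \FR$, so Proposition \ref{nilpresFR} is applicable: for each prime $p$ it yields a pro-$p$ subgroup $R_p$ of $G$ with $NR_p$ containing every $p$-Sylow of $G$, with $R_p$ normalising some $q$-Sylow $S_q^{(p)}$ for every $q$, and with $R_p/(R_p \cap F(G))$ embedding into $\prod_{q \in p'}\Delta(S_q^{(p)})$. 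To weld the $R_p$ together I would fix a Sylow basis $\{S_q\}_{q \in \bP}$ of $G$ (using Hall's theorem for prosoluble groups) and argue, by compactness, that the Frattini-type identity $K_pY_p = G$ used in proving Proposition \ref{nilpresFR} (with $K_p/N = O_{p'}(G/N)$ and $Y_p = \bigcap_{q \neq p}N_G(S_q)$) can be realised uniformly against this common basis. Setting $H = \overline{\langle R_p : p \in \bP \rangle}$ with the $R_p$ drawn from the system normaliser, the $R_p$ pairwise permute, each $R_p$ becomes the $p$-Sylow of $H$, and $NH = G$ because $NR_p$ covers the $p$-Sylow of the pronilpotent group $G/N$ for every $p$.

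To check the Sylow condition, fix $p$ and, for each $q \neq p$, let $L_q$ denote the image of $R_p$ in $\Delta(S_q)$. Then $L_q$ is pro-$p$ and acts on the pro-$q$ group $S_q$, whose $c$-invariant is at most $c$. By Lemma \ref{deltalem}(ii), $O^{(c,q)}(L_q)$ is a pro-$q$ subgroup of $L_q$; being also pro-$p$, it is trivial. Corollary \ref{malcor} applied to $L_q$ with $\pi = \{q\}$ and $n = c$ therefore yields $L_q^{(\db(c))} = 1$ and $(L_q^{\eb(c)})'$ pronilpotent and pro-$q$, hence trivial. Intersecting these triviality conclusions over all $q \neq p$ shows that $R_p^{(\db(c))}$ and $(R_p^{\eb(c)})'$ both lie in the kernel $R_p \cap F(G)$ of the embedding in (iii). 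Since $F(G) \unlhd G$ absorbs the mixed commutator $[R_p^{\eb(c)}, R_p^{(\db(c))}]$, we obtain
\[ A' = (R_p^{\eb(c)}R_p^{(\db(c))})' \leq F(G). \]

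For the ``In particular'' statement, pass to $\bar G = G/NF(G)$, which is pronilpotent as a quotient of $G/N$ and so decomposes as the direct product of its $p$-Sylow subgroups $T_p$. Since $NR_p$ covers the $p$-Sylow of $G$, the image of $R_p$ in $\bar G$ is $T_p$, so $T_p^{\eb(c)}T_p^{(\db(c))}$ is abelian. Set $K/NF(G) := \prod_p T_p^{\eb(c)}T_p^{(\db(c))}$; this is a characteristic abelian subgroup of $\bar G$, and $G/K \cong \prod_p T_p/(T_p^{\eb(c)}T_p^{(\db(c))})$ has exponent dividing $\eb(c)$ and derived length at most $\db(c)$, since both invariants pass through Cartesian products of pronilpotent groups. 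The main obstacle is the Sylow-basis coordination in the first paragraph: Proposition \ref{nilpresFR} makes prime-dependent choices of Sylow subgroups, and welding these into a single supplement $H$ whose Sylow subgroups simultaneously realise all the required conclusions requires a careful compactness argument on Sylow bases in the prosoluble group $G$. The rest of the argument is a formal consequence of Lemma \ref{deltalem} and Corollary \ref{malcor}.
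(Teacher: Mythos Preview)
Your proposal is correct and follows essentially the same route as the paper. The paper likewise invokes Proposition~\ref{nilpresFR} for each prime, applies Corollary~\ref{malcor} to the embedding $R_p/(R_p\cap F(G))\hookrightarrow\prod_{q\in p'}\Delta(S_q)$ (bundled as $O^{(c,p')^*}(R_p/(R_p\cap F(G)))=1$ rather than prime-by-prime as you do), and then welds the $R_p$ into a single $H$ ``by Hall's theorem, conjugating if necessary''---exactly the Sylow-basis coordination you flag as the main obstacle; the paper is terser about it, but the content is the same.
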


\begin{proof}Let $R$ and $p$ be as in Proposition \ref{nilpresFR}.  Then $R/(R \cap F(G) \lesssim \prod_{q \in p'} \Delta(S_q)$, and so by coprime action, $O^{(c,p')}(R)=1$.  Hence $A' \leq F(G)$ by Corollary \ref{malcor}.
  
Now by Hall's theorem, we can make suitable choices of subgroups $R_q$ of the form of $R$ above for each prime $q$, conjugating if necessary, so that the set $\mcR = \{ R_q \mid q \in \bP \}$ is pairwise permutable, and thus forms a Sylow basis for the subgroup $H$ generated by $\mcR$.  Then $NH=G$ by Proposition \ref{nilpresFR}.  The assertion about $G/NF(G)$ follows by the fact that $G/NF(G)$ is pronilpotent, and hence isomorphic to the Cartesian product of its Sylow subgroups.\end{proof}

\section{Profinite groups involving finitely many primes}

In this section we consider profinite groups involving finitely many primes; these can equivalently be referred to as pro-$\bP_n$ groups, where $n$ is the largest prime involved.  If a pro-$\bP_n$ group additionally has all Sylow subgroups finitely generated, then it is virtually pronilpotent, as shown in Section \ref{csqtate}.  In fact, something stronger is true:

\begin{thm}\label{lambdanthm}Let $G$ be a pro-$\bP_n$ group, such that $d_\bP(G)=d$.  Then $|G:F(G)|$ is bounded by a function of $d$ and $n$.\end{thm}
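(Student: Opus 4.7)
The plan is to reduce first to the prosoluble case and then bound $|G:F(G)|$ via faithful actions on the Fitting subgroup. Since $d_2(G) \leq d$, Corollary \ref{tatefin} at $p=2$ gives that $G$ is virtually prosoluble, so $R := O_\sol(G)$ is open in $G$. To bound $|G:R|$, note that $G/R$ has trivial soluble radical, so $F(G/R)=1$ and $F^*(G/R)=E(G/R)$; Theorem \ref{finopil} then gives $C_{G/R}(E(G/R)) \leq E(G/R)$, so $G/R$ embeds in $\Aut(E(G/R))$. Each component of $G/R$ is a quasisimple $\bP_n$-group, and by Theorem \ref{lambdafsg} together with Theorem \ref{schurmult} there are only finitely many such up to isomorphism, each of order bounded by $n$. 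The number of components equals the number of non-abelian composition factors of $G/R$ (since $G/R$ modulo $E(G/R)$ is soluble by Proposition \ref{qsout}(ii)), and Theorem \ref{compbound}(ii) bounds this by a function of $d$. Hence $|E(G/R)|$, and so $|G/R|$, is bounded by a function of $d$ and $n$.

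For the prosoluble case, let $H$ be a prosoluble pro-$\bP_n$ group with $d_\bP(H) \leq d'$. For each $p \leq n$, let $U_p$ be the preimage in $H$ of $O_p(H/O_{p'}(H))$, and set $K := H/O_{p'}(H)$. Then $K$ is prosoluble with $O_{p'}(K)=1$, so $F(K) = O_p(K)$; prosoluble groups are Fitting-regular (every nontrivial prosoluble group has nontrivial Fitting subgroup), so Theorem \ref{frcent} gives $C_K(F(K)) \leq F(K)$. By Theorem \ref{cinvthm}(ii) the kernel of the action of $\Aut(O_p(K))$ on $V_p := O_p(K)/\Phi(O_p(K))$ is pro-$p$, and any nontrivial normal pro-$p$ subgroup of $K/O_p(K)$ would lift to a normal pro-$p$ subgroup of $K$ strictly containing $O_p(K)$, a contradiction. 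Hence $K/F(K)$ embeds in $\GL(V_p)$, yielding $|H:U_p| \leq |\GL(d',p)|$ since $\dim V_p \leq d_p(K) \leq d'$. Next I claim $\bigcap_{p \leq n} U_p = F(H)$: the inclusion $F(H) \subseteq \bigcap_p U_p$ is immediate, and since $\bigcap_{p \leq n} O_{p'}(H) = 1$, the group $\bigcap_p U_p$ injects into the direct product of its images in each $K$, which are pro-$p$ subgroups for distinct primes, so $\bigcap_p U_p$ is pronilpotent, hence contained in $F(H)$. Therefore $|H:F(H)| \leq \prod_{p \leq n} |\GL(d',p)|$.

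Apply this with $H = R$: the Schreier formula for pro-$p$ groups gives $d_p(R) \leq |G:R|(d-1)+1$, bounded by a function of $d$ and $n$, so $|R:F(R)|$ is bounded by a function of $d$ and $n$. Since $F(R) = F(G) \cap R$ by Corollary \ref{opicor}(ii), we have $F(R) \leq F(G)$ and
\[ |G:F(G)| \leq |G:RF(G)|\cdot|RF(G):F(G)| \leq |G:R|\cdot|R:F(R)|, \]
which is bounded by a function of $d$ and $n$, as required. The main obstacle is the prosoluble step, particularly the identity $F(H) = \bigcap_p U_p$ and the profinite application of Theorem \ref{cinvthm}(ii) that forces $K/F(K)$ to embed faithfully into $\GL(d',p)$.
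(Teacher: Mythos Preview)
Your reduction to the prosoluble case is fine (modulo minor sloppiness: $(G/R)/E(G/R)$ need not be soluble, but you only need that the number of components is at most the number of non-abelian composition factors, which is immediate). There are, however, two problems in the prosoluble step.

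First, the assertion ``prosoluble groups are Fitting-regular (every nontrivial prosoluble group has nontrivial Fitting subgroup)'' is false: Proposition~\ref{nilpresFR} is stated only for Fitting-regular prosoluble groups precisely because prosoluble groups can be Fitting-degenerate --- explicit examples are constructed in the paper (a prosoluble pro-$\{p,q\}$ group with $F(G)=1$). This is repairable: your $K = H/O_{p'}(H)$ is virtually pro-$p$ by Corollary~\ref{tatefin}, hence Fitting-regular, so Theorem~\ref{frcent} does apply.

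The real gap is the inequality $\dim V_p \leq d_p(K)$. Here $V_p = O_p(K)/\Phi(O_p(K))$, so $\dim V_p = d(O_p(K))$, and there is no reason for a normal subgroup of a $d$-generator pro-$p$ group to be $d$-generated. Concretely, take $p=2$, $n=3$, and $K = A_4 \wr C_{2^m}$. Then $K$ is a soluble $\{2,3\}$-group with $O_{2'}(K)=1$, its $2$-Sylow subgroup is $V_4 \wr C_{2^m}$ with $d_2(K)=3$ for all $m\geq 1$, but $O_2(K) = V_4^{2^m}$ has $d(O_2(K)) = 2^{m+1}$. So your bound $|K:O_p(K)| \leq |\GL(d_p(K),p)|$ fails, and with it the bound $|H:U_p| \leq |\GL(d',p)|$: the argument is circular, since bounding $d(O_p(K))$ via Schreier already requires bounding $|S_p:O_p(K)|$.

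The paper avoids this circularity by working with the Sylow subgroups of $G$ rather than with $F(G)$: Proposition~\ref{nilpresFR} bounds the $p$-Sylow of $G/O^{\nil}(G)F(G)$ inside $\prod_{q\neq p}\Delta(S_q) \leq \prod_{q\neq p}\GL(d,q)$, where $S_q$ is a $q$-Sylow subgroup of $G$ (so $d(S_q)\leq d$ directly). One then iterates along the series $G_{i+1}=O^{\nil}(G_i)$ and uses a potential-function argument on $\sum_p \dim((G_i\cap S_p)\Phi(S_p)/\Phi(S_p))$, together with Corollary~\ref{tatecor}, to force termination after boundedly many steps.
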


\begin{proof}By Corollary \ref{tatefin}, $G/O_\sol(G)$ is finite; our first aim is to bound its order by a function of $d$ and $n$.  We have $F(G/O_\sol(G))=1$, and so $G/O_\sol(G)$ acts faithfully on $E(G/O_\sol(G))$, which is in turn a direct product of non-abelian finite simple $\bP_n$-groups.  By Theorem \ref{compbound}, the number $e$ of components of $G/O_\sol(G)$ is bounded by a function of $d$.  Since there are only finitely many simple $\bP_n$-groups, it follows that there are only finitely many possibilities for $E(G/O_\sol(G))$, and hence also for $G/O_\sol(G)$.  From now on, we may assume that $G$ is prosoluble, as $d_\bP(O_\sol(G)) \leq |G/O_\sol(G)|d_\bP(G)$.

Define the sequence $G_i$ of subgroups inductively as follows: $G_0=G$, and thereafter $G_{i+1}=O^\nil(G_i)$.  Note that $H=O^\nil(H)$ implies $H=1$ for any prosoluble group $H$.  We now make a series of claims that will lead to the conclusion of the theorem.

\emph{(i) For any prime $p$, the $p$-Sylow subgroup of $G/G_1F(G)$ is isomorphic to a subgroup of $\prod_{q \in \bP_n \setminus \{p\}} \Delta(S_q)$.  Hence $|G:G_1F(G)|$ is bounded by a function of $d$ and $n$.}

This is just a special case of Proposition \ref{nilpresFR}.  The second assertion follows from the fact that $\Delta(S_q) \lesssim \GL(d,q)$.
  
\emph{(ii)Let $K_i = G_iF(G)$; then $|G:K_i|$ is bounded by a function of $(d,n,i)$.}

Suppose $|G:K_i|$ is bounded by a function of $(d,n,i)$ for some integer $i$.  Then $d_\bP(K_i)$ is bounded by a function of $|G:K_i|$ and $d$, and hence by a function of $(d,n,i)$.  By claim (i),  $|K_i:O^\nil(K_i)F(K_i)|$ is bounded by a function of $d_\bP(K_i)$ and $n$, and hence by a function of $(d,n,i)$.  Moreover, $F(K_i) = F(G)$, and $K_{i+1}$ contains $O^\nil(K_i)$ by the fact that $K_i/K_{i+1}$ is pronilpotent.  Hence $|G:K_i|$ is bounded by a function of $(d,n,i)$.  The claim follows by induction.

\emph{(iii) Let $i \geq 0$, and suppose $(G_i \cap S_p)\Phi(S_p) = (G_{i+2} \cap S_p)\Phi(S_p)$ for all $\piP$.  Then $G_{i+1}=1$.}

It follows from Corollary \ref{tatecor} that $G_i/G_{i+2}$ is $p'$-normal for all $p$, so $G_i/G_{i+2}$ is pronilpotent; hence $G_{i+1} = O^\nil(G_i) \leq G_{i+2} = O^\nil(G_{i+1})$, so $G_{i+1}=1$.

\emph{(iv) We have $G_t = 1$, where $t = 2d|\bP_n|+1$.  In particular, $|G:F(G)|$ is bounded by a function of $d$ and $n$ by claim (ii).}

Let $r(i) = \sum_{p \in \bP_n} \log_p|(G_i \cap S_p)\Phi(S_p)/\Phi(S_p)|$.  Then $r(0)$ is at most $d|\bP_n|$, and $r(i) \geq r(i+1)$ for all $i$.  Suppose $r(i) = r(i+2)$ for some $i$.  Then $(G_i \cap S_p)\Phi(S_p) = (G_{i+2} \cap S_p)\Phi(S_p)$ for all $\piP$, and so $G_{i+1} = 1$ by claim (iii).  It follows that the sequence $r(2i)$ is strictly decreasing until $r(2u)=0$ for some $u \leq r(0)$, at which point $G_{2u+1} =1$.\end{proof}

\begin{rem}In the case of prosoluble groups at least, the proof of the above theorem could be followed carefully to give an explicit bound on $|G:F(G)|$ as a function of $d$ and $n$.  However, it seems likely that this bound grows too rapidly to be of much interest, and that considerably better bounds are available.\end{rem}

\section{Profinite groups of finite rank}

Let $G$ be a profinite group, and write $r_\bP(G)$ for the supremum of $r(G)$, as $r$ ranges over all Sylow subgroups of $G$.  Thanks to the following theorem, the rank of $G$ itself is almost completely determined by $r_\bP(G)$:

\begin{thm}[Guralnick \cite{Gur}]Let $G$ be a finite group.  Then $d(G) \leq d_\bP(G) + 1$.\end{thm}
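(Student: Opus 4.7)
The plan is to induct on $|G|$, using a Frattini reduction and then a case analysis on the socle, with the Classification of Finite Simple Groups providing the decisive input in the non-abelian case.

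First, since $d(G) = d(G/\Phi(G))$ (Lemma \ref{fratlem}(ii)) and $d_\bP$ cannot increase under quotients, I reduce to the case $\Phi(G) = 1$. Then the socle $N = \mathrm{soc}(G)$ is complemented and is a direct product of minimal normal subgroups, each either an elementary abelian $p$-group or a direct product $T^k$ of copies of a non-abelian finite simple group. If $G$ has two distinct minimal normal subgroups $L_1, L_2$, then $G$ embeds into $G/L_1 \times G/L_2$; comparing generating sets in these two quotients on their common quotient $G/L_1L_2$ via a Gasch\"utz-type lifting argument reduces to the case where $N$ is itself the unique minimal normal subgroup of $G$.

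In the abelian case $N = V$ elementary abelian of $\bF_p$-rank $n$, the complement $H \cong G/V$ acts faithfully and irreducibly on $V$, so $V$ lies inside a $p$-Sylow of $G$, giving $n \leq d_p(G)$; Gasch\"utz's theorem combined with induction on $H$ then yields $d(G) \leq d(H) + 1 \leq d_\bP(G) + 1$. In the non-abelian case $N = T^k$, the key inputs are that $T$ is $2$-generated (CFSG), $\Out(T)$ is structurally small and well-understood, and $k \leq d_p(T^k) \leq d_\bP(G)$ for any prime $p$ dividing $|T|$; using Hall's bound on generation of direct powers of simple groups together with the embedding $G/N \hookrightarrow \Aut(T) \wr \Sym(k)$, one builds a generating set of size at most $d_\bP(G) + 1$. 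The main obstacle is proving that a single extra generator always suffices to lift generators of $G/N$ to generators of $G$; this ``$+1$'' is genuinely needed (witness $\Sym(3)$), and its uniformity across all CFSG families requires detailed case-by-case knowledge of outer automorphism groups and module structures of the non-abelian finite simple groups, which is essentially the content of Guralnick's argument.
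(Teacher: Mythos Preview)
The paper does not give its own proof of this theorem; it is simply quoted as a known result with a citation to Guralnick \cite{Gur}, and then used to deduce the corollary $r_\bP(G) \leq r(G) \leq r_\bP(G)+1$. So there is nothing in the paper to compare your proposal against.

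That said, your outline is broadly the shape of Guralnick's original argument (and of Lucchini's independent proof): induction on $|G|$, Frattini reduction, reduction to a unique minimal normal subgroup, and a split into the abelian and non-abelian cases with CFSG supplying $2$-generation of simple groups in the latter. A couple of points in your sketch would need tightening if you actually wrote it out. In the abelian case, the inequality ``$n \leq d_p(G)$'' is not the step that does the work; what one actually uses is a Gasch\"utz-type statement that, with $N$ an abelian minimal normal $p$-subgroup and $P \in \Sylp(G)$, any $(d+1)$-element generating set of $G/N$ lifts to a generating set of $G$ provided $P$ itself is $(d+1)$-generated --- and $d(P) \leq d \leq d+1$ gives exactly that. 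In the non-abelian case, the ``single extra generator suffices to lift'' assertion is the real content and is not a one-line consequence of $k \leq d_\bP(G)$; it requires the counting argument (comparing the number of generating tuples of $G/N$ with the number of $N$-orbits on generating tuples of $G$) that Guralnick carries out using $2$-generation of $T$ and bounds on $|\Out(T)|$. Your sketch acknowledges this is where the difficulty lies, which is fair, but as written it is a plan rather than a proof.
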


\begin{cor}Let $G$ be a profinite group.  Then $r_\bP(G) \leq r(G) \leq r_\bP(G) + 1$.\end{cor}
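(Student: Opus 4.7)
The left inequality $r_\bP(G) \leq r(G)$ is immediate from the definitions, since every Sylow subgroup of $G$ is a closed subgroup, so any bound on $d(H)$ over all closed $H$ bounds $d(S)$ for each Sylow $S$. The content is the right inequality, and my plan is to reduce it via closed subgroups and inverse limits to Guralnick's theorem applied to finite quotients.

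First I would observe that it suffices to prove the pointwise statement $d(G) \leq r_\bP(G) + 1$ for every profinite group $G$, and then apply this to each closed subgroup $H \leq G$. For the applicability of this to $H$, I need that $r_\bP(H) \leq r_\bP(G)$: given a Sylow subgroup $T$ of $H$ (a pro-$p$ group), Sylow's theorem for profinite groups (the third part, on containment of pro-$p$ subgroups) places $T$ inside some Sylow subgroup $S$ of $G$, and then $r(T) \leq r(S) \leq r_\bP(G)$ since rank is monotone under passage to closed subgroups. Taking the supremum over primes gives $r_\bP(H) \leq r_\bP(G)$, so assuming the pointwise statement we would get $d(H) \leq r_\bP(H) + 1 \leq r_\bP(G) + 1$, and then the sup over $H$ yields $r(G) \leq r_\bP(G) + 1$.

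Next I would establish the pointwise statement. Since $G$ is profinite, $d(G) = \sup\{ d(G/N) \mid N \unlhd_o G \}$, because any generating set for every finite continuous quotient lifts to a topological generating set by Lemma \ref{fratlem} (or directly by an inverse limit argument on generating tuples). Each $G/N$ is finite, so Guralnick's theorem gives $d(G/N) \leq d_\bP(G/N) + 1$. It remains to bound $d_\bP(G/N)$ by $r_\bP(G)$: a $p$-Sylow subgroup of $G/N$ is $SN/N$ for some $p$-Sylow subgroup $S$ of $G$, and $SN/N$ is a continuous image of $S$, so $d(SN/N) \leq d(S) \leq r(S) \leq r_\bP(G)$. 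Taking the supremum over $p$ gives $d_\bP(G/N) \leq r_\bP(G)$, hence $d(G/N) \leq r_\bP(G) + 1$ for every $N$, and so $d(G) \leq r_\bP(G) + 1$.

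There is no real obstacle here once Guralnick's theorem is in hand; the only thing to be slightly careful about is the passage to closed subgroups, where one must invoke Sylow's theorem for profinite groups to know that Sylow subgroups of a closed subgroup sit inside Sylow subgroups of the ambient group (rather than, say, trying to intersect a Sylow of $G$ with $H$, which need not be a Sylow of $H$ in general outside the prosoluble or subnormal setting of Corollary \ref{sylsubnor}).
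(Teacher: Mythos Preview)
Your proposal is correct and is precisely the intended derivation: the paper states this corollary immediately after Guralnick's theorem with no explicit proof, and your argument (pass to finite quotients, apply Guralnick, bound $d_\bP(G/N)$ by $r_\bP(G)$, then take closed subgroups using Sylow's theorem for profinite groups) is the standard way to fill in the details. One minor remark: the appeal to Lemma~\ref{fratlem} for $d(G)=\sup_N d(G/N)$ is not quite on point, since $G/\Phi(G)$ need not be finite here, but your parenthetical inverse-limit argument on generating tuples is exactly right and suffices.
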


Section 8.4 of \cite{Wil} contains the following theorem:

\begin{thm}Let $G$ be a profinite group of finite rank.  Then $G$ has normal subgroups $C \leq N \leq G$ such that $C$ is pronilpotent, $N/C$ is soluble and $G/N$ is finite.\end{thm}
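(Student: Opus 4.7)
The plan is first to peel off a finite quotient using virtual prosolubility (Corollary \ref{tatefin}), then to take $C$ to be the pro-Fitting subgroup of the resulting open normal prosoluble subgroup, and finally to show the quotient is soluble by establishing a uniform derived-length bound on $H/F(H)$ over finite soluble groups $H$ of rank at most $r$.

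Since $d_2(G) \leq r(G) < \infty$, Corollary \ref{tatefin} provides an open normal prosoluble subgroup $N$ of $G$; one may take $N = O_\sol(G)$, so that $G/N$ is finite. Set $C := F(N)$. By Lemma \ref{fitlem}, $C$ is pronilpotent, and by Corollary \ref{opicor}(ii) we have $F(N) = F(G) \cap N$, so $C$ is normal in $G$. It remains to prove $N/C$ is soluble. By Lemma \ref{opinor} (applied prime by prime), together with the observation that $\bigcap_K \tilde{F}_K$ is a pronilpotent normal subgroup of $N$ containing $F(N)$, we have $C = \bigcap_K \tilde{F}_K$, where $\tilde{F}_K/K = F(N/K)$ as $K$ ranges over open normal subgroups of $N$; hence $N/C \cong \varprojlim_K (N/K)/F(N/K)$, and it suffices to bound the derived length of $H/F(H)$ by a function of $r$ alone, uniformly over finite soluble groups $H$ of rank $\leq r$.

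For such an $H$, Theorem \ref{finopil}(iii) (applied with $E(H)=1$ and $F^*(H)=F(H)$) gives $C_H(F(H)) \leq F(H)$, so $H/F(H)$ embeds in $\Aut(F(H)) = \prod_{p} \Aut(O_p(F(H)))$. Let $K \supseteq F(H)$ be such that $K/F(H)$ is the kernel of the induced action on the Frattini quotient $F(H)/\Phi(F(H))$. Then $H/K$ embeds into $\prod_{p} \GL(r_p,\bF_p)$ with each $r_p \leq r$, so Zassenhaus's theorem bounds its derived length by $\db(r)$. Meanwhile, Theorem \ref{cinvthm}(ii) applied to each $O_p(F(H))$ shows that $K/F(H)$ embeds into a product $\prod_{p} P_p$ in which each $P_p$ is a finite $p$-group acting faithfully on a finite $p$-group of rank $\leq r$; the standard rank-bounded theory of automorphism groups of pro-$p$ groups implies that each $P_p$ has derived length bounded by a function of $r$ alone. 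Combining these two bounds yields the required uniform bound on the derived length of $H/F(H)$.

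The main obstacle is the last step, namely bounding the derived length of $P_p$ in terms of $r$. This reduces to the fact that the kernel of $\Aut(P) \to \Aut(P/\Phi(P))$ for a finite $p$-group $P$ of rank $\leq r$ is contained in a pro-$p$ subgroup of $\Aut(P)$ whose dimension as a $p$-adic analytic group is bounded in terms of $r$, and hence whose derived length is similarly bounded; this draws on the general theory of compact $p$-adic Lie groups applied to automorphism groups, for which the natural reference is the DDMS framework already cited in the thesis.
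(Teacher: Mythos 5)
Your reduction is sound as far as it goes, and it is essentially the standard route: Corollary \ref{tatefin} gives an open normal prosoluble $N$, $C=F(N)$ is pronilpotent and normal in $G$, and by Theorem \ref{opilayer}(i) the quotient $N/C$ is the inverse limit of the groups $(N/K)/F(N/K)$, so everything comes down to a derived-length bound for $H/F(H)$ that is uniform over finite soluble groups $H$ of rank at most $r$. (Note the thesis itself does not prove this theorem; it quotes it from Section 8.4 of \cite{Wil}, so your finite step has to stand on its own. Also, strictly speaking it is $H/C_H(F(H))$ that embeds in $\Aut(F(H))$, with $H/F(H)$ a quotient of it; harmless for derived length.)

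The gap is the last step. The assertion that a finite $p$-group of automorphisms of a finite $p$-group of rank at most $r$, acting trivially on the Frattini quotient, has derived length bounded in terms of $r$ alone is false, and so is the supporting claim that bounded analytic dimension bounds derived length. Take $Q=(\bZ/p^n\bZ)^2$, which has rank $2$: the kernel of $\Aut(Q)\rightarrow\Aut(Q/\Phi(Q))$ is $\ker(\GL(2,\bZ/p^n\bZ)\rightarrow\GL(2,\bF_p))$, a finite $p$-group of rank $4$ whose derived length grows like $\log_2 n$; it is a quotient of the $4$-dimensional analytic pro-$p$ group $1+pM$, $M$ the $2\times 2$ matrices over $\bZ_p$, which contains an open subgroup of $\SL(2,\bZ_p)$ and is not soluble. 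Bounded dimension (or rank) bounds the number of generators of subgroups, not the derived length. What rescues the argument is a different classical ingredient, which you never invoke: for a finite \emph{soluble} group $H$ one has $C_H(F(H)/\Phi(H))=F(H)$ (Gasch\"{u}tz; see \cite{Doe}, A.10.6, using $F(H/\Phi(H))=F(H)/\Phi(H)$ and the fact that the Fitting subgroup is self-centralising modulo a trivial Frattini subgroup). Since $\Phi(F(H))\leq\Phi(H)$, your kernel $K$ is then exactly $F(H)$, the whole $P_p$ analysis disappears, and $H/F(H)$ acts faithfully on $F(H)/\Phi(H)$, a direct product of elementary abelian $p$-groups of rank at most $r$; being soluble, it embeds in a product of soluble subgroups of $\GL(r_p,\bF_p)$ with $r_p\leq r$, so Zassenhaus's theorem gives derived length at most $\db(r)$, uniformly in $H$. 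With that substitution your proof is complete; without it, the step you propose genuinely fails.
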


We now give a more detailed decomposition theorem, using the generalised pro-Fitting subgroup, and the decomposition obtained for groups with finite $c_\bP$.

\begin{thm}Let $G$ be a profinite group, with $r_\bP(G) = r$ finite.  Then $G$ has a series
 \[ F(G) \leq H \leq O_\sol(G) \leq E \leq G \]
of characteristic subgroups, such that:
\vspace{-12pt}
\begin{enumerate}[(i)] \itemsep0pt
\item $G/E$ and $O_\sol(G)/H$ both have order bounded by a function of $r$;
\item $H/F(G)$ is abelian;
\item $E/O_\sol(G)$ is a direct product of non-abelian finite simple groups $Q_1, \dots Q_n$, where $n \leq r/2$ and $\sum^n_{i=1}r_\bP(Q_i) \leq r$.\end{enumerate}\end{thm}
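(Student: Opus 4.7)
The plan is to construct the series from the outside in: first identify the semisimple part $E/O_\sol(G)$ via the layer, then refine the prosoluble radical using Theorem~\ref{cstarthm}.

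Set $M := O_\sol(G)$. Since $d_2(G) \leq r$ is finite, Corollary~\ref{tatefin} gives that $|G/M|$ is finite; the soluble radical of $G/M$ is trivial, so $F(G/M) = 1$ and $F^*(G/M) = E(G/M)$, and I take $E$ to be the preimage in $G$ of $E(G/M)$. The components of $G/M$ all have trivial centre (their centres would lie in the trivial soluble radical), so Theorem~\ref{opilayer}(ii) gives that $E/M$ is a direct product of non-abelian finite simple groups $Q_1, \dots, Q_n$. For the bound $n \leq r/2$: a $2$-Sylow of $E/M$ is the direct product of the $2$-Sylows of the $Q_i$, each of rank at least $2$ (a classical consequence of the Brauer--Suzuki theorem together with the exclusion of cyclic $2$-Sylows for simple groups), so $2n$ is at most the rank of this $2$-Sylow, which is at most $r$. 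The bound $\sum r_\bP(Q_i) \leq r$ requires more care: one shows that for each non-abelian finite simple $Q$ the invariant $r_\bP(Q)$ is realised at the prime $2$ (a uniform fact that appears to require the Classification), after which the Sylow-ranks-add property of direct products yields $\sum_i r_\bP(Q_i) = r(S_2(E/M)) \leq r$, writing $S_2$ for a $2$-Sylow.

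Next, $|G/E|$ is bounded by a function of $r$: by Theorem~\ref{finopil}(iii) the kernel $C_{G/M}(E/M)$ lies in $F^*(G/M) = E/M$, and since $E/M$ is centreless it is trivial, so $G/E \hookrightarrow \Out(E/M)$. Grouping the $Q_i$ by isomorphism type, $\Out(E/M)$ embeds in a product of wreath products $\Out(Q_{(j)}) \wr \Sym(n_j)$. Proposition~\ref{qsout}(iv) bounds each $|\Out(Q_i)|$ by a function of $r(Q_i) \leq r_\bP(Q_i) + 1 \leq r + 1$ via Guralnick's inequality applied to subgroups of $Q_i$, and $n \leq r/2$ bounds the symmetric factors, yielding the required bound.

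Finally, for the soluble part I apply Theorem~\ref{cstarthm} (using $c_\bP(G) \leq r_\bP(G) = r$) to obtain a characteristic subgroup $K \unlhd M$ with $|M/K|$ bounded by a function of $r$ and with $K'F(G)/F(G)$ Sylow-finite. I would take $H$ to be the preimage in $M$ of the largest characteristic abelian subgroup of $K/F(G)$ of bounded index, identified by combining Theorem~\ref{cinvthm}(ii) (which gives a pronilpotent kernel for the action of $M/F(G)$ on $F(G)/\Phi(F(G))$, splitting off the $\GL(r,p)$-quotients) with Proposition~\ref{nilpresFR} applied Sylow-by-Sylow. The main obstacle is converting the Sylow-finiteness of $K'F(G)/F(G)$ under the finite-rank hypothesis into the literal abelianness of $H/F(G)$ modulo a bounded-order piece absorbed into $M/H$: because $M$ may involve infinitely many primes, one cannot simply invoke Corollary~\ref{tatenilp}, and the argument must proceed uniformly across primes using the coprime-action bounds $|\Delta(S_p)| \leq |\GL(r,p)|$ together with Corollary~\ref{malcor} to control how much non-abelian behaviour can persist in $K/F(G)$.
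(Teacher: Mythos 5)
Your construction of the top of the series is essentially the paper's: $E$ is the preimage of the layer of the finite group $G/O_\sol(G)$ (finite by Corollary~\ref{tatefin}), which acts faithfully on that layer since its pro-Fitting subgroup is trivial, and $|G/E|$ is bounded via the embedding into $\Sym(n)$ and $\prod\Out(Q_i)$ together with Proposition~\ref{qsout}; the bound $n \leq r/2$ via non-cyclic $2$-Sylow subgroups is also the right argument. However, your route to $\sum_i r_\bP(Q_i) \leq r$ rests on the assertion that the maximal Sylow rank of a non-abelian finite simple group is always attained at the prime $2$, and that is false: $\PSL(2,3^n)$ has elementary abelian $3$-Sylow subgroups of rank $n$ while its $2$-Sylow subgroups have rank $2$. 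What rank-additivity over a direct product actually yields is the per-prime inequality $\sum_i r_p(Q_i) \leq r$ for each fixed $p$ (which at $p=2$ gives $n \leq r/2$), not the sum of the maxima; indeed for $Q_1 = \PSL(2,27)$ and $Q_2 = \PSL(2,25)$ one has $r_\bP(Q_1 \times Q_2)=4$ but $r_\bP(Q_1)+r_\bP(Q_2)=5$, so the inequality in the form you are aiming at cannot be extracted from $r_\bP(E/O_\sol(G)) \leq r$ by this (or any) route, and only the per-prime form is available.

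The soluble part is where your proposal genuinely departs from the paper, and the obstacle you flag yourself is a real gap, not a technicality. Sylow-finiteness of $K'F(G)/F(G)$ coming from Theorem~\ref{cstarthm} is the wrong intermediate statement: it carries no bound on the index of an abelian subgroup, and the ``largest characteristic abelian subgroup of bounded index'' of $K/F(G)$ that you propose to lift need not exist (a join of abelian normal subgroups need not be abelian), so your $H$ is not even well defined. The paper's proof bypasses Theorem~\ref{cstarthm}'s decomposition entirely: writing $L=O_\sol(G)$, it uses $G \in \FR$ and Corollary~\ref{tatecor} to see $F(G/\Phi(F(G)))=F(G)/\Phi(F(G))$, so $L/F(G)$ acts faithfully on $F(G)/\Phi(F(G))$ and one may assume $\Phi(F(G))=1$; then each $O_p(G)$ is elementary abelian of rank at most $r$, so $L$ acts on $O_p(G)$ through a subgroup of $\GL(r,p)$ for every prime $p$ simultaneously. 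Taking $H=F(G)L^{\eb(r)}L^{(\db(r))}$, which is manifestly characteristic, Corollary~\ref{malcor} shows $H'$ acts on each $O_p(G)$ as a $p$-group, whence $H' \leq F(G)$, and the derived factors of $L/H$ are abelian of exponent dividing $\eb(r)$ and rank at most $r$, giving $|L/H| \leq \eb(r)^{r\db(r)}$. This is precisely the uniform-across-primes use of Corollary~\ref{malcor} you gesture at in your final sentence; applied directly to $F(G)/\Phi(F(G))$ it dissolves the obstacle, and the detour through Sylow-finiteness, Theorem~\ref{cinvthm}(ii) and Proposition~\ref{nilpresFR} can simply be dropped.
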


\begin{proof}Let $L = O_\sol(G)$.  Consider first the quotient $K = G/L$.  This is finite by Corollary \ref{tatefin}, as $d_2(G)$ is finite.  Moreover, $F(K)=1$, and so $K$ acts faithfully on its layer $E(K)$, which leads to (iii) by the fact that $r_\bP(K) \leq r$.  Let $N= \bigcap \{ N_K(Q) \mid Q \in \Comp(K)\}$.  Now $K/N \leq \Sym(n)$ and $N/E(K) \leq \prod^n_{i=1} \Out(Q_i)$; hence $K/N$ and $N/E(K)$ both have order bounded by a function of $r$ by (iii) and by Proposition \ref{qsout} respectively.  Hence $G/E$ has order bounded by a function of $r$.

By Theorem \ref{cstarthm}, $G \in \FR$, and $F(G/\Phi(F(G))) = F(G)/\Phi(F(G))$ by Corollary \ref{tatecor}.  Hence $L/F(G)$ acts faithfully by conjugation on $F(G)/\Phi(F(G))$, so we may assume $\Phi(F(G))=1$.  For each prime $p$, this means that $O_p(G)$ is elementary abelian of rank at most $r$, so $\Aut(O_p(G)) \leq \GL(r,p)$.  Let $H = F(G)L^{\eb(r)}L^{(\db(r))}$.  Then $H$ is characteristic in $G$, and it follows by Corollary \ref{malcor} that $H'$ acts on $O_p(G)$ as a $p$-group for every $p$, thus $H' \leq F(G)$ as required for (ii).  Finally, note that the factors of the derived series of $L/H$ all have exponent dividing $\eb(r)$ and rank at most $r$, and hence order at most $\eb(r)^r$, and that there are at most $\db(r)$ such factors; thus $|L/H| \leq \eb(r)^{r\db(r)}$, completing the proof of (i).\end{proof}

\begin{cor}Let $G$ be a profinite group.  Then $G$ has finite rank if and only if it has normal subgroups $N \leq A \leq G$, such that $N$ is pronilpotent and of finite rank, $A/N$ is finitely generated abelian, and $G/A$ is finite.\end{cor}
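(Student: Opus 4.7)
The plan is to get the forward direction essentially for free from the decomposition theorem above, and to obtain the reverse direction from the subadditivity of rank along a normal series.

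For the forward direction, assume $r_\bP(G) = r$ is finite and apply the theorem to obtain the characteristic series
\[ F(G) \leq H \leq O_\sol(G) \leq E \leq G. \]
Set $N := F(G)$ and $A := H$. Then $N$ is pronilpotent by definition, and has rank at most $r$. By the theorem, $H/F(G)$ is abelian. Moreover, since $G$ has finite rank, so does the section $H/F(G)$, and I would argue that every abelian profinite group of finite rank is (topologically) finitely generated --- this reduces via Lemma \ref{fitlem} to the pro-$p$ case, where for an abelian pro-$p$ group $P$ of finite rank one has $d(P) \leq r(P)$ by Lemma \ref{fratlem}(iii). Hence $A/N$ is finitely generated abelian. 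Finally, again by the theorem, $|G:E|$ and $|O_\sol(G):H|$ are bounded in terms of $r$, and $E/O_\sol(G)$ is a finite direct product of (finite) non-abelian simple groups; so $G/A = G/H$ is finite.

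For the reverse direction, assume such a series $N \leq A \leq G$ exists. The rank of $N$ is finite by hypothesis; $A/N$ is a finitely generated abelian profinite group, hence has finite rank (each Sylow subgroup, being a finitely generated abelian pro-$p$ group, is a quotient of $\bZ_p^{d(A/N)}$, so has rank at most $d(A/N)$); and $G/A$ is finite. The key general fact I would invoke is that for a profinite group $M$ with closed normal subgroup $K$ one has $r(M) \leq r(K) + r(M/K)$, since for any closed subgroup $L \leq M$ we have $d(L) \leq d(L \cap K) + d(LK/K) \leq r(K) + r(M/K)$. Applying this twice along $N \leq A \leq G$ yields
\[ r(G) \leq r(N) + r(A/N) + r(G/A) < \infty. \]

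The main work, if anything, is the equivalence ``abelian profinite group of finite rank $\Longleftrightarrow$ topologically finitely generated'', which I expect to be routine given Sylow decomposition of pronilpotent groups (Lemma \ref{fitlem}) and the Frattini characterisation of generators in pro-$p$ groups (Lemma \ref{fratlem}). I do not anticipate any genuine obstacle: the structure theorem does almost all the heavy lifting, and the converse is a soft consequence of rank subadditivity along a short normal series.
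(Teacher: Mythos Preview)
Your proposal is correct and follows essentially the same route as the paper: take $N = F(G)$ and $A = H$ from the preceding structure theorem for the forward direction, and use subadditivity of rank along the series for the converse. You are in fact more explicit than the paper about why $A/N$ is \emph{finitely generated} (the paper simply asserts that the theorem provides the decomposition); your justification via finite rank of the abelian section is the natural one.
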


\begin{proof}By the theorem, any profinite group of finite rank admits such a decomposition.  Conversely, if $G$ has such a series of normal subgroups, then the rank of $G$ is at most $r(N) + r(A/N) + r(G/A)$, and all three terms of this sum are clearly finite.\end{proof}

\begin{rem}Profinite groups of finite rank need not be virtually pronilpotent, as demonstrated by the following construction.  Let $p$ be a prime, and let $q_1, q_2, \dots$ be a sequence of distinct primes such that $p^i$ divides $(q_i - 1)$ for all $i$.  Now let $C$ be the Cartesian product of cyclic groups of order $q_i$, one for each $i$.  Then $C$ admits a faithful action of $\bZ_p$, and so there is a profinite group of the form $C \rtimes \bZ_p$ of rank $2$ that is not virtually pronilpotent (since $F(C \rtimes \bZ_p) = C$).  The index of the prosoluble radical cannot be bounded by a function of the rank alone, as for example the rank of the finite simple group $\PSL(2,p)$ (where $p \geq 5$) is independent of $p$, whereas $|\PSL(2,p)|$ tends to infinity as $p$ tends to infinity.\end{rem}

\chapter{Virtually pro-$p$ groups with a specified $p$-Sylow subgroup}

\section{Fusion and $p$-local maps in finite groups}

In finite groups, the theory of fusion is a bridge between the theory of finite $p$-groups, and that of finite groups in general.  Typically, it is assumed that we understand something about the structure of a Sylow $p$-subgroup $S$ of a finite group $G$, and wish to apply this knowledge to give `global' information about the structure of $G$ itself.  This is done by studying subgroups of $S$ and the actions induced on them by their normalisers in $G$.  (This is sometimes called `local analysis' by finite group theorists, but in the present context the term `local' could be confused with the infinite group theorist's largely unrelated notion of `local subgroups'.  Hence the use of the alternative term `fusion' in this document.)  The theory of fusion in finite groups is old (arguably dating back to Sylow's theorems in 1872) and well-developed, and in particular played a large role in the classification of finite simple groups.

In principle, exactly the same approach can be applied to profinite groups as well, since a version of Sylow's theorem still applies.  However, fusion theory is much less developed for profinite groups than for finite groups, and the published literature on the subject is quite limited.  As far as the author is aware, the first significant foray into this area was a paper by Gilotti, Ribes and Serena (\cite{GRS}); since then, fusion and fusion systems in a profinite context have also featured in the work of Peter Symonds (see for instance \cite{Sym}).

Given a finite group $G$, subgroups $H$ and $K$, and a homomorphism $\phi$ from $H$ to $K$, we say $\phi$ is induced by $G$ if there is an element $g \in G$ for which $h^g = h^\phi$ for all $h \in H$; write $\Hom_G(H,K)$ for the set of all such homomorphisms between $H$ and $K$.  The following can be considered the `$p$-local' problem of fusion in $G$:

\paragraph{Problem} Describe $\Hom_G(P,Q)$ for all pairs of $p$-subgroups $(P,Q)$ of $G$.

More precisely, we wish to know:
\vspace{-12pt}
\begin{enumerate}[(i)] \itemsep0pt
\item a set $\mcP$ of representatives for the conjugacy classes of $p$-subgroups of $G$;
\item for each $P \in \mcP$, a description of the action of $N_G(P)$ on $P$;
\item given $(P,Q) \in \mcP \times \mcP$, an element of $\Hom_G(P,Q)$ (if one exists).\end{enumerate}
As a result of Sylow's theorem, all the representatives can be chosen to be subgroups of a single $p$-Sylow subgroup $S$ of $G$, and for (iii), it suffices to consider the case $Q=S$.  We can therefore tackle the problem by the following approach:
\vspace{-12pt}
\begin{enumerate}[(a)] \itemsep0pt
\item Find a $p$-Sylow subgroup $S$ of $G$, and obtain a set of representatives $S_i$ for the conjugacy classes of subgroups of $S$, together with the sets $I_i = \Hom_S(S_i,S)$.
\item Determine which $S_i$ are `fused', that is conjugate, in $G$, and given any pair $(i,j)$ such that $S_i$ is conjugate to $S_j$ in $G$, choose an isomorphism $\phi_{ij}$ from $S_i$ to $S_j$ induced by $G$.
\item Choose one representative $P_i$ for each conjugacy class of $p$-subgroups of $G$ from among the representatives $S_j$ that are contained in it, chosen so that $N_S(P_i)$ is a $p$-Sylow subgroup of $N_G(P_i)$ (this is always possible, by Sylow's theorem).
\item For each $P_i$, find the group $A_i$ of automorphisms of $P_i$ induced by $N_G(P_i)$.\end{enumerate}

Every homomorphism from $P_i$ to $S$ induced by $G$ is now obtained as an element of $A_i$, followed by an isomorphism $\phi_{ij}$, followed by an element of $I_j$.  Furthermore, this decomposition is unique.  The subgroups $S_i$ and the sets of homomorphisms $I_i$ can be regarded as purely internal to $S$, with no influence from the rest of $G$, whereas the $A_i$ and $\phi_{ij}$ encode information about the action of $G$ as a whole on its $p$-subgroups.

We refer to automorphisms on subgroups of $S$ induced by $G$ as \emph{$p$-local automorphisms of $G$ on $S$}.  More generally, given any automorphism $\theta$ on a subgroup $P$ of $G$, and any subgroup $Q$ of $P$, there is a restriction of $\theta$ to an isomorphism $\theta_Q$ from $Q$ to another subgroup $Q^\theta$ of $P$.  We refer to such an isomorphism as a \emph{$p$-local map of $G$ on $S$} if it is formed by restricting a $p$-local automorphism of $G$ on $S$.  The importance of $p$-local maps is shown by Alperin's Fusion Theorem:

\begin{thm}[Alperin \cite{Alp}]\label{alpfus}Let $G$ be a finite group, let $S \in \Sylp(G)$, let $P$ and $Q$ be subgroups of $S$, and let $\phi \in \Hom_G(P,Q)$.  Then $\phi$ can be written as a composition $\psi_1 \dots \psi_n$ such that each $\psi_i$ is a $p$-local map of $G$ on $S$.\end{thm}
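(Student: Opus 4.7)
The plan is to proceed by downward induction on $|P|$. The base case $|P| = |S|$ is immediate: then $|Q| = |P| = |S|$ forces $Q = S$, so $\phi$ is an automorphism of $S$ induced by conjugation by some $g \in N_G(S)$, which is a $p$-local automorphism of $G$ on $S$ by definition.

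For the inductive step, assume $P < S$ and write $\phi = c_g|_P$ for some $g \in G$ with $P^g = Q$. Since $S$ is a finite $p$-group properly containing $P$, the normalizer $R := N_S(P)$ strictly contains $P$, and similarly $R' := N_S(Q)$ strictly contains $Q$. The conjugate $R^g$ and $R'$ are both $p$-subgroups of $N_G(Q)$. Applying Sylow's theorem inside $N_G(Q)$, one finds $y \in N_G(Q)$ such that $R^{gy}$ lies in a common $p$-Sylow of $N_G(Q)$ with $R'$, and a further Sylow argument in $G$ permits refining this (at the cost of inserting additional $p$-local factors in the factorization) so that $R^{gy} \leq S$.

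Once $y$ is in hand, we have the factorization
\[ c_g|_P \;=\; c_{y^{-1}}|_Q \,\circ\, c_{gy}|_P. \]
The first factor $c_{y^{-1}}|_Q$ is a $p$-local automorphism of $G$ on $S$, since $y^{-1} \in N_G(Q)$ and $Q \leq S$. The second factor $c_{gy}|_P$ is the restriction to $P$ of the map $c_{gy}|_R : R \to R^{gy} \leq S$. Since $|R| > |P|$, the inductive hypothesis applies to $c_{gy}|_R$, expressing it as a composition of $p$-local maps on $S$; restricting this composition term by term to $P$ gives $c_{gy}|_P$ as a composition of $p$-local maps, using that any restriction of a $p$-local map to a subgroup of its domain is again a $p$-local map.

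The main obstacle is the Sylow-theoretic bookkeeping behind the construction of $y$: in general $N_S(Q)$ need not itself be a $p$-Sylow of $N_G(Q)$, and a $p$-Sylow of $N_G(Q)$ containing $R'$ may not sit inside $S$. To handle this cleanly one can either first conjugate $\phi$ within its $G$-class to arrange that $N_S(Q)$ is Sylow in $N_G(Q)$ (Alperin's \emph{tame intersection} device), or iterate the basic factorization, appealing to the induction hypothesis at each enlarged normalizer. The delicate verification is that every element occurring in the resulting product genuinely normalizes some subgroup of $S$ containing the relevant domain, so that each map meets the definition of a $p$-local map of $G$ on $S$.
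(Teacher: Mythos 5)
The paper gives no proof of this statement at all --- it is quoted directly from Alperin's paper \cite{Alp} --- so your argument has to stand on its own. Its skeleton (downward induction on $|P|$, exploiting that normalisers grow in the finite $p$-group $S$, plus the observation that a restriction of a $p$-local map is again a $p$-local map) is the right one for the weak notion of ``$p$-local map'' defined in this section. The gap is in the construction of $y$. You assert that Sylow's theorem yields $y \in N_G(Q)$ with $R^{gy} \leq S$, but what Sylow's theorem in $N_G(Q)$ actually gives is $n \in N_G(Q)$ with $R^{gn}$ inside a $p$-Sylow subgroup $U$ of $N_G(Q)$ containing $R' = N_S(Q)$; and $U$ need not lie in $S$, because $N_S(Q)$ need not be a $p$-Sylow subgroup of $N_G(Q)$. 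The ``further Sylow argument in $G$'' that moves $U$ into $S$ uses an element $c$ which does not normalise $Q$, so after applying it the displayed two-factor decomposition no longer holds literally: the factor through $Q$ becomes a map involving $Q^c$ instead, and it is precisely this correction that carries the content of the proof. Your final paragraph names this obstacle but defers it rather than resolving it, so as written the induction does not close.

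The fix is short and should be written out. Choose $n \in N_G(Q)$ with $R^{gn} \leq U$ and $c \in G$ with $U^c \leq S$, and factor $c_g|_P$ as: first $c_{gnc}|_P$, then $c_{c^{-1}}|_{Q^c}$, then $c_{n^{-1}}|_Q$ (this is legitimate since $g = (gnc)(c^{-1}n^{-1})$ and $P^{gnc} = Q^c$). The first factor is the restriction to $P$ of $c_{gnc}|_R \colon R \rightarrow R^{gnc} \leq U^c \leq S$, handled by the inductive hypothesis at $R$ since $|R| > |P|$. The second is the restriction to $Q^c$ of $c_{c^{-1}}|_{R'^c} \colon R'^c \rightarrow R' \leq S$, where $R'^c \leq U^c \leq S$ and $|R'^c| = |R'| > |P|$, so the inductive hypothesis applies a second time; this is the step your sketch is missing. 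The third is an automorphism of the subgroup $Q \leq S$ induced by $n^{-1} \in N_G(Q)$, hence a $p$-local automorphism outright under the paper's definition. With these three factors the induction closes and you obtain a complete, elementary proof of the statement as used here; note that this argument proves only the weakened form (arbitrary subgroups of $S$ as the domains of the local automorphisms), not Alperin's actual theorem with tame intersections, which the paper explicitly does not need.
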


(In fact, Alperin proves a stronger result, but the version above will suffice for this discussion.) 

Thus, to obtain the maps $\phi_{ij}$, and to understand the way in which $G$ interacts with its $p$-subgroups, it generally suffices to understand the $p$-local automorphism groups $A_i$ of $G$.

Now consider a profinite group $G$.  This time, we wish to know about homomorphisms between pro-$p$ subgroups of $G$.  Since Sylow's theorem applies, we can apply much the same approach as before, starting with a $p$-Sylow subgroup $S$ of $G$.  This time, define a $p$-local automorphism of $G$ on $S$ to be an automorphism induced by $G$ on an \emph{open} subgroup $P$ of $S$ such that $N_S(P)$ is a $p$-Sylow subgroup of $N_G(P)$, and $p$-local maps as restrictions of these to isomorphisms between closed subgroups.  Even with such a restriction, we can `approximate' the fusion by compositions of $p$-local maps, in the sense of the following theorem, which is a direct application of Theorem \ref{alpfus} to the finite images of a profinite group.

\begin{thm}Let $\{G_i \mid i \in I \}$ be an inverse system of finite groups, with inverse limit $G$, and set $N_i$ to be the kernel of the projection map from $G$ to $G_i$.  Let $S \in \Sylp(G)$, let $P$ and $Q$ be subgroups of $S$, and let $\phi \in \Hom_G(P,Q)$.  Then there is a set $\{\phi_i:P \rightarrow Q_i \mid i \in I \}$ of homomorphisms from $P$ to subgroups $Q_i$ of $S$, such that, for all $i \in I$:
\vspace{-12pt}
\begin{enumerate}[(i)] \itemsep0pt
\item $Q_i$ is an open subgroup of $S$ satisfying $Q_i N_i = QN_i$;
\item $\phi_i$ is an isomorphism from $P$ to $Q_i$;
\item the isomorphism induced by $\phi_i$ from $PN_i/N_i$ to $QN_i/N_i$ is the same as the map induced by $\phi$ from $PN_i/N_i$ to $QN_i/N_i$;
\item $\phi_i$ is the composition of a finite sequence of $p$-local maps of $G$ on $S$.\end{enumerate}\end{thm}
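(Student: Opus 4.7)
The plan is to apply Alperin's Fusion Theorem (Theorem \ref{alpfus}) to each finite quotient $G_i = G/N_i$ and lift the resulting decomposition back to $G$. Fix $i \in I$ and write $\bar X := X N_i/N_i$ for subgroups $X$ of $G$; since $S$ is a $p$-Sylow subgroup of $G$, the image $\bar S$ is a $p$-Sylow subgroup of $G_i$. The reduction $\bar\phi \in \Hom_{G_i}(\bar P, \bar Q)$ of $\phi$ modulo $N_i$ then admits, by Alperin's theorem, a decomposition $\bar\phi = \bar\psi_n \circ \cdots \circ \bar\psi_1$ in which each $\bar\psi_k \colon \bar A_{k-1} \to \bar A_k$ (with $\bar A_0 = \bar P$, $\bar A_n = \bar Q$) is the restriction of conjugation by some $\bar g_k \in N_{G_i}(\bar R_k)$, where $\bar A_{k-1}, \bar A_k \leq \bar R_k$ and $N_{\bar S}(\bar R_k)$ is a $p$-Sylow subgroup of $N_{G_i}(\bar R_k)$.

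I would lift each piece to $G$ as follows. Take $R_k$ to be the full preimage of $\bar R_k$ in $S$; this is an open subgroup of $S$ containing $S \cap N_i$, and $R_k N_i \cap S = R_k$. A key point for the lifting is that the projection $N_G(R_k) \to N_{G_i}(\bar R_k)$ is surjective: any $g \in G$ whose image normalises $\bar R_k$ normalises the preimage $R_k N_i$, and inside $R_k N_i$ the subgroup $R_k$ is a $p$-Sylow subgroup (its index equals the $p'$-index $|N_i : S \cap N_i|$), so Sylow's theorem produces an element of $R_k N_i$ adjusting $g$ to a lift $g_k \in N_G(R_k)$ of $\bar g_k$. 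Setting $A_0 := P$ and inductively $A_k := A_{k-1}^{g_k}$, induction combined with the identity $R_k N_i \cap S = R_k$ gives $A_{k-1} \leq R_k$, whence $A_k \leq R_k \leq S$ and $A_k N_i/N_i = \bar A_k$. Defining $\psi_k \colon A_{k-1} \to A_k$ as conjugation by $g_k$, one obtains $\phi_i := \psi_n \circ \cdots \circ \psi_1 \colon P \to Q_i$ with $Q_i := A_n$, and conditions (i)--(iii) follow directly from the construction.

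The main step I anticipate as the obstacle is verifying (iv): each $\psi_k$ must be a $p$-local map of $G$ on $S$, which requires $N_S(R_k)$ to be a $p$-Sylow subgroup of $N_G(R_k)$ (not merely its image $N_{\bar S}(\bar R_k)$ in $N_{G_i}(\bar R_k)$). Here the normality of $N_i$ in $G$ is essential. Since $S \cap N_i \leq R_k$ and $N_i \unlhd G$, one verifies $N_S(R_k) \cap N_i = S \cap N_i$ and $N_S(R_k) N_i/N_i = N_{\bar S}(\bar R_k)$, so with $K_k := N_G(R_k) \cap N_i$ the index decomposes as
\[ |N_G(R_k) : N_S(R_k)| = |N_{G_i}(\bar R_k) : N_{\bar S}(\bar R_k)| \cdot |K_k : S \cap N_i|. \]
The first factor is a $p'$-number by Alperin's Sylow condition in $G_i$. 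For the second, Corollary \ref{sylsubnor} applied to the normal subgroup $N_i$ of $G$ shows that $S \cap N_i$ is a $p$-Sylow subgroup of $N_i$, so $|N_i : S \cap N_i|$ is a $p'$-number; since $S \cap N_i \leq K_k \leq N_i$, the index $|K_k : S \cap N_i|$ divides this and is therefore $p'$ itself. Hence $N_S(R_k) \in \Sylp(N_G(R_k))$, so $\psi_k$ is the required $p$-local map of $G$ on $S$, completing (iv).
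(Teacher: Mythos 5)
Your argument is correct and is essentially the paper's own route: the paper offers no proof beyond the remark that the theorem is a direct application of Alperin's theorem (Theorem \ref{alpfus}) to the finite images, and your proof carries out exactly that quotient-and-lift strategy. The details you add are sound, in particular the surjectivity of $N_G(R_k) \rightarrow N_{G_i}(\bar{R}_k)$ via Sylow conjugacy inside $R_kN_i$ (where $R_k$ is a $p$-Sylow subgroup because $S \cap N_i \in \Sylp(N_i)$) and the $p'$-index factorisation using Corollary \ref{sylsubnor} together with the Sylow-normaliser condition from the strong form of Alperin's theorem, which is what makes the lifted maps genuine $p$-local maps of $G$ on $S$.
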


We regard the $Q_i$ above as successive approximations to $Q$ that converge to $Q$, and the $\phi_i$ as successive approximations to $\phi$ that converge to $\phi$.  If $Q_i$ and $\phi_i$ are specified for all $i$ in $I$, this is enough to determine both $Q$ and $\phi$ uniquely.  We can therefore reformulate our original problem as follows:

\paragraph{Problem} Given a profinite group $G$, with $p$-Sylow subgroup $S$, find the automorphisms induced on open subgroups of $S$ by conjugation in $G$. 

As in the finite case, there is a further reduction of the problem.  Given a normal subgroup $K$ of $G$ that is pro-$p'$, then $K$ plays no part in the $p$-local automorphism groups:

\begin{lem}Let $G$ be a profinite group, with $p$-Sylow subgroup $S$, and $P$ a subgroup of $S$.  Let $\phi: G \rightarrow H$ be a surjective homomorphism, with kernel $K$, such that $K$ is a pro-$p'$-subgroup of $G$.  Let $Q = P^\phi$, let $A$ be the group of automorphisms of $P$ induced by $N_G(P)$, and let $B$ be the group of automorphisms of $Q$ induced by $N_H(Q)$.  Then $\phi$ restricts to an isomorphism $\psi$ from $P$ to $Q$, and the map $\xi: A \rightarrow B$ defined by $\alpha^\xi = \psi^{-1}\alpha \psi$ is an isomorphism.\end{lem}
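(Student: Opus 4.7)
The plan is to handle the two assertions separately. That $\psi := \phi|_P$ is an isomorphism is automatic once we check injectivity, since $\psi$ is surjective onto $Q = P^\phi$ by definition. Its kernel $P \cap K$ is simultaneously pro-$p$ (being a closed subgroup of $S$) and pro-$p'$ (being contained in $K$), hence trivial.

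Once $\psi$ is known to be an isomorphism, the formula $\alpha \mapsto \psi^{-1}\alpha\psi$ manifestly defines a group isomorphism $\mathrm{Aut}(P) \to \mathrm{Aut}(Q)$, so the content of the second assertion is that this restricts to a bijection $A \to B$. The inclusion $\xi(A) \subseteq B$ is immediate: if $\alpha \in A$ is induced by conjugation by $g \in N_G(P)$, then $\phi(g)$ normalises $Q = \phi(P)$ and the automorphism it induces on $Q$ is exactly $\psi^{-1}\alpha\psi$.

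The main work is the reverse inclusion $B \subseteq \xi(A)$, which is where Sylow's theorem enters. Given $\beta \in B$ induced by some $h \in N_H(Q)$, pick any preimage $g_0 \in \phi^{-1}(h)$. The condition $Q^h = Q$ translates into $P^{g_0}K = PK$, so $g_0 \in N_G(PK)$. Inside the closed subgroup $PK$, both $P$ and $P^{g_0}$ are $p$-Sylow subgroups: $K \unlhd PK$ is pro-$p'$, $P$ is pro-$p$, and $PK/K \cong P$. Sylow's theorem for profinite groups then yields $k \in PK$ with $P^{g_0 k} = P$, so $g := g_0 k$ lies in $N_G(P)$. Since $\phi(g) = h\phi(k)$ and $\phi(k) \in \phi(PK) = Q$, conjugation on $Q$ by $\phi(g)$ equals $\mathrm{inn}_{\phi(k)} \circ \beta$, where $\mathrm{inn}_q$ denotes inner conjugation by $q$. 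Now $\mathrm{inn}_{\phi(k)}$ itself lies in $\xi(A)$, since lifting $\phi(k)$ to some $x \in P$ gives $\xi(\mathrm{inn}_x) = \mathrm{inn}_{\phi(k)}$. Hence $\beta = \mathrm{inn}_{\phi(k)}^{-1} \circ \xi(\alpha_g)$ lies in $\xi(A)$, as required.

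The only non-routine step is the Sylow argument inside $PK$, but this is directly supplied by Sylow's theorem for profinite groups as recalled earlier in the chapter; the remainder is purely formal manipulation of conjugation and inner automorphisms.
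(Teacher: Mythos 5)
Your proof is correct and takes essentially the same route as the paper's: reduce to surjectivity of $\xi$, observe that any preimage of $h$ normalises $PK = \phi^{-1}(Q)$, and apply Sylow conjugacy of $P$ and $P^{g_0}$ inside $PK$. The only difference is cosmetic: the paper notes that, since $K$ is a complement to $P^{g_0}$ in $PK$, the conjugating element $k$ can be taken in $K$ itself, so that $\phi(g_0k)=h$ exactly and your final correction by the inner automorphism $\mathrm{inn}_{\phi(k)}$ (which is itself fine, since inner automorphisms of $Q$ lie in $\xi(A)$) becomes unnecessary.
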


\begin{proof}Since $P \cap K = 1$, it follows that $\phi$ restricts to an isomorphism from $P$ to $Q$.  Hence also $\phi$ induces a homomorphism from $A$ to $B$, the injectivity of which is immediate from the fact that the induced map from $P$ to $Q$ is injective.  So it suffices to prove that $\xi$ is surjective.

Let $\beta$ be an automorphism of $Q$ induced by conjugation in $N_H(Q)$, in other words $y^\beta = y^h$ for some $h \in N_H(Q)$.  Then $h$ has a preimage $g$ in $G$ that normalises $PK$.  But $K$ is a pro-$p'$ group, so $P$ is a $p$-Sylow subgroup of $PK$, and $P^g$ is another $p$-Sylow subgroup of $PK$.  Now $K$ is a complement to $P^g$ in $PK$, so by Sylow's theorem, there is an element $k$ of $K$ such that $P^{gk} = P$, in other words $gk \in N_G(P)$, so $gk$ induces an element $\alpha$ of $A$.  Now $(gk)^\phi = g^\phi = h$, so $\psi^{-1}\alpha \psi = \beta$.\end{proof}

\section{$p'$-embeddings in profinite groups}

The previous section motivates the following definition:

\begin{defn}Let $S$ be a pro-$p$ group, and $G$ a profinite group.  Say $G$ is a \emph{$p'$-embedding} of $S$ if $S$ is isomorphic to a $p$-Sylow subgroup of $G$, and $O_{p'}(G)=1$.  The $p'$-embeddings of $S$ form a class, which we denote $\Emb(S)$.  Write $\EmbLF(S)$ for the class of $p'$-embeddings $G$ of $S$ for which $E(G)=1$, and call such $p'$-embeddings \emph{layer-free}.\end{defn}

We wish to describe (in some sense) the class $\Emb(S)$, in order to give an account of the possible $p$-fusion of a profinite group with Sylow subgroup isomorphic to $S$.  Given $G \in \Emb(S)$, we will usually assume $S \in \Sylp(G)$.  We will usually specialise to the case where $S$ is finitely generated as a topological group: it is in this situation where the analogy with fusion in finite groups is strongest, and results from finite groups can be employed more easily than in a more general context.  In particular, we have already seen that if $S$ is finitely generated and $G \in \Emb(S)$, then $G$ is virtually pro-$p$.  However, the following basic question remains:

\begin{que}\label{embfin}For which finitely generated pro-$p$ groups $S$ are $\Emb(S)$ and $\EmbLF(S)$ finite?\end{que}

\begin{defn}Let $U$ be a subgroup of the pro-$p$ group $S$.  Say $U$ is \emph{layerable} in $S$ if there is some $G \in \Emb(S)$ such that $S \cap E(G)=U$.  Say $U$ is \emph{eligible} in $S$ if there is some $G \in \Emb(S)$ such that $O_p(G)=U$, and say $U$ is \emph{LF-eligible} if there is some $G \in \EmbLF(S)$ such that $O_p(G)=U$.\end{defn}

\begin{lem}\label{eliglem}Let $S$ be a finitely generated pro-$p$ group.
\vspace{-12pt}
\begin{enumerate}[(i)] \itemsep0pt
\item We have $\Emb(S) \subset \FR$, so if $G \in \Emb(S)$ then $C_G(F^*(G))=Z(O_p(G))$.
\item If $U$ is an eligible subgroup of $S$, then $U \unlhd_o S$.
\item If $U$ is a layerable subgroup of $S$, then $U \unlhd S$ and $|U|$ is finite, but $U$ is not contained in $\Phi(S)$ unless $U=1$.\end{enumerate}\end{lem}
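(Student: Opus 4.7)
The plan is to reduce all three parts to the fact that any $G\in\Emb(S)$ is virtually pro-$p$. Since $S$ is finitely generated, $d_p(G)=d(S)<\infty$, so Corollary~\ref{tatefin} combined with $O_{p'}(G)=1$ gives exactly this. Virtually pro-$p$ groups are virtually pronilpotent; and as noted in the section on Fitting-regularity, $\FR$ contains all pronilpotent and all finite groups and is closed under extensions. Hence $G\in\FR$, establishing the first half of~(i). For the second half, Theorem~\ref{frcent} supplies $C_G(F^*(G))=Z(F(G))$, while the Cartesian-product decomposition of the pronilpotent group $F(G)$ in Lemma~\ref{fitlem} forces $F(G)=O_p(G)$: each $q$-Sylow of $F(G)$ with $q\neq p$ is characteristic in $F(G)$, hence normal and pro-$q$ in $G$, and therefore contained in $O_{p'}(G)=1$.

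For~(ii), let $U=O_p(G)$. The virtually pro-$p$ reduction furnishes an open normal pro-$p$ subgroup of $G$, which by maximality lies inside $U$. So $U$ is open in $G$. Since $U$ is a normal pro-$p$ subgroup, it is contained in every Sylow $p$-subgroup of $G$, hence in $S$; consequently $U$ is open and normal in $S$, i.e. $U\unlhd_o S$.

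Part~(iii) is the most substantive, and the key step is to prove $E(G)$ finite. Fix an open normal pro-$p$ subgroup $N$ of $G$. Since $E(G)$ is an $F^*$-group with $F(E(G))=Z(E(G))$, Corollary~\ref{opicor}(iv) shows $E(G)/Z(E(G))$ is a Cartesian product of non-abelian finite simple groups. Any normal pro-$p$ subgroup of such a Cartesian product must project trivially to every factor (no non-abelian finite simple group is a continuous quotient of a pro-$p$ group), so $E(G)\cap N\leq Z(E(G))$. Therefore $E(G)/Z(E(G))$ embeds as a quotient in the finite group $G/N$, making $E(G)$ centre-by-finite; Schur's theorem (Theorem~\ref{cenbyfin}) forces $E(G)'$ finite, and since $E(G)$ is perfect we conclude $E(G)=E(G)'$ is itself finite. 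Then $U=E(G)\cap S$ is finite, and $U\unlhd S$ because $E(G)\unlhd G$.

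For the Frattini clause, suppose $U\neq 1$. By Corollary~\ref{qscor}, no component $Q$ with $p\nmid|Q|$ contributes to $U$, so $\Comp_p(G)\neq\emptyset$; pick $Q\in\Comp_p(G)$. Then $Q^S\leq E(G)$, giving $Q^S\cap S\leq U$. If additionally $U\leq\Phi(S)$, then $(Q^S\cap S)\Phi(S)\leq\Phi(S)$, and applying Lemma~\ref{compphi} with $L=1$ forces $Q=1$, contradicting quasisimplicity. The main conceptual hurdle is verifying that the Cartesian-product structure of $E(G)/Z(E(G))$ rules out pro-$p$ contamination from $N$; once that is in hand Schur's theorem and the $p$-local bookkeeping of Lemma~\ref{compphi} close the argument mechanically.
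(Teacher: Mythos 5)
Your proposal is correct. Parts (i) and (ii) are essentially the paper's own arguments (the paper gets openness of $O_p(G)$ via $\Core_G(S)=O_p(G)$ rather than via an open normal pro-$p$ subgroup, but this is the same idea). Part (iii) reaches the same conclusions by a genuinely different route. The paper proves $E(G)/Z(E(G))$ finite by choosing finitely many components accounting for the whole (finite) $p'$-order of $G$ and observing that the complementary direct factor would be a pro-$p$ direct factor of a perfect group, hence trivial; it then deduces finiteness of $E(G)$ from Theorem \ref{schurmult}. You instead intersect $E(G)$ with an open normal pro-$p$ subgroup $N$, note that a closed normal pro-$p$ subgroup of a Cartesian product of non-abelian finite simple groups is trivial, conclude $E(G)$ is centre-by-finite, and finish with Theorem \ref{cenbyfin}; this is a clean alternative, though you should record that $E(G)$ is an $F^*$-group with $F(E(G))=Z(E(G))$ (immediate from Theorem \ref{opilayer} and Corollary \ref{opicor}(ii)) before invoking Corollary \ref{opicor}(iv), and that Schur's theorem gives finiteness of the \emph{abstract} derived subgroup, which is then closed, so topological perfection of $E(G)$ suffices. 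For the Frattini clause the paper applies Corollary \ref{tatecor} directly to $E(G)$, whereas you route through Lemma \ref{compphi} applied to a component in $\Comp_p(G)$; that works, but the preliminary claim that $U\neq 1$ forces $\Comp_p(G)\neq\emptyset$ needs a sentence more than the bare citation of Corollary \ref{qscor}: if every component were a $p'$-group then $E(G)$, being topologically generated by normal $p'$-subgroups, would be pro-$p'$, whence $U=E(G)\cap S=1$. None of these are gaps, only points to write out; the underlying mechanism (Tate's theorem via Corollary \ref{tatecor}) is the same in both treatments.
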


\begin{proof}(i) By Corollary \ref{tatefin}, any $G \in \Emb(S)$ is (pro-$p$)-by-finite.  Since $\FR$ is closed under extensions, this ensures $G \in \FR$, so $C_G(F^*(G)) = Z(F(G))$ by Theorem \ref{frcent}.  But $Z(F(G))=Z(O_p(G))$, since $O_{p'}(G)=1$.

(ii) Let $G \in \Emb(S)$, such that $U = O_p(G)$.  Then $S$ has finite index in $G$, so $\Core_G(S) \unlhd_o G$; hence $\Core_G(S) \unlhd_o S$.  But $\Core_G(S) = O_p(G)$ by Sylow's theorem.
 
(iii) Let $G \in \Emb(S)$, such that $S \cap E(G) = U$.  Then $U \unlhd S$ since $E(G) \unlhd G$.  Since $G$ is virtually pro-$p$, there is some finite set of components $\mcR$ such that $R = \langle \mcR \rangle$ has the same $p'$-order as $E(G)$.  By Theorem \ref{opilayer}, $RZ(E(G))/Z(E(G))$ is a direct factor of $E(G)/Z(E(G))$, so there is a complementary direct factor $W$ of $E(G)/Z(E(G))$ that is a pro-$p$ group.  Since $E(G)$ is perfect, this ensures $W=1$, so $E(G)/Z(E(G))=RZ(E(G))/Z(E(G))$.  Hence $E(G)$ is finite by Theorem \ref{schurmult}.  If $U \leq \Phi(S)$, then $E(G)$ is $p'$-normal by Corollary \ref{tatecor}; but then $H \leq O_{p'}(G) = 1$, so $E(G)$ is a pro-$p$ group.  This forces $E(G)=1$ and hence $U=1$.\end{proof}

If a class $\mcC$ of $p'$-embeddings of a fixed pro-$p$ group $S$ is finite, then clearly there must be a bound on $|G:S|$ for any $G$ in $\mcC$.  In fact, the converse is true as well.  First, we will need some results from the cohomology theory of finite groups.

\begin{thm}\label{htwothm}Let $G$ be a finite group, and let $M$ be an abelian finite group on which $G$ acts.  Given an extension
\[\mcE = \{\xymatrix{1 \ar[r] & M \ar[r]^\alpha & E \ar[r]^\pi & G \ar[r] & 1}\}\]
of $M$ by $G$, obtain $t_\mcE$ as follows: 

Let $\tau$ be any function from $G$ to $E$ such that $\pi \tau = \id_G$.  Let $f: G \times G \rightarrow M$ be the function determined by $\tau(x)\tau(y) = \tau(xy) \alpha(f(x,y))$.  Let $t_\mcE$ be the equivalence class of $f$ modulo $2$-coboundaries.

Then:
\vspace{-12pt}
\begin{enumerate}[(i)]  \itemsep0pt
\item $f$ is a $2$-cocycle, any choice of $\tau$ gives the same $t_\mcE$, and $t_\mcE$ depends only on the equivalence class of the extension $\mcE$;
\item the map $\mcE \mapsto t_\mcE$ defines a bijection from the set of equivalence classes of extensions of $M$ by $G$ to $\rH^2(G,M)$;
\item $\mcE$ splits if and only if $t_\mcE = 0$.\end{enumerate}\end{thm}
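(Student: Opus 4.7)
The plan is to treat this as the standard classification of extensions by $\rH^2$, so I would not invent anything new. Throughout I write $M$ additively, let $\cdot$ denote the $G$-action on $M$ induced by conjugation in $E$ via $\alpha$, and identify $\alpha(M)$ with $M$.

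First I would verify (i) by direct computation. Given a section $\tau$ with $\pi\tau = \id_G$, the defining identity $\tau(x)\tau(y) = \tau(xy)\alpha(f(x,y))$ exists because $\pi(\tau(x)\tau(y)\tau(xy)^{-1}) = 1$. Expanding the associator $\tau(x)(\tau(y)\tau(z)) = (\tau(x)\tau(y))\tau(z)$ and using the fact that $\tau(x)\alpha(m)\tau(x)^{-1} = \alpha(x \cdot m)$ yields the 2-cocycle identity $x \cdot f(y,z) - f(xy,z) + f(x,yz) - f(x,y) = 0$. For a second section $\tau'$, the function $g: G \to M$ determined by $\tau'(x) = \tau(x)\alpha(g(x))$ is well-defined, and the associated cocycle $f'$ satisfies $f'(x,y) - f(x,y) = x \cdot g(y) - g(xy) + g(x)$, i.e.\ they differ by the coboundary $\delta g$. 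Finally, if $\phi: E \to E'$ is an equivalence of extensions, then $\phi\tau$ is a section of $\mcE'$ producing the same cocycle, so $t_\mcE$ depends only on the equivalence class.

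For (ii), injectivity follows by reversing the previous argument: if cocycles $f,f'$ arising from sections $\tau,\tau'$ of two extensions $\mcE,\mcE'$ differ by a coboundary $\delta g$, then adjusting $\tau'$ by $g$ I may assume $f=f'$, and the map $\tau(x)\alpha(m) \mapsto \tau'(x)\alpha(m)$ is a well-defined equivalence. For surjectivity, given any 2-cocycle $f$ I would build $E := M \times G$ as a set with multiplication
\[ (m,x)(m',y) := (m + x \cdot m' + f(x,y),\; xy).\]
The cocycle identity is exactly the associativity condition; the normalisation $f(1,1) = 0$ (achievable by adjusting by a coboundary) supplies an identity and inverses. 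Taking $\alpha(m) = (m,1)$, $\pi(m,x) = x$, and $\tau(x) = (0,x)$ gives an extension whose associated class is the class of $f$.

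For (iii), if $\mcE$ splits via a homomorphism $\sigma: G \to E$, take $\tau = \sigma$; then $\tau(x)\tau(y) = \tau(xy)$, so $f \equiv 0$ and $t_\mcE = 0$. Conversely, if $t_\mcE = 0$, then $f = \delta g$ for some $g: G \to M$, and replacing $\tau$ by $\tau'(x) := \tau(x)\alpha(-g(x))$ yields a section which is a homomorphism, splitting the extension.

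The argument is entirely routine; the only point requiring care is bookkeeping the $G$-action on $M$ coming from conjugation in $E$, which is what forces the twisted multiplication on $M \times G$ in the surjectivity step. Since the theorem is quoted from standard references, I would be tempted simply to cite Brown's \emph{Cohomology of Groups} rather than give the proof in full.
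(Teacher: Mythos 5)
Your argument is the standard direct classification of extensions by $\rH^2(G,M)$ and is essentially sound; the paper, by contrast, does not prove this statement at all --- it simply cites Lemmas 6.2.1 and 6.2.2 of Wilson's \emph{Profinite Groups}, where the result is established (in fact for profinite groups, with continuous cochains). So your route is the self-contained elementary computation, while the paper's citation buys the profinite version for free; in this thesis the theorem is only invoked for finite quotients (in the proof of Theorem \ref{extnthm}, where the passage to the inverse limit is carried out by hand), so your finite-group proof is adequate for the application. One bookkeeping caveat: with the convention of the statement, $\tau(x)\tau(y)=\tau(xy)\alpha(f(x,y))$ (factor set on the right), the associativity computation yields $f(x,yz)+f(y,z)=f(xy,z)+z^{-1}\cdot f(x,y)$ rather than the identity you display, which belongs to the convention $\tau(x)\tau(y)=\alpha(f(x,y))\tau(xy)$; likewise your model $(m,x)(m',y)=(m+x\cdot m'+f(x,y),\,xy)$ realises the left-hand convention. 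The two normalisations are intertwined by $f(x,y)\mapsto (xy)^{-1}\cdot f(x,y)$, so this is a matter of consistency rather than a gap, but you should fix one convention and carry it through both the cocycle identity and the surjectivity construction (and through the change-of-section and splitting computations in (i) and (iii)).
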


\begin{proof}See \cite{Wil}, Lemmas 6.2.1. and 6.2.2.  (In fact, \cite{Wil} gives a proof for profinite groups in the context of profinite cohomology.)\end{proof}

\begin{thm}\label{coprimecohom}Let $M$ be a finite abelian group, and let $G$ be a finite group acting on $M$.  Suppose $H$ is a subgroup of $G$ for which $|G:H|$ is coprime to $|M|$.  Then for $n>0$, the restriction map $\rH^n(G,M) \rightarrow \rH^n(H,M)$ is injective.\end{thm}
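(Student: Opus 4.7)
The plan is to use the corestriction (transfer) map in group cohomology together with the standard formula $\mathrm{Cor} \circ \mathrm{Res} = \cdot[G:H]$ on $\rH^n(G,M)$. This is the classical argument and is essentially forced on us: the restriction map generally has a large kernel, and the only general tool available for controlling that kernel is the corestriction running in the opposite direction.

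First I would invoke (or construct) the corestriction homomorphism $\mathrm{Cor} : \rH^n(H,M) \rightarrow \rH^n(G,M)$. On the level of $2$-cocycles it can be described explicitly via a set of coset representatives for $H$ in $G$, and the construction readily generalises to arbitrary $n$ via the standard resolution. Second, I would verify the key identity that the composition $\mathrm{Cor} \circ \mathrm{Res} : \rH^n(G,M) \rightarrow \rH^n(G,M)$ is multiplication by the index $[G:H]$. This is immediate on the cochain level: starting from a cocycle $f$ representing a class in $\rH^n(G,M)$, restricting to $H$ and then applying the sum-over-coset-representatives defining $\mathrm{Cor}$ gives back $[G:H]\cdot f$, modulo a coboundary.

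Third, I would observe that for $n > 0$ the group $\rH^n(G,M)$ is annihilated by the exponent of $M$, and therefore by $|M|$: the cochain complex $C^\bullet(G,M)$ consists of $M$-valued functions, and multiplication by $|M|$ is the zero map on $M$, hence on every cochain group, hence on cohomology. Combined with the coprimality hypothesis $\gcd([G:H],|M|)=1$, this shows that multiplication by $[G:H]$ acts as an automorphism on $\rH^n(G,M)$. Consequently $\mathrm{Cor}\circ\mathrm{Res}$ is an automorphism of $\rH^n(G,M)$, which forces $\mathrm{Res}$ to be injective.

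There is essentially no hard step here: the only genuine work is the verification of the formula $\mathrm{Cor}\circ\mathrm{Res} = \cdot[G:H]$, and this is standard enough that, given the author's invocation of profinite cohomology in Theorem \ref{htwothm}, I would simply cite \cite{Wil} or a standard reference rather than reproducing the cochain-level computation. The only subtle point worth flagging in the write-up is the annihilation statement: one should note that although $\rH^n(G,M)$ is also annihilated by $|G|$, it is the annihilation by $|M|$ that combines with the coprimality hypothesis to give the conclusion.
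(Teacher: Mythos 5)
Your proposal is correct and coincides with the paper's approach: the paper disposes of this statement by citing \cite{Eve}, Proposition 4.2.5, and the standard proof behind that citation is exactly the corestriction argument you give, namely $\mathrm{Cor}\circ\mathrm{Res} = [G:H]\cdot$ together with the observation that $\rH^n(G,M)$ is annihilated by $|M|$, so coprimality makes multiplication by $[G:H]$ injective and hence $\mathrm{Res}$ injective. Your closing remark about using annihilation by $|M|$ rather than by $|G|$ is precisely the right point to flag.
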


\begin{proof}
 See \cite{Eve}, Proposition 4.2.5.
\end{proof}

We are now ready to prove a theorem about the role of Sylow subgroups in the extension theory of profinite groups.

\begin{thm}\label{extnthm}Let $P$ be a finitely generated pro-$p$ group, and let $K$ be a finite group.  Suppose the extensions
\[\xymatrix{1 \ar[r] & P \ar[r] & G \ar[r] & K \ar[r] & 1 }\]
and
\[\xymatrix{1 \ar[r] & P \ar[r] & G^* \ar[r] & K \ar[r] & 1 }\]
admit a common restriction
\[\xymatrix{1 \ar[r] & P \ar[r] & S \ar[r] & T \ar[r] & 1 }\]

where $T$ is a $p$-Sylow subgroup of $K$, and the action of $K$ on $P/\Phi(P)$ is the same in both extensions.

Then the extensions are equivalent, and hence $G \cong G^*$.\end{thm}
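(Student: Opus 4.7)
The plan is to reduce the problem to extensions with abelian kernel by filtering $P$ through a characteristic descending chain with abelian successive quotients, then induct using the cohomological classification supplied by Theorems \ref{htwothm} and \ref{coprimecohom}. Specifically, take the lower central exponent-$p$ series $P = P_1 > P_2 > \cdots$ defined by $P_{i+1} = [P,P_i]P_i^p$, so that $P_2 = \Phi(P)$ and (by Lemma \ref{fratlem} and iteration) each $P_i$ is a characteristic open subgroup of $P$ with $\bigcap_i P_i = 1$, while each $P_i/P_{i+1}$ is a finite elementary abelian $p$-group central in $P/P_{i+1}$. A key consequence of Theorem \ref{cinvthm}(ii) is that the kernel of $\beta_P \colon \Out(P) \to \Aut(P/\Phi(P))$ acts trivially on every $P_i/P_{i+1}$. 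Since both extensions induce the same $K$-action on $P/\Phi(P)$, it follows that the $K$-actions on each $P_i/P_{i+1}$ coming from $G$ and $G^*$ coincide.

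The inductive claim will be: for every $i$ there exists an equivalence of extensions $\phi_i \colon G/P_i \to G^*/P_i$ of $K$ by $P/P_i$ which restricts to the identity on the common subgroup $S/P_i$. The base case $i=1$ is trivial. For the inductive step, put $L = G/P_i$, identify it with $G^*/P_i$ via $\phi_i$, and write $M = P_i/P_{i+1}$ and $L_p = S/P_i$. Then $L_p$ is a $p$-Sylow of $L$ because $|L:L_p| = |K:T|$ is coprime to $p$. View $G/P_{i+1}$ and $G^*/P_{i+1}$ as extensions of $L$ by the abelian module $M$ (with the same $L$-action on $M$, as noted above). Because $\phi_i$ restricts to the identity on $L_p$, both extensions restrict to the \emph{same} extension $1 \to M \to S/P_{i+1} \to L_p \to 1$, hence to the same class in $H^2(L_p, M)$. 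By Theorem \ref{coprimecohom}, restriction $H^2(L, M) \to H^2(L_p, M)$ is injective, so by Theorem \ref{htwothm} the two extensions of $L$ by $M$ are equivalent; pick such an equivalence $\psi$.

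I then adjust $\psi$ to obtain the desired $\phi_{i+1}$. Two equivalences of abelian-kernel extensions differ by a cocycle in $Z^1(L,M)$, and a computation shows that the obstruction to $\psi$ restricting to the identity on $S/P_{i+1}$ is precisely a specific element $\bar d \in Z^1(L_p,M)$ (arising from $d(s) = \psi(s)^{-1}s$). The surjectivity of the restriction $Z^1(L,M) \to Z^1(L_p,M)$ follows from the coprime Sylow isomorphism $H^1(L,M) \cong H^1(L_p,M)$: one lifts $[\bar d]$ to a class in $H^1(L,M)$, picks a representative, and modifies it by a coboundary to make its restriction exactly $\bar d$. Modifying $\psi$ by the resulting cocycle yields $\phi_{i+1}$ which is the identity on $S/P_{i+1}$, hence also on $P/P_{i+1}$, and which covers $\phi_i$, hence induces the identity on $K$. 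This closes the induction. Finally, since $\bigcap P_i = 1$, the inverse limit of the nonempty finite sets of such equivalences $\phi_i$ (which is nonempty by the standard stabilization-of-images argument) yields an equivalence $\phi \colon G \to G^*$.

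The main obstacle will be the cocycle adjustment in the inductive step: after the cohomological argument produces some $\psi$, one must keep track of the precise extension structure to verify that a cocycle in $Z^1(L,M)$ trivialising $\bar d|_{L_p}$ can be found, and that the adjusted map remains an equivalence of extensions of $K$ by $P/P_{i+1}$ (identity on the whole kernel, not merely on $M$). Without the inductive strengthening to identity on $S/P_i$, the identification at the next level would introduce a twist by an automorphism of $L_p$ which could shift the cohomology class, and the induction would not close; the Sylow-compatibility condition is exactly what prevents this.
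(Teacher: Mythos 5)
Your route is essentially the paper's: the filtration $P_{i+1}=[P,P_i]P_i^p$, the use of Theorem \ref{cinvthm} to see that the two $K$-actions on each $P_i/P_{i+1}$ agree, the classification of Theorem \ref{htwothm}, injectivity of restriction to a Sylow subgroup on $\rH^2$ (Theorem \ref{coprimecohom}), and an inverse limit at the end. Your extra ingredient is the strengthened induction hypothesis that the equivalence $\phi_i$ be the identity on $S/P_i$, and the concern motivating it is legitimate: if the level-$i$ identification restricts to a nontrivial automorphism $\alpha$ of $S/P_i$, then the ``common restriction'' of the two level-$(i+1)$ extensions is only the $\alpha$-twist of $1\to M\to S/P_{i+1}\to S/P_i\to 1$, and the restricted $\rH^2$-classes need not literally coincide; this is a point the paper's own proof passes over quickly.

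The gap is in the step by which you propagate that strengthened hypothesis. Theorem \ref{coprimecohom} gives \emph{injectivity} of $\rH^n(L,M)\to\rH^n(L_p,M)$ only; there is no ``coprime Sylow isomorphism'' on $\rH^1$. The image of restriction consists of the stable elements and is a proper subgroup in general, even with $M$ a $p$-group and $L_p$ a $p$-Sylow subgroup: for $L=\Sym(3)$, $p=3$, $L_p=C_3$, $M=\bF_3$ with trivial action one has $\rH^1(L,M)=\Hom(\Sym(3),\bF_3)=0$ while $\rH^1(L_p,M)\cong\bF_3$. Since restriction is surjective on coboundaries, the existence of $c\in Z^1(L,M)$ with $c|_{L_p}=\bar d$ is equivalent to $[\bar d]$ lying in the image of $\rH^1(L,M)\to\rH^1(L_p,M)$, and you give no argument that your particular discrepancy class is stable; so the cocycle adjustment may be impossible and the induction does not close as written. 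What is actually needed is weaker than ``identity on $S/P_i$'': it suffices that $\phi_i|_{S/P_i}$ fix the class of the extension $1\to M\to S/P_{i+1}\to S/P_i\to 1$ in $\rH^2(S/P_i,M)$ (for instance because it is induced by an automorphism of $S/P_{i+1}$ that is the identity on $M$), and it is this compatibility, not a lifting statement on $\rH^1$, that has to be established to make the inductive step rigorous.
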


\begin{proof}We may regard $P$ as an open subgroup of $S$, and $S$ as a $p$-Sylow subgroup of both $G$ and $G^*$.  Define subgroups $P_i$ of $P$ by $P_1 = P$, and thereafter $P_{i+1} = [P_i,P]P^p_i$.  Then $P_i$ is an open characteristic subgroup of $P$ for all $i$.  Set $G_i = G/P_i$, set $G^*_i = G^*/P_i$, and set $M_i = P_i/P_{i+1}$.  Then for $i \geq 1$, we have extensions $\mcE_i$ and $\mcE^*_i$ of finite groups given by 
\[\mcE_i = \{\xymatrix{1 \ar[r] & M_i \ar[r] & G_{i+1} \ar[r] & G_i \ar[r] & 1 }\}\]
\[\mcE^*_i = \{\xymatrix{1 \ar[r] & M_i \ar[r] & G^*_{i+1} \ar[r] & G^*_i \ar[r] & 1 }\}\]
and by an inverse limit argument, it suffices to prove that these extensions are equivalent for all $i$.  By induction, we may assume that we have an isomorphism $\theta$ between $G_i$ and $G^*_i$; furthermore, the actions of $G_i$ and $G^*_i$ on $P_i/P_{i+1}$ are determined by the action of $K$ on $P_i/P_{i+1}$, which is in turn determined by the action of $K$ on $P/\Phi(P)$, by Theorem \ref{cinvthm}.  Hence $\theta$ induces an isomorphism from $M_i$ as a $G_i$-module to $M_i$ as a $G^*_i$-module.  Now by Theorem \ref{htwothm}, the extensions $\mcE_i$ and $\mcE^*_i$ are both associated in a natural way to elements $t$ and $t^*$ say of $\rH^2(G_i,M_i)$, and the extensions are equivalent if and only if $t=t^*$.  However, both extensions have the common restriction
\[\xymatrix{1 \ar[r] & M_i \ar[r] & S_{i+1} \ar[r] & S_i \ar[r] & 1 },\]
where $S_i = S/P_i$.  This corresponds to the condition that $t^\rho = (t^*)^{\rho}$, where
$\xymatrix{\rH^2(G_i,M_i) \ar[r]^\rho & \rH^2(S_i,M_i)}$ is the natural restriction map.  But $S_i$ is a $p$-Sylow subgroup of $G_i$ and $M_i$ is a $p$-group, so by Theorem \ref{coprimecohom}, $\rho$ is injective.  Hence $t=t^*$ and so $\mcE_i$ and $\mcE^*_i$ are equivalent.\end{proof}

\begin{cor}\label{embsizecor}Let $S$ be a $d$-generated pro-$p$ group.  Let $\Emb(S;n)$ denote the class of those $G \in \Emb(S)$ for which $|G/O_p(G)| \leq n$.  Then $\Emb(S;n)$ is finite, with $|\Emb(S;n)|$ bounded by a function of $(d,n,p)$.\end{cor}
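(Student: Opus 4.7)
The plan is to reduce the counting problem to a finite enumeration of extension data and then invoke Theorem \ref{extnthm} to show that $G$ is determined up to isomorphism by this data. Let $G \in \Emb(S;n)$, and identify $S$ with a $p$-Sylow subgroup of $G$. Setting $P = O_p(G)$ and $K = G/P$, the group $G$ fits into an extension
\[ 1 \longrightarrow P \longrightarrow G \longrightarrow K \longrightarrow 1 \]
with $|K| \leq n$, and $S$ is the preimage in $G$ of a $p$-Sylow subgroup $T$ of $K$. By Theorem \ref{extnthm} (with this extension playing the role of both sides), $G$ is determined up to equivalence of extensions, and hence up to isomorphism, by the following data: the subgroup $P$ of $S$; the abstract finite group $K$ together with an identification of its $p$-Sylow $T$ with $S/P$; and the action of $K$ on $P/\Phi(P)$. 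Concretely, two elements of $\Emb(S;n)$ giving rise to the same triple $(P,K,\text{action})$ are isomorphic, since the common restriction $1 \to P \to S \to T \to 1$ is fixed as soon as $P \leq S$ is fixed.

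It remains to bound the number of such triples by a function of $(d,n,p)$. First, $P$ is an open normal subgroup of $S$ of index dividing $|K|_p \leq n$, and since $S$ is a $d$-generated pro-$p$ group it has only finitely many open subgroups of index at most $n$, the count being bounded by a function of $(d,n,p)$. Second, $K$ is a finite group of order at most $n$, so there are at most finitely many possibilities (bounded in terms of $n$) for $K$ together with a chosen Sylow $T$ and an identification $T \cong S/P$. Third, by the Schreier index formula for pro-$p$ groups applied to $P \leq S$, we have $d(P) \leq n(d-1)+1$, so $\Aut(P/\Phi(P))$ embeds into $\GL(n(d-1)+1,p)$, a finite group of order bounded in terms of $(d,n,p)$; hence the number of possible actions $K \to \Aut(P/\Phi(P))$ is bounded in these parameters.

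The main obstacle, which is already overcome by the machinery assembled earlier, is showing that the listed data really do determine $G$ up to isomorphism, without having to count classes in $\rH^2(K,M)$ for the various abelian sections $M$ of $P$ appearing in an inverse limit argument. This is exactly the content of Theorem \ref{extnthm}, whose proof uses the coprime-index cohomology injectivity of Theorem \ref{coprimecohom} level by level along the lower central exponent-$p$ series of $P$. With that theorem in hand, the counting argument outlined above yields the claimed bound on $|\Emb(S;n)|$.
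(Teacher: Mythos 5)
Your proof is correct and follows essentially the same route as the paper: fix the data $(P,K,\text{action on }P/\Phi(P))$, invoke Theorem \ref{extnthm} to get uniqueness of the extension restricting to $1 \to P \to S \to S/P \to 1$, and then bound the number of such triples by a function of $(d,n,p)$. Your explicit use of the Schreier index formula to bound $d(P)$ merely makes precise a step the paper leaves implicit.
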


\begin{proof}Fix $P \unlhd S$.  Let $\Emb(S;P;n)$ be the class of those $G \in \Emb(S)$ for which $O_p(G) = P$, and for which $|G/P| \leq n$.  Clearly we only need to consider those $P$ for which $|S/P| \leq n$; since $S$ is finitely generated, the number of possibilities for $P$ is bounded by a function of $(d,n,p)$, so it suffices to consider $|\Emb(S;P;n)|$ for a single $P$.  By the theorem, for each possible isomorphism type $K$ of $G/P$ and each possible action of $K$ on $P/\Phi(P)$, there is at most one extension of $P$ by $K$ that restricts to the natural extension of $P$ by $S/P$; clearly all of $\Emb(S;P;n)$ arises in this way.  Thus $|\Emb(S;P;n)|$ is at most the number of actions of groups of order at most $n$ on $P/\Phi(P)$, which is given by a function of $(d,n,p)$.\end{proof}

In some cases, restricting to layer-free $p'$-embeddings will simplify the analysis, but layer-free $p'$-embeddings also give a good description of $p'$-embeddings in general.  By Corollary \ref{fplayer}, given a finitely generated pro-$p$ group $S$ and $G \in \Emb(S)$, there will be some finite $L$ for which $G/L \in \EmbLF(SL/L)$.

The structure of a layer-free $p'$-embedding of a pro-$p$ group $S$ is constrained by the automorphism groups of the eligible subgroups of $S$.  The proposition below summarises various equivalent conditions for finite LF-eligibility.

\begin{prop}\label{finelg}Let $S$ be a finitely generated pro-$p$ group.  The following are equivalent:
\vspace{-12pt}
\begin{enumerate}[(i)] \itemsep0pt
\item $\EmbLF(S)$ is finite;
\item there is a bound on $|G/O_p(G)|$ for all $G \in \EmbLF(S)$;
\item there is a bound on $|G/O_p(G)|_p$ for all $G \in \EmbLF(S)$;
\item there is a bound on $d(P)$ for all LF-eligible subgroups $P$ of $S$;
\item there is a bound on $c(P)$ for all LF-eligible subgroups $P$ of $S$.\end{enumerate}\end{prop}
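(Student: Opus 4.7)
My plan is to close the cycle $(\mathrm{i}) \Leftrightarrow (\mathrm{ii}) \Rightarrow (\mathrm{iii}) \Rightarrow (\mathrm{iv}) \Rightarrow (\mathrm{v}) \Rightarrow (\mathrm{ii})$; most links are formal, and the difficulty is concentrated in the last step.

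First, the easy implications. The equivalence (i) $\Leftrightarrow$ (ii) follows from Corollary \ref{embsizecor}: one direction is trivial, and for the other, $|G/O_p(G)| \leq n$ throughout $\EmbLF(S)$ forces $\EmbLF(S) \subseteq \Emb(S;n)$, a finite class. Then (ii) $\Rightarrow$ (iii) is immediate. For (iii) $\Rightarrow$ (iv), observe $|G/O_p(G)|_p = |S:O_p(G)|$ because $O_p(G) \leq S$, and apply the Schreier index formula to get $d(P) \leq |S:P|(d(S) - 1) + 1$. The step (iv) $\Rightarrow$ (v) is immediate from $c(P) \leq d(P)$.

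For (v) $\Rightarrow$ (ii), fix $G \in \EmbLF(S)$ and set $P = O_p(G)$. Lemma \ref{eliglem}(i) gives $C_G(P) = Z(P)$, so Lemma \ref{deltalem}(iii) yields $G/P \lesssim \Delta(P)$. The kernel of $G$ acting on $V = P/\Phi(P)$ is a normal pro-$p$ subgroup of $G$ by Lemma \ref{deltalem}(i), hence contained in $P$; since $[P,P] \leq \Phi(P)$ already places $P$ in the kernel, the kernel equals $P$, so $G/P$ acts faithfully on $V$. Invoking $c(P) \leq c$ and Lemma \ref{deltalem}(ii), we see that $O^{(c,p)}(G/P)$ is a normal pro-$p$ subgroup of the finite group $G/P$; its preimage in $G$ is a normal pro-$p$ subgroup containing $P$, hence equal to $P$ by maximality, so $O^{(c,p)}(G/P) = 1$ and $G/P$ is residually-$\mcL(c,p)$.

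The hard remaining step will be to upgrade this residual structure, together with the faithful action on $V$, into a uniform bound on $|G/P|$. For $p = 2$, Corollary \ref{tatefin} makes $G/P$ prosoluble, and Corollary \ref{malcor} then supplies bounded derived length and a bounded-exponent quotient. Combined with the observation that $G/P \cap O_p(\Delta(P)) = 1$ (forcing $S/P$ to inject into the block-diagonal quotient $\Delta(P)/O_p(\Delta(P)) \leq \prod_i \GL(c_i,p)$ with each $c_i \leq c$), this yields a bound on $|S:P|$; Schreier then bounds $d(P)$, and the embedding into $\GL(d(P), p)$ bounds $|G/P|$. For odd $p$, the same strategy works after first controlling $E(G/P)$: each component of $G/P$ acts on $V$ through factors of dimension at most $c$ and hence has degree bounded in terms of $c$ and $p$ by Theorem \ref{qsexsec}, so $E(G/P)$ has uniformly bounded structure (its outer action bounded by Proposition \ref{qsout}), and the residual soluble piece is handled as before. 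The resulting uniform bound on $|G/P|$ closes the cycle via Corollary \ref{embsizecor}.
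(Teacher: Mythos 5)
Your architecture is essentially the paper's: the easy implications are handled the same way, and your set-up of the hard step is sound — $C_G(P)=Z(P)$, the action of $G/P$ on $V=P/\Phi(P)$ is faithful, any normal $p$-subgroup of $G/P$ pulls back into $O_p(G)=P$ and so is trivial, and hence $S/P$ (indeed $G/P$) injects into a product of groups $\GL(c_i,p)$ with $c_i\le c$ coming from a characteristic series of $V$ with factors of rank at most $c$. The genuine gap is the sentence ``this yields a bound on $|S:P|$''. Your justification routes through the global structure of $G/P$: for $p=2$ you claim Corollary \ref{tatefin} makes $G/P$ prosoluble, but that corollary only makes $G$ virtually prosoluble, and $G/O_2(G)$ can be insoluble for a layer-free $2'$-embedding (take $V\rtimes \Alt(5)$ with $V$ a faithful irreducible $\bF_2\Alt(5)$-module: it has $O_{2'}=1$, no components, and quotient $\Alt(5)$), so Corollary \ref{malcor} is not available there; and for odd $p$ your treatment of $E(G/P)$ does not close the gap — a bound on $\deg$ of a component does not bound its order (the groups $\PSL(2,q)$ have Lie rank $1$ and unbounded order), and nothing in the sketch bounds the \emph{number} of components, which a priori grows with $\dim V=d(P)$, the very quantity being controlled. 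Also, as a smaller point, $\Delta(P)/O_p(\Delta(P))$ is a quotient, not visibly a subgroup, of the block-diagonal product; the correct statement is that $(G/P)\cap U=1$, where $U$ is the (normal, unipotent) kernel of the block map, by the same pull-back-to-$O_p(G)$ argument you already use.

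The fix — and the paper's actual argument for (v) — needs none of this structure theory of $G/P$. From the injection you already have, the $p$-group $S/P$ embeds in a direct product of copies of $\GL(c_i,p)$ with $c_i\le c$, hence (projecting into unitriangular Sylow subgroups factor by factor) has nilpotency class at most $c-1$ and exponent at most $p^{\lceil\log_p c\rceil}$; since it is also generated by at most $d(S)$ elements, its order $|S:P|$ is bounded by a function of $c$, $p$ and $d(S)$ alone. The Schreier index formula then bounds $d(P)$, the embedding $G/P\lesssim\Delta(P)\le\GL(d(P),p)$ bounds $|G/P|$, and Corollary \ref{embsizecor} closes the cycle, exactly as you intended; this is precisely how the paper passes from (v) to (iv) and then to (i). (One further citation nit: Lemma \ref{deltalem}(ii) is stated for groups of automorphisms of $P$, so apply it to $G/Z(P)$, or rerun its proof on $V$ as you in effect do.)
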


\begin{proof}All $p'$-embeddings are virtually pro-$p$, so (i) implies (ii); conversely (ii) implies (i) by Corollary \ref{embsizecor}.  Clearly (ii) implies (iii), and (iii) implies (iv) by the Schreier index formula.  If (iv) holds, then $|G/O_p(G)| \leq |\GL(d(P),p)|$ by Lemma \ref{deltalem}, implying (i).

We have $c(P) \leq d(P)$ for any pro-$p$ group $P$, so (iv) implies (v).  Now assume (v), with a bound of $c$ on $c(P)$, and consider the $p$-group $K = S/O_p(G)$.  Then for all $G \in \EmbLF(S)$, it follows that $K$ is nilpotent of class at most $(c-1)$ and exponent bounded by a function of $c$; also, $K$ is generated by at most $d(S)$ elements.  These three conditions give a bound on $|K|=|G/O_p(G)|_p$ in terms of $c$ and $S$, and hence a bound on $d(P)$ by the Schreier index formula, giving (iv).\end{proof}

\begin{rem}Condition (iv) is automatic if $S$ is a pro-$p$ group of finite rank.\end{rem}

\section{The local ordering of $p'$-embeddings}

Let $G$ be a $p'$-embedding of a finitely generated pro-$p$ group $S$.  It is clear that given any subgroup $H$ of $G$ containing $S$, then $K = H/O_{p'}(H)$ is also a $p'$-embedding of $S$.  If we regard $S$ as a subgroup of $K$ in the obvious way, then $O_p(K)$ contains $O_p(G)$.  Of particular interest is the possibility that this containment could be proper, giving the potential to build up $G$ from $p'$-embeddings in which the $p$-core has smaller index.

\begin{defn}Say $H$ is a \emph{strong $p$-local subgroup} of the $p'$-embedding $G$ of $S$ if $H$ is the normaliser of an open normal subgroup $P$ of $S$.  Note that any given $p'$-embedding $G$ has only finitely many strong $p$-local subgroups, since all of them lie between $G$ and $S$, and $S$ has finite index in $G$.

Given a finitely generated pro-$p$ group $S$, define a relation $\lep$ on $\Emb(S)$ to be the smallest transitive relation on $\Emb(S)$ such that whenever $G \in \Emb(S)$ and $H$ is a strong $p$-local subgroup of $G$, then $H/O_{p'}(H) \lep G$.  This induces the \emph{local ordering} on the isomorphism types in $\Emb(S)$.\end{defn}

We define $\lep$ in this way to ensure transitivity.  However, there is also a useful characterisation of the isomorphism types occurring below a given $G$ in terms of strong $p$-local subgroups of $G$ itself:

\begin{lem}Let $S$ be a finitely generated pro-$p$ group.  Let $G \in \Emb(S)$, and let $H = N_G(P_1) \cap \dots \cap N_G(P_k)$, where each $P_i$ is an open normal subgroup of $S$.  Then $H/O_{p'}(H) \lep G$.  Moreover, every isomorphism type $K$ of profinite group such that $K \lep G$ arises in this way.\end{lem}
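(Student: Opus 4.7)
The plan is to prove the two assertions separately, both by induction: part (1) on the number of normalisers $k$ intersected to form $H$, and part (2) on the length of a witnessing chain for $K \lep G$. Throughout, the workhorse is the elementary observation that whenever $T \unlhd H$ with $T$ a (pro-)$p'$-group, then $O_{p'}(H/T) = O_{p'}(H)/T$, and so $(H/T)/O_{p'}(H/T) \cong H/O_{p'}(H)$. This is what lets us pass through repeated $O_{p'}$-quotients cleanly.

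For part (1), I would induct on $k$. The case $k=1$ is immediate from the definition of $\lep$. For the inductive step, set $H' = \bigcap_{i<k} N_G(P_i)$ and $G' = H'/O_{p'}(H')$, so $G' \lep G$ by induction; note that $S \leq H'$ and $S \cap O_{p'}(H') = 1$, hence $S$ embeds in $G'$ as a $p$-Sylow subgroup, so $G' \in \Emb(S)$. Under this embedding $P_k$ remains an open normal subgroup of $S$, and $N_{G'}(P_k)$ equals the image of $N_{H'}(P_k) = H$ in $G'$, which is $H \cdot O_{p'}(H')/O_{p'}(H') \cong H/(H \cap O_{p'}(H'))$. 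Since $H \cap O_{p'}(H')$ is a $p'$-group normal in $H$, the displayed elementary fact gives $N_{G'}(P_k)/O_{p'}(N_{G'}(P_k)) \cong H/O_{p'}(H)$. Hence $H/O_{p'}(H) \lep G' \lep G$, completing the induction.

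For part (2), I unpack the definition: because $\lep$ is the smallest transitive relation with the stated property, $K \lep G$ means there exists a chain $G = G_0, G_1, \dots, G_m = K$ in $\Emb(S)$ together with open normal subgroups $Q_i \unlhd_o S$ such that $G_{i+1} \cong N_{G_i}(Q_i)/O_{p'}(N_{G_i}(Q_i))$. I induct on $m$, taking $H = \bigcap_{i=0}^{m-1} N_G(Q_i)$. Applying the inductive hypothesis to $G_{m-1}$ produces an isomorphism $G_{m-1} \cong H'/O_{p'}(H')$ with $H' = \bigcap_{i<m-1} N_G(Q_i)$; tracing $N_{G_{m-1}}(Q_{m-1})$ back through this isomorphism, exactly as in part (1), gives $N_{G_{m-1}}(Q_{m-1}) \cong H/(H \cap O_{p'}(H'))$, and quotienting by $O_{p'}$ yields $K \cong H/O_{p'}(H)$, with the corresponding $H$ of the required form.

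The main obstacle (really the only delicate point) is keeping track of the identification of $S$ as a Sylow subgroup of each $G_i$ and each quotient along the way, so that the $Q_i$ can legitimately be treated as fixed open normal subgroups of the single pro-$p$ group $S$ that is sitting inside $G$. Once one checks that $S \cap O_{p'}(H') = 1$ at every stage (which follows because $O_{p'}(H')$ is a pro-$p'$ group), this identification is canonical, and the rest of the argument reduces to repeated application of the single lemma on $O_{p'}$-quotients.
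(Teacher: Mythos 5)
Your strategy is the same as the paper's: induct on the number of normalisers (equivalently, on the length of a chain witnessing $K \lep G$), replace $G$ by $G' = H'/O_{p'}(H')$ where $H' = \bigcap_{i<k} N_G(P_i)$, and use the observation that factoring out a normal pro-$p'$ subgroup does not change the quotient by the $p'$-core. However, the one step that carries all the content is asserted rather than proved: the claim that $N_{G'}(\overline{P_k})$ \emph{equals} the image of $N_{H'}(P_k)=H$ in $G'$, where $\overline{P_k} = P_kO_{p'}(H')/O_{p'}(H')$. The containment of the image of $H$ in $N_{G'}(\overline{P_k})$ is immediate, but the reverse containment is not a formal consequence of the lemma on $O_{p'}$-quotients to which you say the whole argument reduces: normalisers do not in general map onto normalisers of images under quotient maps, and if the image of $H$ were a proper subgroup of $N_{G'}(\overline{P_k})$ your identification $N_{G'}(P_k) \cong H/(H\cap O_{p'}(H'))$ would fail, and with it both halves of your induction (you reuse the same step verbatim in part (2)).

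What makes the equality true is coprimality, via a Frattini argument: an element $h \in H'$ whose image normalises $\overline{P_k}$ normalises $R = P_kO_{p'}(H')$; since $P_k$ is a $p$-Sylow subgroup of $R$ and $R \unlhd N_{H'}(R)$, Sylow's theorem gives $N_{H'}(R) = R\,\bigl(N_{H'}(R)\cap N_{H'}(P_k)\bigr) \leq O_{p'}(H')H$, so $h \in O_{p'}(H')H$ and the image of $H$ is all of $N_{G'}(\overline{P_k})$. This is exactly the computation to which the paper's proof is devoted (its subgroup $M$ is $N_{H_{k-1}}(R)$, and it shows $M = O_{p'}(H_{k-1})H$), so your closing remark mislocates the delicate point: the identification of $S$ via $S \cap O_{p'}(H')=1$ is indeed routine, but the Sylow/Frattini step is genuinely needed and is not a consequence of the $O_{p'}$-quotient lemma. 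Once it is inserted, your argument is correct and coincides with the paper's, apart from running the induction twice, once for each direction, where the paper does both at once.
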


\begin{proof}If $L$ is a strong $p$-local subgroup of $G$, then there is a natural embedding of $S$ into $L/O_{p'}(L)$.  As such, we could obtain some $K \lep G$ by taking $K_0 = G$ and $K_{i+1} = N_{K_i}(P_{i+1})/O_{p'}(N_{K_i}(P_{i+1}))$, and then setting $K = K_k$; moreover, any $K \lep G$ can be obtained in such a way by making suitable choices for the $P_i$.  We claim that in fact such a $K$ will be isomorphic to $H/O_{p'}(H)$.

Set $H_i$ to be the intersection of the $N_G(P_j)$ for $j \leq i$; by induction, we may assume $H_{k-1}/O_{p'}(H_{k-1}) \cong K_{k-1}$ and identify these two groups.  Under this identification, $N_{K_{k-1}}(P_k)$ lifts to the normaliser $M$ of $R = P_k O_{p'}(H_{k-1})$ in $H_{k-1}$.  Now $M$ contains $N_{H_{k-1}}(P_k)$, which is precisely $H$.  Since $P_k$ is a $p$-Sylow subgroup of $R$, and $R$ is normal in $M$, Sylow's theorem ensures that $M = RN_M(P_k)$, so
\[ M = O_{p'}(H_{k-1})N_M(P_k) = O_{p'}(H_{k-1})H \]
so $H$ has an image isomorphic to $N_{K_{k-1}}(P_k)$, and $H/O_{p'}(H) \cong K$ as required.\end{proof}

\begin{cor}Let $S$ be a finitely generated pro-$p$ group, and let $G \in \Emb(S)$.  Then there are only finitely many isomorphism types $H \in \Emb(S)$ such that $H \lep G$.\end{cor}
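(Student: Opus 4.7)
The plan is to combine the preceding lemma with the fact that $G$ is virtually pro-$p$ to constrain the possible "intermediate" subgroups of $G$ out of which isomorphism types below $G$ can be built. By the lemma, every $K$ with $K \lep G$ is realised as $H/O_{p'}(H)$ where $H = N_G(P_1) \cap \dots \cap N_G(P_k)$ for some open normal subgroups $P_1,\dots,P_k$ of $S$. So it is enough to show the set of subgroups of $G$ of this form is finite.

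First I would observe that since $S$ is a finitely generated pro-$p$ group, $d_p(G)$ is finite, so by Corollary \ref{tatefin} the group $G$ is virtually pro-$p$; since $O_{p'}(G)=1$ we have $S \leq_o G$, i.e.\ $|G:S|$ is finite. Next, since each $P_i$ is normal in $S$, each normaliser $N_G(P_i)$ contains $S$. Subgroups of $G$ containing $S$ are in bijection with subsets of the finite coset space $G/S$ closed under the appropriate structure, so there are only finitely many of them; in particular the collection $\{N_G(P) \mid P \unlhd_o S\}$ is a finite set of subgroups of $G$. Any intersection $N_G(P_1) \cap \dots \cap N_G(P_k)$ also contains $S$, hence lies in this same finite collection (or is an intersection of finitely many members of it, which still gives only finitely many possibilities).

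Thus the subgroup $H$ of $G$ appearing in the lemma ranges over a finite set, and so $H/O_{p'}(H)$ takes only finitely many isomorphism types as $H$ varies. Since by the lemma every $K \in \Emb(S)$ with $K \lep G$ is isomorphic to one of these quotients, there are only finitely many isomorphism types $K \lep G$, as required.

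The potential obstacle is rhetorical rather than mathematical: one has to be careful that the lemma really does cover all $K \lep G$ (not just those obtained by a single "drop" to a strong $p$-local subgroup), but this is precisely the content of the preceding lemma, which handles arbitrary iterated strong $p$-local reductions by flattening them into a single intersection of normalisers inside $G$. Once that is invoked, the argument reduces to the trivial observation that an open subgroup of $G$ has only finitely many overgroups.
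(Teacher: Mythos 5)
Your proof is correct and is essentially the paper's argument: the corollary is immediate from the preceding lemma once one notes that $S$ is open in $G$ (via Corollary \ref{tatefin} and $O_{p'}(G)=1$), so every normaliser $N_G(P_i)$ and every intersection of such normalisers lies in the finite set of subgroups between $S$ and $G$, leaving only finitely many possible quotients $H/O_{p'}(H)$. The paper treats this as immediate for exactly these reasons, so there is nothing to add.
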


The $p'$-embeddings under a layer-free $p'$-embedding are in fact subgroups of it:

\begin{lem}Let $S$ be a finitely generated pro-$p$ group.  Let $G \in \EmbLF(S)$, and let $H$ be an intersection of strong $p$-local subgroups of $G$.  Then $H \in \EmbLF(S)$.\end{lem}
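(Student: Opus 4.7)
The plan is to verify directly the three defining conditions of $\EmbLF(S)$ for $H$: namely, that $S$ embeds as a $p$-Sylow subgroup of $H$, that $O_{p'}(H)=1$, and that $E(H)=1$. The first is essentially bookkeeping: each strong $p$-local subgroup $N_G(P_i)$ contains $S$, since $P_i\unlhd S$, so $S\leq H$; and since $|H:S|$ divides $|G:S|$, which is coprime to $p$, we conclude that $S\in\Sylp(H)$.

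The main work is in the last two conditions, and the key tool is the identity $C_G(F^*(G))=Z(O_p(G))$. This is available because $G\in\Emb(S)\subseteq\FR$ by Lemma \ref{eliglem}(i); combined with the hypotheses $E(G)=1$ and $O_{p'}(G)=1$, Theorem \ref{frcent} and the fact that $F(G)=O_p(G)$ (forced by $O_{p'}(G)=1$) yield $F^*(G)=O_p(G)$ and $C_G(O_p(G))=Z(O_p(G))$.

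For $O_{p'}(H)=1$: the normal subgroup $O_p(G)$ of $G$ is also a normal pro-$p$ subgroup of $H$, so $O_p(G)\leq O_p(H)$. Since $[O_{p'}(H),O_p(H)]\leq O_{p'}(H)\cap O_p(H)=1$, we get $O_{p'}(H)\leq C_H(O_p(G))\leq C_G(O_p(G))=Z(O_p(G))$. But $Z(O_p(G))$ is pro-$p$ while $O_{p'}(H)$ is pro-$p'$, forcing $O_{p'}(H)=1$. The argument for $E(H)=1$ runs in parallel and is where the assumption $E(G)=1$ is actually used: any component $Q$ of $H$ is quasisimple and (by Corollary \ref{qscor}) finite, and by Theorem \ref{opilayer}(ii) commutes with $F(H)=O_p(H)\supseteq O_p(G)$. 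Hence $Q\leq C_H(O_p(G))\leq Z(O_p(G))$, which is abelian, contradicting the fact that $Q$ is perfect and non-trivial. Therefore $H$ has no components and $E(H)=1$.

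No single step looks like a genuine obstacle once Theorem \ref{frcent} is invoked; the most delicate point is recognising that Fitting-regularity of $G$ is what converts centralisers in $G$ into centres of $O_p(G)$, and that this centralisation passes to the subgroup $H$ without any further conjugation argument because $O_p(G)$ is already normal in $G$ (and hence in $H$) and lies in $S\leq H$.
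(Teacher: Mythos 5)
Your proof is correct and follows essentially the same route as the paper: both arguments rest on Lemma \ref{eliglem}(i) (equivalently $C_G(O_p(G))=Z(O_p(G))$, i.e.\ $G/Z(O_p(G))$ acting faithfully on $O_p(G)$) to kill $O_{p'}(H)$ via the trivial intersection of the two normal subgroups, and on Theorem \ref{opilayer} to force $E(H)\leq Z(O_p(G))$ and hence $E(H)=1$. The only difference is expository (you spell out $S\in\Sylp(H)$ and $F^*(G)=O_p(G)$ explicitly), so there is nothing to add.
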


\begin{proof}Clearly $S \in \Sylp(H)$.  By Lemma \ref{eliglem}, $G/Z(O_p(G))$ acts faithfully on $O_p(G)$, and so $O_{p'}(H)$ acts faithfully on $O_p(G)$; since $O_p(G)$ and $O_{p'}(H)$ are normal subgroups of $H$ with trivial intersection, this ensures $O_{p'}(H)=1$.  By Theorem \ref{opilayer}, $E(H)$ centralises $O_p(G) \leq F(H)$, so $E(H) \leq Z(O_p(G))$; this ensures that $H$ has no components, so $E(H)=1$.\end{proof}

The next few results consider the consequences of Tate's theorem for the structure of $p$-local subgroups.

\begin{prop}\label{locphiprop}Let $G$ be a $p'$-embedding of the finitely generated pro-$p$ group $S$.  Let $R$ be a normal subgroup of $S$, such that $[S,O_p(G)] \leq R \leq \Phi(S)$.  Let $M = N_G(R)$, let $T = O_{p'}(M)$, and let $U = M/RT$.  Then either $O_p(M/T) > O_p(G)T/T$ or $S/O_p(G)$ acts faithfully on $E_p(U)$ (or both).\end{prop}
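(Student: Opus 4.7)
The plan is to prove the contrapositive: assuming the first alternative fails, so $O_p(M/T) = O_p(G)T/T$ (the reverse inclusion being automatic, since $O_p(G)T/T$ is a normal pro-$p$ subgroup of $M/T$), I will deduce that $S/O_p(G)$ acts faithfully on $E_p(U)$. The strategy is first to realize $U$ itself as a Fitting-regular $p'$-embedding of $S/R$, and then to invoke the centralizer formula $C_U(F^*(U)) = Z(O_p(U))$ supplied by Lemma \ref{eliglem}.

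First I would apply Corollary \ref{tatecor}(i) inside $M$ (playing the role of the ambient group) to the normal subgroup $R$: since $R \le \Phi(S)$ and $S \cap R = R$, Tate's theorem yields $O_{p'}(M/R) = O_{p'}(M)R/R = TR/R$. Consequently $U = (M/R)/(TR/R)$ has $O_{p'}(U) = 1$, and its $p$-Sylow subgroup is the image of $S$, which equals $S/(S \cap RT) = S/R$. Hence $U \in \Emb(S/R) \subseteq \FR$, so $C_U(F^*(U)) = Z(O_p(U))$ by Lemma \ref{eliglem}. Moreover, any component $Q$ of $U$ with $p \nmid |Q|$ would generate a nontrivial pro-$p'$ normal closure in $U$, contradicting $O_{p'}(U) = 1$; thus $E(U) = E_p(U)$.

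Now write $\bar M = M/T$ and $\bar R = RT/T$, and suppose $O_p(\bar M) = \overline{O_p(G)}$, where $\overline{O_p(G)} := O_p(G)T/T$. Since $\bar R \unlhd \bar M$ is pro-$p$, we have $\bar R \le \overline{O_p(G)}$, and using $T \cap R = 1$ this forces $R \le O_p(G)$. The preimage in $\bar M$ of $O_p(U)$ is a normal subgroup containing $\bar R$ with pro-$p$ quotient $O_p(U)$, so (extension of pro-$p$ by pro-$p$) is itself pro-$p$ and therefore lies in $\overline{O_p(G)}$; combined with the trivial reverse inclusion this gives $O_p(U) = \overline{O_p(G)}/\bar R$, which is precisely the image of $O_p(G)$ in $U$. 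Now take $s \in S$ whose image $\tilde s$ in $U$ centralizes $E_p(U) = E(U)$. The hypothesis $[S, O_p(G)] \le R$ implies $[\tilde s, O_p(U)] = 1$ in $U$ (since $R$ is killed in $U$ and $O_p(U)$ is the image of $O_p(G)$), so $\tilde s$ centralizes $F^*(U) = O_p(U) E(U)$, and Fitting-regularity places $\tilde s \in Z(O_p(U)) \le O_p(U)$. Therefore $s \in O_p(G)RT$; projecting onto $M/O_p(G)R$, whose image of $T$ is isomorphic to the pro-$p'$ group $T$, forces the pro-$p$ element $s$ to have trivial image, so $s \in O_p(G)R = O_p(G)$ by the earlier inclusion $R \le O_p(G)$.

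The central obstacle is the structural identification of $O_p(U)$ as exactly the image of $O_p(G)$ under the equality hypothesis: without that control, $O_p(U)$ could be strictly larger and the Fitting-regularity step would only trap $\tilde s$ inside some ambient pro-$p$ subgroup, not inside $O_p(G)$ itself. The Tate reduction modelling $U$ as a $p'$-embedding, and the pro-$p$/pro-$p'$ lifting at the end, are routine by comparison once this structural point is in hand.
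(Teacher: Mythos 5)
Your proposal is correct and follows essentially the same route as the paper's proof: assume $O_p(M/T) = O_p(G)T/T$, use Corollary \ref{tatecor} to get $O_{p'}(U)=1$, identify $O_p(U)$ with the image of $O_p(G)$ (which is central in the image of $S$ because $[S,O_p(G)]\leq R$), and then use the self-centralising property of $F^*(U)$ (via $\Emb(S/R)\subset\FR$, Lemma \ref{eliglem}) to conclude that the kernel of the action on $E_p(U)$ is exactly $O_p(G)$. The only difference is that you spell out the identification $O_p(U)=O_p(G)RT/RT$ and the final descent to $s\in O_p(G)$ in more detail than the paper does, which is a welcome clarification rather than a divergence.
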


\begin{proof}Certainly $O_p(M/T) \geq O_p(G)T/T$, so we may assume $O_p(M/T) = O_p(G)T/T$, which means $O_p(U) = O_p(G)/RT$.  Then $O_p(U)$ is central in $ST/RT$, since $[S,O_p(G)] \leq R$.  But by Corollary \ref{tatecor}, $O_{p'}(U) = 1$, so $F^*(U)=O_p(U)E_p(U)$.  Since $F^*(U)$ contains its own centraliser in $U$, we thus have $C_{ST/RT}(E_p(U)) = C_{ST/RT}(F^*(U)) = Z(F^*(U)) = O_p(U)$, giving a faithful action of $S/O_p(G)$ on $E_p(U)$ as required.\end{proof}

\begin{cor}\label{opphi}Let $S$ be a non-trivial finitely generated pro-$p$ group.
\vspace{-12pt}
\begin{enumerate}[(i)] \itemsep0pt
\item Let $G \in \Emb(S)$.  Suppose that there exists some $R \unlhd G$ such that $[S,O_p(G)] \leq R \leq \Phi(S)$.  Then $S/O_p(G)$ acts faithfully on $E_p(G/R)$; in particular, if $E_p(G/R)=1$ then $G$ is $p$-normal.  If $R=O_p(G)$, then $G/R$ acts faithfully on $E_p(G/R)$, and $E_p(G/R)$ is a direct product of non-abelian simple groups.
\item Let $G$ be any $p'$-embedding of $S$.  Then $E^*_p(G) \cap S \not\le \Phi(S)$.\end{enumerate}\end{cor}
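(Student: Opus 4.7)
The plan is to deduce both parts from Proposition~\ref{locphiprop} applied with a clever choice, then leverage part (i) plus Corollary~\ref{tatecor}(ii) for part (ii).

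For part (i), since $R \unlhd G$ we have $N_G(R)=G$, so in the notation of Proposition~\ref{locphiprop} we get $M=G$ and $T=O_{p'}(G)=1$. The dichotomy becomes either $O_p(G) > O_p(G)$ (impossible) or $S/O_p(G)$ acts faithfully on $E_p(G/R)$, and the second alternative must hold. If $E_p(G/R)=1$, faithful action on the trivial group forces $S=O_p(G)$, so the Sylow $p$-subgroup $S$ is normal in $G$, i.e.\ $G$ is $p$-normal.

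For the special case $R=O_p(G)$ within (i), I first upgrade faithfulness of $S/R$ to faithfulness of $G/R$ on $E_p(G/R)$. The centraliser $C_{G/R}(E_p(G/R))$ is normal in $G/R$ and meets the Sylow $p$-subgroup $S/R$ trivially, hence is a pro-$p'$ group contained in $O_{p'}(G/R)$; but Corollary~\ref{tatecor}(i) applied with $M=R$ (using $S\cap R = R \leq \Phi(S)$) gives $O_{p'}(G/R)=O_{p'}(G)R/R=1$. For the structural claim, note $O_p(G/R)=1$ by general nonsense, so $F(G/R)=1$. Then any component $Q$ of $G/R$ with $p\nmid|Q|$ would lie in the trivial $O_{p'}(G/R)$, so every component has order divisible by $p$ and $E_p(G/R)=E(G/R)$. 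Also $Z(E(G/R))$ is an abelian normal (hence subnormal pronilpotent) subgroup, so lies in $F(G/R)=1$; thus each component has trivial centre, i.e.\ is non-abelian simple. Distinct components commute by Theorem~\ref{opilayer} and have trivial pairwise intersection (since they have trivial centres), so the central product is a direct product of non-abelian finite simple groups.

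For part (ii), I argue by contradiction: suppose $E^*_p(G)\cap S \leq \Phi(S)$. Since $O_p(G)\leq E^*_p(G)\cap S$, this gives $O_p(G)\leq\Phi(S)$, so part (i) applies with $R=O_p(G)$. If $E_p(G/R)=1$, then (i) makes $G$ $p$-normal, so $S=O_p(G)\leq\Phi(S)$; hence $S=\Phi(S)$, forcing $S=1$ by Lemma~\ref{fratlem}(iii), contradicting the hypothesis that $S$ is non-trivial. Otherwise $E_p(G/R)\neq 1$, and I apply Corollary~\ref{tatecor}(ii) with $M=E^*_p(G)$ and $N=O_p(G)$: the hypothesis $S\cap M\leq\Phi(S)N$ holds since $S\cap M\leq\Phi(S)$ and $N\leq\Phi(S)$. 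The conclusion is that $MN/N=E^*_p(G)/O_p(G)=E_p(G/O_p(G))$ is $p'$-normal. But by part (i) this group is a non-trivial direct product of non-abelian finite simple groups each of order divisible by $p$; a normal $p'$-Hall subgroup would meet each simple factor (by Corollary~\ref{sylsubnor}) in a proper normal $p'$-subgroup, forcing triviality of each factor's $p'$-part, which is impossible since non-abelian simple groups are never $p$-groups. Contradiction.

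The main obstacle will be the structural verification in part (i) when $R=O_p(G)$: pinning down that $F(G/R)=1$ (which requires Corollary~\ref{tatecor}(i) to kill $O_{p'}(G/R)$) and then using this to force components to be non-abelian simple and pairwise disjoint. Once that structure is secured, the contradiction in part (ii) is essentially automatic, since the $p'$-normality forced by Corollary~\ref{tatecor}(ii) is flatly incompatible with being a direct product of non-abelian simple groups of order divisible by $p$.
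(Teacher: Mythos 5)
Your proposal is correct and takes essentially the same route as the paper: part (i) is Proposition~\ref{locphiprop} with $N_G(R)=G$ and $T=O_{p'}(G)=1$, plus Corollary~\ref{tatecor} to kill $O_{p'}(G/R)$ and hence the centraliser of $E_p(G/R)$, and part (ii) reduces to $O_p(G)\leq\Phi(S)$ and then gets a contradiction out of Tate's theorem, the only (minor) divergence being that the paper argues directly that $E^*_p(G)$ is not pro-$p$ yet has $O_{p'}(E^*_p(G))=1$, so it cannot be $p'$-normal and Corollary~\ref{tatecor} forces $E^*_p(G)\cap S\not\leq\Phi(S)$, whereas you invoke the direct-product structure from (i). One small repair: your citation of Corollary~\ref{sylsubnor} in the last step does not literally apply (the group in question is not prosoluble and $p'$ is not a single prime), but the fact you need is immediate without it --- if $H$ is a normal $p'$-Hall subgroup of $D=E_p(G/O_p(G))$ then $H\cap Q=1$ for each simple factor $Q$ by simplicity and $p\mid |Q|$, so $Q\cong QH/H$ embeds in the pro-$p$ group $D/H$, which is absurd.
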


\begin{proof}(i) We have $O_{p'}(N_G(R))=1$ and $O_p(N_G(R)) = O_p(G)$; hence $S/O_p(G)$ acts faithfully on $E_p(G/R)$ by Proposition \ref{locphiprop}.  If $R=O_p(G)$, this ensures that the kernel of the action of $G/R$ on $E_p(G/R)$ is a pro-$p'$ group; but $O_p(G/R)O_{p'}(G/R)=1$ by Corollary \ref{tatecor}, so $E_p(G/R) = F^*(G/R)$ and $Z(E_p(G/R))=1$.  Hence $E_p(G/R)$ is a direct product of non-abelian simple groups, on which $G/R$ acts faithfully.

(ii) Let $N = E^*_p(G)$.  We may assume that $O_p(G) \leq \Phi(S)$, since $O_p(G) \leq N$; write $K=G/O_p(G)$.  Then $E_p(G/O_p(G)) \not=1$ by part (i), ensuring that $E_p(G/O_p(G))$ is not a pro-$p$ group, and hence $N$ is also not a pro-$p$ group; since $O_{p'}(N) \leq O_{p'}(G) = 1$, this ensures $N$ is not $p'$-normal.  Hence $N \cap S \not\le \Phi(S)$ by Corollary \ref{tatecor}.\end{proof}

The possibility of a layer appearing in certain sections of $G$ complicates the analysis; however, stronger conclusions can be drawn if $G$ is $p$-separable.

\begin{thm}\label{pseplocal}Let $G$ be a $p$-separable $p'$-embedding of the finitely generated pro-$p$ group $S$.  Let $R$ be a normal subgroup of $S$, such that $[S,O_p(G)] \leq R \leq \Phi(S) \cap O_p(G)$, and let $M = N_G(R)$.  Then either $S=O_p(G)$ or $O_p(M) > O_p(G)$.  Furthermore, either $d(S/O_p(M)) < d(S/O_p(G))$ or $|M/O_p(M)|$ is bounded by a function of $p$ and $d(O_p(G)/R)$ (or both).\end{thm}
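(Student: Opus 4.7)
The plan breaks into the dichotomy and the quantitative bound. For the dichotomy, the key is to show the pro-$p'$ core $T := O_{p'}(M)$ is trivial, which reduces Proposition \ref{locphiprop} to a clean statement. By $p$-separability, any component $Q$ of $G$ with $p \mid |Q|$ would give, via Corollary \ref{qscor}, a non-abelian simple $p$-involved section $Q/Z(Q)$ of $G$, contradicting $p$-separability; any $p'$-component would lie in $O_{p'}(G)=1$. So $E(G)=1$, $F^*(G)=O_p(G)$, and by Lemma \ref{eliglem}(i) we have $C_G(O_p(G))=Z(O_p(G))\leq O_p(G)$. Now $T$ and $O_p(G)$ are normal in $M$ with disjoint prime divisors, so $[T,O_p(G)]\leq T\cap O_p(G)=1$, giving $T\leq C_M(O_p(G))\leq O_p(G)$ and hence $T=1$. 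With $T=1$ the image $U=M/R$ inherits $p$-separability so $E_p(U)=1$, and Proposition \ref{locphiprop} then forces either $O_p(M)>O_p(G)$ or $S/O_p(G)$ to act faithfully on $E_p(U)=1$, which means $S=O_p(G)$.

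For the quantitative part, set $V=O_p(G)/(R\Phi(O_p(G)))$. The hypothesis $[S,O_p(G)]\leq R$ ensures $O_p(G)/R$ is abelian, so $V$ is its Frattini quotient, an $\mathbb{F}_p$-vector space of dimension exactly $k:=d(O_p(G)/R)$. Put $H:=C_M(V)$. Since $[S,O_p(G)]\leq R$, we have $S\leq H$, so $M/H$ is pro-$p'$ and acts faithfully on $V$, yielding $|M/H|\leq|\GL_k(\mathbb{F}_p)|$, a function of $p$ and $k$. Moreover, if $H'\leq H$ denotes the kernel of the $M$-action on $O_p(G)/R$ itself, then $H/H'$ injects into the kernel of $\Aut(O_p(G)/R)\to\Aut(V)$, which is pro-$p$ by Theorem \ref{cinvthm}(ii); but $H/H'\leq M/H'$ is pro-$p'$, so $H=H'$ and in fact $[H,O_p(G)]\leq R$.

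It remains to bound $|H/O_p(M)|$ by a function of $p$ and $k$ in the case $d(S/O_p(M))=d(S/O_p(G))$ (otherwise the first alternative of the second statement already holds). In this case $O_p(M)\leq\Phi(S)O_p(G)$. Since $H$ is itself a $p$-separable $p'$-embedding of $S$ with $O_p(H)=O_p(M)$, Lemma \ref{deltalem}(iii) gives $H/O_p(M)\hookrightarrow\Delta(O_p(M))\leq\GL(d(O_p(M)),\mathbb{F}_p)$. The main obstacle I foresee is controlling $d(O_p(M))$ purely in terms of $k$: combining $O_p(M)\leq\Phi(S)O_p(G)$ with $[H,O_p(G)]\leq R$ and the easy inclusion $\Phi(O_p(M))\leq\Phi(S)$ should force the Frattini quotient $O_p(M)/\Phi(O_p(M))$, viewed as an $H$-module on which $H$ acts trivially on the image of $V$, to be bounded in rank by a function of $k$; verifying this via a module-theoretic analysis of $O_p(M)/\Phi(O_p(M))$ using the trivial $H$-action on $V$ and the pro-$p'$ nature of $M/H$ is the technical heart of the argument.
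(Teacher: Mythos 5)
Your first half is sound: showing $E(G)=1$, hence $C_G(O_p(G))=Z(O_p(G))$, and deducing $O_{p'}(M)=1$ from $[O_{p'}(M),O_p(G)]\leq O_{p'}(M)\cap O_p(G)=1$ is a clean way to make Proposition \ref{locphiprop} give exactly the dichotomy ($E_p(M/R)=1$ by $p$-separability, so either $O_p(M)>O_p(G)$ or $S=O_p(G)$); this matches the paper's intent, which reaches the same point via Corollary \ref{tatecor}. Your reductions $|M/C_M(V)|\leq|\GL(k,p)|$, $[C_M(V),O_p(G)]\leq R$ and $O_p(C_M(V))=O_p(M)$ are also correct.

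The endgame, however, is a genuine gap, and the route you sketch cannot be repaired: bounding $|H/O_p(M)|$ through $\Delta(O_p(M))\leq\GL(d(O_p(M)),p)$ requires $d(O_p(M))$ to be controlled by $k=d(O_p(G)/R)$, and nothing in the hypotheses gives this -- $R$ may be enormous, so $d(O_p(G))$ (hence $d(O_p(M))$) is in no way bounded by $d(O_p(G)/R)$; your hoped-for claim that $O_p(M)/\Phi(O_p(M))$ has rank bounded by a function of $k$ is simply false in general, and you acknowledge this step is unverified. The point you are missing is that one never needs to bound a generator number of $O_p(M)$: the paper works in $M_3=M/O_p(M)$ and bounds \emph{all} of $M_3$, including its $p$-part, by a self-centralising argument. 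Concretely: in $M_1=M/R$ one has $O_{p'}(M_1)=1$ and, by $p$-separability, $F^*(M_1)=O_p(M)/R$, which contains its own centraliser; the lift $H$ of $O_{p'}(M/O_p(G))$ centralises $O_p(M)/O_p(G)$, so by coprime action the kernel of its action on the abelian group $O_p(G)/R$ is pro-$p$, whence $O_{p'}(M/O_p(G))$ acts faithfully on $O_p(G)/R$ and embeds in $\GL(k,p)$; the standing case assumption $d(S/O_p(M))=d(S/O_p(G))$ puts $O_p(M)/O_p(G)$ inside $\Phi(S/O_p(G))$, so Corollary \ref{tatecor} gives $O_{p'}(M_3)\cong O_{p'}(M/O_p(G))$; finally $O_p(M_3)=1$ and $p$-separability force $F^*(M_3)\leq O_{p'}(M_3)$, and since $F^*(M_3)$ contains its centraliser in the finite group $M_3$, the whole of $|M_3|=|M/O_p(M)|$ is bounded by a function of $p$ and $k$. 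Your $C_M(V)$ computation bounds only a $p'$-quotient of $M$ and cannot by itself control the $p$-part of $M/O_p(M)$; the self-centralising generalised Fitting subgroup is the missing ingredient.
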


\begin{proof}Suppose $O_p(M) = O_p(G)$.  Then $S/O_p(G)$ acts faithfully on $E_p(M/R)$ by Corollary \ref{opphi}.  But $M$ is $p$-separable, so $E_p(M/R)=1$, and hence $S = O_p(G)$.  We may now assume $d(S/O_p(M)) = d(S/O_p(G))$, since otherwise $O_p(M)$ must strictly contain $O_p(G)$ and $d(S/O_p(M)) < d(S/O_p(G))$.  Let $M_1 = M/R$.  Then $O_{p'}(M_1)=1$ by Corollary \ref{tatecor}, so $F^*(M_1)=O_p(M_1)$, since $M_1$ is $p$-separable; furthermore $O_p(M_1) = O_p(M)/R$.

Let $M_2 = M/O_p(G)$, and let $H$ be the lift of $O_{p'}(M_2)$ to $M$.  Then $H/R$ centralises $O_p(M)/O_p(G)$ since $O_p(M_2) \cap O_p(M)/O_p(G) = 1$, while $O_p(M)/R$ contains its own centraliser in $M_1$.  By coprime action, it follows that the kernel of the action of $H/R$ on $O_p(G)/R$ is a pro-$p$ group, namely $O_p(G)/R$ itself, since $O_p(G)/R$ is abelian by the choice of $R$.  Hence $O_p(G)/R$ admits a faithful action of $O_{p'}(M_2)$, and so $O_{p'}(M_2) \lesssim \GL(n,p)$, where $n = d(O_p(G)/R)$.

Let $M_3 = M/O_p(M)$.  Since $d(S/O_p(M)) = d(S/O_p(G))$, we have $O_{p'}(M_3) \cong O_{p'}(M_2)$ by Corollary \ref{tatecor}.  Furthermore, $O_p(M_3) =1$, so $F^*(M_3) \leq O_{p'}(M_3)$.  Hence $F^*(M_3) \lesssim \GL(n,p)$.  Since $F^*(M_3)$ contains its own centraliser in $M_3$, this ensures that $|M_3|$ is bounded by a function of $n$ and $p$ as required.\end{proof}

\begin{cor}\label{lepcor}Let $G$ be a $p$-separable $p'$-embedding of the pro-$p$ group $S$, with $d(S)=d$ finite.
\vspace{-12pt}
\begin{enumerate}[(i)] \itemsep0pt
\item Let $R = O_p(G) \cap \Phi(S)$, let $M = N_G(R)$, and suppose $O_p(G) < S$.  Then $O_p(M) > O_p(G)$.  Furthermore, either $d(S/O_p(M)) < d(S/O_p(G))$ or $|M/O_p(M)|$ is bounded by a function of $p$ and $d(S)$ (or both).
\item Given $P \unlhd_o S$, let $m_P$ be the number of conjugates of $P \cap \Phi(S)$ under the action of $\Aut(P)$.  Suppose that there is some $n$ such that $(m_P)_{p'} \leq n$ for all $P \unlhd_o S$.  Then $|G|_{p'}$ is bounded by $n^d f(d,p)$ for some function $f$ of $d$ and $p$.
\item Let $L = [S,O_p(G)]$ and suppose $L^G \leq \Phi(S)$.  Then $G$ is $p$-normal.\end{enumerate}\end{cor}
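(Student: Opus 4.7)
For (i), the subgroup $R = O_p(G) \cap \Phi(S)$ is normal in $S$ and satisfies $[S, O_p(G)] \le O_p(G) \cap S' \le O_p(G) \cap \Phi(S) = R$, so the hypotheses of Theorem \ref{pseplocal} are met; since $O_p(G) < S$ by assumption, that theorem delivers $O_p(M) > O_p(G)$ together with the stated dichotomy. To translate its bound from $d(O_p(G)/R)$ to $d$, observe that $O_p(G)/R \cong O_p(G)\Phi(S)/\Phi(S)$ embeds in $S/\Phi(S)$, so $d(O_p(G)/R) \le d$.

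Before tackling (ii), I would record one observation to be used throughout: any $p$-separable $p'$-embedding $H$ of $S$ is automatically layer-free. Indeed, a component $Q$ of $H$ is quasisimple with $Q/Z(Q)$ a non-abelian simple section of $H$, and $p$-separability forces $|Q/Z(Q)|$ to be coprime to $p$; by Corollary \ref{qscor} $|Q|$ is then coprime to $p$, so the subnormal pro-$p'$ subgroup $Q$ lies in $O_{p'}(H) = 1$. Combined with Lemma \ref{eliglem}(i), this gives $C_H(O_p(H)) \le O_p(H)$.

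For (ii), I induct on $k := d(S/O_p(G))$, which is at most $d$. The base case $k = 0$ has $O_p(G) = S$, so $G/S$ embeds in $\Aut(S)$ and, by Theorem \ref{cinvthm}(ii), acts faithfully on $S/\Phi(S)$, giving $|G|_{p'} \le |\GL(d,p)|_{p'}$. For the inductive step, let $M = N_G(R)$ as in (i). The conjugation action of $G$ on $O_p(G)$ factors through the natural map $G \to \Aut(O_p(G))$; comparing the $G$-orbit of $R$ (of size $|G:M|$) with the $\Aut(O_p(G))$-orbit (of size $m_{O_p(G)}$) via the second isomorphism theorem shows $|G:M|$ divides $m_{O_p(G)}$, so $|G:M|_{p'} \le (m_{O_p(G)})_{p'} \le n$. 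If the bounded-$|M/O_p(M)|$ branch of (i) holds we conclude immediately; otherwise $d(S/O_p(M)) < k$, and I pass to $G_1 = M/O_{p'}(M)$, which is again a $p$-separable $p'$-embedding of $S$ with $O_p(G_1) \cong O_p(M)$, and apply the inductive hypothesis.

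The main obstacle is bounding the discarded factor $|O_{p'}(M)|$ independently of the induction depth. Here the layer-free observation saves us: since $C_G(O_p(G)) \le O_p(G)$, we have $C_{O_{p'}(M)}(O_p(G)) \le O_{p'}(M) \cap O_p(G) = 1$, and by Theorem \ref{cinvthm}(ii) the resulting faithful action of $O_{p'}(M)$ on $O_p(G)$ descends to a faithful action on $O_p(G)/\Phi(O_p(G))$, yielding $|O_{p'}(M)| \le |\GL(d,p)|_{p'}$. Hence each of the at most $d$ induction steps contributes one factor of $n$ and one factor depending only on $(d,p)$, giving $|G|_{p'} \le n^d f(d,p)$ as required. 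Finally for (iii), set $R = L^G$: then $R \unlhd G$, $R \le \Phi(S)$ by hypothesis, and $[S,O_p(G)] = L \le R$, so Corollary \ref{opphi}(i) applies. Since $G/R$ inherits $p$-separability, the same reasoning as in paragraph two forces $E_p(G/R) = 1$, and the ``in particular'' clause of Corollary \ref{opphi}(i) concludes directly that $G$ is $p$-normal.
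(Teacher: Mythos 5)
Parts (i) and (iii) of your proposal are correct and follow the paper's own route: (i) is exactly the specialisation of Theorem \ref{pseplocal} to $R = O_p(G) \cap \Phi(S)$ together with the observation that $O_p(G)/R$ embeds in $S/\Phi(S)$, and your proof of (iii) simply inlines the relevant portion of Theorem \ref{pseplocal} (Corollary \ref{opphi}(i) plus the fact that $p$-separability forces $E_p(G/L^G)=1$), which is how that theorem is itself proved. Your preliminary observation that a $p$-separable $p'$-embedding is layer-free, hence satisfies $C_G(O_p(G)) \le O_p(G)$, is also correct and is genuinely needed.

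For (ii) your iteration is the same as the paper's --- repeatedly replace $G$ by the normaliser of $O_p(\cdot)\cap\Phi(S)$, at most $d$ times, each step costing a factor of at most $n$ in the $p'$-index, with the terminal term bounded via Theorem \ref{pseplocal} --- but there is a genuine gap where you bound the discarded factor $|O_{p'}(M)|$. The faithful action of $O_{p'}(M)$ on $O_p(G)/\Phi(O_p(G))$ gives an embedding into $\GL(d(O_p(G)),p)$, not $\GL(d,p)$: the Frattini quotient of $O_p(G)$ has rank $d(O_p(G))$, and since $O_p(G)$ is merely an open normal subgroup of $S$ of unbounded index, $d(O_p(G))$ is not bounded by any function of $d$ and $p$ (Schreier only gives $d(O_p(G)) \le |S:O_p(G)|(d-1)+1$). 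As written, the per-step factor is therefore not a function of $(d,p)$ and the claimed bound $n^d f(d,p)$ does not follow. The repair is one line using facts you already have: $O_{p'}(M)$ and $O_p(M)$ are normal subgroups of $M$ with trivial intersection, so $[O_{p'}(M),O_p(M)]=1$, whence $O_{p'}(M) \le C_G(O_p(G)) \le O_p(G)$ and therefore $O_{p'}(M)=1$. (Equivalently, $M$ is a strong $p$-local subgroup of the layer-free $p'$-embedding $G$, so $M \in \EmbLF(S)$ by the lemma in Section 6.3.) This is what the paper relies on implicitly: its iterated subgroups $G_i$ are intersections of strong $p$-local subgroups of $G$ and so remain $p'$-embeddings of $S$, which is why it never has to pass to quotients or control $|O_{p'}(M)|$ at all.
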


\begin{proof}(i) Note that $d(O_p(G)/R) \leq d(S)$.  The conclusion is now a special case of Theorem \ref{pseplocal}.

(ii) Let $G_0 = G$, and thereafter set $G_{i+1} = N_{G_i}(O_p(G_i) \cap \Phi(S))$, repeating until we reach either a $p$-normal $G_i$, or a $G_i$ such that $d(S/O_p(G_i)) = d(S/O_p(G_{i-1}) > 0$.  One of these must happen before we reach $G_d$, so there is a last term $G_j$ say with $j \leq d$.  Now $|G_i:G_{i+1}|$ is at most $n$ for all $i$; furthermore $|G_j|_{p'}$ is bounded by a function of $d$ and $p$ by Theorem \ref{pseplocal}.  Hence $|G|_{p'} \leq n^j |G_j|_{p'} \leq n^d f(d,p)$.

(iii) Suppose $L^G \leq \Phi(S)$.  Then $L^G$ satisfies the conditions of Theorem \ref{pseplocal}, and $O_p(N_G(L^G)) = O_p(G)$ since $L^G$ is normal, so $O_p(G)=S$.\end{proof}

Now let $G$ be a $p'$-embedding with a layer.  It would be useful to obtain a layer-free $p'$-embedding $H$ satisfying $H \lep G$, such that $H$ retains as much as possible of the structure of $G$, so that we can use properties of layer-free embeddings to control the structure of $G$.

\begin{prop}Let $G$ be a $p'$-embedding of the finitely generated pro-$p$ group $S$.  Then there is a subgroup $H$ of $G$ containing both $S$ and $C_G(E(G))$, such that $H/O_{p'}(H) \lep G$ and $E(H/O_{p'}(H)) = 1$.\end{prop}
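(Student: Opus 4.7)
I propose $H := C_G(E(G))\cdot S$, where $K := C_G(E(G))$. By Lemma \ref{eliglem}(iii), the layer $E(G)$ is finite (each component has order divisible by $p$ by Corollary \ref{qscor} and $O_{p'}(G)=1$, and the bound from Theorem \ref{schurmult} combined with finiteness of the $S$-orbits on $\Comp(G)$ forces $E(G)$ itself to be finite), so $K$ is an open normal subgroup of $G$, and $H$ is a well-defined open subgroup of $G$ containing both $S$ and $K$ by construction.

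To verify $E(H/O_{p'}(H)) = 1$, first observe that $K\unlhd G$ forces $O_{p'}(K)\unlhd G$ to be pro-$p'$, hence $O_{p'}(K)\le O_{p'}(G)=1$. By Corollary \ref{opicor}(ii) applied to $K\unlhd G$, we have $E(K) = E(G)\cap K = E(G)\cap C_G(E(G)) = Z(E(G))$, which is abelian; perfectness of $E(K)$ then forces $E(K)=1$. Applying Corollary \ref{opicor}(ii) once more with $K\unlhd H$ gives $E(H)\cap K = E(K) = 1$, and the same reasoning gives $O_{p'}(H)\cap K\le O_{p'}(K)=1$. Thus both $E(H)$ and $O_{p'}(H)$ embed into $H/K\cong S/(S\cap K)$, a pro-$p$ group, and perfectness of $E(H)$ together with the pro-$p'$ nature of $O_{p'}(H)$ force both to be trivial. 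Hence $H/O_{p'}(H) = H$ and $E(H/O_{p'}(H)) = 1$.

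The remaining and principal task is to show $H\lep G$, which I plan to establish by induction on $|E(G)|$. The base case $E(G)=1$ is immediate: then $K=G$, $H=G$, and $G\lep G$ trivially. For the inductive step I seek a strong $p$-local subgroup $M = N_G(P)$ with $P\unlhd_o S$ satisfying (a) $K\leq M$ (equivalently, $P$ is $K$-invariant) and (b) some component $Q$ of $G$ fails to normalise $P$, so that $Q\not\leq M$ and $E(M/O_{p'}(M))$ is a proper sub-layer of $E(G)$ (by Lemma \ref{compphi} together with Theorem \ref{opilayer} to control that no new components appear in the quotient). Applying the inductive hypothesis to $M/O_{p'}(M)\in\Emb(S)$ and pulling back along $M\to M/O_{p'}(M)$ (using $K\cap O_{p'}(M)\leq O_{p'}(K)=1$ to lift the containment of $K$) yields the required subgroup of $M\leq G$, with $H\lep G$ following by transitivity of $\lep$.

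The principal obstacle lies in constructing $P$: $K$-invariance and component-breaking pull in opposite directions, since $K$ centralises every component and hence fixes the component decomposition. The idea is to pick an $S$-orbit $\Omega$ of components with join $R = \langle\Omega\rangle$ and to take $P$ to be a sufficiently deep open normal subgroup of $S$ defined purely in terms of $R\cap S$ and iterated Frattini data (so that $K$-invariance is automatic via the trivial action of $K$ on $R$), chosen so that $P$ omits a generator of $(R\cap S)\Phi(S)/\Phi(S)$. Corollary \ref{opphi}(ii) — which guarantees $E^*_p(G)\cap S\not\leq\Phi(S)$ — then ensures that such a generator exists, whence by Lemma \ref{compphi} at least one component of $\Omega$ fails to normalise $P$. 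Verifying this construction in detail and confirming the strict decrease of $|E|$ at each step, with no spurious new components introduced in $M/O_{p'}(M)$, is the technical heart of the argument.
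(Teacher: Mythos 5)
The part of your argument that you actually carry out is sound in substance: with $K=C_G(E(G))$ (open and normal in $G$, since $E(G)$ is finite), the group $H=KS$ does satisfy $O_{p'}(H)=1$ and $E(H)=1$. One caveat: your step $E(H)\cap K=E(K)$ uses the inclusion $E(H)\cap K\leq E(K)$, which is not what the proof of Corollary \ref{opicor}(ii) establishes (it only proves $E(N)\leq E(G)\cap N$, and the reverse inclusion fails in general, e.g.\ for $N=Z(Q)$ with $Q$ a component). This is repairable without that corollary: $O_p(G)\unlhd H$, so $[E(H),O_p(G)]=1$ by Theorem \ref{opilayer}, while $C_K(O_p(G))\leq C_G(F^*(G))=Z(O_p(G))$ by Lemma \ref{eliglem}(i); hence $E(H)\cap K$ is pro-$p$, so $E(H)$ is a perfect pro-$p$ group and therefore trivial.

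The genuine gap is the claim $H\lep G$, which you yourself leave as the ``technical heart'', and the plan you sketch does not close it. First, your $P$ must be an \emph{open} normal subgroup of $S$; anything defined purely in terms of the finite group $R\cap S$ cannot be open, and as soon as you bring in Frattini-type data of $S$ to make it open, $K$-invariance is no longer automatic, because $K$ does not normalise $S$. Second, the decisive assertions -- that $E(M/O_{p'}(M))$ is a proper sub-layer of $E(G)$, that no new components arise in $M/O_{p'}(M)$, and that the image of $K$ lies in the centraliser of $E(M/O_{p'}(M))$ (needed both for the strict decrease of your induction parameter and to lift the containment of $C_G(E(G))$ through the inductive hypothesis) -- are statements of $L_{p'}$-balance type about components of $p$-local subgroups; nothing in the paper supplies them and they are not routine. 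Third, your announced candidate $H=KS$ is not what your induction would return anyway (the pullback from $M/O_{p'}(M)$ contains $O_{p'}(M)$), and there is no a priori reason that the specific subgroup $KS$ is, modulo its $p'$-core, an intersection of strong $p$-local subgroups, which is the only mechanism generating $\lep$. The paper avoids all of this with a different scheme: it iterates $G_{i+1}=N_{G_i}\bigl((S\cap E_i)O_p(G)\bigr)$, where $E_i$ is the lift of the layer of $G_i/O_{p'}(G_i)$; it shows $C_G(E(G))\leq G_i$ for every $i$ by playing the trivial action of $E_i$ on $O_p(G)$ against the faithful action of $C_G(E(G))/Z(O_p(G))$ on $O_p(G)$; it terminates because $|G:S|$ is finite; and it kills the layer of the terminal quotient by a direct quasisimplicity argument. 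In particular, components appearing at intermediate stages are simply fed back into the iteration, so no control of how layers behave under passage to local subgroups is ever required.
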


\begin{proof}Form a descending sequence of subgroups of $G$ inductively as follows.  Start with $G_0 = G$.  Let $E_i$ be the lift of $E(G_i/O_{p'}(G_i))$ to $G_i$, and let $K_i = (S \cap E_i)(O_p(G) \cap G_i)$.  Now set $G_{i+1} = N_{G_i}(K_i)$.  By induction, it is clear that each $G_i$ contains $S$, so in fact $K_i = (S \cap E_i)O_p(G)$.

Let $O$ be the lift of $O_p(G_i/O_{p'}(G_i))$ to $G_i$.  Then $[O_p(G),E_i] \leq [O,E_i] \leq O_{p'}(G_i)$; this means in particular that $E_i$ centralises $O_p(G)$.  By contrast, $C_G(E(G))/Z(O_p(G))$ acts faithfully on $O_p(G)$, since $G/Z(O_p(G))$ acts faithfully on $F^*(G)$.  We now have two normal subgroups $E_i Z(O_p(G))/Z(O_p(G))$ and $C_G(E(G))/Z(O_p(G))$ of $G_i/Z(O_p(G))$, of which one acts faithfully on $O_p(G)$ and the other centralises $O_p(G)$; it follows that these normal subgroups have trivial intersection and hence commute.  In particular, 
\[ [C_G(E(G)),K_i] \leq [C_G(E(G)),E_i]O_p(G) \leq O_p(G) \leq K_i, \]
so $C_G(E(G)) \leq N_G(K_i)$.  Since this holds for all $i$, we have $C_G(E(G)) \leq G_i$ for all $i$.
 
Since $S$ has finite index in $G$, the sequence $G_i$ of subgroups will eventually terminate, that is, $K_i \unlhd G_i$ for some $i$.  Set $H = G_i$ and $M = H/O_{p'}(H)$; note that $H$ contains both $S$ and $C_G(E(G))$.  Suppose $E(M) > 1$, and let $Q/O_{p'}(H) \in \Comp(M)$, with $Z/O_{p'}(H) = Z(Q/O_{p'}(H))$.  Then $S \cap Q$ is not contained in $Z$, which means that $[S \cap Q,Q]O_{p'}(H) \geq Q$, since $Q/O_{p'}(H)$ is quasisimple; as $K_i$ contains $S \cap Q$, this ensures $[K_i,Q]O_{p'}(H) \geq Q$, whereas $Q$ is not contained in $K_iO_{p'}(H)$.  Hence $Q$ does not normalise $K_iO_{p'}(H)$, a contradiction.  Thus $E(M)=1$.  Finally, $H$ is an intersection of strong $p$-local subgroups of $G$ by its construction, and so $M \lep G$.\end{proof}

\begin{rem}Given any profinite group $G$ with finite layer $E(G)$, the index of $C_G(E(G))$ must divide $|\Aut(E(G))|$, which is itself finite.  The subgroup $H$ obtained in the proof of the above proposition is uniquely determined as a subgroup of $G$ by the choice of Sylow subgroup $S$; in particular, its isomorphism type is uniquely determined.\end{rem}

\section{$p'$-embeddings of $\CT_p$-groups}

Recall the concept of control of $p$-transfer, as described in Section \ref{ptransprelim}, and the equivalent definitions arising from Theorem \ref{tate}.

\begin{defn}Define the class $\CT_p$ to consist of those finitely generated pro-$p$ groups $S$ such that $N_G(S)$ controls $p$-transfer in $G$ for any profinite group $G$ that has $S$ as a $p$-Sylow subgroup.\end{defn}

In this section, we will consider the consequences that control of $p$-transfer has for the structure of $p'$-embeddings.  As motivation for why this property might be worth investigating, consider Theorem \ref{yosh} below.

\begin{defn}A finitely generated pro-$p$ group $S$ is \emph{weakly regular} if it has no quotient isomorphic to $C_p \wr C_p$.\end{defn}

\begin{thm}[Yoshida \cite{Yosh} (finite version); Gilotti, Ribes, Serena \cite{GRS} (profinite version)]\label{yosh}Every weakly regular pro-$p$ group is a $\CT_p$-group.\end{thm}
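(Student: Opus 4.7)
The plan is to reduce first to the finite case and then invoke Yoshida's original finite-group theorem. Given a profinite $G$ with $S \in \Sylp(G)$, I would work with the formulation $(G'G^p) \cap S = (H'H^p) \cap S$ from Theorem \ref{tate}, where $H = N_G(S)$. Since $S$ is finitely generated, $S/S'S^p$ is a finite elementary abelian $p$-group, so the equation to verify involves only finite data. I would check it modulo every open normal $N \unlhd G$ with $N \cap S \leq S'S^p$. In $G/N$ the Sylow subgroup $SN/N \cong S/(S \cap N)$ is a quotient of $S$ and hence still has no $C_p \wr C_p$ quotient; the hypothesis $N \cap S \leq S'S^p$ guarantees that $S/S'S^p$ maps isomorphically onto $(SN/N)/((SN/N)'(SN/N)^p)$. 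Since $N_G(S)N/N \leq N_{G/N}(SN/N)$, applying the finite Yoshida theorem to $G/N$ identifies the images of $G'G^p \cap S$ and of $H'H^p \cap S$ modulo $S'S^p$; taking the intersection over $N$ then yields the profinite identity.

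For the finite theorem I would invoke the transfer homomorphism $V : G \to S/S'S^p$, whose kernel intersected with $S$ equals $(G'G^p) \cap S$ by the focal subgroup theorem. The image $V(G)$ is characterised by the Cartan--Eilenberg stable-element condition: a coset $sS'S^p$ lies in $V(G)$ iff, for every $p$-subgroup $P \leq S$ containing $s$ and every $g \in G$ with $P^g \leq S$, the images of $s$ and $s^g$ coincide in $S/S'S^p$. The analogous description applies to $V(H)$ with $g$ restricted to $H$. Thus the goal reduces to showing that under weak regularity every stable element is already stable with respect to $H$-fusion alone, giving $V(G) = V(H)$ and hence the Tate equation.

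The hard part will be the weak-regularity step. I would proceed by contradiction: suppose $N_G(S)$ fails to control $p$-transfer, so some $G$-fusion in $S/S'S^p$ is not induced by $H$. By Alperin's fusion theorem this fusion can be localised to conjugation inside $N_G(P)$ for some proper $p$-subgroup $P < S$ with $N_S(P)$ a $p$-Sylow of $N_G(P)$. Using Sylow theory within $N_G(P)/P$ and the action of a suitable $p'$-element on the $G$-orbit of $P$ in $S$, one extracts a wreath-type configuration $N_S(P) \wr C_p$ inside a Sylow subgroup of the subgroup of $G$ generated by $S$ and this orbit, and passing to the quotient by $\Phi(N_S(P))$ and the commutators one finds a section of $S$ that surjects onto $C_p \wr C_p$, contradicting weak regularity. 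This combinatorial extraction is the technical core of Yoshida's paper; the profinite case of Gilotti--Ribes--Serena amounts essentially to the inverse-limit reduction of the first paragraph together with this finite input.
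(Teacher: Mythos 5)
The thesis does not prove Theorem \ref{yosh}: it is quoted as known, with the finite case due to Yoshida and the profinite case to Gilotti--Ribes--Serena, so the only question is whether your argument stands on its own. Your first two paragraphs (the inverse-limit reduction together with the Tate/focal reformulation) are essentially the Gilotti--Ribes--Serena reduction and are sound, with two points you should make explicit: the open normal subgroups $N$ with $N \cap S \leq S'S^p$ are cofinal among all open normal subgroups because $S'S^p = \Phi(S)$ is open in the finitely generated group $S$, so intersecting over them really does recover the closed subgroup $(H'H^p) \cap S$; and you need the equality $N_{G/N}(SN/N) = N_G(S)N/N$ (a Frattini/Sylow argument inside $SN$), not merely the inclusion you state, since the finite theorem applied to $G/N$ gives control of transfer by the full normaliser of $SN/N$, and if that were strictly larger than the image of $H$ the desired containment of $(G'G^p)\cap S$ in $(H'H^p)\cap S$ would not follow.

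The genuine gap is your third paragraph, i.e.\ the finite theorem itself. Producing a section of $S$ that surjects onto $C_p \wr C_p$ does not contradict weak regularity: the definition forbids quotients of $S$ isomorphic to $C_p \wr C_p$, and the class of finite $p$-groups with no such quotient is not closed under passing to sections --- for $p=2$ the central product $D_8 * C_4$ of order $16$ contains $D_8 = C_2 \wr C_2$ as a subgroup, yet its only normal subgroup of order $2$ is its derived subgroup, so every quotient of order $8$ is abelian and the group is weakly regular. So the heart of the matter is to manufacture an honest quotient of $S$ isomorphic to $C_p \wr C_p$, and that is precisely the substantive content of Yoshida's theorem; it does not fall out of Alperin's fusion theorem by a routine wreath-product extraction, and Yoshida's own proof is character-theoretic (via transfer of characters) rather than the fusion-combinatorial argument you sketch. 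Either cite the finite theorem outright, as the thesis does, or be prepared to supply a genuinely nontrivial argument at this point.
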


\begin{rem}Note that $S$ is weakly regular if and only if $S/\Phi(\Phi(S))$ is weakly regular, since $\Phi(\Phi(C_p \wr C_p))=1$.  It is shown in \cite{GRS} that a powerful pro-$p$ group is necessarily weakly regular.\end{rem}
 
Our first goal is the following theorem.

\begin{thm}\label{abecentthm}Let $S \in \CT_p$, and let $G \in \EmbLF(S)$.  Let $H=S[G,S]$, and let $P=O_p(G)$.  Then:
\vspace{-12pt}
\begin{enumerate}[(i)] \itemsep0pt
\item any abelian $p'$-subgroup of $G/P$ that is normalised by $H/P$ is centralised by $H/P$;
\item $F(H/P)$ has nilpotency class at most $2$.\end{enumerate}\end{thm}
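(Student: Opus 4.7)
The plan is to reduce (i) to showing $[S,A]\le P$, and then apply the $\CT_p$-hypothesis inside a Frobenius-like subgroup $VS$ to derive a contradiction via the layer-free constraint. Since $H=S[G,S]=S^G$ is the normal closure of $S$ in $G$ (hence normal in $G$), $\bar H=H/P$ is generated by the $\bar G$-conjugates of $\bar S=S/P$. For any $g\in G$, the conjugate $\bar A^{g^{-1}}$ is still normalised by $\bar H$ (as $H\unlhd G$), and $\bar S^g$ centralises $\bar A$ iff $\bar S$ centralises $\bar A^{g^{-1}}$. Hence it suffices to show, for every abelian $p'$-subgroup $\bar A$ of $\bar G$ normalised by $\bar H$, that $[S,A]\le P$.

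Suppose for contradiction $V:=[S,A]P>P$. By coprime action of $\bar S$ on the abelian $p'$-group $\bar A$ (Theorem \ref{cinvthm}(iii)), $\bar V=V/P$ is a non-trivial $\bar S$-invariant abelian $p'$-subgroup of $\bar A$ with $\bar V=[\bar S,\bar V]$ and $C_{\bar V}(\bar S)=1$. Set $M:=VS$; then $V\unlhd M$, $S\in\Sylp(M)$, and $\bar M=\bar V\rtimes\bar S$ is a Frobenius group, so $N_M(S)=S$. By $S\in\CT_p$ applied to $M$, the subgroup $S=N_M(S)$ controls $p$-transfer in $M$, and Corollary \ref{ptranscomp} gives that $M$ has a normal $p'$-Hall subgroup $N$ (necessarily abelian, isomorphic to $\bar V$). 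Since $P\unlhd G$ and $N\le G$ we have $[P,N]\le P$; since $N\unlhd M$ and $P\le M$ we have $[P,N]\le N$. Hence $[P,N]\le P\cap N=1$, so $N\le C_G(P)$. But $G\in\EmbLF(S)$ gives $F^*(G)=F(G)=P$, whence Lemma \ref{eliglem}(i) yields $C_G(P)=Z(P)\le P$. Combined with $N\cap P=1$, this forces $N=1$, contradicting $\bar V\ne 1$.

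For (ii), note that $O_p(H/P)\le O_p(G/P)=1$ (since $H/P\unlhd G/P$ and $P=O_p(G)$), so $F(H/P)=O_{p'}(H/P)$. Applying (i) to the characteristic abelian $p'$-subgroup $Z(F(H/P))$ of $\bar G$ gives $Z(F(H/P))\le Z(H/P)$. It remains to establish $F(H/P)'\le Z(F(H/P))$, which by another application of (i) reduces to showing that $F(H/P)'$ is abelian: for then it is an $\bar H$-invariant abelian $p'$-subgroup, hence centralised by $\bar H$ by (i), hence contained in $Z(H/P)\cap F(H/P)\le Z(F(H/P))$. I would prove $F(H/P)'$ abelian by mimicking the Frobenius construction of (i) within each pro-$q$ factor $O_q(H/P)$ of $F(H/P)=\prod_{q\neq p}O_q(H/P)$, exploiting the coprime action of $\bar S$ (Theorem \ref{cinvthm}) together with the faithful action of $\bar G$ on $P$ used in step 8 above.

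The main obstacle is precisely this sub-claim that $F(H/P)'$ is abelian: (i) alone only handles abelian $\bar H$-invariant subgroups, so one must analyse the lower central (or derived) series inside each $O_q(H/P)$ and propagate centrality from the characteristic abelian sections back up to the derived subgroup, using the same $P$-versus-$p'$-Hall rigidity that made the Frobenius argument in (i) collapse.
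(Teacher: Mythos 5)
Your proof of part (i) is essentially the paper's own argument: reduce to showing $[S,A]\le P$ (the paper phrases this as $[K,S]\le P$ for the preimage $K$ of the abelian subgroup, then passes to conjugates of $S$ to get all of $H/P$), use coprime action (Theorem \ref{cinvthm}(iii)) to get $N_M(S)=S$ for $M=S[S,A]$, invoke $S\in\CT_p$ together with Corollary \ref{ptranscomp} to produce a normal $p'$-Hall subgroup of $M$, and kill it using the fact that in a layer-free $p'$-embedding $O_p(G)$ is self-centralising (Lemma \ref{eliglem}(i)); this last step is exactly the paper's Lemma \ref{selftranslem}. One small imprecision: $\bar V\rtimes\bar S$ need not be a Frobenius group, since you only know $C_{\bar V}(\bar S)=1$ rather than fixed-point-freeness of every non-trivial element of $\bar S$; but the conclusion you actually need, $N_M(S)=S$, follows directly from $C_{\bar V}(\bar S)=1$ together with $\bar V\cap\bar S=1$, so this is cosmetic.

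Part (ii), however, has a genuine gap. You reduce the statement to the sub-claim that $T':=F(H/P)'$ is abelian; but given (i), that sub-claim is equivalent to the statement itself (class at most $2$ means precisely $T'\le Z(T)$, and conversely $T'\le Z(T)$ forces $T'$ abelian), so the reduction gains nothing, and the sketched strategy of mimicking the Frobenius construction inside each $O_q(H/P)$ and ``propagating centrality'' is not an argument --- as you acknowledge. The missing idea is a short lower-central-series argument that uses (i) directly: $T=F(H/P)$ is a finite nilpotent $p'$-group (you correctly note $O_p(H/P)=1$); if its class $c$ were at least $3$, then $[\gamma_{c-1}(T),\gamma_{c-1}(T)]\le\gamma_{2c-2}(T)=1$ because $2c-2\ge c+1$, so $\gamma_{c-1}(T)$ is an abelian $p'$-subgroup; it is characteristic in $T$ and hence normalised by $H/P$, so by (i) it is centralised by $H/P$, in particular central in $T$, forcing $\gamma_c(T)=[\gamma_{c-1}(T),T]=1$ and contradicting the definition of $c$. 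This is how the paper completes part (ii), and it closes the hole in your proposal.
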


We begin the proof with a lemma.

\begin{lem}\label{selftranslem}Let $S$ be a pro-$p$ group, let $G \in \EmbLF(S)$, and let $S \leq K \leq G$.  Suppose $S$ controls $p$-transfer in $K$.  Then $S = K$.\end{lem}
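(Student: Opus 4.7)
The plan is to use control of $p$-transfer to extract a normal $p'$-Hall subgroup of $K$, and then show it must be trivial because of the layer-free hypothesis on $G$.

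First I would invoke Corollary \ref{ptranscomp}: since $S$ is a $p$-Sylow subgroup of $K$ and $S$ itself controls $p$-transfer in $K$, the group $K$ is $p'$-normal, that is, $L := O^p(K)$ is a normal $p'$-Hall subgroup of $K$ with $K = LS$ and $L \cap S = 1$. The task is then to prove $L = 1$.

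Next I would locate $O_p(G)$ inside $K$. Since $O_p(G) \unlhd G$ and $S \in \Sylp(G)$, we have $O_p(G) \leq S \leq K$, and $O_p(G)$ is normal in $K$ as it is already normal in $G$. Now $L$ and $O_p(G)$ are two normal subgroups of $K$ whose orders are coprime, so $[L, O_p(G)] \leq L \cap O_p(G) = 1$, i.e.\ $L$ centralises $O_p(G)$.

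To close the argument I would exploit the layer-free hypothesis. Because $G \in \EmbLF(S)$, we have $O_{p'}(G) = 1$ and $E(G) = 1$, so $F^*(G) = F(G) = O_p(G)$. By Lemma \ref{eliglem}(i), $G \in \FR$, and then by Theorem \ref{frcent}, $C_G(F^*(G)) = Z(F(G)) = Z(O_p(G))$. Combining this with the previous step gives $L \leq C_G(O_p(G)) = Z(O_p(G))$, so $L$ is simultaneously a pro-$p'$ group and a pro-$p$ group, forcing $L = 1$ and hence $K = S$.

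The main conceptual work has already been done elsewhere in the text, so no step here is hard: Tate's theorem supplies the normal $p'$-Hall subgroup essentially for free, and the layer-free condition together with Fitting-regularity provides exactly the self-centralising property of $F^*(G)$ that kills $L$. The only point that requires any care is verifying $F^*(G) = O_p(G)$ from the definitions of $\EmbLF(S)$; this is immediate once one notes $O_r(G) \leq O_{p'}(G) = 1$ for $r \neq p$.
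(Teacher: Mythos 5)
Your proof is correct and is essentially the paper's own argument: Corollary \ref{ptranscomp} produces the normal $p'$-Hall complement $O^p(K)=O_{p'}(K)$, which centralises $O_p(G)$, and the self-centralising property of $O_p(G)=F^*(G)$ (coming from layer-freeness together with Fitting-regularity via Lemma \ref{eliglem} and Theorem \ref{frcent}) forces that complement to be trivial, so $K=S$. You have merely written out in detail the steps the paper compresses into two sentences.
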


\begin{proof}By Corollary \ref{ptranscomp}, $O^p(K)=O_{p'}(K)$ is a complement to $S$ in $K$.  But $O_{p'}(K)$ acts trivially on $O_p(G)$, whereas $O_p(G)$ contains its own centraliser in $G$; thus $O_{p'}(K) = 1$, so $S=K$.\end{proof}

\begin{proof}[Proof of Theorem](i) It suffices to consider abelian $q$-subgroups of $G/P$, where $q \in p'$.  Let $K \leq G$ such that $K'O^{q}(K) \leq P$ and $[K,H] \leq KP$; it is clear that this accounts for all abelian $q$-subgroups of $G/P$ that are normalised by $H/P$.  Then $N_{K/P}(S/P) = C_{K/P}(S/P)$, and $[K/P,S/P] \cap C_{K/P}(S/P) =1$ by part (iii) of Theorem \ref{cinvthm}; it follows that $N_{[K,S]}(S) \leq P$.  Hence $N_M(S) = S$, where $M = S[K,S]$.  Since $S \in \CT_p$, this ensures $S$ controls $p$-transfer in $M$, so $M=S$ by Lemma \ref{selftranslem}.  Thus $[K,S] \leq K \cap S \leq P$.  The same argument shows that $K/P$ commutes with every $p$-Sylow subgroup of $G/P$.  But $H/P$ is generated by these $p$-Sylow subgroups by construction, so $K/P$ is centralised by $H/P$.

(ii) Write $T = F(H/P)$.  Since $H/P$ is finite, $T$ is nilpotent.  Let $c$ be the nilpotency class of $T$, and assume $c > 2$.  Then $\gamma_{c-1}(T)$ is abelian, since $[\gamma_{c-1}(T),\gamma_{c-1}(T)] \leq \gamma_{2c-2}(T)$, and $2c-2 = c + (c-2) > c$; thus $\gamma_{c-1}(T)$ is central.  But then $\gamma_c(T)=1$, contradicting the definition of $c$.\end{proof}

\begin{cor}\label{fnilpcor}Let $S \in \CT_p$, and let $G$ be a prosoluble $p'$-embedding of $S$.  Let $H=S[G,S]$, and let $P=O_p(H)$.  Then either $G$ is $p$-normal, or $F(H/P)$ has nilpotency class exactly $2$.\end{cor}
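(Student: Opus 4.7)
The plan is to adapt the proof of Theorem~\ref{abecentthm} with $P = O_p(H)$ in place of $O_p(G)$, and then extract $p$-normality from abelianity of $F(H/P)$. First I would check that $H$ is itself a layer-free $p'$-embedding of $S$: since $G$ is prosoluble it contains no quasisimple subgroups, so $E(G)=1$ and $G \in \EmbLF(S)$, giving $F^*(G) = O_p(G)$; any $p'$-subgroup of $H$ centralises the $H$-normal pro-$p$ subgroup $O_p(G)$, so Lemma~\ref{eliglem}(i) yields $O_{p'}(H) \leq C_G(F^*(G)) = Z(O_p(G))$, a pro-$p$ group, and hence $O_{p'}(H) = 1$. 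Thus $H \in \EmbLF(S)$ and $H/P$ is finite.

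Next I would re-run the proof of Theorem~\ref{abecentthm}(i)--(ii) with $P = O_p(H)$. Given $K \unlhd H$ with $K/P$ an abelian $q$-group ($q \neq p$), the crucial observation is that $P$ is the unique $p$-Sylow of $K$, so $K \cap S^h = P$ for every $H$-conjugate $S^h$ of $S$. The original argument then transfers verbatim: coprime action (Theorem~\ref{cinvthm}(iii)) reduces matters to showing $N_{M_h}(S^h) = S^h$ for $M_h = S^h[K, S^h]$, the $\CT_p$ property of $S^h$ gives $S^h$ control of $p$-transfer in $M_h$, and Lemma~\ref{selftranslem} applied inside $G \in \EmbLF(S)$ forces $M_h = S^h$, so $[K, S^h] \leq P$. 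Because every $G$-conjugate of $S$ lies in $H$ (as $s^g = s[g,s]^{-1} \in S[G,S] = H$) and hence is a $p$-Sylow of $H$, and $H$ is generated by these Sylows, we obtain $[K, H] \leq P$. The class bound of part~(ii) then carries over: $T := F(H/P)$ is a $p'$-group since $O_p(H/P) = 1$, and if its class $c$ were at least $3$ then $\gamma_{c-1}(T)$ would be an abelian normal $p'$-subgroup of $H/P$ centralised by $H/P \supseteq T$, yielding the contradiction $\gamma_c(T) = 1$. Hence $F(H/P)$ has class at most $2$.

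The step I expect to be most delicate is extracting $p$-normality from abelianity of $F(H/P)$. If $F(H/P)$ is abelian, then by the adapted form of part~(i) it is centralised by $H/P$; since $H/P$ is finite soluble with $O_p(H/P) = 1$ and trivial layer, we have $F^*(H/P) = F(H/P)$, and Theorem~\ref{finopil}(iii) gives $C_{H/P}(F(H/P)) \leq F(H/P)$, forcing $H/P = F(H/P)$ to be abelian $p'$. The image of $S$ in $H/P$ is then trivial, so $S = O_p(H) \unlhd H$; as every $G$-conjugate of $S$ lies in $H$ and is a $p$-Sylow of $H$, the uniqueness of the $p$-Sylow forces $S^g = S$ for every $g \in G$, so $S \unlhd G$ and $G$ is $p$-normal. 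The converse ($S \unlhd G$ gives $H = S$, $P = S$, and $F(H/P) = 1$) is immediate, and the contrapositive is the required statement.
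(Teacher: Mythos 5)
Your proof is correct and takes essentially the same route as the paper: Theorem~\ref{abecentthm} gives the class bound, and when $F(H/P)$ is abelian the self-centralising property of the Fitting subgroup of the finite soluble group $H/P$ forces $H/P$ to be abelian, whence $S$ is the unique $p$-Sylow subgroup of $H$ and $S \unlhd G$. The only real difference is that your re-derivation of the theorem with $P = O_p(H)$ is unnecessary, since $H = S[G,S] \unlhd G$ and $O_p(G) \leq S \leq H$ give $O_p(H) = O_p(G)$, so the theorem applies verbatim (also, your phrase ``any $p'$-subgroup of $H$ centralises $O_p(G)$'' should refer specifically to $O_{p'}(H)$, whose normality in $H$ is what makes the coprime commutator argument work).
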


\begin{proof}By Theorem \ref{abecentthm}, $F(H/P)$ has nilpotency class at most $2$, so we may assume $F(H/P)$ has nilpotency class less than $2$.  This means $F(H/P)$ is abelian, and so by the theorem $F(H/P)=Z(H/P)$.  Now $H/P$ is a finite soluble group, so $F(H/P) \geq C_{H/P}(F(H/P)) = H/P$, so $H/P$ is abelian, which means $S$ is normal in $H$.  By Sylow's theorem, $S$ is the unique $p$-Sylow subgroup of $H$. Since $H$ is generated by its $p$-Sylow subgroups, it follows that $H=S$, so $S \unlhd G$.\end{proof}

We now give an application of Glauberman's $ZJ$-theorem to this context.  Before stating the $ZJ$-theorem, we need some definitions.

\begin{defn}Let $S$ be a finite $p$-group.  The \emph{Thompson subgroup} $J(S)$ of $S$ is the group generated by all abelian subgroups of $S$ of greatest possible order.\end{defn}

\begin{defn}Let $p$ be a prime.  Define $\Qd(p)$ to be the group of $3 \times 3$ matrices of the form

\[ \left( \begin{array}{cc}
A & 0 \\
u & 1 \end{array} \right)\]

where $A \in \SL(2,p)$, $0$ denotes a zero column vector and $u$ is any row vector of length $2$ over $\bF_p$.\end{defn}

\begin{thm}[Glauberman \cite{Gla}]\label{zjthm}Let $p$ be an odd prime, let $G$ be a $\Qd(p)$-free finite group, and let $S$ be a $p$-Sylow subgroup of $G$.  Suppose that $C_G(O_p(G)) \leq O_p(G)$.  Then $Z(J(S))$ is a characteristic subgroup of $G$.\end{thm}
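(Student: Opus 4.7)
The plan is to first reduce the stated conclusion to the more standard claim that $Z(J(S))$ is a \emph{normal} subgroup of $G$. For if $Z(J(S)) \unlhd G$, then since $Z(J(S)) \leq S$ and all members of $\Sylp(G)$ are $G$-conjugate, every conjugate $Z(J(T))$ with $T \in \Sylp(G)$ coincides with $Z(J(S))$; thus $Z(J(S))$ is intrinsic to the pair $(G,p)$ and consequently fixed by every automorphism of $G$.

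To establish normality, I would argue by induction on $|G|$ and consider a minimal counterexample. The hypothesis $C_G(O_p(G)) \leq O_p(G)$ gives $F^*(G) = O_p(G)$, so that $G/O_p(G)$ embeds into $\Aut(O_p(G))$; this is the classical notion of ``$p$-constraint'' and lets us transfer structural information between $G$ and $S$ freely. The two external ingredients I would invoke are: (a) Thompson's replacement lemma, asserting that if $A$ is an abelian subgroup of $S$ of maximal order and $B \unlhd S$ is abelian with $[B,A,A]=1$, then some $A^*$ of the same maximal order centralises $B$; and (b) the theorem that $\Qd(p)$-free finite groups are \emph{$p$-stable} for odd $p$, meaning that whenever a $p$-subgroup $A$ acts on a $p$-group $Q$ with $[Q,A,A]=1$, the image of $A$ in $N_G(Q)/C_G(Q)$ lies inside $O_p(N_G(Q)/C_G(Q))$. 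Ingredient (b) is precisely where the $\Qd(p)$-free hypothesis and the oddness of $p$ are consumed, and it reduces ultimately to a computation inside $\SL(2,p)$. With both in hand one analyses the action of $J(S)$ on $O_p(G)$: replacement guarantees an abelian subgroup of $S$ of maximal order centralising each chief factor inside $O_p(G)$, and $p$-stability then forces $J(S)$ itself to centralise each such factor, so $[O_p(G), J(S)]$ sinks into the Frattini series. A Frattini-style argument then propagates normality of $Z(J(S))$ from $S$ up to the full group $G$, contradicting the choice of minimal counterexample.

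The main obstacle I expect is the bookkeeping for this iteration. Glauberman's original proof builds a canonical descending chain of characteristic subgroups of $S$ (the ``$K^\infty$'' series) at each step of which the $\Qd(p)$-free hypothesis is needed to rule out an exceptional configuration, and $Z(J(S))$ is identified with the $G$-invariant limit of the chain. Setting up this chain correctly — and verifying that the failure of normality of $Z(J(S))$ in a minimal counterexample really does produce a section of $G$ isomorphic to $\Qd(p)$ — is the technical heart of the proof; in practice I would follow \cite{Gla} rather than try to reconstruct the exact characteristic series from scratch.
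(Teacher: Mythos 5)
The thesis does not prove this result: it is quoted from the literature with the proof delegated entirely to Glauberman's paper \cite{Gla}, which is also what your proposal does for the technical core, so at the level of detail there is nothing to compare. Your surrounding outline --- reducing ``characteristic'' to ``normal'' via Sylow conjugacy, using $C_G(O_p(G)) \leq O_p(G)$ to view $G/O_p(G)$ inside $\Aut(O_p(G))$, and combining Thompson replacement with the fact that $\Qd(p)$-free groups are $p$-stable for odd $p$ in a minimal-counterexample induction --- is the standard route and correctly locates where the $\Qd(p)$-free and oddness hypotheses are consumed; the only inaccuracy is historical, namely that the canonical ``$K^\infty$'' chain you attribute to \cite{Gla} belongs to Glauberman's later work, the original argument establishing normality of $Z(J(S))$ directly.
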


Note that if $p > 3$, then $\SL(2,p)$ involves a non-abelian finite simple group of order divisible by $p$, so all $p$-separable groups are $\Qd(p)$-free.  In addition, all pro-$2'$ groups are $\Qd(p)$-free for every $p$, since $\SL(2,p)$ has even order for every $p$.

Given a prosoluble $p'$-embedding $G$ of a $\CT_p$-group, we can now apply the $ZJ$-theorem to give a further restriction on the structure of $G$.

\begin{prop}\label{zjctp}Let $S \in \CT_p$, and let $G$ be a prosoluble $p'$-embedding of $S$.  Let $Q$ be a $q$-Sylow subgroup of $G$, where $q$ is coprime to $2p$, such that $Q$ is permutable with $S$.  If $q=3$, suppose also that $G$ is $\Qd(3)$-free.   Let $H=S[S,Q]J(Q)$.  Then there is a $q$-Sylow subgroup $R$ of $H$ such that $J(Q) = J(R)$ and $Z(J(R)) = Z(R)$.\end{prop}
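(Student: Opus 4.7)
My approach is to reduce to the finite case by the usual inverse-limit argument, exhibit an explicit candidate $q$-Sylow $R$ of $H$, verify the Thompson-subgroup equality $J(R)=J(Q)$ by elementary means, and then invoke Glauberman's $ZJ$-theorem (Theorem \ref{zjthm}) together with the $\CT_p$-hypothesis through Theorem \ref{abecentthm}(i) to handle $Z(J(R))=Z(R)$. Since $[S,Q]\unlhd\langle S,Q\rangle=SQ$, the subgroup $M:=S[S,Q]$ is normal in $SQ$, so $T:=Q\cap M$ is a $q$-Sylow of $M$. Setting $R:=T\,J(Q)$, a direct order calculation
\[
|R|\;=\;\frac{|T|\,|J(Q)|}{|T\cap J(Q)|}\;=\;|M|_q\cdot\frac{|J(Q)|}{|J(Q)\cap M|}\;=\;|H|_q
\]
shows that $R$ is a $q$-Sylow of $H$ satisfying $J(Q)\le R\le Q$. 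By idempotency of the Thompson subgroup (every maximal abelian subgroup of $Q$ is still maximal abelian of the same order in $R$, and conversely every maximal abelian subgroup of $R$ has order at most that of the maximal abelian subgroups of $Q$ and so equals one of them), we obtain $J(R)=J(Q)$.

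For the second equality, the inclusion $Z(R)\le Z(J(R))$ is standard for any $q$-group (every maximal abelian subgroup of $R$ contains $Z(R)$, so $Z(R)\le J(R)$ and is centralised by $J(R)$). The reverse inclusion reduces, since $J(Q)$ trivially centralises $Z(J(Q))$, to showing that $T$ centralises $Z(J(Q))$. I would apply Glauberman's $ZJ$-theorem with the prime $q$ to the quotient $\overline{SQ}:=SQ/O_p(SQ)$, whose hypotheses are verified as follows: $q$ is odd (coprime to $2p$); $\overline{SQ}$ is $\Qd(q)$-free because for $q\ge 5$ any section isomorphic to $\Qd(q)$ would involve the non-abelian finite simple group $\mathrm{PSL}(2,q)$, contradicting the prosolubility of $G$, while for $q=3$ this is part of the hypothesis; and $\overline{SQ}$ is $q$-constrained because $O_p(\overline{SQ})=1$ forces its Fitting subgroup to equal $O_q(\overline{SQ})$, which is self-centralising by the solubility of $\overline{SQ}$. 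The theorem then yields that $Z(J(\overline{Q}))$ is characteristic in $\overline{SQ}$, equivalently that $Z(J(Q))\,O_p(SQ)\unlhd SQ$, so $M$ normalises $Z(J(Q))$ modulo $O_p(SQ)$.

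The final step is to upgrade this modular normalisation to outright centralisation by $T$. Since $G$ is a prosoluble $p'$-embedding, it is automatically layer-free, so $G\in\EmbLF(S)$ and Theorem \ref{abecentthm}(i) is available: every abelian $p'$-subgroup of $G/O_p(G)$ that is normalised by $S[G,S]/O_p(G)$ is in fact centralised by it. The plan is to pass from the $ZJ$-output inside $\overline{SQ}$ to the hypothesis of Theorem \ref{abecentthm}(i) --- specifically, to conclude that the image of $Z(J(Q))$ in $G/O_p(G)$ is normalised by (the image of) $S[G,S]\supseteq M$, so that its conclusion gives centralisation of $Z(J(Q))$ by $M$ modulo $O_p(G)$; as $Z(J(Q))\cap O_p(G)=1$, this is genuine centralisation, and in particular $T\le M$ centralises $Z(J(Q))$. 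The main obstacle is precisely this last transition: $ZJ$ yields normalisation in $SQ$ modulo $O_p(SQ)$, whereas Theorem \ref{abecentthm}(i) requires normalisation by the potentially larger subgroup $S[G,S]$ modulo $O_p(G)$, and it is the $\CT_p$-hypothesis on $S$ (through its consequences in Corollary \ref{fnilpcor}) that must bridge this gap by forcing the fusion pattern inside $SQ$ to propagate consistently to the whole of $G$.
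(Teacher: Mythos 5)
Most of your argument is sound and runs parallel to the paper's: the explicit Sylow subgroup $R=(Q\cap S[S,Q])J(Q)$, the idempotency argument giving $J(R)=J(Q)$, the standard inclusion $Z(R)\leq Z(J(R))$, the reduction of the remaining inclusion to the claim that $T=Q\cap S[S,Q]$ centralises $Z(J(Q))$, and the application of Glauberman's theorem at the prime $q$ (with the correct verification that $SQ/O_p(SQ)$ is $\Qd(q)$-free and that its Fitting subgroup is $O_q$ and self-centralising) are all fine. The genuine gap is the final step, which you yourself flag as the ``main obstacle'': Theorem \ref{abecentthm}(i), in the form you propose to use it, needs the image of $Z(J(Q))$ in $G/O_p(G)$ to be normalised by $S[G,S]/O_p(G)$, whereas Glauberman only gives normality inside $SQ$ modulo $O_p(SQ)$. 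There is no reason for $S[G,S]$ to normalise $Z(J(Q))O_p(G)$ in the original $G$, and Corollary \ref{fnilpcor} does not supply any such propagation of fusion from $SQ$ to $G$; as written, the proof of $Z(J(Q))\leq Z(R)$ is therefore incomplete.

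The fix, and the paper's route, is to dissolve the obstacle by replacing $G$ with $SQ$ at the outset. This reduction is legitimate: every object in the conclusion ($H$, $J(Q)$, $R$) lies in $SQ$; the hypotheses ($S\in\CT_p$, prosolubility, permutability, $\Qd(3)$-freeness when $q=3$) pass to the subgroup $SQ$; and $SQ$ is again a prosoluble $p'$-embedding of $S$, because $O_{p'}(SQ)$ is a normal $p'$-subgroup of $SQ$ meeting the normal subgroup $O_p(G)$ trivially, hence centralising it, while $C_G(O_p(G))=Z(O_p(G))$ by Lemma \ref{eliglem}(i) (a prosoluble $p'$-embedding is layer-free), so $O_{p'}(SQ)=1$. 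With $G=SQ$, your own $ZJ$-step says that $Z(J(Q))O_p(G)/O_p(G)$ is an abelian normal $q$-subgroup of $G/O_p(G)$, so Theorem \ref{abecentthm}(i) applies directly and gives $[Z(J(Q)),S[G,S]]\leq O_p(G)$; since $S[S,Q]\leq S[G,S]$ and $T\leq Q$, this yields $[Z(J(Q)),T]\leq O_p(G)\cap Q=1$, which completes your argument. (Two minor remarks: your preliminary inverse-limit reduction to the finite case is unnecessary, since $G/O_p(G)$ is already finite for a $p'$-embedding of a finitely generated pro-$p$ group by Corollary \ref{tatefin}; and your explicit construction of $R$ inside $Q$ is in fact a little more careful than the paper's appeal to Sylow's theorem, since it guarantees $J(Q)\leq R\leq Q$, which is what the idempotency argument really uses.)
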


\begin{proof}We may assume $G=SQ$.  Let $P=O_p(G)$, and let $K = Z(J(Q))P/P$.  Then $G/P$ is $\Qd(p)$-free; moreover $F(G/P) = O_q(G/P)$ contains its own centraliser in $G/P$.  It follows by Theorem \ref{zjthm} applied to $G/P$ that $K$ is normal in $G/P$; clearly, $K$ is also abelian.  Hence by part (i) of Theorem \ref{abecentthm}, $K$ is centralised by $S[G,S]/P$, in other words $[Z(J(Q)),S[G,S]] \leq O_p(G)$.  This ensures $[Z(J(Q)),H] \leq O_p(G)$.  By Sylow's theorem, there is some $q$-Sylow subgroup $R$ of $H$ such that $J(Q) \leq R$, so that $J(R) = J(Q)$ and hence $Z(J(R)) = Z(J(Q))$.  Hence $[Z(J(R)),R] \leq R \cap O_p(G) = 1$, so $Z(J(R)) \leq Z(R)$.  But every abelian subgroup of $R$ of largest order contains $Z(R)$, so $Z(J(R)) = Z(R)$.\end{proof}

The last theorem of this section concerns the primes dividing the order of $G$, where $G$ is a prosoluble $p'$-embedding of a $\CT_p$-group.

\begin{thm}Let $S \in \CT_p$ such that $c(S)=c$ and $d(S)=d$, and let $G \in \Emb(S)$.  Suppose that $G$ is prosoluble, and that $G$ is not $p$-normal.  Then there is a prime $q$, such that all of the following conditions are satisfied:
\vspace{-12pt}
\begin{enumerate}[(i)] \itemsep0pt
\item $p \not= q$ and $|G|_q > 1$;
\item $\ord(p,q) \leq m$, where $m = \min \{c,(d-1)\}$;
\item $p \cdot \ord(q,p)$ is even.\end{enumerate}\end{thm}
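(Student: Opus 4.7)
The plan is to build on Corollary \ref{fnilpcor}. A prosoluble profinite group has no components (quasisimple prosoluble groups are trivial), so $G \in \EmbLF(S)$; set $H = S^G = S[G,S]$ and $P = O_p(G) = O_p(H)$. Since $S \in \CT_p$ and $G$ is not $p$-normal, Corollary \ref{fnilpcor} gives that $F(H/P)$ has nilpotency class exactly $2$, so writing $F(H/P) = \prod_{r \neq p} O_r(H/P)$, some Sylow factor $\bar Q = O_q(H/P)$ must be non-abelian. This $q$ is my candidate, and condition (i) is immediate: $q \neq p$ and $q \mid |G|$.

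For (ii), lift $\bar Q$ to a $q$-Sylow $Q$ of its preimage in $H$. Since $F^*(H)=P$ we have $C_H(P)=Z(P)$, so coprime action makes $Q$ act faithfully on $P$ and on $P/\Phi(P)$; combining Lemma \ref{deltalem}(ii) with the $\Delta(S)$-invariant chief series of $S/\Phi(S)$ of width $c$, pulled back along $P\Phi(S)/\Phi(S) \hookrightarrow S/\Phi(S)$ using that $P \unlhd H$, should pin $Q$ into a product of copies of $\GL(c,p)$ and yield $\ord(p,q)\le c$. For the companion $\ord(p,q) \le d-1$, the Frattini argument gives $G = H\,N_G(S)$; were $N_G(S) = S$, the $\CT_p$ property combined with Corollary \ref{ptranscomp} and Lemma \ref{selftranslem} would force $G = S$, against non-$p$-normality, so some $q$-element $x \in N_G(S)$ acts on $V = S/\Phi(S)$. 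Exhibiting a non-zero $x$-fixed vector in $V$---for instance by applying Theorem \ref{abecentthm}(i) to an abelian $H/P$-invariant section of $\bar Q$ whose image in $V$ must therefore be centralised---and using semisimplicity of coprime action, $x$ then acts faithfully on a complement of dimension at most $d-1$, giving $\ord(p,q) \le d-1$.

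For (iii), the case $p=2$ is automatic; assume $p$ odd and suppose for contradiction that $\ord(q,p)$ is odd (so in particular $q$ is odd and $-1 \notin \langle q \rangle \subseteq \bF_p^\times$). When $q \ge 5$, or $q = 3$ with $G$ separately verified to be $\Qd(3)$-free, Proposition \ref{zjctp} applied to an $S$-permutable $q$-Sylow produces a $q$-Sylow $R$ of $S[S,Q]J(Q)$ satisfying $Z(J(R)) = Z(R)$, and the proof of that proposition already forces $[Z(R), H] \le P$ via Theorem \ref{abecentthm}(i). Odd parity of $\ord(q,p)$ then excludes the existence of an order-$p$ element of $\bar S$ acting on the $\bF_q$-sections of $Z(R)$ with eigenvalue structure consistent with the class-$2$ non-abelian $\bar Q$, forcing $\bar Q$ to be abelian and contradicting the choice of $q$. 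The main obstacle I foresee is the $c$-sharpening in (ii): the $q$-group $Q$ acts on $P$, whose $c$-invariant can strictly exceed $c(S)$, so Lemma \ref{deltalem}(ii) applied directly yields only $\ord(p,q)\le c(P)$, and producing a $Q$-stable filtration of a section of $P/\Phi(P)$ whose successive factors embed as $\Delta(S)$-invariant subquotients of $S/\Phi(S)$ of width at most $c$---compatibly with both the Sylow inclusion $P \le S$ and the normality $P \unlhd H$---is the technical heart of the proof.
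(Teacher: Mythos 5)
The obstacle you flag in part (ii) is the real gap, and the paper's proof avoids it rather than solving it: at no point does one make a $q$-group act on $O_p(G)$. Instead, the hypothesis $S \in \CT_p$ is combined with Lemma \ref{selftranslem} to see that $S$ does not control $p$-transfer in $N_G(S)$, and then Corollary \ref{tatecor} shows that the $p'$-group $N_G(S)/S$ acts non-trivially on $S/\Phi(S)$; writing $M/S$ for the kernel of this action, the faithful action of $N_G(S)/M$ takes place on $S/\Phi(S)$ itself, where $c(S)=c$ and $d(S)=d$ are exactly the relevant invariants, so the $c(P)$ problem never arises. The bound $d-1$ likewise does not come from a fixed vector of a single element: since $G$ is prosoluble and not $p$-normal, Corollary \ref{opphi} forces $\Phi(S) < O_p(G)\Phi(S) < S$, so the action of $N_G(S)/S$ on $S/\Phi(S)$ is reducible, and refining against the $\Delta(S)$-invariant series of width $c$ gives $O^{(m,p)}(N_G(S)/M)=1$ with $m=\min\{c,d-1\}$, whence $\ord(p,q)\le m$. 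A second gap is the identity of $q$: you choose $q$ so that $O_q(F(H/P))$ is non-abelian, but your argument for (ii) only shows $N_G(S)>S$; it does not show that this particular $q$ divides $|N_G(S):S|$, nor that a $q$-element acts non-trivially on $S/\Phi(S)$ (your appeal to Theorem \ref{abecentthm}(i) via ``an abelian section of $\bar Q$ whose image in $V$ is centralised'' does not parse, since sections of the $q$-group $\bar Q$ have no image in the $p$-group $V$). The paper removes this mismatch at the outset: since $G$ is prosoluble and $S$ is not normal, there is a prime $q\neq p$ and a $\{p,q\}$-Hall subgroup $H$ with $H>N_H(S)\geq S$ (otherwise Corollary \ref{indexcor} would give $N_G(S)=G$), and one replaces $G$ by the pro-$\{p,q\}$ $p'$-embedding $S[S,H]$, generated by its $p$-Sylow subgroups; now only one prime is in play for all three conditions, $N_G(S)/M$ is automatically a non-trivial $q$-group, and $F(G/O_p(G))$ is a $q$-group.

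Your route to (iii) also fails as stated. The inference ``$\ord(q,p)$ odd, so $q$ is odd'' is false ($q=2$, $p=7$ gives $\ord(2,7)=3$), so the case $q=2$ is simply missing; and when $q=3$ the hypothesis of Proposition \ref{zjctp} cannot be ``separately verified'', because $\Qd(3)$ is soluble and a prosoluble $G$ may well involve it. The decisive step (excluding an ``eigenvalue structure consistent with'' a non-abelian $\bar Q$) is not an argument. The paper's proof of (iii) works inside the reduced pro-$\{p,q\}$ group: with $T=F(G/O_p(G))$, Theorem \ref{abecentthm} makes $Z(T)$ central, Corollary \ref{fnilpcor} gives nilpotency class exactly $2$, a minimal $S/O_p(G)$-submodule $Q/Z(T)$ of $T/Z(T)$ satisfies $Z(Q)=Z(T)$, and then the commutator form of Lemma \ref{sympauto} exhibits the $p$-group $S/O_p(G)$ as an irreducible subgroup of a symplectic group over $\bF_q$, so $p\cdot\ord(q,p)$ is even by Proposition \ref{classicwr}. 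Your instinct to start from Corollary \ref{fnilpcor} is sound for (iii), but without the Hall-subgroup reduction you still have no link between that prime and a non-trivial action on $S/\Phi(S)$, which is what (ii) needs.
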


For the proof, we need another lemma.

\begin{lem}\label{sympauto}Let $q$ be a prime, and let $Q$ be a $q$-group of nilpotency class $2$.  Let $P$ be a $p$-group of automorphisms of $Q$, where $p \not= q$, such that $P$ centralises $Z(Q)$.  Suppose also that $M = Q/Z(Q)$ is irreducible as a $P$-module.  Let $N$ be a maximal subgroup of $Q'$, and identify $Q'/N$ with $\bF_q$.  Then the homomorphism $(-,-)_N$ from $M \times M$ to $Q'/N$ defined by $(xZ(Q),yZ(Q))_N = [x,y]N$ is a non-degenerate, skew-symmetric, alternating bilinear form for $M$ as a vector space over $\bF_q$, and this form is preserved by $P$.  Hence $P$ acts on $M$ as a subgroup of $\Sp(M)$, the symplectic group on $M$ associated to the given form.  In particular, $p \cdot \ord(q,p)$ is even.\end{lem}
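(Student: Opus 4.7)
The plan is to verify each claimed property of the pairing $(-,-)_N$ by direct commutator computation using nilpotency class $2$, to obtain non-degeneracy from irreducibility of $M$, and then to extract the parity statement from the constraints that a central element of $P$ imposes on $\Sp(M)$.

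First I would record that $Q' \leq Z(Q)$, so for any $x,y,z \in Q$ the identity $[xy,z] = [x,z]^y[y,z]$ collapses to $[xy,z] = [x,z][y,z]$, because $[x,z] \in Z(Q)$ commutes with $y$, and symmetrically in the second slot. This yields bilinearity of $[-,-]$ into $Q'$. Well-definedness on $M \times M$ then follows from $[xz,y] = [x,y][z,y] = [x,y]$ for $z \in Z(Q)$, and likewise on the right. The identity $[x,x]=1$ is the alternating property, and bilinearity then forces $[y,x] = [x,y]^{-1}$. For $P$-invariance, note that $P$ centralises $Z(Q) \supseteq Q'$ elementwise, so for each $\sigma \in P$ we have $[x^\sigma, y^\sigma] = [x,y]^\sigma = [x,y]$; reducing modulo $N$ yields $(x^\sigma Z(Q), y^\sigma Z(Q))_N = (xZ(Q), yZ(Q))_N$.

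For non-degeneracy, the radical $R = \{v \in M \mid (v,w)_N = 0 \text{ for all } w \in M\}$ is a $P$-submodule of $M$ because $P$ preserves the form. By irreducibility, $R \in \{0, M\}$; but $R = M$ would force $Q' \leq N$, contradicting the maximality of $N$ in $Q'$. Hence $R = 0$, the form is non-degenerate and alternating, $\dim_{\bF_q} M$ is forced to be even, say $2n$ with $n \geq 1$, and the image of $P$ lies in $\Sp(M) \cong \Sp(2n,q)$.

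For the final parity conclusion, assume $p$ is odd (otherwise we are done). When $q$ is also odd, any $p$-Sylow of $\Sp(2n,q)$ containing the image of $P$ still acts irreducibly on $M$, and Proposition~\ref{classicwr}(i) gives that $\ord(q,p)$ is even. The remaining case $q = 2$ I would handle directly: pass to the (nontrivial) image of $P$ in $\Sp(M)$, pick $z \in Z(P)$ of order $p$, and observe that centrality of $z$ makes the $\bF_q\langle z\rangle$-isotypic decomposition of $M$ into $P$-stable pieces, so irreducibility of $M$ forces $M$ to be $z$-isotypic of some nontrivial type. The eigenvalues of $z$ on $M \otimes \bar{\bF}_q$ then form a single Frobenius orbit $\{\zeta^{q^i} : 0 \leq i < \ord(q,p)\}$ for a primitive $p$-th root of unity $\zeta$, and symplecticity of $z$ forces this set to be closed under $\lambda \mapsto \lambda^{-1}$, giving $-1 \in \langle q\rangle \leq \bF_p^\times$; since $-1$ has order $2$ in $\bF_p^\times$, this forces $\ord(q,p)$ to be even. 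The main (small) wrinkle is precisely this case $q = 2$, where Proposition~\ref{classicwr}(i) is not stated and the Galois-eigenvalue argument must replace it.
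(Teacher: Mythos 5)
Your proof follows essentially the same route as the paper's: bilinearity, well-definedness, the alternating and skew-symmetric identities and $P$-invariance are read off directly from the class-$2$ structure and from $P$ centralising $Q' \leq Z(Q)$; non-degeneracy comes from irreducibility (the radical is a $P$-submodule, and radical equal to $M$ would force $Q' \leq N$, contradicting maximality); and the parity statement is then extracted from the existence of a non-trivial irreducible $p$-subgroup of $\Sp(M)$. The only divergence is the last step: the paper simply cites Proposition \ref{classicwr}, which as stated covers only odd $q$, whereas you treat $q=2$ separately by the central-element argument (a central $z$ of order $p$ forces $M$ to be $z$-isotypic of non-trivial type, self-duality of $M$ via the form makes the Frobenius orbit of eigenvalues inversion-closed, hence $-1 \in \langle q \rangle \leq \bF_p^\times$ and $\ord(q,p)$ is even). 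This is a genuine improvement rather than a detour: your eigenvalue argument works uniformly for every $q \neq p$, so it could replace the citation entirely, and it closes the small gap left by appealing to Proposition \ref{classicwr} in the case $q=2$, which the lemma (and the theorem it serves) does need to cover.
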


\begin{proof}The equation $(xZ(Q),yZ(Q))_1 = [x,y]$ specifies a function $(-,-)_1$ from $M \times M$ to $Q'$.  This is a homomorphism since $M$ is abelian, and hence it is surjective by the definition of $Q'$; hence $(-,-)_N$ is a non-trivial quadratic form.  The form is preserved by $P$ since $P$ centralises $Z(Q)$, which contains $Q'$, and $M$ is irreducible as a $P$-module, so $(-,-)_N$ is non-degenerate on $M$.  Finally, $(-,-)_N$ is also skew-symmetric and alternating by the identities $[x,y] \equiv [y,x]^{-1}$ and $[x,x] \equiv 1$.

We conclude that $P$ acts on $M$ as a subgroup of $\Sp(M)$.  Hence $\Sp(M)$ has a non-trivial irreducible $p$-subgroup.  This implies that one of $p$ and $\ord(q,p)$ is even, by Proposition \ref{classicwr}.\end{proof}

\begin{proof}[Proof of Theorem]Since $G$ is prosoluble, for some $q \not= p$ there must be a $\{p,q\}$-Hall subgroup $H$ such that $H > N_H(S) \geq S$; hence $q$ also divides the order of $S[S,H]$, and $S[S,H]$ is also a $p'$-embedding of $S$ that is not $p$-normal.  Hence we may assume $G$ is a pro-$\{p,q\}$ group, and that $G$ is generated by its $p$-Sylow subgroups.  It now remains to show that $q$ satisfies conditions (ii) and (iii).

By Lemma \ref{selftranslem}, $S$ does not control $p$-transfer in $G$.  Since $S \in \CT_p$, this means $S$ does not control $p$-transfer in $N_G(S)$.  By Corollary \ref{tatecor}, $N_G(S)/S$ must act non-trivially on $S/\Phi(S)$; say the kernel of this action is $M/S$.  In particular $|N_G(S)/M|_q > 1$.  Since $G$ is prosoluble and not $p$-normal, we have $S > O_p(G)\Phi(S) > \Phi(S)$ by Corollary \ref{opphi}, and hence the action of $N_G(S)/S$ is reducible; hence $O^{(m,p)}(N_G(S)/M)=1$ by Lemma \ref{deltalem}.  Condition (ii) now follows by the fact that $|\GL(m,p)|_q > 1$ if and only if $\ord(p,q)$ is at most $m$.

Let $T = F(G/O_p(G))$; then $Z(T)$ is central in $G/O_p(G)$ by Theorem \ref{abecentthm}, and $T$ is nilpotent of class $2$ by Corollary \ref{fnilpcor}.  Let $P = S/O_p(G)$, and consider $T/Z(T)$ as a $P$-module; let $Q/Z(T)$ be a minimal submodule.  Then applying part (ii) of Theorem \ref{abecentthm} to the subgroups of $Q$, we see $Z(Q)=Z(T)$.  We are now in the situation of Lemma \ref{sympauto}, and so $p \cdot \ord(q,p)$ is even.\end{proof}

\begin{eg}Depending on $m$ and $p$, the set $\pi = \pi(m,p)$ of primes $q$ satisfying conditions (ii) and (iii) of the theorem may be surprisingly small.  Suppose $p=3$, and $m \leq 10$.  Then $\pi \subseteq \{2,5,11,41\}$.  So if $S$ is a weakly regular pro-$3$ group generated by at most $11$ elements, and $G$ is a prosoluble $3'$-embedding of $S$, then either $S \unlhd G$, or $G$ involves at least one of the primes in $\{2,5,11,41\}$.  Similarly, if $p=7$ and $m \leq 7$, then $\pi \subseteq \{2,3,5,19\}$.\end{eg}

\section{Normal subgroup conditions and just infinite pro-$p$ groups}

The main aim of this section is to prove the following theorem:

\begin{thm}\label{obphithm}Let $S$ be an infinite finitely generated pro-$p$ group, and let $\mcK$ be the set of open normal subgroups of $S$ that are not contained in $\Phi(S)$.  Suppose $\mcK$ is finite, and that $|S:S^{(n)}|$ is finite for all $n$.  Then $\Emb(S)$ is finite.\end{thm}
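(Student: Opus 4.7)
My plan is to prove the theorem in two phases. In Phase 1 I use hypothesis (H2) (that $\mcK$ is finite) to show every $G \in \Emb(S)$ is layer-free. In Phase 2 I use hypothesis (H1) ($|S{:}S^{(n)}|$ finite for all $n$) to uniformly bound $|G/O_p(G)|$ over $G \in \EmbLF(S)$; combined with Corollary \ref{embsizecor}, this shows $\Emb(S)=\EmbLF(S)$ is finite. The critical consequence of (H2) I will use repeatedly is: \emph{any closed normal subgroup $N$ of $S$ with $N \not\le \Phi(S)$ is open}. Indeed, $\mcN := \{M \unlhd_o S : M \geq N\}$ is downward-directed (closed under finite intersections) and contained in $\mcK$ (any $M \geq N$ automatically has $M \not\le \Phi(S)$), hence $|\mcN| \leq |\mcK| < \infty$; as $N$ is closed we have $N = \bigcap \mcN$, which is the minimum element of this finite directed family and thus is itself in $\mcN$, i.e.\ open.

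For Phase 1, suppose for contradiction that $G \in \Emb(S)$ has $E(G) \neq 1$. Every component $Q$ of $G$ has order divisible by $p$: otherwise the subnormal quasisimple pro-$p'$ subgroup $Q$ would lie in $O_{p'}(G) = 1$. Hence $U := S \cap E(G)$, which by Corollary \ref{sylsubnor} is a $p$-Sylow of the finite group $E(G)$ (finite by Lemma \ref{eliglem}(iii) via Theorem \ref{schurmult}), is nontrivial. On one hand, Lemma \ref{eliglem}(iii) gives $U \not\le \Phi(S)$; on the other, the critical consequence of (H2) forces the nontrivial finite normal subgroup $U$ to be open in the infinite pro-$p$ group $S$, which is absurd. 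Hence $E(G) = 1$.

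For Phase 2, by Proposition \ref{finelg} it suffices to bound $d(P)$ over LF-eligible $P \unlhd_o S$. I split such $P$ into (a) those in $\mcK$, a finite set by (H2) and hence contributing a bounded supremum, and (b) those with $P \le \Phi(S)$. For (b), note first that (H1) together with the fact that a topologically perfect closed pro-$p$ subgroup $T$ satisfies $T = T' \le \Phi(T)$ (forcing $T/\Phi(T) = 1$, hence $T=1$) yields $\bigcap_n S^{(n)} = 1$; thus the characteristic open subgroups $S^{(n)}$ form a neighbourhood base at $1$, and any open $P$ contains $S^{(n_P)}$ for some least $n_P$, giving $|S{:}P| \le |S{:}S^{(n_P)}|$. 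Together with Schreier's formula, bounding $n_P$ uniformly bounds $d(P)$ uniformly. To bound $n_P$, I would invoke Corollary \ref{opphi}(i) with $R = P$: in the layer-free situation, $G/P$ acts faithfully on $E_p(G/P)$, a nontrivial direct product of non-abelian finite simple groups each of order divisible by $p$, with $S/P$ a $d(S)$-generated $p$-Sylow of this action (since $P \le \Phi(S)$ gives $d(S/P)=d(S)$); the rigidity of this faithful action forces $|S/P|$, and hence $n_P$, to be bounded.

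The main obstacle is executing this bound in case (b): converting the structural constraints (a $d(S)$-generated finite $p$-group acting faithfully on a product of non-abelian simples, combined with the tight derived filtration from (H1)) into a genuine uniform bound on $n_P$ is delicate, because LF-eligibility is not obviously preserved under passage to quotients of $G$ and $G/P$ need not be $p$-separable, so Corollary \ref{lepcor} is not directly applicable. I would attempt to circumvent this by iterative descent through strong $p$-local subgroups along the local ordering $\lep$: successively replacing $G$ by $N_G(O_p(G) \cap \Phi(S))/O_{p'}$ and applying a version of Corollary \ref{lepcor}(i) (after passing to a suitable $p$-separable image controlled by (H1)) to strictly grow the $p$-core inside $S$ at each step. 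The descent must terminate in a number of steps bounded by $d(S) = \dim_{\bF_p}(S/\Phi(S))$, which together with the constraints of Corollary \ref{opphi} should yield the uniform bound and complete the proof.
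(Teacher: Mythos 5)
Your Phase 1 is correct and is essentially the paper's own argument: your ``critical consequence'' of the finiteness of $\mcK$ is Lemma \ref{obphilem}, and the deduction that $E(G)=1$ for every $G \in \Emb(S)$ is the first half of Lemma \ref{phiobemblem}.  Your treatment of standard embeddings (those with $O_p(G) \not\le \Phi(S)$) also matches the paper: finiteness of $\mcK$ bounds $|S:O_p(G)|$, the Schreier formula bounds $d(O_p(G))$, and layer-freeness gives $G/O_p(G) \lesssim \GL(d(O_p(G)),p)$, so Corollary \ref{embsizecor} (equivalently your appeal to Proposition \ref{finelg}) applies.

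The gap is exactly where you flag it: the Frattini case $O_p(G) \le \Phi(S)$, and the route you sketch to close it would fail.  Your iterative descent through strong $p$-local subgroups relies on Corollary \ref{lepcor}(i), whose engine is Theorem \ref{pseplocal}; that theorem uses $p$-separability precisely to force $E_p(M/R)=1$ and hence make the $p$-core grow, whereas in the Frattini case Corollary \ref{opphi} resolves the dichotomy of Proposition \ref{locphiprop} the other way: $G/O_p(G)$ acts faithfully on a nontrivial product of non-abelian simple groups of order divisible by $p$, the $p$-core need not grow under passage to normalisers, and no passage to a ``$p$-separable image'' retains the structure you need, so the claimed termination in at most $d(S)$ steps has no support.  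The missing idea is the quantitative step the paper supplies instead: with $P=O_p(G)$, $E=E^*_p(G)/P$ and $p^t$ the maximum of $|S:N|$ over $N\in\mcK$, Lemma \ref{phiobemblem} gives $|G/P:E|_p \le p^t$, the Schreier formula bounds $d_p(E)$, and then CFSG-based input (Corollary \ref{dpdeg} together with Zassenhaus's theorem) bounds the derived length $l$ of a $p$-Sylow subgroup of $E$ by a function of $p$ and $t$; consequently $S/P$ has derived length at most $l+t$, so $P \ge S^{(l+t)}$, and only at this point does your hypothesis that $|S:S^{(n)}|$ is finite yield a uniform bound on $|S:P|$, hence on $d(P)$ and on $|G/P| \le |\GL(d(P),p)|$.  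A minor additional point: your claim that $\bigcap_n S^{(n)}=1$ is not justified as stated (the intersection of the derived series need not be topologically perfect), but it is also not needed --- what the proof requires is the uniform containment $P \ge S^{(l+t)}$, not merely that each open $P$ contains some $S^{(n_P)}$.
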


Note in particular that $\mcK$ as defined above is finite whenever $S$ is a just infinite pro-$p$ group, by Theorem \ref{genob}.

\begin{lem}\label{obphilem}Let $S$ be a finitely generated pro-$p$ group.  Let $\mcK$ be the set of open normal subgroups of $S$ that are not contained in $\Phi(S)$.  The following are equivalent:\vspace{-12pt}
\begin{enumerate}[(i)] \itemsep0pt
\item $\mcK$ is finite;
\item $|S:\Ob_S(\Phi(S))|$ is finite;
\item $\Phi(S)$ contains every normal subgroup of $S$ of infinite index.\end{enumerate}\end{lem}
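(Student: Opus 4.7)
The plan is to prove the cycle $(\mathrm{i}) \Leftrightarrow (\mathrm{ii})$ and $(\mathrm{i}) \Leftrightarrow (\mathrm{iii})$, exploiting the identity $\Ob_S(\Phi(S)) = \Phi(S) \cap \bigcap_{K \in \mcK} K$ and the fact that $\Phi(S)$ is open in $S$ (since $S$ is finitely generated pro-$p$). The equivalence of (i) and (ii) is almost immediate from this identity: if $\mcK$ is finite then $\bigcap \mcK$ is a finite intersection of open subgroups, hence open, so $\Ob_S(\Phi(S))$ is open; conversely, if $\Ob_S(\Phi(S))$ is open then every $K \in \mcK$ is an open normal subgroup containing this fixed open subgroup, and $S/\Ob_S(\Phi(S))$ being finite forces $\mcK$ to be finite.

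For $(\mathrm{i}) \Rightarrow (\mathrm{iii})$, I would argue by contrapositive. Suppose $N$ is a closed normal subgroup of $S$ of infinite index with $N \not\le \Phi(S)$. Pick a maximal open subgroup $M$ (automatically normal of index $p$) with $N \not\le M$. For any open normal subgroup $L \leq M$, the subgroup $NL$ is open and normal, and satisfies $NL \not\le M$ (hence $NL \not\le \Phi(S)$), so $NL \in \mcK$. The collection $\{L \unlhd_o S \mid L \leq M\}$ is cofinal among open normal subgroups, so $\bigcap_L NL = \bigcap_L NL \cdot \bigcap_L L = N$; if only finitely many distinct $NL$ occurred they would have an open intersection equal to $N$, contradicting that $N$ has infinite index. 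Thus $\mcK$ is infinite.

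For $(\mathrm{iii}) \Rightarrow (\mathrm{i})$, the plan is to use the chain-closure machinery from Section \ref{sgplat}. Since $S$ is a finitely generated pro-$p$ group, every open normal subgroup $H$ satisfies $\Phi^\lhd(H) = \Phi(H)$ (maximal normal subgroups of a pro-$p$ group coincide with maximal open subgroups), so $H/\Phi^\lhd(H)$ is finite. The set $\mcK$ is evidently upward-closed in $\nsgp(S)$. If $\mcK$ were infinite, Corollary \ref{phichaincor}(ii) would force $\mcK$ to contain an infinite descending chain $K_1 > K_2 > \dots$; letting $K = \bigcap_i K_i$, Lemma \ref{obchain}(ii) applied with $O = \Phi(S)$ gives $K \not\le \Phi(S)$. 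Hypothesis (iii) then forces $|S:K|$ finite, but only finitely many subgroups lie between $K$ and $S$, contradicting the existence of an infinite descending chain above $K$.

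The main obstacle, if any, is arranging the descending chain argument in $(\mathrm{iii}) \Rightarrow (\mathrm{i})$: it requires recognising that Corollary \ref{phichaincor}(ii) applies to the upward-closed family $\mcK$, and then using Lemma \ref{obchain}(ii) to ensure the limit subgroup $K$ still escapes $\Phi(S)$. Everything else is bookkeeping with the openness of $\Phi(S)$.
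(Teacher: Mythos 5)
Your proposal is correct and rests on the same ingredients as the paper's proof — the fact that a closed (normal) subgroup is the intersection of the open normal subgroups containing it, the chain-closure machinery of Lemma \ref{phichain}/Corollary \ref{phichaincor}, and Lemma \ref{obchain}(ii) applied with $O=\Phi(S)$ — merely arranged as (i)$\Leftrightarrow$(ii), (i)$\Rightarrow$(iii), (iii)$\Rightarrow$(i) instead of the paper's cycle (i)$\Rightarrow$(ii)$\Rightarrow$(iii)$\Rightarrow$(i), with your (i)$\Rightarrow$(iii) step being a slightly more hands-on version (via $M$ and the products $NL$) of the paper's observation that every open normal subgroup containing a normal $P\not\le\Phi(S)$ lies in $\mcK$ and hence contains $\Ob_S(\Phi(S))$. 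The one blemish is the written ``identity'' $\bigcap_L NL=\bigcap_L NL\cdot\bigcap_L L$, which is not a valid manipulation; the intended conclusion $\bigcap_L NL=N$ should instead be justified directly by the closure formula $\overline{N}=\bigcap_{L\unlhd_o S}NL$ from Section 1.2 together with the cofinality of $\{L\unlhd_o S\mid L\le M\}$, exactly as you otherwise indicate.
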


\begin{proof}Assume (i).  Then $\Ob_S(\Phi(S))$ is the intersection of finitely many open subgroups of $S$, so is itself open in $S$.

Assume (ii), and let $P$ be a normal subgroup of $S$ not contained in $\Phi(S)$.  Then every open normal subgroup containing $P$ contains $\Ob_S(\Phi(S))$, so $P$ itself contains $\Ob_S(\Phi(S))$; in particular, $P$ is of finite index.  Hence (iii) holds.
  
Suppose $\mcK$ is infinite.  Then $K/\Phi(K)$ is finite for every $K \in \mcK$, so $\mcK$ contains an infinite descending chain $K_1 > K_2 > \dots$ by Lemma \ref{phichain}.  By Lemma \ref{obchain}, the intersection of the $K_i$ is a normal subgroup $L$ say, which is not contained in $\Phi(S)$; but $L$ has infinite index, contradicting (iii).  Hence (iii) implies (i).\end{proof}

\begin{defn}Let $G$ be a $p'$-embedding of the pro-$p$ group $S$.  Say $G$ is a \emph{Frattini} $p'$-embedding if $O_p(G) \le \Phi(S)$.  Otherwise, say $G$ is a \emph{standard} $p'$-embedding.  All $p$-separable $p'$-embeddings are standard, by part (iii) of Corollary \ref{lepcor}.\end{defn}

\begin{lem}\label{phiobemblem}Let $S$ be an infinite pro-$p$ group.  Let $G \in \Emb(S)$.  Suppose that $|S:P| \leq p^t$ for every normal subgroup $P$ of $S$ that is not contained in $\Phi(S)$.

Then $E(G) = 1$, and $|G:E^*_p(G)|_p \leq p^t$.\end{lem}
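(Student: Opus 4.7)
My plan is to deduce both conclusions as essentially immediate consequences of the structural lemmas on layerable subgroups (Lemma \ref{eliglem}) and on the intersection $E^*_p(G)\cap S$ (Corollary \ref{opphi}), together with the observation that the index hypothesis sharply restricts what can lie outside $\Phi(S)$.

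A preliminary observation is that, although not stated explicitly, the hypothesis forces $S$ to be topologically finitely generated. Indeed, if $|S:\Phi(S)|$ were infinite, then, since $S/\Phi(S)$ is a topological vector space over $\bF_p$, it would contain a closed subspace of codimension $t+1$; lifting gives a closed normal subgroup $N$ with $\Phi(S)<N<S$ and $|S:N|=p^{t+1}$. Such an $N$ is not contained in $\Phi(S)$ and violates the bound, a contradiction. So $S/\Phi(S)$ is finite, $S$ is finitely generated, and Lemma \ref{eliglem} and Corollary \ref{opphi} both apply.

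For the first conclusion, set $U:=S\cap E(G)$. This is a layerable subgroup of $S$ by construction (with $G$ itself as witness), so Lemma \ref{eliglem}(iii) gives $U\unlhd S$ with $|U|$ finite, together with the dichotomy: either $U\le\Phi(S)$ (which by that lemma forces $U=1$), or $U\not\le\Phi(S)$. In the latter case the hypothesis yields $|S:U|\le p^t$, and combined with $|U|$ finite this makes $S$ finite, contradicting the assumption that $S$ is infinite. Hence $U=1$. Since $E(G)$ is normal (hence subnormal) in $G$, Corollary \ref{sylsubnor} identifies $U$ as a $p$-Sylow subgroup of $E(G)$, so $E(G)$ is a $p'$-group, and by normality $E(G)\le O_{p'}(G)=1$.

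For the second conclusion, Corollary \ref{opphi}(ii) gives directly that $E^*_p(G)\cap S\not\le\Phi(S)$. Since $E^*_p(G)\unlhd G$, the subgroup $E^*_p(G)\cap S$ is normal in $S$, so the hypothesis supplies $|S:E^*_p(G)\cap S|\le p^t$. Applying Corollary \ref{sylsubnor} once more, $E^*_p(G)\cap S$ is a $p$-Sylow subgroup of $E^*_p(G)$, whence
\[ |G:E^*_p(G)|_p = |S:S\cap E^*_p(G)| \le p^t, \]
as required. There is no substantive obstacle in the argument; the only delicate point is the preliminary extraction of finite generation from the hypothesis, which is needed before the cited structural results become available.
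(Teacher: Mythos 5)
Your proof is correct and follows essentially the same route as the paper: Lemma \ref{eliglem}(iii) combined with the index hypothesis (finite normal subgroups of an infinite $S$ have infinite index, hence lie in $\Phi(S)$) disposes of $E(G)$, and Corollary \ref{opphi}(ii) together with the hypothesis and Corollary \ref{sylsubnor} gives $|G:E^*_p(G)|_p \leq p^t$, exactly as in the paper's argument. Your preliminary deduction that the hypothesis forces $S/\Phi(S)$ to be finite is a worthwhile addition rather than a detour, since the lemma as stated omits finite generation while the cited results (Lemma \ref{eliglem}, Corollary \ref{opphi}) require it; in the paper this is harmless only because the application in Theorem \ref{obphithm} assumes $S$ finitely generated.
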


\begin{proof}By Lemma \ref{eliglem}, $S \cap E(G)$ is a finite normal subgroup of $S$; hence $S \cap E(G) \leq \Phi(S)$, by Lemma \ref{obphilem}.  But this implies $E(G)=1$ by Lemma \ref{eliglem}.  By Corollary \ref{opphi}, $(E^*_p(G) \cap S) \not\le \Phi(S)$.  Hence $|S:E^*_p(G) \cap S| \leq p^t$, and so $|G:E^*_p(G)|_p \leq p^t$.\end{proof}

\begin{proof}[Proof of Theorem \ref{obphithm}]Let $d = d(S)$, let $G \in \Emb(S)$, let $P=O_p(G)$, and let $E=E^*_p(G)/P$; note $|G:E^*_p(G)|_p \leq p^t$ by Lemma \ref{phiobemblem}.  Let $p^t$ be the maximum of $|S:N|$ as $N$ ranges over $\mcK$; note that $t \geq (d-1)$.  By Theorem \ref{extnthm}, it suffices to bound $|G:P|$ in terms of properties of $S$.

If $G$ is a standard $p'$-embedding, then $|S:P| \leq p^t$, and so $d(P) \leq (d-1)p^t + 1$ by the Schreier index formula.  Now $G/P \lesssim \GL(d(P),p)$ since $G \in \EmbLF(S)$, and so $|G:P|$ is bounded by a function of $p$ and $t$.  From now on, we may assume that $G$ is a Frattini $p'$-embedding.  We proceed by a series of claims.

\emph{(i) We have $d_p(E) \leq p^t t + 1$, and hence both $|\Comp(E)|$ and $d_p(Q)$ for $Q \in \Comp(E)$ are at most $p^t(t+1)$.}

Using the Schreier index formula, we obtain the following inequalities:
\[ d_p(E) - 1 \leq p^t(d_p(G/P) - 1) \leq p^t (d(S) - 1) \leq p^t t.\]
In turn, it is clear that both $|\Comp(E)|$ and $d_p(Q)$ are bounded by $d_p(E)$.

\emph{(ii) Let $T$ be a $p$-Sylow subgroup of $E$ contained in $S/P$.  Then the derived length $l$ of $T$ is bounded by a function of $p$ and $t$.}

Since $E$ is a central product of components, it suffices to prove this claim in the case of $E$ quasisimple.  In this case, it follows from Corollary \ref{dpdeg} that $\deg(E)$ is bounded by a function of $d_p(E)$, and hence by a function of $p$ and $t$ by claim (i).  If $E/Z(E)$ is of Lie type, then the claim now follows by Zassenhaus's theorem.  Otherwise, Corollary \ref{dpdeg} ensures that $|E|$ is bounded by a function of $d_p(E)$, which in turn gives a bound on the derived length of $T$.

\emph{(iii) There is a bound on $|S:P|$ in terms of properties of $S$.}

Let $R = S/P$.  Then $|R:T|=|G/P:E|_p$, and by Lemma \ref{phiobemblem} we have $|G/P:E|_p \leq p^t$, so certainly $R^{(t)} \leq T$.  But then $R^{(l+t)} \leq T^{(l)} = 1$, so $S/P$ is soluble of derived length at most $l+t$.  This means that $P$ contains the open subgroup $S^{(l+t)}$ of $S$, so $|S:P|$ is bounded by properties of $S$.

\emph{(iv) There is a bound on $|G:P|$ in terms of properties of $S$.}

We have a bound on $|S:P|$, giving a bound on $d(P)$ in terms of properties of $S$.  But $G$ is layer-free by Lemma \ref{phiobemblem}, so $G/P \lesssim \GL(d(P),p)$.\end{proof}

\begin{cor}Let $S$ be a just infinite pro-$p$ group.  Then $\Emb(S)$ is finite.\end{cor}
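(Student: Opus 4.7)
The plan is to apply Theorem \ref{obphithm} directly, verifying its three hypotheses for a just infinite pro-$p$ group $S$: finite generation, finiteness of the set $\mcK$ of open normal subgroups not contained in $\Phi(S)$, and finiteness of $|S : S^{(n)}|$ for every $n$. Finite generation follows from Lemma \ref{jiphilhd}: $\Phi^\lhd(S)$ has finite index in any just infinite profinite group, and for a pro-$p$ group $\Phi^\lhd(S) = \Phi(S)$, so $S/\Phi(S)$ is finite and $d(S) < \infty$ by Lemma \ref{fratlem}(iii). Finiteness of $\mcK$ follows from Theorem \ref{genob} applied to the open subgroup $H = \Phi(S)$.

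For the third hypothesis, $S^{(n)}$ is characteristic, so whenever $S^{(n)} > 1$, just-infiniteness forces $|S : S^{(n)}|$ to be finite. The only difficulty arises when $S$ is soluble, in which case $S^{(n)} = 1$ eventually and Theorem \ref{obphithm} does not apply directly. A soluble just infinite pro-$p$ group is virtually abelian, since the last non-trivial term of its derived series is an abelian characteristic subgroup, hence open. By Proposition \ref{vastr} such an $S$ has an open normal subgroup $A \cong \bZ_p^d$ on which $S/A$ acts irreducibly over $\bQ_p$, and in particular $S$ has finite rank. This case must be treated separately: for $G \in \Emb(S)$, Lemma \ref{eliglem}(i) gives $C_G(F^*(G)) \leq O_p(G)$, so $G/O_p(G) \hookrightarrow \Aut(F^*(G))$; the finite rank of $S$ together with finiteness of $E(G)$ (Lemma \ref{eliglem}(iii)) yield a uniform bound on $|G/O_p(G)|$, and Corollary \ref{embsizecor} then gives the conclusion.

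The main obstacle is thus the soluble case, which falls outside the direct scope of Theorem \ref{obphithm}; in the generic non-soluble case, verification of the derived-length hypothesis is immediate and the corollary is a clean invocation of the theorem. Dispatching the soluble case requires only the separate finite-rank argument sketched above.
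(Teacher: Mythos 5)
Your proposal is correct and follows essentially the same route as the paper: the insoluble case is a direct application of Theorem \ref{obphithm} (with $\mcK$ finite by Theorem \ref{genob} and $|S:S^{(n)}|$ finite since each $S^{(n)}$ is a non-trivial characteristic subgroup), and the soluble case reduces to a virtually abelian group of finite rank. For that case the paper simply notes $\Fin(S)=1$, so $\Emb(S)=\EmbLF(S)$, and cites Proposition \ref{finelg} (finite rank gives its condition (iv)); your sketch via Lemma \ref{eliglem}, a uniform bound on $|G/O_p(G)|$ through the faithful action on $O_p(G)$ (more precisely $G/O_p(G)\lesssim\Delta(O_p(G))$ by Lemma \ref{deltalem}), and Corollary \ref{embsizecor} is exactly the content of that proposition's proof, unpacked.
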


\begin{proof}If $S$ is insoluble, the result follows immediately from the theorem.  If $S$ is soluble, then the last non-trivial term in its derived series has finite index, so $S$ is virtually abelian.  In this case $S$ has finite rank and $\Fin(S)=1$ so $\Emb(S) = \EmbLF(S)$; hence $\Emb(S)$ is finite by Proposition \ref{finelg}.\end{proof}

\section{$p'$-embeddings of abelian and $2$-generator pro-$p$ groups}

Let $S$ be a finitely generated pro-$p$ group and let $G \in \Emb(S)$.  We consider first the action of $Z(S)$ on $F^*(G)$.

\begin{prop}Let $G$ be a $p'$-embedding of the finitely generated pro-$p$ group $S$.  Then:
\vspace{-12pt}
\begin{enumerate}[(i)] \itemsep0pt
\item $Z(S)O_p(G)/O_p(G)$ acts faithfully on $E(G)$, but trivially on $\Comp(G)$;
\item $d(Z(S)F^*(G)/F^*(G)) \leq 4|\Comp(G)|$;
\item $G$ has a finite normal subgroup $N$ such that $O_p(G/N)$ contains the centre of a $p$-Sylow subgroup of $G/N$.\end{enumerate}\end{prop}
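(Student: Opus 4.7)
For part (i), the plan is to exploit the identity $C_G(F^*(G))=Z(O_p(G))$ from Lemma~\ref{eliglem}(i). Since $O_p(G)\le S$, every element of $Z(S)$ centralises $O_p(G)=F(G)$; if in addition it centralises $E(G)$, it centralises $F^*(G)$ and so lies in $Z(O_p(G))\le O_p(G)$. This identifies the kernel of the action on $E(G)$ as $Z(S)\cap O_p(G)$ and gives the claimed faithful action. For the action on $\Comp(G)$, first observe that $O_{p'}(G)=1$ precludes any pro-$p'$ subnormal quasisimple subgroup, so every component $Q$ has $p\mid |Q|$ and its $p$-Sylow $Q\cap S$ is non-trivial by Corollary~\ref{sylsubnor}. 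Since $Z(S)$ normalises every subgroup of $S$, it normalises $Q\cap S$, and Lemma~\ref{pscomplem} gives $Z(S)\le N_G(Q\cap S)\le N_G(Q)$.

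For (ii), part (i) furnishes a homomorphism $\phi\colon Z(S)\to\prod_{Q\in\Comp(G)}\Out(Q)$ obtained by composing the action on $E(G)$ with the projections and the quotient maps $\Aut(Q)\to\Out(Q)$. Since $Z(S)$ is abelian, the image in each $\Out(Q)$ is an abelian subgroup of rank at most $4$ by Proposition~\ref{qsout}(iii), so the full image has $d\le 4|\Comp(G)|$. Next I identify $\ker\phi$ with $Z(S)\cap F^*(G)$: the inclusion $\supseteq$ is immediate from Theorem~\ref{opilayer}(ii); for $\subseteq$, recall that $E(G)$ is finite by Lemma~\ref{eliglem}(iii), so if $z\in Z(S)$ acts on each $Q$ as conjugation by some $q_Q\in Q$, the product $y=\prod_{Q}q_Q\in E(G)$ is well-defined and acts identically to $z$, whence $zy^{-1}$ centralises $E(G)$ as well as $O_p(G)$ and so lies in $C_G(F^*(G))=Z(O_p(G))\le F^*(G)$. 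Combining, $Z(S)F^*(G)/F^*(G)\cong Z(S)/\ker\phi$ has $d\le 4|\Comp(G)|$.

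For (iii), the approach is to produce a finite normal subgroup $N$ such that $G/N$ is a layer-free $p'$-embedding of $SN/N$; once such $N$ is in hand, part (i) applied to $G/N$ forces $Z(SN/N)\le O_p(G/N)$ because the target $E(G/N)$ of the faithful action is trivial. To construct $N$, I first apply Corollary~\ref{fplayer} to obtain a finite normal $K$ such that every finite normal subgroup of $G/K$ is $p'$-normal; the normal closure of any $p$-divisible component of $G/K$ is a finite normal central product of quasisimple $p$-divisible groups, which admits no normal $p'$-Hall subgroup, so no such components survive. It remains to deal with any pro-$p'$ components and with any non-trivial $O_{p'}(G/K)$---both finite, since $G/K$ is still virtually pro-$p$---by further enlarging $N$, alternating between lifting $O_{p'}$ and lifting the residual layer of successive quotients. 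The main obstacle is showing that this alternating procedure terminates in finitely many steps; I expect this to follow from the Fitting-regularity of $G$ (Lemma~\ref{eliglem}(i)) together with a bound on a suitable complexity invariant of the successive quotients, leaving no room for infinite descent.
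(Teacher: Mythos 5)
Parts (i) and (ii) of your proposal are correct and essentially coincide with the paper's own argument: the faithfulness in (i) via $C_G(F^*(G))=Z(O_p(G))$ and the normalisation of components via Lemma \ref{pscomplem} are exactly the paper's steps, and your identification of the kernel of $Z(S)\to\prod_{Q\in\Comp(G)}\Out(Q)$ with $Z(S)\cap F^*(G)$ is precisely the verification the paper leaves implicit when it deduces $Z(S)F^*(G)/F^*(G)\lesssim \Out(Q_1)\times\dots\times\Out(Q_n)$ from part (i).

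Part (iii), however, has a genuine gap which you yourself flag: you never prove that the alternating procedure (kill the layer, then kill $O_{p'}$ of the quotient, repeat) terminates, and the ingredient you hope will supply termination --- Fitting-regularity plus an unspecified complexity invariant --- is not what is needed. The relevant finiteness is much more elementary: since $d(S)$ is finite and $O_{p'}(G)=1$, Corollary \ref{tatefin} makes $G$ virtually pro-$p$, so $|G|_{p'}$ is a finite number; every non-final round of your alternation factors out a finite normal subgroup with non-trivial $p'$-part (a non-trivial finite $O_{p'}$, or a non-trivial finite layer whose components all have order divisible by some prime other than $p$), and since $|G|_{p'}=|G/N|_{p'}\,|N|_{p'}$ this strictly decreases $|G|_{p'}$, forcing termination. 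This is exactly how the paper argues, as an induction on $|G|_{p'}$: if $E(G)\neq 1$, pass to $G/N$ where $N$ is the (finite, non-$p$-group) preimage of $O_{p'}(G/E(G))$; then $O_{p'}(G/N)=1$, so the inductive hypothesis applies to the $p'$-embedding $G/N$ of $SN/N$; if $E(G)=1$, then $Z(S)\le C_G(F^*(G))=Z(O_p(G))\le O_p(G)$ by Lemma \ref{eliglem}(i) and one takes $N=1$. Note also that your preliminary appeal to Corollary \ref{fplayer} buys you nothing, since after that step you still face the same alternation; quotienting directly by the preimage of $O_{p'}(G/E(G))$ does everything in one move per round. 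With the induction on $|G|_{p'}$ replacing your unproved termination claim, your argument for (iii) becomes correct and is essentially the paper's.
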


\begin{proof}(i) By Theorem \ref{opilayer}, $O_p(G)$ centralises $E(G)$, so $Z(S)O_p(G)/O_p(G)$ acts on $E(G)$.  Suppose $s \in Z(S)$ centralises $E(G)$.  Then $s$ centralises $F^*(G)=O_p(G)E(G)$, as $O_p(G) \leq S$.  Hence $s \in Z(F^*(G)) \leq O_p(G)$.  Let $Q \in \Comp(G)$.  Then $Z(S) \leq N_S(Q \cap S)$, and so $Z(S) \leq N_S(Q)$ by Lemma \ref{pscomplem}.

(ii) By part (i), $Z(S)F^*(G)/F^*(G) \lesssim \Out(Q_1) \times \dots \times \Out(Q_n)$, where $\Comp(G) = \{Q_1, \dots, Q_n \}$.  The conclusion follows by Proposition \ref{qsout}.
 
(iii) By induction on the $p'$-order of $G$, it suffices to find a finite normal subgroup $N$ such that $N$ is not a $p$-group and $O_{p'}(G/N)=1$, or to find that $O_p(G)$ already contains $Z(S)$.
 
Suppose $E(G) \not=1$.  Then set $H=G/E(G)$, and choose $K$ such that $KE(G)/E(G)=O_{p'}(H)$.  Then $N = E(G)K$ is finite and not a $p$-group, and $O_{p'}(G/N)=1$.  So we may assume $E(G)=1$.  This means $Z(S) \leq O_p(G)$.\end{proof}

Say a profinite group $G$ is cyclic if $d(G) \leq 1$.  We consider first the $p'$-embeddings of cyclic pro-$p$ groups, and then the $p'$-embeddings of pro-$p$ groups $S$ such that $d(S) \leq 2$.

\begin{prop}Let $S$ be a cyclic pro-$p$ group, and let $G \in \Emb(S)$.  Then one of the following holds:
\vspace{-12pt}
\begin{enumerate}[(i)] \itemsep0pt
\item $S \unlhd G$ and $G/S$ is cyclic of order dividing $p-1$;
\item $G$ has a single component $Q$, such that $S \leq Q$ and $G/Z(Q)$ is almost simple.\end{enumerate}\end{prop}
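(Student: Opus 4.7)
My plan is to split into two cases based on whether $E(G) = 1$. Throughout I will use Lemma \ref{eliglem}, which gives $G \in \FR$ and $C_G(F^*(G)) = Z(O_p(G))$, together with the crucial fact that $S$ is abelian (being cyclic), and therefore $S \leq C_G(O_p(G))$.

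In Case A ($E(G) = 1$) I expect (i) to fall out easily: $F^*(G) = O_p(G)$ gives $C_G(O_p(G)) = Z(O_p(G)) \leq O_p(G)$, so the inclusion $S \leq C_G(O_p(G))$ forces $S = O_p(G) \unlhd G$. Then $G/S \hookrightarrow \Aut(S)$ with image of order coprime to $p$, and a direct check of $\Aut(C_{p^n})$ and $\Aut(\bZ_p)$ shows the $p'$-part of the automorphism group is cyclic of order dividing $p-1$.

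In Case B ($E(G) \neq 1$) I would first show $G$ has a unique component $Q$. Any component $Q'$ of order coprime to $p$ would generate, with its $G$-conjugates, a normal pro-$p'$-subgroup (by Theorem \ref{opilayer}), contradicting $O_{p'}(G) = 1$. For two distinct components $Q_1, Q_2$ (necessarily both of order divisible by $p$), the subgroups $Q_1 \cap S$ and $Q_2 \cap S$ are comparable in the cyclic $S$, say $Q_1 \cap S \leq Q_2 \cap S$; then $Q_1 \cap S \leq Q_1 \cap Q_2 \leq Z(Q_1)$ by Theorem \ref{opilayer}, contradicting Corollary \ref{qscor} since $Q_1 \cap S$ is the full $p$-Sylow of $Q_1$. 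Uniqueness forces $Q \unlhd G$. I next expect to obtain $C_G(Q) = Z(Q)$ from the fact that $C_G(Q) \in \FR$ has $E(C_G(Q)) = 1$ (the only candidate component, $Q$, is non-abelian and not contained in $C_G(Q)$), $O_{p'}(C_G(Q)) \leq O_{p'}(G) = 1$, and $O_p(C_G(Q)) \leq O_p(G) \leq Z(Q)$; the final inclusion uses the same cyclic-comparability trick, namely that if $Q \cap S \leq O_p(G)$ then $Q \cap S \leq C_G(Q) \cap Q = Z(Q)$, again contradicting Corollary \ref{qscor}, so instead $O_p(G) \leq Q \cap S$ and centrality in $Q$ (Theorem \ref{opilayer}) gives $O_p(G) \leq Z(Q)$. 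Then $F^*(C_G(Q)) = Z(Q)$ is central in $C_G(Q)$, so Theorem \ref{frcent} delivers $C_G(Q) = Z(Q)$. Consequently $G/Z(Q) \hookrightarrow \Aut(Q) \leq \Aut(Q/Z(Q))$ by Proposition \ref{qsout}(i), exhibiting $G/Z(Q)$ as almost simple with socle $Q/Z(Q)$.

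The hard part is showing $S \leq Q$. My plan is to pass to $\bar G = G/Z(Q)$ with $\bar S = S/(S \cap Z(Q))$ and $\bar Q = Q/Z(Q)$, noting that $\bar G$ is a $p'$-embedding of the nontrivial cyclic pro-$p$ group $\bar S$ with unique simple component $\bar Q$, and that $O_p(\bar G) = 1$ (any normal $p$-subgroup of $\bar G$ meets the simple $\bar Q$ trivially, hence lies in $C_{\bar G}(\bar Q) = 1$). Thus $E^*_p(\bar G) = \bar Q$, and applying Corollary \ref{opphi}(ii) to $\bar G$ gives $\bar Q \cap \bar S \not\le \Phi(\bar S)$; since $\Phi(\bar S)$ is the unique maximal open subgroup of the cyclic pro-$p$ group $\bar S$, this forces $\bar Q \cap \bar S = \bar S$, that is $SZ(Q) \leq Q$ and hence $S \leq Q$, completing (ii).
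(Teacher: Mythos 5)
Your argument is sound in substance, and it takes a genuinely different route from the paper's. The paper splits on whether $S = O_p(G)$: if not, then $O_p(G) \leq \Phi(S)$ by cyclicity of $S$, so Corollary \ref{opphi}(i) applies to $G$ itself; a component of $G/O_p(G)$ then lifts to a quasisimple normal subgroup $Q$ (its preimage is a central extension of $O_p(G)$, since $\Aut(O_p(G))$ is abelian, and one takes the prosoluble residual), and Corollary \ref{tatecor} forces $S \cap Q \not\le \Phi(S)$, whence $S \leq Q$ by cyclicity again. You instead split on whether $E(G)=1$, prove uniqueness of the component by playing the total ordering of the subgroups of the cyclic group $S$ against Corollary \ref{qscor} and Corollary \ref{sylsubnor}, identify $C_G(Q) = Z(Q)$ via Fitting-regularity (Theorem \ref{frcent} applied to the normal subgroup $C_G(Q)$), and only then recover $S \leq Q$ by passing to $\bar{G} = G/Z(Q)$ and invoking Corollary \ref{opphi}(ii) there. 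Your case (i) and the centraliser computation are correct and in fact more explicit than the paper's one-line disposal of that case; what your route buys is a self-contained component analysis inside $G$, at the cost of having to re-verify the $p'$-embedding hypotheses for the quotient.

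That re-verification is the one place where you owe more detail. To apply Corollary \ref{opphi}(ii) to $\bar{G}$ you need $O_{p'}(\bar{G})=1$, which is the defining condition of a $p'$-embedding; you never check it, and your parenthetical only addresses $O_p(\bar{G})$. Likewise $C_{\bar{G}}(\bar{Q})=1$ and $E^*_p(\bar{G})=\bar{Q}$ are asserted rather than proved (components of a quotient need not be images of components, so uniqueness in $\bar{G}$ is not automatic from uniqueness in $G$). All three gaps close quickly with material you already have: since $C_G(Q)=Z(Q)$ and $\Aut(Q)$ acts faithfully on $Q/Z(Q)$ by Proposition \ref{qsout}(i), any subgroup of $G$ inducing trivial automorphisms of $Q/Z(Q)$ lies in $Z(Q)$, so $C_{\bar{G}}(\bar{Q})=1$; then any normal pro-$p'$ subgroup, normal $p$-subgroup, or second component of $\bar{G}$ meets the simple group $\bar{Q}$ trivially, hence commutes with it, hence is trivial. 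Alternatively you can avoid the quotient altogether, as the paper does: $Q \unlhd G$ is quasisimple with $p$ dividing $|Q/Z(Q)|$, so $Q$ cannot be $p'$-normal, and Corollary \ref{tatecor}(i) then gives $S \cap Q \not\le \Phi(S)$ directly; cyclicity of $S$ forces $S \cap Q = S$, i.e.\ $S \leq Q$.
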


\begin{proof}Let $P = O_p(G)$.  If $S=P$, then case (i) occurs.  Otherwise, $P \leq \Phi(S)$, so $G/P$ acts faithfully on $E_p(G/P)$ by Corollary \ref{opphi}.  Let $R/P \in \Comp_p(G/P)$; then $R/P \in \fsg$.  Now $R$ is a central extension of $P$ by $R/P$, since $\Aut(P)$ is soluble, so $Q=O^\sol(G)$ is quasisimple.  Since $Q \unlhd G$ but $Q$ is not $p'$-normal, then $S \cap Q \not\le \Phi(S)$ by Corollary \ref{tatecor}, so $S \leq Q$.  Clearly $Q = E_p(G) = E(G)$, and $G/Z(Q)$ is almost simple, since $G/P = G/Z(Q)$ acts faithfully on $Q/Z(Q)$.\end{proof}

Recall Proposition \ref{pspace} and the definition of $f_p$ given afterwards.

\begin{lem}\label{dfplem}Let $S$ be a pro-$p$ group with $d(S)\leq 2$.  Let $l=d(S)-f_p(S)$.  Then $l \in \{0,1,2\}$.

If $l=0$ then $S$ is finite.

If $l=1$, then $S$ is an extension of a finite group by an infinite cyclic group.

If $l=2$ then $S$ has no non-trivial layerable subgroups.\end{lem}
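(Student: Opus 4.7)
The plan is to exploit the unlabelled proposition earlier in the section: for a finitely generated pro-$p$ group $G$, if $G = \overline{\Fin(G)}K$ for some closed subgroup $K$, then $|G:K|$ is finite. The bounds $0 \le f_p(S) \le d(S) \le 2$ immediately give $l \in \{0,1,2\}$. For $l = 0$ the equation $f_p(S) = d(S)$ is exactly $\overline{\Fin(S)}\Phi(S) = S$, so by Lemma \ref{fratlem}(ii) the subgroup $\Fin(S)$ already topologically generates $S$, i.e.\ $\overline{\Fin(S)} = S$; applying the proposition with $K = 1$ then forces $|S|$ to be finite. For $l = 2$ we have $\overline{\Fin(S)} \le \Phi(S)$, so every finite normal subgroup of $S$ lies in $\Phi(S)$, whereas Lemma \ref{eliglem}(iii) says any non-trivial layerable subgroup is finite, normal, and \emph{not} contained in $\Phi(S)$; the two conditions together leave no room for a non-trivial layerable subgroup.

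The substantive case is $l = 1$, where I aim to show $S$ is finite-by-$\bZ_p$. First, $S$ is infinite (otherwise $S$ itself is a finite normal subgroup of $S$ and $l = 0$), and by hypothesis $F\Phi(S)$ is an index-$p$ maximal subgroup of $S$, where $F := \overline{\Fin(S)}$. Picking any $x \in S \setminus F\Phi(S)$ and setting $K = \overline{\langle x\rangle}$, Lemma \ref{fratlem}(ii) yields $FK = S$, so the proposition gives $|S:K|$ finite; since $K$ is pro-cyclic and cannot be finite (else $S$ would be), $K \cong \bZ_p$ and $S$ is virtually $\bZ_p$. Moreover $S/F$ is a pro-cyclic pro-$p$ group of rank $d(S) - f_p(S) = 1$, hence isomorphic to either $\bZ_p$ or some $C_{p^n}$.

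I would then rule out the possibility that $S/F$ is finite: if $|S:F|$ were finite then $F$ would be open and therefore finitely generated, so $\Phi(F)$ would be open in $F$. Writing $\Fin(S) = \bigcup_\alpha N_\alpha$ as a directed union of finite normal subgroups of $S$ (using that the product of two finite normal subgroups is again one), the images $N_\alpha\Phi(F)/\Phi(F)$ form a directed family of subgroups of the \emph{finite} group $F/\Phi(F)$ whose union is dense in, and therefore equal to, $F/\Phi(F)$. Directedness plus finiteness forces the family to stabilise at some $N_\beta$ with $N_\beta\Phi(F) = F$, and a second application of Lemma \ref{fratlem}(ii) gives $N_\beta = F$, making $F$ and hence $S$ finite --- a contradiction. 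Therefore $S/F \cong \bZ_p$. Lifting a topological generator to $y \in S$ and setting $K' := \overline{\langle y\rangle}$, the natural map $K' \to S/F$ is a continuous surjection between pro-cyclic groups with $K' \cong \bZ_p$, and any continuous surjection $\bZ_p \twoheadrightarrow \bZ_p$ is an isomorphism; hence $K' \cap F = 1$, and combining this with $FK' = S$ and $|S:K'| < \infty$ (from the proposition) yields $F \cong S/K'$ finite. Thus $F = \Fin(S)$ is a finite normal subgroup with $S/F \cong \bZ_p$.

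The only delicate step in the whole argument is the directed-union trick eliminating the possibility that $S/F$ is finite in the $l = 1$ case; the remainder is a direct interplay between the Frattini-generation criterion of Lemma \ref{fratlem}(ii) and the proposition on decompositions of the form $\overline{\Fin(G)}K$, together with the trivial observation that a surjective continuous endomorphism of $\bZ_p$ is an isomorphism.
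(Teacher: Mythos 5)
Your argument is correct, but it takes a genuinely different route from the paper's. The paper simply applies Proposition \ref{pspace}(i) to the family of finite normal subgroups of $S$: this produces a single finite normal subgroup $K$ of $S$ with $K\Phi(S)=\overline{\Fin(S)}\Phi(S)$, whence $d(S/K)=l$ and all three cases drop out in a line each (for $l=1$, $S/K$ is procyclic and infinite because $K$ is finite while $S$ is not, so $S/K\cong\bZ_p$). You never invoke Proposition \ref{pspace}; instead you lean on the unlabelled proposition from the section on the virtual centre and finite radical (that $G=\overline{\Fin(G)}K$ forces $|G:K|$ finite for a finitely generated pro-$p$ group $G$), and in the $l=1$ case your directed-union-of-finite-normal-subgroups argument inside the open subgroup $F=\overline{\Fin(S)}$ re-proves by hand precisely the special case of Proposition \ref{pspace}(i) that the paper quotes. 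The cost is length; the gain is that you actually exhibit a procyclic complement, $S=FK'$ with $K'\cong\bZ_p$ and $F\cap K'=1$, so you in fact show the extension splits as $\Fin(S)\rtimes\bZ_p$, slightly more than the lemma asserts. One small repair is needed: the phrase ``yields $F\cong S/K'$'' is not literally available, since $K'$ is not known to be normal in $S$ and $S/K'$ need not be a group; but the finiteness of $F$ follows at once from what you have already established, namely $F\cap K'=1$ together with $K'$ open, which gives $|F|=|F:F\cap K'|\leq|S:K'|<\infty$. Your $l=0$ case (the Section 2.3 proposition applied with $K=1$) is a valid alternative to the paper's, and your $l=2$ case via Lemma \ref{eliglem}(iii) coincides with the paper's.
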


\begin{proof}By Proposition \ref{pspace}, there is a finite normal subgroup $K$ of $S$, such that $K\Phi(S)/\Phi(S)$ has dimension $f_p(S)$, and hence $d(S/K) = l$.

If $d(S/K)=0$, then $S=K$, so $S$ is finite.  Conversely, if $S$ is finite then $d(S/K)=0$.

If $d(S/K)=1$, then $S/K$ is infinite cyclic, since otherwise we would have $S$ finite.

If $d(S/K)=2$, then $\Fin(S) \subseteq \Phi(S)$, so $S$ has no non-trivial layerable subgroups by Lemma \ref{eliglem}.\end{proof}

Normal subgroups not contained in the Frattini subgroup of a $2$-generator pro-$p$ group have consequences for its finite images.

\begin{prop}Let $S$ be a pro-$p$ group with $d(S)=2$, and suppose $P$ is a normal subgroup of $S$ not contained in $\Phi(S)$.  Then $S$ has an image isomorphic to the semidirect product $A \rtimes T$ where $A$ is elementary abelian, $d(A) \geq d(P) - 2$ and $A$ is generated by the conjugates of a single element under the action of $T$, and $T$ is cyclic.\end{prop}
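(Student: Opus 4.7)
The plan is to construct the desired image of $S$ in two stages: first, reduce to an elementary abelian setting and establish a module-theoretic framework; second, carve out a cyclic quotient that splits off a cyclic complement.

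First I would dispose of the easy case: if $P\Phi(S) = S$, then by the Frattini argument $P = S$, so $d(P) \leq d(S) = 2$ and the claim holds vacuously with $A = T = 1$. Otherwise $P\Phi(S)/\Phi(S)$ is a $1$-dimensional subspace of $S/\Phi(S) \cong \mathbb{F}_p^2$. Pick $x \in P$ with $x\Phi(S)$ spanning $P\Phi(S)/\Phi(S)$ and $y \in S$ with $y\Phi(S)$ spanning a complementary line, so that $\{x,y\}$ topologically generates $S$ and $S/P$ is procyclic, generated by $yP$.

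Next, pass to $\bar{S} := S/\Phi(P)$, in which $\bar{P} := P/\Phi(P)$ is elementary abelian. Regarding $\bar{P}$ as a module over the local (completed) group ring $R := \mathbb{F}_p[[S/P]]$ with maximal ideal $\mathfrak{m}$ generated by $t-1$ (where $t$ is the image of $y$), I would prove the key bound $d_R(\bar{P}) \leq 2$ as follows. The coinvariants $\bar{P}/\mathfrak{m}\bar{P} = P/[P,S]\Phi(P)$ sit as a central subgroup of $S/[P,S]\Phi(P)$, and the quotient $S/P$ is procyclic; hence $S/[P,S]\Phi(P)$ is a central extension of a cyclic group by a central subgroup and is itself abelian. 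As a subgroup of a $2$-generator abelian pro-$p$ group, $P/[P,S]\Phi(P)$ has rank at most $2$, so by Nakayama $\bar{P}$ requires at most two generators as an $R$-module.

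For the construction itself, let $C := \overline{\langle y\rangle}$ and form the normal subgroup $N_0 := (C\cap P)^S \Phi(P)$. Dedekind's modular law (applied with $C\cap P \leq P$) shows that $CN_0/N_0 \cap PN_0/N_0$ is trivial, giving a split extension $S/N_0 = (P/N_0) \rtimes (CN_0/N_0)$ whose complement is cyclic. Now the structure theorem for finitely generated modules over the local principal ideal ring $R$ (which is a PID when $S/P \cong \mathbb{Z}_p$ and a local PIR when $S/P$ is finite cyclic) yields $\bar{P} \cong R/\mathfrak{m}^{n_1} \oplus R/\mathfrak{m}^{n_2}$ with $n_1 \geq n_2 \geq 0$ and $n_1 + n_2 = d(P)$. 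Projecting onto the first summand (after lifting through the quotient $S \to S/N_0$) gives a further quotient $S/N = A \rtimes T$ in which $A$ is cyclic as an $R$-module, hence as a $T$-module, of $\mathbb{F}_p$-dimension $n_1$; the image of $x$ is the required cyclic generator under the action of $T$.

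The main obstacle is verifying $n_1 \geq d(P) - 2$, equivalently $n_2 \leq 2$. I expect this to follow from a refinement of the $2$-generator argument used for $\bar{P}/\mathfrak{m}\bar{P}$, applied to the descending chain $\bar{P} \supseteq \mathfrak{m}\bar{P} \supseteq \mathfrak{m}^2\bar{P} \supseteq \cdots$ and the corresponding $S$-normal subgroups of $P$: the layers $\mathfrak{m}^i\bar{P}/\mathfrak{m}^{i+1}\bar{P}$ have dimension $2$ for $i < n_2$ and dimension $1$ for $n_2 \leq i < n_1$, and one must show that the $2$-dimensional regime persists for at most two steps because of the constraint $d(S)=2$ on each intermediate central extension. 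An alternative, and perhaps cleaner, route is to choose the lift of $x$ more carefully by modifying it by a suitable element of $(P\cap\Phi(S))/\Phi(P)$ so that the annihilator of its image in $\bar{P}$ is exactly $\mathfrak{m}^{n_1}$; then $R\bar{x}$ itself is a direct summand isomorphic to $R/\mathfrak{m}^{n_1}$, and one quotients by a complementary submodule of dimension $n_2 \leq 2$ directly.
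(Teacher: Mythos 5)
Your framework --- viewing $\overline{P}=P/\Phi(P)$ as a module over $R=\mathbb{F}_p[[S/P]]$, proving it is $2$-generated via Nakayama, and invoking the structure theory of modules over this local principal ideal ring --- is sound as far as it goes and is genuinely different from the paper's argument, but the proof is incomplete at exactly the point you flag. The bound $n_2\le 2$ is never established, and the heuristic you offer (that the two-dimensional layers $\mathfrak{m}^i\overline{P}/\mathfrak{m}^{i+1}\overline{P}$ can persist for at most two steps `because of the constraint $d(S)=2$ on each intermediate central extension') does not obviously work: $2$-generation of the intermediate quotients does not by itself limit the number of two-dimensional layers, and no mechanism is supplied. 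Moreover, even granting $n_2\le 2$, your assembly step loses a dimension you have not accounted for: to retain a cyclic complement in the final quotient you must kill the image of $C\cap P$ as well as the second summand (nothing guarantees the second summand contains it), and since that image can be a further one-dimensional submodule, your route as written only yields $d(A)\ge n_1-1\ge d(P)-3$. Quotienting by the second summand alone would preserve the dimension count, but then the existence of a complement in that quotient is not addressed.

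The missing ingredient is the one the paper uses. Since $S=\overline{\langle x,y\rangle}$, the quotient $S/\langle x\rangle^S$ is procyclic, hence so is its subgroup $P/\langle x\rangle^S$; consequently the cyclic submodule $\langle x\rangle^S\Phi(P)/\Phi(P)=R\overline{x}$ has codimension at most $1$ in $\overline{P}$. In the finite-length case every cyclic submodule has dimension at most $n_1$, so this forces $n_2\le 1$ (the case of an infinite cyclic summand when $S/P\cong\mathbb{Z}_p$ is handled similarly), which repairs both problems above at once. In fact, once this observation is available the structure theorem becomes unnecessary: the paper simply sets $K=\langle x\rangle^S$, writes $S=KC$ with $C$ procyclic, and passes to $S/L$ where $L=\Phi(K)(K\cap C)$ (normal in $S$ because $K$ centralises $K/\Phi(K)$ and $C$ normalises $K\cap C$); then $A=K/L$ is elementary abelian and generated by the conjugates of the image of $x$ under the cyclic complement $CL/L$, and $d(A)\ge d(K)-1\ge d(P)-2$ because $K\cap C$ and $P/K$ are each procyclic. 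If you add the observation about $P/\langle x\rangle^S$ to your module-theoretic setup, your proof can be completed, but it then essentially reduces to this direct argument.
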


\begin{proof}The conclusion clearly holds if $S=P$, so we may assume $P < S$.  Let $x$ be an element of $P \setminus \Phi(S)$.  Let $K = \langle x \rangle^S$.  Then $S/K$ is cyclic since $K$ is not contained in $\Phi(S)$; hence $S=KR$, where $R$ is a cyclic subgroup of $S$.  Now let $L = \Phi(K)(K\cap R)$, and consider the image $S/L$ of $S$.  This decomposes as a semidirect product $A \rtimes T$, where $A = K/L$ and $T = RL/L$; here $T$ is cyclic, and $A$ is elementary abelian and generated by the conjugates of $xL$ under the action of $T$.  Since $K \cap R$ is cyclic, $d(A) \geq d(K) - 1$.  Finally, $K \leq P$ and $P/K$ is cyclic, so $P$ can be generated by $K$ together with at most one element outside of $K$; hence $d(K) \geq d(P) - 1$, which means $d(A) \geq d(P) - 2$.\end{proof}

Now define an invariant $\wrd(S)$ to be the supremum of $\mathrm{log}_p|A|$, as $A$ ranges over all elementary abelian groups such that $S$ has an image of the form $A \rtimes T$ as specified by the proposition.  In general this may be infinite, for instance if $S$ is the free pro-$p$ group on $2$ generators.

On the other hand, any $A \rtimes T$ as above satisfies $\Phi((A \rtimes T)')=1$, so it must be an image of $S/\Phi(S')$.  This means $\wrd(S)=\wrd(S/\Phi(S'))$.  Thus for $\wrd(S)$ to be finite, it suffices for $S/S'$ to be finite.

\begin{cor}\label{wdcor}Let $G$ be a $p'$-embedding of the $2$-generated pro-$p$ group $S$, and suppose $\wrd(S)$ is finite.  Let $H$ be a normal subgroup of $G$ such that $(H \cap S) \not\le \Phi(S)$.  Then $d_p(H) \leq \wrd(S) + 2$.\end{cor}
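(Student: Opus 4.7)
The plan is to apply the proposition immediately preceding the corollary to the intersection $P := H \cap S$, combined with Corollary \ref{sylsubnor} to interpret $P$ as a $p$-Sylow subgroup of $H$.

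First I would observe that since $H \unlhd G$ and $S \in \Sylp(G)$, Corollary \ref{sylsubnor} gives that $P = H \cap S$ is a $p$-Sylow subgroup of $H$. Consequently $d_p(H) = d(P)$, so the task reduces to bounding $d(P)$.

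Next, note that $P \unlhd S$ (as the intersection of the normal subgroup $H$ of $G$ with $S$), and by hypothesis $P \not\le \Phi(S)$. Thus $P$ is a normal subgroup of the $2$-generated pro-$p$ group $S$ that is not contained in $\Phi(S)$, so the proposition preceding Corollary \ref{wdcor} applies: $S$ has a quotient isomorphic to $A \rtimes T$ where $A$ is elementary abelian, $T$ is cyclic, $A$ is generated by the $T$-conjugates of a single element, and $d(A) \geq d(P) - 2$.

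Finally, the definition of $\wrd(S)$ as the supremum of $\log_p|A|$ over all such quotients of $S$ yields $\wrd(S) \geq \log_p|A| = d(A) \geq d(P) - 2$, hence $d_p(H) = d(P) \leq \wrd(S) + 2$. There is no real obstacle here; the corollary is essentially a direct reading of the preceding proposition in light of the Sylow correspondence for normal subgroups, and the only point requiring any care is to confirm that $P$ is genuinely a $p$-Sylow subgroup of $H$ rather than merely a pro-$p$ subgroup.
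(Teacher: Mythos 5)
Your proof is correct and is exactly the intended argument: the paper states this as an immediate corollary (with no separate proof) of the preceding proposition together with the Sylow correspondence of Corollary \ref{sylsubnor}, which is precisely the route you take. The one point needing care — that $H \cap S$ is genuinely a $p$-Sylow subgroup of $H$, so $d_p(H) = d(H \cap S)$ — is handled correctly by your appeal to Corollary \ref{sylsubnor}.
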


We now obtain a list of possible structures for $p'$-embeddings of a $2$-generator pro-$p$ group.

\begin{thm}Let $S$ be a pro-$p$ group such that $d(S) = 2$, and let $G \in \Emb(S)$.  Write $P= O_p(G)$ and $H=G/O_p(G)$.  Let $V=(E^*_p(G) \cap S)\Phi(S)/\Phi(S)$.
  
If $G$ is a standard $p'$-embedding, then exactly one of the following holds:
\vspace{-12pt}
\begin{enumerate}[(i)] \itemsep0pt
\item $S = P$ and $H \lesssim \Delta(S)$;
\item $p$ is odd, $f_p(S) \geq 1$, $E(G)$ is quasisimple, with $|E(G) \cap S|$ of order bounded by properties of $S$, and such that $(E(G) \cap S)Z(Q)/Z(Q)$ is cyclic, $F^*(G) = SE(G)$, and $H \lesssim \Delta(P) \times \Aut(E(G))$;
\item $E(G)=1$, $d(S/P)=1$ and $H \leq \Delta(P) \leq \GL(k,p)$ for some $k$.\end{enumerate}
If instead $G$ is a Frattini $p'$-embedding, then $H \lesssim \Aut(E(H))$, and exactly one of the following holds:
\vspace{-12pt}
\begin{enumerate}[(i)] \itemsep0pt
\setcounter{enumi}{3}
\item $E(H)$ is a non-abelian finite simple group with $d_p(E(H))=2$, and $S \leq E^*_p(G)$;
\item $p$ is odd, $[S,S] \leq P$, $E(H)$ is the direct product of two non-abelian finite simple groups (possibly isomorphic), each with cyclic $p$-Sylow subgroups, and $S \leq E^*_p(G)$;
\item $E(H)$ is the direct product of $p^l$ copies of a single non-abelian finite simple subgroup $Q$ of $H$ for some integer $l$, with $E(H)$ being the $S$-invariant closure of $Q$, and $|V|=p$.\end{enumerate}
 
Let $n$ be the smallest integer such that, whenever $R \in \Comp(H)$ and $R/Z(R)$ is of Lie type, the defining field of $R$ has order at most $n$.  In case (i), $|H|$ divides $(p-1)(p^2-1)$.  In cases (ii), (iv) and (v), $|H|$ is bounded by a function of $p$ and $n$.  If in addition $\wrd(S)$ is finite, we can replace $k$ by $\wrd(S)+2$, and $l$ satisfies $d_p(Q)p^l \leq \wrd(S)+2$.  Hence $|H|$ is now bounded by a function of $\wrd(S)$ and $p$ in case (iii), and in case (vi) it is bounded by a function of $S$ and $n$.\end{thm}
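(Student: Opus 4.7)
The plan is to split $\Emb(S)$ according to whether $O_p(G)$ lies in $\Phi(S)$ (Frattini case) or not (standard case), and within each to control the image $V = (E^*_p(G) \cap S)\Phi(S)/\Phi(S)$ inside the two-dimensional $\bF_p$-space $S/\Phi(S)$. The crucial ingredient is Corollary \ref{comporb}, which caps the number of $S$-orbits on $\Comp(G)$ at $f_p(S) \leq d(S) = 2$; combined with the dimension bound on $V$, this forces the layer/Fitting configuration into exactly the listed cases.

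In the standard case, $d(S/P) \leq 1$. If $S = P$, then $H$ is a $p'$-group acting faithfully on $P/\Phi(P)$ by Lemma \ref{eliglem}(i) and Lemma \ref{deltalem}, embedding in $\Delta(S) \leq \GL(2,p)$; this is case (i), with $|H|$ dividing the $p'$-part of $|\GL(2,p)|$, namely $(p-1)(p^2-1)$. When $d(S/P)=1$ and $E(G)=1$, Theorem \ref{frcent} yields $C_G(F^*(G)) = Z(P)$ and the same analysis embeds $H$ in $\Delta(P) \leq \GL(k,p)$ with $k = d(P)$; finiteness of $\wrd(S)$ then bounds $k$ by $\wrd(S) + 2$ via Corollary \ref{wdcor}, giving case (iii). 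When $d(S/P)=1$ and $E(G) \neq 1$, Lemma \ref{eliglem}(iii) forces $\overline{E(G) \cap S}$ to be nonzero in $S/\Phi(S)$, and together with $\overline{P}$ it exhausts the full two-dimensional space; the orbit-count bound then gives a unique $S$-orbit on $\Comp(G)$, and for odd $p$ this orbit must be a singleton (since a pro-$p$ group has no non-trivial $p'$-quotient), so $E(G)$ is quasisimple. The cyclicity of $(E(G) \cap S)/Z(E(G))$ and the identity $F^*(G) = SE(G)$ in case (ii) then follow from the fact that $V/\overline{P}$ is one-dimensional together with Lemma \ref{pscomplem}.

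In the Frattini case $P \leq \Phi(S)$, so $G$ is not $p$-normal and Corollary \ref{opphi}(i) with $R = P$ shows that $H$ acts faithfully on $E_p(H)$, a direct product of non-abelian finite simple groups; because any component $Q$ with $p \nmid |Q|$ would lie in $O_{p'}(G) = 1$, we have $E(H) = E_p(H)$, so $H \hookrightarrow \Aut(E(H))$. Applying Corollary \ref{comporb} again, the $S$-orbits on $\Comp(G)$ number at most two. If $\dim V = 2$, then $S \leq E^*_p(G)$ by Lemma \ref{fratlem}(ii), and each orbit has size one; the outcomes are either a single component with $d_p(E(H)) = 2$ (case (iv)) or two components each of cyclic $p$-Sylow (case (v)). In (v), odd $p$ rules out a single orbit of size two, and cyclic-$p$-Sylow simple groups give abelian $p$-parts of $\Aut(Q_i)$ for odd $p$, so $S/P$ is abelian and $[S,S] \leq P$. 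If $\dim V = 1$, a single $S$-orbit of size $p^l$ on $\Comp(G)$ yields case (vi).

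The uniform bound on $|H|$ comes from $H \hookrightarrow \Aut(F^*(H))$ (since $O_p(H) = 1$ and $H$ is Fitting-regular): in cases (ii), (iv), (v) the constraint $d_p(Q) \leq 2$ combined with Corollary \ref{dpdeg} and the Lie defining-field bound $n$ caps $|Q|$ by a function of $p$ and $n$, so Proposition \ref{qsout} together with the component count bounds $|\Aut(E(H))|$; in case (vi) the extra factor $p^l$ is controlled via Corollary \ref{wdcor} applied to the normal subgroup $E^*_p(G)$ (whose intersection with $S$ sits outside $\Phi(S)$), giving $p^l d_p(Q) \leq \wrd(S)+2$. The most delicate step is the dimension counting in the Frattini case: precisely matching $\dim V$ to the orbit size and component count, especially distinguishing case (iv) from case (vi) when $l = 0$, rests on identifying $\overline{E(G) \cap S}/\overline{P}$ with the appropriate $S$-invariant subspace of the permutation module on components, a matching which requires odd $p$ to rule out low-order permutation possibilities.
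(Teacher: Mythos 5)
Your overall architecture is the same as the paper's (standard/Frattini split, Corollary \ref{opphi} in the Frattini case, Corollary \ref{comporb} for orbit counting, Corollary \ref{dpdeg}, Proposition \ref{qsout} and Corollary \ref{wdcor} for the bounds), but the component-counting steps, which carry the real content, do not work as you state them. In the standard case with $E(G)\neq 1$, Corollary \ref{comporb} only gives at most $f_p(S)\leq 2$ orbits of $S$ on $\Comp_p(G)$, not a unique orbit, and the parenthetical ``a pro-$p$ group has no non-trivial $p'$-quotient'' is beside the point: orbits of $S$ on a finite set have $p$-power size, so what must be excluded is an orbit of size $p$ (or two singleton orbits), not an orbit of size coprime to $p$. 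The step that actually closes this uses $d(S/P)=1$: since $E(G)\cap S$ is a non-trivial layerable subgroup it is not contained in $\Phi(S)$ (Lemma \ref{eliglem}), its image in the cyclic group $S/P$ is not contained in $\Phi(S/P)$, so some component $Q$ satisfies $(Q\cap S)P=S$; any second component would then centralise the whole $p$-Sylow subgroup $S/P$ of $H$ (components commute with each other and with $P$ by Theorem \ref{opilayer}), forcing its own $p$-Sylow subgroup to be central, contradicting Corollary \ref{qscor}. Note also that in cases (ii) and (v) the oddness of $p$ is a \emph{conclusion} (no non-abelian finite simple group has a cyclic $2$-Sylow subgroup), not a hypothesis; you assume ``for odd $p$'' to get singleton orbits, which is backwards, and in case (v) ``odd $p$ rules out a single orbit of size two'' still leaves an orbit of size $p\geq 3$ unexcluded. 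The correct point there is that $|V|=p^2$ forces $S\leq E^*_p(G)$, hence $S/P\leq E(H)$, so $S/P$ normalises every component and all orbits are singletons; only then do you get exactly two components, cyclic $p$-Sylow subgroups (from $d(S/P)\leq 2$), abelianness of $S/P$ from $S/P\leq Q_1\times Q_2$, and finally $p$ odd.

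A second genuine gap is the order bound in case (ii). Your uniform route via $H\hookrightarrow\Aut(F^*(H))$ does not apply there: in case (ii) one only has $H\lesssim\Delta(P)\times\Aut(E(G))$, and bounding $|\Delta(P)|$ requires first bounding $d(P)$. This comes from $|S:P|\leq|E(G)|$ (because $S=(E(G)\cap S)P$) together with the Schreier index formula, and $|E(G)|$ is in turn bounded by a function of $p$ and $n$ via Corollary \ref{dpdeg}; you omit the $|S:P|$ step entirely, and your appeal to $\Aut(E(H))$ and the component count cannot replace it since $F(H)$ need not be trivial in the standard case. Likewise the clause of case (ii) asserting that $|E(G)\cap S|$ is bounded by properties of $S$ (which the paper gets from Lemma \ref{dfplem}: $f_p(S)\geq 1$ forces $\Fin(S)$ to be finite when $d(S)=2$) is not addressed in your argument.
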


\begin{proof}Suppose $G$ is a standard $p'$-embedding.  Then $d(S/P) < d(S)$, that is $d(S/P) \leq 1$.  If $E(G)=1$, then clearly (i) or (iii) holds according to the value of $d(S/P)$, so we may assume $E(G) > 1$.  This also ensures $d(S/P)=1$ and $f_p(S) \geq 1$.  Lemma \ref{dfplem} now ensures that $\Fin(S)$ is finite, and so the order of $E(G) \cap S$ is bounded by a property of $S$.  Furthermore, $(E(G) \cap S)Z(E(G))/Z(E(G))$ is cyclic, as it is isomorphic to $(E(G) \cap S)P/P$; moreover, $E(G)P/P$ contains a component of $H$, so $(E(G) \cap S)P/P$ is not contained in $\Phi(S/P)$.  This ensures that $E(G)$ consists of a single component $Q$, that $S \leq (Q \cap S)P$, and that $(Q \cap S)Z(Q)/Z(Q)$ is cyclic; this also ensures $p$ is odd, as no non-abelian finite simple group has a cyclic $2$-Sylow subgroup.  Case (ii) now follows.

Suppose now that $G$ is a Frattini $p'$-embedding.  This ensures that $H \lesssim \Aut(E(H))$ by Corollary \ref{opphi}, and the order of $V$ is either $p$ or $p^2$.

Suppose $|V|=p$; then by Corollary \ref{comporb}, there is a single $S/P$-conjugacy class of components of $H$, giving case (vi), so we may assume $|V|=p^2$, which means that $S$ is a subgroup of $E^*_p(G)$.  If $E(H)$ is simple, then $d_p(E(H)) = 2$, and we are in case (iv).  Otherwise, $E(H)$ is decomposable; by Corollary \ref{comporb}, there are at most two $S/P$-conjugacy classes of component of $H$, and hence there are exactly two components of $H$, since $S/P \leq E(H)$, so that $S/P$ normalises every component.  Since $d(S/P) \leq 2$, and $p$ divides the order of each component, each component must have a cyclic $p$-Sylow subgroup; this ensures in turn that $S/P$ is abelian, and that $p$ is odd.  This is case (v).

Now consider bounds on the order of $H$.  In case (i), $H$ is isomorphic to a $p'$-subgroup of $\GL(2,p)$, so has order dividing $(p-1)(p^2-1)$.  In case (ii), let $R = E(G)$; in case (iv), let $R = E(H)$; in case (v), let $R$ be either of the components of $H$.  Then $d_p(R/Z(R))$ is at most $2$, and so $\deg(R)$ is bounded by a function of $p$ by Corollary \ref{dpdeg}, which means $|R|$ is bounded by a function of $p$ and $n$.  In case (ii), $|S:P|$ is at most $|R|$, so $d(P) \leq 2|R|$, giving a bound on $|\Delta(P) \times \Aut(R)|$, and hence on $|H|$, as a function of $p$ and $n$.  In cases (iv) and (v), we obtain a bound on $|E(H)|$ as a function of $p$ and $n$, and hence on $|H|$, since $H \lesssim \Aut(E(H))$.

Now suppose $\wrd(S)$ is finite.  Then Corollary \ref{wdcor} ensures $d_p(P) \leq \wrd(S) + 2$ in case (iii), and $d_p(E(H)) \leq \wrd(S) + 2$ in case (vi).  This gives the required bounds on $k$ and $l$.  This immediately gives a bound on $|H|$ as a function of $\wrd(S)$ and $p$ in case (iii).  In case (vi), there is a bound on both the number of components of $H$ and on $\deg(R)$ for each component $R$ of $H$, and so $|E(H)|$ is bounded by a function of $(\wrd(S),p,n)$, which in turn gives a bound on $|H|$.\end{proof}

\begin{rem}If the $p'$-embedding $G$ is $p$-separable, then only cases (i) and (iii) are possible.\end{rem}

\begin{cor}Let $G$ be a $2'$-embedding of the $2$-generator pro-$2$ group $S$.  Write $P= O_2(G)$, $H=G/O_2(G)$.  Then $G/E^*_2(G)$ is soluble, and in cases (i) and (iii) of the theorem, $G$ is prosoluble.\end{cor}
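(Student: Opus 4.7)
The plan is to apply the classification theorem just proved case by case. Cases (ii) and (v) both explicitly require $p$ odd, so they cannot occur when $p=2$; we are left with cases (i), (iii), (iv), and (vi). By Feit--Thompson, every non-abelian finite simple group has even order, so $E_2(H) = E(H)$ and thus $E^*_2(G)/P = E(H)$ in the notation of the theorem. The corollary will follow by showing in each remaining case that the Sylow $2$-subgroup of $G/E^*_2(G)$ is either trivial or cyclic, and then invoking (the profinite versions of) Burnside's normal $p$-complement theorem and the Odd Order Theorem.

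In cases (i) and (iii) we have $E(G)=1$, so $E^*_2(G) = O_2(G)$. For case (i), $S=P$ and $H \lesssim \Delta(S) \leq \GL(2,\bF_2) \cong S_3$ has order at most $6$ and is therefore soluble, so $G$ is (pro-$2$)-by-finite-soluble, hence prosoluble. For case (iii), $d(S/P)=1$ forces $S/P$ to be a cyclic pro-$2$ group, so every finite quotient of $H = G/P$ has cyclic Sylow $2$-subgroup. Burnside's normal $p$-complement theorem then gives each such finite quotient a normal $2$-complement, and passing to the inverse limit $H$ acquires a normal pro-$2'$ subgroup with cyclic pro-$2$ quotient; Feit--Thompson applied to the complement shows that $H$, and therefore $G$, is prosoluble.

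In case (iv) the theorem gives $S \leq E^*_2(G)$, so $G/E^*_2(G)$ has trivial image of its Sylow $2$-subgroup; that is, $G/E^*_2(G)$ is pro-$2'$, hence prosoluble by Feit--Thompson. In case (vi), $|V|=2$ means that $(S \cap E^*_2(G))\Phi(S)/\Phi(S)$ has index $2$ in the $4$-element space $S/\Phi(S)$. Writing $A = S \cap E^*_2(G)$ and using Lemma~\ref{fratlem}(ii) applied to $S/A$, we get $d(S/A) = d(S/(A\Phi(S))) = 1$, so $S/A$ is cyclic. Since $S/A$ is a Sylow $2$-subgroup of $G/E^*_2(G)$, the same Burnside plus Feit--Thompson argument used in case (iii) shows $G/E^*_2(G)$ is prosoluble.

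No step is genuinely hard; the plan is essentially a routine case check once the theorem is in hand. The only observation requiring care is the one in case (vi): translating the hypothesis $|V|=2$ (which a priori controls only a Frattini section) into the statement that the full Sylow $2$-subgroup of $G/E^*_2(G)$ is cyclic, using $d(S)=2$ and the fact that the number of generators of a pro-$2$ group equals that of its Frattini quotient. This is the pivotal structural input that makes both Burnside's theorem and Feit--Thompson applicable.
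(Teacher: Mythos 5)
Your proof is correct and takes essentially the same route as the paper's: discard cases (ii) and (v) because they require $p$ odd, observe that $G/E^*_2(G)$ has odd order in cases (i) and (iv) and a cyclic $2$-Sylow subgroup in cases (iii) and (vi), conclude solubility via Burnside's normal $p$-complement theorem together with Feit--Thompson, and add the extra step giving prosolubility of $G$ itself in cases (i) and (iii). One small remark: the line ``$E(G)=1$, so $E^*_2(G)=O_2(G)$'' as written needs $E(G/O_2(G))=1$ rather than $E(G)=1$, but this is harmless --- in case (i) the quotient $H$ has odd order, and in case (iii) you prove $H=G/P$ soluble anyway, so since $E^*_2(G)\geq O_2(G)$ the group $G/E^*_2(G)$ is a quotient of a soluble group in either case and the stated conclusions follow without that identification.
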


\begin{proof}Cases (ii) and (v) of the theorem do not apply.  In cases (i) and (iv), $G/E^*_2(G)$ has odd order, and in cases (iii) and (vi) it has a cyclic $2$-Sylow subgroup; thus in all cases $G/E^*_2(G)$ is soluble.  In case (iii), $E^*_2(G)/O_p(G)$ also has a cyclic $2$-Sylow subgroup, so is trivial, and in case (i), $G/O_p(G)$ is isomorphic to a subgroup of the soluble group $\GL(2,2)$.  Hence in cases (i) and (iii), $G/O_p(G)$ is soluble, so that $G$ itself is prosoluble.\end{proof}

\chapter*{Index of Notation}

\addcontentsline{toc}{chapter}{\protect\numberline{}Index of Notation}

$\cdot$ extension (Section 1.1)

$\lesssim$ isomorphic to a subgroup

$\leq_f$ subgroup of finite index

$\lep$ see Definition 6.3.1

$\unlhd^2$ subnormal of defect at most $2$ (Section 1.1)

$\sgp(G)$ the set of all subgroups of $G$

$\nsgp(G)$ the set of all normal subgroups of $G$

$\fsgp(G)$ the set of all subgroups of $G$ of finite index

$\nsgp_\Phi$, $\nsgp^*_\Phi$ see Definition 2.1.3

$\rtimes$ semidirect product

$\wr$ wreath product

$\ri$ see Section 3.1

$[1]$ the class of trivial groups

$\ra$ see Section 3.1

$\Aut(G)$ the group of continuous automorphisms of $G$

$c$ (subscript) closed subset (Section 1.1)

$c(G)$ see Definition 2.5.1

$c^\le(G)$ the supremum of $c(H)$ as $H$ ranges over the open subgroups of $G$

$c_\pi(G)$ the supremum of $c(S)$ for $S$ a $p$-Sylow subgroup of $G$, as $p$ ranges over $\pi$

$C_n$ the cyclic group of order $n$

$C_G(X)$ the centraliser or pointwise stabiliser of $X$ under the action of $G$ (conjugation action unless otherwise indicated)

$\Comm(G)$ the commensurator of $G$ (Definition 2.4.1)

$\Comp(G)$ the set of components of $G$ (Definition 1.4.11)

$\Comp_\pi(G)$ the set of components $Q$ of $G$ such that $p$ divides $|Q|$ for every $p$ in $\pi$ (Definition 1.4.11) 

$\Core_K(H)$ the intersection of the $K$-conjugates of $H$

$\CT_p$ see Definition 6.4.1

$d(G)$ the smallest cardinality of a (topological) generating set for $G$

$d_p(G)$ the smallest cardinality of a (topological) generating set for a $p$-Sylow subgroup of $G$

$d_\pi(G)$ the supremum of $d_p(G)$ as $p$ ranges over the set of primes $\pi$

$\db(n)$ a bound on the derived length of a soluble linear group of degree $n$ (Theorem 1.5.2)

$\deg(G)$ see Definition 1.4.9

$\Delta(G)$ the group of automorphisms of $G/\Phi(G)$ induced by $\Aut(G)$ (Definition 2.5.1)

$E(G)$ the subgroup generated by the components of $G$ (Definition 1.4.11)

$E_\pi(G)$, $E^*_\pi(G)$ see Definition 1.4.11

$\Emb(S)$ the class of $p'$-embeddings of $S$ (Definition 6.2.1)

$\EmbLF(S)$ the class of layer-free $p'$-embeddings of $S$ (Definition 6.2.1)

$\eb(n)$ a bound on the exponent of a class of groups arising from Mal'cev's theorem (Theorem 1.5.5, Corollary 1.5.6)

$F(G)$ the pro-Fitting subgroup of $G$ (Section 1.3)

$F^*(G)$ the generalised pro-Fitting subgroup of $G$ (Definition 1.4.11)

$f_p(G)$ see Proposition 2.2.4 and Corollary 2.2.5

$\bF_q$ the field of $q$ elements

$\FD$ the class of profinite groups $G$ for which $F^*(G)=1$ (Definition 4.1.1)

$\fin$ the class of finite groups

$\Fin(G)$ the union of all finite normal subgroups of $G$

$\FR$ the class of Fitting-regular groups (Definition 4.1.1)

$\hat{G}$ the profinite completion of $G$

$G'$ the derived subgroup of $G$

$G^n$ the group generated by $n$-th powers of elements of $G$

$G^{(n)}$ the $n$-th term of the derived series of $G$

$|G|$ the cardinality of the underlying set of $G$

$|G:H|$ the profinite index of $H$ in $G$ (Definition 1.3.2)

$[G,H]$ the group generated by commutators $[g,h]$ where $g \in G$ and $h \in H$

$G^H$ the group generated by the $H$-conjugates of $G$

$\GL(n,p^e)$ the general linear group of dimension $n$ over the field of $p^e$ elements

$\gamma_n(G)$ the $n$-th term of the lower central series of $G$

$\rh$, $\rhp$ see Section 3.1

$\mathrm{H}^n(G,M)$ the $n$-th cohomology group of $G$ acting on $M$

$\Ilhd_n(G)$ the intersection of all normal subgroups of $G$ of index at most $n$

$J(S)$ the Thompson subgroup of $S$ (Definition 6.4.7)

$j_N(G)$ see Definition 3.7.6

$\KComm(G)$ see Definition 2.4.7

$\LComm(G)$ see Definition 2.4.7

$m_G(H)$ the minimum cardinality of a set of $G$-conjugates of $H$ that generates $H^G$

$\rN$, $\rNa$, $\rNh$, $\rNi$ see Section 3.1

$n_\pi$ the $\pi$-part of the supernatural number $n$ (Definition 1.3.1)

$N_G(H)$ the normaliser of $H$ in $G$

$o$ (subscript) open subset (Section 1.1)

$O^{(n,\pi)}(G)$, $O^{(n,\pi)^*}(G)$ see Definition 1.5.1

$O^p(G)$ the smallest normal subgroup of $G$ such that $G/O^p(G)$ is pro-$p$

$O_\pi(G)$ the $\pi$-core of $G$ (Definition 1.3.8)

$O^\mcX(G)$ the $\mcX$-residual of $G$, where $\mcX$ is a class of groups (Section 1.1)

$O_\mcX(G)$ the $\mcX$-radical of $G$, where $\mcX$ is a class of groups (Section 1.1)

$\ob_G(n)$, $\ob^*_G(n)$: see Definition 3.6.1

$\Ob_G(H)$, $\Ob^*_G(H)$: see Definition 3.5.1

$\ord(n,p)$ the multiplicative order of $n$ as an element of the field of $p$ elements

$\Out(G)$ the group of continuous automorphisms of $G$, modulo inner automorphisms

$p'$ the set of all prime numbers other than $p$

$\bP$ the set of prime numbers

$\bP_n$ the set of prime numbers that are at most $n$

$\bP'_n$ the set of prime numbers greater than $n$

$\nil$ the class of pronilpotent groups

$\sol$ the class of prosoluble groups

$\Phi(G)$ the intersection of all maximal subgroups of $G$ (Definition 1.3.12)

$\Phi^\lhd(G)$ the intersection of all maximal normal subgroups of $G$ (Definition 2.1.1)

$\Phi^{\lhd n}(G)$ see Definition 3.7.2

$\Phi^f(G)$ the intersection of all finite index subgroups of $G$ (Definition 2.1.1)

$\pi'$ the set of prime numbers not contained in the set of primes $\pi$

$\prod$ Cartesian product

$\bQ_p$ the field of $p$-adic numbers

$\Qd(p)$ see Definition 6.4.8

$r(G)$ the supremum of $d(H)$ as $H$ ranges over all subgroups of $G$

$\ir(\theta)$ the index ratio of $\theta$ (Definition 2.4.4)

$s_b(n)$ the sum of the digits of the base $b$ expansion of $n$

$\fsg$ the class of non-abelian finite simple groups

$\SL(n,p^e)$ the special linear group of dimension $n$ over the field of $p^e$ elements

$\Sp(2n,p^e)$ the symplectic group of dimension $2n$ over the field of $p^e$ elements

$\St_G(n)$ the $n$-th level stabiliser (Definition 3.1.2)

$\Sylp(G)$ the set of $p$-Sylow subgroups of $G$

$T_{[n]}$ the subtree induced by vertices of norm at most $n$ (Definition 3.1.2)

$T_v$ the subtree with root $v$ induced by the vertex $v$ and its descendants (Definition 3.1.3)

$VZ(G)$ the union of all finite conjugacy classes of $G$

$\wrd(S)$ see after Proposition 6.6.4

$\overline{X}$ the topological closure of $X$

$\rX$, $\rXa$, $\rXh$, $\rXi$ see Section 3.1

$\bZ_p$ (the additive group of) the $p$-adic integers

$Z(G)$ the centre of $G$

\end{document}